\documentclass[english]{amsart}
\usepackage[final]{microtype}
\usepackage{caption}
\RequirePackage{amsmath}
\RequirePackage{amssymb}
\RequirePackage{amsxtra}
\RequirePackage{amsfonts}
\RequirePackage{latexsym}
\RequirePackage{euscript}
\RequirePackage{amscd}
\RequirePackage{amsthm}
\usepackage[all,cmtip]{xy}

\usepackage[T1]{fontenc}

\usepackage{stmaryrd}
\usepackage{mathtools}

\RequirePackage[dvipsnames,usenames]{xcolor}
\usepackage{ifpdf}
\ifpdf \usepackage[colorlinks,bookmarksopen,bookmarksdepth=2, linkcolor=Mahogany, citecolor=ForestGreen]{hyperref} \fi

\usepackage{mathrsfs}
\usepackage{tabu}
\usepackage{tikz-cd}
\usetikzlibrary{arrows.meta,calc,positioning,decorations.pathreplacing}
\usepackage[shortlabels]{enumitem}
\usepackage{enumitem}

\usepackage{relsize}
\usepackage[bbgreekl]{mathbbol}
\usepackage{dsfont}

  \usepackage[
    style=ext-alphabetic,
    backend=biber,
    doi=false,
    isbn=false,
    maxalphanames=99,
    maxnames=99,
    articlein=false
  ]{biblatex}

  \DeclareFieldFormat{language}{}
  \renewbibmacro*{language}{}

  \AtEveryBibitem{%
    \clearfield{language}%
    \ifboolexpr{
      test {\ifentrytype{article}}
      or test {\ifentrytype{book}}
      or test {\ifentrytype{incollection}}
    }
      {\clearfield{url}}
      {}%
    \iffieldundef{eprint}
      {}
      {\clearfield{url}}%
  }

  \DefineBibliographyStrings{english}{page={}, pages={}}
  \DefineBibliographyStrings{french}{page={}, pages={}}
  \DefineBibliographyStrings{german}{page={}, pages={}}

\newcommand{\explicit}{{\operatorname{explicit}}}
\newcommand{\Herzig}{{\operatorname{Her}}}

\newcommand{\WHer}{W^{\Herzig}}
\newcommand{\KisinS}{\mathfrak{S}}
\newcommand{\fib}{\mathrm{fib}}
\newcommand{\Vect}{\mathrm{Vect}}
\newcommand{\refl}{\mathrm{refl}}
\newcommand{\specializable}{\mathrm{sp}}

\newcommand{\Hk}{\mathcal{H}\mathrm{k}}
\newcommand{\pHk}{p\text{-}\mathcal{H}\mathrm{k}}

\newcommand{\Ghat}{\widehat{G}}
\newcommand{\Hhat}{\widehat{H}}
\newcommand{\Bhat}{\widehat{B}}
\newcommand{\Phat}{\widehat{P}}
\newcommand{\Mhat}{\widehat{M}}

\newcommand{\canonicalmapnotation}{\wp}
\newcommand{\can}{\operatorname{can}}
\newcommand{\Mrhobar}{M(\rhobar)}
\newcommand{\Mrhobarss}{M(\rhobar^{\semis})}

\DeclareSymbolFontAlphabet{\mathbb}{AMSb} %
\DeclareSymbolFontAlphabet{\mathbbl}{bbold}

\makeatletter%
\@mparswitchfalse%
\makeatother%
\normalmarginpar%

\usepackage{xcolor}

\renewcommand{\mathbb}{\mathbf}

\newcommand{\Bru}{\operatorname{Bru}}
\newcommand{\dom}{\operatorname{dom}}
\newcommand{\Flag}{\mathcal{F}\ell}
\newcommand{\Waff}{W_{\operatorname{Aff}}}
\newcommand{\Wt}{\widetilde{W}}

\newcommand{\gMbar}{\overline{\mathfrak{M}}}
\newcommand{\gNbar}{\overline{\mathfrak{N}}}

\newcommand{\Nyg}{\operatorname{Nyg}}
\newcommand{\Syn}{\operatorname{Syn}}
\newcommand{\ZpSyn}{\Z_{p}^{\Syn}}

\newcommand{\ZpNyg}{\Z_{p}^{\Nyg}}

\DeclareMathOperator{\colim}{\mathop{colim}}

\newcommand{\Prism}{{\mathlarger{\mathbbl{\Delta}}}}
\newcommand{\prism}{\Prism}
\newcommand{\Triv}{\text{\rm triv}}

\def\A{\mathbb A}

\def\C{\mathbb C}
\def\F{\mathbb F}

\def\N{\mathbb{N}}
\def\Q{\mathbb{Q}}
\def\R{\mathbb{R}}

\def\Z{\mathbb{Z}}

\def\Gr{\mathrm{Gr}}
\def\Mod{\mathrm{Mod}}

\def\id{\mathrm{id}}
\def\tf{\widetilde{f}}

\def\alg{\mathrm{alg}}

\def\red{\mathrm{red}}

\def\ss{\mathrm{ss}}

\def\GL{\operatorname{GL}}

\def\Gal{\mathrm{Gal}}

\def\Aut{\mathrm{Aut}}

\def\Ext{\mathrm{Ext}}

\def\End{\mathrm{End}}

\def\Hom{\mathop{\mathrm{Hom}}\nolimits}

\def\Spec{\mathop{\mathrm{Spec}}\nolimits}
\def\Spf{\mathop{\mathrm{Spf}}\nolimits}

\def\Ind{\mathop{\mathrm{Ind}}\nolimits}

\def\Fil{\mathop{\mathrm{Fil}}\nolimits}

\def\rhobar{\overline{\rho}}

\def\crys{\mathrm{crys}}

\def\dR{\mathrm{dR}}

\def\That{\widehat{T}}

\def\triv{\mathds{1}}
\def\ev{\mathrm{ev}}

\newcommand{\onto}{\twoheadrightarrow}

\newcommand{\into}{\hookrightarrow}

\newcommand{\To}{\longrightarrow}
\newcommand{\isoto}{\stackrel{\sim}{\To}}

\newlength{\ownl}

\newcommand{\diag}{{\operatorname{diag}}}

\newcommand{\gr}{{\operatorname{gr}\,}}

\newcommand{\Id}{{\operatorname{Id}}}

\newcommand{\Iw}{{\operatorname{Iw}}}

\newcommand{\Rep}{{\operatorname{Rep}}}

\newcommand{\Res}{{\operatorname{Res}}}
\newcommand{\rk}{{\operatorname{rk}\,}}

\newcommand{\Gm}{{\mathbb{G}_m}}

\newcommand{\LG}{{{}^{L}G}}
\newcommand{\LH}{{{}^{L}H}}
\newcommand{\LM}{{{}^{L}M}}
\newcommand{\LP}{{{}^{L}P}}

\newcommand{\cris}{{\operatorname{cris}}}

\newcommand{\der}{{\operatorname{der}}}

\newcommand{\semis}{{\operatorname{ss}}}

\newcommand{\univ}{{\operatorname{univ}}}

\newcommand{\D}{{\mathbb{D}}}

\newcommand{\G}{{\mathbb{G}}}

\newcommand{\CC}{{\mathcal{C}}}

\newcommand{\cC}{\mathcal{C}}
\newcommand{\cD}{\mathcal{D}}
\newcommand{\cE}{\mathcal{E}}

\newcommand{\cN}{\mathcal{N}}
\newcommand{\cO}{\mathcal{O}}
\renewcommand{\O}{\cO}

\newcommand{\cR}{\mathcal{R}}

\newcommand{\cX}{\mathcal{X}}

\newcommand{\gM}{{\mathfrak{M}}}
\newcommand{\gN}{{\mathfrak{N}}}

\newcommand{\gS}{{\mathfrak{S}}}

\newcommand{\gm}{{\mathfrak{m}}}

\newcommand{\tu}{\widetilde{{u}}}

\newcommand{\tw}{{\widetilde{{w}}}}

 \newcommand{\barrho   }{{\overline{\rho}}}

 \newcommand{\taubar     }{\overline{\tau}}

\newcommand{\s}{\mathcal{S}} %

\newcommand{\rbar}{{\bar{r}}}

\newcommand{\HT}{\operatorname{HT}}

 \newcommand{\Qp}{{\Q_p}}

\newcommand{\Zp}{{\Z_p}}
 
\newcommand{\Qpbar}{{\overline{\Q}_p}}

\newcommand{\Zpbar}{{\overline{\Z}_p}}

\newcommand{\Fpbar}{{\overline{\F}_p}}
\newcommand{\Fpbartimes}{{\overline{\F}_p^\times}}

\newcommand{\Fp}{{\F_p}}

\newcommand{\rank}{\operatorname{rank}}

\usepackage{chngcntr}
\counterwithin{equation}{subsection}

\makeatletter
\let\c@figure\c@equation

\ifdefined\theHfigure
  \let\theHfigure\theHequation
\fi

\let\oldsubsubsection\subsubsection

\renewcommand{\subsubsection}{\@ifstar{\subsubsectionstar}{\newsubsubsection}}

\newcommand{\subsubsectionstar}{\oldsubsubsection*}

\newcommand{\newsubsubsection}[1]{%
  \refstepcounter{equation}%
  \@startsection{subsubsection}{3}%
  {\z@}{.5\linespacing\@plus.7\linespacing}{-.5em}%
  {\normalfont\itshape}{#1}%
}
\makeatother

\newtheorem{theorem}[equation]{Theorem}
\newtheorem{thm}[equation]{Theorem}
\newtheorem{lemma}[equation]{Lemma}
\newtheorem{lem}[equation]{Lemma}

\newtheorem{cor}[equation]{Corollary}

\newtheorem{optimistic-conj}[equation]{Optimistic Conjecture}

\newtheorem{prop}[equation]{Proposition}

\theoremstyle{definition}

\newtheorem{defn}[equation]{Definition}

\theoremstyle{remark}
\newtheorem{remark}[equation]{Remark}
\newtheorem{rem}[equation]{Remark}

\newtheorem{ithm}[subsection]{\bf Theorem}
\newtheorem{iprop}[subsection]{\bf Proposition}
\newtheorem{iconj}[subsection]{\bf Conjecture}
\newtheorem{iquestion}[subsection]{\bf Question}

\newtheorem{example}[equation]{Example}

\newtheorem{hypothesis}[equation]{Hypothesis}

\theoremstyle{definition}
\newtheorem{para}[equation]{\bf}
\title
[Reduction mod~$p$ and $\mu_p$-equivariance]{Reduction modulo~$p$ of crystalline Galois representations via  $\mu_p$-equivariance}
\author[B. Bhatt]{Bhargav Bhatt} \email{bhargav.bhatt@gmail.com} \address{School of Mathematics, Institute for Advanced Study \& Department of Mathematics,
Princeton University}
\author[T. Gee]{Toby Gee} \email{toby.gee@imperial.ac.uk} \address{Department of
  Mathematics, Imperial College London}
\author[M. Kisin]{Mark Kisin} \email{kisin@math.harvard.edu} \address{Department of
  Mathematics, Harvard University}
\thanks{B.B.\ was partially supported by grants from the Packard and Simons Foundations.
  T.G.\ was
  supported in part by an ERC Advanced grant and by the Simons Collaboration on Perfection in Algebra, Geometry, and Topology.
  This project has received funding from the European Research Council (ERC) under the European Union’s Horizon 2020 research and innovation programme (grant agreement No. 884596).  M.K. was partially supported by NSF grant DMS-2200449.
For the purpose of open access, the authors have applied a CC BY public copyright licence to any author accepted manuscript arising from this submission.}

\begin{document}
\begin{abstract} 
For a crystalline representation of $\Gal(\Qpbar/\mathbf{Q}_p),$ with given Hodge--Tate weights, we obtain new constraints on the inertial weights of its mod $p$ reduction. This allows us to formulate an explicit Serre weight conjecture, in the generality of $L$-parameters for unramified connected reductive groups over~$\mathbf{Q}_p $, and to prove the elimination direction of this conjecture. The proof uses prismatic techniques to show that the reductions modulo~$p$ of the Breuil--Kisin modules attached to crystalline representations of $\Gal(\Qpbar/\mathbf{Q}_p)$ acquire a natural $\mu_p$-equivariant structure. Combining this with results on the geometry of the $\mu_p$-fixed points of affine Grassmannians leads to our new constraint.
\end{abstract}
 \maketitle

\setcounter{tocdepth}{2}
\tableofcontents

\section{Introduction}\label{sec:intro}  
\subsubsection*{The main theorem}
Write~$\Gal_{\Q}$ for the absolute Galois group of~$\Q$, and let~$p$ be a prime.
The weight part of Serre's conjecture \cite{SerreDuke} predicts the weights of mod~$p$ modular
forms giving rise to a modular Galois representation
$$ \rhobar: \Gal_{\Q} \rightarrow \GL_2(\Fpbar). $$
This has been generalized to reductive groups~$G$ other than $\GL_2$ and general number fields $F$;
see in particular Herzig's thesis~\cite{MR2541127} and~\cite{zbMATH06991335} (the introduction to which gives a brief overview of the history of this problem with further references to the literature).
For simplicity of exposition we mostly stick to the case~$F=\Q$ and~$G=\GL_n$ in this introduction; the results and proofs are formulated uniformly for $L$-groups of unramified connected reductive groups~$G$ over $\Q_p,$ as explained below. This may then be applied to the weight part of
Serre's conjecture for any such $G,$ and $F/\Q$ unramified at all primes $v|p.$

The form of these conjectures is that the set of predicted weights depends only on
the restriction $\rhobar|_{\Gal_{\Qp}}$, or even on $\rhobar|_{I_{\Qp}}$, where~$I_{\Qp}$ is the inertia subgroup at~$p$. This is a form of local-global compatibility for the Galois representations associated to mod~$p$ automorphic forms. It is closely related to the reduction modulo~$p$ of the corresponding statements in characteristic zero, and thus to the following question.
\begin{iquestion} Let
$ \overline r: \Gal_{\Qp} \rightarrow \GL_n(\Fpbar) $ be
a continuous representation. What are the possible Hodge--Tate weights of
crystalline representations
$\rho: \Gal_{\Qp} \rightarrow \GL_n(\Zpbar)$ such that
the mod~$p$ reduction $\rhobar$ of $\rho$ satisfies $\rhobar \simeq \overline r$?
\end{iquestion}
Turning the question around, we may reformulate this as asking for all the representations
$\overline r: \Gal_{\Qp} \rightarrow \GL_n(\Fpbar)$ such that $\overline r$ arises as the reduction modulo~$p$ of a crystalline representation $\rho$ with given Hodge--Tate weights. The main result of this paper is the following necessary condition on the restriction to~$I_{\Qp}$ of the semisimplification of~$\overline r$.

\begin{ithm}[Theorem~\ref{v3-thm:existence-of-semisimple-etale-phi-module-with-invariants}]\label{ithm:mainthem}
Let $\rho: \Gal_{\Qp} \rightarrow \GL_n(\Zpbar)$ be a crystalline
representation with Hodge--Tate weights~$\mu$. Then there exist
$\lambda \uparrow \mu$  and $w\in S_n$ such that
$\barrho^{\ss}|_{I_{\Qp}} \simeq \tau(\lambda,w)$.
\end{ithm}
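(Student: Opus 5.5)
The plan follows the route indicated in the abstract: pass to Breuil--Kisin modules, equip their reductions mod~$p$ with a $\mu_p$-action, and read off the constraint from the $\mu_p$-fixed points of the affine Grassmannian.

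\textbf{Step 1: the mod~$p$ Breuil--Kisin module and its $\mu_p$-structure.} Let $\mathfrak{M}$ be the Breuil--Kisin module over $\KisinS=W(k)[[u]]$ attached to a $\Gal_{\Qp}$-stable lattice in $\rho$. Its Frobenius has relative position $\mu$, meaning that $\varphi^*\mathfrak{M}\to\mathfrak{M}$ lies in the Schubert cell of $\mu$, with $E(u)=u+p$ playing the role of the uniformizer. Set $\overline{\mathfrak{M}}=\mathfrak{M}/p$, a $\varphi$-module over $\Fpbar[[u]]$ of $E$-height $\mu$. Using prismatic $F$-gauges, that is, crystalline representations viewed as vector bundles on $\ZpSyn$, I would show that $\overline{\mathfrak{M}}$ carries a natural $\mu_p$-equivariant structure. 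Here $\mu_p$ acts on $u$ by scaling, compatibly with $\varphi$. The source of this action is that on the reduction mod~$p$ of the Breuil--Kisin prism the relevant stack acquires a $\mu_p$-symmetry. Reducing mod~$p$ has the effect that $E(u)\equiv u$, so the relative position of $\overline{\mathfrak{M}}$ is measured by powers of $u$, and these are $\mu_p$-equivariant.

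\textbf{Step 2: relative position in the $\mu_p$-fixed Grassmannian.} A $\mu_p$-equivariant lattice in $\Fpbar((u))^n$ with relative position $\mu$ gives a point of the $\mu_p$-fixed locus of the Schubert variety $\overline{\Gr}_{\le\mu}$ in the affine Grassmannian over $\Fpbar$. I would invoke the geometry of these fixed points. Since $\mu_p$ acts through loop rotation, the fixed components are indexed by $\mu_p$-weight data of the form $(\lambda,w)$. The point is then that a component meets $\overline{\Gr}_{\le\mu}$ only when $\lambda\uparrow\mu$, where $\uparrow$ is the linkage order for the $p$-dilated dot action of the affine Weyl group. The combinatorial input is a Jantzen-type description of which linked weights occur in the $\mu_p$-fixed locus.

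\textbf{Step 3: semisimplification and inertial type.} Next I would $\mu_p$-equivariantly filter $\overline{\mathfrak{M}}[1/u]$ by étale $\varphi$-submodules with irreducible graded pieces. This is possible because the $\mu_p$-action commutes with $\varphi$, so semisimplification can be done inside $\mu_p$-equivariant modules. The resulting semisimple étale $\varphi$-module with $\mu_p$-invariant lattice has relative position $\lambda$ for some $\lambda\uparrow\mu$, using closedness of the fixed-point strata under specialization. This is the statement of Theorem~\ref{v3-thm:existence-of-semisimple-etale-phi-module-with-invariants}. For a semisimple étale $\varphi$-module with an equivariant lattice of this shape, the standard classification shows that the restriction to inertia of the associated $\Gal_{\Qp_\infty}$-representation, and hence of $\rhobar^{\ss}$, is $\tau(\lambda,w)$. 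Here $w\in S_n$ records how Frobenius permutes the eigenlines, and the niveau of the fundamental characters comes from the cycle type of $w$. This step also uses that restriction to $\Gal_{\Q_p(p^{1/p^\infty})}$ is faithful on semisimple mod~$p$ inertial data.

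\textbf{Main obstacle.} I expect the hardest step to be Step 1, constructing the $\mu_p$-equivariance naturally and checking that it is compatible with $\varphi$ and with the bound $\mu$. The $\mu_p$-action does not exist on $\mathfrak{M}$ itself, only on its reduction, so it must come from the stacky structure of $\ZpSyn$ mod~$p$. Step 2 is the second risk, since one must make sure the fixed-point combinatorics produces exactly the relation $\uparrow$.
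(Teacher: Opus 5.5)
Your architecture matches the paper's, but Step~2 as formulated cannot produce $\uparrow$, and this exposes the missing key idea.

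The $\mu_p$-fixed locus of the ordinary Schubert variety $\Gr_{\le\mu}$ meets \emph{every} stratum $\Gr^{\lambda}$ with $\lambda\le\mu$, because $(\Gr^{\lambda})^{\mu_p}=L^+_p\GL_n\cdot u^{\lambda}$ is nonempty. For example, take $\GL_2$ and $\mu=(2,0)$. The $\mu_p$-fixed point $u^{(1,1)}$ lies in $\Gr_{\le\mu}$, yet $(1,1)\not\uparrow(2,0)$, and Corollary~\ref{v3-cor:shape-p-small-weight} shows the shape must be $(2,0)$. So knowing that $\gMbar$ is $\mu_p$-equivariant with relative position bounded by $\mu$ gives only $s(\gMbar)\le\mu$.

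Proposition~\ref{prop:K-orbit-closure-mu-fixed} computes something different: the closure of the single stratum $(\Gr^{\mu})^{\mu_p}$ inside $\Gr^{\mu_p}$. To use it, you must exhibit $\gMbar$, with its $\mu_p$-structure, as the special fibre of a $\mu_p$-equivariant family over $W(k)$, i.e.\ a point of $\pHk$. The generic fibre of that family must have relative position \emph{exactly} $\mu$, and then Corollary~\ref{cor:closure-from-zpbar-point} applies.

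A $\mu_p$-structure that exists only modulo~$p$, as you propose, cannot do this. Such a structure is unique if it exists (Corollary~\ref{cor:mainNcor}), so it is intrinsic to $(\gMbar,\varphi_{\gMbar})$ and cannot by itself relate $s(\gMbar)$ to $\mu$.

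The paper's central result, Theorem~\ref{thm:mainNthm}, supplies exactly the missing object. It gives a $\mu_p$-equivariant $\gN$ over $\KisinS$ and a $\mu_p$-equivariant modification $\psi\colon\varphi^*\gM[1/u]\simeq\gN[1/u]$ along $u=0$ (not $E=0$). The generic relative position of $\psi$ is the Hodge--Tate cocharacter, and $\psi\equiv\varphi_{\gM}$ modulo~$p$. It is obtained by pulling the prismatic $F$-crystal back along $\rho_{\dagger}$, the map classified by $[u]-zV(1)$. This map agrees with $\rho_{\mathrm{std}}$ modulo $p$ but does not come from a prism structure.

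Step~3 has a second gap. Reading off $\rhobar^{\semis}|_{I_{\Qp}}\cong\tau(\lambda,w)$ from the shape $\lambda$ of a $\mu_p$-equivariant lattice works for rank-one constituents, where you can diagonalize over $\That(\Fpbar\llbracket u\rrbracket)$ as in Lemma~\ref{v3-lem:semisimple-etale-phi-structure}. It is not a standard fact for irreducible constituents. There the lattice need not admit an $\Fpbar\llbracket u\rrbracket$-basis in which Frobenius has the form $tu^{\lambda}w$. Its shape therefore need not be one of the exponents $\lambda$ with $M(\rhobar^{\semis})\sim tu^{\lambda}w$, and those exponents are what determine $\tau$.

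The paper proves only an inequality here, in the following steps.
\begin{enumerate}
\item It performs the Levi degeneration of Lemma~\ref{v3-lem:BK-module-parabolic}, which is essentially your semisimplification step.
\item It writes each irreducible constituent as $tu^{\theta}w$ with the fixed point of $(tu^{\theta}w\gamma)\sigma$ in the fundamental alcove.
\item It uses Lemma~\ref{v3-lem:Chen-Nie-conjugacy} to write $\gM\sim u^{-\nu}ju^{\nu}\cdot u^{\lambda}tw$ with $j\in\Iw_1$.
\item It degenerates $j$ away via a $\Gm$-action inside the closed set $\Gr^{sw,\mu_p,\uparrow s(\gM)}$ (Proposition~\ref{prop:Iwahori-degeneation}), obtaining only $\lambda_{\dom}\uparrow s(\gM)$.
\end{enumerate}
Transitivity of $\uparrow$ and Lemma~\ref{lem:comparing-Levi-uparrow-to-G-uparrow} then finish the proof. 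You need this inequality in place of the equality you assert.
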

Let us explain the notation in the theorem.
Firstly, $\lambda=(\lambda_1 ,\dots,\lambda_n)$ and $\mu=(\mu_1 ,\dots,\mu_n)$ are tuples of integers with $\lambda_1 \ge\lambda_2 \ge\dots\ge\lambda_{n}$ and $\mu_1 \ge\mu_2 \ge\dots\ge\mu_n$. 
The relation $\lambda \uparrow \mu $ is
the one which appears in the theory of mod~$p$ representations of reductive groups.
It means (see Definition~\ref{defn:uparrow-coweights}, and Figure~\ref{fig:dominant-chain-in-defn-uparrow} for the example of~$\GL_3 $)
that there is a sequence of reflections $s_1, \dots, s_r$ in the
$p$-dilated affine Weyl group of $G=\GL_n$ such that
$$ \lambda \leq s_1(\lambda) \leq s_2s_1(\lambda) \leq \dots \leq s_r \dots s_1(\lambda)
= \mu,$$
where $\leq$ denotes the usual dominance order.

The representation $\tau(\lambda,w)$ is an explicit semisimple representation of $I_{\Qp}$. Such a representation is a sum of powers of fundamental characters and, roughly speaking, $ \lambda$ determines the ``$p$-adic digits'' which appear in the exponents; see \ref{v3-para:defntau} below.
For example, suppose that~$n=2$ and~$\lambda=(a,b)$.
Then if $w =1= \Id$, we have \[\tau(\lambda,1)=\omega^{-a}\oplus\omega^{-b},\] where~$\omega$ is the mod~$p$ cyclotomic character, while if~$w=w_0=(12)$, we have \[\tau(\lambda,w_0 )=\omega_2 ^{-(a+pb)}\oplus\omega_2 ^{-(b+pa)},\] where~$\omega_2 $ is a fundamental character of niveau~$2$.

Continuing to assume that~$n=2$, suppose that
$\rho$ is crystalline with Hodge--Tate weights $\mu = (k,0)$ with $p \leq k \leq 2p$; our convention in this paper is that the Hodge--Tate weight of the cyclotomic character is $-1$.
Then $\lambda \uparrow \mu$ implies that $\lambda = (k,0)$ or $(p,k-p)$, and Theorem~\ref{ithm:mainthem} gives four possibilities for $(\rhobar|_{I_{\Q_p}})^{\ss}$, namely
\[\tau((k,0),1) = \omega^{-k}\oplus 1,\  \tau((p,k-p),1) = \omega^{-1} \oplus \omega^{-(k-p)}
\]

\[\tau((p,k-p),w_0 ) =  \omega_2^{-(p+p(k-p))} \oplus \omega_2^{-(k-p+p^2)},\ \tau((k,0),w_0 ) =  \omega_2^{-k} \oplus \omega_2^{-pk}
\]
(note that some of these possibilities coincide for particular values of~$k$).
A result of Berger--Breuil (see e.g.\ \cite[Thm.~3.2.1]{MR2642408})  asserts that all
$4$ cases occur as the reduction of a crystalline representation of weights $0,k$.

In general the condition $\lambda \uparrow \mu$ is not sufficient to guarantee that there is a crystalline $\rho$ with Hodge--Tate weights
$\mu$ and reduction $\tau( \lambda, w)$. However we conjecture that this is true when the Hodge--Tate weights are {\em $p$-restricted} in the sense that~$\mu_i-\mu_{i+1}\le p$ for $i=1,\dots, n-1$,
which is the case relevant for Serre's conjecture; see Conjecture~\ref{optimistic-conj:crystalline-Serre-weight} below.

\subsubsection*{Breuil--Kisin modules}
We now begin the discussion of the proof of Theorem~\ref{ithm:mainthem}. 
To simplify notation we assume that the crystalline representation $\rho$ takes values in $\GL_n(\Zp).$ 
Then associated to $\rho$ one has its corresponding Breuil--Kisin module, which
 is a finite free $\Z_p\llbracket u \rrbracket$-module $\gM,$ equipped
with an isomorphism
$$ \varphi_{\gM}:  \varphi^*(\gM)\left[\frac{1}{u-p}\right] \simeq \gM\left[\frac{1}{u-p}\right],$$ where
$\varphi$ is the endomorphism of $\Z_p\llbracket u \rrbracket$ given by $u \mapsto u^p.$
We set $\overline \gM = \gM\otimes \Fp$ with its induced Frobenius $\varphi_{\overline \gM}.$

Now fix a $\Z_p\llbracket u \rrbracket$-basis for $\gM.$ Then $\varphi_{\gM}$
may be viewed as an element of the group $\GL_n(\Q_p((u-p)))$,
and the Iwasawa
decomposition, applied over the local field $\Q_p((u-p))$, associates to $\varphi_{\gM}$ a tuple $\alpha_{\gM} =(\alpha_{1},\dots\alpha_{n})$  of integers $\alpha_1 \ge\dots\ge\alpha_{n}$.
Similarly, there is a tuple $\alpha_{\overline \gM}$ associated to
$\varphi_{\overline \gM}.$ It is known that~$\alpha_{\gM}=\mu$, the Hodge--Tate weights of~$\rho$, and that
$\alpha_{\overline \gM} \leq \alpha_{\gM} = \mu,$ cf.~\cite{br2016hardernarasimhan} (here~$\le$ is again the usual dominance order).
It turns out that this can be upgraded to
\begin{iprop}[Corollary~\ref{cor:specializableuparrow}]\label{iprop:uparrowrel} We have $\alpha_{\overline \gM} \uparrow \alpha_{\gM} = \mu.$
\end{iprop}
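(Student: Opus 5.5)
We argue geometrically, following the strategy sketched in the abstract: from the prismatic structure of crystalline Breuil--Kisin modules we get a $\mu_p$-equivariance on $\overline{\gM}$, and then we use the geometry of $\mu_p$-fixed points on affine Grassmannians. The proof has three steps.

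\emph{Step 1: the $\mu_p$-action.} Since $\rho$ is crystalline, $\gM$ comes from a prismatic $F$-crystal on $\Z_p$, and in particular carries descent data along the Breuil--Kisin prism $(\Z_p\llbracket u\rrbracket,(u-p))$. Reducing modulo $p$, the prism becomes $\F_p\llbracket u\rrbracket$ with Frobenius $u\mapsto u^p$. We expect the self-product of this prism, reduced modulo $p$, to contain the automorphism $u\mapsto \zeta u$ with $\zeta\in\mu_p$. The crystal property should then give an action of the group scheme $\mu_p$ on $\overline{\gM}$, semilinear for the action of $\mu_p$ on $\F_p\llbracket u\rrbracket$ by $u\mapsto \zeta u$ and commuting with $\varphi_{\overline\gM}$. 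The key point is that $u^p$ is $\mu_p$-invariant, so the Frobenius image lies in the invariants.

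\emph{Step 2: translation to Grassmannians.} Relative to a basis of $\overline{\gM}$, the element $\varphi_{\overline\gM}$ defines a point $x$ of the affine Grassmannian $\Gr_{\GL_n}$ over $\F_p((u))$. The lattice $\overline{\gM}\subset \overline{\gM}[1/u]$ is compared with $\varphi^*\overline{\gM}$, and since $u-p\equiv u$ modulo $p$, $x$ lies in the Schubert cell of type $\alpha_{\overline\gM}$. The $\mu_p$-equivariance from Step 1 says that $x$ is a $\mu_p$-fixed point for the loop-rotation action. The same construction before reduction places the generic-fibre point in the cell of type $\mu=\alpha_\gM$; since specialization decreases Schubert strata, $x$ lies in the closure $\Gr_{\le\mu}$.

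\emph{Step 3: fixed-point geometry.} The heart of the matter is the following claim: the $\mu_p$-fixed locus of $\Gr_{\le\mu}$, for the rotation $u\mapsto\zeta u$ with $p$ the order of the rotation, meets only those cells $\Gr_\lambda$ with $\lambda\uparrow\mu$. To prove it, we would identify $(\Gr)^{\mu_p}$ with a union of partial affine flag varieties for the $p$-dilated loop group, via lattices in $\F_p((u^{1/p}))$-coordinates, or equivalently $u\mapsto u^p$. The cells of the fixed locus are indexed by the $p$-dilated affine Weyl group. Passing to closures, a fixed point in $\Gr_{\le\mu}$ must lie in the closure of a fixed component meeting $\Gr_\mu$, and the components containing $t^\mu$ are the affine-Weyl orbits of $\mu$. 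We would then show that the Bruhat order on these components forces a chain of reflections $\lambda\le s_1\lambda\le\cdots\le\mu$, using the standard linkage argument; Jantzen-type combinatorics relates Bruhat order to $\uparrow$ for dominant weights in the dilated Weyl group. Applying the claim to $x$ gives $\alpha_{\overline\gM}\uparrow\mu$.

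We expect the main obstacle to be Step 3. The $\mu_p$ action is not étale in characteristic $p$, so the fixed locus must be understood scheme-theoretically, as $\mu_p$-equivariant lattices, that is, $\Z/p$-graded lattices. The plan here is to use the grading to decompose each equivariant lattice into eigenpieces, and to read off the relative position as a $p$-dilated affine Weyl group element. Making Step 1 rigorous, namely extracting the $\mu_p$-action from the prismatic crystal structure, is the other substantial input.
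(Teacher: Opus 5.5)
Your Step~3 claim is false, and this is where the argument breaks. For every dominant $\lambda$, the point $u^{\lambda}$ is fixed by loop rotation, since $(\zeta u)^{\lambda}=u^{\lambda}\lambda(\zeta)$ with $\lambda(\zeta)\in T\subset L^+G$. So $(\Gr^{\le\mu})^{\mu_p}$ meets every cell $\Gr^{\lambda}$ with $\lambda\le\mu$ (cf.\ Lemma~\ref{lem:mup-invariants-spherical}). For $\GL_2$, $\mu=(2,0)$ and $\lambda=(1,1)$, we have $\lambda\le\mu$ but $\lambda\notin W_p\mu$, so $\lambda\not\uparrow\mu$. The faulty sentence is ``a fixed point in $\Gr_{\le\mu}$ must lie in the closure of a fixed component meeting $\Gr_\mu$.'' The fixed locus of the closure is much larger than the closure of the fixed locus of the open cell; here $u^{(1,1)}$ and $u^{(2,0)}$ even lie in different pieces of the decomposition~\eqref{eq:5}. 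Proposition~\ref{prop:K-orbit-closure-mu-fixed} only describes the closure of $(\Gr^{\mu})^{\mu_p}$ inside $\Gr^{\mu_p}$. To use it, you must realize $\alpha_{\overline\gM}$ as a specialization, \emph{inside the $\mu_p$-fixed locus}, of a point of relative position $\mu$.

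Your inputs cannot supply such a specialization. The family you specialize in Step~2 is $\varphi_\gM$, a modification along $\{E=0\}$. Loop rotation does not preserve this divisor in characteristic $0$, so that family lives in $\Gr$ but not in $\Gr^{\mu_p}$, and it only yields $\alpha_{\overline\gM}\le\mu$. Equivariance of the special fibre together with $\le$ is genuinely too weak. The Breuil--Kisin module with $\varphi_\gM=\bigl(\begin{smallmatrix}E&p\\0&E\end{smallmatrix}\bigr)$ has $\alpha_\gM=(2,0)$, while its reduction $u\cdot\Id$ is $\mu_p$-equivariant (put the basis in $\mu_p$-degree $-1$) of shape $(1,1)$.

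The missing idea is an \emph{integral} $\mu_p$-equivariant modification. This means a point of $\pHk_{\GL_n}(W(k))$: an isomorphism $\psi\colon\varphi^*\gM[1/u]\simeq\gN[1/u]$ along $\{u=0\}$ over $W(k)\llbracket u\rrbracket$, with $\psi\equiv\varphi_\gM \bmod p$ and with relative position $\mu$ over $K\llbracket u\rrbracket$; this is Theorem~\ref{thm:mainNthm}. The paper obtains $\gN$ by pulling back the prismatic $F$-crystal along the map $\rho_\dagger\colon\Spf(\KisinS)/\mu_p\to W(k)^{\Prism}$ classifying $[u]-zV(1)$. This map has the properties you need:
\begin{itemize}
\item it is $\mu_p$-equivariant already in characteristic $0$;
\item it agrees with $\rho_{\mathrm{std}}$ mod $p$;
\item it satisfies $\varphi\circ\rho_\dagger\simeq\rho_{\mathrm{std}}\circ q$;
\item it gives the same relative position as $\rho_{\mathrm{std}}$, since both are flat covers of $W(k)^{\Prism}$ (Example~\ref{ex:relposWkprism}).
\end{itemize}
The closure relation is then applied to a mixed-characteristic family. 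This needs Proposition~\ref{prop:closure-mu-fixed-Zpbar-Fpbar} in addition to Proposition~\ref{prop:K-orbit-closure-mu-fixed}, as in Corollary~\ref{cor:closure-from-zpbar-point}.

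Your Step~1 is fine in substance: mod $p$, $\rho_{\mathrm{std}}$ is $\mu_p$-invariant (Proposition~\ref{prop:FrobDescMap}(3)). The combinatorics you sketch for Step~3 is what Section~\ref{sec:affine-Grassmannians} does. Without the integral equivariant lift, however, these two ingredients cannot be combined to give $\uparrow$.
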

\subsubsection*{$\mu_{p}$-equivariance}
The proof of Proposition~\ref{iprop:uparrowrel} relies on one of the key observations of this paper, which is that Breuil--Kisin modules coming from crystalline representations enjoy a $\mu_p$-equivariance property. The scheme $\Spec \Z_p\llbracket u \rrbracket$ is equipped with an action of the scheme~$\mu_p$ of $p^{\text {\rm th}}$-roots of unity, given by ``loop rotation''; $\zeta\in \mu_p$ acts by $u \mapsto \zeta\cdot u.$ The Frobenius $\varphi:\Spec \Z_p \llbracket u \rrbracket \to \Spec \Z_p \llbracket u \rrbracket$ is $\mu_p$-equivariant for the trivial $\mu_p$-action on the target, so $\varphi^*(-)$ always produces $\mu_p$-equivariant objects. In terms of this structure, we show:

\begin{iprop}[Theorem~\ref{thm:mainNthm}]\label{iprop: mupequiv} 
There exists a $\mu_p$-equivariant finite free $\Z_p\llbracket u \rrbracket$-module $\gN$, a $\mu_p$-equivariant isomorphism $\psi_\gN:\varphi^* \gM[1/u] \simeq \gN[1/u]$, and an $\F_p\llbracket u\rrbracket$-linear isomorphism $
\gNbar \coloneq \gN/p \simeq \overline{\gM}=\gM/p$ such that the diagram 
\[\xymatrix{
\varphi^*\gM[1/u] \ar[r]^{\psi_{\gN}}_{\sim}\ar@{->>}[d] &  \gN[1/u] \ar@{->>}[d]
\\
\varphi^*\overline \gM[1/u] \ar[r]^{\varphi_{\overline \gM}}_{\sim} & \overline{\gM}[1/u]
}\]
commutes (where the right-hand vertical arrow is the composite of reduction modulo~$p$ and the isomorphism $\gNbar  \simeq \overline{\gM}$); in particular, the Breuil--Kisin module $(\overline \gM, \varphi_{\overline{\gM}})$ acquires a natural $\mu_p$-equivariant structure. Moreover, under the Iwasawa decomposition over $\Q_p (( u )),$ $\psi_{\gN}$ corresponds to $\alpha_{\gM}$.
\end{iprop}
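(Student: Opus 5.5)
Our approach is prismatic. We realize $\gM$ as the evaluation of a prismatic $F$-crystal and compare two prisms: the Breuil--Kisin prism $(\Z_p\llbracket u\rrbracket,(u-p))$ and its Frobenius twist. By the results of Bhatt--Scholze and Anschütz--Le Bras (and Du--Liu for the crystalline case), $\rho$ corresponds to a prismatic $F$-crystal $\mathcal{E}$ on $(\Z_p)_\Prism$ whose value on the Breuil--Kisin prism is $\gM$. The key observation is that $\varphi:\Z_p\llbracket u\rrbracket\to\Z_p\llbracket u\rrbracket$ factors through the ring $A=\Z_p\llbracket u\rrbracket$ with $\mu_p$ acting by $u\mapsto\zeta u$. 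So $\varphi^*\gM$ is the value of $\varphi^*\mathcal{E}$ on the prism $(\Z_p\llbracket u\rrbracket,(\varphi(u-p)))=(\Z_p\llbracket u\rrbracket,(u^p-p))$. This prism carries a $\mu_p$-action, and as an object of the prismatic site the action is by automorphisms. Crystals evaluated on it are therefore automatically $\mu_p$-equivariant.

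We then define $\gN$ using the Nygaard filtration (equivalently, the Frobenius-linear structure of $\mathcal{E}$). Concretely, we take $\gN$ to be the value of $\mathcal{E}$ on the prism $(\Z_p\llbracket u\rrbracket,(E'))$, where $E'=u^p-p$ is $\mu_p$-invariant, or more precisely on the $\mu_p$-equivariant prism $\Z_p\llbracket u\rrbracket\to\Z_p\llbracket u\rrbracket$, $u\mapsto u^p$. Crystal base change identifies $\gN$ with $\varphi^*\gM$ only after inverting $u$, or more precisely $u^p-p$. The Frobenius $\varphi_{\mathcal{E}}$ gives a $\mu_p$-equivariant isomorphism $\psi_\gN:\varphi^*\gM[1/u]\simeq\gN[1/u]$. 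Here we use that $\gN$ and $\varphi^*\gM$ agree away from $u$, because $(u^p-p)$ and $u$ generate the unit ideal after $p$-completion modulo the relevant terms. An alternative is to build $\gN$ as the lattice $\varphi^*_{\gM}$-image $\varphi_\gM(\varphi^*\gM)\subset\gM[1/(u-p)]$, transported via the map of prisms and analytically extended. We fix one choice: $\gN:=\mathcal{E}(\Z_p\llbracket u\rrbracket,(u^p-p))$ with the evident equivariance, finite free by Anschütz--Le Bras purity over two-dimensional regular local rings.

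Next we prove the compatibility mod $p$. Modulo $p$ the two prisms $(u-p)$ and $(u^p-p)$ reduce to the same ideals $(u)$ and $(u^p)$ in $\F_p\llbracket u\rrbracket$. Moreover, the map $u\mapsto u^p$ mod $p$ is the Frobenius. Hence the reduction of the crystal-transition isomorphism identifies $\gN/p$ with $\gM/p$, and under this identification $\psi_\gN$ mod $p$ becomes $\varphi_{\overline\gM}$. Checking this square requires carefully tracking which base-change maps the prismatic Frobenius and crystal property produce. This is the main obstacle, since the $\mu_p$-action is not defined on $\gM$ itself. We would try to factor everything through the perfection or through the $q$-de Rham prism $\Z_p\llbracket q-1\rrbracket$ with $u\mapsto q$-type substitution, where $\mu_p$ acts naturally.

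Finally, for the Iwasawa invariant we compare $\psi_\gN$ with $\varphi_\gM$ over $\Q_p((u))$. Crystallinity identifies $\gN[1/p]$ with $\varphi^*\gM[1/p]$ twisted by the relative position at the divisor $u^p=p$, whose elementary divisors are the Hodge--Tate weights. After extending to $\Q_p(p^{1/p})$, where $u^p-p$ splits, each factor contributes $\mu$, which gives $\alpha_{\gM}$.
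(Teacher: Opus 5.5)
Your construction has a genuine gap: the object you call $\gN$ is not a modification of $\varphi^*\gM$ along $u=0$.

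The map $u\mapsto u^p$ is the Frobenius of the Breuil--Kisin $\delta$-ring. It is therefore a map of prisms $(\Z_p\llbracket u\rrbracket,(u-p))\to(\Z_p\llbracket u\rrbracket,(u^p-p))$, and the crystal property gives $\mathcal{E}(\Z_p\llbracket u\rrbracket,(u^p-p))\cong\varphi^*\gM$ integrally. Your $\gN$ is simply $\varphi^*\gM$.

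Evaluating $\varphi_{\mathcal{E}}$ on this prism yields $\varphi^*(\varphi_{\gM})$. That is an isogeny between $\varphi^*\varphi^*\gM$ and $\varphi^*\gM$ supported along $u^p=p$, not along $u=0$. Your claim that $(u^p-p)$ and $u$ generate the unit ideal is false: they generate the maximal ideal $(p,u)$.

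Both later steps then break:
\begin{itemize}
\item A compatible $\tau:\gN/p\simeq\overline{\gM}$ would force $\varphi_{\overline{\gM}}$ to carry the lattice $\varphi^*\overline{\gM}$ onto $\overline{\gM}$. This fails whenever $\overline{\gM}$ has nonzero shape.
\item Since $u^p-p$ is a unit in $\Q_p\llbracket u\rrbracket$, any modification supported on $u^p=p$ has trivial relative position at $u=0$. Your Hodge--Tate computation therefore concerns the wrong divisor.
\end{itemize}
What you actually need is a $\mu_p$-equivariant point of $\Z_p^{\Prism}$ over $\Spf\Z_p\llbracket u\rrbracket$ that pulls the Hodge--Tate divisor back to $(u)$ and whose Frobenius is the Breuil--Kisin point composed with $\varphi$. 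The point the paper uses is not a $\delta$-map, so it does not arise from evaluating $\mathcal{E}$ on any prism.

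The missing idea is to write this point down directly as the Cartier--Witt divisor $[u]-V(1)\in W(\Z_p\llbracket u\rrbracket)$.
\begin{itemize}
\item Its $0$th Witt component is $u$, so it pulls $I_{\HT}$ back to $(u)$.
\item For $\zeta\in\mu_p$ one has $[\zeta]([u]-V(1))=[\zeta u]-V(F[\zeta])=[\zeta u]-V(1)$. So the classifying map intertwines loop rotation with $\mu_p\subset W^*$, and it descends to $\rho_\dagger:\Spf(\Z_p\llbracket u\rrbracket)/\mu_p\to\Z_p^{\Prism}$.
\item $F([u]-V(1))=[u^p]-p=w(\varphi(E))$, where $w$ is the $\delta$-structure map. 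Hence $\varphi\circ\rho_\dagger$ agrees with $\rho_{\mathrm{std}}\circ q$, where $\rho_{\mathrm{std}}$ classifies $w(E)$ and $q$ is the map through which $\varphi$ factors. So $\rho_\dagger^*\varphi^*\mathcal{E}\cong\varphi^*\gM$ $\mu_p$-equivariantly.
\item Setting $\gN:=\rho_\dagger^*\mathcal{E}$ and $\psi_{\gN}:=\rho_\dagger^*\varphi_{\mathcal{E}}$ gives the required $\mu_p$-equivariant isogeny along $u=0$.
\item Since $V(1)=p$ in Witt vectors of $\F_p$-algebras, $[u]-V(1)\equiv[u]-p=w(E)$ modulo $p$. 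Thus $\rho_\dagger$ agrees with $\rho_{\mathrm{std}}$ mod $p$, which yields $\gN/p\cong\overline{\gM}$ and the commutative square.
\item This congruence plus the local flatness criterion makes $\rho_\dagger$ faithfully flat. The relative position of an $I_{\HT}$-isogeny on $\Z_p^{\Prism}$ can be computed on any such flat cover. So $\psi_{\gN}$ has the same relative position over $\Q_p\llbracket u\rrbracket$ as $\varphi_{\gM}$ over $\Q_p\llbracket E\rrbracket$, namely $\alpha_{\gM}$.
\end{itemize}
The paper also gives a second proof. There $\gN$ is the integral extension of the Rees modification of the Hodge filtration along $u=0$, and the mod $p$ compatibility is proved via a chart of the Nygaard stack.
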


Roughly, $\psi_\gN$ realizes $\gN$ as a modification of $\varphi^* \gM$ along $\{u=0\}$, and is constructed using the Hodge filtration on the associated filtered $\Q_p$-vector space; in fact, the subset of $\mathbf{Z}/p$ determined by the support of the $\mu_p$-equivariant structure on $\gN$ (and thus also on $\overline{\gN} \simeq \overline{\gM}$ by Proposition~\ref{iprop: mupequiv}) is exactly given by (the image in $\mathbf{Z}/p$ of) the Hodge--Tate weights of the underlying Galois representation (Remark~\ref{rmk:mupweightcanmod}).  Such a characteristic $0$ modification can in fact be constructed for any Breuil--Kisin module; the key consequence of crystallinity is the congruence of $\psi_\gN \equiv \varphi_\gM$ modulo $p$. We offer two constructions of $\gN$, one relying on the prismatization \cite{bhatt2022prismatization} that minimally proves Proposition~\ref{iprop: mupequiv}, the other using the syntomification \cite{drinfeldprismatization,BBFgaugenotes} that characterizes $\gN$ in classical terms along the lines of the opening sentence of this paragraph. We do not know a proof using more classical $p$-adic Hodge theory.

For the first construction recall that, by \cite{bhatt2021prismatic}, a Breuil--Kisin module $\gM$ 
associated to a crystalline representation arises from a prismatic $F$-crystal $\mathcal{E}.$ 
By \cite{bhatt2022prismatization}, $\mathcal{E}$ may in turn be regarded as a vector bundle with Frobenius structure on the prismatization $\mathbf{Z}_p^{\Prism}$; in this perspective, the module $\gM$ is simply the pullback of $\mathcal{E}$ along the map 
$\rho_{\mathrm{std}}: \mathrm{Spf}(\mathbf{Z}_p\llbracket u \rrbracket) \to 
\mathbf{Z}_p^{\Prism}$ classifying the Breuil--Kisin prism. We realize $\gN$ as the pullback of $\mathcal{E}$ along a new map $\widetilde{\rho_\dagger}:\Spf(\Z_p\llbracket u \rrbracket) \to \Z_p^\Prism$ (Proposition~\ref{prop:FrobDescMap}). The map $\widetilde{\rho_\dagger}$ is congruent to $\rho_{\text{std}}$ modulo $p$, but does {\em not} come from a prism structure on $\Z_p\llbracket u \rrbracket$ integrally. Moreover, the $\mu_p$-equivariance of $\gN$ is  a reflection of the $\mu_p$-equivariance of the map $\widetilde{\rho_\dagger}$ itself.

For the second construction, recall that the Nygaard filtration on $\varphi^*(\gM)$
induces the Hodge filtration on $D = \bigl(\varphi^*(\gM)/(u-p)\varphi^*(\gM)\bigr)[1/p].$
On the other hand, parallel transport along Frobenius allows us to identify
$\varphi^*(\gM)\otimes_{\Z_p\llbracket u \rrbracket}\Qp\llbracket u \rrbracket$ with
$D\otimes_\Qp\Qp\llbracket u \rrbracket.$ Thus the Rees module, with Rees parameter $u$, of the Hodge filtration on $D$ gives a modification of $\varphi^*(\gM)\otimes_{\Z_p\llbracket u \rrbracket}\Qp\llbracket u \rrbracket$ along $\{u=0\}.$ By Beauville--Laszlo glueing and purity for vector bundles on $\Spec \Z_p\llbracket u \rrbracket$, this extends uniquely to a modification $\gN$ of $\varphi^*(\gM)$ along $\{u=0\}$. It is not hard to see
that $\gN$ has all the properties in the proposition {\em except} for the isomorphism $\gN/p\gN \simeq \overline \gM.$ The proof of this last, crucial, property (Theorem~\ref{thm:crysBKspecial})
uses essentially that if
$\gM$ corresponds to a crystalline representation, then it arises from an $F$-gauge $\mathcal{E}^{\Syn}$, which is a reflexive coherent sheaf on the stack $\Z_p^{\Syn}$ \cite{drinfeldprismatization,BBFgaugenotes}, extending the crystal $\mathcal{E}$ from the previous paragraph from the open substack $\Z_p^\Prism \subset \Z_p^{\Syn}$.

\subsubsection*{$\mu_{p}$-fixed points in the affine Grassmannian}
To explain how to deduce Proposition \ref{iprop:uparrowrel} from
Proposition \ref{iprop: mupequiv}, it is useful to reformulate the invariants
$\alpha_{\gM}$ and~$\alpha_{\overline \gM}$ in terms of orbits on the affine Grassmannian
$\Gr$ for $\GL_n$. Recall that this is defined as a quotient $L\GL_n/L^+\GL_n$, where for any ring
$R$, $L\GL_n(R) = \GL_n(R((u)))$ and $L^+\GL_n(R) = \GL_n(R\llbracket u \rrbracket)$.

Write~$T$ for the usual maximal torus of $\operatorname{GL}_n$ given by the diagonal matrices, write $X_*(T)^+$ for the set of tuples of integers $\lambda=(\lambda_1 , \dots, \lambda_{n})$ with $\lambda_1 \geq \lambda_2 \geq \dots \geq \lambda_n $, and write $u^{\lambda}=\diag(u^{\lambda_1},\dots, u^{\lambda_n})\in L \operatorname{GL}_n$. 
Then the orbits of $L^+\GL_n$ on $\Gr$ are precisely the $L^+\GL_n\cdot u^{\mu}$, and in particular are  indexed by $X_*(T)^+$; this is a geometric reformulation of the Iwasawa decomposition.
The closure relation on orbits is given by the dominance order.
Thus one may view $\alpha_{\gM}$ and~$\alpha_{\overline \gM}$ as points of $L^+\GL_n\backslash \Gr,$ and the closure relations imply that
$\alpha_{\overline \gM} \leq \alpha_{\gM}.$ More geometrically, the quotient $L^+\GL_n\backslash \Gr$ parameterizes modifications of vector bundles on $\Spec R[[u]]$, and $\alpha_{\gM}$ (respectively $\alpha_{\gMbar}$) is given by the modification $\varphi_{\gM}$ (respectively $\varphi_{\gMbar}$).

Now $\mu_p$ acts on $\Gr$ and $L^+\GL_n$ by loop rotation.
In particular we can consider the $\mu_p$-fixed points
$\Gr^{\mu_p}.$ Define the subgroup $L_p^+\GL_n \subset L^+\GL_n$ by $L_p^+\GL_n(R) = \GL_n(R \llbracket u^p \rrbracket).$
Using Proposition \ref{iprop: mupequiv}, one may promote $\alpha_{\gM}$ and $\alpha_{\overline \gM}$ to points of $L^+_p\GL_n\backslash\Gr^{\mu_p}.$
By a result of Riche--Williamson \cite{MR4517647}, the orbits of $L^+_p\GL_n$ on $\Gr^{\mu_p}$ are also indexed by $X_*(T)^+,$ but the closure relations between the orbits $L_p^+\GL_n\cdot u^{\lambda}$ in $\Gr^{\mu_p}$ are different from those between the corresponding orbits $L^+\GL_n\cdot u^{\lambda}$ in~$\Gr$.
We show

\begin{iprop}[Proposition~\ref{prop:K-orbit-closure-mu-fixed}]\label{iprop:uparroworbits} For $\lambda, \mu \in X_*(T)^+,$ 
the orbit $L_p^+\GL_n\cdot u^{\lambda} \subset \Gr^{\mu_p}$ is
contained in the closure of $L_p^+\GL_n\cdot u^{\mu}$ if and only if $\lambda \uparrow \mu.$
\end{iprop}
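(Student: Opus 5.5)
The plan is to identify $\Gr^{\mu_p}$, with its $L_p^+\GL_n$-action, with a union of partial affine flag varieties for the $p$-dilated affine Weyl group. Then the orbit closure relation becomes a Bruhat order computation, and I will match that order with $\uparrow$.

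First I describe the fixed points. The $\mu_p$-fixed locus of $\Gr$ under loop rotation decomposes into connected components, indexed by $\mu_p$-equivariant lattice types. Concretely, a $\mu_p$-fixed lattice $\Lambda\subset R((u))^n$ is a $\Z/p$-graded $R((u^p))$-lattice chain. Writing $v=u^p$, the component containing $u^\lambda$ is identified with $L\GL_n/P_\lambda$. Here $L\GL_n$ is taken in the variable $v$, and $P_\lambda$ is the parahoric of $\GL_n(R((v)))$ stabilizing the chain determined by the residues of the $\lambda_i$ modulo $p$. This is the Riche--Williamson description \cite{MR4517647}.

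Under this identification, $L_p^+\GL_n=L^+\GL_n$ (in $v$) acts by left multiplication. Its orbits on $L\GL_n/P_\lambda$ are indexed by $W_{\mathrm{fin}}\backslash \Wt/W_{P_\lambda}$, where $\Wt$ is the extended affine Weyl group in $v$. Rescaling translations by $p$, this is the $p$-dilated affine Weyl group acting on $X_*(T)$, and the double cosets correspond to dominant $\lambda$ (this recovers the stated indexing). Closure relations for such orbits on a partial affine flag variety are given by the Bruhat order on double cosets. This is standard, by reduction to the Iwahori case via the smooth projections $L\GL_n/I\to L\GL_n/P$ and the Bruhat--Tits (BN-pair) description of Schubert varieties.

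Next I compare this Bruhat order with $\uparrow$. The key point is that orbits in different connected components of $\Gr^{\mu_p}$ are never in each other's closure. On the other side, $\lambda\uparrow\mu$ forces $\lambda$ and $\mu$ to lie in the same $W_{\mathrm{aff},p}$-orbit, and hence in the same component. So it suffices to work within one component. There I apply the classical result, going back to Jantzen and Andersen, that for dominant weights in a single $W_{\mathrm{aff},p}$-orbit (dot or linear action, suitably normalized), the Bruhat order on the maximal-length or minimal-length double coset representatives agrees with the order $\uparrow$ defined by chains of reflections increasing in dominance. The proof of that result proceeds by induction on length. Each Bruhat cover $x<xs$ between representatives corresponds to applying an affine reflection $s'$ (conjugate of $s$) taking $\lambda$ to something $\geq\lambda$ in dominance. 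Conversely, one such reflection step raises Bruhat length, and I then re-dominantize using the finite Weyl group.

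I expect the main obstacle to be the bookkeeping in this step. The intermediate weights in Definition~\ref{defn:uparrow-coweights} need not be dominant, while the double coset representatives must be, and the $P_\lambda$ stabilizer (weights on walls) complicates the covers. I would first treat regular $\lambda$ (Iwahori case, where it is exactly Andersen's theorem \emph{$\lambda\uparrow\mu$ iff $w_\lambda\le w_\mu$}). I would then deduce the singular case by translating to a regular weight in the same facet closure, using that projection $L\GL_n/I\to L\GL_n/P_\lambda$ preserves and reflects closure relations of images.
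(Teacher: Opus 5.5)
Your architecture is the paper's. Via the Riche--Williamson isomorphism~\eqref{eq:5}, the question becomes one about closures of $L^+_pG$-orbits on a partial affine flag variety. These closures are governed by the Bruhat order on $W\backslash W_p/W_p^{\lambda_0}$: your reduction along $L\GL_n/I\to L\GL_n/P_\lambda$ amounts to \cite[Prop.~2.8]{MR2998951} together with Proposition~\ref{prop:equivalence-Bruhat-orderings-double-cosets}, and you may take representatives $x$ minimal in $Wx$, i.e.\ with $x\Delta_p$ dominant. On dominant alcoves, the Bruhat order is $\uparrow$ by Wang's theorem (Theorem~\ref{thm:Wang}).

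The gap is in the step you call bookkeeping, which is exactly what goes beyond the regular case. ``Translating to a regular weight in the same facet closure'' is not yet an argument. What you must show is: for dominant $\mu,\lambda$ in one $W_p$-orbit, $\mu\uparrow\lambda$ implies there are \emph{dominant} alcoves $\Delta\uparrow\Delta'$ with $\mu\in\overline{\Delta}$ and $\lambda\in\overline{\Delta'}$. (The converse is the easy Lemma~\ref{lem:uparrow-from-alcove-to-cocharacter}.) This depends on the choice of alcoves: if $\mu=\lambda$ lies on a wall between two dominant alcoves, taking the upper one for $\mu$ and the lower one for $\lambda$ fails. It also does not follow by running a chain as in Definition~\ref{defn:uparrow-coweights} and re-dominantizing after each step. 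The intermediate weights need not be dominant, and passing to the dominant $W$-conjugate is not monotone for $\uparrow$. In rank one, $\{-5p<\langle\alpha,x\rangle<-4p\}\uparrow\Delta_p$, but its dominant conjugate $\{4p<\langle\alpha,x\rangle<5p\}$ is not $\uparrow\Delta_p$. For the same reason, your inductive sketch of the regular case is wrong as stated: moving an alcove up across $\langle\alpha,\cdot\rangle=np$ with $\alpha\in\Phi^+$, $n\le 0$, \emph{lowers} the length of the corresponding element of $W_p$. There you really must cite Wang's theorem.

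The missing input is Lemma~\ref{lem:equivalence-uparrow-alcoves-weights}. Discard trivial steps so that the chain $\mu<s_1(\mu)<\dots<(s_n\cdots s_1)(\mu)=\lambda$ is strict, with $s_i=s_{\alpha_i,pn_i}$ and $\alpha_i\in\Phi^+$. If an alcove $A$ has $\nu$ in its closure and $\langle\alpha_i,\nu\rangle<pn_i$ strictly, then $A$ lies entirely below the hyperplane $\langle\alpha_i,\cdot\rangle=pn_i$, so $A\uparrow s_iA$. Start from the alcove $\Delta(\mu)=C^+(F)$ just above $\mu$, which is dominant because $\mu$ is. This gives $\Delta(\mu)\uparrow(s_n\cdots s_1)\Delta(\mu)$, an alcove whose closure contains $\lambda$. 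Jantzen's maximality statement \cite[II.6.11(3)]{MR2015057}, that every alcove whose closure contains $\lambda$ is $\uparrow\Delta(\lambda)$, upgrades this to $\Delta(\mu)\uparrow\Delta(\lambda)$ with both ends dominant.

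Now write $\Delta(\mu)=u\Delta_p$ and $\Delta(\lambda)=v\Delta_p$. Wang's theorem gives $u\le_{\Bru}v$ with $u(\lambda_0)=\mu$ and $v(\lambda_0)=\lambda$. Proposition~\ref{prop:equivalence-Bruhat-orderings-double-cosets} then passes to the minimal double coset representatives. By Corollary~\ref{cor:minimal-cosets-Wp-dominant}, these are exactly the representatives sending $\lambda_0$ to dominant weights. With this lemma inserted, your argument is complete and coincides with the paper's proof through Proposition~\ref{prop:Bruhat-equivalent-uparrow}.
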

Now $\alpha_{\overline \gM}$ is a specialization of $\alpha_{\gM},$ so we obtain
$\alpha_{\overline \gM} \uparrow \alpha_{\gM} = \mu.$

\subsubsection*{Semisimple Galois representations and Breuil--Kisin modules}
\label{sec:semis-galo-repr}
Identify the Weyl group~$S_n$ of~$T$ with the group of permutation matrices, so that $N_{\operatorname{GL}_n }(T)=T\rtimes S_n$.
It is easy to show (using the explicit description of the tame inertia subgroup of~$I_{\Qp}$) that any semisimple representation $\rbar:\Gal_{\Qp}\to\GL_{n}(\Fpbar)$ is conjugate to a representation $\rbar:\Gal_{\Qp}\to N_{\GL_n}(T)(\overline{\mathbf{F}}_p )$ with $\rbar(I_{\Qp})\subset T(\Fpbar)$.

We now return to the proof of Theorem~\ref{ithm:mainthem}. 
First suppose that~$\rhobar^{\semis}$ is conjugate to a representation $\Gal_{\Qp}\to T(\Fpbar)$, which we continue to denote by $\rhobar^{\semis}$. Then it is straightforward to show that we may semisimplify $\gMbar$ to obtain a Breuil--Kisin module $\gMbar^{\semis}$ with~$\varphi_{\gMbar^{\semis}}$ represented by a diagonal matrix in some choice of basis.
The semisimplification procedure exhibits~$\gMbar^{\semis}$ as a specialization of~$\gMbar$ (in an appropriate moduli stack of Breuil--Kisin modules), and it follows easily that $\gMbar^{\semis}$ is $\mu_p$-equivariant, and $\alpha_{\gMbar^{\semis}}$ is a specialization in $L_p^+ \operatorname{GL} _n\Gr^{\mu_p}$ of~$\alpha_{\gMbar}$.
Setting $\lambda=\alpha_{\gMbar^{\semis}}$, we deduce from Propositions \ref{iprop:uparrowrel}, \ref{iprop: mupequiv} and \ref{iprop:uparroworbits} that $\lambda\uparrow \mu$, while it is easy to see that after a possible further change of basis, $\varphi_{\gMbar^{\semis}}$ is given by the diagonal matrix $\diag(t_1 u^{\lambda_1 },\dots,t_{n}u^{\lambda_n})$ for some $t_i\in\Fpbartimes$.
From this one can read off that $\rhobar^{\semis}|_{I_{\Qp}}\cong \tau(\lambda,1)= \omega^{-\lambda_1 }\oplus\dots\oplus \omega^{-\lambda_n}$, completing the proof in this case.
More generally, if~$\varphi_{\overline{\mathfrak{M}}^\semis}$ happens to be conjugate to a matrix in $N_{\GL_n}(T)$, then it is easy to see that after a change of basis, it is of the form
\[
  \diag(t_1 u^{\lambda_1 },\dots,t_{n}u^{\lambda_n})\cdot w
\] for some $t_i\in\Fpbartimes$ and~$w\in S_n$, where $\lambda=\alpha_{\gMbar}$.
One then checks that we have $\rhobar^{\semis}|_{I_{\Qp}}\cong \tau(\lambda,w)$ (indeed, this is essentially our definition of $\tau(\lambda,w)$).

When ~$\rhobar$ is irreducible, we do not expect that~$\varphi_{\gMbar}$ can always be conjugated into $N_{\GL_n}(T).$ 
Instead we use an idea from a paper of Chen--Nie~\cite{MR4402497}.
Firstly, we show that after a change of basis, $\varphi_{\overline{\mathfrak{M}}}$ is of the form (see~\eqref{v3-eq:cuhk2mb6o9}) \[
  x\cdot \diag(t_1 u^{\lambda_1 },\dots,t_{n}u^{\lambda_n})\cdot w  
\]where~$x$ is a conjugate of a matrix in $\mathrm{GL}_n(\overline{\mathbf{F}}_p \llbracket u \rrbracket)$ (in fact, a matrix in the Iwahori subgroup) by a diagonal matrix $\operatorname{diag}(u^{\nu_1},\dots, u^{\nu_n}) $.
Since~$x$ need not be integral, it is not necessarily the case that $\alpha_{\overline{\mathfrak{M}}}=\lambda$.
However, to prove Theorem~\ref{ithm:mainthem} it suffices to show that $\lambda \uparrow \alpha_{\overline{\mathfrak{M}}}$, which we are able to deduce from Proposition~\ref{iprop:uparroworbits} by making a specialization argument in~$L^+_p\operatorname{GL}_n\backslash\Gr^{\mu_p}$.

In the general case that~$\rhobar^{\semis}$ neither factors through a torus nor is irreducible, we combine the methods above: we first semisimplify, and then pass to a Levi subgroup to reduce to the case that~$\rhobar$ is irreducible.

\subsubsection*{$L$-groups}
The statements and proofs above are not special to~$\GL_n$. In the body of the paper we work with $L$-groups of arbitrary unramified connected reductive groups. Thus if~$G$ is such a group over~$\Zp$, with (unramified) splitting field~$L$, dual group~$\Ghat$, and dual torus~$\That$, we consider
\[
  \LG=\Ghat\rtimes \Gal(L/\Qp),
\]
and $L$-parameters $\Gal_{\Qp}\to \LG$ compatible with the projection to~$\Gal(L/\Qp)$; see Section~\ref{subsec:L-groups}.
The role of~$S_n$ is played by the Weyl group~$W$ of~$\Ghat$, and Hodge--Tate weights are dominant cocharacters of~$\That$.
In particular, for any connected reductive group~$\Ghat$ over~$\Zpbar$, we can choose~$G$ to be the split form of the dual group of~$\Ghat$, in which case $
  \LG=\Ghat
$.

In this generality we prove Theorem~\ref{v3-thm:existence-of-semisimple-etale-phi-module-with-invariants}: if $\rho:\Gal_{\Qp}\to\LG(\Zpbar)$ is crystalline with Hodge--Tate cocharacter~$\mu(\rho)$, then there are $\lambda$ and~$w\in W$ such that $\lambda_{\dom}\uparrow\mu(\rho)$ and
\[
  \rhobar^{\semis}|_{I_{\Qp}}\cong \tau(\lambda,w);
\] here  $\lambda_{\dom}$ denotes the unique dominant element of the orbit~$W\lambda$.
The formulation in terms of $L$-groups also gives, by restriction of scalars, the corresponding theorem for crystalline $L$-parameters of $\Gal_K$ for any finite unramified extension~$K/\Qp$; see Theorem~\ref{thm:existence-of-semisimple-etale-phi-module-with-invariants-L-parameter-version}.

\subsubsection*{The weight part of Serre's conjecture}
We now explain the connection with Serre weights more precisely. In Section~\ref{sec:weight-Serre-conjecture}, under the usual hypotheses on an unramified group~$G/\Zp$ recalled there, the Serre weights are the irreducible $\Fpbar$-representations of~$G(\Fp)$, parameterized as $F_{\lambda-\eta}$ for suitable $p$-restricted highest weights~$\lambda$ and a fixed twisting element~$\eta$.

For a semisimple $L$-parameter $\rhobar:\Gal_{\Qp}\to\LG(\Fpbar)$ we define two sets of Serre weights. The set $W^{\cris}(\rhobar)$ consists of the weights $F_{\lambda-\eta}$ for which~$\rhobar$ admits a crystalline lift with Hodge--Tate cocharacter~$\lambda$. The set $W^{\explicit}(\rhobar)$ consists of the weights $F_{\lambda-\eta}$ for which there are $\lambda'\uparrow\lambda$ and~$w\in W$ with
\[
  \rhobar|_{I_{\Qp}}\cong \tau(\lambda',w).
\]
Theorem~\ref{thm:Serre-weight-upper-bound} gives the inclusion
\[
  W^{\cris}(\rhobar)\subseteq W^{\explicit}(\rhobar),
\]
and therefore gives a weight elimination result whenever the expected inclusion from the true Serre weight set~$W(\rhobar)$ into~$W^{\cris}(\rhobar)$ is known. We make the following optimistic conjecture.
\begin{iconj}[Conjecture~\ref{optimistic-conj:crystalline-Serre-weight}]\label{conj:mainconj} Suppose that $\rhobar$ is semisimple. Then 
\[
  W(\rhobar)=W^{\cris}(\rhobar)=W^{\explicit}(\rhobar).
\]
\end{iconj}
 This gives a uniform candidate set of Serre weights, with no genericity assumptions, and a corresponding weight elimination result, for general unramified reductive groups.

We also show that our results are compatible with the existing literature on the weight part of Serre's conjecture.
In particular, for generic~$\rhobar$, we compare our set $W^{\explicit}(\rhobar)$  with the predicted set~$\WHer(\rhobar)$ of Gee--Herzig--Savitt, which generalizes Florian Herzig's conjectural set for~$\GL_n$. Proposition~\ref{prop:generic-agreement-GHS} proves that $W^{\explicit}(\rhobar)=\WHer(\rhobar)$ under an explicit genericity hypothesis. Combined with standard local-global compatibility, this improves existing weight elimination results for automorphic forms on unitary groups; see
Remark~\ref{rem:compare-to-LLLM-elimination}.
Note also  the key structural theorem in the proof of the Buzzard--Diamond--Jarvis conjecture~\cite{GLSII} is
 an immediate corollary of our results  (see Corollary~\ref{v3-cor:shape-p-small-weight}).

\subsubsection*{A brief history of this paper}\label{subsubsec:historical-remarks} 
The project began as joint work of T.G. and M.K., and the earliest version of the results of this paper was announced in T.G.'s talk \cite{TG-IAS-talk} in November 2023.
At that time we had a slightly weaker version of Proposition~\ref{iprop:uparrowrel} 
(see Corollary \ref{cor:Wp-orbit}), proved by a different argument with the stack~$\ZpSyn$, 
with the help of Jacob Lurie. 
This argument is presented in Section~\ref{subsec:Jacob-lemma}, which uses these earlier techniques to give a complete proof of Proposition~\ref{iprop:uparrowrel} for~$G=\GL_n$.
Inspired by T.G.'s talk~\cite{TG-IAS-talk}, Tong Liu proved a statement related to Corollary 
\ref{cor:Wp-orbit} in his paper~\cite{liu2026torsiongradedpiecesnyggard}.
Further proofs of Liu's result were given by
Gao--Liu~\cite{gao2024integralsentheoryintegral} and Pham~\cite{PhamFGaugeLiu}.

We attempted to deduce Theorem~\ref{ithm:mainthem} directly from the aforementioned version of Proposition~\ref{iprop:uparrowrel}, but did not succeed in doing so. At that point, B.B. joined the collaboration. By late summer 2024, we realized that the $\mu_p$-equivariance statement for~$\gMbar$ in Proposition~\ref{iprop: mupequiv} appeared to be true, and that this, together with the ideas of Chen--Nie~\cite{MR4402497}, would give a proof of Theorem~\ref{ithm:mainthem} for general reductive groups.
We initially imagined that the $\mu_p$-equivariance property would hold for any prism, and were confused by the lack of any structure in~$\ZpSyn$ that would enable us to prove this. Once we realized that we should only be trying to prove it for a Breuil--Kisin prism, we obtained our first complete proof of Theorem~\ref{ithm:mainthem} in the fall of 2024.
This first proof of $\mu_p$-equivariance, which uses $\Z_p^{\Prism}$, is described in Section~\ref{sss:canmodprismproof}; Proposition~\ref{iprop: mupequiv} was found later, in the course of writing this paper.
In late 2024 Robin Bartlett (personal communication) independently suggested that a $\mu_p$-equivariant structure on Breuil--Kisin modules could give another proof of Proposition~\ref{iprop:uparrowrel}; we thank him for bringing the reference~\cite{MR4517647} to our attention, which saved us a good deal of work in writing Section~\ref{sec:affine-Grassmannians}.

\subsection{A brief guide to the paper}
Section~\ref{sec:affine-Grassmannians} studies the $\mu_p$-fixed locus in affine Grassmannians. The main results are the comparison between the $\uparrow$ order and Bruhat order in Proposition~\ref{prop:Bruhat-equivalent-uparrow}, the orbit-closure description in Proposition~\ref{prop:K-orbit-closure-mu-fixed}, and the integral specialization result Corollary~\ref{cor:closure-from-zpbar-point}.

Section~\ref{sec:Rees-stacks} recalls the Rees-stack formalism used to package relative positions of modifications and their specializations. Section~\ref{sec:Frob-descent-F-gauge} proves the main structural theorem for crystalline Breuil--Kisin modules, Theorem~\ref{thm:mainNthm}, including the $\mu_p$-equivariant specialization in Corollary~\ref{cor:mainNcor} and the numerical consequence Corollary~\ref{cor:specializableuparrow}. It also proves the corresponding statements for $G$-valued crystalline
representations. 

Section~\ref{v3-sec:inertial-weights-mod-p-Galois-rep} applies these results to Galois representations. It recalls $L$-groups and the inertial representations~$\tau(\lambda,w)$, proves the shape bound Theorem~\ref{v3-thm:shape-uparrow}, and proves our main result on inertial weights, Theorem~\ref{v3-thm:existence-of-semisimple-etale-phi-module-with-invariants}.

Section~\ref{sec:weight-Serre-conjecture} formulates Conjecture~\ref{optimistic-conj:crystalline-Serre-weight} on the weight part of Serre's conjecture, and proves  Proposition~\ref{prop:generic-agreement-GHS} which compares our explicit set of weights with the Herzig/Gee--Herzig--Savitt predicted set in the generic case.

Finally, Appendix~\ref{sec:appendix-Tannakian} recalls some material on the Tannakian formalism which we use in the body of the paper.
\subsection{Acknowledgements}We are all grateful to Jacob Lurie for many helpful conversations about this work. In particular, T.G.\ and M.K.\ wish to acknowledge his assistance in the early stages of this project, on the material recorded in Section~\ref{subsec:Jacob-lemma}. T.G.\ thanks George Boxer for many helpful conversations about Breuil--Kisin modules, the weight part of Serre's conjecture, and related topics over the last ten years.
We thank Simon Riche and Geordie Williamson for answering a question about~\cite{MR4517647}, and in addition we are grateful to Riche for his helpful comments on an earlier version of section~\ref{sec:affine-Grassmannians} below.
We would like to thank Robin
Bartlett, Florian Herzig, Hui Gao, Dan Le, Bao Le Hung, Tong Liu and Dat Pham for conversations about this work.
We would also like to thank Robin Bartlett, Matt Emerton, Brandon Levin and Dat Pham for their comments on an earlier draft of this paper.
The proof of Proposition~\ref{prop:effective-functions-imply-uparrow} was found with assistance from ChatGPT‑5.4 Pro.
ChatGPT also produced Figure~\ref{fig:dominant-chain-in-defn-uparrow}.

\section{\texorpdfstring{$\mu_{p}$}{mu-p}-fixed points in affine Grassmannians} \label{sec:affine-Grassmannians}
Our main aim in this section is to prove some results about the $\mu_p$-fixed locus for the loop rotation action on the affine Grassmannian.
We first establish Proposition~\ref{prop:Bruhat-equivalent-uparrow}, a combinatorial result which we then combine with results of Riche--Williamson~\cite{MR4517647} 
to study the closure relations in this $\mu_p$-fixed locus.
We have made an effort to give a self-contained treatment of this material, recalling some standard results along the way.

\subsection{Coxeter groups}
\label{sec:Coxeter}We briefly recall some more or less well-known results about Coxeter groups (see \cite{MR1890629, zbMATH00053657}).
Let~$(W_S,S)$ be a Coxeter system, with~$S$ finite.
Write~$\ell$ for the length function on~$W_S$, i.e.\ for the length of a minimal expression of an element in terms of elements of~$S$.
We write~$\le_{\Bru}$ for the Bruhat (partial) order on~$W_S$, so that $w\le_{\Bru} w'$ if and only if some (not necessarily consecutive) substring of some (equivalently every) reduced expression for~$w'$ is a reduced expression for~$w$.
We have the following useful lemma.
\begin{lem}
  \label{lem:Bruhat-order-cancellation}Suppose that $a,b,c\in W_{S}$ with $\ell(ac)=\ell(a)+\ell(c)$ and $\ell(bc)=\ell(b)+\ell(c)$.
Then $a\le_{\Bru}b$ if and only if $ac\le_{\Bru}bc$.
\end{lem}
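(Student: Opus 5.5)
We prove both directions by induction on $\ell(c)$, reducing to the case $c=s\in S$ a simple reflection. Indeed, if $c=c's$ with $\ell(c)=\ell(c')+1$, then the additivity hypotheses $\ell(ac)=\ell(a)+\ell(c)$ and $\ell(bc)=\ell(b)+\ell(c)$ imply $\ell(ac')=\ell(a)+\ell(c')$, $\ell(bc')=\ell(b)+\ell(c')$, $\ell(ac's)=\ell(ac')+1$ and $\ell(bc's)=\ell(bc')+1$. This holds because $\ell$ is subadditive and the total lengths add up. Hence it suffices to treat $c=s$ under the assumptions $as>a$ and $bs>b$.

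For $c=s$ the key input is the standard ``lifting property'' (Deodhar's property Z) of the Bruhat order. Suppose $x,y\in W_S$ and $s\in S$ satisfy $xs>x$ and $ys>y$. Then
\[
x\le_{\Bru} y \iff xs\le_{\Bru} ys \iff x\le_{\Bru} ys.
\]
Here it is applied on the right. One version is stated for left multiplication, and the other follows by applying the anti-automorphism $w\mapsto w^{-1}$, which preserves both $\ell$ and $\le_{\Bru}$.

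Applying this with $x=a$, $y=b$ gives $a\le_{\Bru}b\iff as\le_{\Bru}bs$, which is exactly the case $c=s$.

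If the lifting property is to be proved rather than cited, I would use the subword characterization recalled above. Take a reduced expression $\underline{b}$ for $b$. Then $\underline{b}s$ is reduced for $bs$.

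For the forward direction, suppose $a\le_{\Bru} b$, so a subword of $\underline{b}$ is reduced for $a$. Appending $s$ to that subword gives a subword of $\underline{b}s$ which is reduced for $as$, since $\ell(as)=\ell(a)+1$.

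For the backward direction, suppose $as\le_{\Bru}bs$, so some subword of $\underline{b}s$ is a reduced expression for $as$. If the subword uses the final $s$, deleting it yields a reduced expression for $a$ inside $\underline{b}$. If it does not use the final $s$, then $as\le_{\Bru} b$. Since $(as)s=a<as$, the subword characterization gives $a\le_{\Bru} as$ by deleting the final letter. Transitivity then yields $a\le_{\Bru}b$.

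The only delicate point is the backward direction, specifically the case where the subword avoids the last letter. This case is handled by the transitivity step above, so I do not expect a genuine obstacle. The main care needed is bookkeeping the length additivity in the inductive reduction from $c$ to $c'$.
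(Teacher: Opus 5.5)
Your proposal is correct and follows the same route as the paper: induction on $\ell(c)$ reduces to the case $c=s\in S$, which is then handled by the lifting property of the Bruhat order. Your subword-based verification of the needed special case $x\le_{\Bru}y\iff xs\le_{\Bru}ys$ (for $xs>x$, $ys>y$) is also sound, so the argument is self-contained where the paper simply cites the lifting property.
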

\begin{proof}
  By induction on~$\ell(c)$, this follows easily from the lifting property of the Bruhat order; see~\cite[Lem.~2.2]{zbMATH04077214}.
\end{proof}
\begin{para}\label{para:minimal-representatives} For any subset~$I\subseteq S$, we write~$(W_I,I)$ for the corresponding Coxeter system; the restriction of~$\ell$ to~$W_I$ agrees with the length function on~$W_I$, and~$W_I$ is called a \emph{standard parabolic subgroup} of~$W_S$.
The set \[
W^I \coloneq  \{ w \in W_S \mid w < ws \text{ for all } s \in I \}
\] is the set of minimal length representatives of $W_S/W_I$, so that each element $w\in W_S$ has a unique expression as $w=w^Iw_I$ with $w^I\in W^I$ and $w_I\in W_I$.
Furthermore, we have $\ell(w)=\ell(w^I)+\ell(w_{I})$.
The set ${}^IW\coloneq (W^I)^{-1}$ is the set of minimal representatives for ~$W_I\backslash W_S$.
If~$W_I$ is finite, then it has a longest element~$w_{0,I}$.
If $I\subseteq J$, then $W^I\cap W_J$ has a unique longest element, namely $w_{0,J}w_{0,I}^{-1}$.
We will be interested in representatives for double cosets $W_J\backslash W_S/W_I$, for which a convenient reference is~\cite{zbMATH07740443}.
In particular we have the following result.
\end{para}

\begin{prop}
  \label{prop:double-coset-representatives-basic-properties} Suppose 
  that~$W_I,W_J$ are finite.
Then:
  \begin{enumerate}
  \item\label{item:19}Let $u\in W_J\backslash W_S/W_I$. Then the subset~$u\subset W_S$ is an interval in the Bruhat order; i.e.\ there are elements $\underline{u}$, $\overline{u} \in W_S$ such that \[u=\{x\in W_S \mid \underline{u}\le_{\Bru} x\le_{\Bru}\overline{u}\}.\] We say that $\underline{u}$ is the minimal representative of~$u$, and $\overline{u}$ is its maximal representative.
  \item\label{item:20} The set of minimal elements~$\{\underline{u}: u \in W_J\backslash W_S/W_I\}$ is equal to ${}^JW\cap W^I$.
  \item\label{item:21} For each~$u$, the map
     \begin{align*}
  & (W^{J \cap \underline{u} I \underline{u}^{-1}} \cap W_J) \times W_I
  \longrightarrow u \notag \\
  & \hspace{5em} (x, y) \longmapsto x \underline{u} y
\end{align*}
    is a bijection, and furthermore we have $\ell(x\underline{u}y)=\ell(x)+\ell(\underline{u})+\ell(y)$. In particular, we have
    \begin{equation}
    \label{eq:16}
    \overline{u}=w_{0, J} \, w_{0, J \cap \underline{u} I \underline{u}^{-1}}^{-1} \, \underline{u} \, w_{0, I}.
    \end{equation}
  \item The set $u\cap W^I$ contains a unique element ${}_Ju^I$ of maximal length; furthermore we have
  \begin{equation}
  \label{eq:11}
  {}_J u^I = w_{0, J} \, w_{0, J \cap \underline{u} I \underline{u}^{-1}}^{-1} \, \underline{u} .
  \end{equation}
    \end{enumerate}
\end{prop}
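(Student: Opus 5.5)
The plan is to build everything from the unique factorizations $w = w^I w_I$ and ${}^J w = w_J\,{}^J\!w$ (with additive lengths), Kilmoyer's lemma on intersections of parabolics, and Lemma~\ref{lem:Bruhat-order-cancellation}. Standard references (e.g.\ \cite{zbMATH07740443}) contain most of this, so the proof will mostly organize known facts. I would prove the parts in the order (2), (3), (4), (1), since the interval statement follows from the explicit parametrization.

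For (2), take $w\in u$ of minimal length. If $ws<w$ for some $s\in I$, then $ws\in u$ is shorter, a contradiction, so $w\in W^I$; symmetrically $w\in{}^JW$. Uniqueness of such an element comes from the standard argument: if $d,d'\in{}^JW\cap W^I$ lie in the same double coset, write $d'=xdy$ and induct on $\ell(x)$, using the exchange condition to show $d'=d$. The same induction shows that every $w\in u$ satisfies $\ell(w)\ge \ell(\underline u)$ and in fact $\underline u\le_{\Bru} w$, because $\underline u$ is obtained from any reduced word of $w$ by deleting letters.

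For (3), put $d=\underline u$ and $K=J\cap dId^{-1}$. By Kilmoyer's lemma, $W_J\cap dW_Id^{-1}=W_K$. For $x\in W_J$ and $y\in W_I$, lengths add in $x d y$: since $d\in W^I$ we have $\ell(dy)=\ell(d)+\ell(y)$, and since $d\in{}^JW$ one checks $dy\in{}^JW$ modulo the $W_K$ part. Concretely, write $x=x^Kx_K$. Then $x_K d = d\,(d^{-1}x_Kd)$ with $d^{-1}x_Kd\in W_I$, so the map $(x^K,y')\mapsto x^Kdy'$ from $(W^K\cap W_J)\times W_I$ onto $u$ is surjective. Injectivity follows by counting, since $|u| = |W_J|\,|W_I|/|W_K|$, which is the stabilizer computation for $W_J\times W_I$ acting on $u$. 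The main obstacle is additivity of lengths. I would prove $\ell(x^K d y)=\ell(x^K)+\ell(d)+\ell(y)$ by showing that $x^K d\in W^I$ and then applying $W^I$-additivity. The claim $x^Kd\in W^I$ reduces, via the characterization of $W^I$ by positive roots $d(\alpha_s)>0$ for $s\in I$, to the fact that $x^K$ sends no positive root of $d\Phi_I\cap\Phi_J=\Phi_K$ to a negative one. This holds because $x^K\in W^K$, and because for roots in $d\Phi_I^+$ outside $\Phi_J$ the element $x^K\in W_J$ preserves positivity.

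Given (3), formula \eqref{eq:16} follows by taking $x$ longest in $W^K\cap W_J$, namely $w_{0,J}w_{0,K}^{-1}$ by \S\ref{para:minimal-representatives}, and $y=w_{0,I}$. Part (4) is the case $y=1$. Indeed, $x d y\in W^I$ forces $y=1$ by length additivity, so $u\cap W^I=(W^K\cap W_J)\,d$ and its longest element is $w_{0,J}w_{0,K}^{-1}d$. For (1), every $w = xdy\in u$ satisfies $\underline u\le_{\Bru} w\le_{\Bru}\overline u$: by additivity, $x\le_{\Bru}x_{\max}$ and $y\le_{\Bru}w_{0,I}$ give subwords of a reduced word of $\overline u$. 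Conversely, if $\underline u\le_{\Bru} z\le_{\Bru}\overline u$, I would show $z\in u$ by descending induction on length using the lifting property (or Lemma~\ref{lem:Bruhat-order-cancellation}): if $z\ne\overline u$, there is $s\in I\cup J$ with $zs>z$ or $sz>z$ and $z s\le_{\Bru}\overline u$ (respectively $sz\le_{\Bru}\overline u$), which reduces to showing $zs\in u$. Alternatively, the projection $w\mapsto$ (minimal double coset representative) is order preserving, so $\underline u\le \underline{[z]}\le\underline u$, which forces $[z]=u$. I would use this cleaner monotonicity argument, citing it from \cite{zbMATH07740443}.
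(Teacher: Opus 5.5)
Your proposal is correct. For part (4) it is the paper's own argument: the paper also reads off $u\cap W^I=(W^{K}\cap W_J)\,\underline{u}$ from part (3), using additivity of lengths, and takes the longest element $w_{0,J}w_{0,K}^{-1}$ of $W^K\cap W_J$.

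For parts (1)--(3), however, the paper gives no argument and simply cites \cite[Lem.~2.12]{zbMATH07740443}, whereas you reconstruct them. Your ingredients are Kilmoyer's lemma, the root-theoretic criterion for right descents, uniqueness of minimal double coset representatives, and monotonicity of $w\mapsto\underline{[w]}$; the last can be realised as the composite $w\mapsto {}^J(w^I)$ of two order-preserving projections. Your route makes the parametrization and the length additivity transparent. It costs length, though, and it still rests on standard Coxeter-group facts of the same kind the paper outsources. So the gain is self-containedness rather than a new idea.

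A few points should be tightened.
\begin{itemize}
\item Your opening claim in (3), that lengths add in $xdy$ for \emph{all} $x\in W_J$, $y\in W_I$, and that $dy\in{}^JW$, is false. If $s\in I$ with $t=dsd^{-1}\in K$, then $t\,d\,s=d$. Additivity holds only for $x\in W^K\cap W_J$, which is exactly what your concrete root argument proves, so that sentence should simply be dropped.
\item Injectivity in (3) can be seen directly rather than by counting. If $xdy=x'dy'$, then $x'^{-1}x\in W_J\cap dW_Id^{-1}=W_K$, so $x=x'$ because both lie in $W^K$, and then $y=y'$.
\item The inequality $x\le_{\Bru}x_{\max}$ for $x\in W^K\cap W_J$ does not follow from additivity alone. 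It does follow from Lemma~\ref{lem:Bruhat-order-cancellation} with $c=w_{0,K}$: we have $xw_{0,K}\le_{\Bru}w_{0,J}=x_{\max}w_{0,K}$, and lengths add on both sides.
\end{itemize}
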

\begin{proof}The first three parts are part of~\cite[Lem.~2.12]{zbMATH07740443}.
For the fourth part, note that by part~\eqref{item:21}, the set $u\cap W^I$ is equal to the set of elements $x\underline{u}$ with
$x\in W^{J \cap \underline{u} I \underline{u}^{-1}}\cap W_J$, and we have $\ell(x\underline{u})=\ell(x)+\ell(\underline{u})$ for all
$x\in W^{J \cap \underline{u} I \underline{u}^{-1}}\cap W_J$.
Since $W^{J \cap \underline{u} I \underline{u}^{-1}}\cap W_J$ has a unique longest element, namely $w_{0,J}w_{0,J\cap \underline{u} I \underline{u}^{-1}}^{-1}$, we obtain~\eqref{eq:11}.
\end{proof}
The following result is well-known, but for lack of a reference we give a proof.
\begin{prop}\label{prop:equivalence-Bruhat-orderings-double-cosets}
  Assume that $W_I,W_J$ are finite, and let $u,v\in W_J\backslash W_S/W_I$.
Then the following conditions are equivalent:
\begin{enumerate}
\item\label{item:23} There exist $x\in u$ and $y\in v$ with $x\le_{\Bru} y$.
\item\label{item:22} $\underline{u}\le_{\Bru} \underline{v}$.
\item\label{item:29} $\overline{u}\le_{\Bru}\overline{v}$.
\item\label{item:24} ${}_Ju^I\le_{\Bru} {}_Jv^I$.

\end{enumerate}
\end{prop}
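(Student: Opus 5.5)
The plan is to prove the cycle of implications \eqref{item:23}$\Rightarrow$\eqref{item:22}$\Rightarrow$\eqref{item:23}, then \eqref{item:23}$\Leftrightarrow$\eqref{item:29}, and finally \eqref{item:23}$\Leftrightarrow$\eqref{item:24}. The implications from \eqref{item:22}, \eqref{item:29} and \eqref{item:24} to \eqref{item:23} are trivial, since $\underline{u},\overline{u},{}_Ju^I\in u$ and likewise for~$v$. So the real content lies in the three implications out of~\eqref{item:23}.

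The basic tool is the standard fact that projection to minimal coset representatives is order preserving. If $x\le_{\Bru}y$ then $x^I\le_{\Bru}y^I$; this follows from the subword characterisation together with Lemma~\ref{lem:Bruhat-order-cancellation}, or from the lifting property in \cite{zbMATH04077214}. By symmetry (inversion preserves $\le_{\Bru}$), the same holds for minimal representatives of right cosets $W_J\backslash W_S$.

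For \eqref{item:23}$\Rightarrow$\eqref{item:22}, take $x\le_{\Bru}y$ and project first to $W^I$ and then to ${}^JW$. Some care is needed: the ${}^JW$-projection of an element of $W^I$ need not stay in $W^I$. Instead I would use that $\underline{u}$ is the unique minimum of the interval~$u$ by Proposition~\ref{prop:double-coset-representatives-basic-properties}\eqref{item:19}. Alternate the two projections on $x$ and~$y$ simultaneously. Each step preserves $\le_{\Bru}$ and weakly decreases length, so the process stabilises at elements of ${}^JW\cap W^I$ in the respective double cosets. By \eqref{item:20}, these are $\underline{u}$ and $\underline{v}$, and we conclude $\underline{u}\le_{\Bru}\underline{v}$.

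For \eqref{item:23}$\Rightarrow$\eqref{item:29}, use the dual statement: projecting to maximal coset representatives (multiply $x^I$ by $w_{0,I}$) is also order preserving. This follows because $w\mapsto ww_{0,I}$ on $W^I$ together with $\ell(x^Iw_{0,I})=\ell(x^I)+\ell(w_{0,I})$ and Lemma~\ref{lem:Bruhat-order-cancellation} give $x^I\le_{\Bru} y^I\iff x^Iw_{0,I}\le_{\Bru}y^Iw_{0,I}$. The same alternating argument then terminates at the maxima $\overline{u}$ and~$\overline{v}$. An alternative route is to note that $x\mapsto w_0xw_0$ or $x\mapsto xw_{0}$ reverses Bruhat order, but $W_S$ may be infinite. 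I therefore stick to the projection argument, which only uses finiteness of $W_I$ and $W_J$.

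For \eqref{item:23}$\Rightarrow$\eqref{item:24}, first obtain $\overline{u}\le_{\Bru}\overline{v}$. By \eqref{eq:16} and \eqref{eq:11}, we have $\overline{u}={}_Ju^Iw_{0,I}$ with lengths adding, and similarly for~$v$. Lemma~\ref{lem:Bruhat-order-cancellation} with $c=w_{0,I}$ then gives ${}_Ju^I\le_{\Bru}{}_Jv^I$.

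I expect the main obstacle to be the monotonicity of the projection $x\mapsto x^I$ (and its maximal-representative analogue), together with making the alternating-projection argument rigorous. If the alternating approach proves awkward, I would instead use the bijection in \eqref{item:21}. Writing $y=x'\underline{v}y'$ with lengths adding, one can argue directly by induction on $\ell(y)$ using the lifting property that some element of $u$ lies below~$\underline{v}$. That element then dominates $\underline{u}$.
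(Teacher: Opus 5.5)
Your proposal is correct in substance, but it proves more than the paper does. The paper cites a reference for the equivalence of \eqref{item:23}, \eqref{item:22} and \eqref{item:29}. Its only argument is \eqref{item:29}$\Rightarrow$\eqref{item:24}, which it proves exactly as you do: $\overline{u}={}_Ju^Iw_{0,I}$ with lengths adding, then Lemma~\ref{lem:Bruhat-order-cancellation}.

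You instead derive \eqref{item:23}$\Rightarrow$\eqref{item:22} and \eqref{item:23}$\Rightarrow$\eqref{item:29} from two monotonicity facts:
\begin{itemize}
\item the parabolic projections $x\mapsto x^I$ and $x\mapsto {}^Jx$ preserve $\le_{\Bru}$, which is a standard consequence of the subword property;
\item so do their ``longest element'' analogues, via Lemma~\ref{lem:Bruhat-order-cancellation}.
\end{itemize}
This makes the proof self-contained and uses only finiteness of $W_I,W_J$, which matters since $W_p$ is infinite. The cost is a little extra bookkeeping.

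Two points should be fixed.

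First, in the maximal case you assert that the alternating process stops at $\overline{u}$, but you give no reason. In the minimal case, Proposition~\ref{prop:double-coset-representatives-basic-properties}\eqref{item:20} identifies the terminal element. Nothing you quote says that an element of $u$ which is longest in both its right $W_I$-coset and its left $W_J$-coset must be $\overline{u}$. This is true and short to prove:
\begin{itemize}
\item If $z\in u$ is longest in $zW_I$, then Proposition~\ref{prop:double-coset-representatives-basic-properties}\eqref{item:21} gives $z=x\underline{u}w_{0,I}$ with $x\in W_J$.
\item Hence $\overline{u}\in W_J\underline{u}w_{0,I}=W_Jz$ by~\eqref{eq:16}.
\item Since $\overline{u}$ is the unique longest element of the interval $u$, it is the longest element of $W_Jz$.
\end{itemize}
So for every $x\in u$, $\overline{u}$ is the longest element of $W_J\,x^Iw_{0,I}$, and two projections already suffice.

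Second, your remark that the ${}^JW$-projection of an element of $W^I$ need not lie in $W^I$ is false. If $w\in W^I$, $s\in S$ and $sw<w$, then $sw\in W^I$. Otherwise some $t\in I$ has $\ell(swt)=\ell(w)-2$, while $\ell(swt)\ge\ell(wt)-1=\ell(w)$, a contradiction. So stripping off left factors from $W_J$ stays inside $W^I$, and ${}^J(x^I)\in{}^JW\cap W^I$, i.e.\ ${}^J(x^I)=\underline{u}$ directly. Your alternating argument does not rely on this remark, so correcting it only shortens the proof.
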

\begin{proof} Conditions \eqref{item:23}--\eqref{item:29} are equivalent by~\cite[Lem.~2.2]{MR1047315}.
By~\eqref{eq:16} and~\eqref{eq:11} we have $\overline{u}={}_Ju^Iw_{0,I}$, $\overline{v}={}_Jv^Iw_{0,I}$,  where $\ell(\overline{u})=\ell({}_Ju^I)+\ell(w_{0,I})$ and $\ell(\overline{v})=\ell({}_Jv^I)+\ell(w_{0,I})$.
It follows from Lemma~\ref{lem:Bruhat-order-cancellation} that \eqref{item:29} implies \eqref{item:24}.
Finally~\eqref{item:24} trivially implies~\eqref{item:23}.
\end{proof}

\subsection{The affine Weyl group}
\label{sec:affine-weyl-group}
We now recall some standard material on affine Weyl groups; see for example~\cite[Ch.~VI, \S 2]{MR1890629} for any facts for which we do not give an explicit reference.

\begin{para}\label{para:notation-for-root-systems} Let $(X^*(T),\Phi,X_*(T),\Phi^\vee)$ be the root system attached to
a split connected reductive group~$G$ and a choice of maximal torus $T \subseteq G$.
We write $\langle\cdot,\cdot\rangle:X^{*}(T)\times X_{*}(T)\to\Z$ for the natural pairing,
and $\Lambda\subseteq X^{*}(T), \Lambda^{\vee}\subseteq X_{*}(T)$ for the root and coroot lattices, respectively.
For any ring~$S$ we write $X_{*}(T)_{S}\coloneq X_{*}(T)\otimes_{\Z}S$.

Now fix a Borel subgroup $B \subseteq G$ containing $T$.
Let $\Phi^+ \subset \Phi$ and $\Phi^{\vee,+} \subset \Phi^{\vee}$ denote the positive
roots and coroots, respectively.
We let $X^{*}(T)^{+}$ and $X_{*}(T)^+$ denote the dominant characters and dominant cocharacters, respectively, so that in particular~$X_{*}(T)^+$ is the set of cocharacters~$\mu$ such that $\langle \alpha,\mu\rangle\ge 0$ for all $\alpha\in\Phi^{+}$.

We denote the Weyl group of~$G$ by~$W_{G}$, or by~$W$ if the choice of~$G$ is clear.
We let~$W$ act on the left on~$X^{*}(T)$ and on~$X_{*}(T)$, so that for~$w\in W$, $\chi\in X^{*}(T)$ and~$t\in T$ we have $(w(\chi))(t)=\chi(w^{-1}tw)$, while for~$\mu\in X_{*}(T)$ we have $(w(\mu))(x)=w\mu(x)w^{-1}$.
In particular, the pairing $\langle\cdot,\cdot\rangle$ is $W$-equivariant (for the trivial action
of~$W$ on~$\Z$).
For any~$\lambda\in X_{*}(T)$, we write $\lambda_{\dom}$ for the unique dominant element of the orbit~$W\lambda$.
\end{para}
\begin{para}\label{para:notn-affine-Weyl}
Write $\Wt$ for the extended affine Weyl group $X_{*}(T)\rtimes W$, and~$\Waff$ for its normal subgroup, the affine Weyl group $\Lambda^{\vee}\rtimes W$. These both act naturally on~$X_{*}(T)$ (with $X_{*}(T)$ acting by the usual addition of cocharacters), and we often view~$\Waff$ as a group of automorphisms of~$X_{*}(T)_{\R}$. We embed~$X_{*}(T)$ into~$G(R((u)))$ (where~$u$ is a formal variable) via \[\lambda\mapsto u^{\lambda}\coloneq \lambda(u).\] Accordingly, we will
frequently write elements of~$\Wt$ as~$\widetilde{w}=u^{\lambda}w$ with~$\lambda\in X_{*}(T)$ and~$w\in W$, so that
$u^{w(\lambda)}=wu^{\lambda}w^{-1}$.
We fix a lift of each~$w\in W$ to an element $w\in N_G(T)(\Fpbar)$, so that we have an embedding (of sets) of $\Wt$ into~$G(R((u)))$.

For each $\alpha\in\Phi$ and~$n\in \Z$ we have a hyperplane in~$X_{*}(T)_{\R}$ given by the equation $\langle \alpha,v\rangle=n$.
We write~$s_{\alpha,n}$ for the affine reflection in this hyperplane, given by \begin{equation*}\label{eqn:formula-for-affine-reflection}s_{\alpha,n}(v)\coloneq v-(\langle\alpha,v\rangle-n)\alpha^{\vee}=s_{\alpha}(v)+n\alpha^{\vee}.\end{equation*} In particular $s_{\alpha}=s_{\alpha,0}$ has its usual meaning in the finite Weyl group~$W$, and in the notation above,
$s_{\alpha,n}$ is given by $u^{n\alpha^{\vee}}s_{\alpha}$.
The~$s_{\alpha,n}$ generate~$\Waff$ (note that $s_{\alpha}s_{\alpha,n}(\lambda)=\lambda-n\alpha^{\vee}$). 

We write $W_p\coloneq p\Lambda^{\vee}\rtimes W$ for the $p$-dilated affine Weyl group, which is the subgroup of~$\Waff$ generated by the reflections~$s_{\alpha,pn}$.
\end{para}

\begin{defn}\label{defn:facet}
  A \emph{facet} is an equivalence class of elements of~$X_{*}(T)_{\R}$ for the equivalence relation given by $x\sim y$ if and only if for each $\alpha\in\Phi$ and~$n\in \Z$, either $\langle \alpha,x\rangle=\langle \alpha,y\rangle= np$, or $\langle \alpha,x\rangle,\langle \alpha,y\rangle< np$ or $\langle \alpha,x\rangle,\langle \alpha,y\rangle> np$.
  An \emph{alcove} is a facet of maximal dimension, and a \emph{wall} of an alcove is a codimension-one facet which lies in the closure of the alcove.
\end{defn}

\begin{para}
The group~$W_p$ acts simply transitively on the set of alcoves. We say that an alcove~$\Delta$ is dominant if $\langle \alpha ,x\rangle>0$ for all $\alpha\in \Phi^+,x\in \Delta$.  

We write~$\Delta_p$ for the alcove \begin{equation*}\label{eqn:fundamental-p-alcove}\Delta_p=\{v\in X_*(T)_{\R}: 0<\langle\alpha,v\rangle<p\ \forall \alpha\in
  \Phi^+\}.\end{equation*} The closure~$\overline{\Delta}_p$ of~$\Delta_p$ is a fundamental domain for the action of~$W_p$ on~$X_{*}(T)_{\R}$.
Thus given any~$\lambda\in X_{*}(T)$, we can write $\lambda=(u^{p\nu }w)\lambda_0 =p\nu +w\lambda_0$ for some unique~$\lambda_0 \in \overline{\Delta}_p\cap X_{*}(T)$ (and some~$w\in W$ and~$\nu \in \Lambda^{\vee}$, which are not in general uniquely determined).

The alcove~$\Delta_p$ determines a Coxeter system~$(W_p,S)$, where~$S$ is the set of reflections in the walls of~$\Delta_p$. 
We write $\le_{\Bru}$ for the Bruhat order on~$W_p$, and we write~$\ell(v)$ for the length of~$v\in W_p$. 

By \cite[(1.9)]{MR2998951} (a formula which goes back to Iwahori--Matsumoto \cite{MR185016}), for any~$\nu\in \Lambda^{\vee}$ and $w\in W$ we have
  \begin{equation}
    \label{eq:length-of-u-lambda-w}
    \ell(u^{p\nu}w)=\sum_{\alpha\in \Phi^+\cap w(\Phi^+)}|\langle \alpha,\nu\rangle| + \sum_{\alpha\in \Phi^+\cap w(\Phi^-)}|\langle \alpha,\nu\rangle-1|.
  \end{equation}

For each $\lambda_0 \in \overline{\Delta}_p\cap X_{*}(T)$, we write $W_p^{\lambda_0 }\le W_p$ for the stabilizer of~$\lambda_0 $.
This is a finite standard parabolic subgroup of~$W_p$, generated by the reflections in the walls of~$\Delta_p$ which contain~$\lambda_0 $.

\end{para}
\begin{prop}
  \label{prop:Bruhat-order-and-Wp}Suppose that $\lambda\in X_{*}(T)^+$, and write $\lambda=v(\lambda_0) $, where $\lambda_0 \in \overline{\Delta}_p\cap X_{*}(T)$, and~$v\in W_p$ is chosen so that~$v$ is the element of minimal length in $v W_p^{\lambda_0 }$.
  Write $v = u^{p\nu}w$ with $\nu\in \Lambda^{\vee}$ and $w\in W.$ Then:
\begin{enumerate}
\item\label{item:16} $\nu\in X_{*}(T)^{+}$, and if $\beta\in \Phi^{+}$ with $\langle\beta,\nu\rangle=0$, then $w^{-1}(\beta)\in\Phi^{+}$.
\item\label{item:17} $v{\Delta}_{p}$ is dominant.
\item\label{item:18} $v=\underline{v}$ is the unique minimal representative of the double coset~$WvW_p^{\lambda_0 }$.
\end{enumerate}
\end{prop}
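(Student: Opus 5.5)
The key input is the standard description of minimal coset representatives in terms of alcoves. For $x\in W_p$ and a simple reflection $s\in S$ (reflection in a wall $H$ of $\Delta_p$), we have $\ell(xs)>\ell(x)$ if and only if $x\Delta_p$ and $\Delta_p$ lie on the same side of the hyperplane $x(H)$. More generally, $\ell(x)$ is the number of $p$-hyperplanes separating $\Delta_p$ from $x\Delta_p$.

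\emph{Part (2).} Since $v$ is minimal in $vW_p^{\lambda_0}$, for every wall $H$ of $\Delta_p$ containing $\lambda_0$, the hyperplane $v(H)$ does not separate $\Delta_p$ from $v\Delta_p$. Suppose $v\Delta_p$ is not dominant. Then some hyperplane $\langle\beta,x\rangle=0$ with $\beta\in\Phi^+$ separates $v\Delta_p$ from the dominant chamber. Because $\lambda=v(\lambda_0)$ is dominant and lies in $\overline{v\Delta_p}$, the point $\lambda$ lies on this hyperplane. I would then choose such a hyperplane among the walls of $v\Delta_p$ through $\lambda$. This is possible because the alcoves whose closures contain $\lambda$ are permuted simply transitively by the finite reflection group generated by the $p$-hyperplanes through $\lambda$, and the dominant ones form a subset cut out by those walls. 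So some wall of $v\Delta_p$ through $\lambda$ is of the form $\langle\beta,x\rangle=0$ and separates $v\Delta_p$ from the dominant region. That wall is $v(H)$ for a wall $H$ of $\Delta_p$ containing $\lambda_0$, and it separates $v\Delta_p$ from $\Delta_p$, since $\Delta_p$ is dominant. This contradicts minimality. I expect this local argument at $\lambda$ to be the main obstacle: one has to check that a non-dominant alcove containing the dominant point $\lambda$ in its closure has a non-dominance wall passing through $\lambda$.

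\emph{Part (1).} I would read this off from (2). Take $x$ in $\Delta_p$ close to $0$, so that $\langle\alpha,x\rangle$ is small and positive for all $\alpha\in\Phi^+$. Then $v(x)=p\nu+w(x)$ is dominant. For $\beta\in\Phi^+$ we have $\langle\beta,p\nu+w(x)\rangle>0$ with $|\langle\beta,w(x)\rangle|<p$, which forces $\langle\beta,\nu\rangle\ge0$. If $\langle\beta,\nu\rangle=0$, then $\langle w^{-1}\beta,x\rangle>0$, so $w^{-1}(\beta)\in\Phi^+$.

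\emph{Part (3).} By Proposition~\ref{prop:double-coset-representatives-basic-properties}\eqref{item:20}, it suffices to show $v\in{}^JW\cap W^I$, where $J$ is the set of simple reflections generating $W$ and $I$ generates $W_p^{\lambda_0}$. We have $v\in W^I$ by hypothesis. For $v\in{}^JW$, we need $\ell(s_\alpha v)>\ell(v)$ for each simple root $\alpha$. By the left-multiplication version of the criterion, this holds exactly when the hyperplane $\langle\alpha,x\rangle=0$ does not separate $\Delta_p$ from $v\Delta_p$. Since both alcoves are dominant by (2), this is true. Alternatively, one could use the length formula \eqref{eq:length-of-u-lambda-w} together with (1), but the geometric argument is cleaner.
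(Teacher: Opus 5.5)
Your reduction of minimality to ``no wall of $v\Delta_p$ through $\lambda$ separates $v\Delta_p$ from $\Delta_p$'' is correct. So are your deductions of (1) and (3) from (2).

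\textbf{The gap.} The local lemma you flagged is false. A non-dominant alcove whose closure contains a dominant $\lambda$ need not have any wall of the form $\langle\beta,x\rangle=0$ through $\lambda$. For $G=\GL_3$, take $\lambda=(p,p,0)$ and
$C=\{x: x_1-x_2>-p,\ x_2-x_3>p,\ x_1-x_3<p\}$.
\begin{itemize}
\item On $C$ one has $x_1-x_2\in(-p,0)$, $x_2-x_3\in(p,2p)$ and $x_1-x_3\in(0,p)$, so $C$ is an alcove.
\item It is not dominant, and $\lambda\in\overline{C}$.
\item Its walls are $x_1-x_2=-p$, $x_2-x_3=p$ and $x_1-x_3=p$, none of which is linear.
\end{itemize}
The hyperplane $x_1=x_2$ separates $C$ from the dominant chamber and passes through $\lambda$, but it is not a wall of $C$. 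Locally at $\lambda$ there is an $A_2$-arrangement, and $C$ is the middle one of the three local chambers on the negative side of $x_1=x_2$. Simple transitivity of the local reflection group on the alcoves around $\lambda$ does not give the wall you want. So your argument for (2), and with it your derivations of (1) and (3), does not go through as written.

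\textbf{The repair.} Do not insist that the separating wall be linear.
\begin{itemize}
\item The walls of $v\Delta_p$ through $\lambda$ cut out an open cone $C_\lambda$ with apex $\lambda$, which coincides with $v\Delta_p$ near $\lambda$.
\item $C_\lambda$ meets no $p$-hyperplane through $\lambda$. Both are stable under positive homotheties centred at $\lambda$, so an intersection point could be pushed into $v\Delta_p$.
\item Minimality of $v$ in $vW_p^{\lambda_0}$ says exactly that $\Delta_p\subseteq C_\lambda$.
\item Hence $\Delta_p$ and $v\Delta_p$ lie on the same side of every $p$-hyperplane through $\lambda$. In particular $\langle\beta,\cdot\rangle>0$ on $v\Delta_p$ for every $\beta\in\Phi^+$ with $\langle\beta,\lambda\rangle=0$, because this holds on $\Delta_p$.
\item For $\beta\in\Phi^+$ with $\langle\beta,\lambda\rangle>0$, positivity on $v\Delta_p$ is automatic, since $\lambda\in\overline{v\Delta_p}$ and $v\Delta_p$ does not meet $\langle\beta,x\rangle=0$.
\end{itemize}
In the example above, it is the affine wall $x_2-x_3=p$ that separates $C$ from $\Delta_p$.

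\textbf{Comparison with the paper.} With this fix your route is a valid alternative to the paper's. The paper proves (1) first, using the Iwahori--Matsumoto length formula to show that any failure of (1) produces $s\in W_p^{\lambda_0}$ with $vs=s_\beta v$ shorter than $v$. It then reads off (2) and (3) from (1) and the same formula. Your version replaces those length computations with the separating-hyperplane description of length, at the cost of the local convexity argument above.
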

\begin{proof} We begin by noting that for any $w'\in W$ we can write $w'u^{p\nu}w=u^{pw'(\nu)}w'w$, so that by~\eqref{eq:length-of-u-lambda-w} we have

  \begin{gather}
    \begin{aligned}
    \label{eq:length-of-left-translate-of-v}
      \ell(w'v) &=\sum_{\alpha\in \Phi^+\cap w'w(\Phi^+)}|\langle \alpha,w'(\nu)\rangle| + \sum_{\alpha\in \Phi^+\cap w'w(\Phi^-)}|\langle \alpha,w'(\nu)\rangle-1|\\
                &=\sum_{\alpha\in \Phi^+\cap w'w(\Phi^+)}|\langle (w')^{-1}\alpha,\nu\rangle| + \sum_{\alpha\in \Phi^+\cap w'w(\Phi^-)}|\langle (w')^{-1}\alpha,\nu\rangle-1|\\
      &=\sum_{\alpha\in (w')^{-1}(\Phi^+)\cap w(\Phi^+)}|\langle \alpha,\nu\rangle| + \sum_{\alpha\in (w')^{-1}(\Phi^+)\cap w(\Phi^-)}|\langle \alpha,\nu\rangle-1|.
  \end{aligned}
\end{gather}
Comparing \eqref{eq:length-of-u-lambda-w} and \eqref{eq:length-of-left-translate-of-v}, we obtain
\begin{multline}
    \label{eq:difference-of-lengths-counting-function}
      \ell(w'v)-\ell(v) =\sum_{\alpha\in \Phi^+\cap(w')^{-1}(\Phi^-)\cap w(\Phi^+)}\bigl(|\langle \alpha,\nu\rangle+1|-|\langle \alpha,\nu\rangle|\bigr) \\+ \sum_{\alpha\in \Phi^{+}\cap (w')^{-1}(\Phi^-)\cap w(\Phi^-)}\bigl(|\langle \alpha,\nu\rangle|-|\langle \alpha,\nu\rangle-1|\bigr).
    \end{multline}
First note that for any~$\alpha\in\Phi^{+}$ and any~$x\in\overline{\Delta}_p$, we have
\begin{equation}
  \label{eq:14}
  \langle\alpha,vx\rangle=\langle \alpha,p\nu+wx\rangle=p\langle \alpha,\nu\rangle+\langle w^{-1}(\alpha),x \rangle.
\end{equation}
To establish part~\eqref{item:16},  we take~$x=\lambda_0 $ in~\eqref{eq:14}.
Since $|\langle w^{-1}(\alpha),\lambda_0 \rangle|\le p$, and $\lambda=v(\lambda_0) $ is dominant, we see that $\langle \alpha,\nu\rangle\ge -1$ for all~$\alpha\in\Phi^+$. Suppose for the sake of contradiction that~$\nu$ is not dominant, so that there exists ~$\alpha\in \Phi^+$ with $\langle \alpha ,\nu\rangle= -1$.
Writing~$\alpha $ as a sum of simple roots, we see that there must in fact be a simple root~$\beta$ with $\langle \beta,\nu\rangle= -1$. Note that by~\eqref{eq:14} we must also have $\langle
w^{-1}(\beta),\lambda_0 \rangle=p$, so that (by the definition of~$\Delta_p$) we have $w^{-1}(\beta)\in\Phi^+$.
Since $\langle w^{-1}(\beta),\lambda_0 \rangle= p$ we have $s_{w^{-1}(\beta),p}(\lambda_0 )=\lambda_0 $, so that $s_{w^{-1}(\beta),p}\in W_p^{\lambda_0 }$.

We claim that $\ell(vs_{w^{-1}(\beta),p})=\ell(v)-1$, which contradicts the assumption that~$v$ is of minimal length in $v W_p^{\lambda_0 }$.
To see this, note that since $\langle \beta,\nu\rangle= -1$ we have $s_{\beta}(\nu)=\nu+\beta^{\vee}$, and an easy calculation shows that
\begin{equation*}
  \label{eq:13}
 vs_{w^{-1}(\beta),p} =u^{ps_{\beta}(\nu)}s_{\beta}w=s_{\beta}u^{p\nu}w=s_{\beta}v.
\end{equation*}
We can therefore compute using~\eqref{eq:difference-of-lengths-counting-function} with~$w'=s_{\beta}$.
Since~$\beta$ is assumed simple, we have $\Phi^+\cap s_{\beta}^{-1}(\Phi^-)=\{\beta\}$, and since we have already seen that $w^{-1}(\beta)\in\Phi^+$, we deduce from~\eqref{eq:difference-of-lengths-counting-function} that
\[\ell(vs_{w^{-1}(\beta),p})-\ell(v)=\ell(s_{\beta}v)-\ell(v)=|\langle \beta,\nu\rangle+1|-|\langle \beta,\nu\rangle|=-1,\]
as claimed.
Thus~$\nu$ is indeed dominant.

To complete the proof of~\eqref{item:16}, we assume for the sake of contradiction that there exists $\beta\in \Phi^{+}$ with $\langle\beta,\nu\rangle=0$ and $w^{-1}(\beta)\in\Phi^{-}$.
Writing~$\beta$ as a sum of simple roots, and using that~$\nu$ is dominant, we see that we may furthermore assume that~$\beta$ is simple.
By the definition of~$\Delta_p$, we have~$\langle w^{-1}(\beta),\lambda_0\rangle \le 0$, and comparing to~\eqref{eq:14} we see that in fact $\langle w^{-1}(\beta),\lambda_0\rangle =0$, so that~$s_{w^{-1}(\beta)}\in W_p^{\lambda_0 }$.
Again, we may assume that~$\beta$ is simple,
and to obtain a contradiction it suffices to show that $\ell(vs_{w^{-1}(\beta)})=\ell(v)-1$.
 Since $\langle\beta,\nu\rangle=0$ we have $s_{\beta}\nu=\nu$, so that
\[ vs_{w^{-1}(\beta)} =u^{p\nu}s_{\beta}w=s_{\beta}v,\]
 and we can again compute using~\eqref{eq:difference-of-lengths-counting-function} with~$w'=s_{\beta}$.
Since~$\beta$ is assumed simple, we again have $\Phi^+\cap s_{\beta}^{-1}(\Phi^-)=\{\beta\}$, but this time we are assuming that $w^{-1}(\beta)\in\Phi^-$. Since $\langle\beta,\nu\rangle=0$, it follows from~\eqref{eq:difference-of-lengths-counting-function} that
\[\ell(vs_{w^{-1}(\beta)})-\ell(v)=\ell(s_{\beta}v)-\ell(v)=|\langle \beta,\nu\rangle|-|\langle \beta,\nu\rangle-1|=-1,\]
as claimed.
This completes the proof of part~\eqref{item:16}.

We now turn to proving~\eqref{item:17}.
We need to show that  if~$x\in\Delta_p$ and $\alpha \in \Phi^+$, then $\langle \alpha,vx\rangle >0$.
To do this, we use~\eqref{eq:14}; if $\langle \alpha,\nu\rangle > 0$, then $p\langle \alpha,\nu\rangle+\langle w^{-1}(\alpha),x \rangle> p-p=0$, and otherwise $\langle \alpha,\nu\rangle = 0$, in which case $w^{-1}(\alpha)\in\Phi^{+}$, and $p\langle \alpha,\nu\rangle+\langle w^{-1}(\alpha),x \rangle= \langle w^{-1}(\alpha),x \rangle>0$.

It remains to prove part~\eqref{item:18}.
By Proposition~\ref{prop:double-coset-representatives-basic-properties}~\eqref{item:20} and our assumption that $v$ is minimal in $v W_p^{\lambda_0 },$ it suffices to show that~$v$ is of minimal length in~$Wv$.
Using part~\eqref{item:16}, it follows easily from~\eqref{eq:difference-of-lengths-counting-function} that for any $w'\in W$ we have 
  \[\ell(w'v)-\ell(v)=|\Phi^+\cap(w')^{-1}(\Phi^-)|=\ell(w'),\]
and the result follows immediately.
\end{proof}

\begin{cor}\label{cor:minimal-cosets-Wp-dominant}
Let $\lambda_0 \in \overline{\Delta}_p\cap X_{*}(T)$, and let $v \in W_p$ be an element which is minimal in $vW_p^{\lambda_0 }$.
Then the following are equivalent:
\begin{enumerate}
\item \label{item:101} $v$ is minimal in $Wv$.
\item \label{item:102} $v$ is minimal in $WvW_p^{\lambda_0}$.
\item \label{item:103} $v(\lambda_0)$ is dominant.
\end{enumerate}
\end{cor}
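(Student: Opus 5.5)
The equivalence of (1) and (2) comes straight from the double coset theory, and (3) is linked to these through Proposition~\ref{prop:Bruhat-order-and-Wp} plus a direct length computation. We prove (1)$\Leftrightarrow$(2), then (3)$\Rightarrow$(2), then (1)$\Rightarrow$(3).

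\emph{(1)$\Leftrightarrow$(2).} By Proposition~\ref{prop:double-coset-representatives-basic-properties}\eqref{item:20}, applied with $W_J=W$ and $W_I=W_p^{\lambda_0}$ (both finite standard parabolics), the minimal element of $WvW_p^{\lambda_0}$ is the unique element of ${}^JW\cap W^I$ in that double coset. We are given that $v\in W^I$. So $v$ is minimal in the double coset exactly when $v\in{}^JW$, i.e.\ exactly when $v$ is minimal in $Wv$.

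\emph{(3)$\Rightarrow$(2).} This is Proposition~\ref{prop:Bruhat-order-and-Wp}\eqref{item:18} applied to $\lambda=v(\lambda_0)$.

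\emph{(1)$\Rightarrow$(3).} Write $v=u^{p\nu}w$ and suppose $v(\lambda_0)$ is not dominant. First reduce to a simple root. If some positive root pairs negatively with $v(\lambda_0)$, then some simple root $\beta$ does too, since a dominant cocharacter pairs nonnegatively with every simple root. Fix such a simple $\beta$ with $\langle\beta,v(\lambda_0)\rangle<0$.

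Now compute the length change. By formula~\eqref{eq:difference-of-lengths-counting-function} with $w'=s_\beta$, the only contributing root is $\alpha=\beta$. This gives
\[\ell(s_\beta v)-\ell(v)=\begin{cases}|\langle\beta,\nu\rangle+1|-|\langle\beta,\nu\rangle| & \text{if } w^{-1}\beta\in\Phi^+,\\ |\langle\beta,\nu\rangle|-|\langle\beta,\nu\rangle-1| & \text{if } w^{-1}\beta\in\Phi^-.\end{cases}\]
Use \eqref{eq:14} to control $\langle\beta,\nu\rangle$. We have $\langle\beta,v\lambda_0\rangle=p\langle\beta,\nu\rangle+\langle w^{-1}\beta,\lambda_0\rangle$. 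Since $\lambda_0\in\overline\Delta_p$, the second term lies in $[0,p]$ when $w^{-1}\beta>0$ and in $[-p,0]$ when $w^{-1}\beta<0$.

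\emph{Case $w^{-1}\beta>0$.} Negativity of the pairing forces $\langle\beta,\nu\rangle\le-1$, so the length difference is $-1$.

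\emph{Case $w^{-1}\beta<0$.} Negativity forces $\langle\beta,\nu\rangle\le0$, so the difference is $n-(n-1)$ in absolute values with $n\le0$, namely $|n|-|n-1|=-1$.

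In either case $\ell(s_\beta v)<\ell(v)$, contradicting (1).

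The main obstacle is the boundary of the second case, where $\langle\beta,\nu\rangle=0$ and $\langle w^{-1}\beta,\lambda_0\rangle<0$. The computation above covers it, but it is the step to check carefully: one must confirm that the length formula \eqref{eq:difference-of-lengths-counting-function} applies for arbitrary $\nu$, with no dominance assumption. It does, since it was derived before part~\eqref{item:16} of Proposition~\ref{prop:Bruhat-order-and-Wp} was used.
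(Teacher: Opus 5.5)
Your proof is correct. For (1)$\Leftrightarrow$(2) and (3)$\Rightarrow$(2) it coincides with the paper's; for the remaining implication you take a different route.

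\textbf{The paper's route.} It proves (2)$\Rightarrow$(3) with no new computation. It takes any $v'\in WvW_p^{\lambda_0}$ and left-multiplies by an element of $W$ so that $v'(\lambda_0)$ is dominant. It then replaces $v'$ by the minimal element of $v'W_p^{\lambda_0}$, which does not change $v'(\lambda_0)$. Now Proposition~\ref{prop:Bruhat-order-and-Wp}\eqref{item:18} says $v'$ is the minimal element of the double coset. Uniqueness of that element gives $v'=v$, so $v(\lambda_0)=v'(\lambda_0)$ is dominant.

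\textbf{Your route.} You prove (1)$\Rightarrow$(3) directly, using the simple-reflection length computation from \eqref{eq:difference-of-lengths-counting-function}, in the same style as the proof of Proposition~\ref{prop:Bruhat-order-and-Wp}\eqref{item:16}. Your case analysis is right. Integrality of $\langle\beta,\nu\rangle$ forces $\langle\beta,\nu\rangle\le -1$ in the first case and $\le 0$ in the second, and in both cases the length drops by exactly one. You are also right that \eqref{eq:length-of-u-lambda-w}, and hence \eqref{eq:difference-of-lengths-counting-function}, hold for arbitrary $\nu\in\Lambda^\vee$ and $w\in W$.

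\textbf{Comparison.} The paper's argument is shorter because it reuses the full strength of Proposition~\ref{prop:Bruhat-order-and-Wp}, together with uniqueness of minimal double coset representatives. Part~\eqref{item:18} of that proposition rests on part~\eqref{item:16}, whose proof already contains essentially your computation. Your argument is more self-contained and slightly stronger: it never uses the hypothesis that $v$ is minimal in $vW_p^{\lambda_0}$. It shows that any $v$ minimal in $Wv$ sends all of $\overline{\Delta}_p$ into the closed dominant chamber. This is the standard fact that minimal representatives of $W\backslash W_p$ correspond exactly to dominant alcoves.
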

\begin{proof} The equivalence of~\eqref{item:101} and~\eqref{item:102} follows from Proposition~\ref{prop:double-coset-representatives-basic-properties}~\eqref{item:20}, while~\eqref{item:103} implies~\eqref{item:102} by Proposition~\ref{prop:Bruhat-order-and-Wp}~\eqref{item:18}.
To see that~\eqref{item:102} implies~\eqref{item:103}, let
$v'$ be any element of $WvW_p^{\lambda_0 }$. Replacing~$v'$ by~$wv'$ for some $w\in W$ if necessary,
we may assume that~$v'(\lambda_0) $ is dominant.
Replacing~$v'$ by the element of~$v'W_{p}^{\lambda_0 }$ of minimal length, it follows from Proposition~\ref{prop:Bruhat-order-and-Wp}~\eqref{item:18} that $v'=v$, so in particular $v'(\lambda_0) =v(\lambda_0)$ is dominant, as required.
\end{proof}

\subsection{The \texorpdfstring{$\uparrow$}{uparrow} partial order}\label{subsec:uparrow}
We define partial orderings~$\le,\uparrow$ on~$X_{*}(T)$ as follows; see Figure~\ref{fig:dominant-chain-in-defn-uparrow} for an illustration for~$G=\GL_3$.
\begin{defn}\label{defn:uparrow-coweights}
  We say that $\mu\le \lambda$ if~$\lambda-\mu\in\Z_{\ge 0}\Phi^{\vee,+}$.
  We say that $\mu\uparrow\lambda$ if and only if~$\mu=\lambda$, or there exist reflections $s_i=s_{\alpha_i,pn_i}\in W_p$, $i=1,\dots,r$, such that
  \begin{equation}
    \label{eq:defn-of-uparrow}
    \mu\le s_1 (\mu) \le (s_2s_1)  (\mu) \le\dots\le (s_{r}\cdots s_1 )(\mu)=\lambda.
  \end{equation}
  Note in particular that if~$\mu\uparrow \lambda$, then $\mu\le \lambda$ and $\mu\in W_p\lambda$.
\end{defn}
\begin{rem}\label{rem:dominant-chain-in-defn-uparrow}
 By Corollary~\ref{cor:uparrow-dominant-chain} below, if~$\mu,\lambda$ are dominant, then one can always take the $(s_{i}\cdots s_1 )\mu$ in~\eqref{eq:defn-of-uparrow} to be dominant.
\end{rem}

\begin{figure}[htbp]
\centering
\resizebox{\textwidth}{!}{
\begin{tikzpicture}[
  x={(3.666cm,0cm)},
  y={(1.833cm,3.17485cm)},
  >=Latex,
  font=\small,
  wall/.style={line width=0.35pt, draw=black!35},
  chamber/.style={line width=1.05pt, draw=black},
  trunc/.style={line width=0.55pt, draw=black!45, dashed},
  upmove/.style={-{Latex[length=2.2mm,width=1.6mm]}, line width=0.7pt, draw=orange!85!black, shorten >=4.2pt, shorten <=4.2pt},
  alcovefill/.style={fill=orange!13, draw=none},
  labelbox/.style={fill=white, fill opacity=.90, text opacity=1, inner sep=1.0pt, rounded corners=.6pt},
  tinylabel/.style={font=\scriptsize, inner sep=0.4pt, align=center}
]

\def\N{4} 

\fill[black!2] (0,0) -- (\N,0) -- (0,\N) -- cycle;

\fill[alcovefill] (0,0) -- (1,0) -- (0,1) -- cycle;

\foreach \k in {1,2,3} {
  \pgfmathtruncatemacro{\Nk}{\N-\k}
  \draw[wall] (\k,0) -- (\k,\Nk);          
  \draw[wall] (0,\k) -- (\Nk,\k);          
  \draw[wall] (\k,0) -- (0,\k);             
}

\draw[chamber,-{Latex[length=2.4mm]}] (0,0) -- (\N+.35,0)
  node[below right=-1pt] {$x_1-x_2$};
\draw[chamber,-{Latex[length=2.4mm]}] (0,0) -- (0,\N+.35)
  node[above left=-1pt] {$x_2-x_3$};
\draw[trunc] (\N,0) -- (0,\N)
  node[midway, above=3pt, sloped, labelbox] {$x_1-x_3=4p$};

\foreach \k/\lab in {1/p,2/2p,3/3p,4/4p} {
  \draw[black!45] (\k,0) -- ++(0,-.045) node[below=2pt, font=\scriptsize] {$\lab$};
  \draw[black!45] (0,\k) -- ++(-.045,0) node[left=2pt, font=\scriptsize] {$\lab$};
}

\foreach \i in {0,1,2} {
  \foreach \j in {0,1,2} {
    \pgfmathtruncatemacro{\s}{\i+\j}
    \ifnum\s<3
      \pgfmathsetmacro{\ax}{\i+1/3}
      \pgfmathsetmacro{\ay}{\j+1/3}
      \pgfmathsetmacro{\bx}{\i+2/3}
      \pgfmathsetmacro{\by}{\j+2/3}
      \draw[upmove] (\ax,\ay) -- (\bx,\by);
      \pgfmathsetmacro{\cx}{\i+4/3}
      \pgfmathsetmacro{\cy}{\j+1/3}
      \draw[upmove] (\bx,\by) -- (\cx,\cy);
      \pgfmathsetmacro{\dx}{\i+1/3}
      \pgfmathsetmacro{\dy}{\j+4/3}
      \draw[upmove] (\bx,\by) -- (\dx,\dy);
    \fi
  }
}

\node[tinylabel] at (.33,.33) {$(a,b,c)$};
\node[tinylabel] at (.67,.67) {$(c{+}p,b,a{-}p)$};
\node[tinylabel] at (1.33,.33) {$(b{+}p,c,a{-}p)$};
\node[tinylabel] at (.33,1.33) {$(c{+}p,a,b{-}p)$};
\node[tinylabel] at (1.67,.67) {$(a{+}p,c,b{-}p)$};
\node[tinylabel] at (.67,1.67) {$(b{+}p,a,c{-}p)$};
\node[tinylabel] at (2.33,.33) {$(c{+}2p,a{-}p,b{-}p)$};
\node[tinylabel] at (1.33,1.33) {$(a{+}p,b,c{-}p)$};
\node[tinylabel] at (.33,2.33) {$(b{+}p,c{+}p,a{-}2p)$};
\node[tinylabel] at (2.67,.67) {$(b{+}2p,a{-}p,c{-}p)$};
\node[tinylabel] at (1.67,1.67) {$(c{+}2p,b,a{-}2p)$};
\node[tinylabel] at (.67,2.67) {$(a{+}p,c{+}p,b{-}2p)$};
\node[tinylabel] at (3.33,.33) {$(a{+}2p,b{-}p,c{-}p)$};
\node[tinylabel] at (2.33,1.33) {$(b{+}2p,c,a{-}2p)$};
\node[tinylabel] at (1.33,2.33) {$(c{+}2p,a,b{-}2p)$};
\node[tinylabel] at (.33,3.33) {$(a{+}p,b{+}p,c{-}2p)$};

\fill[black] (0,0) circle[radius=1.3pt];
\node[below left=2pt and 1pt, font=\scriptsize, text=black!45] at (0,0) {$0$};

\end{tikzpicture}
}
\caption{The $\uparrow$ partial order on dominant coweights $(x_1,x_2,x_3)$ for $\GL_3$.
The highlighted alcove is~$\Delta_{p}$.
}
\label{fig:dominant-chain-in-defn-uparrow}
\end{figure}

We will sometimes make use of the following basic lemma.
\begin{lem}
  \label{lem:uparrow-dominant-Weyl-orbit}For each $\lambda\in X_*(T)$ and~$w\in W$ we have $w\lambda\uparrow \lambda_{\dom}$.
\end{lem}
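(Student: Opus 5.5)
We argue by induction along a chain of simple reflections in the finite Weyl group $W$, using only that the finite reflections $s_\alpha=s_{\alpha,0}$ lie in $W_p$. The key observation is the following. If $\alpha\in\Phi^+$ and $\nu\in X_*(T)$ satisfy $\langle\alpha,\nu\rangle\le 0$, then
\[
s_\alpha(\nu)=\nu-\langle\alpha,\nu\rangle\alpha^\vee=\nu+|\langle\alpha,\nu\rangle|\,\alpha^\vee .
\]
Hence $\nu\le s_\alpha(\nu)$, so by Definition~\ref{defn:uparrow-coweights} (a single step with $s_1=s_{\alpha,0}$) we get $\nu\uparrow s_\alpha(\nu)$. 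The relation $\uparrow$ is transitive, since chains of the form \eqref{eq:defn-of-uparrow} can be concatenated. So it suffices to connect $w\lambda$ to $\lambda_{\dom}$ by a chain of finite reflections, each of which increases the coweight in the dominance order.

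Write $\nu_0=w\lambda$. If $\nu_0$ is not dominant, choose a simple root $\beta$ with $\langle\beta,\nu_0\rangle<0$; such a $\beta$ exists, since a non-dominant coweight pairs negatively with some simple root. Set $\nu_1=s_\beta(\nu_0)$. By the observation above, $\nu_0\le\nu_1$ and $\nu_0\uparrow\nu_1$. Repeat with $\nu_1$, and so on.

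The one point needing care is termination. Each step strictly increases $\nu_i$ in the partial order $\le$, because $\nu_{i+1}-\nu_i$ is a positive multiple of $\beta^\vee$. All the $\nu_i$ lie in the finite orbit $W\lambda$, and a strictly increasing sequence in a finite poset cannot repeat. So the process stops, necessarily at a dominant element of $W\lambda$, which is $\lambda_{\dom}$. Concatenating the reflections gives the chain required in \eqref{eq:defn-of-uparrow}, and hence $w\lambda\uparrow\lambda_{\dom}$. If $w\lambda$ is already dominant, then $w\lambda=\lambda_{\dom}$ and the statement holds trivially.
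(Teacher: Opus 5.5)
Your proposal is correct and is essentially the paper's argument. The paper cites Jantzen's straightforward induction on $\ell(w)$, which builds a chain of finite reflections $s_{\alpha,0}\in W_p$, each step moving up in the dominance order; your greedy choice of a simple root with negative pairing, with termination guaranteed by finiteness of the orbit $W\lambda$, replaces that length induction equally validly.
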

\begin{proof}
  This may be proved by a straightforward induction on the length of~$w$; see~\cite[II.6.4~(5)]{MR2015057}.
\end{proof}
We have the following related definition for alcoves.
\begin{defn}\label{defn:uparrow-alcoves}
  Suppose that $\Delta$ is an alcove, that~$\alpha\in\Phi^+$ and that~$n\in\Z$.
  Then either $\langle \alpha,x\rangle<np$ for all $x\in \Delta$, in which case we say that $\Delta\uparrow s_{\alpha,np}\Delta$; or $\langle \alpha,x\rangle>np$ for all~$x\in \Delta$, in which case we say that $s_{\alpha,np}\Delta\uparrow \Delta$.
  We then say that $\Delta\uparrow\Delta'$ if and only if $\Delta=\Delta'$, or there are reflections $s_i=s_{\alpha_i,pn_i}\in W_p$, $i=1,\dots,r$, such that
  \begin{equation*}
    \label{eq:15}
    \Delta\uparrow s_1 \Delta\uparrow s_2 s_1 \Delta\uparrow\dots \uparrow (s_{r}\cdots s_1) \Delta=\Delta'.
  \end{equation*}
\end{defn}
The following theorem of Wang will be crucial below. 
\begin{thm}
  \label{thm:Wang}Suppose that $u,v\in W_p$ are such that $u\Delta_{p}$ and $v\Delta_p$ are both dominant.
Then the following conditions are equivalent:
\begin{enumerate}
\item\label{item:30} $u\Delta_p\uparrow v\Delta_p$.
\item\label{item:31} $u\le_{\Bru}v$.
\end{enumerate}
\end{thm}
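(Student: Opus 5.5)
We argue by induction, reducing both orders to their covering relations and matching these. The key geometric input is the standard dictionary between the Bruhat order on $W_p$ and reflections: for $v\in W_p$ and a reflection $s=s_{\alpha,np}$, we have $sv<_{\Bru}v$ if and only if the hyperplane $H_{\alpha,np}$ separates $v\Delta_p$ from $\Delta_p$. For $\alpha\in\Phi^+$ and $n\geq 1$ (or $n=0$), and $\Delta_p$ lying on the side $\langle\alpha,x\rangle<np$, this says $sv<_{\Bru}v$ if and only if $\langle\alpha,x\rangle>np$ on $v\Delta_p$, i.e.\ if and only if $sv\Delta_p\uparrow v\Delta_p$ in the sense of Definition~\ref{defn:uparrow-alcoves}. (For $n\le 0$ with $\alpha\in\Phi^+$, the sign conventions flip accordingly; in all cases one checks that "$s$ moves the alcove towards $\Delta_p$" corresponds to a single $\uparrow$-step downward.) This is a direct consequence of the exchange condition and the fact that $\ell(v)$ counts hyperplanes separating $v\Delta_p$ from $\Delta_p$.

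\emph{\eqref{item:30}$\Rightarrow$\eqref{item:31}.} By induction on the number of steps it suffices to treat a single step $u\Delta_p\uparrow s u\Delta_p=v\Delta_p$ with $s=s_{\alpha,np}$, so $v=su$. By the dictionary, $su<_{\Bru}u$ or $u<_{\Bru}su$ according to which side $u\Delta_p$ lies. If $\alpha\in\Phi^+$ and $n\ge 0$, the hyperplane $H_{\alpha,np}$ has $\Delta_p$ on the side $<np$, as does $u\Delta_p$. Hence $H$ separates $v\Delta_p$ from $\Delta_p$ but not $u\Delta_p$, which gives $u<_{\Bru}v$. The remaining case, where $\alpha\in\Phi^+$ and $n<0$, cannot occur: both alcoves are dominant, so $u\Delta_p$ cannot lie on the side $\langle\alpha,\cdot\rangle<np\le -p$. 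The intermediate alcoves in a chain need not be dominant, however. This is the main obstacle. We first show that a $\uparrow$-chain between dominant alcoves can be replaced by one through dominant alcoves. To do this, we reflect any non-dominant intermediate alcove into the dominant chamber via the finite Weyl group $W$. Folding by $W$ preserves $\uparrow$ between consecutive steps by a standard argument (cf.\ Jantzen II.6), in the same spirit as Lemma~\ref{lem:uparrow-dominant-Weyl-orbit}.

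\emph{\eqref{item:31}$\Rightarrow$\eqref{item:30}.} If $u\le_{\Bru}v$, there is a chain $u=v_0<v_1<\dots<v_r=v$ with $v_{i}=t_iv_{i-1}$ for reflections $t_i$, each step raising the length. The difficulty is again dominance of the intermediate $v_i\Delta_p$. Here we use Proposition~\ref{prop:Bruhat-order-and-Wp} and Corollary~\ref{cor:minimal-cosets-Wp-dominant} (with $\lambda_0$ generic, so $W_p^{\lambda_0}$ is trivial): $v\Delta_p$ is dominant exactly when $v$ is minimal in $Wv$, i.e.\ $v\in{}^{J}W$ with $W_J=W$. Proposition~\ref{prop:equivalence-Bruhat-orderings-double-cosets} together with Deodhar's result that Bruhat intervals within minimal coset representatives ${}^JW$ are chains of such representatives lets us pick all $v_i\in{}^JW$. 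Each $v_i\Delta_p$ is then dominant, and by the dictionary each step $v_{i-1}\Delta_p\uparrow v_i\Delta_p$ holds, since the reflecting hyperplane separates $v_i\Delta_p$, but not $v_{i-1}\Delta_p$, from $\Delta_p$. Concatenating these steps gives \eqref{item:30}.
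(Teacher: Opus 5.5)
Your proof of \eqref{item:30}$\Rightarrow$\eqref{item:31} has a genuine gap, exactly at the point you identify as the main obstacle.

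First, the closing claim of your dictionary is false for $n\le 0$. If $\alpha\in\Phi^+$, $n<0$ and $C\uparrow s_{\alpha,np}C$, then $H_{\alpha,np}$ separates $C$ from $\Delta_p$. So this $\uparrow$-step moves \emph{towards} $\Delta_p$ and lowers $\ell$, and such steps do occur at non-dominant intermediate alcoves.

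Second, your repair does not work. Replacing each intermediate alcove by its dominant $W$-translate need not preserve $\uparrow$ between consecutive terms, even inside a chain joining two dominant alcoves. Take $G=\GL_3$, $p>3$, $\alpha=\varepsilon_1-\varepsilon_2$, $\beta=\varepsilon_2-\varepsilon_3$, and set $C_0=\Delta_p\ni(3,2,0)$, $C_1=s_{\alpha,3p}C_0\ni(3p+2,3-3p,0)$, and $C_2=s_{\beta,-p}C_1\ni(3p+2,-p,3-2p)$. Then:
\begin{itemize}
\item both steps are $\uparrow$-steps, since $\langle\alpha,\cdot\rangle<3p$ on $C_0$ and $\langle\beta,\cdot\rangle<-p$ on $C_1$;
\item $C_0$ and $C_2$ are dominant, while $C_1$ is not;
\item writing $C_i=x_i\Delta_p$, we have $x_2<_{\Bru}x_1$.
\end{itemize}
The dominant translate $s_\beta C_1$ contains $\mu=(3p+2,0,3-3p)$, and $W_p\mu$ meets $\overline{C_2}$ in $\mu-p\beta^\vee$. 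Since $\mu\not\le\mu-p\beta^\vee$, Lemma~\ref{lem:uparrow-from-alcove-to-cocharacter} shows that $s_\beta C_1\uparrow C_2$ fails. So the folded chain breaks at the second step.

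What you actually need is that two dominant alcoves with $\Delta\uparrow\Delta'$ are joined by an $\uparrow$-chain of \emph{dominant} alcoves. This does not follow from folding; it is \cite[Thm.~2.7]{MR4545005}, which the paper invokes separately in Corollary~\ref{cor:uparrow-dominant-chain}, and it carries the real content of this implication. Granting it, your single-step argument (where dominance forces $n\ge 1$) does finish \eqref{item:30}$\Rightarrow$\eqref{item:31}. The paper itself gives no proof of the theorem and simply cites \cite[Thm.~4.2]{MR4545005} (originally \cite{MR980127}).

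Your argument for \eqref{item:31}$\Rightarrow$\eqref{item:30} is sound in substance. The real input is that ${}^JW$, with $W_J=W$, is graded by $\ell$ for the induced Bruhat order (Deodhar), so a saturated chain from $u$ to $v$ can be taken inside ${}^JW$. Three small corrections:
\begin{itemize}
\item You should state explicitly that dominance of $v_i\Delta_p$ forces $n\ge 1$ for $t_i=s_{\alpha,np}$.
\item Proposition~\ref{prop:equivalence-Bruhat-orderings-double-cosets} plays no role here.
\item The fact that $v\Delta_p$ is dominant iff $v$ is minimal in $Wv$ is best read off from the separation criterion for the simple reflections of $W$, since $\Delta_p$ may contain no cocharacter when $p$ is small.
\end{itemize}
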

\begin{proof} This is~\cite[Thm.~4.2]{MR4545005}, which was originally proved in~\cite{MR980127}.
See also~\cite[App.~A]{zbMATH06991335} for an exposition of some points of the proof.
\end{proof}
Definitions~\ref{defn:uparrow-coweights} and~\ref{defn:uparrow-alcoves} are related as follows.
\begin{lem}\label{lem:uparrow-from-alcove-to-cocharacter}
  Suppose that $\Delta\uparrow \Delta'$ and that~$\mu\in \overline{\Delta}\cap X_{*}(T)$. Let $\lambda$ be the unique element of $\overline{\Delta'}\cap W_p\mu$.
Then $\mu\uparrow \lambda$.
\end{lem}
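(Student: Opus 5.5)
By induction on the length $r$ of a chain $\Delta=\Delta_0\uparrow s_1\Delta_0\uparrow\dots\uparrow(s_r\cdots s_1)\Delta_0=\Delta'$ as in Definition~\ref{defn:uparrow-alcoves}, the lemma reduces to a single elementary step. We may assume each step $\Delta_{i-1}\uparrow\Delta_i=s_i\Delta_{i-1}$ is a basic one, with $s_i=s_{\alpha_i,n_ip}$, $\alpha_i\in\Phi^+$ and $\langle\alpha_i,x\rangle<n_ip$ for all $x\in\Delta_{i-1}$. Set $\mu_0=\mu$ and $\mu_i=s_i(\mu_{i-1})$. Then $\mu_i\in\overline{\Delta_i}\cap W_p\mu$, because $s_i$ is an affine isometry carrying $\overline{\Delta_{i-1}}$ onto $\overline{\Delta_i}$.

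The uniqueness claim in the statement needs a short justification. $W_p$ acts simply transitively on alcoves and $\overline{\Delta}_p$ is a fundamental domain, so each closed alcove $\overline{\Delta'}=v\overline{\Delta}_p$ meets each $W_p$-orbit in exactly one point. Indeed, if $v x, v y$ lie in the same orbit with $x,y\in\overline{\Delta}_p$, then $x=y$. Hence $\mu_r=\lambda$.

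It therefore suffices to show $\mu_{i-1}\le\mu_i$ for each $i$, since then $\mu\le s_1\mu\le s_2s_1\mu\le\dots\le\lambda$ is exactly a chain as in~\eqref{eq:defn-of-uparrow}. Write $\nu=\mu_{i-1}\in\overline{\Delta_{i-1}}$. By continuity, $\langle\alpha_i,\nu\rangle\le n_ip$. Using the formula for the affine reflection,
\[
s_i(\nu)-\nu=(n_ip-\langle\alpha_i,\nu\rangle)\alpha_i^\vee .
\]
The coefficient $n_ip-\langle\alpha_i,\nu\rangle$ is a nonnegative integer, since $\nu\in X_*(T)$, and $\alpha_i^\vee\in\Phi^{\vee,+}$ because $\alpha_i\in\Phi^+$. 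So $s_i(\nu)-\nu\in\Z_{\ge0}\Phi^{\vee,+}$, i.e.\ $\nu\le s_i\nu$. When the coefficient vanishes, $\nu$ lies on the wall, so $\mu_i=\mu_{i-1}$, which is harmless.

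The only point needing care is the orientation convention: the positive root must be the one used in Definition~\ref{defn:uparrow-alcoves}. If a step is written with $\alpha\in\Phi^-$, we replace $(\alpha,n)$ by $(-\alpha,-n)$, which gives the same reflection and the same inequality. Beyond this bookkeeping there is no real obstacle.
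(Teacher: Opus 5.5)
Your proposal is correct and follows the paper's own proof. The paper also reduces by induction to a single step $\Delta'=s_{\alpha,np}\Delta$, deduces $\langle\alpha,\mu\rangle\le np$ from $\mu\in\overline{\Delta}$, and concludes from $\lambda-\mu=(np-\langle\alpha,\mu\rangle)\alpha^{\vee}\ge 0$. Your remarks on the uniqueness of $\lambda$ and on the sign convention spell out points the paper leaves implicit.
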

\begin{proof} By induction, we can assume that $\Delta'=s_{\alpha,pn}\Delta$ for some $\alpha,n$, so that $\lambda=s_{\alpha,pn}(\mu)$. Since $\Delta\uparrow\Delta'$, we have $\langle \alpha,x\rangle<np$ for all $x\in \Delta$, so that $\langle \alpha,\mu\rangle\le np$.
Then $\lambda-\mu=(np-\langle\alpha,\mu\rangle)\alpha^{\vee}\ge 0$, so $\mu\uparrow\lambda$, as required.
\end{proof}

\begin{rem}\label{rem:uparrow-rho-translation}
  Our definitions differ in two ways from the definition of~$\uparrow$ used in the representation theory of reductive groups in finite characteristic, as in~\cite[II.6.4]{MR2015057}. Firstly, our definitions are for coweights, rather than weights; this is harmless, as one can pass between the two cases by exchanging a root datum with its dual. Secondly, the definition in~\cite[II.6.4]{MR2015057} uses the ``dotted'' action of the affine Weyl group, which is defined as follows (after passing to the dual root datum): for $v\in W_p$ and $\mu\in X_{*}(T)$, we let \[
v \cdot \mu \coloneq  v(\mu + \rho^{\vee}) - \rho^{\vee},
\] where~$\rho^{\vee}\coloneq \frac{1}{2}\sum_{\alpha\in \Phi^+}\alpha^{\vee}$.
Then the definition of $\uparrow$ in~\cite[II.6.4]{MR2015057} differs from ours in that in~\eqref{eq:defn-of-uparrow}, the $s_i$ act via the dotted action.

It follows that $\mu\uparrow \lambda$ in our sense if and only if $(\mu - \rho^{\vee})\uparrow (\lambda-\rho^{\vee})$ in the sense of~\cite[II.6.4]{MR2015057} (where strictly speaking we have now extended the definition of~$\uparrow$ to~$X_{*}(T)_{\Z[1/2]}$).
Similarly, the definition of ``alcove'' in \cite[II.6.2]{MR2015057} differs from our definition by the same translation by~$\rho^{\vee}$.
Accordingly, we will freely use the results of~\cite[II.6]{MR2015057} below. 
\end{rem}
  The converse implication in Lemma~\ref{lem:uparrow-from-alcove-to-cocharacter} holds if $\mu\in \Delta\cap X_{*}(T)$, but it fails in general (consider the case $\mu=\lambda=0$).
However we still have the following weaker statement.

\begin{lem}
  \label{lem:equivalence-uparrow-alcoves-weights}Suppose that $\mu,\lambda\in X_{*}(T)^+$ are in the same $W_p$-orbit.
Then the following are equivalent:
\begin{enumerate}
\item\label{item:27} $\mu\uparrow \lambda$.
\item\label{item:28} There exist dominant alcoves $\Delta,\Delta'$ such that $\mu\in\overline{\Delta}$, $\lambda\in \overline{\Delta'}$, and $\Delta\uparrow\Delta'$.
\end{enumerate}
\end{lem}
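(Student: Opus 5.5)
The implication \eqref{item:28}$\Rightarrow$\eqref{item:27} is immediate from Lemma~\ref{lem:uparrow-from-alcove-to-cocharacter}. Indeed, given $\Delta\uparrow\Delta'$ with $\mu\in\overline{\Delta}$, that lemma gives $\mu\uparrow\lambda'$, where $\lambda'$ is the unique element of $\overline{\Delta'}\cap W_p\mu$. Since $\lambda\in\overline{\Delta'}$ and $\lambda\in W_p\mu$, and $\overline{\Delta'}$ is a fundamental domain for $W_p$ (being a translate of $\overline{\Delta}_p$), we get $\lambda'=\lambda$.

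For \eqref{item:27}$\Rightarrow$\eqref{item:28}, I would reduce to Wang's theorem (Theorem~\ref{thm:Wang}) via Bruhat order on double cosets. Write $\mu=v(\lambda_0)$ and $\lambda=v'(\lambda_0)$ with the common $\lambda_0\in\overline{\Delta}_p\cap X_*(T)$. Choose $v,v'$ minimal in their cosets modulo $W_p^{\lambda_0}$. By Proposition~\ref{prop:Bruhat-order-and-Wp}, the alcoves $v\Delta_p$ and $v'\Delta_p$ are dominant, and $v,v'$ are the minimal representatives of $Wv W_p^{\lambda_0}$ and $Wv'W_p^{\lambda_0}$. Also $\mu\in\overline{v\Delta_p}$ and $\lambda\in\overline{v'\Delta_p}$. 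By Theorem~\ref{thm:Wang} it therefore suffices to show $v\le_{\Bru}v'$. By Proposition~\ref{prop:equivalence-Bruhat-orderings-double-cosets}, this reduces to finding any $x\in WvW_p^{\lambda_0}$ and $y\in Wv'W_p^{\lambda_0}$ with $x\le_{\Bru}y$.

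To produce such $x,y$, I would use the chain \eqref{eq:defn-of-uparrow}. By Remark~\ref{rem:dominant-chain-in-defn-uparrow} (Corollary~\ref{cor:uparrow-dominant-chain}) we may take all intermediate terms dominant; if that corollary is itself proved from this lemma, I would avoid it and argue directly. By induction it suffices to treat a single step $\mu\le s_{\alpha,pn}\mu=\lambda$, with $\alpha\in\Phi^+$ and $\langle\alpha,\mu\rangle\le pn$ (the case of equality is trivial). Set $s=s_{\alpha,pn}$, so that $\lambda=sv(\lambda_0)$, and $sv$ lies in the double coset of $v'$. The standard fact for Coxeter groups, $v<_{\Bru}sv$ if and only if $\ell(sv)>\ell(v)$, holds exactly when the hyperplane $H_{\alpha,pn}$ separates $\Delta_p$ from $v\Delta_p$ or not, i.e.\ according to which side of $H_{\alpha,pn}$ the alcove $v\Delta_p$ lies on relative to the origin side. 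I would choose an alcove $\Delta\ni\mu$ in its closure, of the form $vw_I\Delta_p$ for suitable $w_I\in W_p^{\lambda_0}$, lying on the side $\langle\alpha,\cdot\rangle<pn$. Possibly I would first apply an element of $W$ to work in the dominant region. Then $x=vw_I$ and $y=sx$ satisfy $x\Delta_p\uparrow sx\Delta_p$.

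The main obstacle is relating $x\Delta_p\uparrow sx\Delta_p$ to $x\le_{\Bru}sx$ when the alcoves need not be dominant. The step from $\uparrow$ to Bruhat order is only guaranteed by Wang in the dominant chamber. I would handle this by replacing $x$ by $\underline{x}$ and $sx$ by its image under left multiplication by $W$, using that $\uparrow$ on alcoves is compatible with the dotted/linkage theory in \cite[II.6]{MR2015057} (Remark~\ref{rem:uparrow-rho-translation}). In particular, I would use \cite[II.6.5]{MR2015057}, which shows that reflecting across a hyperplane moving "upwards" is preserved under passing to dominant representatives, to obtain dominant alcoves $\Delta\uparrow\Delta'$ directly without going through Bruhat order.
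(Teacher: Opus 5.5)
Your argument for (2)$\Rightarrow$(1) is fine. For (1)$\Rightarrow$(2), reducing to $v\le_{\Bru}v'$ via Theorem~\ref{thm:Wang} and Proposition~\ref{prop:equivalence-Bruhat-orderings-double-cosets} is sound. The step ``by induction it suffices to treat a single step'', however, fails.

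A single step that starts at a \emph{dominant} $\nu$ is indeed easy. Here $\langle\alpha,\nu\rangle\ge 0$ forces $n\ge 1$, so $H_{\alpha,pn}$ does not separate $\Delta_p$ from any $x\Delta_p$ with $\nu\in\overline{x\Delta_p}$, and hence $x<_{\Bru}s_{\alpha,pn}x$.

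The problem is that the chain \eqref{eq:defn-of-uparrow} may pass through non-dominant coweights, and for those the single-step statement is false. For $\GL_2$, take $\nu=(1,3p)$ and $s=s_{\alpha,-p}$, so that $\nu\le s\nu=(2p,p+1)$. The double coset of $\nu$ is that of $\nu_{\dom}=(3p,1)$. If it were Bruhat-below the double coset of $s\nu$, then Theorem~\ref{thm:Wang} and Lemma~\ref{lem:uparrow-from-alcove-to-cocharacter} would give $(3p,1)\uparrow(2p,p+1)$. This is absurd, since $(2p,p+1)-(3p,1)=(-p,p)$.

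Your proposed repair fails on the same example. The alcove $C=\{-3p<x_1-x_2<-2p\}$ satisfies $C\uparrow s_{\alpha,-p}C=\{0<x_1-x_2<p\}$. Its dominant conjugate $\{2p<x_1-x_2<3p\}$ is not $\uparrow$ to $\{0<x_1-x_2<p\}$, because for $\GL_2$ every upward reflection strictly increases the index $k$ of the strip $kp<x_1-x_2<(k+1)p$. So upward steps are not preserved by passing to dominant representatives. Making the intermediate terms dominant is exactly Corollary~\ref{cor:uparrow-dominant-chain}, which is deduced from the lemma you are proving.

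The missing idea is to drop Bruhat order here and use the fact that $\uparrow$ on alcoves (Definition~\ref{defn:uparrow-alcoves}) is defined by chains through arbitrary, possibly non-dominant, alcoves. Then dominance only has to be arranged at the two ends.

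Let $\Delta(\mu)$ be the alcove $C^+(F)$ of \cite[II.6.11]{MR2015057} attached to the facet $F\ni\mu$. Explicitly, $\Delta(\mu)$ is cut out by $m_\alpha p<\langle\alpha,\cdot\rangle<(m_\alpha+1)p$, where $m_\alpha p\le\langle\alpha,\mu\rangle<(m_\alpha+1)p$; it is dominant because $\mu$ is. Discarding reflections that fix the current term makes all inequalities in the chain strict. Each $\nu_{i-1}$ then lies strictly on the lower side of the $i$th reflecting hyperplane, which gives $\Delta(\mu)\uparrow s_1\Delta(\mu)\uparrow\cdots\uparrow(s_r\cdots s_1)\Delta(\mu)$. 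The last alcove has $\lambda$ in its closure, so by the maximality statement \cite[II.6.11(3)]{MR2015057} it is $\uparrow\Delta(\lambda)$, which is dominant. Hence $\Delta(\mu)\uparrow\Delta(\lambda)$.

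Theorem~\ref{thm:Wang} enters only afterwards, in Proposition~\ref{prop:Bruhat-equivalent-uparrow}. Its direction (1)$\Rightarrow$(2) is essentially what your approach was trying to prove directly.
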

\begin{proof}By Lemma~\ref{lem:uparrow-from-alcove-to-cocharacter}, the second statement implies the first.
  For the converse, we follow the proof of~\cite[Cor.~A.1.2]{zbMATH06991335}.
We may assume that~$\mu\ne \lambda$.
We let~$F$ denote the unique facet containing~$\mu$, and
we let~$\Delta(\mu)$ be the alcove denoted~$C^+(F)$ in \cite[II.6.11]{MR2015057}, which is defined as follows: for each~$\alpha\in\Phi^+$, there is a unique integer~$m_{\alpha}$ such that \[m_{\alpha}p\le \langle\alpha,\mu\rangle< (m_{\alpha}+1)p,\] and we set \[\Delta(\mu)=\{v\in X_*(T)_{\R}: m_{\alpha}p<\langle\alpha,v\rangle<(m_{\alpha}+1)p\ \forall \alpha\in
  \Phi^+\}.\]
Since~$\mu$ is dominant, so is~$\Delta(\mu)$. By~\cite[II.6.11(3)]{MR2015057}, $\Delta(\mu)$ is maximal with respect to~$\uparrow$; more precisely, if~$\Delta$ is an alcove whose closure contains~$\mu$, then $\Delta\uparrow\Delta(\mu)$.

We define~$\Delta(\lambda)$ in the same way.
 Since~$\mu\uparrow\lambda$, we can write \[\mu< s_1 (\mu) < (s_2s_1)  (\mu) <\dots< (s_{n}\cdots s_1 )(\mu)=\lambda\] for some reflections $s_1,\dots, s_n$, where we have replaced the inequalities in~\eqref{eq:defn-of-uparrow} by strict inequalities, by discarding any reflections giving equalities.
 Using the strictness of these inequalities, it follows from Definition~\ref{defn:uparrow-alcoves} that  \[\Delta(\mu)\uparrow s_1 \Delta(\mu) \uparrow s_2s_1  \Delta(\mu) \uparrow\dots\uparrow (s_{n}\cdots s_1 )\Delta(\mu).\]
By the maximality of~$\Delta(\lambda)$, we have $(s_n\cdots s_1 )\Delta(\mu)\uparrow\Delta(\lambda).$
Thus~$\Delta(\mu)\uparrow \Delta(\lambda)$, and since $\Delta(\mu)$ and~$\Delta(\lambda)$ are dominant, we are done.
\end{proof}

We note the following corollary, which is~\cite[Cor.~A.1.2(ii)]{zbMATH06991335} (with the same proof).
\begin{cor}
  \label{cor:uparrow-dominant-chain}If $\mu,\lambda\in X_{*}(T)^+$ are such that $\mu\uparrow \lambda$, then there exists a sequence of dominant coweights \[\mu=\lambda_0 \uparrow\cdots\uparrow \lambda_n=\lambda\] and reflections~$s_i\in W_{p}$ such that $\lambda_i=s_{i}(\lambda_{i-1})$ for all~$i$.
\end{cor}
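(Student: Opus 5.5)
The plan is to run the alcove argument from the proof of Lemma~\ref{lem:equivalence-uparrow-alcoves-weights} and then push it back to coweights, choosing dominant representatives at each step. If $\mu=\lambda$ there is nothing to prove, so assume $\mu\ne\lambda$.

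By Lemma~\ref{lem:equivalence-uparrow-alcoves-weights} there are dominant alcoves $\Delta\ni\mu$ (in the closure) and $\Delta'$ with $\lambda\in\overline{\Delta'}$ such that $\Delta\uparrow\Delta'$. The chain of alcoves realizing $\Delta\uparrow\Delta'$ may pass through non-dominant alcoves, so I will first replace it by a chain of dominant alcoves. Write $\Delta=u\Delta_p$ and $\Delta'=v\Delta_p$ with $u,v\in W_p$. By Theorem~\ref{thm:Wang}, $u\le_{\Bru}v$. A standard fact about the Bruhat order (the chain property) gives a saturated chain $u=u_0<u_1<\dots<u_n=v$ in which each step is multiplication by a reflection and the length goes up by one. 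I want every $u_i\Delta_p$ to be dominant. By Proposition~\ref{prop:Bruhat-order-and-Wp} and Corollary~\ref{cor:minimal-cosets-Wp-dominant} (with $\lambda_0$ in the open alcove, so that $W_p^{\lambda_0}$ is trivial), $u_i\Delta_p$ is dominant if and only if $u_i$ is minimal in its coset $Wu_i$. So the question becomes whether a Bruhat interval between two elements of ${}^{\emptyset}W^{\,}$ (minimal in $W\backslash W_p$) admits a maximal chain inside the set of minimal coset representatives. I expect this to be standard, via Deodhar's results on the Bruhat order on parabolic quotients. I will apply it with $J$ the generators of the finite Weyl group $W$.

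Granting this, each step $u_{i-1}\Delta_p\to u_i\Delta_p$ is a pair of dominant alcoves related by a single reflection $s_i$, and Theorem~\ref{thm:Wang} gives $u_{i-1}\Delta_p\uparrow u_i\Delta_p$. Now define $\lambda_i$ to be the unique element of $W_p\mu$ in $\overline{u_i\Delta_p}$, starting from $\lambda_0=\mu$. Since all $u_i\Delta_p$ are dominant, their closures lie in the dominant chamber, so each $\lambda_i$ is dominant. Lemma~\ref{lem:uparrow-from-alcove-to-cocharacter} gives $\lambda_{i-1}\uparrow\lambda_i$, and in fact $\lambda_i=s_i\lambda_{i-1}$, which is exactly what that lemma's proof produces for a single reflection step. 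At the top of the chain, $\lambda_n$ is the element of $W_p\mu$ in $\overline{\Delta'}$, which equals $\lambda$.

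The main obstacle is the middle step: producing a chain from $u$ to $v$ that stays among the minimal representatives of $W\backslash W_p$. If the general Deodhar-type statement is not directly available, the fallback is to work with the chain of alcoves $\Delta(\mu)\uparrow s_1\Delta(\mu)\uparrow\cdots$ from the proof of Lemma~\ref{lem:equivalence-uparrow-alcoves-weights}. Whenever an alcove in that chain is non-dominant, I would reflect it back into the dominant chamber by a finite Weyl reflection. Since $\mu$ lies in the dominant chamber, such a reflection should preserve both the $\uparrow$ relation and the dominance relations among the coweights; I would verify this using Lemma~\ref{lem:uparrow-dominant-Weyl-orbit} and induct on the number of non-dominant alcoves in the chain.
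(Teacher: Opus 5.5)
Your proof is correct, and its outer structure is the paper's. Lemma~\ref{lem:equivalence-uparrow-alcoves-weights} gives dominant alcoves $\Delta\uparrow\Delta'$. One then needs a chain of \emph{dominant} alcoves from $\Delta$ to $\Delta'$ in which consecutive terms differ by a single reflection. Finally $\mu$ is transported along this chain, using that each closed alcove meets each $W_p$-orbit exactly once.

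The difference is in how the dominant chain is obtained. The paper quotes \cite[Thm.~2.7]{MR4545005}, which produces it directly. You instead derive it from Theorem~\ref{thm:Wang} together with Deodhar's theorem that ${}^JW$, the minimal representatives of $W\backslash W_p$, is graded by $\ell$ for the induced Bruhat order (see e.g.\ Bj\"orner--Brenti, \emph{Combinatorics of Coxeter groups}, Ch.~2; apply it to $W^J$ and invert). That input is standard, so you can drop the hedge. You can also drop the fallback in your last paragraph, which as written is too vague to be a proof.

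A length-one Bruhat step has the form $u_i=u_{i-1}t$ with $t$ a reflection. Hence $u_i\Delta_p=s_iu_{i-1}\Delta_p$ for the affine reflection $s_i=u_{i-1}tu_{i-1}^{-1}\in W_p$. Then $\lambda_i=s_i\lambda_{i-1}$ by uniqueness, and $\lambda_{i-1}\le\lambda_i$ follows from Theorem~\ref{thm:Wang} and Lemma~\ref{lem:uparrow-from-alcove-to-cocharacter}, as you say. So your route trades a specialized citation about dominant alcove chains for a general Coxeter-group fact plus the Bruhat/$\uparrow$ comparison the paper already states. The paper's route is shorter.

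One small repair is needed. Corollary~\ref{cor:minimal-cosets-Wp-dominant} is stated for $\lambda_0\in X_*(T)$, but for small $p$ the open alcove $\Delta_p$ may contain no integral point (e.g.\ $\GL_3$ with $p=2$). So you cannot literally invoke it with $W_p^{\lambda_0}$ trivial. Instead, justify ``$u\Delta_p$ is dominant iff $u$ is minimal in $Wu$'' directly. For each simple root $\beta$, the reflection $s_\beta$ lies in $S$, and $\ell(s_\beta u)>\ell(u)$ iff the wall $\langle\beta,x\rangle=0$ does not separate $\Delta_p$ from $u\Delta_p$.
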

\begin{proof}
  By Lemma~\ref{lem:equivalence-uparrow-alcoves-weights}, there are dominant alcoves $\Delta,\Delta'$ such that $\mu\in\overline{\Delta}$, $\lambda\in \overline{\Delta'}$, and $\Delta\uparrow\Delta'$.
By \cite[Thm.~2.7]{MR4545005} there exists a sequence of dominant alcoves \[\Delta=\Delta_0 \uparrow\cdots\uparrow \Delta_n=\Delta'\] and reflections~$s_i\in W_{p}$ such that $\Delta_i=s_{i}\Delta_{i-1}$ for all~$i$.
Taking $\lambda_i\coloneq (s_i\cdots s_1)\mu$, the result follows as in the proof of Lemma~\ref{lem:uparrow-from-alcove-to-cocharacter}.
\end{proof}

The following is the main combinatorial result of this section; under an additional regularity assumption (which is irrelevant for applications to the representation theory of reductive groups, but would be insufficient for our applications in $p$-adic Hodge theory) it was earlier proved by Riche~\cite[Prop.~2.35]{Riche-lectures}.
\begin{prop}
  \label{prop:Bruhat-equivalent-uparrow}Suppose that $\mu,\lambda\in X_{*}(T)^+$.
Then the following are equivalent:
\begin{enumerate}
\item\label{item:25} $\mu\uparrow\lambda$.
\item\label{item:26} There exist $u,v\in W_p$ with $u\le_{\Bru}v$, and $\lambda_0 \in \overline{\Delta}_p\cap X_{*}(T)$, such that $u(\Delta_p)$ and~$v(\Delta_p)$ are dominant, $\mu=u(\lambda_0) $, and $\lambda=v(\lambda_0)$.
\end{enumerate}
\end{prop}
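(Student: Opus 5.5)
The plan is to reduce both directions to Wang's Theorem~\ref{thm:Wang} and Lemma~\ref{lem:equivalence-uparrow-alcoves-weights}, moving between alcoves containing $\mu,\lambda$ in their closures and Weyl group elements carrying $\Delta_p$ to those alcoves.

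\emph{(2) $\Rightarrow$ (1).} Suppose $u\le_{\Bru}v$ with $u\Delta_p$, $v\Delta_p$ dominant. Theorem~\ref{thm:Wang} gives $u\Delta_p\uparrow v\Delta_p$. Since $\lambda_0\in\overline{\Delta}_p$, we have $\mu=u(\lambda_0)\in\overline{u\Delta_p}$. Moreover $\lambda=v(\lambda_0)$ is the unique element of $\overline{v\Delta_p}\cap W_p\mu$, because $\overline{v\Delta_p}$ is a fundamental domain for $W_p$. Lemma~\ref{lem:uparrow-from-alcove-to-cocharacter} then gives $\mu\uparrow\lambda$.

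\emph{(1) $\Rightarrow$ (2).} If $\mu\uparrow\lambda$, then $\mu,\lambda$ lie in a common $W_p$-orbit, so Lemma~\ref{lem:equivalence-uparrow-alcoves-weights} produces dominant alcoves $\Delta,\Delta'$ with $\mu\in\overline\Delta$, $\lambda\in\overline{\Delta'}$ and $\Delta\uparrow\Delta'$. Since $W_p$ acts simply transitively on alcoves, write $\Delta=u\Delta_p$ and $\Delta'=v\Delta_p$. Theorem~\ref{thm:Wang} gives $u\le_{\Bru}v$.

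The main point to check is that both $\mu$ and $\lambda$ are images of the same $\lambda_0$. Set $\lambda_0:=u^{-1}(\mu)$, which lies in $\overline{\Delta}_p$. Also $v^{-1}(\lambda)\in\overline\Delta_p$, and it lies in the $W_p$-orbit of $\mu$, hence of $\lambda_0$. As $\overline\Delta_p$ is a fundamental domain, every $W_p$-orbit meets it in exactly one point, so $v^{-1}(\lambda)=\lambda_0$. Therefore $\mu=u(\lambda_0)$ and $\lambda=v(\lambda_0)$, as required. No further adjustment of $u,v$ within cosets of $W_p^{\lambda_0}$ is needed, since the statement does not demand minimal representatives.

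The only real input is Wang's theorem together with the alcove version of $\uparrow$. Any subtlety with non-regular $\lambda_0$ (the failure of the naive converse of Lemma~\ref{lem:uparrow-from-alcove-to-cocharacter}) has already been absorbed into Lemma~\ref{lem:equivalence-uparrow-alcoves-weights}, through its use of the maximal alcoves $\Delta(\mu)$. I therefore expect no serious obstacle beyond correctly invoking the fundamental-domain property.
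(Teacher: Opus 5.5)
Your proof is correct and is essentially the paper's argument. For (1)$\Rightarrow$(2) it uses Lemma~\ref{lem:equivalence-uparrow-alcoves-weights} and Wang's Theorem~\ref{thm:Wang}, with $\lambda_0=u^{-1}(\mu)=v^{-1}(\lambda)$ pinned down by the fundamental-domain property of $\overline{\Delta}_p$; for the converse it uses Theorem~\ref{thm:Wang} together with Lemma~\ref{lem:uparrow-from-alcove-to-cocharacter}, just as the paper does.
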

\begin{proof} First assume that $\mu\uparrow\lambda$.
Then by Lemma~\ref{lem:equivalence-uparrow-alcoves-weights} we can find dominant alcoves $\Delta\uparrow\Delta'$ such that $\mu\in\overline{\Delta}$ and $\lambda\in \overline{\Delta'}$.
Write $\Delta=u\Delta_p, \Delta'=v\Delta_p$ for (unique) elements $u,v\in W_p$.
Since $\Delta,\Delta'$ are dominant, it follows from Theorem~\ref{thm:Wang} that $u\le_{\Bru}v$.
As we are assuming that $\mu\uparrow\lambda$, we in particular have $\mu\in W_p\cdot\lambda$, whence we have $u^{-1}(\mu)=v^{-1}(\lambda)=:\lambda_0 \in\overline{\Delta}_p$, as required.

Now suppose that~\eqref{item:26} holds. Since $u\Delta_p$ and
$v\Delta_p$ are dominant, Theorem~\ref{thm:Wang} gives
$u\Delta_p\uparrow v\Delta_p$, 
whence Lemma~\ref{lem:uparrow-from-alcove-to-cocharacter} implies
that $\mu\uparrow \lambda$, as required.
\end{proof}

\subsection{Affine flag varieties}\label{subsec:affine-Grassmannian-recalling}  

Let $A$ be a ring and let $G$ be a smooth affine group scheme over $A$.
We recall the definitions of the loop groups and the affine Grassmannian associated to~$G$,
for which \cite{Cesnavicius} is a general reference; see also
\cite{epiga:12352} and \cite{MR4061978}.
\begin{para}\label{para:Affgrasssetup} We write~$L_nG$, $L^+_nG$ for the $n$th loop and arc groups, so that for any $A$-algebra~$R$ we have $L_nG(R)=G(R((u^n)))$ and~$L_n^+G(R)=G(R\llbracket u^n\rrbracket)$. When~$n=1$ we drop it from the notation.
We define the affine Grassmannian $\Gr_G$ as the \'etale sheafification of the presheaf $LG/L^+G$.
By \cite[Prop.~7]{Cesnavicius}, $\Gr_G(R)$ is in natural bijection with the isomorphism classes of pairs
$(\cE,\iota)$ consisting of a $G$-bundle $\cE$ over $R\llbracket u\rrbracket$ and a trivialization $\iota:\cE|_{R((u))}\isoto \cE^0|_{R((u))}$
of $\cE$ over $R((u))$ (where~$\cE^0$ denotes the trivial $G$-bundle). We sometimes write $\Gr_{G,A}$ for $\Gr_G$ when the ring $A$ may not be clear from the context.
When $G$ is a Chevalley group, i.e.\ a split reductive group, then $\Gr_G$
is representable by an Ind-projective Ind-scheme.

The multiplicative group $\G_m$ acts on $LG, L^+G$ and $\Gr_G$ by ``loop rotation''. That is, $z \in R^\times$ sends $f = f(u) \in G(R((u)))$ to $f(uz)$. In particular, we have the action of the subgroup $\mu_p \subset \G_m$ on $\Gr_G$. We will write $(\Gr_G)^{\mu_p} \subset \Gr_G$ for the fixed points of $\mu_p$.
\end{para}

\begin{para} Now suppose that $G$ is a Chevalley group, equipped with a Borel subgroup $B\subseteq G$ and a maximal torus $T \subseteq B$.
For~$\lambda\in X_{*}(T)^+$ we write $\Gr_{G}^{\le\lambda}$ for the image of the orbit map
$L^+G \rightarrow \Gr_G$ sending $f$ to $f\cdot u^{\lambda}$. This is a projective $A$-scheme, which contains
a unique open, fiberwise dense $L^+G$ orbit $\Gr_G^{\lambda}$.
The Ind-scheme $\Gr_G$ admits a stratification by the disjoint union of the $\Gr_G^{\lambda}$. 

\end{para}

\begin{para}\label{para:integral-Schubert} We continue to assume that $G$ is a Chevalley group.
To any facet~$\mathbf{f}$ (in the sense of Definition~\ref{defn:facet}), Bruhat--Tits theory associates a parahoric group scheme~$P_{\mathbf{f}}$ over $A\llbracket t \rrbracket$,
which is a smooth affine group scheme over~$A\llbracket t \rrbracket$ with connected geometric fibres such that~$P_{\mathbf{f}}\times_{A\llbracket t \rrbracket} A((t)) = G\times_AA((t))$.
See \cite[Lem.~11.1]{epiga:12352} for the case $A=\Z$, which implies the general case.

As in \cite[\S 4]{MR4517647}, for $R$ an $A$-algebra, set
$L^+_p P_{\mathbf{f}}(R) = P_{\mathbf{f}}(R \llbracket u^p \rrbracket)$.
Here we view $R \llbracket u^p \rrbracket$ as an $A\llbracket t \rrbracket$
-algebra via $t \mapsto u^p$.
This defines a subgroup $L^+_p P_{\mathbf{f}} \subset L_pG$.
We denote by $\Flag^{p}_{\mathbf{f},A} = L_pG/L^+_pP_{\mathbf{f}}$ the associated affine flag variety. It is an Ind-projective Ind-scheme. We sometimes write
$\Flag^{p}_{\mathbf{f}}$ when the base is clear.

For each $v\in W_p$ we have the Schubert scheme $\Flag^{p,\le v}_{\mathbf{f},A}$, which is by definition (\cite[Defn.~4.3.4]{MR4061978} or   \cite[11.2.3]{epiga:12352}) the scheme-theoretic image of the map $L^+_pG\to \Flag^{p}_{\mathbf{f},A}$, $g\mapsto gv$, and its open subscheme $\Flag^{p,v}_{\mathbf{f},A}$, which by definition is the \'etale sheaf-theoretic image $L^+_pG\cdot v\subseteq \Flag^{p,\le v}_{\mathbf{f},A}$.

By~\cite[Lem.~4.3.7]{MR4061978} and~\cite[Lem.~11.8]{epiga:12352},
$\Flag^{p,\le v}_{\mathbf{f},A}$ is a projective faithfully flat $A$-scheme with geometrically integral fibres, while $\Flag^{p,v}_{\mathbf{f},A}$ is smooth, fiberwise geometrically connected, and fiberwise dense in $\Flag^{p,\le v}_{\mathbf{f},A}$.
\end{para}
\begin{para} We now take $A = \F$ an algebraically closed field, and assume that $G$ is a Chevalley group.
We recall some results from~\cite[\S 4]{MR4517647}.\footnote{In~\cite[\S 4]{MR4517647} $\F$ is assumed to be of positive characteristic, but this assumption is never used.} Write~$\Iw$ for the Iwahori subgroup of~$G(\F\llbracket u\rrbracket )$ given by the
preimage of $B^{-}(\F)$ under the natural map $G(\F\llbracket u\rrbracket )\to G(\F)$,  where $B^{-}$ is opposite to~$B$.
Write~$\Iw_p=\Iw^{\mu_{p}}$ for the corresponding subgroup of $G(\F\llbracket u^{p}\rrbracket )$.
\end{para}
\begin{lem}
  \label{lem:mup-invariants-spherical}For each~$\lambda\in X_{*}(T)^+$ we have \[(\Gr_G^{\lambda})^{\mu_p}=L^+_pG\cdot u^{\lambda}.\]
\end{lem}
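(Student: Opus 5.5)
The inclusion $\supseteq$ is immediate. The element $u^{\lambda}$ is $\mu_p$-fixed, because loop rotation sends it to $\lambda(\zeta)u^{\lambda}$, and $\lambda(\zeta)\in T(\F)\subset L^+G$. Every $g\in L^+_pG$ is fixed by loop rotation by $\mu_p$, since its coefficients only involve powers of $u^p$. Hence $g u^{\lambda}$ is a $\mu_p$-fixed point of $\Gr_G^{\lambda}$.

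For $\subseteq$, I would use the Iwahori decomposition in two steps.

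\emph{Step 1: reduce to $\mu_p$-fixed $\Iw$-orbits.} The orbit $\Gr_G^{\lambda}$ is a finite disjoint union of $\Iw$-orbits $\Iw\cdot u^{w\lambda}$, for $w\in W/W_\lambda$. Loop rotation normalizes $\Iw$ and fixes each $u^{w\lambda}$ up to $T(\F)$, so $\mu_p$ preserves each of these orbits. It therefore suffices to show
\[
(\Iw\cdot u^{\nu})^{\mu_p}=\Iw_p\cdot u^{\nu}\quad\text{for every }\nu\in W\lambda,
\]
because $\Iw_p\subset L^+_pG$.

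\emph{Step 2: linearize the $\Iw$-orbit.} The orbit $\Iw\cdot u^{\nu}$ is isomorphic, $\mu_p$-equivariantly, to an affine space. More precisely, it is the product over the relevant affine root subgroups $U_{\alpha+k\delta}$, meaning those in $\Iw$ that are not contained in $u^{\nu}\Iw u^{-\nu}$, taken in a fixed order, acting on $u^{\nu}$. This isomorphism is equivariant once we twist by $T$, using that $u^{\nu}$ is fixed modulo $T(\F)\cap \mathrm{Stab}$. The loop rotation $\zeta$ acts on the coordinate of $U_{\alpha+k\delta}$ by multiplication by $\zeta^{k}$. So on $\F$-points, the fixed locus of the affine space is the coordinate subspace where $k\equiv 0 \pmod p$, which is exactly $\prod_{p\mid k}U_{\alpha+k\delta}\cdot u^{\nu}\subseteq \Iw_p\cdot u^{\nu}$.

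The delicate point is the equivariance: the point $\zeta\cdot(x u^{\nu})$ equals $(\zeta\cdot x)\,\nu(\zeta) u^{\nu}$. The factor $\nu(\zeta)$ lies in $T$ and stabilizes the base point, so it must be absorbed. Absorbing it changes the action on the coordinates to $\zeta^{k}$ times a character of $\zeta$, namely $\alpha(\nu(\zeta))^{-1}=\zeta^{-\langle\alpha,\nu\rangle}$. This character may be nontrivial.

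I expect this twist to be the main obstacle, and would handle it as follows. The natural base point is not $u^{\nu}$ but the $T$-fixed point, and $\mu_p$ acts through the combined (rotation, $\nu$) torus. A point $x u^{\nu}$ with $x=\prod u_{\alpha+k\delta}(c)$ is $\mu_p$-fixed iff $c\zeta^{k}=c$ in the quotient, since we compare elements of $\Gr$ and $u^{\nu}$ is fixed in $\Gr$. Thus we need $p\mid k$, or $c=0$. Here one must use the fixed-point set as a set of $\F$-points (or the reduced scheme), since $\mu_p$ is non-reduced in characteristic $p$. In characteristic $p$ I would instead follow \cite[\S4]{MR4517647}, whose fixed-point description of $\Iw$-orbits gives exactly this statement. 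Combining over all $\nu\in W\lambda$, and using $\Iw_p\subset L^+_pG$, gives $(\Gr_G^{\lambda})^{\mu_p}\subseteq L^+_pG\cdot u^{\lambda}$, with the $W$-translates absorbed since $W\subset G(\F)\subset L^+_pG$.
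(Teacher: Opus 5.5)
Your proof is correct and is essentially the paper's: decompose $\Gr_G^{\lambda}=\coprod_{\nu\in W\lambda}\Iw\cdot u^{\nu}$, use \cite[Lem.~4.4]{MR4517647} for $(\Iw\cdot u^{\nu})^{\mu_p}=\Iw_p\cdot u^{\nu}$, and absorb the Weyl translates via $u^{w(\lambda)}=wu^{\lambda}w^{-1}$ with $w\in G(\F)\subset L^+_pG$. Your attempted self-contained proof of the $\Iw$-orbit statement is unnecessary and not quite right as written, for three reasons: there is no twist, since $\zeta\cdot(xu^{\nu})=(\zeta\cdot x)\,u^{\nu}\nu(\zeta)$ in $LG$ and $\nu(\zeta)\in L^+G$; the coordinates on $\Iw\cdot u^{\nu}\subset\Gr_G$ come from the root subgroups not contained in $u^{\nu}L^+Gu^{-\nu}$ (not $u^{\nu}\Iw u^{-\nu}$); and in characteristic $p$ the group $\mu_p(\F)$ is trivial, so fixed points must be computed scheme-theoretically, e.g.\ with the universal $\zeta$ over $\F[\zeta]/(\zeta^p-1)$, where $c\zeta^{k}=c$ forces $c=0$ unless $p\mid k$, and done this way the coordinate argument works in every characteristic.
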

\begin{proof} This can be deduced from the results of~\cite[\S 4.4]{MR4517647} as follows.
  We have \begin{equation}\label{eqn:K-vs-I-orbits}\Gr_G^{\lambda}=\coprod_{\mu\in W\lambda}\Iw\cdot u^{\mu},\end{equation} and by~\cite[Lem.~4.4]{MR4517647} we have
\begin{equation}
  \label{eq:7}
  (\Iw\cdot u^{\mu})^{\mu_p}=\Iw_p\cdot u^{\mu}.
\end{equation}
We certainly have $L^+_pG\cdot u^{\lambda}\subseteq (\Gr_G^{\lambda})^{\mu_p}$, so we only need to prove the opposite inclusion.
By~\eqref{eqn:K-vs-I-orbits} and~\eqref{eq:7}, it suffices to show that for each $w\in W$ we have $u^{w(\lambda)}\in L^+_pG\cdot u^{\lambda}$; this follows from writing $u^{w(\lambda)}=
wu^{\lambda}w^{-1}
$ (where as usual 
we have chosen a lift of~$w$ to $N_G(T)(\F)$).  
\end{proof}

\begin{para}We continue to assume that $A = \F$ is an algebraically closed field. We now explain the relationship between affine flag varieties and the $\mu_p$-fixed points $(\Gr_G)^{\mu_p}$.

Assume from now on that the facet~$\mathbf{f}$ is contained in~$\overline{\Delta}_p$; then $P_{\Delta_p}\subseteq P_{\mathbf{f}}$, and $P_{\Delta_p}$ is the Iwahori subgroup
given by
the inverse image of the opposite Borel~$B^-$ under evaluation at~$t=0$. 
If we let $W_p^{\mathbf{f}}\le W_p$ be the pointwise stabilizer of~$\mathbf{f}$, then \[P_{\mathbf{f}}=\coprod_{v\in W_p^{\mathbf{f}}}P_{\Delta_p}vP_{\Delta_p},\] where we regard~$v\in W_p$ as an element of $G(\F((u^p)))$ via the embedding of~$\Wt$ into~$G(\F((u)))$ defined in Section~\ref{sec:affine-weyl-group}.
For example, we see that for $\mathbf{o}_p\coloneq 0\in X_{*}(T)$ we have $P_{\mathbf{o}_p}=G_{\F\llbracket u^p\rrbracket }$.
(Note that some of our formulas differ from those found in~\cite[\S 4]{MR4517647}, due to our differing conventions on signs; see~\cite[\S 2]{arXiv:2405.17174} for a helpful discussion of the possible conventions.)

The connected components of the affine flag variety $\Flag_{\mathbf{f}}^p$ are indexed by~$\pi_1(G)=X_{*}(T)/\Lambda^{\vee}$ via the Kottwitz homomorphism; concretely, each connected component of~$(\Flag_{\mathbf{f}}^p)_{\red}$ is stratified by the orbits $L^+_pG\cdot u^{\lambda}$ for~$\lambda$ in the corresponding coset of~$\Lambda^{\vee}$.
We write $\Flag_{\mathbf{f}}^{p,\circ}$ for the neutral component (for which~$\lambda\in \Lambda^{\vee}$).

For each $\lambda_0\in \overline{\Delta}_p\cap X_{*}(T)$, we write $\mathbf{f}_{\lambda_0}\subseteq \overline{\Delta}_p$ for the unique facet containing~$\lambda_0$. 
Then the morphism $L_pG\to\Gr_{G}$, $g\mapsto gu^{\lambda_0}$ induces a morphism
\begin{equation*}
  \label{eq:6}
  \Flag^{p,\circ}_{\mathbf{f}_{\lambda_0}}\to \Gr_G^{\mu_{p}}.
\end{equation*}
By~\cite[Prop.~4.7]{MR4517647}, these morphisms induce an isomorphism of ind-schemes
  \begin{equation}
    \label{eq:5}
    \coprod_{\lambda_0\in \overline{\Delta}_p\cap X_{*}(T)}\Flag^{p,\circ}_{\mathbf{f}_{\lambda_0}}\isoto \Gr_G^{\mu_{p}}.
  \end{equation}

\end{para}
  \begin{prop}
    \label{prop:K-orbit-closure-mu-fixed}Suppose that  $A = \F$ is an algebraically closed field. Then for each $\lambda\in X_{*}(T)^+$, the Zariski closure $\Gr_{G}^{\mu_p,\uparrow\lambda}$
    of~$(\Gr_{G}^{\lambda})^{\mu_{p}}$ in $\Gr_{G}^{\mu_p}$ is given (on the level of underlying sets) 
    by the disjoint union of the $(\Gr_{G}^{\lambda'})^{\mu_p}$ where $\lambda'\in X_{*}(T)^+$ and $\lambda'\uparrow\lambda$.
    \end{prop}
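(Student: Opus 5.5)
Via the isomorphism~\eqref{eq:5}, $\Gr_G^{\mu_p}$ is the disjoint union of the neutral components $\Flag^{p,\circ}_{\mathbf{f}_{\lambda_0}}$, one for each $\lambda_0\in\overline{\Delta}_p\cap X_*(T)$. These pieces are open and closed, so it suffices to compute closures inside a single $\Flag^{p,\circ}_{\mathbf{f}_{\lambda_0}}$.

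Fix $\lambda\in X_*(T)^+$. Write $\lambda=v(\lambda_0)$ with $\lambda_0\in\overline{\Delta}_p\cap X_*(T)$ and $v\in W_p$ minimal in $vW_p^{\lambda_0}$. By Lemma~\ref{lem:mup-invariants-spherical} we have $(\Gr_G^\lambda)^{\mu_p}=L^+_pG\cdot u^\lambda$. Under~\eqref{eq:5}, the point $u^\lambda=v u^{\lambda_0}$ corresponds to the point $v$ of $\Flag^{p,\circ}_{\mathbf{f}_{\lambda_0}}$, using $v=u^{p\nu}w$ and $\nu\in\Lambda^\vee$. Hence $(\Gr_G^\lambda)^{\mu_p}$ is the $L^+_pG=L^+_pP_{\mathbf{o}_p}$-orbit of $v$ in $L_pG/L^+_pP_{\mathbf{f}_{\lambda_0}}$. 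These orbits are indexed by double cosets $W\backslash W_p/W_p^{\lambda_0}$. By Proposition~\ref{prop:Bruhat-order-and-Wp}\eqref{item:18}, $v$ is the minimal representative $\underline{v}$ of its double coset $WvW_p^{\lambda_0}$. Conversely, Corollary~\ref{cor:minimal-cosets-Wp-dominant} shows that every double coset in $W\backslash W_p/W_p^{\lambda_0}$ has minimal representative $u$ with $u(\lambda_0)$ dominant. So the $L^+_pG$-orbits in this component correspond bijectively to the dominant $\lambda'\in W_p\lambda_0$, and each is again of the form $(\Gr_G^{\lambda'})^{\mu_p}$.

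The key input is the standard closure relation for partial affine flag varieties: the closure of $L^+_pP_{\mathbf{f}}\cdot v$ in $L_pG/L^+_pP_{\mathbf{f}'}$ is the union of the orbits $L^+_pP_{\mathbf{f}}\cdot u$ with $u\le_{\Bru}v$ in the sense of Proposition~\ref{prop:equivalence-Bruhat-orderings-double-cosets}. That is, some representative of the $u$ double coset is Bruhat-below some representative of the $v$ double coset, or equivalently $\underline{u}\le_{\Bru}\underline{v}$. I would prove this in three steps.
\begin{enumerate}
\item Pull back along the smooth surjection $L_pG/L^+_p\Iw_p\to L_pG/L^+_pP_{\mathbf{f}'}$ (here $P_{\Delta_p}$ is the Iwahori).
\item Observe that each $L^+_pG$-orbit is a finite union of Iwahori orbits. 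Apply the classical result that Iwahori-orbit closures in the full affine flag variety are given by Bruhat order; over an algebraically closed field this is the standard Bruhat--Tits / Kac--Moody statement.
\item Conclude using the invariance of closure under the smooth projection, together with Proposition~\ref{prop:equivalence-Bruhat-orderings-double-cosets}.
\end{enumerate}
This is the step I expect to need the most care, mainly in citing the precise references (e.g.\ \cite{MR4517647} or \cite{MR4061978}) and matching conventions such as the opposite Borel and the dilation by $p$. Everything else is bookkeeping.

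Granting this, the orbit of $\lambda'=u(\lambda_0)$, with $u$ minimal, lies in the closure of the orbit of $\lambda$ if and only if $u\le_{\Bru}v$. The comparison with $\uparrow$ then comes from Proposition~\ref{prop:Bruhat-equivalent-uparrow}, but translating between representatives requires care. If $u\le_{\Bru}v$, then $u\Delta_p$ and $v\Delta_p$ are dominant by Proposition~\ref{prop:Bruhat-order-and-Wp}\eqref{item:17}, so $\lambda'\uparrow\lambda$ by \eqref{item:26}$\Rightarrow$\eqref{item:25}. Conversely, if $\lambda'\uparrow\lambda$, then Proposition~\ref{prop:Bruhat-equivalent-uparrow} gives $u',v'$ with $u'\le_{\Bru}v'$, $u'(\lambda_0')=\lambda'$ and $v'(\lambda_0')=\lambda$, where $\lambda_0'=\lambda_0$ by uniqueness of the representative in $\overline{\Delta}_p$. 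Then $u'\in WuW_p^{\lambda_0}$ and $v'\in WvW_p^{\lambda_0}$, so Proposition~\ref{prop:equivalence-Bruhat-orderings-double-cosets}\eqref{item:23}$\Rightarrow$\eqref{item:22} gives $u\le_{\Bru}v$. Finally, if $\lambda'\uparrow\lambda$ then $\lambda'\in W_p\lambda$, so $\lambda'$ lies in the same component; conversely, orbits in different components are never in each other's closures. This gives the stated set-theoretic description of the closure.
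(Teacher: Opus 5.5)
Your proposal is correct and follows essentially the same route as the paper: reduce via~\eqref{eq:5} to $L^+_pG$-orbit closures in $\Flag^{p,\circ}_{\mathbf{f}_{\lambda_0}}$, describe these by the condition $\underline{u}\le_{\Bru}\underline{v}$ on minimal double coset representatives, and translate to $\uparrow$ using Corollary~\ref{cor:minimal-cosets-Wp-dominant} and Propositions~\ref{prop:equivalence-Bruhat-orderings-double-cosets} and~\ref{prop:Bruhat-equivalent-uparrow}. The only differences are that the paper cites \cite[Prop.~2.8]{MR2998951} for the closure relation in partial affine flag varieties, whereas you sketch its derivation from the Iwahori case by smooth pullback, and that you spell out the matching of representatives in the final translation step explicitly.
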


    \begin{proof}
      Let $\lambda_0\in \overline{\Delta}_p\cap X_{*}(T)$ be the unique element with~$\lambda\in W_p\lambda_0 $.
Write $\lambda=v(\lambda_0) $ for some~$v\in W_p$.
By Lemma~\ref{lem:mup-invariants-spherical} we have $(\Gr_{G}^{\lambda})^{\mu_{p}}=L^+_pG\cdot u^{\lambda}$, which is identified with the orbit $L^+_pG\cdot v$ in $\Flag^{p,\circ}_{\mathbf{f}_{\lambda_0}}$
       via the isomorphism~\eqref{eq:5}; so $\Gr_{G}^{\mu_p,\uparrow\lambda}$ is identified with the affine Schubert variety $\Flag^{p,\le v}_{\mathbf{f}_{\lambda_0}}$ (i.e.\ the Zariski closure of $L^+_pG\cdot v$ in $\Flag^{p,\circ}_{\mathbf{f}_{\lambda_0}}$).

By~\cite[Prop.~2.8]{MR2998951} and Proposition~\ref{prop:equivalence-Bruhat-orderings-double-cosets} above (with~$W_S=W_p$, $W_I=W_p^{\lambda_0 }$, and~$W_J=W$), the closure of this orbit in $\Flag^{p,\circ}_{\mathbf{f}_{\lambda_0}}$ is the disjoint union of the orbits $L^+_pG\cdot \underline{v'}$, where~$\underline{v'}$ runs over those minimal length representatives in $W_p$ of the elements of $W\backslash W_p/W_p^{\lambda_0 }$ which satisfy $\underline{v'}\le_{\Bru}\underline{v}$.

Using~\eqref{eq:5} again, we see that the closure of~$(\Gr_{G}^{\lambda})^{\mu_{p}}$ is the disjoint union of the $L^+_pG\cdot u^{\underline{v'}(\lambda_0) }$. By Corollary~\ref{cor:minimal-cosets-Wp-dominant} and Proposition~\ref{prop:Bruhat-equivalent-uparrow}, the cocharacters $\underline{v'}(\lambda_0)$ with
$\underline{v'}\le_{\Bru}\underline{v}$ are exactly the dominant cocharacters~$\mu$ with $\mu\uparrow\lambda$, as required.
\end{proof}

\begin{rem}
\label{affgrassfixedorbit}
Proposition~\ref{prop:K-orbit-closure-mu-fixed} implies that the inclusion $\mathrm{Gr}_G^{\mu_p} \to \mathrm{Gr}_G$ induces a bijection $L^+_pG \backslash \mathrm{Gr}_G^{\mu_p} \to L^+G\backslash \mathrm{Gr}_G$ on orbit {\em sets}\footnote{In the forthcoming \S \ref{ss:fungrass}, we shall give a functor of points description of the corresponding quotient stacks.}, with both being identified with $X_*(T)^+$. However, the closure relations on the right are given by the standard dominance order on $X_*(T)^+$, while those on the left are given by the more restrictive $\uparrow$ relation.
\end{rem}
In the proof of Theorem~\ref{v3-thm:existence-of-semisimple-etale-phi-module-with-invariants} we will need to compare the~$\uparrow$ order for a group and a Levi subgroup.
To this end, let~$L$ be a standard Levi subgroup of~$G$, and write~$\uparrow_L,\uparrow_G$ for the~$\uparrow$ orders on~$X_{*}(T)$ for~$L, G$ respectively.
If~$\lambda\in X_{*}(T)$, we continue to write~$\lambda_{\dom}$ for the $G$-dominant representative of~$W\cdot\lambda$.
\begin{lem}\label{lem:comparing-Levi-uparrow-to-G-uparrow}Let~$L$ be a standard Levi subgroup of~$G$, and suppose that $\lambda,\mu\in X_{*}(T)$ are $L$-dominant cocharacters with $\mu\uparrow_L\lambda$.
Then~$\mu_{\dom}\uparrow\lambda_{\dom}$.
\end{lem}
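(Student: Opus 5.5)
By induction on the length of a chain witnessing $\mu\uparrow_L\lambda$, it suffices to treat a single step. Suppose $\mu'$ and $s(\mu')$ are consecutive terms in such a chain, with $s=s_{\alpha,pn}$ for some root $\alpha$ of $L$ and $\mu'\le_L s(\mu')$. I will show that $\mu'_{\dom}\uparrow s(\mu')_{\dom}$. The chain from Definition~\ref{defn:uparrow-coweights} for $L$ may pass through cocharacters that are not $L$-dominant. This is harmless, because the step claim does not need dominance: all I use is that each step is of the form $\nu\le_L s(\nu)$ with $s$ a reflection in $W_{L,p}\subseteq W_p$. Transitivity of $\uparrow$ on dominant elements then concatenates the steps.

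For a single step, write $\nu=\mu'$. The hypothesis says $s_{\alpha,pn}(\nu)-\nu=(pn-\langle\alpha,\nu\rangle)\alpha^\vee\in\Z_{\ge0}\alpha^\vee$, where $\alpha$ is a root of $L$; after replacing $\alpha$ by $-\alpha$ and $n$ by $-n$, we may assume $\alpha\in\Phi^+$ (positive for $G$ too). Thus $\nu\le s_{\alpha,pn}(\nu)$ in the $G$-dominance order, since $\Phi_L^{\vee,+}\subseteq\Phi^{\vee,+}$. Hence $\nu\uparrow_G s_{\alpha,pn}(\nu)$ directly from Definition~\ref{defn:uparrow-coweights}, using one reflection of $W_p$.

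It remains to pass from $\nu\uparrow_G\nu'$, with $\nu,\nu'$ arbitrary, to $\nu_{\dom}\uparrow\nu'_{\dom}$. Lemma~\ref{lem:uparrow-dominant-Weyl-orbit} gives $\nu'\uparrow\nu'_{\dom}$, hence $\nu\uparrow\nu'_{\dom}$. The real point, which I expect to be the main obstacle, is to get from $\nu\uparrow\nu'_{\dom}$ to $\nu_{\dom}\uparrow\nu'_{\dom}$. For this I will invoke the standard fact \cite[II.6.4--6.8]{MR2015057}, transported through Remark~\ref{rem:uparrow-rho-translation}: if $\nu\uparrow\lambda$ with $\lambda$ dominant, then $\nu_{\dom}\uparrow\lambda$. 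Jantzen proves this by the strong linkage-type argument: for $\beta$ simple with $\langle\beta,\nu\rangle<0$ one has $\nu\le s_\beta\nu$, and one shows that a chain ending at the dominant $\lambda$ can be modified so that it starts at $s_\beta\nu$.

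Alternatively, one can argue through alcoves. Choose an alcove $\Delta$ with $\nu\in\overline\Delta$, and use Lemma~\ref{lem:equivalence-uparrow-alcoves-weights} in a version allowing a nondominant source, together with the known fact that $\Delta\uparrow\Delta'$ with $\Delta'$ dominant implies $w\Delta\uparrow\Delta'$ for the $w\in W$ making $w\Delta$ dominant \cite[II.6.6]{MR2015057}. Lemma~\ref{lem:uparrow-from-alcove-to-cocharacter} then gives $\nu_{\dom}\uparrow\nu'_{\dom}$.

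Chaining the steps gives $\mu_{\dom}\uparrow\cdots\uparrow\lambda_{\dom}$. At the final stage, $\lambda$ is $L$-dominant and its $G$-dominant representative is $\lambda_{\dom}$, so the conclusion is exactly $\mu_{\dom}\uparrow\lambda_{\dom}$.
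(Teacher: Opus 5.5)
There is a genuine gap. The ``standard fact'' you attribute to \cite{MR2015057} is false: it is not true that $\nu\uparrow\lambda$ with $\lambda$ dominant implies $\nu_{\dom}\uparrow\lambda$. Your claim that the single-step statement needs no dominance is also false.

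Take $G=L=\GL_2$, $\alpha=\varepsilon_1-\varepsilon_2$ and $\nu=(0,2p+1)$. Then $s_{\alpha,-p}(\nu)=\nu+(p+1)\alpha^\vee=(p+1,p)$. So $\nu\uparrow(p+1,p)$ by a single upward reflection, and $(p+1,p)$ is dominant. But $(p+1,p)-\nu_{\dom}=(p+1,p)-(2p+1,0)=-p\alpha^\vee$, so $\nu_{\dom}\not\le(p+1,p)$, let alone $\nu_{\dom}\uparrow(p+1,p)$.

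The alcove variant fails on the same example. The alcove $\{-3p<\langle\alpha,\cdot\rangle<-2p\}$ containing $\nu$ lies below the wall $\langle\alpha,\cdot\rangle=-p$, so it is $\uparrow$-below its reflection $\Delta_p$. Its dominant $W$-translate $\{2p<\langle\alpha,\cdot\rangle<3p\}$, however, is not $\uparrow$-below $\Delta_p$.

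The underlying issue is that reflections $s_{\alpha,pn}$ with $n<0$ can carry a very non-dominant element up to a small dominant one. Lemma~\ref{lem:uparrow-dominant-Weyl-orbit} only lets you replace the \emph{top} of a chain by its dominant representative, never the bottom. Relatedly, your first two steps only extract $\mu\uparrow_G\lambda$ from the hypothesis, and that can never suffice. For $L=T$ every cocharacter is $L$-dominant, and the example above contradicts the resulting statement.

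The missing idea is to combine the $L$-dominance of the starting point with the fact that all the reflections come from $L$. Choose $w\in W$ of minimal length with $w\mu=\mu_{\dom}$. Its inversion set is $\{\beta\in\Phi^+:\langle\beta,\mu\rangle<0\}$, which contains no root of $L$ since $\mu$ is $L$-dominant; hence $w(\Phi_L^+)\subseteq\Phi^+$.

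Now apply this single $w$ to the whole chain $\mu=\nu_0\le\nu_1\le\dots\le\nu_r=\lambda$, where $\nu_i=s_{\alpha_i,pn_i}(\nu_{i-1})$ with $\alpha_i\in\Phi_L^+$. This gives $w\nu_i=s_{w\alpha_i,pn_i}(w\nu_{i-1})$ with $w\nu_i-w\nu_{i-1}\in\Z_{\ge0}\,w(\alpha_i^\vee)\subseteq\Z_{\ge0}\Phi^{\vee,+}$. Hence $\mu_{\dom}=w\mu\uparrow w\lambda$, and $w\lambda\uparrow\lambda_{\dom}$ by Lemma~\ref{lem:uparrow-dominant-Weyl-orbit}. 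This is the paper's direct argument; its other proof is geometric, via Proposition~\ref{prop:K-orbit-closure-mu-fixed} and the closed immersion $\Gr_L\hookrightarrow\Gr_G$.

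Your stepwise scheme can be repaired along the same lines. First use Corollary~\ref{cor:uparrow-dominant-chain} for $L$ to make every intermediate term $L$-dominant. Then do this conjugation at each step instead of appealing to the false monotonicity.
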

\begin{proof}
  By Proposition~\ref{prop:K-orbit-closure-mu-fixed}, the hypothesis that $\mu\uparrow_L\lambda$ is equivalent to the statement that~$u^{\mu}\in \Gr_{L}^{\mu_p,\uparrow\lambda}$, while the conclusion that~$\mu_{\dom}\uparrow\lambda_{\dom}$ is equivalent to the statement that~$u^{\mu_{\dom}}\in \Gr_{G}^{\mu_p,\uparrow\lambda_{\dom}}$, and thus to the statement that~$u^{\mu}\in
  \Gr_{G}^{\mu_p,\uparrow\lambda_{\dom}}$.
  By~\cite[Prop.~3.6]{Richarz2019Notes} and~\cite[Cor.~9.7.7]{MR3272912}, the inclusion $L\into G$ induces a closed immersion $\Gr_L\into\Gr_G$, so that $\Gr_{L}^{\mu_p,\uparrow\lambda}\subseteq \Gr_{G}^{\mu_p,\uparrow\lambda_{\dom}}$, as required. (Here we implicitly use that the inclusion of the $\mu_p$-fixed locus is a closed immersion, see \cite[Tome 1, Expose VIII, Example 6.5 (e)]{SGA3}.)

  Alternatively, we may proceed directly from the definitions as follows.
By the assumption that $\mu\uparrow_L\lambda$, we may fix~$\alpha_i\in \Phi_L^+$ and $n_i\in \Z$ for $i=1,\dots,r$ such that the
reflections $s_i=s_{\alpha_i,pn_i}$ satisfy
\begin{equation}\label{eq:chain-G}
  \mu=\nu_0 \le \nu_1 \le \cdots \le \nu_r=\lambda,
  \qquad \nu_i=s_i(\nu_{i-1}) \ \ (1\le i\le r),
\end{equation}

Choose $w\in W$ of minimal length such that $w\mu=\mu_{\dom}$; then since $\mu$ is $L$-dominant, we see that $w(\Phi_L^+)\subseteq \Phi^+$.
For each $i$ set $s_i'\coloneq ws_iw^{-1}=s_{w\alpha_i,pn_i}$. Then
\begin{equation}\label{eq:cuhp3j88lr}
  w(\nu_i)=s_i'(w(\nu_{i-1})) \ \ (1\le i\le r).
\end{equation}
Using that $w(\Phi_L^+)\subseteq \Phi^+$, it follows from~\eqref{eq:chain-G} and~\eqref{eq:cuhp3j88lr} that
\[
  w\mu \le s_1'(w\mu)\le (s_2's_1')(w\mu)\le \cdots \le (s_r'\cdots s_1')(w\mu)=w\lambda,
\] i.e.\ that $\mu_{\dom}=w\mu \uparrow w\lambda$. The result follows from Lemma~\ref{lem:uparrow-dominant-Weyl-orbit}. 
\end{proof}
\subsubsection{The switched affine Grassmannian}
It will be convenient in much of the rest of the paper to use the switched version $\Gr^{sw}_G = L^+G \backslash LG$ of the affine Grassmannian, where we quotient by $L^+G$ on the left; this is isomorphic to $\Gr_G$ as an ind-scheme (via the inversion map on $LG$ and $L^+G$), so all statements we have made about~$\Gr_{G}$ can be translated to statements about $\Gr^{sw}_G$ at the expense of taking inverses. In particular, using the fact that we have $\mu\uparrow \lambda$ if and only if $-w_0(\mu)\uparrow -w_0(\lambda)$ (see e.g.\ \cite[II.6.4 (6)]{MR2015057}), we see that the statement of Proposition~\ref{prop:K-orbit-closure-mu-fixed} goes over unchanged to the switched affine Grassmannian.

\subsection{An orbit closure}\label{subsec:unipotent-orbits}
We continue to let~$\F$ be an algebraically closed field.
Write~$\Iw_1$ for the unipotent radical of~$\Iw$, i.e.\ the preimage of $U^{-}(\F)$ under the natural map $G(\F\llbracket u\rrbracket )\to G(\F)$.
For each~$\alpha\in\Phi$ choose a parametrization~$u_{\alpha}:\G_a\to U_{\alpha}$ of the corresponding root subgroup of~$G$, and for each~$n\in\Z$ we write $U_{\alpha+n}$ for the affine root subgroup of~$LG$, which is the image of the homomorphism $u_{\alpha,n}:\G_a\to LG$ with $u_{\alpha,n}(x)\coloneq u_{\alpha}(xu^{n})$.

For any~$\mu\in X_{*}(T)$ we have
\begin{equation}
\label{eq:24}
 u^{\mu} U_{\alpha+n}u^{-\mu}=U_{\alpha+n+\langle \alpha,\mu\rangle},
\end{equation}
and we set
\begin{equation}
\label{eq:23}
U_{\mu}\coloneq \prod_{\alpha\in \Phi}\prod_{\delta_{\alpha}\le n < \delta_{\alpha}+\langle \alpha,\mu\rangle }U_{\alpha+n},
\end{equation}
where

\[
\delta_{\alpha} =
\begin{cases}
1 & \text{if } \alpha \in \Phi^+, \\
0 & \text{if } \alpha \in \Phi^-.
\end{cases}
\]
Here the product over~$n$ in~\eqref{eq:23} is understood to be empty if~$\langle\alpha,\mu\rangle\le 0$.
It follows from~\eqref{eq:24} (see~\cite[Prop.~3.7.4]{MR3803785}; this gives the two factors below in the opposite order, and applying inversion gives the statement written here) that
 \begin{equation}
 \label{eq:22}
 \Iw_1 = (\Iw_1 \cap u^{\mu}\Iw_1 u^{-\mu})\cdot U_{\mu}.
\end{equation}

The following proposition was suggested by the proof of~\cite[Prop.~2.4]{MR4402497}; its proof relies on a standard $\Gm$-orbit closure technique 
for studying the intersections of semi-infinite orbits with affine Schubert varieties
(see e.g.\ \cite[Thm.~3.2]{MR2342692}).
\begin{prop}
  \label{prop:Iwahori-degeneation}Suppose that $\lambda,h\in X_{*}(T)$ with $h$ dominant, and that there exist~$x\in \Iw_1$ and~$\mu\in X_{*}(T)$ such that \[(u^{-\mu} x u^{\mu})u^{\lambda}\in \Gr_G^{sw,\mu_p,\uparrow h}.\] Then $\lambda_{\dom}\uparrow h$.
\end{prop}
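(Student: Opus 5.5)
The plan is a $\Gm$-orbit closure (contraction) argument. First we strip off the infinite-dimensional part of $x$ using~\eqref{eq:22}. Then we contract the remaining finite-dimensional unipotent part to the identity, using a one-parameter subgroup that combines loop rotation with conjugation by a constant cocharacter of~$T$. This shows that the coset of $u^{\lambda}$ itself lies in $\Gr_G^{sw,\mu_p,\uparrow h}$, and Proposition~\ref{prop:K-orbit-closure-mu-fixed} (in its switched form) finishes the proof.

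\emph{Step 1: reduction to a finite product.} By~\eqref{eq:22} we write $x=yz$ with $y\in \Iw_1\cap u^{\mu}\Iw_1u^{-\mu}$ and $z\in U_{\mu}$. Then $u^{-\mu}yu^{\mu}\in \Iw_1\subseteq L^+G$. Since $\Gr^{sw}_G=L^+G\backslash LG$ is a quotient on the left, this factor disappears, and the point in question equals $L^+G\cdot z'u^{\lambda}$ with $z'=u^{-\mu}zu^{\mu}$. By~\eqref{eq:24}, $z'$ is a finite product of elements of affine root groups $U_{\alpha+m}$ with $m=n-\langle\alpha,\mu\rangle$ and $\delta_{\alpha}\le n<\delta_{\alpha}+\langle\alpha,\mu\rangle$. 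In particular $m\le 0$ when $\alpha\in\Phi^+$, and $m<0$ when $\alpha\in\Phi^-$.

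\emph{Step 2: the contracting $\Gm$-action.} Fix a regular dominant $\eta\in X_*(T)$ and an integer $N\gg 0$. For $t\in\Gm$, let $t$ act on $LG$ by $g\mapsto \eta(t)\,\mathrm{rot}_{t^{-N}}(g)\,\eta(t)^{-1}$, where $\mathrm{rot}_s$ denotes $u\mapsto su$. This action descends to $\Gr^{sw}_G$.

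It preserves $\Gr_G^{sw,\mu_p,\uparrow h}$, for the following reasons:
\begin{itemize}
\item loop rotation commutes with the $\mu_p$-action and preserves $L^+_pG$;
\item $\mathrm{rot}_s(u^h)=h(s)u^h$ with $h(s)\in T\subseteq L^+_pG$, so rotation preserves the orbit $(\Gr^{sw,h}_G)^{\mu_p}$ and hence its closure;
\item left multiplication by $\eta(t)$ is absorbed by $L^+G$, and right multiplication by $\eta(t)^{-1}\in T\subseteq L^+_pG$ preserves each $L^+_pG$-orbit.
\end{itemize}

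Applying the action to $z'u^{\lambda}$, each root factor $u_{\alpha}(a u^m)$ becomes $u_{\alpha}(a\,t^{\langle\alpha,\eta\rangle-Nm}u^m)$. Meanwhile $u^{\lambda}$ becomes $\lambda(t^{-N})u^{\lambda}$, and the constant torus factor is again absorbed by the right $T$-action. The exponent $\langle\alpha,\eta\rangle-Nm$ is positive in both cases:
\begin{itemize}
\item if $m=0$, then $\alpha\in\Phi^+$, so $\langle\alpha,\eta\rangle>0$;
\item if $m<0$, the exponent is positive for $N\gg0$ (there are only finitely many factors).
\end{itemize}
Hence, modulo the right $T$-action, the orbit point tends to $L^+G\cdot u^{\lambda}$ as $t\to0$. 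Since $\Gr_G^{sw,\mu_p,\uparrow h}$ is closed and stable under the action, we get $u^{\lambda}\in \Gr_G^{sw,\mu_p,\uparrow h}$.

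\emph{Step 3: conclusion.} Write $\lambda_{\dom}=w\lambda$ with $w\in W$. Then $u^{\lambda}$ and $u^{\lambda_{\dom}}$ lie in the same $L^+_pG$-orbit, since $u^{w\lambda}=wu^{\lambda}w^{-1}$ and constant lifts of Weyl elements lie in both $L^+_pG$ and $L^+G$. By the switched version of Proposition~\ref{prop:K-orbit-closure-mu-fixed}, it follows that $\lambda_{\dom}\uparrow h$.

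The main obstacle I expect is Step 2. One must check carefully, with the paper's switched left/right conventions, that the combined rotation-plus-torus action genuinely preserves the closed subset $\Gr_G^{sw,\mu_p,\uparrow h}$. One must also get the signs right, so that all affine root weights appearing in $z'$ have the same sign. If the conventions turn out to be opposite, I would replace $\eta$ by $-\eta$, or $N$ by $-N$, and take the limit $t\to\infty$ instead.
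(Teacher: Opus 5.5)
Your proof is correct, and its overall skeleton is the paper's: strip off $y$ using \eqref{eq:22}, contract the finite product $u^{-\mu}zu^{\mu}$ by a $\Gm$-action to show that $u^{\lambda}$ lies in the closure, then Weyl-conjugate and apply Proposition~\ref{prop:K-orbit-closure-mu-fixed}. Your sign bookkeeping is right, so the fallback in your last paragraph is not needed.

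The difference is the choice of contracting one-parameter subgroup. The paper conjugates only by the constant element $\mu(t)\in T$, i.e.\ it uses $t\mapsto \bigl(t^{\mu}u^{-\mu}zu^{\mu}t^{-\mu}\bigr)u^{\lambda}$. This works because, by \eqref{eq:23}, every affine root group $U_{\alpha+n}$ occurring in $U_{\mu}$ has $\langle\alpha,\mu\rangle>0$ (otherwise the range of $n$ is empty). Each factor is therefore rescaled by $t^{\langle\alpha,\mu\rangle}$, which tends to $0$ as $t\to 0$. The only invariances needed are absorption of $t^{\mu}$ on the left by $L^+G$, and stability of the closure under right multiplication by $t^{-\mu}\in T\subseteq L^+_pG$.

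Your action, loop rotation $\mathrm{rot}_{t^{-N}}$ combined with conjugation by a regular dominant cocharacter, contracts according to the affine root itself: it contracts the factors with $m<0$, and those with $m=0$ and $\alpha\in\Phi^+$. This is more uniform, since it contracts every affine root subgroup not contained in $\Iw$, independently of $\mu$. The cost is that you must also check that loop rotation preserves $\Gr_G^{sw,\mu_p,\uparrow h}$, which you did correctly, and choose $N\gg0$. The paper's choice avoids both by exploiting the positivity $\langle\alpha,\mu\rangle>0$ built into $U_{\mu}$.
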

\begin{proof}
  We claim that $u^{\lambda}\in \Gr_G^{sw,\mu_p,\uparrow h}$.
  Granting this, it then follows that for any~$w\in W$, we have $u^{w(\lambda)}=wu^{\lambda}w^{-1}\in \Gr_G^{sw,\mu_p,\uparrow h}$ (recall that we have chosen a lift of~$w$ to $N_G(T)\subset L^+_pG$, and that $\Gr_G^{sw,\mu_p,\uparrow h}$ is stable under the right action of~$L^+_pG$).
  In particular, if we choose~$w$ so that $w(\lambda)=\lambda_{\dom}$, then Proposition~\ref{prop:K-orbit-closure-mu-fixed} implies that $\lambda_{\dom} \uparrow h$.

  We now show the claim. By~\eqref{eq:22}, we may write $x=yz$ where $u^{-\mu}yu^\mu\in \Iw_1$ and $z\in U_{\mu}$.
  Then \[(u^{-\mu} x u^{\mu})u^{\lambda}=(u^{-\mu} y u^{\mu})(u^{-\mu} z u^{\mu})u^{\lambda},\] so since (by assumption) $(u^{-\mu} x u^{\mu})u^{\lambda}\in \Gr_G^{sw,\mu_p,\uparrow h}$, and $u^{-\mu}yu^\mu\in \Iw_1 \subset L^+G$, we have \[(u^{-\mu} z u^{\mu})u^{\lambda} \in \Gr_G^{sw,\mu_p,\uparrow h}.\]
By~\eqref{eq:23} the group~$U_{\mu}$ is a product of affine root groups~$U_{\alpha+n}$ with $\langle \alpha,\mu\rangle>0$. Choose an order for this product, and write
\[
z=\prod_{\alpha,n}u_{\alpha,n}(a_{\alpha,n}).
\]
Noting that $t^{\mu} u_{\alpha,n}(a)t^{-\mu}=u_{\alpha,n}(t^{\langle \alpha,\mu\rangle}a)$, we see that the morphism $\Gm\to \Gr_G^{sw,\mu_p,\uparrow h}$ given by
\[
t\mapsto \bigl(t^{\mu}u^{-\mu}zu^{\mu}t^{-\mu}\bigr)u^{\lambda}
\]
extends to a morphism
  $\A^1\to \Gr_G^{sw,\mu_p,\uparrow h}$ sending~$0$ to~$u^{\lambda}$.
Indeed, the displayed point is obtained from $(u^{-\mu}zu^\mu)u^\lambda$ by left multiplication by~$t^\mu$ (which is invisible in~$\Gr_G^{sw}=L^+G\backslash LG$) and by the right action of~$t^{-\mu}\in L^+_pG$, so it lies in $\Gr_G^{sw,\mu_p,\uparrow h}$ for $t\ne 0$; and the preceding root-subgroup calculation shows that the first factor tends to~$1$ as~$t\to 0$.
In particular we have $u^{\lambda}\in \Gr_G^{sw,\mu_p,\uparrow h}$, as required.
\end{proof}

\subsection{An integral version}\label{subsec:integral-mu-fixed}
 In this subsection, let $k$ be an algebraically closed field of characteristic~$p$,
$K$ an algebraic closure of $W(k)[1/p]$, and $\O$ the ring of integers of~$K$. We let $G$ be a Chevalley group 
over~$\O$.

\begin{lem} For each $\lambda\in X_*(T)^+$, 
$\Gr_{G,\O}^{\mu_p,\uparrow \lambda},$ defined as the closure of
$(\Gr^{\lambda}_{G,\O})^{\mu_{p}}$ in $(\Gr_{G,\O})^{\mu_{p}},$
coincides with the scheme-theoretic image of the map
\[ L^+_pG\to  \Gr_{G,\O}^{\mu_{p}}; \quad  g\mapsto g u^{\lambda}.
\]
\end{lem}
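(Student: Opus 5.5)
Write $Z$ for the scheme-theoretic image of the orbit map $L^+_pG\to \Gr_{G,\O}^{\mu_p}$, $g\mapsto gu^{\lambda}$, and $Z'$ for $\Gr_{G,\O}^{\mu_p,\uparrow\lambda}$. The plan is to identify both with the integral affine Schubert scheme $\Flag^{p,\le v}_{\mathbf{f}_{\lambda_0},\O}$ of \S\ref{para:integral-Schubert}. Here $\lambda=v(\lambda_0)$ with $\lambda_0\in\overline{\Delta}_p\cap X_*(T)$, and $v$ is the minimal representative.

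\emph{Step 1: the map to the Grassmannian.} The morphism $L_pG\to\Gr_{G,\O}$, $g\mapsto gu^{\lambda_0}$, factors through $\Flag^{p}_{\mathbf{f}_{\lambda_0},\O}$, because $L^+_pP_{\mathbf{f}_{\lambda_0}}$ stabilizes $u^{\lambda_0}$ in $\Gr_G$. It also lands in the $\mu_p$-fixed locus, since $L_pG$ is pointwise $\mu_p$-fixed and $u^{\lambda_0}$ is $\mu_p$-fixed up to $L^+G$. Since $\Flag^{p,\le v}_{\mathbf{f}_{\lambda_0},\O}$ is projective over $\O$, its image $Y$ is closed in $\Gr^{\mu_p}_{G,\O}$. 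By construction $\Flag^{p,\le v}$ is the scheme-theoretic image of $g\mapsto gv$, and $u^{\lambda}$ equals $v u^{\lambda_0}$ in $\Gr_G$ up to $L^+G$. Hence the scheme-theoretic image $Z$ of the orbit map is the scheme-theoretic image of $\Flag^{p,\le v}_{\mathbf{f}_{\lambda_0},\O}\to\Gr^{\mu_p}_{G,\O}$. As that source is reduced and flat, $Z$ is flat over $\O$, reduced, and has underlying set $Y$.

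\emph{Step 2: comparing $Z$ and $Z'$.} The orbit $(\Gr^{\lambda}_{G,\O})^{\mu_p}$ contains the image of $L^+_pG\cdot u^{\lambda}$. Over the fibres (over $K$ and over $k$), Lemma~\ref{lem:mup-invariants-spherical} shows it equals that orbit. So $Z\subseteq Z'$, since $Z'$ is closed and contains the orbit. Conversely, $Z$ is closed and contains the image of the orbit map on points. To conclude $Z'\subseteq Z$ I must show that $(\Gr^{\lambda}_{G,\O})^{\mu_p}$ lies in $Z$ scheme-theoretically, not merely on points. I would argue that $(\Gr^{\lambda}_{G,\O})^{\mu_p}$ is reduced, indeed smooth over $\O$. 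The reason is that $\Gr^{\lambda}_{G,\O}$ is smooth over $\O$, and fixed points of the linearly reductive (diagonalizable) group $\mu_p$ on a smooth scheme are smooth. A smooth, hence reduced, subscheme whose points all lie in the closed subscheme $Z$ is contained in $Z$.

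\emph{Main obstacle.} The delicate point is the smoothness of fixed points, since $\mu_p$ is not étale over $\O$ (in residue characteristic $p$). Diagonalizable group schemes are nonetheless linearly reductive over any base, so the fixed locus of a smooth affine scheme with $\mu_p$-action is smooth; this is the SGA3 fact already invoked in Lemma~\ref{lem:comparing-Levi-uparrow-to-G-uparrow}, used together with the standard deformation-theoretic argument. I would apply this locally on $\mu_p$-stable affine opens of $\Gr^{\lambda}_{G,\O}$. These exist, for instance from $L^+G$-translates of the big cell, which are stable under loop rotation. If this step causes trouble, a fallback is to use flatness of $Z$ and compare generic fibres, where everything is reduced. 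The orbit is dense in $Z$ over $K$ by the fibrewise density of $\Flag^{p,v}$, and $Z$ is the flat closure of its generic fibre. This gives $Z\subseteq Z'$ again, and equality of reduced closures.
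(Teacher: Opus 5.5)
Your argument is correct and is essentially the paper's. In both, the orbit map factors through $(\Gr^{\lambda}_{G,\O})^{\mu_p}$ and is surjective onto it by Lemma~\ref{lem:mup-invariants-spherical}, applied on the two fibres of $\Spec\O$. Your Step~1 (the comparison with $\Flag^{p,\le v}_{\mathbf{f}_{\lambda_0},\O}$) is not needed for this lemma; it is really the content of the next proposition.

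What you add is an explicit proof that $(\Gr^{\lambda}_{G,\O})^{\mu_p}$ is reduced, which the paper's ``the lemma follows'' tacitly uses to pass from pointwise surjectivity to equality of scheme-theoretic closures. The smoothness of fixed points of a group of multiplicative type acting on a smooth, separated, finitely presented scheme is not the SGA3 fact cited in the paper; that citation is only for closedness of the fixed locus. It is nonetheless standard (e.g.\ Conrad--Gabber--Prasad, Prop.~A.8.10(2)) and needs no $\mu_p$-stable affine cover.
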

\begin{proof} The map in the lemma factors through a map $L^+_pG\to (\Gr^{\lambda}_{G,\O})^{\mu_{p}}$, and this map is surjective, as by Lemma~\ref{lem:mup-invariants-spherical} it is surjective over each point of $\Spec \O$.
The lemma follows.
\end{proof}

  \begin{prop}\label{prop:closure-mu-fixed-Zpbar-Fpbar}
    If~$\F=k$ or~$K$, then the natural closed immersion \[\Gr_{G,\F}^{\mu_p,\uparrow\lambda}\into ( \Gr_{G,\O}^{\mu_p,\uparrow \lambda}\times_{\O}\F)_{\red}\]is an isomorphism.
  \end{prop}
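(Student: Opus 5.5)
The plan is to compare both sides through their underlying sets of $L^+_pG$-orbits. We will then use that the source is reduced and contains the open dense orbit, and that the target is reduced with the same points.

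Since $\Gr_{G,\F}^{\mu_p,\uparrow\lambda}$ is reduced by definition (a Zariski closure of a reduced orbit), and the target is reduced by construction, it suffices to show that the closed immersion is surjective on $\overline{\F}=\F$-points. That is, we must show every point of $\Gr_{G,\O}^{\mu_p,\uparrow\lambda}\times_\O\F$ lies in the closure of $(\Gr^\lambda_{G,\F})^{\mu_p}$.

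By Proposition~\ref{prop:K-orbit-closure-mu-fixed}, the source is the union of the orbits $L^+_pG\cdot u^{\lambda'}$ with $\lambda'\uparrow\lambda$. So we must show that the fibre of $\Gr_{G,\O}^{\mu_p,\uparrow\lambda}$ over $\F$ contains no other points.

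For $\F=K$ this is easy. The generic fibre of the scheme-theoretic image of $L^+_pG\to\Gr^{\mu_p}_{G,\O}$, $g\mapsto gu^\lambda$, is the scheme-theoretic image of the generic fibre map, since scheme-theoretic images commute with flat base change $\O\to K$ (on quasi-compact pieces, working inside a finite-type closed subscheme $\Gr^{\le\lambda}_{G,\O}$). That image is exactly $\Gr_{G,K}^{\mu_p,\uparrow\lambda}$ by the preceding lemma.

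For $\F=k$ the key point is that the image $Z\coloneq\Gr_{G,\O}^{\mu_p,\uparrow\lambda}$ is $\O$-flat, being the closure of a flat (even $\O$-torsion-free) family. Hence $Z$ equals the closure of its generic fibre. By the $K$ case, the generic fibre is a union of the orbits $L^+_pG\cdot u^{\lambda'}$ with $\lambda'\uparrow\lambda$. Now:
\begin{itemize}
\item Every $k$-point of $Z$ is a specialization of a $K$-point of $Z$. Since $Z$ lies in the projective $\O$-scheme $\Gr^{\le\lambda}_{G,\O}$, this can be realized as the reduction of an $\O$-point (using $\O$ algebraically closed valued, i.e.\ via the valuative criterion after a finite extension).
\item The $k$-points of $Z$ are $\mu_p$-fixed and lie in $\Gr_{G,k}^{\le\lambda}$, so each lies in some orbit $(\Gr^{\lambda''}_{G,k})^{\mu_p}=L^+_pG\cdot u^{\lambda''}$ by Lemma~\ref{lem:mup-invariants-spherical}.
\end{itemize}
We must show $\lambda''\uparrow\lambda$.

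The step I expect to be the main obstacle is this specialization of orbits from $K$ to $k$: an $\O$-point of $\Gr^{\mu_p}_{G,\O}$ with generic fibre in the $\lambda'$-orbit ($\lambda'\uparrow\lambda$) and special fibre in the $\lambda''$-orbit. To handle it, I would use the decomposition~\eqref{eq:5} integrally. Over $\O$, the $\mu_p$-fixed points still decompose according to $\lambda_0\in\overline{\Delta}_p$, since these components are open and closed and are detected by the $\mu_p$-weights, which are locally constant. On each component, $Z$ is the scheme-theoretic image of $L^+_p G\to\Flag^{p,\circ}_{\mathbf{f}_{\lambda_0},\O}$, $g\mapsto gv$. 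This is the integral Schubert scheme $\Flag^{p,\le v}_{\mathbf{f}_{\lambda_0},\O}$ of~\ref{para:integral-Schubert}, which is flat with geometrically integral fibres. Its special fibre is therefore the Schubert variety $\Flag^{p,\le v}_{\mathbf{f}_{\lambda_0},k}$, whose points are described as in the proof of Proposition~\ref{prop:K-orbit-closure-mu-fixed} as the orbits with $\underline{v'}\le_{\Bru}\underline v$, i.e.\ $\lambda''\uparrow\lambda$.

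The delicate point is justifying the integral version of the isomorphism~\eqref{eq:5} (at least that $\Flag^{p,\circ}_{\mathbf{f}_{\lambda_0},\O}\to\Gr^{\mu_p}_{G,\O}$ is a closed immersion compatible with the fibrewise isomorphisms). If that is awkward, I would instead argue directly that the fibres of this monomorphism are the fibrewise isomorphisms~\eqref{eq:5}. Then the image of the integral Schubert scheme has special fibre, on points, equal to the $k$-Schubert variety, which suffices because only underlying reduced schemes are compared.
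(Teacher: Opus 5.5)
Your reduction to surjectivity on points and your treatment of $\F=K$ are fine. The $K$ case is in fact a different and slightly sharper argument than the paper's: flat base change of scheme-theoretic images along $\O\to K$ identifies $\Gr_{G,\O}^{\mu_p,\uparrow\lambda}\times_{\O}K$ with the scheme-theoretic image of the reduced scheme $L^+_pG_K$. That image is exactly $\Gr_{G,K}^{\mu_p,\uparrow\lambda}$, without even passing to $(\cdot)_{\red}$.

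For $\F=k$ there is a genuine gap.

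\emph{Main line.} It rests on an integral version of~\eqref{eq:5}: a decomposition of $\Gr^{\mu_p}_{G,\O}$ into open and closed pieces indexed by $\lambda_0$, and an identification of $Z=\Gr^{\mu_p,\uparrow\lambda}_{G,\O}$ with the integral Schubert scheme $\Flag^{p,\le v}_{\mathbf{f}_{\lambda_0},\O}$. Neither is proved. The reason you give is also not enough: the $\mu_p$-weights of the fibre at the origin do not by themselves separate the pieces. For $\GL_2$, the points $\lambda_0=(0,0)$ and $\lambda_0=(p,0)$ of $\overline{\Delta}_p$ give the same weights.

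\emph{Fallback.} The phrase ``which suffices'' hides the real issue. Knowing the special fibre of the \emph{image} of $\Flag^{p,\le v}_{\mathbf{f}_{\lambda_0},\O}$ tells you nothing about $Z_k$ unless every point of $Z_k$ lies in that image, and you never argue this.

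\emph{Lifting bullet.} Your step lifting $k$-points of $Z$ to $\O$-points is unjustified. The valuative criterion extends $K$-points to $\O$-points; it does not produce a $K$-point specializing to a given $k$-point. This step is not needed anyway.

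The missing observation, which is how the paper argues, is that no integral form of~\eqref{eq:5} is required.

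\emph{Surjectivity.} The map $\Flag^{p,\le v}_{\mathbf{f}_{\lambda_0},\O}\to\Gr^{\mu_p}_{G,\O}$, $g\mapsto gu^{\lambda_0}$, factors through $Z$ and has dense image. This holds because $\Flag^{p,\le v}_{\mathbf{f}_{\lambda_0},\O}$ and $Z$ are the scheme-theoretic images of $L^+_pG$ under $g\mapsto gv$ and $g\mapsto gu^{\lambda}$ respectively, and $vu^{\lambda_0}$ and $u^{\lambda}$ define the same point of $\Gr_G$. Being a morphism of projective $\O$-schemes, it is therefore surjective.

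\emph{Fibre comparison.} For $\F=k$ or $K$, consider the following three maps:
\begin{itemize}
\item the isomorphism $\Flag^{p,\le v}_{\mathbf{f}_{\lambda_0},\F}\isoto(\Flag^{p,\le v}_{\mathbf{f}_{\lambda_0},\O}\times_{\O}\F)_{\red}$, which is the fibre theorem for integral Schubert schemes \cite[Lem.~4.3.6]{MR4061978}; deducing it as you do needs the fibrewise density of the open orbit, not just geometric integrality of fibres;
\item the field-level isomorphism $\Flag^{p,\le v}_{\mathbf{f}_{\lambda_0},\F}\isoto\Gr^{\mu_p,\uparrow\lambda}_{G,\F}$ coming from~\eqref{eq:5};
\item the induced surjection $(\Flag^{p,\le v}_{\mathbf{f}_{\lambda_0},\O}\times_{\O}\F)_{\red}\to(Z\times_{\O}\F)_{\red}$.
\end{itemize}
These fit into a commutative square with the closed immersion of the statement. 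The square forces that closed immersion to be surjective, hence an isomorphism, for both choices of $\F$ at once.
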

  \begin{proof}
As in the proof of Proposition~\ref{prop:K-orbit-closure-mu-fixed}, we write $\lambda=v(\lambda_0)$ with~$v\in W_p$,
$\lambda_0\in \overline{\Delta}_p\cap X_{*}(T)$.
The obvious commutative diagram
\[\begin{tikzcd}
	& L^+_pG \\
\Flag^{p}_{\mathbf{f}_{\lambda_0 },\O} && \Gr_{G,\O}^{\mu_{p}}
	\arrow["g\mapsto gv"', from=1-2, to=2-1]
	\arrow["g\mapsto gu^{\lambda}", from=1-2, to=2-3]
	\arrow["g\mapsto gu^{\lambda_0 }", from=2-1, to=2-3]
\end{tikzcd}\]
induces a natural morphism
\begin{equation*}
\label{eq:241}
\Flag^{p,v}_{\mathbf{f}_{\lambda_0}, \O} \to (\Gr_{G,\O}^{\lambda})^{\mu_p},
\end{equation*}
which is surjective by Lemma~\ref{lem:mup-invariants-spherical}. Hence the induced map
\begin{equation}
\label{eq:25}
\Flag^{p,\le v}_{\mathbf{f}_{\lambda_0},\O} \to \Gr_{G,\O}^{\mu_p,\uparrow \lambda},
\end{equation}
is scheme-theoretically surjective by the definitions of each side as a scheme-theoretic closure. As (\ref{eq:25}) is a map of projective schemes over~$\O$,
this implies that it is surjective.

We therefore have a commutative diagram
\[\begin{tikzcd}
\Flag^{p,\le v}_{\mathbf{f}_{\lambda_0 },\F}	 & (\Flag^{p,\le v}_{\mathbf{f}_{\lambda_0},\O}\times_{\O}\F)_{\red} \\
\Gr_{G,\F}^{\mu_p,\uparrow\lambda}	 & (\Gr_{G,\O}^{\mu_p,\uparrow \lambda}\times_{\O}\F)_{\red}
	\arrow["\sim", from=1-1, to=1-2]
	\arrow["\sim"', from=1-1, to=2-1]
	\arrow["", from=1-2, to=2-2]
	\arrow["", hook, from=2-1, to=2-2]
\end{tikzcd}\]
where the left-hand vertical isomorphism was noted in the proof of Proposition~\ref{prop:K-orbit-closure-mu-fixed}, and the top horizontal map is an isomorphism by \cite[Lem.~4.3.6]{MR4061978} or
~\cite[Cor.~11.15]{epiga:12352}.
The right-hand vertical map is surjective (since~\eqref{eq:25} is),
so the closed immersion $\Gr_{G,\F}^{\mu_p,\uparrow\lambda}\into ( \Gr_{G,\O}^{\mu_p,\uparrow \lambda}\times_{\O}\F)_{\red}$ is also surjective, and thus an isomorphism, as claimed.
  \end{proof}

\subsection{A functor of points interpretation}
\label{ss:fungrass}
Let $G$ be a smooth affine group scheme over a ring~$A$, as in~\ref{para:Affgrasssetup}.
For later use, it will be convenient to record the geometric perspective on the quotient stack $(\Gr_{G}^{sw})^{\mu_p}/L^+_pG$ in terms of the standard one for the Hecke stack $\Gr^{sw}_{G}/L^+G$.

In what follows, all quotients are interpreted as sheaves of groupoids for the \'etale\footnote{It would be more natural to use the pro-\'etale topology when forming quotients by the pro-(smooth affine) group scheme $L^+G$. However, since $L^+G \to G$ is surjective with a kernel that has vanishing higher pro-\'etale cohomology on affines, \'etale and pro-\'etale $L^+G$-torsors on affines coincide, so it suffices to work with the \'etale topology.} topology on $A$-algebras.
Starting with the (switched) affine Grassmannian $\Gr^{sw}_{G}$ of $G$ and its natural $\mu_p$-action (considered in \S \ref{subsec:affine-Grassmannian-recalling}), we obtain three stacks over $A$:
\[ \Hk_G \coloneq  \Gr^{sw}_{G}/L^+G, \quad  (\Hk_G)^{\mu_p}, \quad \text{and} \quad \pHk_G = (\Gr^{sw}_{G})^{\mu_p}/L^+_p G, \]
where, for a stack $\mathcal{X}$ equipped with an action of a group scheme~$\mu$, the fixed point stack $\mathcal{X}^{\mu}$ is, by definition, the stack of sections of $\mathcal{X}/\mu \to B\mu$. There are natural maps
\[ \pHk_G \to (\Hk_G)^{\mu_p} \to \Hk_G.\]
Our goal is to describe the functor of points of these stacks (especially $\pHk_G$). For a test $A$-algebra $R$, write
 $\mathbf{D}_{R} = \mathrm{Spec}(R\llbracket u \rrbracket)$ and
 $\mathbf{D}_{R,p} = \mathrm{Spec}(R\llbracket u^p \rrbracket)$ for the $p$-speed disc, so there is a natural degree $p$ map
 $\phi_{/R}:\mathbf{D}_R \to \mathbf{D}_{R,p}$, which factors through
 the stack quotient $\mathbf{D}_{R}/\mu_p$. Write $\mathbf{D}^\circ_R = \mathrm{Spec}(R((u))) \subset \mathbf{D}_R$ for the punctured disc, and similarly $\mathbf{D}^\circ_{R,p} \subset \mathbf{D}_{R,p}$ for the $p$-speed version. The map $\phi_{/R}$ induces an identification $\mathbf{D}^{\circ}_{R}/\mu_p \simeq \mathbf{D}^\circ_{R,p}$. By \cite[Prop.~7]{Cesnavicius}, the set $\Gr^{sw}_G(R)$ is naturally identified with the (discrete) groupoid of pairs
$(M,\iota)$ consisting of a $G$-torsor $M$ over $\mathbf{D}_R$ and a trivialization $\iota:M^0|_{\mathbf{D}^\circ_R}\isoto M|_{\mathbf{D}^\circ_R}$ of $M$ over $\mathbf{D}^\circ_R$ (where~$M^0$ denotes the trivial $G$-torsor); this has a natural $LG(R)$-action on the right via automorphisms of $M^0$. From this description, we obtain the following description of $\Hk_G$, $(\Hk_G)^{\mu_p}$ and $\pHk_G$:

\begin{prop} Fix a test ring $R$.
\label{prop:funcmupfixed}
\begin{enumerate}
    \item $\Hk_G(R)$ is naturally (in $R$) identified with the groupoid $\mathcal{C}_1(R)$ of triples $(M_1,M_2,\tau)$, where the $M_i$ are $G$-torsors on $\mathbf{D}_R$ and $\tau:M_1|_{\mathbf{D}^\circ_R} \simeq M_2|_{\mathbf{D}^{\circ}_R}$ is an isomorphism of torsors.

    \item $(\Hk_G)^{\mu_p}(R)$ is naturally (in $R$) identified with the groupoid $\mathcal{C}_2(R)$ of triples $(M_1,M_2,\tau)$, where the $M_i$ are $\mu_p$-equivariant $G$-torsors on $\mathbf{D}_R$ and $\tau:M_1|_{\mathbf{D}^\circ_R} \simeq M_2|_{\mathbf{D}^{\circ}_R}$ is a $\mu_p$-equivariant isomorphism of torsors.

    \item $\pHk_G(R)$ is naturally (in $R$) identified with the groupoid $\mathcal{C}_3(R)$ of triples $(M_1,M_2,\tau),$
where $M_1$ is a $G$-torsor on $\mathbf{D}_{R,p}$, $M_2$ is a $\mu_p$-equivariant $G$-torsor on $\mathbf{D}_{R}$, and
 $\tau:\phi_{/R}^* M_1|_{\mathbf{D}^\circ_R} \isoto M_2|_{\mathbf{D}^\circ_R}$ is a $\mu_p$-equivariant isomorphism of torsors.
\end{enumerate}
\end{prop}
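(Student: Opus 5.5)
The approach is to deduce all three identifications from the Beauville--Laszlo style description of $\Gr^{sw}_G(R)$ recalled above: pairs $(M,\iota)$ with $\iota:M^0|_{\mathbf{D}^\circ_R}\isoto M|_{\mathbf{D}^\circ_R}$, carrying a right $LG(R)$-action by precomposition. We then pass to quotient stacks by descent along torsors for the relevant arc groups.

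For (1), we first note that $L^+G$-torsors over $\Spec R$ (étale locally trivial) are the same as $G$-torsors on $\mathbf{D}_R$. A $G$-torsor $M_1$ on $\mathbf{D}_R$ is étale locally on $R$ trivial, by Hensel's lemma and smoothness of $G$. Its sheaf of trivializations is then an $L^+G$-torsor, and conversely any $L^+G$-torsor twists $M^0$ into such a torsor. We define the map $\Gr^{sw}_G\to\mathcal{C}_1$ by $(M,\iota)\mapsto(M^0,M,\iota)$. This is $L^+G$-invariant up to canonical isomorphism, since $g\in L^+G$ acts by an automorphism of $M^0$ over the whole disc. It therefore descends to $\Hk_G\to\mathcal{C}_1$. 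Full faithfulness and essential surjectivity can be checked étale locally, where $M_1$ is trivial. Choosing a trivialization $M_1\simeq M^0$ reduces a triple to a point of $\Gr^{sw}_G$, and the choices form an $L^+G$-torsor, which gives the equivalence.

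For (2), we apply the definition of fixed points as sections of $\mathcal{X}/\mu_p\to B\mu_p$ to $\mathcal{X}=\Hk_G\simeq\mathcal{C}_1$. The $\mu_p$-action on $\mathcal{C}_1$ is by pullback along loop rotation of $\mathbf{D}_R$. So a section over $\Spec R$ of $\mathcal{C}_1/\mu_p\to B\mu_p$ is a $\mu_p$-torsor $P$ over $R$ together with a $P$-twisted triple, and sections lift $\mathrm{id}$, i.e.\ we restrict to the trivial torsor. After unwinding, such a section is a triple equipped with compatible isomorphisms $a^*(M_i,\tau)\simeq \mathrm{pr}^*(M_i,\tau)$ over $\mu_p\times\mathbf{D}_R$ satisfying the cocycle condition. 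This is exactly $\mu_p$-equivariant data on $M_1$, $M_2$ with $\tau$ equivariant. The functoriality in $R$ is formal, since the definition is via the stack $\mathcal{C}_1/\mu_p$ over $B\mu_p$, which is computed fpqc locally.

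For (3), we use the same descent as in (1), now with $L^+_pG$-torsors over $R$, which correspond to $G$-torsors $M_1$ on $\mathbf{D}_{R,p}$. The map $(\Gr^{sw}_G)^{\mu_p}\to\mathcal{C}_3$ is the following. A $\mu_p$-fixed point $(M,\iota)$ of $\Gr^{sw}_G$ is a $\mu_p$-equivariant structure on $M$ making $\iota$ equivariant for the trivial structure on $M^0=\phi_{/R}^*M^0_p$. Fixed points of a sheaf are a condition, and it is rigid because $\iota$ is an isomorphism over the punctured disc, which is schematically dense. We send this point to $(M^0_p,M,\iota)$. As in (1), the map is $L^+_pG$-invariant, because $L^+_pG$ acts through automorphisms of $M^0_p$ on $\mathbf{D}_{R,p}$, and these pull back to $\mu_p$-equivariant automorphisms. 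Descending gives $\pHk_G\to\mathcal{C}_3$. Étale locally $M_1$ is trivial, and trivializations of $M_1$ form an $L^+_pG$-torsor, so the map is an equivalence.

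The main obstacle is the point in (3) that $\mu_p$-equivariance of $M_2$ is determined by, and automatically extends from, the punctured disc. We need this to identify the $\mu_p$-fixed locus of the sheaf $\Gr^{sw}_G$ with triples in which $M_2$ carries an equivariant structure compatible with $\tau$. It follows from the fact that $\mathbf{D}^\circ_R\subset\mathbf{D}_R$ and $\mu_p\times\mathbf{D}^\circ_R\subset\mu_p\times\mathbf{D}_R$ are schematically dense and $G$ is affine. Hence a morphism of torsors defined over the punctured disc extends in at most one way, and the equivariance isomorphism, being $\iota\circ\iota^{-1}$ conjugated, extends precisely when the point is fixed. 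All remaining verifications are routine unwinding of descent.
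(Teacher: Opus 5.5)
Your proposal is correct and follows the paper's argument. In (1) and (3) you map $\Gr^{sw}_G$ (respectively its $\mu_p$-fixed locus, viewed as pairs $(M,\iota)$ with $M$ $\mu_p$-equivariant and $\iota$ equivariant) to triples by forgetting the trivialization of the first torsor, observe full faithfulness onto triples with $M_1$ trivial, and conclude by \'etale-local triviality of $G$-torsors on the disc (respectively the $p$-speed disc). In (2) you formally unwind the definition of fixed points.

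One phrasing slip in (2): an $R$-point of the fixed-point stack is a map $B\mu_{p,R}\to\mathcal{C}_1/\mu_p$ over $B\mu_p$ (equivalently, a $\mu_p$-equivariant map from $\Spec R$ with trivial action), not a map $\Spec R\to\mathcal{C}_1/\mu_p$ over the trivial torsor. The descent datum you end up with is nonetheless the right one.
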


\begin{proof}

For (1), the functor of points description of $\Gr^{sw}_G$ recalled above
gives rise to a natural $L^+G(R)$-equivariant map $\Gr^{sw}_G(R) \rightarrow \mathcal{C}_1(R)$ simply by forgetting the canonical trivialization on the trivial $G$-bundle on $\mathbf{D}_R$. The resulting map $\Gr^{sw}_G(R)/L^+G(R) \to \mathcal{C}_1(R)$ is fully faithful by construction, with essential image being exactly those triples $(M_1,M_2,\tau)$ where $M_1$ admits a trivialization. It then suffices to show that for any triple $(M_1,M_2,\tau) \in \mathcal{C}_1(R)$, after replacing $R$ with an \'etale cover, we can find a trivialization of $M_1$. By smoothness of $G$, a $G$-torsor on $\mathbf{D}_R$ is trivial if and only if its fibre over the origin $\mathrm{Spec}(R) \subset \mathbf{D}_R$ is trivial; as the latter can be achieved \'etale locally on $R$ by smoothness of $G$, this gives the result.

The description in (2) follows formally from (1) by unwinding the definition of $\mu_p$-fixed points.

Finally, for (3), note that $(\Gr^{sw}_G)^{\mu_p}(R)$ can be viewed as the groupoid of pairs $(M, \iota)$,
where $M$ is a $\mu_p$-equivariant $G$-bundle on $\mathbf{D}_R$, and $\iota:\phi_{/R}^*(M_0)|_{\mathbf{D}^\circ_R} \to M|_{\mathbf{D}^\circ_R}$ is a $\mu_p$-equivariant isomorphism, where $M_0$ denotes the trivial $G$-bundle on $\mathbf{D}_{R,p}$. One then argues as in (1) above.
\end{proof}

\begin{remark}[Comparing $\pHk_G$ with $(\Hk_G)^{\mu_p}$]
Fix a test ring $R$. As pullback along the map $\mathbf{D}_R/\mu_p \to \mathbf{D}_{R,p}$ is fully faithful on quasi-coherent sheaves, the natural map $\pHk_G(R) \to (\Hk_G)^{\mu_p}(R)$ is fully faithful. However, it is not essentially surjective, even when $R=k$ is an algebraically closed field. Indeed, using linear reductivity of $\mu_p$ and deformation theory, one can verify that a point $(M_1,M_2,\tau) \in \mathcal{C}_2(k) = (\Hk_G)^{\mu_p}(k)$ lies in $(\pHk_G)(k)$ exactly when the induced $\mu_p$-equivariant $G$-torsor $0^* M_1$ on the $0$-section $0:\mathrm{Spec}(k) \to \mathbf{D}_k$ is $\mu_p$-equivariantly trivial; noting that $\mu_p$-equivariant $G$-torsors on $\mathrm{Spec}(k)$ are given by (conjugacy classes of) representations $\mu_p \to G$, it is easy to find examples where $0^* M_1$ is not $\mu_p$-equivariantly trivial, showing strictness of the containment $\pHk_G(k) \to (\Hk_G)^{\mu_p}(k)$.
\end{remark}

\begin{lemma}\label{lem:pointquotient} Let $R$ be a local Henselian $A$-algebra, and suppose that either the residue field $k$ of $R$ is separably closed or that $G$ is connected, and $k$ is finite. Then
$$ \Hk_G(R) = \Gr^{sw}_{G}(R)/L^+G(R) \quad \text{and \,\,\,} \pHk_G(R) = (\Gr^{sw}_{G})^{\mu_p}(R)/L^+_pG(R). $$
In particular, if $R=k$ is separably closed and $G$ is connected reductive, then we have
$$ |\pHk_G(k)| \simeq |\Hk_G(k)| \simeq X_*(T)^+ $$
where $T\subseteq G\otimes k$ is a maximal torus, and $|-|$ denotes the set of isomorphism classes of objects.
\end{lemma}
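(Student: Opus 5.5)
We must show two things. First, every object of $\Hk_G(R)$ (resp.\ $\pHk_G(R)$) has its source torsor $M_1$ trivial. Second, once $M_1$ is trivialized, the groupoid is the action groupoid of $L^+G(R)$ on $\Gr^{sw}_G(R)$ (resp.\ of $L^+_pG(R)$ on $(\Gr^{sw}_G)^{\mu_p}(R)$). The second point is the full faithfulness already noted in the proof of Proposition~\ref{prop:funcmupfixed}. Indeed, triples whose source is the trivial torsor, with a chosen trivialization, are exactly points of $\Gr^{sw}_G(R)$ (resp.\ of the $\mu_p$-fixed locus, where the source is $\phi_{/R}^*$ of a torsor on $\mathbf{D}_{R,p}$). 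Changing the trivialization is exactly the action of $\Aut(M^0)=L^+G(R)$ (resp.\ of $G(R\llbracket u^p\rrbracket)=L^+_pG(R)$, the automorphisms of the trivial torsor on $\mathbf{D}_{R,p}$). So the whole task is the triviality of $M_1$ without passing to an étale cover.

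By Proposition~\ref{prop:funcmupfixed}, in the $\Hk_G$ case $M_1$ is a $G$-torsor on $\mathbf{D}_R$, and in the $\pHk_G$ case it is a $G$-torsor on $\mathbf{D}_{R,p}\cong \Spec R\llbracket u^p\rrbracket$. In both cases it lives on a power series ring $R\llbracket t\rrbracket$, which is $t$-adically complete. By smoothness of $G$, a torsor $P$ on $\Spec R\llbracket t\rrbracket$ is trivial if and only if $P|_{t=0}$ is trivial. To see this, lift a section successively over $R[t]/t^n$, using formal smoothness of the smooth affine scheme $P$, and pass to the limit using affineness: $P(R\llbracket t\rrbracket)=\varprojlim P(R[t]/t^n)$. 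So we are reduced to showing that every $G$-torsor over the Henselian local ring $R$ is trivial.

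Again by smoothness and Hensel's lemma, a $G$-torsor over $R$ has an $R$-point if and only if its reduction over $k$ has a $k$-point. If $k$ is separably closed, a smooth nonempty $k$-scheme has a $k$-point, so we are done. If instead $G$ is connected and $k$ is finite, Lang's theorem gives $H^1(k,G_k)=0$, so again the torsor over $k$ is trivial. This last step is the only place where the hypotheses are used, and it is the main point to check. In particular, $G_k$ must be a smooth connected group over the finite field for Lang's theorem to apply; this holds since $G$ is smooth and connected (fibrewise). One also needs Hensel lifting for the smooth affine scheme $P$ over $R$, which is the standard property of Henselian local rings.

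For the final assertion take $R=k$ separably closed. Then the isomorphism classes are the orbit sets $L^+G(k)\backslash\Gr^{sw}_G(k)$ and $L^+_pG(k)\backslash(\Gr^{sw}_G)^{\mu_p}(k)$. If $k$ is algebraically closed, these are identified with $X_*(T)^+$ by the Cartan decomposition and by Lemma~\ref{lem:mup-invariants-spherical} together with Remark~\ref{affgrassfixedorbit} (switched version). For $k$ merely separably closed, $G_k$ is split, so the Cartan decomposition $G(k((u)))=\coprod G(k\llbracket u\rrbracket)u^\lambda G(k\llbracket u\rrbracket)$ still holds over any field for split $G$. For the $\mu_p$-fixed version, one uses that the decomposition~\eqref{eq:5} and the Iwahori–Bruhat decomposition of $G(k((u^p)))$ with respect to parahorics are valid over any field for split groups. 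Alternatively, one reduces to $\bar k$ by noting that orbits are defined over $k$ and the representatives $u^\lambda$ are $k$-rational.
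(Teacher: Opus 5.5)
Your argument is correct and is essentially the paper's. The paper phrases the key step as the vanishing of $H^1(\Spec R, L^+G)$, using $L^+_pG\simeq L^+G$, and proves it by writing $L^+G$ as an inverse limit of extensions of $G$ by vector groups and then using Hensel together with Lang (or separable closedness) for $G$ itself; this is exactly your $t$-adic lifting argument followed by the residue-field step.

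For the final assertion, the paper simply invokes Remark~\ref{affgrassfixedorbit}, which is established over algebraically closed fields, and that is the only case used later. So your extra remarks about merely separably closed $k$ go beyond what the paper proves. The ``alternatively'' sketch there would also need an extra input, such as smoothness of the orbit maps, to be complete.
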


\begin{proof} As $L^+_p G \simeq L^+G$ abstractly, the first claim in the lemma follows from the vanishing of $H^1(\mathrm{Spec}(R),L^+G).$
To see that this group is trivial, write $L^+G$ as an inverse limit of extensions of $G$ by iterated extensions of vector groups. Then this reduces to the corresponding vanishing claim for the smooth group $G$ itself, which follows from Lang's theorem in the finite residue field case and is clear in the separably closed residue field case.

The second claim now follows from Remark~\ref{affgrassfixedorbit}.
\end{proof}

\begin{cor}\label{cor:closure-from-zpbar-point} Let $\O$ be as in~\ref{subsec:integral-mu-fixed}, and let $G$ be a connected reductive group over~$\O$.
Let $\beta \in \pHk_G(\O),$ and write $\beta(K) \in \pHk_G(K)$ and
$\beta(k) \in \pHk_G(k)$
for the images of~$\beta$. Then
$$ \beta(k) \uparrow \beta(K) \in X_*(T)^+.$$
\end{cor}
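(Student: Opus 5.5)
We plan to lift $\beta$ to an $\O$-point of $(\Gr^{sw}_{G,\O})^{\mu_p}$ and then apply the integral closure result, Proposition~\ref{prop:closure-mu-fixed-Zpbar-Fpbar}. The ring $\O$ is a valuation ring with algebraically closed residue field $k$. It is Henselian, since it is a directed union of complete DVRs $W(k)[1/p]$-extensions' integer rings. Lemma~\ref{lem:pointquotient} then gives $\pHk_G(\O)=(\Gr^{sw}_G)^{\mu_p}(\O)/L^+_pG(\O)$. So $\beta$ is represented by some $g\in(\Gr^{sw}_G)^{\mu_p}(\O)$, and $\beta(K),\beta(k)$ are represented by the images $g_K$, $g_k$. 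The same lemma, applied over $K$ and $k$, identifies these classes with elements of $X_*(T)^+$. Let $\lambda\in X_*(T)^+$ be the class of $g_K$. By Lemma~\ref{lem:mup-invariants-spherical} (in its switched form), $g_K$ then lies in $(\Gr^{sw,\lambda}_{G,K})^{\mu_p}$.

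The key step is to show that $g$ factors through $\Gr^{sw,\mu_p,\uparrow\lambda}_{G,\O}$, the closure of $(\Gr^{sw,\lambda}_{G,\O})^{\mu_p}$. Since $(\Gr^{sw}_{G,\O})^{\mu_p}$ is an ind-scheme that is ind-projective over $\O$, the point $g$ lands in some projective $\O$-scheme $Y$ inside it. Because $\Spec \O$ is integral with dense generic point, $g$ factors through the scheme-theoretic closure of its generic point $g_K$. That generic point lies in $(\Gr^{sw,\lambda}_{G,K})^{\mu_p}\subseteq(\Gr^{sw,\lambda}_{G,\O})^{\mu_p}$. So $g$ factors through the closure of $(\Gr^{sw,\lambda}_{G,\O})^{\mu_p}$, which is $\Gr^{sw,\mu_p,\uparrow\lambda}_{G,\O}$. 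The one point needing care is that the closure of $g_K$ in $Y$ is contained in the closure of the orbit; this is clear because closures are taken inside the same closed sub-ind-scheme.

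Reducing mod the maximal ideal, $g_k$ lies in $(\Gr^{sw,\mu_p,\uparrow\lambda}_{G,\O}\times_\O k)_{\red}$. Since $k$ is a field, $g_k$ is a $k$-point of the reduced subscheme. By Proposition~\ref{prop:closure-mu-fixed-Zpbar-Fpbar} with $\F=k$, this reduced fibre equals $\Gr^{sw,\mu_p,\uparrow\lambda}_{G,k}$. Here we use that the statements for $\Gr_G$ transfer to the switched version by inversion, as noted in the discussion of the switched affine Grassmannian. Proposition~\ref{prop:K-orbit-closure-mu-fixed}, in its switched form, says this closure is the union of the orbits $(\Gr^{sw,\lambda'}_{G,k})^{\mu_p}$ with $\lambda'\uparrow\lambda$. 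So the class $\beta(k)=\lambda'$ of $g_k$ satisfies $\lambda'\uparrow\lambda=\beta(K)$.

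The main obstacle is the bookkeeping in the first step. We must check that $\O$ is Henselian so that $\beta$ lifts to $g$ by Lemma~\ref{lem:pointquotient}. We must also check that "$g_K$ lies in the $\lambda$-orbit" makes sense scheme-theoretically over $\O$. Both are routine.
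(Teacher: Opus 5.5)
Your proposal is correct and is essentially the paper's argument. The paper's proof just cites Lemma~\ref{lem:pointquotient} to view $\beta(k),\beta(K)$ as elements of $X_*(T)^+$, then invokes Propositions~\ref{prop:closure-mu-fixed-Zpbar-Fpbar} and~\ref{prop:K-orbit-closure-mu-fixed}; your lift of $\beta$ to an $\O$-point $g$ of $(\Gr^{sw}_{G,\O})^{\mu_p}$ (using that $\O$ is strictly Henselian), and your check that $g$ factors through the closure of the generic-fibre orbit, are exactly the steps the paper leaves implicit.
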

\begin{proof} By Lemma~\ref{lem:pointquotient},
$\beta(k)$ and $\beta(K)$ can be regarded as elements of $X_*(T)^+$.
The result now follows from Proposition~\ref{prop:K-orbit-closure-mu-fixed} and Proposition~\ref{prop:closure-mu-fixed-Zpbar-Fpbar}.
\end{proof}

\section{Rees stacks and quasi-coherent sheaves}\label{sec:Rees-stacks}
\label{ss:ReesGeneral}

In this section, let $B$ be a commutative ring equipped with an invertible ideal $J \subset B$.  Given a triple $(M,N,\psi)$ consisting of finite projective $B$-modules $M$ and $N$ with an isomorphism $\psi:M[1/J] \simeq N[1/J]$ over $B[1/J]$, we obtain a $\mathbf{Z}$-graded $B/J$-module given in degree $i$ by
\[ (J^i N \cap M) / (J^{i+1} N \cap M + J(J^{i-1}N \cap M)).\]
The first goal of this section is to describe a geometric construction of this graded module via the Rees construction for $J$-adically filtered $B$-modules, in order to make certain base change properties evident; see \S \ref{sss:numrees}. The geometric formulation also makes sense when the input datum $\mathrm{Spec}(B/J) \subset \mathrm{Spec}(B)$ has been replaced by a suitable morphism of formal stacks (see \S \ref{sss:numstack}), and will be used in the sequel for the closed immersion $W(k)^{\HT} \subset W(k)^\Prism$. For our applications, we shall also need a certain ``$p$-speed'' variant of the preceding geometric constructions, discussed in \S \ref{ss:twistedrees}.

For any scheme~$S$, we write~$\Vect(S)$ for the category of vector bundles on~$S$. For a commutative ring $A$, we write~$\Vect(A)$ for~$\Vect(\Spec A)$, which we shall often identify with the category of finite projective $A$-modules.

A recent more comprehensive account of the Rees construction and its uses in related contexts can be found in~\cite{viehmann2025modulitruncatedshtukasdisplays}.

\subsection{Rees stacks for filtered rings}
There is a $\mathbf{Z}_{\geq 0}$-indexed filtration $J^\bullet B$ on $B$ given by powers of $J$, and its extended Rees algebra is defined as the graded $B[t]$-subalgebra
\[\mathrm{Rees}(J^\bullet B) \coloneq  \bigoplus_{i \in \mathbf{Z}} J^{\max(i,0)} t^{-i} \subset B[1/J,t,t^{-1}], \]
where $t$ has degree $1$. In relating this expression to objects more familiar in $p$-adic Hodge theory, it might be helpful rewrite the right hand side above as
\[ \mathrm{Fil}^0(J^{\max(\bullet,0)} B \otimes_B t^\bullet B[t^{\pm 1}]) \]
where $t^\bullet B[t^{\pm 1}]$ denotes the $\mathbf{Z}$-indexed filtration on $B[t^{\pm 1}]$ given by powers of $t$, the object $J^{\max(\bullet,0)} B$ is the evident displayed $\mathbf{Z}$-filtered ring, and the tensor product is endowed with the natural $\otimes$-product $\mathbf{Z}$-indexed filtration.

\begin{example}
\label{ex:Rees}
Say $J=(d)$ is principal. Then
\[ \mathrm{Rees}(J^\bullet B) = B[\tilde{d},t]/(\tilde{d}t-d),\] where $\tilde{d}$ has degree $-1$. Note that the elements $t$ and $\tilde{d}$ form a regular sequence of length $2$.
\end{example}

Viewing gradings as $\mathbf{G}_m$-actions, we can define:

\begin{defn}
The Rees stack of $J^\bullet B$ is the quotient stack
\[ X=\mathrm{Spec}(\mathrm{Rees}(J^\bullet B))/\mathbf{G}_m,\]
regarded as an Artin stack over $\mathrm{Spec}(B)$.
\end{defn}

The functor of points of $X$ is described as follows, generalizing Example~\ref{ex:Rees}. For a $B$-algebra $R$, a point of $X(R)$ is determined by an invertible $R$-module $L$
as well as a factorization
\[ J \otimes_B R \xrightarrow{\tilde{d}'} L \xrightarrow{t'} R\]
of the tautological map $J \otimes_B R \to R$.  In particular, the identity map $X \to X$ corresponds to a universal factorization
\[ J \otimes_B \mathcal{O}_X \xrightarrow{\tilde{d}} \mathcal{O}_X(-1) \xrightarrow{t} \mathcal{O}_X,\]
where the second map is the datum defining the map to $\mathbf{A}^1/\mathbf{G}_m = \mathrm{Spec}(B[t])/\mathbf{G}_m$ coming from the Rees parameter. From now on, we use the symbols $\tilde{d}$ and $t$ to refer to the maps appearing above, viewed as sections of suitable line bundles on $X$. For future reference, note that the elements $t$ and $\tilde{d}$, locally on $X$, form a regular sequence of length $2$ by Example~\ref{ex:Rees}.

The functor of points description then gives the following description of certain associated substacks of $X$:

\begin{lemma}
\label{lem:opensubstackdesc}\leavevmode
\begin{enumerate}
    \item The open substacks $X_{t\neq 0}$, $X_{\tilde{d} \neq 0}$ of $X$ both map isomorphically to $\mathrm{Spec}(B)$ via the structure map.

    \item The intersection $X_{t \neq 0} \cap X_{\tilde{d} \neq 0}$ is the open subscheme $\mathrm{Spec}(B[1/J])$ in either copy of $\mathrm{Spec}(B)$ in the isomorphisms from (1).

    \item The closed substack $X_{t=\tilde{d}=0} \subset X$ is identified with $\mathrm{Spec}(B/J) \times B\mathbf{G}_m$.
    \end{enumerate}
\end{lemma}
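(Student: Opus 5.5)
The plan is to compute everything from the functor of points, working locally on $B$ so that $J=(d)$ is principal and $\mathrm{Rees}(J^\bullet B)=B[\tilde d,t]/(\tilde d t-d)$ as in Example~\ref{ex:Rees}. All three claims are local on $\mathrm{Spec}(B)$ and compatible with Zariski localization, and the identifications we produce are canonical, so they glue. Recall that an $R$-point is a line bundle $L$ together with a factorization $J\otimes_B R\xrightarrow{\tilde d'} L\xrightarrow{t'} R$ of the tautological map.

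For (1), consider first $X_{t\neq 0}$, the locus where $t'$ is an isomorphism. Using $t'$ to identify $L\cong R$ trivializes the $\mathbf{G}_m$-torsor, and then $\tilde d'$ is forced to be the tautological map. So the groupoid of such points over a $B$-algebra $R$ is a point, and the structure map is an isomorphism. Equivalently, $\mathrm{Rees}[t^{-1}]=B[t^{\pm1}]$ and $\mathrm{Spec}(B[t^{\pm1}])/\mathbf{G}_m=\mathrm{Spec}(B)$. Symmetrically, on $X_{\tilde d\neq 0}$ the map $\tilde d'$ is an isomorphism $J\otimes R\cong L$, and then $t'$ is determined as the tautological map. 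Algebraically, $\mathrm{Rees}[\tilde d^{-1}]=B[\tilde d^{\pm1}]$ with $t=d\tilde d^{-1}$.

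For (2), a point lies in both opens exactly when $t'$ and $\tilde d'$ are both isomorphisms. This happens exactly when the composite $J\otimes R\to R$ is an isomorphism, i.e. when $J R=R$, i.e. when $\mathrm{Spec}(R)\to\mathrm{Spec}(B)$ factors through $\mathrm{Spec}(B[1/J])$. Inside either copy of $\mathrm{Spec}(B)$ this is the open $\mathrm{Spec}(B[1/J])$.

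For (3), use the ring computation: $\mathrm{Rees}/(t,\tilde d)=B/(d)$, concentrated in degree $0$. Hence $\mathrm{Spec}(B/J)/\mathbf{G}_m$ with the trivial action, which is $\mathrm{Spec}(B/J)\times B\mathbf{G}_m$. In functor-of-points terms, $t'=\tilde d'=0$ forces $J R=0$, and the only remaining datum is the line bundle $L$.

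The only mild point is checking that the identification in (3) is canonical, which is needed for gluing when $J$ is not principal. I would handle this intrinsically via the functor of points rather than chosen generators: the zero locus of $t$ and $\tilde d$ classifies $B/J$-algebras $R$ with an arbitrary line bundle $L$. That is exactly the functor of points of $\mathrm{Spec}(B/J)\times B\mathbf{G}_m$.
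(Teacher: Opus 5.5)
Your proposal is correct and follows the same route as the paper, which presents the lemma as a direct consequence of the functor-of-points description of $X$, with Example~\ref{ex:Rees} as the local model when $J$ is principal. Your ring computations ($\mathrm{Rees}(J^\bullet B)[t^{-1}]=B[t^{\pm 1}]$, and $\mathrm{Rees}(J^\bullet B)/(t,\tilde{d})=B/(d)$ in degree $0$) fill in details the paper leaves to the reader. So does the standard fact you use implicitly in (2): if $t'\circ\tilde{d}'$ is an isomorphism, then $t'$ is a surjection of invertible modules and hence an isomorphism, and therefore so is $\tilde{d}'$.
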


Write $X^\circ \coloneq  X_{t \neq 0} \cup X_{\tilde{d} \neq 0}$ for the open substack of $X$ where at least one of $t$ or $\tilde{d}$ is invertible, and let $j:X^\circ \to X$ be the corresponding open immersion.

\begin{remark}
\label{rmk:tonothercopy}
For future reference, let us note that the restriction to $\mathrm{Spec}(B) = X_{\tilde{d} \neq 0}$ of the tautological section $t:\mathcal{O}_X(-1) \to \mathcal{O}_X$ coincides with the inclusion $J \subset B$ under our identifications.
\end{remark}

\subsection{Sheaves on Rees stacks}

Let us describe quasi-coherent sheaves on $X^\circ$ and $X$ in linear algebraic terms.  For $X^\circ$, we need the category of isogenies:

\begin{defn}
    The category $\mathrm{Isog}(B,J)$ of $J$-isogenies is the category of triples $(M,N,\psi)$, where $M,N \in \mathrm{Vect}(B)$ and $\psi:M[1/J] \simeq N[1/J]$ is an isomorphism of $B[1/J]$-modules.
\end{defn}

The relevance of this category to current considerations is:
\begin{lemma}
\label{lem:qcohxcirc}
    The category $\mathrm{Vect}(X^\circ)$ of vector bundles on the open substack $j:X^\circ \subset X$ is naturally identified with $\mathrm{Isog}(B,J)$.
\end{lemma}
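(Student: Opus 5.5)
The identification will come from Zariski descent along the cover $X^\circ = X_{t\neq 0}\cup X_{\tilde d\neq 0}$, using Lemma~\ref{lem:opensubstackdesc}. First, by part~(1) of that lemma each of the two open substacks maps isomorphically to $\Spec(B)$. A vector bundle on $X_{\tilde d\neq 0}$ is therefore the same as a finite projective $B$-module $M$, and a vector bundle on $X_{t\neq 0}$ is a finite projective $B$-module $N$. Second, by part~(2) the overlap $X_{t\neq 0}\cap X_{\tilde d\neq 0}$ is identified, inside either copy of $\Spec(B)$, with the open subscheme $\Spec(B[1/J])$. So gluing data is precisely an isomorphism $\psi\colon M[1/J]\simeq N[1/J]$ of $B[1/J]$-modules. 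Because the cover has only two opens, there is no cocycle condition. Zariski descent for vector bundles on stacks then gives an equivalence $\Vect(X^\circ)\simeq\mathrm{Isog}(B,J)$. On morphisms, a map of bundles on $X^\circ$ is a pair of maps $M\to M'$ and $N\to N'$ compatible with $\psi,\psi'$ after inverting $J$, which matches morphisms of triples.

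The step needing genuine care is checking that the two identifications of the overlap with $\Spec(B[1/J])$, one coming from each chart, agree as maps to $\Spec(B)$. If instead they differed by an automorphism, the gluing datum would have to be twisted by it. I would settle this from the functor of points. On $X_{\tilde d\neq 0}$ the map $\tilde d$ is an isomorphism $J\otimes R\simeq L$, so the point is determined by $t'\colon L\to R$, equivalently by the inclusion $J\otimes R\to R$; this is Remark~\ref{rmk:tonothercopy}. On $X_{t\neq 0}$ we have $L\simeq R$, and the point is determined by $J\otimes R\to R$, which is again the tautological map. In both charts the structure map to $\Spec(B)$ is simply the given $R$, so both are identified with $\Spec(B)$ compatibly over $B$. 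On the overlap both $t$ and $\tilde d$ are invertible, which forces $J\otimes R\simeq R$, that is, $R$ is a $B[1/J]$-algebra, and the two identifications coincide.

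I would also state the naturality, that the equivalence is compatible with base change in $(B,J)$, since later sections rely on it. This follows because every identification above is functorial in $B$-algebras.
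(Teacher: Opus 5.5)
Your argument is correct and is essentially the paper's: $X^\circ$ is glued from the two copies of $\Spec(B)$ along $\Spec(B[1/J])$ (Lemma~\ref{lem:opensubstackdesc}), and vector bundles glue along this two-piece cover. Your careful step is automatic, since both chart identifications are restrictions of the structure map $X\to\Spec(B)$. One small point: the remark right after the lemma fixes the convention that $M$ lives on $X_{t\neq 0}$ and $N$ on $X_{\tilde d\neq 0}$, the reverse of your assignment, so you should swap roles to stay consistent with Lemma~\ref{lem:starextendrees}, where the filtration is $F^iM=M\cap J^iN$ (either choice does give an equivalence).
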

\begin{proof}
Lemma~\ref{lem:opensubstackdesc} shows that $X^\circ$ is the union of two open copies of $\mathrm{Spec}(B)$ glued along the common open $\mathrm{Spec}(B[1/J])$ in either copy. The lemma then follows immediately from gluing considerations.
\end{proof}

\begin{remark}
    There are two natural choices for the equivalence in Lemma~\ref{lem:qcohxcirc}, depending on which copy of $\mathrm{Spec}(B)$ inside $X$ carries $M$; in this paper, we always follow the convention that the vector bundle corresponding to $(M,N,\psi) \in \mathrm{Isog}(B,J)$ recovers $M$ over $X_{t \neq 0}$ and $N$ over $X_{\tilde{d} \neq 0}$.
\end{remark}

Next, passing to $X$, we have the following:

\begin{lemma}
\label{lem:qcohX}
The following categories are equivalent:
\begin{enumerate}
    \item The category of $t$-torsion-free quasi-coherent sheaves on $X$.
    \item The category of $\mathbf{Z}$-filtered modules $F^\bullet M$ over the filtered ring $J^\bullet B$.
\end{enumerate}
\end{lemma}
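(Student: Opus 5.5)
The plan is to reduce to the standard dictionary between $\mathbf{G}_m$-equivariant modules over a graded ring and graded modules, and then to show that $t$-torsion-freeness is exactly what lets a graded module be read as a filtration. Quasi-coherent sheaves on $X=\mathrm{Spec}(\mathrm{Rees}(J^\bullet B))/\mathbf{G}_m$ are the same as $\mathbf{Z}$-graded modules $\mathcal{M}=\bigoplus_i \mathcal{M}_i$ over the graded ring $\mathrm{Rees}(J^\bullet B)=\bigoplus_i J^{\max(i,0)}t^{-i}$. Multiplication by $t$ (degree $1$) gives maps $t:\mathcal{M}_{i}\to\mathcal{M}_{i+1}$. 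Multiplication by the degree $-i$ pieces $J^{i}t^{-i}$, for $i\ge 0$, gives maps $J^i\otimes \mathcal{M}_{j}\to \mathcal{M}_{j-i}$, and these are compatible with $t$.

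From a $t$-torsion-free $\mathcal{M}$ we set $M\coloneq \mathcal{M}[1/t]_0=\colim(\mathcal{M}_{-i}\xrightarrow{t}\mathcal{M}_{-i+1}\to\cdots)$. This is a module over $\mathrm{Rees}[1/t]_0=B$. Torsion-freeness makes each $\mathcal{M}_i$ inject into $M$ via $t^{-i}$, with image $F^i M$. These images are decreasing in $i$, and the $J$-action gives $J^a F^i M\subseteq F^{i+a}M$; for $a\ge 0$ this comes from $J^a t^{-a}\cdot \mathcal{M}_{-i}\subseteq \mathcal{M}_{-i-a}$, after fixing the sign conventions so that degree $-i$ corresponds to $F^i$. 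So $F^\bullet M$ is a $\mathbf{Z}$-filtered module over $J^\bullet B$. Conversely, given $F^\bullet M$, form the Rees module $\bigoplus_i F^iM\,t^{-i}\subset M[t^{\pm1}]$. It is a graded $\mathrm{Rees}(J^\bullet B)$-submodule because $J^{a}F^i\subseteq F^{i+a}$, and it is $t$-torsion-free since it sits inside $M[t^{\pm 1}]$.

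It then remains to check that the two constructions are mutually inverse and functorial. Going from filtration to Rees module and back returns $M$ with its filtration, since $\colim_i F^{-i}M$ should equal $M$ for an exhaustive filtration. Going from $\mathcal{M}$ to a filtration and back recovers $\mathcal{M}$ by torsion-freeness.

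The main obstacle is this exhaustiveness and convention bookkeeping. I have to decide whether "filtered module" means exhaustive, or else define $M$ as the colimit so that it is automatically exhaustive. I also have to make sure the grading sign convention ($t$ of degree $1$, $\tilde d$ of degree $-1$) matches filtration degree. Once these are fixed, morphisms match because graded maps commute with $t$ and hence induce filtered maps on colimits, and vice versa.
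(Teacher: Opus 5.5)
Your proposal is correct and follows the paper's route. The paper likewise reads a $t$-torsion-free sheaf as a graded $\mathrm{Rees}(J^\bullet B)$-module, sets $F^iM=\Gamma(X,\mathcal{F}\otimes\mathcal{O}_X(-i))$ (the degree $-i$ piece) with transition maps given by $t$, and inverts this via the Rees module $\bigoplus_i F^iM\,t^{-i}$. So the convention you finally adopt, degree $-i$ corresponding to $F^i$, is the right one; with your first labeling $F^i=\mathrm{im}(\mathcal{M}_i)$ the images would be increasing rather than decreasing. Your extra checks go beyond the paper's write-up, which just calls the result standard and names the two functors: you define $M$ as the colimit so that exhaustiveness is automatic (matching the paper's later remark $M=\varinjlim_{i\to-\infty}F^iM$), verify $J^aF^i\subseteq F^{i+a}$, and show the two constructions are mutually inverse.
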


It might also be helpful to note that the $t$-torsionfree condition in (1) corresponds to having an honest filtration in (2); we leave it to the reader to formulate a version of the above equivalence dropping these conditions.

\begin{proof}
This result is standard, so we simply explain the functors in either direction. Given a $t$-torsion-free quasi-coherent sheaf $F$ on $X$, we obtain a $\mathbf{Z}$-filtered $J^\bullet B$-module $F^\bullet M$ via $F^i M = \Gamma(X, F \otimes \mathcal{O}_X(-i))$, with the transition maps coming from multiplication by $t$. Conversely, given $F^\bullet M$ as in (2) above, consider its Rees module 
\[ \mathrm{Rees}(F^\bullet M) := \mathrm{Fil}^0(F^\bullet M \otimes_B t^\bullet B[t,t^{-1}]) = \oplus_i F^i M t^{-i}.\] 
This is naturally a graded $t$-torsion-free $\mathrm{Rees}(J^\bullet B)$-module, and hence defines an object of (1).
\end{proof}

\begin{remark}
\label{rmk:reesdictionary}
Under the equivalence in Lemma~\ref{lem:qcohX}, forgetting the filtration corresponds to restriction to $\mathrm{Spec}(B)= X_{t \neq 0}$, while passage to the associated graded corresponds to restriction to $X_{t =0} = \mathbf{A}^1_{B/J}\{1\}/\mathbf{G}_m$, where $\mathbf{A}^1_{B/J}\{1\} = \mathrm{Spec}(\mathrm{Sym}^*_{B/J}(J/J^2))$ is the normal bundle of $\mathrm{Spec}(B/J) \subset \mathrm{Spec}(B)$ and the $\mathbf{G}_m$-action has weight $-1$ (as the element $\tilde{d}$ locally has degree $-1$).
\end{remark}

\begin{remark}
Fix a $t$-torsion-free quasi-coherent sheaf $F$ on $X$ corresponding to a filtered $J^\bullet B$-module $F^\bullet M$ via Lemma~\ref{lem:qcohX}. One then obtains two $B$-modules via restriction to the open substacks $X_{t \neq 0}$ and $X_{\tilde{d} \neq 0}$ (see Lemma~\ref{lem:opensubstackdesc}); the former is simply the underlying unfiltered module
\[ M =\varinjlim_{i \to -\infty} F^i M\ \]
with the obvious transition maps (as mentioned in Remark~\ref{rmk:reesdictionary}), while the latter is given by
\[ N = \varinjlim_{i \to \infty} F^i M \otimes J^{-i},\]
with (increasing) transition maps coming from the natural maps $J \otimes F^i \to F^{i+1}$. When $J=(d)$ and $M$ is $d$-torsion-free, this is simply $\sum_{i\geq 0} F^iM d^{-i} \subset M[1/d]$.
\end{remark}

Finally, we have:

\begin{lemma}
\label{lem:qcohclosedpoint}
    The category of quasi-coherent sheaves on $X_{t=\tilde{d}=0}$ identifies with the category of $\Z$-graded $B/J$-modules.
\end{lemma}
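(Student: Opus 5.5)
The plan is to reduce to the local structure of $X_{t=\tilde d=0}$ recorded in Lemma~\ref{lem:opensubstackdesc}(3), which identifies this closed substack with $\mathrm{Spec}(B/J)\times B\mathbf{G}_m$. Quasi-coherent sheaves on a product with $B\mathbf{G}_m$ are quasi-coherent sheaves on $\mathrm{Spec}(B/J)$ with a $\mathbf{G}_m$-action, i.e.\ comodules over $\mathcal{O}(\mathbf{G}_m)=\Z[s^{\pm1}]$. It therefore suffices to justify Lemma~\ref{lem:opensubstackdesc}(3) concretely and then invoke the standard equivalence between $\mathbf{G}_m$-equivariant modules and graded modules.

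First I will compute the closed substack directly. Write $X=\mathrm{Spec}(\mathrm{Rees}(J^\bullet B))/\mathbf{G}_m$, so $X_{t=\tilde d=0}=\mathrm{Spec}(R_0)/\mathbf{G}_m$, where $R_0$ is the quotient of $\mathrm{Rees}(J^\bullet B)$ by the graded ideal generated by the images of $t$ and $\tilde d$. Concretely, this ideal is generated by $t$ in degree $1$ and by $J\cdot t^{0}\subset$ degree $0$, the image of $J\otimes\mathcal{O}\to\mathcal{O}(-1)\to\mathcal{O}$. Reading off degrees in $\bigoplus_i J^{\max(i,0)}t^{-i}$, we have the following. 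In degree $i\ge 1$ (that is, $t^{i}$, coming from $i'=-i$) the piece is $Bt^{i}$, which lies in $(t)$. In degree $-i\le 0$ the piece is $J^{i}t^{-i}$, and modulo $t\cdot J^{i+1}t^{-i-1}$ and $\tilde d$-multiples this quotient vanishes for $i\ge1$. Degree $0$ gives $B/J$.

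The one slightly delicate point is the bookkeeping of $t$ versus $\tilde d$ in the non-principal case. I will handle it by working Zariski locally on $B$ where $J=(d)$ is principal; this is legitimate because both sides of the claimed equivalence satisfy Zariski descent on $\mathrm{Spec}(B)$. By Example~\ref{ex:Rees} the local computation is $B[\tilde d,t]/(\tilde dt-d,\tilde d,t)=B/(d)$, concentrated in degree $0$ with trivial grading. Hence $X_{t=\tilde d=0}\simeq \mathrm{Spec}(B/J)/\mathbf{G}_m$ with trivial action, which is $\mathrm{Spec}(B/J)\times B\mathbf{G}_m$, compatibly with the gluing.

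Finally, $\mathrm{QCoh}(\mathrm{Spec}(C)/\mathbf{G}_m)$ for trivial action is the category of $C$-modules with a coaction $M\to M\otimes\Z[s^{\pm1}]$. Such a coaction is the same as a decomposition $M=\bigoplus_n M_n$, where $M_n$ is the part on which the coaction is $m\mapsto m\otimes s^n$; this is the usual diagonalizability of $\mathbf{G}_m$. Taking $C=B/J$ then gives the identification with $\Z$-graded $B/J$-modules. The main obstacle is only confirming the sign convention, that $\tilde d$ has degree $-1$, so that this identification matches the grading used in the introduction to this section. I expect no real difficulty here.
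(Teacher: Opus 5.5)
Your proposal is correct and follows the paper's proof, which simply combines the identification $X_{t=\tilde d=0}\simeq \mathrm{Spec}(B/J)\times B\mathbf{G}_m$ from Lemma~\ref{lem:opensubstackdesc}(3) with the standard equivalence between $\mathbf{G}_m$-actions and $\Z$-gradings; your direct verification of that identification is a harmless extra. One slip should be fixed: the ideal of $X_{t=\tilde d=0}$ is generated by $t$ in degree $1$ and by the image $Jt^{-1}$ of $\tilde d$ in degree $-1$, not by $J\cdot t^{0}$, which already lies in $(t)$ and would only cut out $X_{t=0}$. Your degree-by-degree computation does use $\tilde d$-multiples correctly, and the sign convention you worry about does not affect whether the equivalence exists.
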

\begin{proof}
This follows from the identification (3) in Lemma~\ref{lem:opensubstackdesc} and the standard identification of $\mathbf{G}_m$-actions with gradings.
\end{proof}

\subsection{Extending sheaves from \texorpdfstring{$X^\circ$}{Xo} to \texorpdfstring{$X$}{X}}

In our applications, we will $*$-extend vector bundles along $j:X^\circ \to X$. Let us describe the concrete meaning of this operation in terms of the equivalences above.

\begin{lemma}\label{lemma:cuhrg42u7j}
\label{lem:starextendrees}
Fix $(M,N,\psi) \in \mathrm{Isog}(B,J)$ corresponding to $V \in \mathrm{Vect}(X^\circ)$ under the equivalence in Lemma~\ref{lem:qcohxcirc}.
\begin{enumerate}
    \item The filtered $J^\bullet B$-module corresponding to $j_* V$ under Lemma~\ref{lem:qcohX} has underlying module $M$, with filtration given by $F^i M = M \cap J^i N$ (where the intersection takes place inside $M[1/J] \stackrel{\psi}{\simeq} N[1/J]$).

    \item The sections $(\tilde{d},t)$ form a Koszul-regular sequence of length $2$ on $j_* V$, so the restriction $(j_* V)|_{X_{t=\tilde{d}=0}}$ coincides with the derived pullback of $j_* V$ along $\mathrm{Spec}(B/J)\times B\mathbf{G}_m \to X$.

    \item The graded $B/J$-module corresponding to the restriction $(j_* V)|_{X_{t=\tilde{d}=0}}$ under Lemma~\ref{lem:qcohclosedpoint} is given, in degree $i$, by the $B/J$-module
\[ \gr^i_F(M)/\tilde{d}\gr^{i-1}_F(M) = F^iM/(F^{i+1}M + J F^{i-1}M).\]
Here $\tilde{d}$ denotes the coordinate on $X_{t=0} = \mathbf{A}^1_{B/J}\{1\}/\mathbf{G}_m$ with the $\mathbf{G}_{m}$-action having weight $-1$ (see Remark~\ref{rmk:reesdictionary}).

\end{enumerate}
\end{lemma}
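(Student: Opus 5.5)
The plan is to compute $j_*V$ by a Čech/gluing argument over the cover of $X^\circ$, working $\mathbf{G}_m$-equivariantly on the affine chart $\mathrm{Spec}(\mathrm{Rees}(J^\bullet B))$. Everything is local on $\mathrm{Spec}(B)$ and compatible with flat base change, so I will first reduce to the case $J=(d)$ principal with $d$ a nonzerodivisor. Then $\mathrm{Rees}(J^\bullet B)=B[\tilde d,t]/(\tilde dt-d)$ as in Example~\ref{ex:Rees}, and $X^\circ$ is the complement of the closed locus $V(t,\tilde d)$.

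For (1), a section of $j_*V$ in degree $-i$ (that is, a section of $j_*V\otimes\mathcal{O}_X(-i)$ over $X$) is a pair of compatible sections over $X_{t\neq0}\simeq\mathrm{Spec}(B)$ and over $X_{\tilde d\neq 0}\simeq\mathrm{Spec}(B)$. On the first chart such a section is an element $m\in M$. On the second it is an element $n\in N$, and the twist by $\mathcal{O}(-i)$ makes the gluing condition read $\psi(m)=d^i n$. Remark~\ref{rmk:tonothercopy} is exactly what fixes this normalization. Hence the degree-$i$ piece is $M\cap d^iN$, which gives (1); the $t$-torsion-freeness is automatic because $M$ is torsion-free. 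Before writing this down I will check the sign conventions against Lemma~\ref{lem:qcohX}.

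For (2), I will use the standard fact that for a finite projective (hence reflexive) module $E$ on a scheme, the sections over the complement of a codimension-2 locus cut out by a regular sequence $(t,\tilde d)$ form the reflexive extension, and this extension has depth $\geq 2$ along $V(t,\tilde d)$. Concretely, I will check the two Koszul conditions directly on $R=\bigoplus_i F^iM\,t^{-i}$.

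\begin{itemize}
\item[(a)] $t$ is injective on $R$. This is clear, since $F^{i+1}M\subset F^iM$.
\item[(b)] $\tilde d$ is injective on $R/tR=\gr_F M$. The map is $\gr^{i-1}\to\gr^{i}$, induced by multiplication by $d$ from $F^{i-1}M$ to $F^iM$. If $x\in F^{i-1}M$ satisfies $dx\in F^{i+1}M=M\cap d^{i+1}N$, then $x\in M\cap d^iN=F^iM$ because $d$ is a nonzerodivisor on $N$. So the kernel vanishes.
\end{itemize}

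Regularity in this order also gives Koszul-regularity, since $\tilde d t=d$ and the Koszul complex is symmetric. The derived pullback to $\mathrm{Spec}(B/J)\times B\mathbf{G}_m$ is the Koszul complex on $(t,\tilde d)$, so it reduces to the underived quotient.

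Part (3) then follows formally: $R/(t,\tilde d)R$ in degree $i$ is $F^iM/(F^{i+1}M+dF^{i-1}M)$, which equals $\gr^i_F/\tilde d\,\gr^{i-1}_F$.

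The main obstacle is step (b), more precisely the use of saturation ($F^i=M\cap J^iN$ is saturated with respect to $J$ inside $M$) for non-principal $J$. I plan to handle it by working Zariski-locally on $B$, where $J$ is principal because it is invertible, and by noting that each construction ($j_*$ and the pieces $F^i$) commutes with flat base change, so local principality suffices.
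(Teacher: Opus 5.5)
Your proposal is correct and is essentially the paper's argument. For (1) you compute $\Gamma(X^\circ, V(-i))$ by \v{C}ech gluing over $X_{t\neq 0}$ and $X_{\tilde d\neq 0}$ to get $M\cap J^iN$. For (2) you check that $t$ acts injectively (the $F^iM$ form an honest filtration) and that $\tilde d\colon \gr^{i-1}_F M\to \gr^{i}_F M$ is injective because $F^iM=M\cap J^iN$ is saturated. Part (3) is then read off from the quotient by $(t,\tilde d)$. The only cosmetic difference is that you localize to make $J=(d)$ principal, whereas the paper states the injectivity directly as $J\otimes \gr^{i}_F M\to \gr^{i+1}_F M$; both are fine.
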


In the sequel, we shall refer to $(j_* V)|_{X_{t=\tilde{d}=0}} \in \mathrm{QCoh}(\mathrm{Spec}(B/J) \times B\mathbf{G}_m)$ as the {\em residual sheaf} of the isogeny $(M,N,\psi)$.

\begin{proof}[Proof of Lemma~\ref{lemma:cuhrg42u7j}]
For (1), we need to show that $\Gamma(X,j_* V(-i)) = \Gamma(X^\circ, V(-i))$ equals $M \cap J^i N$ inside $M[1/J]$. Computing global sections via the cover $X^\circ = X_{t \neq 0} \cup X_{\tilde{d} \neq 0}$ shows that
\[ \Gamma(X^\circ, V(-i)) = \Gamma(X_{t \neq 0},V(-i)) \cap \Gamma(X_{\tilde{d} \neq 0}, V(-i))\]
inside $\Gamma(X_{t\neq 0} \cap X_{\tilde{d} \neq 0},V(-i))$. Now $\Gamma(X_{t \neq 0},V(-i)) = M$ via Remark~\ref{rmk:reesdictionary} (whence $\Gamma(X_{t\neq 0} \cap X_{\tilde{d} \neq 0},V(-i)) = M[1/J]$), while $\Gamma(X_{\tilde{d} \neq 0}, V(-i)) = J^i N$ via Remark~\ref{rmk:tonothercopy}; we leave the identification of the maps to the reader.

The transversality assertion in (2) translates under the Rees equivalence of Lemma~\ref{lem:qcohX} to the statement that $F^\bullet M$ is a genuine filtration and that the natural maps $J \otimes \gr^i_F M \to \gr^{i+1}_F M$ are injective; the first is clear, while the second follows from the definition of $F^\bullet M$ as a saturated filtration.

The claim in (3) is clear from the discussion in the proof of (2) above: $(j_*V)/t$ identifies with $\gr^\bullet_F M$, and further quotienting by $\tilde{d}$ amounts to passage to the cokernel of $J \otimes \gr^{\bullet-1}_F M \to \gr^\bullet_F M$.
\end{proof}

Under rather mild conditions, one can turn the above into an equivalence of categories:

\begin{prop}
\label{prop:CohSheafRees}
Assume $B$ is excellent. Then the following categories are equivalent:
\begin{enumerate}
    \item The category $\mathrm{Isog}(B,J)$ of $J$-isogenies.
    \item The category $\mathrm{Vect}(X^\circ)$ of vector bundles on $X^\circ$.
    \item The subcategory of $\mathrm{Coh}(X)$ spanned by coherent sheaves that are vector bundles over $X^\circ$ and are $(\tilde{d},t)$-regular.
    \item The category $\mathrm{Vect}(X)$ provided $B/J$ is a field.
    \item The category $\mathrm{Coh}^{\refl}(X)$ of reflexive coherent sheaves on $X$ provided $B/J$ is a DVR.
\end{enumerate}
The functors relating (2) and (3) are $*$-extension and restriction, while those relating (3), (4), and (5) are the identity functor.
\end{prop}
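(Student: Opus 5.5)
The plan is to take (2) as the hub. First I establish (1)$\simeq$(2) and (2)$\simeq$(3). Then I identify (3) with (4) and with (5) under their respective hypotheses, by showing in each case that the relevant subcategory of $\mathrm{Coh}(X)$ is exactly the one in (3).

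\textbf{Steps (1)--(3).} The equivalence (1)$\simeq$(2) is Lemma~\ref{lem:qcohxcirc}.

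For (2)$\to$(3), I use $V\mapsto j_*V$. Coherence of $j_*V$ comes from Lemma~\ref{lemma:cuhrg42u7j}(1): the module of $j_*V$ is the Rees module $\bigoplus_i (M\cap J^iN)t^{-i}$. This is finitely generated over $\mathrm{Rees}(J^\bullet B)$, because $J^aN\subseteq M\subseteq J^{-a}N$ for some $a$. Hence only finitely many graded pieces $\gr^i_F M$ are nonzero, and each is a finite $B/J$-module ($B$ is noetherian, being excellent). Regularity of $(\tilde d,t)$ on $j_*V$ is Lemma~\ref{lemma:cuhrg42u7j}(2), and $j^*j_*V=V$.

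For (3)$\to$(2), I use restriction. Given $F$ in (3), the point is that $F\to j_*j^*F$ is an isomorphism. The closed complement of $X^\circ$ is $V(t,\tilde d)$. On it $(\tilde d,t)$ is $F$-regular, so $\mathrm{depth}_{(t,\tilde d)}F\ge 2$. Therefore the local cohomology $\mathcal H^0_{Z}(F)$ and $\mathcal H^1_{Z}(F)$ vanish, and this is exactly the required isomorphism.

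\textbf{Step (4).} Assume $B/J$ is a field. A vector bundle on $X$ restricts to a vector bundle on $X^\circ$. Since $(t,\tilde d)$ is locally a regular sequence on $\mathcal O_X$, it is regular on any vector bundle, so $\mathrm{Vect}(X)$ lies in (3).

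Conversely, let $F$ be in (3). By Lemma~\ref{lemma:cuhrg42u7j}(2), the derived fibre of $F$ along $\mathrm{Spec}(B/J)\times B\mathbf G_m$ is the underived restriction, which is a graded vector space. Over the local ring at a point of $Z$, the Koszul regularity of $(t,\tilde d)$ and the fact that $B/J$ is a field give that the maximal ideal is $(t,\tilde d,J)$. The standard local criterion (Tor-vanishing against the residue field) then makes $F$ free near $Z$. Away from $Z$, $F$ is a vector bundle by hypothesis.

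\textbf{Step (5).} Assume $B/J$ is a DVR. Here I must show two things: objects of (3) are reflexive, and every reflexive coherent $F$ lies in (3).

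The key geometric input is regularity. Locally near $V(J)$, write $J=(d)$ with $d$ a nonzerodivisor. Since $B/d$ is regular of dimension $1$, $B$ is regular of dimension $2$ there. Then $X$ is locally $\mathrm{Spec}\,B[\tilde d,t]/(\tilde d t-d)$. At a point of $Z$, its maximal ideal is generated by $\pi,t,\tilde d$, where $\pi$ lifts a uniformizer of $B/J$. So $X$ is regular of dimension $3$ there, and $Z$ has codimension $2$.

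Direction (3)$\Rightarrow$(5). Let $F$ be in (3). It is locally free on $X^\circ$ and has depth $\ge2$ along $Z$. Serre's criterion on the normal stack $X$ (conditions $S_2$ plus locally free in codimension $1$) then shows $F$ is reflexive.

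Direction (5)$\Rightarrow$(3). Let $F$ be reflexive. Reflexive sheaves on a normal scheme satisfy $S_2$, so $\mathrm{depth}_Z F\ge 2$, and since $X$ is regular near $Z$, $(t,\tilde d)$ is $F$-regular. It remains to see that $F|_{X^\circ}$ is a vector bundle. The open $X^\circ$ is two copies of $\mathrm{Spec}\,B$. Near $V(J)$, $B$ is regular of dimension $2$, and a reflexive module over a $2$-dimensional regular local ring is free (Auslander--Buchsbaum: depth $2$ forces projective dimension $0$).

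\textbf{Main obstacle.} The real difficulty is the locus $\mathrm{Spec}\,B[1/J]$ away from $J$. There a reflexive module need not be projective unless $B[1/J]$ is itself regular of dimension $\le 2$. My first attempt would be to show that the standing hypotheses force this, via excellence plus localization at $V(J)$. If that fails, I would use that in every application $B$ is a $2$-dimensional regular ring such as $W(k)\llbracket u\rrbracket$. Failing both, I would interpret (5) with that regularity understood, since without it reflexive sheaves on $X^\circ$ are not vector bundles in general.
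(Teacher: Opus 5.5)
Your arguments for (1)--(4) are correct and parallel the paper's, with one real difference of route. For coherence of $j_*V$, the paper quotes a general coherence criterion for $j_*$ (Stacks Project, Tag 0AWA), and that is where excellence is used. Your explicit Rees-module argument needs only that $B$ is noetherian, so it shows excellence plays no role in (1)--(4).

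One sentence there should be corrected. The pieces $\operatorname{gr}^i_F M$ do \emph{not} vanish for $i\gg 0$: for $i\ge a$ one has $F^iM=J^iN$, so $\operatorname{gr}^i_FM=J^iN/J^{i+1}N$. What holds is that $F^iM=M$ for $i\le -a$ and $F^{i+1}M=J\,F^iM$ for $i\ge a$. Hence the Rees module is generated by its finitely generated pieces in degrees between $-a$ and $a$. It is the residual pieces $F^iM/(F^{i+1}M+JF^{i-1}M)$ that vanish for $|i|>a$.

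Your local-cohomology argument for (3)$\to$(2) spells out what the paper leaves implicit in citing Lemma~\ref{lem:starextendrees}. Your Koszul-resolution/Tor-vanishing criterion at the point $V(t,\tilde d)$ is the paper's ``vector bundles are the depth-$2$ coherent sheaves on a $2$-dimensional regular graded-local scheme'', unpacked.

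On (5) your suspicion is justified, and your ``first attempt'' cannot succeed. Excellence and $B/J$ being a DVR say nothing about $B$ away from $V(J)$, and (5) is false as literally stated. Take $B=k\llbracket x,y\rrbracket\times k[a,b,c]/(ab-c^2)$ and $J$ generated by the nonzerodivisor $(y,1)$. Then $B$ is excellent and $B/J\cong k\llbracket x\rrbracket$. The stack $X$ is the disjoint union of the Rees stack of $(k\llbracket x,y\rrbracket,(y))$ and $\Spec(k[a,b,c]/(ab-c^2))$; the latter lies in $X^\circ$, since $t$ and $\tilde d$ are invertible there. The ideal $(a,c)$ is reflexive but not projective. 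So it gives an object of $\mathrm{Coh}^{\refl}(X)$ that is not a vector bundle on $X^\circ$, hence not in (3).

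Disconnectedness is inessential here. The same works for the semilocal ring of a normal surface at an $A_1$-point and a smooth point, with $J$ generated by a local equation of a smooth curve through the smooth point that is a unit at the singular one.

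The paper's proof of (5) is only ``a similar argument''. The reduction it uses for (4) --- the statement is local on $\Spec(B)$ and clear after inverting $J$ --- is exactly what fails for reflexive sheaves. The same remark applies to Proposition~\ref{prop:CohSheafptwistedRees}(4).

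So (5) needs your extra hypothesis that reflexive $B[1/J]$-modules are projective. A convenient sufficient condition is $J\subseteq\operatorname{rad}(B)$: then $B$ is local, hence a two-dimensional regular local ring, and $B[1/J]$ is regular of dimension $1$. This holds wherever the paper uses (5) (e.g.\ $B=\KisinS$ with $J=(E)$, or $J=(u)$ in the twisted variant), so nothing downstream is affected. Under that hypothesis your proof of (5) is complete, once you note that normality of $X$ is needed only near $V(t,\tilde d)$. On $X^\circ$, objects of (3) are vector bundles and hence reflexive.
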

\begin{proof}
The equivalence of (1) and (2) was already shown in Lemma~\ref{lem:qcohxcirc}. The equivalence of (2) and (3) follows from Lemma~\ref{lem:starextendrees} since the $*$-extension procedure produces coherent sheaves by \cite[\href{https://stacks.math.columbia.edu/tag/0AWA}{Tag 0AWA}]{stacks-project}. For the equivalence of (3) and (4), assume $B/J$ is a field, so $J$ is a maximal ideal. We must show that every coherent sheaf on $X$ satisfying the assumptions in (3) is in fact a vector bundle on $X$. The statement is local on $\Spec(B)$ and clear after inverting $J$, so we can assume $B$ is local with maximal ideal $J$, and thus a DVR. Then the stack $X$ admits a faithfully flat cover by a $2$-dimensional noetherian scheme $\Spec(\mathrm{Rees}(J^\bullet B))$ which is regular and graded-local 
with $\tilde{d}$ and $t$ as regular parameters; as vector bundles coincide with depth $2$ coherent sheaves on such schemes, the equivalence of (3) and (4) follows. A similar argument also gives the equivalence of (3) and (5).
\end{proof}

\subsection{A \texorpdfstring{$p$}{p}-twisted variant}
\label{ss:twistedrees}

In our applications, we shall also need a twisted version of the above discussion for the ``$J$ in degree $p$ filtration''. Thus, consider the graded $B[t]$-algebra
\[ \mathrm{Rees}(J^{\lceil \bullet/p \rceil}) = \bigoplus_{i \in \mathbf{Z}} J^{\max(\lceil i/p \rceil,0)} t^{-i} \subset B[1/J,t,t^{-1}] \]
where $t$ has degree $1$. Write $Y=\mathrm{Spec}(\mathrm{Rees}(J^{\lceil \bullet/p \rceil}))/\mathbf{G}_m$; it comes equipped with a projection map to $\Spec(B)$ and a Rees map to $\mathbf{A}^1/\mathbf{G}_m=\Spec(B[t])/\Gm$.

\begin{example}
    If $J=(d)$ is principal, then we can write
\[ \mathrm{Rees}(J^{\lceil \bullet/p \rceil}) = B[y,t]/(yt^p-d),\]
where $y = dt^{-p} \in Jt^{-p} \subset \mathrm{Rees}(J^{\lceil \bullet/p \rceil})$ and $\deg(y)=-p$.
\end{example}

Note that $Y$ can also be described as the fibre product of
\[ X \xrightarrow{\text{Rees}} \mathbf{A}^1/\mathbf{G}_m \xleftarrow{ (-)^p} \mathbf{A}^1/\mathbf{G}_m,\]
with the Rees map for $Y$ corresponding to the map from the fibre product to $\mathbf{A}^1/\mathbf{G}_m$ appearing on the right. Consequently, for a test ring $R$, the groupoid $Y(R)$ is the groupoid of generalized Cartier divisors $t':\mathcal{O}(-1) \to R$ together with a factorization $J \otimes_B R \xrightarrow{y'} \mathcal{O}(-p) \xrightarrow{(t')^p} R$ of the canonical map. In particular, there is a universal factorization
\[ J \otimes_B \mathcal{O}_Y \xrightarrow{y} \mathcal{O}(-p) \xrightarrow{t^p} \mathcal{O}_Y\]
over $Y$, where $y$ and $t$ are sections of suitable line bundles over $Y$.
To describe the geometry of $Y$, recall the notion of a $p$-th root stack: given any scheme (or stack) $W$ with a Cartier divisor $D \subset W$, the $p$-th root stack $W[\sqrt[p]{D}] \to W$ is the pullback of the $p$-power endomorphism $\mathbf{A}^1/\mathbf{G}_m \xrightarrow{(-)^p} \mathbf{A}^1/\mathbf{G}_m$ along the classifying map $W \to \mathbf{A}^1/\mathbf{G}_m$ of the divisor $D$. Explicitly, $W[\sqrt[p]{D}] \to W$ is the space of $p$-th roots of the effective Cartier divisor $\mathcal{O}(-D) \subset \mathcal{O}_W$. In particular, the structure map  $W[\sqrt[p]{D}] \to W$ is an isomorphism if $D$ is empty.

Using the above notion, we have:

\begin{lemma}
\label{lem:LociTwistedRees}\leavevmode
    \begin{enumerate}
        \item The open substack $Y_{t \neq 0}$ maps isomorphically to $\mathrm{Spec}(B)$ via the projection.

        \item The open substack $Y_{y \neq 0}$ identifies with the $p$-th root stack $\mathrm{Spec}(B)[\sqrt[p]{V(J)}]$ of the divisor $V(J) \subset \mathrm{Spec}(B)$ via the projection.

        \item The intersection $Y_{t \neq 0} \cap Y_{y \neq 0}$ identifies with the open substack $\mathrm{Spec}(B[1/J]) \subset Y_{t \neq 0}$ and with the open substack
$\mathrm{Spec}(B[1/J]) = \mathrm{Spec}(B[1/J])[\sqrt[p]{V(J)}] \subset Y_{y \neq 0}$.
\end{enumerate}
\end{lemma}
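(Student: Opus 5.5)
The plan is to work directly with the functor of points of $Y$ as the fibre product $X\times_{\mathbf{A}^1/\mathbf{G}_m,(-)^p}\mathbf{A}^1/\mathbf{G}_m$. By that description, a point of $Y(R)$ is a line bundle $L$ with a map $t':L\to R$, together with a factorization $J\otimes_B R\xrightarrow{y'}L^{\otimes p}\xrightarrow{(t')^p}R$ of the canonical map. Each assertion will follow by imposing that one of the sections be invertible and simplifying, exactly as in Lemma~\ref{lem:opensubstackdesc}.

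For (1), suppose $t'$ is an isomorphism. Then $L\simeq R$ canonically, with no residual $\mathbf{G}_m$-automorphisms. The factorization $y'$ is then uniquely determined by the canonical map $J\otimes_B R\to R$. So $Y_{t\neq0}(R)$ is a single point over each $B$-algebra $R$, which gives $Y_{t\neq 0}\simeq\Spec(B)$. As a sanity check in the principal case $J=(d)$, inverting $t$ in $B[y,t]/(yt^p-d)$ and taking $\mathbf{G}_m$-invariants gives $B$.

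For (2), suppose instead that $y'$ is an isomorphism. Then $L^{\otimes p}\simeq J\otimes_B R$, and $t':L\to R$ is a map whose $p$th power is the canonical inclusion $J\otimes_B R\to R$, which is the generalized Cartier divisor $V(J)$ pulled back to $R$. This is precisely the datum of a $p$th root of $\mathcal{O}(-V(J))\subset\mathcal{O}$. Equivalently, $Y_{y\ne0}$ is the pullback of $(-)^p:\mathbf{A}^1/\mathbf{G}_m\to\mathbf{A}^1/\mathbf{G}_m$ along the classifying map $\Spec B\to\mathbf{A}^1/\mathbf{G}_m$ of $V(J)$. Here I use that $X_{\tilde d\neq0}\simeq\Spec B$, where by Remark~\ref{rmk:tonothercopy} the Rees section restricts to $J\subset B$, and that $Y_{y\neq0}=X_{\tilde d\ne0}\times_{\mathbf{A}^1/\mathbf{G}_m}\mathbf{A}^1/\mathbf{G}_m$, since $y\ne0$ corresponds to $\tilde d\ne0$ under the fibre product.

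For (3), intersect the two descriptions. Inside $Y_{t\ne0}=\Spec B$, the extra condition that $y'$ be invertible means $J\otimes R\to R$ is an isomorphism, i.e.\ $\Spec B[1/J]$. Inside the root stack, the condition that $t'$ be invertible forces $J\otimes R\simeq R$, i.e.\ the locus over $\Spec B[1/J]$. The root stack is trivial over the complement of the divisor, so that locus is $\Spec B[1/J]$ again.

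The only mild obstacle I expect is keeping track of automorphisms in (2). One must verify that the $\mu_p$-gerbe structure along $V(J)$ is exactly that of the root stack, and is not collapsed to $\Spec B$. This is handled by matching the groupoids on the nose via the fibre-product description, rather than by computing invariants of graded rings.
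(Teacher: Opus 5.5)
Your proposal is correct and follows the same route as the paper. The paper just says that (1) is straightforward and that (2) and (3) follow from the functor-of-points description of $Y$ as the fibre product $X\times_{\mathbf{A}^1/\mathbf{G}_m,(-)^p}\mathbf{A}^1/\mathbf{G}_m$; your argument fills in that unwinding, including the identification $Y_{y\neq 0}\simeq X_{\tilde d\neq 0}\times_{\mathbf{A}^1/\mathbf{G}_m,(-)^p}\mathbf{A}^1/\mathbf{G}_m$ with the Rees section restricting to $J\subset B$.
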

\begin{proof}
(1) is straightforward, while (2) and (3) follow from the functor of points description of $Y$ given above.
\end{proof}

Using the above, we can describe sheaves on $Y$ and its open substacks concretely, as in Proposition~\ref{prop:CohSheafRees}:

\begin{prop}
\label{prop:CohSheafptwistedRees}
Consider the open substack $Y^\circ = Y_{t \neq 0} \cup  Y_{y \neq 0} \subset Y$. Then the following categories are equivalent:
\begin{enumerate}
    \item The category $\mathrm{Vect}(Y^\circ)$ of vector bundles on $Y^\circ$.
    \item The category of triples $(M,N,\psi)$, where $M$ is a vector bundle on $\Spec(B)$, $N$ is a vector bundle on the root stack $\mathrm{Spec}(B)[\sqrt[p]{V(J)}]$, and
    \[ \psi:M|_{\mathrm{Spec}(B[1/J])} \simeq N|_{\mathrm{Spec}(B[1/J])[\sqrt[p]{V(J)}]} \]
    is an isomorphism over the natural identification
    \[ \mathrm{Spec}(B[1/J]) = \mathrm{Spec}(B[1/J])[\sqrt[p]{V(J)}]. \]
    \item The category $\mathrm{Vect}(Y)$ provided $B$ is excellent and $B/J$ is a field.
    \item The category $\mathrm{Coh}^{\refl}(Y)$ of reflexive coherent sheaves on $Y$ provided $B$ is excellent and $B/J$ is a DVR.
\end{enumerate}
\end{prop}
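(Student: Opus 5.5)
The plan is to mirror the proof of Proposition~\ref{prop:CohSheafRees}, using Lemma~\ref{lem:LociTwistedRees} in place of Lemma~\ref{lem:opensubstackdesc}.

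\emph{(1)$\Leftrightarrow$(2).} By Lemma~\ref{lem:LociTwistedRees}, $Y^\circ$ is covered by two opens. The first is $Y_{t\neq0}\simeq\Spec(B)$; the second is $Y_{y\neq0}\simeq\Spec(B)[\sqrt[p]{V(J)}]$. Their intersection is $\Spec(B[1/J])$, embedded in each as described in part (3) of that lemma. Vector bundles satisfy Zariski descent on stacks, so a vector bundle on $Y^\circ$ is the same as a pair of bundles on the two opens together with a gluing isomorphism on the overlap. That is exactly a triple $(M,N,\psi)$. Morphisms match in the same way, so this step is formal.

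\emph{(2)$\Leftrightarrow$(3) and (4).} Here I would argue as for $X$. Let $j:Y^\circ\to Y$ be the open immersion; the functors are restriction and $j_*$.

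First, $Y$ has a smooth cover $\Spec(\mathrm{Rees}(J^{\lceil\bullet/p\rceil}))$, which is locally $B[y,t]/(yt^p-d)$. Since $B$ is excellent, this ring is excellent. Moreover $(y,t)$ is locally a regular sequence: $t$ is a nonzerodivisor, and modulo $t$ the ring is $(B/d)[y]$, on which $y$ is a nonzerodivisor. The complement of $Y^\circ$ is the closed substack $V(y,t)$, which has codimension $2$. Therefore $j_*$ of a vector bundle is coherent by \cite[\href{https://stacks.math.columbia.edu/tag/0AWA}{Tag 0AWA}]{stacks-project}, applied on the cover. It is also reflexive and has depth $\ge2$ along $V(y,t)$. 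Conversely, any reflexive sheaf $F$ satisfies $F\simeq j_*j^*F$ because of the codimension-$2$ complement. The remaining task is to check that for $*$-extensions, restriction to $Y^\circ$ is fully faithful and essentially surjective onto $\mathrm{Vect}(Y^\circ)$ (respectively onto the reflexive sheaves). This is again formal from $F=j_*j^*F$.

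\emph{Identifying the target category.} The question is local on $\Spec B$ and trivial after inverting $J$. So we may localize at a prime containing $J$, making $B$ local with $J=(d)$ principal.

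If $B/J$ is a field, then $B$ is a DVR with uniformizer $d$. In that case $\mathrm{Rees}=B[y,t]/(yt^p-d)$ is a $2$-dimensional regular ring: its graded maximal ideal is $(y,t)$, since $d=yt^p$ lies in it. On a $2$-dimensional regular scheme, reflexive coherent sheaves (equivalently depth-$2$ sheaves) are exactly the vector bundles, by Auslander--Buchsbaum. This gives (3).

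If $B/J$ is a DVR, then $B$ is a $2$-dimensional regular local ring with regular system of parameters $(d,\pi)$. Then the Rees ring is $3$-dimensional and regular, with graded-local parameters $y,t,\pi$. Restricting from reflexive sheaves to $Y^\circ$ is still an equivalence onto vector bundles, because the complement of $Y^\circ$ has codimension $2$ and reflexive sheaves are determined away from codimension $2$. This gives (4).

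\emph{Expected main obstacle.} The hardest step should be the regularity and codimension claims for the twisted Rees ring. One could worry that $yt^p-d$ is singular. However, $d$ is part of a regular system of parameters, so $yt^p-d$ has nonzero linear term $-d$ in the local ring at the origin and is therefore not in the square of the maximal ideal; the quotient is then regular. I would verify this computation carefully. I would also check that the open substack $Y_{y\neq0}$, which is a root stack rather than a scheme, causes no descent issues; working on the smooth atlas should remove them.
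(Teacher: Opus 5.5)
Your proposal is correct and follows essentially the same route as the paper. The paper obtains (1)$\Leftrightarrow$(2) by gluing over the two-chart cover of Lemma~\ref{lem:LociTwistedRees}, then reduces to $B$ local and uses that the twisted Rees ring $B[y,t]/(yt^p-d)$ is regular and graded-local of dimension $2$ (resp.\ $3$), so that restriction and $*$-extension are inverse equivalences. Your explicit regularity check, via $yt^p-d\notin\mathfrak{m}^2$ with explicit regular parameters, fills in a detail that the paper leaves implicit.
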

\begin{proof}
As in the untwisted case, the identification between (1) and (2) arises by describing sheaves on $Y^\circ$ in terms of the open cover $Y_{t \neq 0} \cup  Y_{y \neq 0}$ using Lemma~\ref{lem:LociTwistedRees}. The functor relating the category in (1) with either (3) or (4) is given by $*$-extension, with inverse given by restriction. To prove these are equivalences, as in the untwisted case, we may assume $B$ is local with $J$ inside the maximal ideal. Then the graded coordinate ring $\mathrm{Rees}(J^{\lceil \bullet/p \rceil})$ of $Y$ is regular and graded-local of dimension $2$ 
in case (3) (whence $\mathrm{Vect}(Y) \simeq \mathrm{Vect}(Y^\circ)$) and of dimension $3$ in case (4) (whence $\mathrm{Coh}^{\refl}(Y) \simeq \mathrm{Vect}(Y^\circ)$ via restriction and $*$-extension).
\end{proof}

\begin{remark}[Quasi-coherent sheaves on root stacks for principal divisors]
\label{rmk:qcohroot}
Assume $J=(d)$ is principal. Then the root stack $\mathrm{Spec}(B)[\sqrt[p]{V(J)}]$ admits a concrete quotient description as $\Spec(B[\sqrt[p]{d}])/\mu_p$, where $B[\sqrt[p]{d}] = B[t]/(t^p-d)$ and the $\mu_p$-action is the standard one, giving $t$ weight $1$. In particular, quasi-coherent sheaves on $\mathrm{Spec}(B)[\sqrt[p]{V(J)}]$ can be identified as $\mu_p$-equivariant $B[\sqrt[p]{d}]$-modules or equivalently $\mathbf{Z}/p$-graded modules $M = \oplus_{i \in \mathbf{Z}/p} M_i$ over the $\mathbf{Z}/p$-graded $B$-algebra $B[t]/(t^p-d)$. Note that when $d$ is invertible, then so is $t$, in which case $M$ is determined uniquely by the $B$-module $M_0$; this is the concrete manifestation, at the level of quasi-coherent sheaves, of the isomorphism $\mathrm{Spec}(B)[\sqrt[p]{V(J)}] \simeq \Spec(B)$ in the case of the empty divisor.
\end{remark}

Let us specialize the above discussion to a case of relevance in the sequel:

\begin{example}
\label{ex:VectReesKu}
Fix a ground ring $K$, and let $B=K\llbracket u \rrbracket$ with $J=(u)$. Then there is a $K$-algebra endomorphism $\varphi_{/K}:K\llbracket u \rrbracket \to K \llbracket u \rrbracket$ given by $u \mapsto u^p$, which realizes the target as the algebra of $p$-th roots of $u \in B$ on the source. In particular, there is a $\mu_p$-action on the target $K\llbracket u \rrbracket$ giving the variable $u$ degree $1$. Passing to schemes and quotients, we obtain a factorization
\[ \Spec(K \llbracket u \rrbracket) \xrightarrow{\can} \Spec(K \llbracket u \rrbracket)/\mu_p \xrightarrow{\pi} \Spec(K\llbracket u \rrbracket) \]
of $\varphi_{/K}$. By Remark~\ref{rmk:qcohroot}, the map $\pi$ is naturally identified with the root stack map $\mathrm{Spec}(K\llbracket u \rrbracket)[\sqrt[p]{V((u))}] \to \Spec(K\llbracket u \rrbracket)$. Consequently, by the description of the quasi-coherent sheaves on the root stack in Remark~\ref{rmk:qcohroot}, vector bundles on $Y^\circ$ are identified with triples $(M,N,\psi)$, where $M$ is a vector bundle on $\Spec(K\llbracket u \rrbracket)$, $N$ is a $\mu_p$-equivariant vector bundle on $\Spec(K \llbracket u \rrbracket)$, and $\psi$ is a $\mu_p$-equivariant isomorphism $\varphi_{/K}^*M[1/u] \simeq N[1/u]$ on $\Spec(K((u)))$ (or equivalently an isomorphism $\pi^*M[1/u] \simeq N[1/u]$ on $\Spec(K((u)))/\mu_p \stackrel{\pi}{\simeq} \Spec(K((u)))$).
\end{example}

\begin{remark}
\label{rmk:ptwistedmupweights}
Assume $B$ is excellent and $B/J$ is a field. Fix a triple $(M,N,\psi)$ as in Proposition~\ref{prop:CohSheafptwistedRees} (2). This corresponds to a vector bundle $\mathcal{E} \in \mathrm{Vect}(Y)$ by Proposition~\ref{prop:CohSheafptwistedRees}; one recovers $M$ and $N$ by restricting $\mathcal{E}$ to $Y_{t \neq 0}$ and $Y_{y \neq 0}$ respectively. In this situation, let us explain how to read off certain numerical invariants for $N$ in terms of $\mathcal{E}$. 

First, from $N$, we obtain a $\mu_p$-representation, or equivalently a $\mathbf{Z}/p$-graded vector bundle, over $\Spec(B/J)$ via pullback along the natural map 
\[ \Spec(B/J) \times B\mu_p = \left(\Spec(B)[\sqrt[p]{V(J)}])\right)_{J=0,\text{red}} \hookrightarrow \Spec(B)[\sqrt[p]{V(J)}].\]
Tracking ranks of the graded pieces determines a multisubset $S_1$ of $\mathbf{Z}/p$. On the other hand, from $\mathcal{E}$, one obtains a $\mathbf{Z}$-graded vector bundle over $B/J$ corresponding to $\mathcal{E}|_{Y_{t=0,y=0}} \in \mathrm{Vect}(\Spec(B/J)/\mathbf{G}_m)$; as before, determines a multisubset $S_2$ of $\mathbf{Z}$. These two multisets are related in the natural way: the projection of $S_2$ to a multisubset $\overline{S_2}$ of $\mathbf{Z}/p$ agrees with $S_1$. To prove this, observe that $Y_{t=0} \simeq \Spec(B/J[y])/\mathbf{G}_m$, where $y$ has weight $p$. Pulling back $\mathcal{E}$ along the composition 
\[ \Spec(B/J[y]) \times B\mu_p \to \Spec(B/J[y])/\mathbf{G}_m = Y_{t=0} \to Y\]
and evaluating at $y=1$ and $y=0$ gives the multisets $S_1$ and $\overline{S_2}$ respectively. But then the coincidence is clear: taking the fibral rank of a vector bundle is a locally constant function on $\Spec(B/J[y])$.
\end{remark}

\subsection{Numerical invariants}
\label{sss:numrees}
In this subsection, we extract numerical invariants from isogenies. It will be convenient to make the following definition:

\begin{defn}
An isogeny $(M,N,\psi) \in \mathrm{Isog}(B,J)$ corresponding to a vector bundle $V$ on $X^\circ$ is called {\em good} if the quasi-coherent sheaf $j_*V$ on the Rees stack $X$ is a perfect complex. Write $\mathrm{Isog}^{\text{good}}(B,J)$ for the full subcategory of good isogenies.
\end{defn}

While good isogenies need not be stable under arbitrary pullback, they are stable under flat pullback as the formation of $j_*$ commutes with flat pullback. This observation, coupled with the following example, provides an abundant supply of good isogenies (and, in particular, covers all cases we need).

\begin{example}
\label{ex:manygoodisog}
If $B$ is excellent and $B/J$ is regular, any isogeny is good. Indeed, by \cite[\href{https://stacks.math.columbia.edu/tag/0AWA}{Tag 0AWA}]{stacks-project}, the sheaf $j_* V$ is coherent. To check perfectness of $j_* V$, it suffices to do so after derived pullback to $X^\circ$ and $X_{t=\tilde{d}=0}$ separately; the former is clear (as $j_*V|_{X^\circ}$ is a vector bundle), while the latter follows from regularity of $B/J$ and Lemma~\ref{lem:starextendrees} (2).
\end{example}

A good isogeny has a naturally attached numerical invariant:

\begin{defn}
\label{def:relpos}
Fix a good isogeny $(M,N,\psi) \in \mathrm{Isog}(B,J)$ corresponding to a vector bundle $V$, so the residual sheaf $(j_* V)|_{X_{t=\tilde{d}=0}}$ is a perfect complex on $\mathrm{Spec}(B/J) \times B\mathbf{G}_m$. Let $f_\psi$ be the function $X^*(\mathbf{G}_m) = \mathbf{Z} \to \mathbf{Z}^{\pi_0(\mathrm{Spec}(B/J))}$ recording the fibrewise Euler characteristic of the graded pieces; thanks to the formula in Lemma~\ref{lem:starextendrees} (3), this is concretely given by
\[ f_{\psi}(i) = \chi_{\fib} \left( F^iM/(F^{i+1}M + J F^{i-1}M)\right).\]
We call $f_\psi$ the {\em relative position} of the good isogeny $\psi$.
\end{defn}

As $\chi_{\fib}(-)$ can be computed by taking the rank of the argument at generic points, the values of $f_\psi$ are always non-negative. Moreover, as the formation of $j_*$ commutes with flat pullback, the assignment
\begin{equation}
\label{eq:relpos}
(M,N,\psi) \in \mathrm{Isog}^{\text{good}}(B,J) \to f_\psi \in \mathrm{Map}(\mathbf{Z},\mathbf{Z}_{\geq 0}^{\pi_0(\mathrm{Spec}(B/J))})
\end{equation} 
is compatible with flat pullback.  In particular, if $B \to C$ is a flat map such that $\Spec(C/JC) \to \Spec(B/JB)$ is a bijection on connected components, then $f_\psi$ agrees with $f_\psi \otimes_B C$.

\begin{example}[Relative position of lattices]
\label{ex:RelPosDef}
Assume $B=V$ is an excellent discrete valuation ring and $J=\mathfrak{m}$ is the maximal ideal. Given two finite projective $V$-modules $M,N$ and an isomorphism $\psi:M[1/\mathfrak{m}] \simeq N[1/\mathfrak{m}]$, by \eqref{eq:relpos}, we obtain a function $f_\psi:\mathbf{Z} \to \mathbf{Z}$. This function is called {\em the relative position} of the modification $\psi$. For example, if $M=V$ and $N=J^a$ for some integer $a$ (and $\psi$ is the obvious identification), then one computes that $f_\psi$ has value $1$ at $-a$ and is $0$ otherwise.
\end{example}

For a modification of lattices over $V=k\llbracket t \rrbracket$, we now have two potential definitions of relative position: one via the affine Grassmannian, and the stack-theoretic one in Definition~\ref{def:goodisog}. Unsurprisingly, the two coincide:

\begin{remark}
\label{RelPosGrAgree}
Fix a field $k$. Consider the special case of Example~\ref{ex:RelPosDef} where $V=k\llbracket t \rrbracket$ and we have fixed a framing $M=V^{\oplus n}$ for $M$. Then an isomorphism $\psi:M[1/t] \simeq N[1/t]$ can be viewed as a $k$-point of the affine Grassmannian $\Gr_{\GL_n} = L\GL_n/L^+\GL_n$; the $L^+\GL_n$-orbit of this point therefore has parameter given by a dominant cocharacter $\lambda(\psi) \in X_*(T)^+ = (\mathbf{Z}^n)^+$ of $\GL_n$ (which is often called the relative position of the lattices $M$ and $N$ with respect to $\psi$ classically). Recall that there is a standard embedding $\tau:X_*(T)^+ \subset \mathrm{Map}(\mathbf{Z},\mathbf{Z})$ carrying an $n$-tuple $\lambda \coloneq (\lambda_1,\dots,\lambda_n) \in (\mathbf{Z}^n)^+$ to the function $\tau(\lambda)$ where $\tau(\lambda)(a)$ is the multiplicity of $a$ in the multiset $\lambda$. Using the calculation in Example~\ref{ex:RelPosDef}, one verifies that $\tau(\lambda(\psi)) = f_\psi$, so the notion of relative position in Definition~\ref{def:relpos} is indeed a generalization of the classical notion of relative position for lattices.
\end{remark}

\subsection{Generalization to stacks}
\label{sss:numstack}
In this subsection, we observe that the assignment \eqref{eq:relpos} generalizes to certain formal stacks; we have written what we need, and have not explored the natural generality of the constructions.

Fix a closed immersion $D \subset Y$ of fpqc stacks on $p$-nilpotent rings satisfying the following condition:

\begin{itemize}
    \item[$(\ast)$] There exists a representable affine flat cover $\pi:\mathrm{Spf}(B) \to Y$, where $B$ is a Noetherian ring with $I_B \coloneq  \pi^* I_D \subset B$ an invertible ideal, and such that $(p,I_B)$ is an ideal of definition for $B$.
\end{itemize}

The following observation will be used repeatedly to ensure that the completions coming from formal algebraic geometry do not cause significant problems.

\begin{lemma}
\label{lem:flatcov}
For a flat cover $\mathrm{Spf}(B) \to Y$ as in $(\ast)$, write $\mathrm{Spf}(B^\bullet)$ for its \v{C}ech nerve. Then each face map $B=B^0 \to B^i$ is faithfully flat.
\end{lemma}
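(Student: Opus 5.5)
The plan is to deduce faithful flatness of each face map $B^0\to B^i$ from two facts: the face maps are base changes of $\pi$ along itself, and completions of Noetherian rings are well behaved. The face maps $B=B^0\to B^i$ are, up to relabelling, compositions of the maps $\mathrm{Spf}(B^{j+1})\to \mathrm{Spf}(B^{j})$ obtained by base changing $\pi:\mathrm{Spf}(B)\to Y$ along maps to $Y$. Composites of faithfully flat maps are faithfully flat, so it suffices to treat $i=1$, i.e.\ the two projections $\mathrm{Spf}(B^1)=\mathrm{Spf}(B)\times_Y\mathrm{Spf}(B)\to\mathrm{Spf}(B)$. By symmetry it is enough to treat one of them.

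First, I would make $B^1$ concrete. Since $\pi$ is representable and affine, $\mathrm{Spf}(B)\times_Y\mathrm{Spf}(B)$ is affine over $\mathrm{Spf}(B)$. Its pullback to each $\Spec(B/(p,I_B)^n)$ is then $\Spec$ of a ring $C_n$, and $C_n$ is faithfully flat over $B/(p,I_B)^n$ because $\pi$ is a flat cover, i.e.\ flat and surjective after base change. We set $B^1=\varprojlim C_n$. Note that the ideal $(p,I)$ pulls back to the same ideal from either factor, because $I_B=\pi^*I_D$ on both sides. So $B^1$ is the $(p,I_B)$-adic completion of a system $\{C_n\}$ in which each $C_n$ is flat over $B/(p,I_B)^n$ and $C_{n+1}/(p,I_B)^n=C_n$.

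Second, I would apply the standard criterion for such an inverse system. For a Noetherian adic ring $B$ with ideal of definition $\mathfrak a=(p,I_B)$, an inverse system of flat $B/\mathfrak a^n$-algebras with $C_{n+1}\otimes B/\mathfrak a^n=C_n$ has limit $\widehat C$ that is $\mathfrak a$-adically flat over $B$. Since $B$ is Noetherian, and one has to check (or arrange) that $\widehat C$ is Noetherian, or at least has bounded $\mathfrak a$-torsion, this gives honest flatness of $B\to\widehat C$ (Stacks Project, the results on flatness of completions, or the criterion of \cite[Tag 0912]{stacks-project}-type). Faithfulness then follows because every maximal ideal of the $\mathfrak a$-adically complete ring $B$ contains $\mathfrak a$. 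Also $\Spec(C_1)\to\Spec(B/\mathfrak a)$ is surjective since $\pi$ is a cover, so every maximal ideal of $B$ is hit.

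The main obstacle is the second step: passing from "flat modulo every power of $\mathfrak a$" to genuine flatness of $B\to B^1$ without knowing a priori that $B^1$ is Noetherian. If the Noetherian property is not available, I would first show that $B^1$ has bounded $\mathfrak a^\infty$-torsion. I would then invoke the version of the flatness criterion for derived-complete modules: $M$ is flat over Noetherian $B$ if $M$ is $\mathfrak a$-adically complete and each $M/\mathfrak a^n$ is flat over $B/\mathfrak a^n$, using that $\mathrm{Tor}$ against finitely generated modules commutes with the completion. Alternatively, I would note that in all applications $Y$ is a prismatization-type stack for which $B^1$ is visibly Noetherian, and restrict to that case.
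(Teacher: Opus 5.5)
Your core argument is the paper's: each face map $B\to B^i$ is flat modulo every power of $\mathfrak a=(p,I_B)$ (i.e.\ $(p,I_B)$-completely flat), complete flatness over the Noetherian complete ring $B$ implies honest flatness, and faithfulness follows because every maximal ideal of $B$ contains $\mathfrak a$ while $\mathrm{Spec}(B^i/\mathfrak a)\to\mathrm{Spec}(B/\mathfrak a)$ is surjective. Two parts of your write-up need correcting, however. First, the reduction to $i=1$ does not work as stated. Deducing the general case from $i=1$ via composites would require each coface map $B^j\to B^{j+1}$ to be faithfully flat. But $B^{j+1}$ is the $\mathfrak a$-adically \emph{completed} base change of $B\to B^1$ along $B\to B^j$, and $B^j$ is not Noetherian, so flatness of $B\to B^1$ gives no control over this completed base change. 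The reduction is also unnecessary. $\mathrm{Spec}(B^i/\mathfrak a^n)\to\mathrm{Spec}(B/\mathfrak a^n)$ is an iterated base change of $\pi$, hence faithfully flat, so your second step applies verbatim to $B\to B^i$ for every $i$. That is exactly what the paper does.

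Second, the ``main obstacle'' is not real, and one of your proposed workarounds is false. The criterion needs only the \emph{base} to be Noetherian. Suppose $B$ is Noetherian and $(C_n)$ is an inverse system with $C_n$ flat over $B/\mathfrak a^n$ and surjective transition maps. For finitely generated $N$, the pro-systems $\{\mathrm{Tor}^B_j(N,B/\mathfrak a^n)\}_n$ are pro-zero for $j\ge 1$ by Artin--Rees. Hence $\{\mathrm{Tor}^B_j(N,C_n)\}_n=\{\mathrm{Tor}^B_j(N,B/\mathfrak a^n)\otimes_{B/\mathfrak a^n}C_n\}_n$ is pro-zero. The Milnor sequence for $N\otimes^L_B\varprojlim C_n\simeq R\varprojlim(N\otimes^L_B C_n)$ (computed with a finite free resolution of $N$) then gives $\mathrm{Tor}^B_j(N,\varprojlim C_n)=0$ for $j\ge 1$. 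So no Noetherianity or bounded torsion of $B^1$ is needed. Your final formulation of the criterion is the right one, and the preliminary bounded-torsion step is superfluous. By contrast, your fallback that $B^1$ is ``visibly Noetherian'' in applications is wrong. For $\rho_{\mathrm{std}}\colon\mathrm{Spf}(\KisinS)\to W(k)^\Prism$, $B^1$ is the self-coproduct of the Breuil--Kisin prism. Its quotient $B^1/E$ is a $p$-completed divided power polynomial algebra over $\mathcal{O}_K$, which is not Noetherian.
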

\begin{proof}
By assumption, $B$ is $(p,I_B)$-adically complete and the face maps $B \to B^i$ are $(p,I_B)$-completely flat. It is then enough to note that over a Noetherian complete ring, complete flatness implies genuine flatness; see \cite[page 116, footnote 70]{BBFgaugenotes} for an elaboration of this argument that also explains that $Y$ admits a good notion of coherent sheaves compatible with pullback to any such $\mathrm{Spf}(B)$.
\end{proof}

Using Lemma~\ref{lem:flatcov}, one checks that $Y$ is complete along $D$ with $I_D \subset \mathcal{O}_Y$ being an invertible ideal sheaf.

\begin{defn}
The category $\mathrm{Isog}(Y,D)$ consists of triples $(M,N,\psi)$, where $M,N \in \mathrm{Vect}(Y)$ and $\psi$ is an $I_D$-isogeny\footnote{As $Y$ is $I_D$-formal, the locus $Y-D$ is empty, so we cannot simply define isogenies of vector bundles as isomorphisms of their restrictions to $Y-D$.}, i.e., an isomorphism $\{I_D^{\otimes -n} \otimes M\}_{n \geq 0} \simeq \{I_D^{\otimes -n} \otimes N\}_{n \geq 0}$ of ind-objects in $\mathrm{Vect}(Y)$.
\end{defn}

It is straightforward to check that when $Y=\mathrm{Spf}(A)$ is affine, the category $\mathrm{Isog}(Y,D)$ defined above agrees with $\mathrm{Isog}(A,I_D)$.

\begin{defn}
\label{def:goodisog}
The full subcategory $\mathrm{Isog}^{\text{good}}(Y,D) \subset \mathrm{Isog}(Y,D)$ of good isogenies consists of those isogenies which become good after pullback to some (or, equivalently, any\footnote{\label{foot:comparecech}As $*$-extension from a quasi-compact open commutes with flat pullback, and because perfectness can be detected after faithfully flat pullback, this equivalence follows from the following observation: if $\mathrm{Spf}(B) \to Y$ and $\mathrm{Spf}(B') \to Y$ are two flat covers as in $(\ast)$, then their fibre product is $\mathrm{Spf}(C) \to Y$ for a (possibly non-Noetherian) ring $C$ that is faithfully flat over both $B$ and $B'$.}) flat cover $\mathrm{Spf}(B) \to Y$ as in $(\ast)$.
\end{defn}

If there exists a flat cover $\mathrm{Spf}(B) \to Y$ as in $(\ast)$ with $B$ excellent and $B/I_B$ regular, then every isogeny is good by Example~\ref{ex:manygoodisog}; this will be the case in our main application. The main result in this subsection is an extension of the construction \eqref{eq:relpos} to $(Y,D)$:

\begin{lemma}
    There is a unique assignment
    \[ \mathrm{Isog}^{\mathrm{good}}(Y,D) \to \mathrm{Map}(\mathbf{Z}, \mathbf{Z}_{\geq 0}^{\pi_0(D)})\]
    compatible with pullback to any $\mathrm{Spf}(B) \to Y$ as in $(\ast)$.
\end{lemma}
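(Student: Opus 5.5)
The plan is descent: construct the invariant on one flat cover and check it is independent of choices. Fix a flat cover $\pi:\mathrm{Spf}(B)\to Y$ as in $(\ast)$. Given a good isogeny $(M,N,\psi)\in\mathrm{Isog}^{\mathrm{good}}(Y,D)$, its pullback $\pi^*(M,N,\psi)$ is a good isogeny in $\mathrm{Isog}(B,I_B)$; this uses the remark after Definition~\ref{def:goodisog} that $\mathrm{Isog}(\mathrm{Spf}(B),\cdot)$ agrees with $\mathrm{Isog}(B,I_B)$. By \eqref{eq:relpos} it therefore has a relative position $f_{\pi^*\psi}:\mathbf{Z}\to\mathbf{Z}_{\ge0}^{\pi_0(\mathrm{Spec}(B/I_B))}$.

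\textbf{Step 1 (descent to $D$).} The aim is to show that $f_{\pi^*\psi}$ is the pullback of a function with values in $\mathbf{Z}_{\ge 0}^{\pi_0(D)}$. Let $\mathrm{Spf}(B^1)=\mathrm{Spf}(B)\times_Y\mathrm{Spf}(B)$, with its two face maps $B\rightrightarrows B^1$. By Lemma~\ref{lem:flatcov} both face maps are faithfully flat. Since $j_*$ commutes with flat pullback, so does the formation of the residual sheaf, and its fibrewise Euler characteristic is insensitive to flat base change. Consequently the two pullbacks of $f_{\pi^*\psi}$ to the locally constant functions on $\mathrm{Spec}(B^1/I_{B^1})$ agree.

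The step I expect to be the main obstacle is passing from this agreement to a function on $\pi_0(D)$. The point is that $\pi_0(D)$ is the coequalizer of $\pi_0(\mathrm{Spec}(B^1/I))\rightrightarrows\pi_0(\mathrm{Spec}(B/I))$, because locally constant $\mathbf{Z}$-valued functions satisfy fpqc descent: $\mathbf{Z}_D$ is a sheaf for the fpqc topology, and $\mathrm{Spf}(B/I_B)\to D$ is an fpqc cover. A technical point arises because $B^1$ may be non-Noetherian and not complete in the naive sense. To handle it, I would work with the $I$-reductions $B/I_B\to B^1/I_{B^1}$, which are flat. For Euler characteristics I only need perfectness, which is stable under flat pullback, so I would use this to compute $\chi_{\fib}$ on $B^1$ directly.

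\textbf{Step 2 (independence and uniqueness).} Given a second cover $\mathrm{Spf}(B')$, I would use the footnote to Definition~\ref{def:goodisog}: the fibre product $\mathrm{Spf}(C)$ is faithfully flat over both $B$ and $B'$. Flat compatibility of \eqref{eq:relpos} then shows that both constructions pull back to the same function on $\pi_0(\mathrm{Spec}(C/I))$, and since $\mathrm{Spec}(C/I)\to D$ is surjective, the two descended functions on $\pi_0(D)$ coincide. The same argument gives compatibility with pullback along an arbitrary $\mathrm{Spf}(B)\to Y$ as in $(\ast)$. Uniqueness is immediate, since any such assignment is determined by its pullback to a single cover, and $\pi_0(\mathrm{Spec}(B/I_B))\to\pi_0(D)$ is surjective.
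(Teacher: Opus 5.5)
Your proposal is correct, but it descends a different object from the one the paper descends.

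The paper descends the residual sheaves themselves. Using the \v{C}ech nerve $\mathrm{Spf}(B^\bullet)$, Lemma~\ref{lem:flatcov}, and flat base change for $j_*$, it assembles them into an object of $\lim_\Delta \mathrm{Perf}(\mathrm{Spec}(B^\bullet/I_{B^\bullet})\times B\mathbf{G}_m)\simeq \mathrm{Perf}(D\times B\mathbf{G}_m)$. This gives a perfect complex $K$ on $D\times B\mathbf{G}_m$, and the invariant is defined as the fibrewise Euler characteristic of $K$. Independence of the cover is then checked for $K$ itself, by comparing \v{C}ech nerves as in the footnote to Definition~\ref{def:goodisog}.

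You instead descend only the numerical shadow. Locally constant $\mathbf{Z}$-valued functions form an fpqc sheaf of sets, so you need only the two face maps $B\rightrightarrows B^1$. You need no higher coherences, no $\infty$-categorical descent for $\mathrm{Perf}$, and no comparison between $\mathrm{Perf}$ of $\mathrm{Spec}$ and of $\mathrm{Spf}$. The two inputs you do use are the same as the paper's: faithful flatness of the face maps, and compatibility of the residual sheaf with flat pullback.

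Your route is more economical for the statement as posed. The paper's route proves more: it produces a canonical residual complex on $D\times B\mathbf{G}_m$, of which the relative position is just the Euler characteristic.

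One step you leave implicit should be stated. The two pullbacks of the residual sheaf to $B^1$ are isomorphic because both are the residual sheaf of the single isogeny obtained by pulling $\psi$ back along $\mathrm{Spf}(B^1)\to Y$. This uses $\pi\circ d_0\simeq\pi\circ d_1$ in addition to flat base change.
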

\begin{proof}
Granting existence, the uniqueness is clear as $\pi_0 (\mathrm{Spec}(B/I_B)) \to \pi_0(D)$ is surjective for any $\mathrm{Spf}(B) \to Y$ as in $(\ast)$.

For existence, fix a good isogeny $(M,N,\psi)$ over $Y$. We will construct a perfect complex $K$ over $D \times B\mathbf{G}_m$ whose pullback to any $\mathrm{Spf}(B) \to Y$ as in $(\ast)$ yields the residual coherent sheaf (as in Lemma~\ref{lem:starextendrees}) for the pullback of the good isogeny $(M,N,\psi)$ to $B$; once such a $K$ is constructed, taking fibrewise Euler characteristic yields the desired assignment.

To construct $K$, fix $\mathrm{Spf}(B) \to Y$ as in $(\ast)$ with \v{C}ech nerve $\mathrm{Spf}(B^\bullet)$. Then pullback of $(M,N,\psi)$ yields a compatible system of good isogenies on $(B^\bullet,I_{B^\bullet})$. By Lemma~\ref{lem:flatcov}, forming the residual sheaf termwise then yields an object $K$ of $\lim_\Delta \mathrm{Perf}(\mathrm{Spec}(B^\bullet/I_{B^\bullet}) \times B\mathbf{G}_m)$ thanks to the flat pullback compatibility of the formation of the residual sheaf of a good isogeny in the case of rings. But we have
\[
\begin{aligned}
\lim_\Delta \mathrm{Perf}(\mathrm{Spec}(B^\bullet/I_{B^\bullet}) \times B\mathbf{G}_m)
&\simeq \lim_\Delta \mathrm{Perf}(\mathrm{Spf}(B^\bullet/I_{B^\bullet}) \times B\mathbf{G}_m) \\
&\simeq \mathrm{Perf}(D \times B\mathbf{G}_m),
\end{aligned}
\]
so $K$ can be viewed as a perfect complex on $D \times B\mathbf{G}_m$. The function associated to $K$ clearly descends the function associated to the pullback of $(M,N,\psi)$ to $\mathrm{Spf}(B) \to Y$. To finish, it is thus enough to check that $K$ is independent of the cover $\mathrm{Spf}(B) \to Y$ used to construct it; this follows by comparing the \v{C}ech nerves associated to two different covers as in Footnote~\ref{foot:comparecech}.
\end{proof}

\section{Structures on crystalline Breuil--Kisin modules}\label{sec:Frob-descent-F-gauge}
Fix a perfect field $k$ of characteristic~$p$. In this section, our main goal is to record why the Breuil--Kisin modules attached to prismatic $F$-crystals over $W(k)$ enjoy some additional structures related to the natural $\mu_p$-action on a Breuil--Kisin prism, see  Theorem~\ref{thm:mainNthm}, Proposition~\ref{cor:FrobdescentfiltBKmod}, Theorem~\ref{thm:crysBKspecial}, and Corollary~\ref{cor:crysGdesc}. 

\subsection{Notation: the Breuil--Kisin prism and its \texorpdfstring{$\mu_p$}{mu p}-action}
\label{not:mupaction}

We will write $K = W(k)[1/p];$ note that this is a change of notation from Section~\ref{sec:affine-Grassmannians}, where $K$ denoted an algebraically closed field. Fix a uniformizer $\pi \in K$, so $\pi = pz$ for some unit $z \in W(k)^*$. This choice determines the {\em Breuil--Kisin} prism $(\KisinS,(E))$, where $\KisinS=W(k)\llbracket u \rrbracket$, given the unique $\delta$-structure where $\delta(u)=0$ (or equivalently $\varphi(u)=u^p$), and $E=E(u)=u-\pi = u-pz$ is the minimal polynomial of $\pi$. Unless otherwise specified, $\KisinS$ is endowed with the $(p,u)$-adic topology for which it is complete. Write $w:\KisinS \to W(\KisinS)$ for the map associated to the $\delta$-structure, so $w(u)=[u]$ while $w|_{W(k)}$ endows $W(\KisinS)$ with a $W(k)$-algebra structure.  For any $\delta$-ring/space $X$, we shall write $\varphi_X$ for the associated Frobenius lift (or simply $\varphi$ if there is no risk of confusion).

There is a natural $W(k)$-linear $\mu_p$-action on $\mathrm{Spec}(\KisinS)$ giving $u$ weight $1$. Equipping $\mu_p$ with its usual $\delta$-structure (where the Frobenius lift kills the group scheme), this action is compatible with the $\delta$-structure: indeed, this $\mu_p$-action on $\mathrm{Spec}(\KisinS)$ is simply
induced from the scaling $\mathbf{G}_m$-action on $\mathbf{A}^1 = \mathrm{Spec}(W[u])$ restricting the group and by completing the ring. In particular,  there is an induced $\delta$-structure on $\Spec \gS/\mu_p$ such that the canonical map $\canonicalmapnotation:\Spec \gS \to \Spec \gS/\mu_p$ is a $\delta$-map. Moreover, as $\varphi:\KisinS \to \KisinS$ has image inside the invariant subring $\KisinS^{\mu_p} \subset \KisinS$, the map $\varphi:\Spec \KisinS \to \Spec \KisinS$ factors as 
\begin{equation} 
\label{eq:Kisin-Phi-factorize}\Spec \KisinS \xrightarrow{\canonicalmapnotation} \Spec \KisinS/\mu_p \xrightarrow{q} \Spec \KisinS
\end{equation}
where $q$ is a $\delta$-map that is linear over $\varphi$ on $W(k)$. Completing all structures for the $(p,u)$-adic topology on $\gS$ yields similar statements with $\Spec(-)$ replaced by $\mathrm{Spf}(-)$. The following simple consequence of this discussion will be used repeatedly:

\begin{lemma}
\label{lem:mupactionKisinSmod}
For any $\gM \in \mathrm{Coh}(\KisinS)$, the pullback $\varphi^* \gM$ is naturally $\mu_p$-equivariant. 
\end{lemma}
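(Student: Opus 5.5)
The plan is to read off the equivariant structure directly from the factorization \eqref{eq:Kisin-Phi-factorize} of Frobenius, $\varphi = q\circ\canonicalmapnotation$, where $\canonicalmapnotation:\Spec\KisinS\to\Spec\KisinS/\mu_p$ is the quotient map.

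First, I will identify $\varphi^*\gM$ with $\canonicalmapnotation^*(q^*\gM)$ via the canonical isomorphism $\varphi^*\simeq\canonicalmapnotation^*q^*$ for pullbacks along a composite of maps of stacks. Second, I will use the definition of the quotient stack: coherent (indeed quasi-coherent) sheaves on $\Spec\KisinS/\mu_p$ are the same thing as $\mu_p$-equivariant coherent sheaves on $\Spec\KisinS$, with $\canonicalmapnotation^*$ being the forgetful functor. Applying this to $q^*\gM$, which is a coherent sheaf on $\Spec\KisinS/\mu_p$, shows that $\canonicalmapnotation^*q^*\gM\simeq\varphi^*\gM$ carries a $\mu_p$-equivariant structure. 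This structure is natural in $\gM$, since each step is functorial.

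For a concrete check, the equivariant structure can also be written down directly. Since $\varphi(\KisinS)\subset\KisinS^{\mu_p}$, the module $\varphi^*\gM=\KisinS\otimes_{\varphi,\KisinS}\gM$ carries the $\mu_p$-coaction (equivalently, the $\mathbf{Z}/p$-grading) induced from the left tensor factor $\KisinS$. This coaction is well defined exactly because the elements $\varphi(s)$ are invariant, so the relation $a\varphi(s)\otimes m=a\otimes sm$ is preserved. The same argument works verbatim after $(p,u)$-adic completion with $\mathrm{Spf}$ in place of $\mathrm{Spec}$, as the paper notes.

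I do not expect any real obstacle. The only point needing care is the equivalence $\mathrm{Coh}(\Spec\KisinS/\mu_p)\simeq\mathrm{Coh}^{\mu_p}(\Spec\KisinS)$, which holds by descent along the fppf cover $\canonicalmapnotation$; this is standard.
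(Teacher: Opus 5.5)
Your proposal is correct and follows the same route as the paper, which deduces the lemma directly from the factorization $\varphi = q\circ\canonicalmapnotation$ in \eqref{eq:Kisin-Phi-factorize}, so that $\varphi^*\gM \simeq \canonicalmapnotation^*(q^*\gM)$ is pulled back from $\Spec\KisinS/\mu_p$. Your explicit description of the structure via the $\mu_p$-action on the left factor of $\KisinS\otimes_{\varphi,\KisinS}\gM$ is a correct unwinding of the same argument.
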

\begin{proof}
This follows immediately from the factorization in \eqref{eq:Kisin-Phi-factorize}.
\end{proof}

In the sequel, we shall implicitly use Lemma~\ref{lem:mupactionKisinSmod} to $\varphi$-pullbacks of objects on $\Spec \KisinS$ with a $\mu_p$-equivariant structure.

\subsection{Canonical modifications of crystalline Breuil--Kisin modules}
\label{ss:canmodthms}

Recall that a Breuil--Kisin module is a pair $(\gM,\varphi_{\gM})$ where $\gM$ is a  finite projective $\KisinS$-module and $\varphi_\gM:\varphi^* \gM[1/E] \simeq \gM[1/E]$ is an isomorphism of $\KisinS[1/E]$-modules. We write $\mathrm{Vect}^{\varphi}(\KisinS)$ for the category of Breuil--Kisin modules. This category receives a natural fully faithful functor from the category of $\Gal_K$-stable $\mathbf{Z}_p$-lattices in crystalline $\Gal_K$-representations by the main result of \cite{KisinCrys}. We denote the essential image of this functor by $\Vect^{\varphi}(\gS)_{\cris}$; its
objects are dubbed {\em  crystalline} Breuil--Kisin modules.

The goal of this subsection is to formulate  Theorem~\ref{thm:mainNthm}, which gives a new structure enjoyed by crystalline
Breuil--Kisin modules.

\begin{para} Denote by $K\llbracket E \rrbracket$  the completion of $\gS[1/p]$ at the ideal $E(u).$
Recall that the double cosets $\GL_n(K\llbracket E \rrbracket)\backslash \GL_n(K\llbracket E \rrbracket[1/E])/ \GL_n(K\llbracket E \rrbracket)$ are indexed by unordered $n$-tuples of integers, or equivalently
by dominant cocharacters of $\GL_n.$

Let $\gM$ in  $\Vect^{\varphi}(\gS)_{\cris}$ be of rank $n.$
 As $\varphi^*(\gM)[1/E] \simeq \gM[1/E],$ the relative position of
$\varphi^*(\gM)\otimes_{\gS} K\llbracket E \rrbracket$ and $\gM \otimes_{\gS} K\llbracket E \rrbracket$
in $\varphi^*\gM \otimes_{\KisinS} K((E)) \simeq \gM \otimes_{\gS} K((E))$ corresponds to a dominant cocharacter
which we denote by $\mu_{\gM}.$

If $\gM$ arises from a lattice $L$ in a crystalline representation $V,$ then $\mu_{\gM}$ agrees with the Hodge--Tate
cocharacter of $V.$
\end{para}

\begin{para}\label{para:category-D} Let $\cD$ denote the category of tuples $\underline{\gN} = (\gN_1, \gN_2,\psi),$ where $\gN_1$ is a finite free $\gS$-module, $\gN_2$ is a finite free $\mu_p$-equivariant $\gS$-module, and $\psi: \varphi^* \gN_1[1/u] \simeq \gN_2[1/u]$ is a $\mu_p$-equivariant isomorphism (see Lemma~\ref{lem:mupactionKisinSmod} for the $\mu_p$-action on the left hand side). In the notation of \S \ref{ss:fungrass} (or the forthcoming \S \ref{sss:xprimezprimeBK}), the underlying groupoid of $\mathcal{D}$ coincides with $\pHk(W(k)) \coloneq  \bigsqcup_n \pHk_{\mathrm{GL}_n}(W(k))$. For any $\underline{\gN} = (\gN_1, \gN_2,\psi) \in \mathcal{D}$, we denote by $\mu_{\underline{\gN}}$ the dominant cocharacter of $\GL_n$ determined by the attached point of $\pHk(K)$, i.e., by the relative position of the $K\llbracket u \rrbracket$-lattices $\varphi^*\gN_1 \otimes_{\KisinS} K\llbracket u \rrbracket$ and $\gN_2 \otimes_{\KisinS} K\llbracket u \rrbracket$ inside $\varphi^* \gN_1 \otimes_{\KisinS} K((u)) \simeq \gN_2 \otimes_{\KisinS} K((u))$, where $n$ is the rank of $\gN_1.$

Our main result is that any crystalline Breuil--Kisin module has an attached object in $\mathcal{D}$ whose generic and special fibres have predictable relative positions:

\end{para}

\begin{thm}\label{thm:mainNthm} There is an exact tensor functor
$$\Vect^{\varphi}(\gS)_{\cris} \rightarrow \cD: \quad \gM \mapsto \underline{\gN}(\gM) = (\gM, \gN, \psi) $$
such that
\begin{enumerate}
    \item\label{item:specializable} {\em Specializability:} There is a  $\KisinS/p$-linear identification $\tau:\gN/p\gN \simeq \gM/p\gM$ fitting into a commutative diagram
\[\xymatrix{ \bigl(\varphi^*(\gM)/p\varphi^*(\gM)\bigr)[1/u] \ar[r]^-{\psi}\ar@{=}[d] & \bigl(\gN/p\gN\bigr)[1/u] \ar[d]^{\tau,\sim} \\
\bigl(\varphi^*(\gM)/p\varphi^*(\gM)\bigr)[1/u] \ar[r]^-{\varphi_\gM} & \bigl(\gM/p\gM\bigr)[1/u]
}\]
of $\KisinS/p[1/u]$-modules. (Note that such a $\tau$ is unique if it exists.)

    \item\label{item:generic-position} {\em Generic relative position:} We have $\mu_{\gM} = \mu_{\underline{\gN}(\gM)}.$
\end{enumerate}
\end{thm}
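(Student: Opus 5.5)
I plan to construct $\gN$ by the second (``syntomic/Rees'') construction sketched in the introduction, and then use the $F$-gauge to get specializability. Let $D = \bigl(\varphi^*\gM/E\varphi^*\gM\bigr)[1/p]$ with its Hodge filtration $\Fil^\bullet D$, which is induced by the Nygaard filtration on $\varphi^*\gM$. By Kisin's theory, the Frobenius structure gives a canonical $\varphi$-compatible identification $\xi:\varphi^*\gM\otimes_\gS K\llbracket u\rrbracket \simeq D\otimes_K K\llbracket u\rrbracket$. This comes from parallel transport: the unique $\varphi$-equivariant section lifting the identity modulo $u$, convergent on the open disc. I then define a $\mu_p$-equivariant $K\llbracket u\rrbracket$-lattice as the Rees module
\[ \gN_K := \sum_i u^{-i}\,\Fil^iD\otimes_K K\llbracket u\rrbracket \subset D\otimes_K K((u)), \]
with $\mu_p$ (indeed $\Gm$) acting through $u$. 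It is equivariant because the Rees construction is graded. Beauville--Laszlo glueing of $\gN_K$ with $\varphi^*\gM[1/u]$ along $u=0$ gives a reflexive module on $\Spec\gS$. By purity for vector bundles on the regular two-dimensional scheme $\Spec\gS$ (away from the closed point), this is a finite free $\gN$ with $\psi:\varphi^*\gM[1/u]\simeq\gN[1/u]$. The $\mu_p$-structure extends because it exists on both glued pieces and is compatible on the overlap. Exactness and tensor compatibility are inherited from those of the Hodge filtration, of $\xi$, and of Beauville--Laszlo glueing.

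For (2), the relative position of $\varphi^*\gM\otimes K\llbracket u\rrbracket$ and $\gN_K$ is by construction the type of the Hodge filtration. The relative position of $\varphi^*\gM\otimes K\llbracket E\rrbracket$ and $\gM\otimes K\llbracket E\rrbracket$ is also the Hodge type, by the standard comparison $\gM\otimes K\llbracket E\rrbracket = \sum_i E^{-i}\Fil^i(\varphi^*\gM\otimes K\llbracket E\rrbracket)$. Hence $\mu_\gM=\mu_{\underline\gN(\gM)}$. The sign conventions have to be matched with the relative-position dictionary of Remark~\ref{RelPosGrAgree}.

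The main obstacle is (1), the congruence $\psi\equiv\varphi_\gM \pmod p$. Here I would realise $\gM$ as the pullback of the $F$-gauge $\cE^{\Syn}$ on $\ZpSyn$ (or of the prismatic $F$-crystal on $W(k)^\Prism$) along $\rho_{\rm std}$. The aim is to produce a second map $\widetilde{\rho_\dagger}:\Spf\gS\to W(k)^\Prism$ that agrees with $\rho_{\rm std}$ mod $p$, is $\mu_p$-equivariant, and factors through the $p$-twisted Rees stack $Y$ of \S\ref{ss:twistedrees} for $(\gS,(u))$. The natural candidate uses the transversal prism $\gS$ with $E$ replaced by $u^p$ up to $p$-adic perturbation: since $E = u - pz$ and $\varphi(E)\equiv u^p \pmod p$, the Cartier divisors $\varphi(E)$ and $u^p$ agree mod $p$. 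The Frobenius $\varphi_\gM$, read as the Frobenius of the crystal via $\rho_{\rm std}$ and $\varphi\circ\rho_{\rm std}$, would then identify mod $p$ with the comparison isomorphism of the crystal along the two maps. I would then check that $\widetilde{\rho_\dagger}^*\cE$ has generic fibre equal to the Rees lattice $\gN_K$; this uses that crystalline parallel transport on $K\llbracket u\rrbracket$ agrees with the crystal's transition isomorphism, so $\gN = \widetilde{\rho_\dagger}^*\cE$ by purity. That yields $\tau$. Uniqueness of $\tau$ holds because $\gN/p$ is $u$-torsion free.

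I expect the real difficulty to be the precise construction of $\widetilde{\rho_\dagger}$, equivalently of the map into $\ZpSyn$ along the Hodge--Tate/de Rham point, together with the verification that the resulting modification is exactly the Hodge Rees lattice.
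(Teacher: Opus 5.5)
The Rees-lattice construction of $\gN$ is the paper's canonical modification $(\gN^{\can},\psi^{\can})$, and with that definition (2) does hold by construction. The gap is in (1). There, the map $\widetilde{\rho_\dagger}$ carries essentially all the content, and you do not construct it.

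The candidate you suggest cannot work. Take a point given by a prism structure on $\gS$ whose distinguished element is a $p$-adic perturbation of $u^p$ (e.g.\ $\varphi(E)$, i.e.\ $\rho_{\mathrm{std}}\circ\varphi$). Its Hodge--Tate divisor is $(u^p)$ modulo $p$. So it cannot agree modulo $p$ with $\rho_{\mathrm{std}}$, whose Hodge--Tate divisor is $(u)$ modulo $p$. For $\varphi(E)$ the pullback of $\cE$ is just $\varphi^*\gM$, which is congruent to $\varphi^*\overline{\gM}$ rather than $\overline{\gM}$.

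What is needed is a $\mu_p$-equivariant Frobenius \emph{root} of $\rho_{\mathrm{std}}\circ\varphi$. Your list of desiderata omits exactly this Frobenius property. The paper uses the distinguished Witt vector $[u]-zV(1)\in W(\gS)$, which has three properties:
\begin{itemize}
\item it is $\mu_p$-equivariant because $[\zeta]V(1)=V(F[\zeta])=V(1)$ (whereas $[\zeta]p\neq p$ integrally, which is why $w(E)$ itself is not equivariant);
\item it agrees with $w(E)=[u]-pz$ in $W(\gS/p)$ because $V(1)=p$ there;
\item $F([u]-zV(1))=w(\varphi(E))$, i.e.\ $\varphi\circ\rho_\dagger\simeq\rho_{\mathrm{std}}\circ q$.
\end{itemize}
The last identity is what makes $\varphi_{\cE}$ pull back to a $\mu_p$-equivariant isogeny $\psi$ from $\varphi^*\gM$ to $\gN=\canonicalmapnotation^*\rho_\dagger^*\cE$. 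There is no ``comparison isomorphism of the crystal along two maps agreeing mod $p$'' you could use instead. Also, $\rho_\dagger$ is not a $\delta$-map, so it does not come from a prism structure on $\gS$ in the way you suggest.

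Your remaining step claims that $\widetilde{\rho_\dagger}^*\cE\otimes K\llbracket u\rrbracket$ is the Rees lattice because ``parallel transport agrees with the crystal's transition isomorphism''. This is not an argument, for three reasons:
\begin{itemize}
\item $W(k)^\Prism$ is a $p$-adic formal stack with no $K\llbracket u\rrbracket$-points to compare along;
\item the Hodge filtration is not visible on $W(k)^\Prism$;
\item a $\mu_p$-equivariant lattice is not pinned down by its relative position.
\end{itemize}
This identification is the paper's Theorem~\ref{thm:crysBKspecial}, and it needs the $F$-gauge on $W(k)^{\cN}$. Concretely, one uses a chart $f:Y\to W(k)^{\cN}$ from the $p$-twisted Rees stack, restricting to $\rho_{\mathrm{std}}$ on the de Rham open and to $\rho_\dagger$ on the Hodge--Tate open. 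One then computes $Y\times_{W(k)^{\cN}}\mathbf{A}^1/\mathbf{G}_m$ along $i_{\dR}$ as the PD-envelope of $y/p$.

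For the theorem as stated you can skip all of this. Set $\gN\coloneq\canonicalmapnotation^*\rho_\dagger^*\cE$; then (1) is immediate from the mod $p$ agreement. For (2), the relative position of an $I_{\HT}$-isogeny on $W(k)^\Prism$ may be computed on any flat cover with excellent source and regular connected Hodge--Tate locus, and both $\rho_{\mathrm{std}}$ and $\rho_\dagger$ are such covers ($\rho_\dagger$ is faithfully flat because it is so modulo $p$).

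Finally, exactness is not inherited from glueing. The purity step is a $*$-extension across the closed point of $\Spec\gS$, which is not right exact. Exactness has to be deduced from (1) by working modulo $p$.
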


We shall give two proofs of Theorem~\ref{thm:mainNthm}: a shorter one via the prismatization in \S \ref{sss:canmodprismproof}, and a slightly more conceptual longer one via the syntomification in \S \ref{ss:PfCanModThm}.

\begin{remark}
There is an asymmetry in parts~\eqref{item:specializable} and~\eqref{item:generic-position} of Theorem~\ref{thm:mainNthm}: the former identifies the isogeny $\psi \otimes_{\KisinS} k\llbracket u \rrbracket$ uniquely, while the latter only pins down the relative position of $\psi \otimes_{\KisinS} K\llbracket u \rrbracket$. The proof via the syntomification mentioned above upgrades~\eqref{item:generic-position} to a more precise assertion: in fact, the isogeny $\psi$ is itself intrinsically described in terms of $(\gM,\varphi_\gM)$ (see Proposition~\ref{prop:canmodBK} and Theorem~\ref{thm:crysBKspecial}).
\end{remark}

In the rest of this subsection, we record some consequences. First, one has the following $\mu_p$-equivariance of mod $p$ reductions of crystalline Breuil--Kisin modules:

\begin{cor}\label{cor:mainNcor}
Let $(\gM,\varphi_\gM) \in \Vect^{\varphi}(\gS)_{\cris}.$ Then $\gM/p\gM$ has a unique $\mu_p$-equivariant structure compatible with the map $\varphi^*(\gM/p\gM)[1/u] \simeq \gM/p\gM[1/u]$ induced by $\varphi_\gM$.
\end{cor}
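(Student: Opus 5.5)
Existence follows directly from Theorem~\ref{thm:mainNthm}; uniqueness comes from a torsion-freeness argument over $k\llbracket u\rrbracket$. Apply Theorem~\ref{thm:mainNthm} to obtain $\underline{\gN}(\gM)=(\gM,\gN,\psi)$. The module $\gN$ is a finite free $\mu_p$-equivariant $\KisinS$-module. Since the $\mu_p$-action on $\KisinS$ preserves $p$, the quotient $\gN/p\gN$ inherits a $\mu_p$-equivariant structure as a module over $\KisinS/p=k\llbracket u\rrbracket$.

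Transport this structure along the identification $\tau:\gN/p\gN\simeq\gM/p\gM$ of Theorem~\ref{thm:mainNthm}\eqref{item:specializable}. The source $\varphi^*(\gM/p\gM)=\varphi^*(\gM)/p$ carries its canonical $\mu_p$-equivariant structure from Lemma~\ref{lem:mupactionKisinSmod}. The map $\psi$ is $\mu_p$-equivariant, and its reduction mod $p$ is still $\mu_p$-equivariant. The commutative square in \eqref{item:specializable} then shows that the map $\varphi^*(\gM/p\gM)[1/u]\to (\gM/p\gM)[1/u]$ induced by $\varphi_\gM$ equals $\tau\circ(\psi \bmod p)$. This composite is $\mu_p$-equivariant for the transported structure, which proves existence.

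For uniqueness, suppose two $\mu_p$-equivariant structures on $\gM/p\gM$ both make this map equivariant. They then agree after inverting $u$, because both are transported from the fixed structure on $\varphi^*(\gM/p\gM)[1/u]$ via the same isomorphism. Now $\gM/p\gM$ is finite free over $k\llbracket u\rrbracket$, hence $u$-torsion-free, so $\gM/p\gM\subset(\gM/p\gM)[1/u]$. An equivariant structure amounts to a coaction $\gM/p\gM\to\gM/p\gM\otimes_{\Z}\Z[x]/(x^p-1)$, equivalently a $\Z/p$-grading compatible with the grading of $k\llbracket u\rrbracket$ (base-changed appropriately, since $\mu_p$ is not étale over $k$; the coaction formulation avoids this issue). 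Two such coactions that agree after inverting $u$ agree on the $u$-torsion-free target, so they coincide. Equivalently, the sub-$k\llbracket u\rrbracket$-module $\gM/p\gM$ of a $\mu_p$-equivariant $k((u))$-module determines the structure whenever it is stable under the coaction.

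The only point needing some care is this uniqueness statement for equivariant structures on a lattice. One must check that the coaction on $(\gM/p\gM)[1/u]$ preserves $\gM/p\gM$, which holds by existence, and that restriction is injective, which holds by $u$-torsion-freeness. The rest is formal.
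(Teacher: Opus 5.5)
Your proposal is correct and follows the paper's proof: existence comes from Theorem~\ref{thm:mainNthm}~\eqref{item:specializable}, and uniqueness holds because an equivariant structure on the $u$-torsion-free lattice $\gM/p\gM$ is determined by its restriction to $(\gM/p\gM)[1/u]$, and that restriction is pinned down by compatibility with $\varphi_\gM$. One minor correction: your concern that $\mu_p$ is not étale over $k$ is unnecessary, because $\mu_p$ is diagonalizable, so $\mu_p$-equivariant structures are exactly $\Z/p$-gradings over any base, which is how the paper phrases the argument.
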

\begin{proof}
The existence follows from Theorem \ref{thm:mainNthm}~\eqref{item:specializable}. For uniqueness: a $\mu_p$-equivariant structure on $\gM/p\gM$ corresponds to a $\mathbb{Z}/p$-grading (with some properties), so it is determined by the induced structure on $\gM/p\gM[1/u]$, and the latter is unique (if it exists) thanks to its compatibility with $\varphi_{\gM}$.
\end{proof}

Secondly, one has the following numerical consequence:

\begin{cor}
\label{cor:specializableuparrow}
Let $(\gM,\varphi_\gM) \in \Vect^{\varphi}(\gS)_{\cris}.$  Let $\mu_{\gM}$ and $\mu_{\overline{\gM}}$ be the relative positions of the isogenies over $K\llbracket E \rrbracket$ and $k\llbracket E \rrbracket$ defined by $\varphi_\gM$. Then
\[ \mu_{\overline{\gM}} \uparrow \mu_\gM \]
in $X_*(T)^+$, where $n=\rank(\gM)$ and $T\subseteq \mathrm{GL}_n$ is the maximal torus.
\end{cor}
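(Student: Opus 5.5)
We will deduce the statement from Theorem~\ref{thm:mainNthm} together with Corollary~\ref{cor:closure-from-zpbar-point}, by viewing the triple $\underline{\gN}(\gM)=(\gM,\gN,\psi)$ as an integral point of $\pHk_{\GL_n}$ and reading off its generic and special fibres.

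\textbf{Step 1: an $\O$-point of $\pHk_{\GL_n}$.} The triple $\underline{\gN}(\gM)=(\gM,\gN,\psi)\in\cD$ defines a point of $\pHk_{\GL_n}(W(k))$. Here we use the functor-of-points description of Proposition~\ref{prop:funcmupfixed}(3), with $M_1=\gM$ on the $p$-speed disc (identified with $\KisinS$ via $u\mapsto u^p$), $M_2=\gN$, and $\tau=\psi$. To match the setup of \S\ref{subsec:integral-mu-fixed}, we base change along $W(k)\to\O$, where $\O$ is the ring of integers of an algebraic closure of $W(\bar k)[1/p]$. This gives $\beta\in\pHk_{\GL_n}(\O)$. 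Relative positions are compatible with this flat base change (cf.\ \eqref{eq:relpos} and Remark~\ref{RelPosGrAgree}).

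\textbf{Step 2: the generic fibre.} By Lemma~\ref{lem:pointquotient} and Remark~\ref{RelPosGrAgree}, the invariant $\beta(K)\in X_*(T)^+$ is the relative position of the lattices $\varphi^*\gM\otimes K\llbracket u\rrbracket$ and $\gN\otimes K\llbracket u\rrbracket$. This is $\mu_{\underline{\gN}(\gM)}$, which equals $\mu_\gM$ by Theorem~\ref{thm:mainNthm}\eqref{item:generic-position}.

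\textbf{Step 3: the special fibre.} The invariant $\beta(k)$ is the relative position of $\psi\otimes k\llbracket u\rrbracket$. By Theorem~\ref{thm:mainNthm}\eqref{item:specializable}, the isomorphism $\tau$ identifies this modification with $\varphi_\gM \bmod p$ as a map $\varphi^*\overline{\gM}[1/u]\simeq\overline{\gM}[1/u]$. So $\beta(k)$ is the relative position of $\varphi^*\overline{\gM}$ and $\overline{\gM}$ over $k\llbracket u\rrbracket$. Since $E\equiv u \pmod p$, we have $k\llbracket E\rrbracket=k\llbracket u\rrbracket$, and this relative position is by definition $\mu_{\overline{\gM}}$. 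Note that $\tau$ is only an isomorphism of modules, not a $\mu_p$-equivariant one. This does not matter, because the relative position is computed in $\Hk_{\GL_n}(k)$ and Remark~\ref{affgrassfixedorbit} identifies the orbit sets.

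\textbf{Conclusion and main obstacle.} Corollary~\ref{cor:closure-from-zpbar-point} now gives $\mu_{\overline{\gM}}=\beta(k)\uparrow\beta(K)=\mu_\gM$. The one delicate point is conventions. We must check that the switched versus unswitched Grassmannian, together with the order of the pair (source $\varphi^*\gM$, target $\gN$), produce the same dominant cocharacter on both fibres and agree with the conventions defining $\mu_\gM$ and $\mu_{\overline{\gM}}$. Any global sign or inversion is harmless, because the relation $\uparrow$ is preserved by $\lambda\mapsto -w_0\lambda$. The substantive input is entirely Theorem~\ref{thm:mainNthm}.
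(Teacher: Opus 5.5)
Your proposal is correct and is essentially the paper's own proof: the triple from Theorem~\ref{thm:mainNthm} defines a point of $\pHk_{\GL_n}(W(k))$, parts (2) and (1) of that theorem identify its geometric generic and special invariants with $\mu_\gM$ and $\mu_{\overline{\gM}}$, and Corollary~\ref{cor:closure-from-zpbar-point} then gives $\mu_{\overline{\gM}}\uparrow\mu_\gM$. Your explicit base change to the ring of integers of an algebraic closure of $W(\bar k)[1/p]$ simply spells out what the paper means by passing to the geometric generic and special points.
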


\begin{proof}
Consider the $u$-isogeny $\psi:\varphi^*\gM[1/u] \simeq \gN^{\can}[1/u]$ provided by Theorem~\ref{thm:mainNthm}. As this isogeny is $\mu_p$-equivariant, via Proposition~\ref{prop:funcmupfixed}, it defines a point $\beta_{W(k)} \in \pHk_{\mathrm{GL}_n}(W(k))$. The induced points $\beta_{\overline{K}} $ and $\beta_{\overline{k}}$ over the geometric generic and special points of $\mathrm{Spec}(W(k))$ can be viewed as elements of $X_*(T)^+$ by Lemma~\ref{lem:pointquotient}. These elements satisfy
\begin{equation*}\label{eq:uparrowbetaGL} \beta_{\overline{k}} \uparrow \beta_{\overline{K}} \end{equation*}
by Corollary~\ref{cor:closure-from-zpbar-point}. By Theorem~\ref{thm:mainNthm} (1) and (2) respectively, we also know that $\beta_{\overline{k}}=\mu_{\overline{\gM}}$ and $\beta_{\overline{K}} = \mu_{\gM}$, so the claim follows.
\end{proof}

Our methods also naturally give the following Frobenius-descent property enjoyed by crystalline Breuil--Kisin modules (which is not used in the sequel).

\begin{prop}
\label{cor:FrobdescentfiltBKmod}
Let $(\gM,\varphi_\gM)$ be the Breuil--Kisin module attached to a crystalline Galois representation of $\Gal_{K}$. Consider the two filtrations on $(\varphi^* \gM)/p$, given by $(\varphi^* \gM \cap E^\bullet \gM)/p \subset \varphi^* \gM/p$ and $\varphi^*(\gM/p) \cap E^\bullet \gM/p \subset \varphi^* \gM/p$. Both these filtered $(u)^\bullet \KisinS/p$-modules descend naturally along the Frobenius $(u)^{\lceil \bullet/p \rceil} \KisinS/p \to (u)^\bullet \KisinS/p$.
\end{prop}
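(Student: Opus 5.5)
The plan is to deduce the statement from Theorem~\ref{thm:mainNthm} together with the $p$-twisted Rees formalism of \S\ref{ss:twistedrees}. Let $(\gM,\gN,\psi)=\underline{\gN}(\gM)$ be the triple from Theorem~\ref{thm:mainNthm}. Reducing mod $p$ and composing with $\tau$, we get a $\mu_p$-equivariant $u$-isogeny $\overline\psi:\varphi^*(\gM/p)[1/u]\simeq \gN/p[1/u]\simeq \gM/p[1/u]$. By part~\eqref{item:specializable} this isogeny agrees with $\varphi_{\gM}$ mod $p$. Note also that $E\equiv u$ mod $p$, so $E^\bullet$ becomes $u^\bullet$ on $\KisinS/p$.

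First I treat the second filtration, $\varphi^*(\gM/p)\cap u^\bullet\gM/p$. By Lemma~\ref{lem:starextendrees}(1) applied with $B=k\llbracket u\rrbracket$ and $J=(u)$, this is exactly the filtration attached to the isogeny $(\varphi^*(\gM/p),\gM/p,\varphi_{\gM})$, that is, the Rees module of $j_*V$ on $X$. By Example~\ref{ex:VectReesKu}, with base ring $k$, a triple consisting of $\gM/p$ on $\Spec k\llbracket u\rrbracket$, the $\mu_p$-equivariant module $\gM/p$ (equivariant structure transported from $\gN/p$ via $\tau$, as in Corollary~\ref{cor:mainNcor}), and the $\mu_p$-equivariant isogeny $\overline\psi$ is the same thing as a vector bundle on $Y^\circ$. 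Here $Y$ is the Rees stack of $(u)^{\lceil\bullet/p\rceil}$. Taking $*$-extension to $Y$ and applying Lemma~\ref{lem:qcohX} in its $Y$-version, I obtain a filtered module over $(u)^{\lceil\bullet/p\rceil}k\llbracket u\rrbracket$ with underlying module $\gM/p$, namely $F^i=\gM/p\cap u^{\lceil i/p\rceil}\,\gN/p$ inside the generic fibre.

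Second, I check that pulling back along $\varphi:(u)^{\lceil\bullet/p\rceil}\to(u)^\bullet$ recovers the target filtration. Concretely, the $\mu_p$-invariant part of $\varphi^*F^i$ should be computed degree by degree in the $\Z/p$-grading (Remark~\ref{rmk:qcohroot}), and should give $\varphi^*(\gM/p)\cap u^i\gM/p$. Equivalently, the map $X\to Y$ induced by $u\mapsto u^p$ identifies $X$ as the base change of $Y$ along Frobenius, and $j_*$ commutes with this flat pullback. I expect this base-change compatibility to be a routine check using flatness of $\varphi$ on $k\llbracket u\rrbracket$.

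Third, for the first filtration $(\varphi^*\gM\cap E^\bullet\gM)/p$, I run the same construction integrally. The twisted Rees stack for $(W(k)\llbracket u\rrbracket,(u))$ carries the $*$-extension of the triple $(\gM,\gN,\psi)$. Its Frobenius pullback should give the filtration $\varphi^*\gM\cap u^\bullet\gN$, and reducing mod $p$ yields the descent. The main obstacle is reconciling this with the filtration by $E^\bullet$ rather than $u^\bullet$. Integrally $\psi\ne\varphi_\gM$, so I must show that $(\varphi^*\gM\cap E^i\gM)/p=(\varphi^*\gM\cap u^i\gN)/p$. My first attempt would be to use the syntomic description (Proposition~\ref{prop:canmodBK}, Theorem~\ref{thm:crysBKspecial}): both filtrations should be restrictions of the Nygaard filtration of the $F$-gauge $\mathcal{E}^{\Syn}$, and $*$-extension commutes with reduction mod $p$ by the Koszul regularity of Lemma~\ref{lem:starextendrees}(2). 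Failing that, I would argue that the mod $p$ reductions are saturated filtrations that both have associated graded equal to the residual sheaf, and compare them using $\tau$.
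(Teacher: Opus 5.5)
Your argument for the second filtration works, and it is a legitimate alternative to the paper's; the paper itself remarks that this case is essentially equivalent to Corollary~\ref{cor:mainNcor}. Theorem~\ref{thm:mainNthm}(1) makes $\varphi_{\gM}\bmod p$ a $\mu_p$-equivariant isogeny. Its $*$-extension to the $p$-twisted Rees stack $Y_{p=0}$ is the descended object, and flat pullback along the Frobenius map $X_{p=0}\to Y_{p=0}$ (given by $y\mapsto x^p$) recovers $\varphi^*(\gM/p)\cap u^\bullet\gM/p$. There are two slips in this part:
\begin{itemize}
\item $X$ is not the base change of $Y$ along Frobenius. What you need, and what is true, is that this map is flat and $X^\circ$ is the preimage of $Y^\circ$.
\item The descended filtration is $F^i=\{m\in\gM/p:\varphi_{\gM}(1\otimes m)\in u^i\gM/p\}$, not $\gM/p\cap u^{\lceil i/p\rceil}\gN/p$. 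Its Frobenius pullback is the target filtration precisely because that filtration is $\mu_p$-stable.
\end{itemize}

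The genuine gap is the first filtration. Your candidate descent there, the mod $p$ reduction of the $*$-extension of $(\gM,\gN,\psi)$ to the integral $p$-twisted Rees stack, is in fact the right object. But everything rests on the identity $(\varphi^*\gM\cap E^i\gM)/p=(\varphi^*\gM\cap u^i\gN)/p$, which you do not prove. It cannot be extracted from Theorem~\ref{thm:mainNthm}: parts (1) and (2) only see $\gN/p$ and the generic relative position of $\psi$, whereas both sides depend on the integral lattices $E^i\gM$ and $u^i\gN$.

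The mechanism you propose is false. Lemma~\ref{lem:starextendrees}(2) concerns the sections $(\tilde{d},t)$, not $p$, and $*$-extension does not commute with reduction mod $p$. If it did, $(\varphi^*\gM\cap E^\bullet\gM)/p$ would equal $\varphi^*(\gM/p)\cap u^\bullet\gM/p$, so the two filtrations of the statement would coincide. Comparing residual sheaves would then give $\mu_{\gMbar}=\mu_{\gM}$ always, which is false in general. The paper separates ``reflexive hull, then mod $p$'' from ``mod $p$, then reflexive hull'' because they differ: the discrepancy is the sheaf $N$ in the proof of Corollary~\ref{cor:Wp-orbit}. Your fallback fails for the same reason, since the mod $p$ reduction of the Nygaard filtration is not saturated in $\varphi^*\gM/p$ in general.

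The missing idea is Proposition~\ref{prop:relatefandg}. The chart $g\colon X\to W(k)^{\cN}$ pulls the gauge $\mathcal{E}^{\cN}$ back to the Rees module of $\varphi^*\gM\cap E^\bullet\gM$. Moreover $g_{p=0}=f_{p=0}\circ\phi$, where $\phi\colon X_{p=0}\to Y_{p=0}$ is the map of Rees stacks induced by the Frobenius $(u)^{\lceil\bullet/p\rceil}\KisinS/p\to(u)^\bullet\KisinS/p$. Hence $(g^*\mathcal{E}^{\cN})/p=\phi^*f_{p=0}^*(\mathcal{E}^{\cN}/p)$, so $f_{p=0}^*(\mathcal{E}^{\cN}/p)$ is the descent of the first filtration. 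Pulling back the reflexive hull of $\mathcal{E}^{\cN}/p$ instead handles the second. This is the paper's proof, and it treats both filtrations uniformly.

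Note that this comparison exists only mod $p$: integrally $\varphi(u)=u^p\notin(E)$, so Frobenius does not map the $u$-twisted filtration into the $E$-adic one. If you want to keep your formulation, combine the above with the identification of $f^*\mathcal{E}^{\cN}$ with the $*$-extension of $(\gM,\gN,\psi)$. That identification is claim $(\ast)$ in the proof of Theorem~\ref{thm:crysBKspecial}, or follows from Proposition~\ref{prop:rhodaggerviaNygaard}, and together they prove your identity.
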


The proof is given at the end of \S \ref{ss:synstack}. The descent along the Frobenius for the second filtration $\varphi^*(\gM/p) \cap E^\bullet \gM/p$ considered above is essentially equivalent to Corollary~\ref{cor:mainNcor} and was also independently established by Gao--Liu \cite{gao2024integralsentheoryintegral} via slightly more classical techniques in integral $p$-adic Hodge theory. We are not aware of a classical argument for the first filtration.

The reader interested only in the {\em statements} about Breuil--Kisin modules should feel free to skip ahead to \S \ref{v3-sec:inertial-weights-mod-p-Galois-rep} at this point.

\begin{remark}
\label{rmk:mupweightcanmod}
Let $(\gM,\varphi_\gM)$ be a crystalline Breuil--Kisin module. By Theorem~\ref{thm:crysBKspecial}, we obtain a $\mu_p$-equivariant $\KisinS$-module $\gN$ and an isomorphism $\psi:\varphi^*\gM[1/u] \simeq \gN[1/u]$ whose relative position is given by $\mu_\gM$, the Hodge--Tate cocharacter of $\gM$. On the other hand, by $\mu_p$-equivariance, the mod $u$ reduction of $\gN$ determines a $\mathbf{Z}/p$-graded finite free $W(k)$-module, and thus a multisubset $S \subset \mathbf{Z}/p$ by tracking ranks of graded pieces. It follows from Remark~\ref{rmk:ptwistedmupweights} that $S$ is determined by the Hodge--Tate weights of $\gM$: regard the latter as a multisubset of $\mathbf{Z}$ and project to $\mathbf{Z}/p$. Informally, we may simply say that the $\mu_p$-weights of $\gN/u\gN$ (and thus also $\overline{\gM}/u\overline{\gM}$) coincide with the mod $p$ reduction of the Hodge--Tate weights of $\gM$. 
\end{remark}

\subsection{Recollections on the prismatization of \texorpdfstring{$\mathrm{Spf}(W(k))$}{Spf(W(k))}}
\label{ss:reviewprismatization}

In this subsection, we recall (without proof) some definitions and results surrounding the prismatization of $W(k)$ that are relevant to this paper; see \cite{bhatt2022absolute,bhatt2022prismatization,drinfeldprismatization} for more.

All our geometric objects are regarded as $p$-adic formal objects over $W(k)$, i.e., presheaves on $p$-nilpotent $W(k)$-algebras; in fact, we shall only work with fpqc sheaves in practice.

\begin{para}
{\em The stack $W(k)^\Prism$.
} Write $W(-)$ (or just $W$) for the ring scheme of $p$-typical Witt vectors; it is pro-smooth over $W(k)$.
Write $F:W \to W$ for the Witt vector Frobenius.
Let $W_{\text{dist}} \subset W$ be the subfunctor parameterizing {\em distinguished elements}, i.e., $d = (d_0,d_1,d_2,\dots)
\in W(R)$ with $d_0$ nilpotent and $d_1$ invertible.
By construction, $W_{\text{dist}}$ is the formal completion of a quasi-compact open subscheme over $W$ along a Cartier divisor, and it is easily seen to be stable under multiplication by the group scheme $W^*$ of units in $W$.

\begin{defn}
\label{defn:wkprism}
The {\em prismatization of $W(k)$} is defined to be the quotient
\[ W(k)^\Prism \coloneq  W_{\text{dist}}/W^*,\]
regarded as an fpqc stack on $p$-nilpotent $W(k)$-algebras.
\end{defn}

\begin{remark}[$W(k)^\Prism$ via Cartier--Witt divisors]
There is an implicit fpqc sheafification involved in Definition~\ref{defn:wkprism}. While this can be problematic in general, there is no issue above as one can identify the functor of points $W(k)^\Prism$: for a $p$-nilpotent $W(k)$-algebra $R$, the groupoid $W(k)^\Prism(R)$ is the groupoid of maps $I \xrightarrow{\alpha} W(R)$, where $I$ is an invertible $W(R)$-module, and $\alpha$ is a map of $W(R)$-modules such that the image of $I$ is, Zariski locally on $\mathrm{Spec}(R)$, generated by a
distinguished element, see \cite[Proposition 3.2.3]{bhatt2022absolute}. We prefer to emphasize the above quotient presentation as it is directly relevant for our main new geometric result (Proposition~\ref{prop:FrobDescMap}) about the prismatization.
\end{remark}

The stack $W(k)^\Prism$ carries a Frobenius lift $\varphi$ determined by the Witt vector Frobenius $F$ on $W_{\text{dist}}$ (and its compatibility with the action of $W^*$) and $\varphi$ on $W(k)$ itself. In fact, as $W(k)^\Prism$ admits a flat cover by the $\mathrm{Spf}(\mathbf{Z}_p)$-flat scheme $W$, it follows that this Frobenius lift $\varphi$ underlies a (unique) $\delta$-structure on $W(k)^\Prism$.
\end{para}

\begin{remark}[Comparison with relative prismatization]\label{rem:comparison-relative-prismatization}
The papers \cite{bhatt2022absolute,drinfeldprismatization} concern the prismatization of $\mathrm{Spf}(\mathbf{Z}_p)$ itself (which agrees with the above for $k=\mathbf{F}_p$), while  \cite{bhatt2022prismatization} provides a general definition of the prismatization $X^\Prism$ of any bounded $p$-adic formal scheme $X$. It is easy to see, from deformation theory, that $X^\Prism \simeq X \times_{\mathrm{Spf}(\mathbf{Z}_p)} \mathrm{Spf}(\mathbf{Z}_p)^\Prism$ when $X/\mathbf{Z}_p$ is formally \'etale (i.e., $L_{X/\mathbf{Z}_p} = 0$); here the projection map to $X$ arises from the natural identifications $X(R) \xleftarrow{{\simeq}} X(W(R)) \xrightarrow{\simeq} X(W(R)/I)$ for a test $p$-nilpotent ring $R$ and Cartier--Witt divisor $(I \xrightarrow{\alpha} W(R)) \in \Z_p^\Prism(R)$. As the formal \'etaleness holds true for $X=\mathrm{Spf}(W(k))$, we have simply defined $W(k)^\Prism$ as $\mathrm{Spf}(\mathbf{Z}_p)^\prism \times_{\mathrm{Spf}(\mathbf{Z}_p)} \mathrm{Spf}(W(k))$ above. Note that this would not be the case if we replaced $W(k)$ with a ramified extension.
\end{remark}

\begin{para} 
{\em The Hodge--Tate locus in $W(k)^\Prism$.}
There is a divisor $W(k)^{\HT} \subset  W(k)^\Prism$ determined by the $W^*$-stable divisor $W_{\text{dist},d_0=0} \subset W_{\text{dist}}$; write $I_{\HT} \subset \mathcal{O}_{W(k)^\Prism}$ for the corresponding ideal sheaf. This divisor is called the {\em Hodge--Tate divisor}. To describe it explicitly, note that there is a point $\rho_{\HT}:\mathrm{Spf}(W(k)) \to W(k)^{\HT} \subset W(k)^\Prism$ determined by the distinguished element $V(1) \in W(W(k))$. This point is called the Hodge--Tate point. Calculating its stabilizer in $W^*$ and using the surjectivity of $F$ on $W$, one shows  that
\[ \mathrm{Spf}(W(k))/W^*[F] \simeq W(k)^{\HT},\]
where $W^*[F] \subset W^*$ is the subgroup scheme determined by the kernel of $F$ on $W^*$ (see \cite[Theorem 3.4.13]{bhatt2022absolute}). Note that $\mu_p \subset W^*[F]$ via the Teichm\"uller map. In fact, one can show that projection to the $0$-th component induces an identification $W^*[F] \simeq \mathbf{G}_m^\sharp$, where $\mathbf{G}_m^\sharp$ is the PD-hull of the identity in $\mathbf{G}_m$ (see \cite[Proposition 3.4.13]{bhatt2022absolute}).
\end{para}

\begin{para}\label{para:ptsofprism}
{\em Some points of $W(k)^\Prism$ from bounded prisms.}
Fix a $\delta$-ring $B$ over $W(k)$ and a nonzerodivisor $d \in B$. Assume that $B$ is $(p,d)$-complete, that $\delta(d) \in B^*$, and $B/(d)$ has bounded $p$-power torsion (i.e., $(B,(d))$ is a bounded prism in the sense of \cite[Definition 1.4]{bhatt2019prisms}.) Then the image of $d$ under the natural $\delta$-map $B \to W(B)$ classifying the $\delta$-structure on $B$ gives a point of $W_{\text{dist}}(\mathrm{Spf}(B))$, where $B$ is given the $(p,d)$-adic topology. This yields a map
\begin{equation}\label{eqn:rho-B-d} \rho_{(B,(d))}:\mathrm{Spf}(B) \to W(k)^\Prism,\end{equation}
     which is a $\delta$-map and in particular intertwines the maps labelled $\varphi$ on both the source and the target. It has the following crucial property:

     \begin{lemma}[Flat covers of $W(k)^\Prism$ from prisms]
     \label{lem:flatcoverWkprism}
     With $(B,(d))$ as above, assume that $B/(d)$ is $p$-torsion-free (i.e., $(B,(d))$ is a transversal prism) and nonzero. Then $\rho_{(B,(d))}$ is faithfully flat, representable and affine.
     \end{lemma}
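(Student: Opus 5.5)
The plan is to prove the three properties (faithful flatness, representability, affineness) for a single well-chosen transversal prism, and then transfer them to an arbitrary $(B,(d))$. The transfer uses the fact that $W(k)^\Prism$ is the quotient $W_{\text{dist}}/W^*$. The map $\rho_{(B,(d))}$ factors as
\[
\mathrm{Spf}(B)\to W_{\text{dist}}\to W_{\text{dist}}/W^*,
\]
where the first arrow is $d\mapsto w(d)$. First I would show representability and affineness. The morphism $W_{\text{dist}}\to W(k)^\Prism$ is a $W^*$-torsor, and $W^*$ is an affine (pro-smooth) group scheme, so this arrow is representable and affine. For a test point $\mathrm{Spec}(R)\to W(k)^\Prism$ given by a Cartier--Witt divisor $I\to W(R)$, the fibre product $\mathrm{Spf}(B)\times_{W(k)^\Prism}\mathrm{Spec}(R)$ parametrizes two pieces of data: $\delta$-compatible maps from $B$ (equivalently, via the Witt vector adjunction, ring maps $B\to W(R)$ lifting to the $\delta$-structure), and isomorphisms of Cartier--Witt divisors carrying $d$ to a generator of $I$. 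I would identify this functor with a formal spectrum over $R$. Concretely, after trivializing $I$ fpqc locally by a distinguished element $e$, it is the ring of $\delta$-maps $B\to W(R)$ together with a unit $v\in W(R)^*$ satisfying $w(d)=ve$. This is a closed condition inside an affine object, because $\delta$-maps out of $B$ are represented by the cofree-type construction and $W^*$ is affine. Hence the map is representable and affine.

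For faithful flatness the key tool is the prismatic rigidity / ``$(p,I)$-completely faithfully flat'' criterion. By base change along the torsor $W_{\text{dist}}\to W(k)^\Prism$ and fpqc descent, it suffices to check flatness after pulling back to a flat cover of the target. I would use the cover by the Breuil--Kisin-type transversal prism, or more simply the $q$-de Rham or perfect prism cover. The cleanest route is to reduce to the universal oriented prism: the ring representing $W_{\text{dist}}$ is $\delta$-freely generated by a distinguished element, i.e. $\mathbf{Z}_p\{d,\delta(d)^{-1}\}^\wedge$ over $W(k)$. The map $\mathrm{Spf}(B)\to W_{\text{dist}}$ composed with the $W^*$-action map $\mathrm{Spf}(B)\times W^*\to W_{\text{dist}}$ is then the relevant map to check. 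Flatness of that map reduces, by the derived Nakayama lemma for $(p,d)$-complete modules, to flatness modulo $(p,d)$, i.e. on Hodge--Tate loci. There one uses the description $W(k)^{\HT}\simeq \mathrm{Spf}(W(k))/\mathbf{G}_m^\sharp$ and the fact that $B/(d)$ is $p$-torsion-free and nonzero. Transversality gives that $(p,d)$ is a regular sequence on $B$, so the derived reductions are classical, and the map $\mathrm{Spf}(B/(p,d))\to W(k)^{\HT}\times\mathrm{Spec}\,\mathbf{F}_p$ is flat: any map to a classifying stack of a flat group is flat when the source is flat over the base $k$, which holds since $B/(p,d)$ is a $k$-algebra. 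Surjectivity follows from nonemptiness together with the fact that $W(k)^{\HT}$, and hence every point after $p$-nilpotent thickening, has a single point topologically, via $\mathrm{Spf}(W(k))/\mathbf{G}_m^\sharp$ and the fact that $|W(k)^\Prism|$ is controlled by its Hodge--Tate locus up to nilpotents.

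The main obstacle is the flatness step. I expect it to require carefully justifying the reduction from the formal stack to its Hodge--Tate divisor: completeness of everything with respect to $(p,I)$ together with regularity of $(p,d)$ on $B$, which is exactly where transversality is used. If that reduction proves delicate, the fallback is to cite the analogous statement for $\mathbf{Z}_p^\Prism$ in \cite{bhatt2022absolute} (flatness of maps from transversal prisms) and base change along $W(k)$, using Remark~\ref{rem:comparison-relative-prismatization}.
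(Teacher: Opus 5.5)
Your argument is correct in outline, but it is a genuinely different proof from the paper's. The paper gives no argument of its own and simply cites \cite[Cor.~3.2.9, 3.2.10]{bhatt2022absolute}, which is exactly your fallback.

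Your direct route is to base change along the $W^*$-torsor $W_{\text{dist}}\to W(k)^\Prism$ and then apply a local flatness criterion along $V(p,I_{\HT})$. Its advantage is that it shows precisely where the two hypotheses enter:
\begin{itemize}
\item Transversality makes $(p,d)$ a regular sequence on $B$, hence on $B\widehat{\otimes}\mathcal{O}(W^*)$. So $\Spf(B)$ is Tor-independent of the mod~$p$ Hodge--Tate locus. After that, flatness is automatic, because $(W(k)^{\HT})_{p=0}\simeq B\mathbf{G}_m^\sharp$ lives over the field $k$; the mod $(p,d)$ step is really just descent along the torsor.
\item Nonvanishing gives surjectivity, since every field-valued point of $W(k)^\Prism$ factors through this one-point stack.
\end{itemize}
This is the same local-flatness pattern the paper itself uses in Proposition~\ref{prop:FrobDescMap}(4). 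It also shows why transversality cannot be dropped: for the crystalline prism $(W(k),(p))$ the derived reduction modulo $(p,d)$ is not discrete. The citation, by contrast, costs nothing and also handles the representability bookkeeping.

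A few points to tighten if you write it out:
\begin{itemize}
\item Checking flatness on a Breuil--Kisin, $q$-de Rham or perfect-prism cover would be circular, since those maps are instances of the lemma. The universal oriented prism $A^0=W(k)\{d,\delta(d)^{-1}\}^\wedge$ is the right cover.
\item You should note that $(p,d)$ is regular on $A^0$ as well, so that derived and classical quotients agree on the target. This holds because $A^0$ is a completed localization of a polynomial ring in $d,\delta(d),\delta^2(d),\dots$.
\item For honest affineness rather than a formal spectrum, observe that in the fibre product $d$ maps to $v_0e_0$. This is nilpotent of order bounded by that of $e_0$ in $R$, so the fibre product is a closed subscheme of an affine scheme.
\item $B/(p,d)\neq 0$ follows from $B/(d)\neq 0$ because $B/(d)$ is $p$-adically complete.
\end{itemize}
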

     \begin{proof}
     This is \cite[Corollaries 3.2.9 and 3.2.10]{bhatt2022absolute}.
     \end{proof}

We shall use the above map frequently for the Breuil--Kisin prism $(\KisinS,(E))$, so let us give it the name
\begin{equation}
\label{eq:rhoforBK}
\rho_{\mathrm{std}} = \rho_{(\KisinS,(E))   }:\mathrm{Spf}(\KisinS) \to W(k)^\Prism.
\end{equation}
It is faithfully flat by Lemma~\ref{lem:flatcoverWkprism}, and   $\rho_{\mathrm{std}}^*(I_{\HT}) = (E)$ as ideals in $\KisinS$.
\end{para}

\subsection{The canonical modification via the prismatization}
\label{sss:canmodprismproof}
In this subsection, we give our first proof of Theorem~\ref{thm:mainNthm}, via the prismatization. The main geometric innovation is the following natural map $\Spf(\KisinS) \to W(k)^\Prism$ that does not come from a prism structure via the construction in \S \ref{para:ptsofprism}. More precisely:

\begin{prop}
\label{prop:FrobDescMap}
There is a natural $W(k)$-linear map
\[ \rho_{\dagger}:\mathrm{Spf}(\KisinS)/\mu_p \to W(k)^\Prism \]
with the following features:
\begin{enumerate}
    \item The composition $\mathrm{Spf}(\KisinS) \xrightarrow{\canonicalmapnotation} \mathrm{Spf}(\KisinS)/\mu_p \xrightarrow{\rho_{\dagger}} W(k)^\Prism$ is determined by the distinguished element $[u] - zV(1) \in W(\KisinS)$. In particular, the pullback of the Hodge--Tate divisor on $W(k)^\Prism$ along $\rho_\dagger$ is the $\mu_p$-equivariant divisor $V((u)) \subset \mathrm{Spf}(\KisinS)$.

    \item We have an identification $\varphi \circ \rho_{\dagger} \simeq \rho_{\mathrm{std}} \circ q$ of maps $\mathrm{Spf}(\KisinS)/\mu_p \to W(k)^\Prism$ (both of which are linear over $\varphi$ on $W(k)$).

    \item The mod $p$ reductions of the maps $\rho_\dagger\circ \canonicalmapnotation$ and $\rho_{\mathrm{std}}$ are naturally identified (as maps $\mathrm{Spf}(\KisinS/p) \to W(k)^\Prism \times_{\mathrm{Spf}(\mathbf{Z}_p)} \mathrm{Spec}(\mathbf{F}_p)$).

    \item The map $\rho_{\dagger}$ is faithfully flat.
\end{enumerate}
    \end{prop}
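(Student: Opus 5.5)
The plan is to construct $\rho_\dagger$ explicitly from the quotient presentation $W(k)^\Prism=W_{\text{dist}}/W^*$ of Definition~\ref{defn:wkprism}. First I build a $\mu_p$-equivariant point of $W_{\text{dist}}$ over $\mathrm{Spf}(\KisinS)$, defined up to the $W^*$-action. Then I check parts (2) and (3) by direct identities in Witt vectors. Finally I deduce (4) from (3) and flatness of $\rho_{\mathrm{std}}$.

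\emph{Construction and part (1).} Put $d\coloneq [u]-zV(1)\in W(\KisinS)$, where $z$ acts through $w|_{W(k)}$. Using $zV(1)=V(F(z))$ and the fact that $[a]+V(b)$ has Witt components $(a,b_0,b_1,\dots)$, the components of $d$ are $(u,-F(z)_0,\dots)$. Thus $d_0=u$ is topologically nilpotent, and $d_1$ is a unit (it reduces to $-\varphi(z)$ modulo $u$). So $d$ is distinguished and defines a map $\mathrm{Spf}(\KisinS)\to W(k)^\Prism$. For $\zeta\in\mu_p$, the action $u\mapsto\zeta u$ sends $d$ to $[\zeta u]-zV(1)$. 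On the other hand,
\[
[\zeta]\cdot d=[\zeta u]-zV([\zeta]^p)=[\zeta u]-zV(1).
\]
Hence the twisted point equals the original one multiplied by the unit $[\zeta]\in W^*$. Since Teichm\"uller $\mu_p\to W^*$ is a group homomorphism, this gives a descent datum, so the point descends to $\rho_\dagger$ on $\mathrm{Spf}(\KisinS)/\mu_p$ with $\rho_\dagger\circ\canonicalmapnotation$ given by $d$. The Hodge--Tate divisor is $\{d_0=0\}$, which pulls back to $V(u)$, and this is $\mu_p$-stable.

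\emph{Part (2).} On $\mathrm{Spf}(\KisinS)$, $\varphi\circ\rho_\dagger\circ\canonicalmapnotation$ is classified by
\[
F(d)=[u^p]-F(w(z))\,FV(1)=[u^p]-p\,w(\varphi(z)),
\]
using $FV=p$ and the fact that $w$ is a $\delta$-map. The map $\rho_{\mathrm{std}}\circ q\circ\canonicalmapnotation=\rho_{\mathrm{std}}\circ\varphi$ is classified by the $\varphi$-twist of $w(E)=[u]-p\,w(z)$, which is the same element. Both descent data along $\canonicalmapnotation$ are trivial: on the left because $F([\zeta])=[\zeta^p]=1$, on the right because $q$ is invariant. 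So the identification descends to $\mathrm{Spf}(\KisinS)/\mu_p$.

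\emph{Part (3).} Over $\F_p$-algebras we have $p=V(1)$ in $W$. Hence $p\,w(z)=V(1)\,w(z)=V(F w(z))=zV(1)$, so $w(E)=[u]-p\,w(z)$ and $d$ coincide on the nose in $W(\KisinS/p)$, not merely up to a unit.

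\emph{Part (4), the main obstacle.} Since $\canonicalmapnotation$ is an fppf $\mu_p$-torsor, it suffices to show that $\rho_\dagger\circ\canonicalmapnotation$ is faithfully flat. I would pull back along a flat cover of $W(k)^\Prism$, for instance $\rho_{\mathrm{std}}$ itself (Lemma~\ref{lem:flatcoverWkprism}), which reduces the question to a map of $(p,u)$-complete rings. Both source and target are $p$-torsion-free. By (3), the reduction modulo $p$ agrees with the reduction of $\rho_{\mathrm{std}}$, which is faithfully flat. A $p$-complete local flatness criterion then gives $p$-complete, hence genuine (noetherian, as in Lemma~\ref{lem:flatcov}), faithful flatness. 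Surjectivity can be checked mod $p$, where it again follows from (3).

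The delicate point is justifying this flatness criterion for a morphism to a stack that is not noetherian. I would handle it via the \v{C}ech nerve of $\rho_{\mathrm{std}}$ and the ``completely flat implies flat'' argument of Lemma~\ref{lem:flatcov}. As an alternative, I would try to deduce flatness from (2), using that $\varphi$ on $W(k)^\Prism$ is flat and $\rho_{\mathrm{std}}\circ q$ is flat.
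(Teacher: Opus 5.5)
Your proposal is correct and follows the paper's proof essentially step for step: the same distinguished element $[u]-zV(1)$ with its Teichm\"uller $\mu_p$-equivariance for the construction and (1), the identity $F([u]-zV(1))=w(\varphi(E))$ for (2), $p=V(1)$ in characteristic $p$ for (3), and faithful flatness modulo $p$ from (3) plus the local flatness criterion (with $p$-torsion-freeness and the $\mu_p$-cover $\mathrm{Spf}(\KisinS)\to\mathrm{Spf}(\KisinS)/\mu_p$) for (4). The one thing to discard is your tentative alternative for (4): flatness of the composite $\varphi\circ\rho_\dagger$, even together with flatness of $\varphi$, does not by itself give flatness of $\rho_\dagger$, so the mod-$p$ argument is the one to use.
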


We warn the reader that the map $\rho_\dagger$ does {\em not} come from a prism structure on $\KisinS$: if it did, it would commute with $\varphi$, but $\rho_\dagger$ does not (see Remark~\ref{rmk:rhodaggernotprism}).

\begin{proof}

In this proof, we interpret geometry on $\mathrm{Spf}(\KisinS)/\mu_p$ as $\mu_p$-equivariant geometry on $\mathrm{Spf}(\KisinS)$, e.g., to check if two $\mathrm{Spf}(\KisinS)/\mu_p$-valued points of a stack $F$ are identified, it is necessary and sufficient to identify the corresponding $\mathrm{Spf}(\KisinS)$-valued points $\mu_p$-equivariantly.

For the existence of $\rho_\dagger$ satisfying (1), we shall use the quotient presentation in Definition~\ref{defn:wkprism}. Note that $[u] - zV(1) \in W_{\text{dist}}(\mathrm{Spf}(\KisinS))$. Now, for any $\KisinS$-algebra $R$ equipped with $\zeta \in \mu_p(R)$, we have the equality
\[ [\zeta] \cdot ([u]-zV(1)) = [\zeta u] - [\zeta]zV(1) = [\zeta u] - zV(F[\zeta]) = [\zeta u] - zV(1)\]
in $W(R)$. Consequently, the map $\mathrm{Spf}(\KisinS) \to W_{\text{dist}}$ classifying $[u]-zV(1)$ intertwines the $\mu_p$-action on the source given by rescaling $u \in \KisinS$ with the action of the subgroup $\mu_p \stackrel{[\cdot]}{\hookrightarrow} W^*$ on the target. Passing to quotients gives the map $\rho_\dagger$ in (1). To obtain the description of the Hodge--Tate locus, we simply note that the $0$th Witt component of $[u]-zV(1) = [u] - V(Fz)$ is $u \in \KisinS$.

Part (2) follows from noting that $\varphi$ on $W_{\text{dist}}/W^*$ is obtained from $F$ on the Witt vectors (and $\varphi$ on $W$), coupled with the calculation
\[ F([u]-zV(1)) = [u]^p - \varphi(z)p = w(\varphi(u-pz)).\]

Part (3) is immediate from the construction of $\rho_\dagger$ since $p=V(1)$ in $W(\mathbb{F}_p)$.

For part (4), the faithful flatness modulo $p$ follows from part (3) by Lemma~\ref{lem:flatcoverWkprism}. As both $\mathrm{Spf}(\KisinS)$ and $W(k)^\Prism$ are $\mathrm{Spf}(\mathbf{Z}_p)$-flat, part~(4) then follows from local flatness criterion: if a map $A \to B$ of $p$-complete and $p$-torsion-free rings is (faithfully) flat modulo $p$, then $A/p^n \to B/p^n$ is (faithfully) flat for all~ $n$.
\end{proof}

    \begin{rem}
    \label{rmk:rhodaggernotprism}
    Even though the source and target of $\rho_\dagger$ admit natural $\delta$-structures, the map $\rho_\dagger$ is {\em not} a $\delta$-map. Indeed, even ignoring the $\mu_p$-equivariance, the map in Proposition~\ref{prop:FrobDescMap} (1) is not a $\delta$-map: if it were, then $[u] - zV(1) = w(u) - V(\varphi(z))$ 
    would have to lie in $w(\KisinS) \subset W(\KisinS)$, which is clearly not the case, since $w(B) \cap VW(B) = 0$ for any $\delta$-ring $B$. Concretely, the $\KisinS$-point underlying $\rho_\dagger \circ \varphi$ is given by $[u^p] - \varphi(z)V(1)$, while the $\KisinS$-point underlying $\varphi \circ \rho_\dagger$ is given by $w(\varphi(u-pz)) = [u^p] - \varphi(z)p$.
    \end{rem}

Let us also record a consequence of the discussion in \S \ref{sss:numstack} in relation to the map $\rho_\dagger$.

\begin{example}[Relative position over $W(k)^\Prism$]
\label{ex:relposWkprism}
Consider the closed immersion $W(k)^{\HT} \subset W(k)^\Prism$, so $\pi_0(W(k)^{\HT}) = \{\ast\}$. The analysis of Section~\ref{sss:numstack} applies to this closed immersion. The Breuil--Kisin prism provides a flat cover $\rho_{\text{std}}:\mathrm{Spf}(\KisinS) \to W(k)^\Prism$ with $\KisinS$ excellent and 
$\KisinS/\rho_{\text{std}}^*(I_{\HT}) \simeq \KisinS/(E(u)) = \mathcal{O}_K$ being regular, so every $I_{\HT}$-isogeny over $W(k)^\Prism$ is good in the sense of Definition~\ref{def:goodisog}. In particular, given an $I_{\HT}$-isogeny $\psi$ between $E,F \in \mathrm{Vect}(W(k)^\Prism)$, we obtain a function $f_\psi:\mathbf{Z} \to \mathbf{Z}_{\geq 0}$. This function can be computed by first pulling back $\psi$ to any flat cover $\pi:\mathrm{Spf}(B) \to W(k)^\Prism$ where $B$ is excellent and $B/\pi^*(I_{\HT})$ is regular and connected, and then applying \eqref{eq:relpos}. In particular, taking $\pi$ to be either $\rho_{\text{std}}$ or $\rho_\dagger$ shows that the relative positions of $\rho_{\text{std}}^*(\psi)$ and $\rho_\dagger^*(\psi)$ agree.
\end{example}

Let us use Proposition~\ref{prop:FrobDescMap} to prove Theorem~\ref{thm:mainNthm}.

\begin{proof}[Proof of Theorem~\ref{thm:mainNthm}]

Since $(\gM,\varphi_\gM)$ is crystalline, it is (uniquely) the pullback of a prismatic $F$-crystal $(\mathcal{E},\varphi_{\mathcal{E}})$ along the map $\rho_{\text{std}}$; more general statements are recalled in the forthcoming Theorem~\ref{thm:CrysGalPrismCrys}. Let $\gN \coloneq  {\canonicalmapnotation}^* \rho_\dagger^* \mathcal{E}$, regarded as a $\mu_p$-equivariant vector bundle on~ $\KisinS$. Note that there is a $\mu_p$-equivariant identification
\[ {\canonicalmapnotation}^* \rho_\dagger^* \varphi^* \mathcal{E} \simeq \varphi^* \gM \]
by Proposition~\ref{prop:FrobDescMap} (2) and \eqref{eq:Kisin-Phi-factorize}. Moreover, the pullback of the Hodge--Tate divisor $W(k)^{\HT} \subset W(k)^\Prism$ along $\rho_\dagger$ is simply $\{u=0\} \subset \Spf(\KisinS)/\mu_p$ by Proposition~\ref{prop:FrobDescMap} (1). Consequently, the isogeny $\varphi_{\mathcal{E}}$ pulls back along $\rho_\dagger$ to a $\mu_p$-equivariant identification $\psi:\varphi^* \gM[1/u] \simeq \gN[1/u]$. The assignment carrying  $(\gM,\varphi_\gM)$ to the triple $(\gM,\gN,\psi)$ then defines a functor
\[ \underline{\gN}(-):\mathrm{Vect}^{\varphi}(\KisinS)_\cris \to \mathcal{D}.\]
As the construction of $\underline{\gN}(-)$ only involves pullbacks (and the $\otimes$-equivalence between prismatic $F$-crystals and crystalline Breuil--Kisin modules), it is clearly a $\otimes$-functor. To finish proving Theorem~\ref{thm:mainNthm}, it suffices to verify (1) and (2) in the theorem, and show exactness of $\underline{\gN}(-)$.

Part (1) is immediate from Proposition~\ref{prop:FrobDescMap} (3) and the construction of $\gN$.

For (2), we need to show that the relative positions $\mu_{\gM}$ and $\mu_{\underline{\gN}(\gM)}$ of the isogenies defined by $\varphi_\gM$  over $K\llbracket E \rrbracket$ and  $\psi$ over $K\llbracket u \rrbracket$ agree; this follows from Example~\ref{ex:relposWkprism}.

Finally, it remains to prove exactness. As exactness in $\mathcal{D}$ can be detected after reduction modulo $p$, the claim follows formally from the functorial identification $\gM/p\gM \simeq \gN/p\gN$ constructed in the proof of (1) above.
\end{proof}

\begin{remark}[$\mu_p$-equivariance for mod $p$ $F$-crystals]
\label{rmk:modpFcrysmupequiv}
There is an evident notion of a prismatic $F$-crystal in vector bundles on $(W(k)^\Prism)_{p=0}$. 
Given such a crystal $(\overline{\mathcal{E}}, \varphi_{\overline{\mathcal{E}}})$ with Breuil--Kisin realization $(\overline{\gM},\varphi_{\overline{\gM}})$ over $\mathrm{Spf}(\KisinS/p)$,  the proof of Theorem \ref{thm:mainNthm} (and ultimately the isomorphism of
Proposition~\ref{prop:FrobDescMap} (2)) shows that $(\overline{\gM},\varphi_{\overline{\gM}})$ is naturally $\mu_p$-equivariant.
\end{remark}

At this point, we have given our first proof of Theorem~\ref{thm:mainNthm}. The reader interested only in applications of this statement to Galois representations should feel free to skip ahead to \S \ref{v3-sec:inertial-weights-mod-p-Galois-rep} at this point (possibly after looking at the statements for $G$-valued representations in \S \ref{subsec:Gstr}).

\subsection{Specializable Breuil--Kisin modules}

The main goal of this subsection is to formulate Theorem~\ref{thm:crysBKspecial}, which isolates an algebraic property --- specializability --- enjoyed by crystalline Breuil--Kisin modules (Definition~\ref{def:specializable}); this formally implies Theorem~\ref{thm:mainNthm}, as we explain after Theorem~\ref{thm:crysBKspecial}. The specializability property concerns the reduction modulo $p$ of a certain canonical $\mu_p$-equivariant modification of any Breuil--Kisin module, which is constructed in Proposition~\ref{prop:canmodBK}. For the latter construction, it is convenient to use the formalism of the $p$-Hecke stack from \S \ref{ss:fungrass}, so we recall some surrounding notation in \S \ref{sss:xprimezprimeBK}.

\subsubsection{Recollections on the Hecke stacks}
\label{sss:xprimezprimeBK}
It will be convenient to use the stacks from \S \ref{ss:fungrass}. Since we change notation nominally\footnote{More precisely, for compatibility with standard notation for Breuil--Kisin modules, the relative Frobenius is now viewed as an $R$-algebra endomorphism $R\llbracket u \rrbracket \to R\llbracket u \rrbracket$ sending $u$ to $u^p$, rather than the inclusion $R\llbracket u^p \rrbracket \subset R\llbracket u \rrbracket$ as in \S \ref{ss:fungrass}. Moreover, we stick to the case of $\GL_n$, in which case we can reformulate statements in terms of vector bundles, leaving the rank unspecified.} let us simply give the definition.

Let $\Hk$ be the stack on $W(k)$-algebras sending a $W(k)$-algebra $R$ to the groupoid of triples $(\mathfrak{M}_1,\mathfrak{M}_2,\tau)$, where $\mathfrak{M}_i \in \mathrm{Vect}(R\llbracket u \rrbracket)$ and $\tau:\gM_1[1/u] \simeq \gM_2 [1/u]$ is an isomorphism over $R((u))$.

Similarly, $\pHk$ is the stack sending $R$ to the groupoid of triples $(\gM_1,\gM_2,\psi)$, where $\gM_1$ is a  vector bundle on $R\llbracket u \rrbracket$, $\gM_2$ is a $\mu_p$-equivariant vector bundle on $R\llbracket u \rrbracket$, and 
$\psi: \varphi_{/R}^* \gM_1[1/u] \simeq \gM_2[1/u]$ is a $\mu_p$-equivariant isomorphism over $R((u))$, where $\varphi_{/R}:R\llbracket u \rrbracket \to R\llbracket u \rrbracket$ is the $R$-algebra map sending $u$ to $u^p$.

There is a natural map $\pHk \to \Hk$ given by sending $(\gM_1,\gM_2,\psi) \in \pHk(R)$ to $(\varphi_{/R}^* \gM_1, \gM_2,\psi) \in \Hk(R)$. This map fits into a commutative diagram
\[ \xymatrix{ \pHk(R) \ar[r] \ar[d] & \Hk(R) \ar[d] \\ \mathrm{Vect}(R\llbracket u \rrbracket) \ar[r]^-{\varphi_{/R}^*} & \mathrm{Vect}(R\llbracket u \rrbracket) }\]
where the vertical maps remember the first component bundle from the functor of points description.

Given a $W(k)$-algebra $R$ and a vector bundle $\gM \in \mathrm{Vect}(R\llbracket u \rrbracket)$, write $\pHk(R)_\gM$ (resp. $\Hk(R)_\gM$) for the (groupoid-theoretic) fibre of the left vertical map (resp. right vertical map) over $\gM$ on the bottom left (resp. bottom right) in the diagram above. In this situation, given an $R$-algebra $S$, we shall often abuse notation and write $\Hk(S)_\gM$ for $\Hk(S)_{\gM \otimes_R S}$ (and similarly for the $\pHk$ variant).

Given a $W(k)$-algebra $R$ and a vector bundle $\gM \in \mathrm{Vect}(R\llbracket u \rrbracket)$, the displayed diagram above induces a natural map $\pHk(R)_\gM \to \Hk(R)_{\varphi^*_{/R}\gM}$ by passing to fibres. After fixing a basis of $\gM$ (when it exists, always achievable locally) and using the induced basis of $\varphi_{/R}^* \gM$, this map agrees with the obvious map $\Gr_{\GL_n}^{\mu_p}(R) \to \Gr_{\GL_n}(R)$.

\subsubsection{Canonical $\mu_p$-equivariant modifications}
\label{sss:canmupmod}

Fix a Breuil--Kisin module $(\gM,\varphi_{\gM})$. We shall construct a canonical $\mu_p$-equivariant modification of $\gM$ supported at $\{u=0\} \subset \mathrm{Spec}(\KisinS)$ using the Hodge filtration of the associated $F$-crystal (Proposition~\ref{prop:canmodBK}).

Let us first recall some basic constructions from \cite{KisinCrys}. Write
\[ (D,\varphi_D) \coloneq  \varphi^* (\gM,\varphi_{\gM}) \otimes_\KisinS \KisinS/(u)[1/p] \]
for the associated $F$-isocrystal over $W(k)$. The open unit disc $\mathbf{D}{[0,1)}$ can be viewed as the generic fibre of the (non $p$-adic) formal scheme $\mathrm{Spf}(\KisinS)$, so finitely generated $\KisinS$-modules give coherent sheaves on $\D[0,1)$ via $M \mapsto M \otimes_\KisinS \mathcal{O}_{\D[0,1)}$.  Write $\mathbf{D}[0,r) \subset \mathbf{D}[0,1)$ for the smaller open unit disc of radius some fixed $r \in (|p|,|p|^{1/p})$. By \cite[Lemma 1.2.6]{KisinCrys}, there is a unique $\varphi$-equivariant isomorphism
\begin{equation}
\label{eq:ConnBKRational}
    D \otimes_K \mathcal{O}_{\D[0,r)} \simeq \varphi^* \gM \otimes_{\gS} \mathcal{O}_{\D[0,r)}
\end{equation}
whose reduction modulo $u$ is the identity. In particular, as $r > |p|$, we can also identify $D$ with $(\varphi^*\gM/(E)\varphi^*\gM)[1/p]$. On the other hand, we have  the relative ``Nygaard filtration''
\begin{equation}
\label{eq:NygaardBK} \Fil^i \varphi^* \gM = (E)^i \gM \cap \varphi^* \gM
\end{equation}
on $\varphi^*\gM$. Inverting $p$ and taking its image under the map
\[
  \varphi^* \gM[1/p] \to (\varphi^*\gM/(E)\varphi^* \gM)[1/p] = D
\]
then defines a filtration $F^\bullet D$ that we call the Hodge filtration on $D$.

\begin{rem} If $(\gM,\varphi_\gM)$ comes from a crystalline Galois representation $T$, then $D = D_{\crys}(T)$ and $F^\bullet D$ agrees with the classical Hodge filtration on $D_{\crys}(T) \simeq D_{\dR}(T)$ (by the construction of the equivalence in \cite[Theorem 1.2.15]{KisinCrys}). 
\end{rem}

Observe that $\varphi^* \gM$ is naturally a $\mu_p$-equivariant vector bundle on $\Spec(\KisinS)$ with respect to the scaling $\mu_p$-action on $\KisinS$. Thanks to the factorization
\[ \varphi = \varphi_{/W(k)} \circ \varphi_{W(k)} = \varphi_{W(k)} \circ \varphi_{/W(k)}\]
of the Frobenius on $\KisinS$ in terms of the Frobenius $\varphi_{W(k)}$ on $W(k)$ and the relative Frobenius $\varphi_{/W(k)}$, for any $W(k)$-algebra $R$, the groupoid $\pHk(R)_{\varphi_{W(k)}^* \gM}$ from \S \ref{sss:xprimezprimeBK} is simply the groupoid of pairs 
$(\gN,\psi)$ consisting of a $\mu_p$-equivariant $R\llbracket u \rrbracket$-module $\gN$ and a $\mu_p$-equivariant isomorphism $\varphi^* \gM \otimes_\KisinS R((u)) \simeq \gN[1/u]$. The promised canonical modification of $\gM$ is a natural point of this groupoid:

\begin{prop}
\label{prop:canmodBK}
There is a unique pair $(\gN^{\can},\psi^{\can}) \in \pHk(W(k))_{\varphi_{W(k)}^* \gM}$ whose image in $\pHk(K)_{\varphi_{W(k)}^* \gM}$ is the $\mu_p$-equivariant modification of
\[ D \otimes_K K\llbracket u \rrbracket \simeq (\varphi^* \gM) \otimes_\KisinS K\llbracket u \rrbracket \simeq \varphi_{/K}^* \left(\varphi_{W(k)}^* \gM \otimes_\KisinS K\llbracket u \rrbracket\right)\]
given by the Rees module of the Hodge filtration $F^\bullet D$, i.e., by the $u$-adic completion of the graded $K[u]$-module $\oplus_i F^i D u^{-i}$ (see Lemma~\ref{lem:qcohX}) or equivalently as  $\mathrm{Fil}^0(F^\bullet D \otimes_K u^\bullet K((u)))$.
\end{prop}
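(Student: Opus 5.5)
The plan is to construct $(\gN^{\can},\psi^{\can})$ by Beauville--Laszlo gluing of the prescribed rational modification with the unmodified integral module away from the special fibre, and then to show that the glued object extends to a vector bundle on $\Spec(\KisinS)$. Concretely, I would work with the $\mu_p$-equivariant bundle $\varphi^*\gM$ on $\Spec(\KisinS)$ and the closed subset $Z=\{u=0\}\cup\{p=0\}$. Let $U=\Spec(\KisinS)\setminus\{u=0\}$, so that $U\cap\{p\neq 0\}$ and the complement of the closed point cover $\Spec(\KisinS)\setminus V(p,u)$.

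The first step is to define a $\mu_p$-equivariant vector bundle $\gN^\circ$ on $\Spec(\KisinS)\setminus V(p,u)$, in three pieces.
\begin{enumerate}
\item On $\Spec(\KisinS[1/u])$ take $\gN^\circ = \varphi^*\gM[1/u]$.
\item On $\Spec(\KisinS[1/p])$ take the bundle obtained by Beauville--Laszlo gluing, along $u=0$, of $\varphi^*\gM[1/p]$ with the $K\llbracket u\rrbracket$-lattice $\mathcal{L}:=\mathrm{Fil}^0(F^\bullet D\otimes_K u^\bullet K((u)))$. This lattice sits inside
\[ D\otimes_K K((u))\simeq \varphi^*\gM\otimes_\KisinS K((u)) \]
via the isomorphism~\eqref{eq:ConnBKRational} restricted to the completion at $u=0$, which is legitimate because $0\in\D[0,r)$.
\item Note that $\mathcal{L}$ is $\mu_p$-stable: $\mu_p$ acts on $D\otimes K((u))$ through $u$, since the identification~\eqref{eq:ConnBKRational} is $\mu_p$-equivariant, being unique and with $\varphi^*\gM$ a $\mu_p$-equivariant pullback. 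The lattice $\mathcal{L}$ is a graded Rees module, hence invariant under $u\mapsto \zeta u$.
\end{enumerate}
Gluing over $\KisinS[1/p]$ (Beauville--Laszlo for the completion $K\llbracket u\rrbracket$ of $\KisinS[1/p]$ along $u$, which is the completion of a noetherian ring along a principal ideal) gives a $\mu_p$-equivariant bundle on $\Spec\KisinS[1/p]$ that agrees with $\varphi^*\gM$ after inverting $u$. Gluing with the first piece then yields $\gN^\circ$ on the punctured spectrum.

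The second step is to set $\gN^{\can}:=\Gamma(\Spec(\KisinS)\setminus V(p,u),\gN^\circ)$, i.e.\ the $*$-extension. Since $\KisinS=W(k)\llbracket u\rrbracket$ is a two-dimensional regular local ring, reflexive modules are free and vector bundles on the punctured spectrum extend uniquely (purity), so $\gN^{\can}$ is finite free. The $\mu_p$-action extends by functoriality of pushforward, and $\psi^{\can}$ is the tautological identification of $\gN^{\can}[1/u]$ with $\varphi^*\gM[1/u]$. Uniqueness also follows from purity: any pair with the given image over $K$ agrees with $\gN^{\can}$ on $\Spec\KisinS[1/p]$ by Beauville--Laszlo (its completion at $u=0$ is forced) and on $\Spec\KisinS[1/u]$ via $\psi$. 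Hence it agrees on the punctured spectrum, and therefore globally as both are reflexive. The $\mu_p$-equivariant structure is likewise determined on a dense open, so it is unique.

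I expect the main obstacle to be the first step's bookkeeping rather than the extension. One must check that the isomorphism~\eqref{eq:ConnBKRational}, after completing at $u=0$, is compatible with the $\mu_p$-actions and with the identification
\[ \varphi^*\gM\otimes_\KisinS K\llbracket u\rrbracket\simeq\varphi_{/K}^*\bigl(\varphi_{W(k)}^*\gM\otimes_\KisinS K\llbracket u\rrbracket\bigr). \]
One must also check that the Beauville--Laszlo gluing is compatible with the $\mu_p$-action, which I would handle by viewing everything as $\mathbf{Z}/p$-graded modules as in Remark~\ref{rmk:qcohroot}. I would first try to deduce the equivariance of~\eqref{eq:ConnBKRational} from its uniqueness: for $\zeta\in\mu_p$, pulling back by $u\mapsto\zeta u$ gives another $\varphi$-equivariant isomorphism reducing to the identity mod $u$, hence the same one.
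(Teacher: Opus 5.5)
Your proposal is correct and is essentially the paper's argument made explicit. The paper notes that $\pHk\to\Vect$ is ind-proper (locally a $\Gr^{\mu_p}_{\GL_n}$-bundle) and applies the valuative criterion to $W(k)\subset K$. That step unwinds to exactly your Beauville--Laszlo gluing over $\KisinS[1/p]$ followed by $*$-extension across the closed point of $\Spec\KisinS$, as the paper's introduction and the remark following the proposition indicate; the paper records the $\mu_p$-stability of the Rees lattice in a footnote.

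One repair is needed in your last paragraph. $\varphi$ is $\mu_p$-equivariant only for the trivial action on its target, so loop rotation does not commute with $\varphi$, and the rotated isomorphism is not a priori $\varphi$-equivariant; the uniqueness argument therefore does not apply as stated. Argue instead as follows. Frobenius-equivariance of the isomorphism $\xi$ in \eqref{eq:ConnBKRational} gives $\xi=\varphi^*(\varphi_{\gM})\circ\varphi^*\xi\circ\Phi_D^{-1}$, where $\Phi_D$ is the linearized Frobenius of $D\otimes_K\mathcal{O}_{\D[0,r)}$. Every term on the right is $\mu_p$-equivariant, being either a $\varphi$-pullback or constant in $D$, so $\xi$ is $\mu_p$-equivariant.
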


\begin{proof}
Note that the ``first component'' map $\pHk \to \mathrm{Vect}$ is an ind-proper morphism of stacks: indeed, locally on the base, it is a $\Gr_{\mathrm{GL}_n}^{\mu_p}$-bundle. In particular, by the valuative criterion, the  natural map
\[ \pHk(W(k))_{\varphi_{W(k)}^* \gM} \to \pHk(K)_{\varphi_{W(k)}^*\gM}\] is an isomorphism. The claim then follows as the Rees construction indeed gives a $\mu_p$-equivariant modification\footnote{In fact, the natural $\mu_p$-equivariant structure on $\varphi^* \gM$ upgrades to a $\mathbf{G}_m$-equivariant structure on $\varphi^* \gM \otimes_{\KisinS} K\llbracket u \rrbracket$ thanks to the isomorphism \eqref{eq:ConnBKRational}. The
  Rees modification, by construction, is even $\mathbf{G}_m$-equivariant. However, this $\mathbf{G}_m$-equivariant structure does not spread out to $\Spec(\KisinS)$, so we can only talk about $\mu_p$-equivariance integrally.} of $\varphi^* \gM \otimes_{\gS} K\llbracket u\rrbracket \simeq D \otimes_K K\llbracket u\rrbracket$.
\end{proof}
Examining the construction shows that the assignment
\begin{equation}
    \label{eq:functorcanmodBK}
(\gM,\varphi_\gM) \mapsto (\gM, \gN^{\can},\psi^{\can})
\end{equation}
gives a $\otimes$-functor from Breuil--Kisin modules to the category~$\cD$ defined in~\ref{para:category-D}. 
We warn the reader that, since the construction of $\gN^{\can}$ involves $*$-extension of vector bundles across punctured spectra (via the proof of the valuative criterion invoked in Proposition~\ref{prop:canmodBK}), the functor $(\gM,\varphi_\gM) \mapsto \gN^{\can}$ is not exact in general.

\begin{remark}[Geometric realization of the canonical modification]
\label{rmk:GeomIntCanMod}
Consider the graded ring $R_K = K\llbracket u \rrbracket[y,t]/(yt^p-u)$. Fix a finite projective $K\llbracket u \rrbracket$-module $M$, so its base change $\varphi_{/K}^* M$ under the $K$-algebra map $\varphi_{/K}:K\llbracket u \rrbracket \xrightarrow{u \mapsto u^p} K\llbracket u \rrbracket$ is naturally $\mu_p$-equivariant. By Example~\ref{ex:VectReesKu} (see also Proposition~\ref{prop:CohSheafptwistedRees}), the category of $\mu_p$-equivariant modifications of $\varphi_{/K}^* M$ supported at $\{u=0\}$ is naturally identified with the category of vector bundles $E$ on $\mathrm{Spec}(R_K)/\mathbf{G}_m$ equipped with an identification of $E_{t \neq 0} \in \mathrm{Vect}_{\gr}(R_{K}[1/t]) \simeq \mathrm{Vect}(K\llbracket u \rrbracket)$ with $M$. Taking $M = D \otimes_K K\llbracket u \rrbracket$ (so $M \simeq \varphi_{/K}^* M$), the output of Proposition~\ref{prop:canmodBK} then yields such a bundle $E$. This bundle can be described geometrically as follows: the Rees module of $F^\bullet D$ defines a vector bundle $E' \in \mathrm{Vect}_{\gr}(K[t])$, and the bundle $E$ is simply the pullback of $E'$ along the evident map $r_K:\Spec(R_K)/\mathbf{G}_m \to \mathrm{Spec}(K[t])/\mathbf{G}_m$.
\end{remark}

\subsubsection{The main theorem on specializability}

Let $(\gM,\varphi_{\gM})$ be a Breuil--Kisin module. Let us write $\overline{\gM} \coloneq \gM/p\gM$, and similarly for mod $p$ reductions of other objects. In Proposition~\ref{prop:canmodBK}, we constructed a $\mu_p$-equivariant modification $\psi^{\can}$ of $\varphi^* \gM$ supported at $\{u=0\}$. On the other hand, tautologically, $\varphi_{\gM}$ is a modification of $\varphi^* \gM$ supported at $\{E=0\}$. Since $E \equiv u \mod p\KisinS$, both  $\overline{\psi^{\can}}$ and $\overline{\varphi_\gM}$ give modifications of $\overline{\varphi^* \gM}$ supported at $\{u=0\}$ on $\Spec(\KisinS/p)$.

\begin{defn}
\label{def:specializable}
The Breuil--Kisin module  $(\gM,\varphi_{\gM})$ is called {\em specializable} if $\overline{\gN^{\can}}$ and $\overline{\gM}$ are equal as submodules of $\overline{\varphi^* \gM}[1/u]$ via $\overline{\psi^{\can}}$ and $\overline{\varphi_\gM}$ respectively. Write $\mathrm{Vect}^{\varphi}(\KisinS)_{\specializable} \subset \mathrm{Vect}^{\varphi}(\KisinS)$ for the full subcategory spanned by specializable Breuil--Kisin modules.
\end{defn}

Using the functoriality of the canonical modification, one checks that the inclusion $\mathrm{Vect}^{\varphi}(\KisinS)_{\specializable} \subset \mathrm{Vect}^{\varphi}(\KisinS)$ is a $\otimes$-subcategory. Moreover, on this subcategory, the functor \eqref{eq:functorcanmodBK} is actually exact: indeed, the right exactness of $(\gM,\varphi_\gM) \to \gN^{\can}$ can be checked modulo $p$, and then follows from the agreement of $\overline{\psi^{\can}}$ with $\overline{\varphi_{\gM}}$. 

\begin{remark}
The canonical modification is defined using the isomorphism \eqref{eq:ConnBKRational}, which is closely related to the canonical (meromorphic) connection $N_\nabla$ on $\gM \otimes_\KisinS \mathcal{O}_{\D[0,1)}$ (see \cite[Corollary 1.3.15]{KisinCrys}); from this optic, the ``specializability'' condition can be regarded as an integrality constraint on $N_\nabla$.
\end{remark}

The main existence theorem about such specializable objects is the following theorem, whose proof uses prismatic $F$-gauges, and will be given at the end of~\S \ref{ss:PfCanModThm}.

\begin{theorem}
\label{thm:crysBKspecial}
Crystalline Breuil--Kisin modules are specializable.
\end{theorem}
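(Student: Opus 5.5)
The plan is to reduce Theorem~\ref{thm:crysBKspecial} to the prismatic construction of \S\ref{sss:canmodprismproof}. The task is to identify the canonical modification $(\gN^{\can},\psi^{\can})$ of Proposition~\ref{prop:canmodBK} with the modification $(\gN,\psi)$ obtained there by pulling the prismatic $F$-crystal $\mathcal{E}$ back along $\rho_\dagger$. Once $\gN\simeq\gN^{\can}$ compatibly with the maps from $\varphi^*\gM[1/u]$, specializability follows from Proposition~\ref{prop:FrobDescMap}~(3): modulo $p$, $\psi$ agrees with $\varphi_\gM$.

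By the uniqueness in Proposition~\ref{prop:canmodBK}, which comes from the valuative criterion, it suffices to check that $\psi\otimes_{\KisinS}K\llbracket u\rrbracket$ is the Rees modification of $F^\bullet D$. Here $\psi=\rho_\dagger^*\varphi_{\mathcal E}$ is a $\mu_p$-equivariant $u$-isogeny out of $\varphi^*\gM$. The comparison will come from $F$-gauges. Because $\gM$ is crystalline, $\mathcal{E}$ extends to a reflexive $F$-gauge $\mathcal{E}^{\Syn}$ on $W(k)^{\Syn}$, and hence to a filtered object $\mathcal{E}^{\Nyg}$ on $W(k)^{\Nyg}$.

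The key geometric step is to lift $\rho_\dagger$ to a map from the $p$-twisted Rees stack $Y$ of \S\ref{ss:twistedrees}, built from $(\KisinS,(u))$ and taken $\mu_p$-equivariantly, into $W(k)^{\Nyg}$. The open pieces $Y_{t\neq0}$ and $Y_{y\neq0}$ should map via the two maps $\varphi\circ\rho_\dagger\simeq\rho_{\mathrm{std}}\circ q$ and $\rho_\dagger$, which are glued by $j_{\mathrm{dR}}$ and $j_{\mathrm{HT}}$. The candidate filtered Cartier--Witt divisor is $[u]-zV(1)$ with its Rees parameter $t$ sent to $t$. I would then show that the pullback of $\mathcal{E}^{\Nyg}$ is a vector bundle on $Y$ whose restriction to $Y^\circ$ is the triple $(\varphi^*\gM,\gN,\psi)$, using Proposition~\ref{prop:CohSheafptwistedRees}.

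To compare with the Rees construction after inverting $p$, I would restrict to the locus over $K\llbracket u\rrbracket$, where $\mathcal{E}^{\Nyg}$ becomes the de Rham realization $D$ with its Hodge filtration, via the comparison of \eqref{eq:ConnBKRational} with rationalized prismatic cohomology. Remark~\ref{rmk:GeomIntCanMod} then identifies the resulting bundle with the pullback of the Rees module of $F^\bullet D$ along $r_K$. Rationally, the map from $Y$ factors through the de Rham point followed by $\mathbf{A}^1/\mathbf{G}_m$, so the pullback is exactly that Rees bundle.

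I expect the main obstacle to be constructing the filtered lift of $\rho_\dagger$ and checking that, after inverting $p$, it factors through the de Rham locus compatibly with the trivialization \eqref{eq:ConnBKRational}. I would attempt this with explicit Witt vector computations, in the spirit of the proof of Proposition~\ref{prop:FrobDescMap}.
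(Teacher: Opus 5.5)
Your proposal follows essentially the paper's proof. The paper constructs exactly this lift $f\colon Y\to W(k)^{\mathcal{N}}$, using a ``divisorial'' description of $W(k)^{\mathcal{N}}$; $f$ restricts to $\rho_{\mathrm{std}}$ on $Y_{t\neq0}\simeq\Spf(\KisinS)$ and to $\rho_\dagger$ on $Y_{y\neq0}\simeq\Spf(\KisinS)/\mu_p$. It then reads off the triple $(\gM,\gN,\psi)$ from $f^*\mathcal{E}^{\mathcal{N}}$ and reduces via Proposition~\ref{prop:canmodBK} to the generic fibre. Note that $f^*\mathcal{E}^{\mathcal{N}}$ is only reflexive, not a vector bundle in general, but Proposition~\ref{prop:CohSheafptwistedRees}(4) is all that is needed. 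Your step ``rationally $f$ factors through the de Rham point'' is made precise in the paper in two moves: it identifies $Y\times_{f,W(k)^{\mathcal{N}},i_{\dR}}\mathbf{A}^1/\mathbf{G}_m$ with the PD-envelope of $y/p$ (Corollary~\ref{cor:filtdRlocus} and Lemma~\ref{lem:twistedPDenvelope}), and then passes to the chart $y=p^2y_2$, through which the rational Rees stack $\Spec(R_K)/\mathbf{G}_m$ maps.
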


The proof of Theorem~\ref{thm:crysBKspecial} 
uses some new results on the syntomification. Given the relatively classical nature of the statement, it would be quite interesting to have a more direct proof. For now, let us simply prove Theorem~\ref{thm:mainNthm} assuming Theorem~\ref{thm:crysBKspecial}:

\begin{proof}[Proof of Theorem~\ref{thm:mainNthm} via specializability]
We shall show that the conclusion of Theorem~\ref{thm:mainNthm} holds true with the category $\mathrm{Vect}^\varphi(\KisinS)_{\text{sp}}$ replacing $\mathrm{Vect}^\varphi(\KisinS)_{\cris}$; this gives the desired result by Theorem~\ref{thm:crysBKspecial}. Moreover, we already observed in~\eqref{eq:functorcanmodBK} 
that the assignment
$(\gM,\varphi_\gM) \mapsto (\gM,\gN^{\can},\psi^{\can})$ yields an exact $\otimes$-functor $\mathrm{Vect}^\varphi(\KisinS)_{\text{sp}} \to \mathcal{D}$, so it remains to verify that (1) and (2) from Theorem~\ref{thm:mainNthm} hold true for this functor. But (2) holds true by construction of the canonical modification, while  (1) is exactly the specializability condition, so we win.\end{proof}

\subsection{The Nygaard stack}
\label{ss:Nygaard}

In this subsection, we study the compactification of $W(k)^\Prism$ provided by the Nygaard stack $W(k)^\cN$ (see \cite{drinfeldprismatization,BBFgaugenotes}). Quasi-coherent sheaves on $W(k)^\cN$, also called {\em gauges} over $\Spf(W(k))$, encode the Nygaard filtration (whose Breuil--Kisin realization already appeared earlier in \eqref{eq:NygaardBK}) on a prismatic crystal.

\subsubsection{A ``divisorial'' description of $\Z_p^\cN$: }
As we saw in \S \ref{ss:reviewprismatization}, the stack $\Z_p^\Prism$ parametrizes generalized Cartier divisors in the ring scheme $W$ of Witt vectors. Drinfeld constructed an enlargement $\Z_p^\cN$ of $\Z_p^\Prism$ (or, more precisely, of $\Z_p^\Prism \sqcup \Z_p^\Prism$) as a moduli space of certain (non-invertible) quasi-ideal schemes in $W$, phrased in terms of ``admissible $W$-module schemes'', see \cite{drinfeldprismatization}. The notes \cite{BBFgaugenotes} gave an alternate perspective on $\Z_p^\cN$ via quasi-syntomic descent. While we do not recall either perspective here, we shall offer an alternative, and perhaps more conceptual, description of $W(k)^\cN$ entirely in terms of generalized Cartier divisors. It relies on the Teichm\"uller construction $[\cdot]$, regarded as a $\otimes$-functor from the category of invertible modules over a ring $R$ to the category of invertible $W(R)$-modules; this functor is right inverse to the obvious base change functor in the other direction, and, due to the identity $[x^p] = F([x]) \in W(R)$ for $x \in R$, carries the $p$-power operation on invertible $R$-modules to $F^*$ on invertible $W(R)$-modules.

\begin{prop}
\label{prop:ZpNygDesc}
For a $p$-nilpotent ring $R$, the groupoid $\mathbb{Z}_p^{\cN}(R)$ identifies with the groupoid of the following triples:
\begin{enumerate}
\item $(d: I \longrightarrow W(R))$ is a Cartier--Witt divisor over $\Spec(R)$, i.e., a point of $\mathbb{Z}_p^\Prism(R)$;
\item $(t: \mathcal{O}(-1) \longrightarrow R)$ is a generalized Cartier divisor over $\Spec(R)$, i.e., a point of $\mathbb{A}^1/\mathbb{G}_m(R)$;
\item $I/p \xrightarrow{y} [\mathcal{O}(-p)]/p \xrightarrow{[t]^p} W(R)/p$ is a factorization, in invertible $W(R)/p$-modules, of the mod $p$ reduction of $d$, where $[t]: [\mathcal{O}(-1)] \longrightarrow W(R)$ is the Teichm\"uller lift of $t$.
\end{enumerate}
Moreover, for any perfect $\F_p$-algebra $k$, the map\footnote{\label{fn:wklinearity} This isomorphism is normalized as follows: the $\Z_p^\cN$ component of this map is the obvious one, while the $\mathrm{Spf}(W(k))$ component is induced by the projection $W(k)^\cN \to W(k)^\Prism$ and the natural $W(k)$-structure on the target. In particular, this is {\em not} the $W(k)$-structure on $W(k)^\cN$ induced by the universal ring stack $\mathbf{G}_a^\cN$, but rather the composition of the latter with the Frobenius on $W(k)$.} $W(k)^\cN \to \Z_p^\cN \times_{\mathrm{Spf}(\Z_p)} \mathrm{Spf}(W(k))$ is an isomorphism, so we obtain a similar description for $W(k)^\cN$.
\end{prop}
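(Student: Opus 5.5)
To prove Proposition~\ref{prop:ZpNygDesc}, I will compare the triple description with the standard presentation of $\Z_p^\cN$ from \cite{BBFgaugenotes}. In the first step I recall that presentation: $\Z_p^\cN$ is a stack over $\mathbf{A}^1/\mathbf{G}_m$ via the Rees parameter $t$. Over $t\neq 0$ it is $\Z_p^\Prism$. The fibre over $t=0$ is described via the filtered Cartier--Witt divisor, i.e. $W$ together with the "$t$-filtration" on it. Concretely, there is an étale-local chart $\Spf(\Z_p\llbracket u,t\rrbracket)$ modelled on the Rees algebra of the Nygaard filtration of the prism $(\Z_p\llbracket \tilde u\rrbracket,(\tilde u - p))$, and I would use it as a test case.

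The second step is to construct the comparison map. Given an $R$-point of $\Z_p^\cN$, the projection to $\Z_p^\Prism$ is not available, since $\Z_p^\cN$ only maps to $\Z_p^\Prism$ over $t\neq 0$. Instead I use the map $j_{\dR}$-twisted Frobenius $\Z_p^\cN\to\Z_p^\Prism$, which is the "$\varphi$" extending the Frobenius on $\Z_p^\Prism$ (this is the map defined on all of $\Z_p^\cN$ in \cite{BBFgaugenotes}). This map produces the Cartier--Witt divisor $d:I\to W(R)$ in (1), and the Rees map produces $t$ in (2). The key identity to verify is that, mod $p$, $d$ factors through $[t]^p$. This is the gauge-theoretic incarnation of the fact that the Frobenius of the Nygaard filtration satisfies $\varphi(\mathcal{N}^{\geq i})\subset I^i$. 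Modulo $p$, Frobenius becomes the $p$-th power, so the Teichmüller $p$-power $[t]^p=F[t]$ appears. I would check this on the quasi-syntomic cover $\Spf(\Z_p^{\cris}\text{-type})$ or on the Breuil--Kisin chart, where it reduces to $\varphi(u)=u^p\equiv E(u)$ modulo the appropriate ideal, which matches the computation in Proposition~\ref{prop:FrobDescMap}.

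The third step is to show the resulting map of stacks is an isomorphism. I would argue fpqc-locally by comparing both sides on a flat cover: both sides map to $\mathbf{A}^1/\mathbf{G}_m$. Over $t\neq0$ condition (3) is vacuous up to a unique choice, since $y$ is forced to be $[t]^{-p}d$, so both sides reduce to $\Z_p^\Prism$ and the map is the Frobenius-twisted identity, which is an isomorphism. Over $t=0$ I would compare with Drinfeld's description of the Hodge fibre $\Z_p^\cN_{t=0}$ as classifying $d$ with $d \equiv 0 \bmod p$ together with the extra datum $y$, matching his admissible $W$-module schemes $W/[\text{filtration}]$. The main obstacle is exactly this $t=0$ (and especially $t=0$, $d_0$ nilpotent) locus: I would need to show the stabilizers agree, namely that automorphisms of the triple reproduce the group $W^*\times\mathbf{G}_m$ modulo the relation imposed by $y$. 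I would handle it by computing both sides on the flat chart $\Spf(W(k)\llbracket u\rrbracket[y,t]/(yt^p-u))$-style Rees cover of \S\ref{ss:twistedrees}, and checking that the Čech nerves agree.

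Finally, for $W(k)$, formal étaleness of $W(k)/\Z_p$ gives $W(k)^\cN\simeq \Z_p^\cN\times\Spf W(k)$ by the same deformation-theoretic argument as in Remark~\ref{rem:comparison-relative-prismatization}. The footnote's normalization accounts for the Frobenius twist introduced by using the projection to $W(k)^\Prism$.
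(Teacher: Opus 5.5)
There is a genuine gap: you never say where the datum $y$ in (3) comes from. You treat the factorization of $d \bmod p$ through $[t]^p$ as an identity to be verified, but it is extra structure, not a property. Over the locus $t=0$ one has $[t]^p=0$, and $y$ can be an arbitrary map $I/p\to[\mathcal{O}(-p)]/p$. So the real content of the proposition is that the fibre of $(\pi,t)\colon \Z_p^\cN\to\Z_p^\Prism\times\mathbf{A}^1/\mathbf{G}_m$ over a point $(d,t)$ is equivalent to the groupoid of such $y$. Your proposal contains neither this forward construction nor an inverse.

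The missing idea is to use the definition of $\Z_p^\cN$ via admissible modules \cite{BBFgaugenotes}. That fibre is the groupoid of extensions $M$ of $F_*I$ by $\mathbf{V}(\mathcal{O}(-1))^\sharp$, together with $\tilde d\colon M\to W$ inducing $t$ on the sub and $F_*d$ on the quotient. Push out along $t$, and pull back $0\to\mathbf{G}_a^\sharp\to W\xrightarrow{F}F_*W\to0$ along $F_*d$. This identifies the fibre with the fibre of $[t]\colon\Ext_W(F_*I,\mathbf{V}(\mathcal{O}(-1))^\sharp)\to\Ext_W(F_*I,\mathbf{G}_a^\sharp)$ over the pulled-back tautological class. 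The computation $\Ext_W(F_*W,\mathbf{G}_a^\sharp)\simeq F_*W/p$, which sends the tautological extension to $1$, then applies after twisting by invertible modules. It turns the fibre into exactly the set of $y$ with $[t]^p\circ y = d \bmod p$. In other words, $y$ is the class of the admissible module $M$.

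Several other steps also fail as written.

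\begin{itemize}
\item The projection $\pi\colon\Z_p^\cN\to\Z_p^\Prism$ (the underlying Cartier--Witt divisor) is defined on all of $\Z_p^\cN$. It restricts to the identity along $j_{\dR}$ and to $\varphi$ along $j_{\HT}$. If you instead build $d$ from a Frobenius-twisted map, then over $t\neq0$ your comparison is $\varphi\colon\Z_p^\Prism\to\Z_p^\Prism$, which is not an isomorphism.
\item Checking the comparison separately over the open $t\neq0$ and the closed $t=0$, or comparing automorphism groups at points, does not show that a map of stacks is an isomorphism. You must control all test rings $R$, including nilpotent thickenings of the locus $t=0$.
\item In the paper, the chart $\Spf(\KisinS[y,t]^\wedge/(yt^p-u))/\mathbf{G}_m\to W(k)^\cN$ and its flatness are both obtained \emph{from} this proposition. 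Using the chart here is circular unless you construct it and prove flatness directly from admissible modules.
\item The relevant congruence on that chart is $w(E)\equiv[u]\bmod p$ with $u=yt^p$, not ``$u^p\equiv E(u)$''.
\end{itemize}

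Your last step, deducing the statement for $W(k)$ by formal \'etaleness, is fine and agrees with the paper.
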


As the preceding description of $\Z_p^\cN$ is in terms of invertible modules (over $W$, $\mathbf{G}_a$ and $W/p$), we shall refer to it informally as the {\em divisorial description}.

In the proof below, we shall write $W$ for the ring scheme of $p$-typical Witt vectors, and $F:W \to W$ for the Witt vector Frobenius. To emphasize $W$-linearity, we shall write $F_* W$ to indicate the group scheme $W$, endowed with the $W$-module structure via $F$; thus, the Frobenius itself is a $W$-module map $W \to F_* W$, while the Verschiebung is a $W$-module map $V:F_*W \to W$, etc.

\begin{proof}[Proof of Proposition~\ref{prop:ZpNygDesc}]
The last part is standard from deformation theory, so we focus on the first part. Fix a test $p$-nilpotent ring $R$. We have a natural map $\alpha:\mathbf{Z}_p^\cN(R) \to \Z_p^\Prism(R) \times \mathbf{A}^1/\mathbf{G}_m(R)$ given by remembering the underlying Cartier--Witt divisor on the first component and the Rees map on the second component; these maps are labelled $\pi$ and $t$ in \cite[Construction 5.3.3]{BBFgaugenotes}. Fix a pair $(d,t)$ as in (1) and (2) above, regarded as a map $\mathrm{Spec}(R) \to \Z_p^\Prism(R) \times \mathbf{A}^1/\mathbf{G}_m(R)$. It suffices to identify the fibre $\mathcal{F}$ of $\alpha$ over the point $(d,t)$ with the groupoid of factorizations as in (3) above. By definition (\cite[Definition 5.3.1]{BBFgaugenotes}), $\mathcal{F}$ is the groupoid of pairs $(M,\widetilde{d})$, where $M$ is an extension of $F_* I$ by $\mathbf{V}(\mathcal{O}(-1))^\sharp$ (in a necessarily unique fashion, see \cite[Remark 5.2.5]{BBFgaugenotes}), and $\tilde{d}:M \to W$ is a morphism of $W$-module schemes such that the diagram
\[ \xymatrix{ 0 \ar[r] & \mathbf{V}(\mathcal{O}(-1))^\sharp \ar[r] \ar[d]^-{t} & M \ar[r] \ar[d]^-{\widetilde{d}} & F_* I \ar[d]^-{F_*d} \ar[r] & 0 \\
0 \ar[r] & \mathbf{G}_a^\sharp \ar[r] & W \ar[r]^-F & F_*W \ar[r] & 0 }\]
commutes (and thus gives a map of short exact sequences). As with any morphism of short exact sequences, this diagram can be enlarged into a diagram
\begin{equation}\label{eqn:pullback-pushout-diagram} \xymatrix{ 0 \ar[r] & \mathbf{V}(\mathcal{O}(-1))^\sharp \ar[r] \ar[d]^-{t} & M \ar[r] \ar[d] & F_* I \ar@{=}[d] \ar[r] & 0 \\
    0 \ar[r] & \mathbf{G}_a^\sharp \ar[r] \ar@{=}[d] & M' \ar[r] \ar[d] & F_* I \ar[d]^-{F_*d} \ar[r] & 0 \\
    0 \ar[r] & \mathbf{G}_a^\sharp \ar[r] & W \ar[r]^-F & F_*W \ar[r] & 0, }\end{equation}
where the second row is both the pushout of the first row along the map $t$, and the pullback of the last row along the map $F_{*}d$. Conversely, specifying such an isomorphism of the respective pullback and pushouts uniquely determines  $\widetilde{d}$.  It follows that $\mathcal{F}$ is also the fibre product of
\[ \xymatrix{
  & \tau \ar[d] \\
  & \Ext_W(F_* W, \mathbf{G}_a^\sharp) \ar[d]_-{F_*d} \\
  \Ext_W(F_* I,\mathbf{V}(\mathcal{O}(-1))^\sharp) \ar[r]^-{[t]} & \Ext_W(F_* I,\mathbf{G}_a^\sharp)
}\]
where $\tau$ denotes the tautological extension appearing in the last row of~\eqref{eqn:pullback-pushout-diagram}. But now recall that $\Ext_W(F_*W,\mathbf{G}_a^\sharp) = F_*W/p$ with $1$ on the right corresponding to $\tau$ on the left (see \cite[Proposition 5.2.1 and Corollary 2.6.8]{BBFgaugenotes}). Twisting, this implies that for any pair of invertible $W$-modules $M$ and $N$, we have a natural identification
\[ \mathrm{Ext}_W(F_* M, N \otimes_W \mathbf{G}_a^\sharp) \simeq N \otimes_W F_*(M^\vee/p) \simeq F_*\left(M^\vee/p \otimes_{W/p} F^* N/p\right).\]
Applying this with $M=I$ and $N=[\mathcal{O}(-1)]$ or $N=W$, we can rewrite the above fibre product as
\[ \xymatrix{
  && 1 \ar[d] \\
  && F_* W/p \ar[d]_-{F_*d} \\
  F_* \left(I^\vee/p \otimes_{W/p} [\mathcal{O}(-p)]/p\right) \ar[rr]^-{F_*[t^p] = [t] } && F_* I^\vee/p
} \]
As all the linear maps appearing are actually $F_*W/p$-linear, this simplifies to the fibre product of
\[ \xymatrix{
  & 1 \ar[d]_-{d} \\
  \Hom_{W/p}(I/p,[\mathcal{O}(-p)]/p) \ar[r]^-{[t]^p} & \Hom_{W/p}(I/p,W/p)
} \]
which is exactly what the groupoid of data in (3) records.
\end{proof}

\begin{remark}
In \cite[Remark 7.2.3]{drinfeldprismatization}, Drinfeld noted that the explicit quotient presentation of $\Z_p^\cN$ in \cite[\S 7.2.1-7.2.2]{drinfeldprismatization} showed a slightly mysterious descent property: the Rees map $\Z_p^\cN \to \mathbf{A}^1/\mathbf{G}_m$ descends\footnote{However, he also observed that this descent property is not shared by the universal ring stack $\mathbf{G}_a^\cN$ over $\Z_p^\cN$; this can also be seen from our description of $\mathbf{G}_a^\cN$ as a fibre product of a diagram \eqref{eq:fibproductGaNyg} whose last term requires the knowledge of $t$ (and not just $t^p$).} along the $p$-power map $\mathbf{A}^1/\mathbf{G}_m \to \mathbf{A}^1/\mathbf{G}_m$. The divisorial description in Proposition~\ref{prop:ZpNygDesc} makes this descent more transparent: the Rees parameter $t$ only appears via its $p$-th power in part (3) of the description.
\end{remark}

\begin{remark}
We were led to the divisorial description of $\Z_p^\cN$ quite directly by the applications of this paper. Specifically, the most natural way to geometrically realize the canonical modification from \S  \ref{sss:canmupmod} from the data of an $F$-gauge is via pullback along a map $f:Y \to \Z_p^\cN$ (as we do in \S \ref{ss:PfCanModThm}). In constructing this map, the form of the co-ordinate ring of $Y$ (see \eqref{eq:coordringY}) almost immediately led us to  the divisorial description of $\Z_p^\cN$.  But an essentially equivalent form of Proposition~\ref{prop:ZpNygDesc} in fact appeared earlier in \cite[\S 6.4]{GardnerMadapusiAlg}; as they stress, the divisorial description adapts straightforwardly to the derived setting.
\end{remark}

\subsubsection{Reinterpreting classical structures via the divisorial description}
\label{sss:ReinterpretClassicalDivisorial}

It is a rather pleasant exercise to understand structures over $\Z_p^\cN$ in terms of its divisorial description. To carry this out, let us name some universal objects over $\Z_p^\cN$ coming from Proposition~\ref{prop:ZpNygDesc}: we have a universal Cartier--Witt divisor $d^{\univ}:\mathcal{I}_W^{\univ} \to W$, a universal Cartier divisor $t^{\univ}:\mathcal{O}(-1) \to \mathcal{O}_{\Z_p^\cN}$, and a universal factorization
\[ \mathcal{I}_W^{\univ}/p \xrightarrow{y^{\univ}} [\mathcal{O}(-p)]/p \xrightarrow{[t^{\univ}]^p} W/p \]
of $d^{\univ} \mod p$. Using these, we explain how to recover some geometric statements about $\Z_p^\cN$:

\begin{enumerate}[wide,itemsep=4pt]
    \item {\em The universal ring stack:} We have a diagram of animated $W$-algebra stacks

    \begin{equation}
\label{eq:fibproductGaNyg}
\xymatrix{
  & F_* W/\mathcal{I}_W^\univ \ar[d] \\
  & F_* W/(p,\mathcal{I}_W^\univ) \ar[d] \\
  \mathbf{G}_a/\mathcal{O}(-1) \ar[r] & F_*W/(p,[\mathcal{O}(-p)])
}
\end{equation}
where all quotients are interpreted in the Koszul sense, the first vertical map is the obvious one, the second vertical map is induced by $y^\univ$, and the horizontal map is induced by Frobenius $\mathbf{G}_a = W/VW \xrightarrow{F}  F_*W/p$. The fibre product of this diagram is the universal $W$-algebra stack $\mathbf{G}_a^\cN$.

    \item  {\em The universal ideal:}
Recall that \cite{drinfeldprismatization} defines $\Z_p^\cN$ as a moduli stack of pairs $(M,d)$, where $M$ is an ``admissible $W$-module scheme'' and $d:M \to W$ is a ``primitive'' map. In particular, there is a universal such map $d^\univ:M^\univ \to W$ over $\Z_p^\cN$ itself. In the ring stack picture, $d^\univ$ is simply the fibre of the structure map $W \to \mathbf{G}_a^\cN$. Thus, part (1) already contains a description of $d^\univ$ implicitly. But the proof of Proposition~\ref{prop:ZpNygDesc} actually gives a much more direct description of $M^{\univ}$: the section $y^\univ$ classifies $M^{\univ}$ in the sense that $M^{\univ}$ is the pullback of the tautological extension
    \[ \mathcal{O}(-1) \otimes_{\mathbf{G}_a} \left( 0 \to \mathbf{G}_a^\sharp \to \mathbf{G}_a \to \mathbf{G}_a^{\dR} \simeq F_* W/p \to 0\right)\]
    along the map $F_* \mathcal{I}^\univ \to F_* [\mathcal{O}(-p)]/p \simeq \mathcal{O}(-1) \otimes_{\mathbf{G}_a} F_*W/p$ induced by $y^\univ$.

    \item {\em The two open copies of $\Z_p^\Prism$ inside $\Z_p^\cN$:}
    \label{reinterpret-two-open}The de Rham open copy $j_{\dR}:\Z_p^\Prism \to \Z_p^\cN$ from \cite[Construction 5.3.5]{BBFgaugenotes} is simply the open locus $(\Z_p^\cN)_{t^\univ \neq 0},$ as the Rees map resulting from Proposition~\ref{prop:ZpNygDesc} is the usual Rees map for $\Z_p^\cN$. On the other hand, the Hodge--Tate open copy $j_{\HT}:\Z_p^\Prism \to \Z_p^\cN$ from \cite[Construction 5.3.2]{BBFgaugenotes}, which is the locus where the universal
      admissible module $M^{\univ}$ is invertible, is the open substack $(\Z_p^\cN)_{y^{\univ} \neq 0}$: indeed, it follows from (2) above that $y^\univ$ exactly classifies the failure of the universal admissible module to be invertible. For future use, let us give an explicit construction of this isomorphism in both directions. Fix a test ring $R$. 
      
      For the forward direction, given a test ring $R$, fix a point $(d:I \to W(R)) \in \Z_p^\Prism(R)$. Denoting base change along the projection $W(R) \to R$ with a subscript of $0$, the point $j_{\HT}(d) \in \Z_p^\cN(R)$ is given by the triple 
      \[ \Big(F^*d:F^*I \to W(R), t=d_0:I_0 \to R, F^*I/p \stackrel{y}{\simeq} [(I_0)^{\otimes p}]/p \xrightarrow{[t]^p} W(R)/p\Big) \]
      under the divisorial description, where the isomorphism $y$ in the third entry is the effect on invertible modules of the identity $F(x) \equiv x_0^p \mod pW(R)$ for $x \in W(R)$ (and can also be seen more formally from the square just below).
      
    Conversely, since $FV=p$ on $W$, we have a fibre square 
    \[ \xymatrix{W \ar[r]^-F \ar[d] & F_*W \ar[d] \\
     W/VW  \simeq \mathbf{G}_a \ar[r]^-{\overline{[\cdot]^p}} & F_*W/p }\]
    of animated $W$-algebra stacks: indeed, the induced map on vertical fibres is the tautological isomorphism $VW \simeq F_*W$. Applying $\mathrm{Vect}(-)$ to the value of this square on our test ring $R$ yields a fibre square of categories by Milnor glueing \cite[Theorem 16.2.0.2]{LurieSAG}. In particular, the groupoid $\Z_p^\Prism(R)$ of Cartier--Witt divisors $I \to W(R)$ is identified, via pullback along the maps appearing above, with the groupoid of triples $(d:J \to W(R),\tau:L\to R,\xi)$, where $d:J \to W(R)$ is a Cartier--Witt divisor, $\tau:L \to R$ is a generalized Cartier divisor, and
    \[  \xi: \Big(J/p \to W(R)/p\Big)  \simeq \Big(\overline{[L]^p} \to W(R)/p\Big)\]
    is an identification of the generalized Cartier divisors over $W(R)/p$; this latter groupoid is exactly $(\Z_p^\cN)_{y^{\univ}\neq0}(R)$, so $\Z_p^\Prism \simeq (\Z_p^\cN)_{y^{\univ}\neq0}$ via this construction.

\begin{remark}
      The argument in the last paragraph above is particularly transparent when all line bundles appearing are trivial. Indeed, it is tantamount to the following elementary fact: if $R$ is a test ring and $d \in W(R)$ is an element of the form $d=[t]^p + pz$ for some $t \in R $ and $z \in W(R)$, then $d$ has a preferred Frobenius root given by
      $[t] + Vz$.
\end{remark}

    \begin{rem}
    Our choice of the $W(k)$-structure on $W(k)^\cN$ (see Footnote~\ref{fn:wklinearity}) ensures that the composition $W(k)^\Prism \xrightarrow{j_{\dR}} W(k)^\cN \to W(k)^\Prism$ is the identity (and hence $W(k)$-linear), while the composition $W(k)^\Prism \xrightarrow{j_{\HT}} W(k)^\cN \to W(k)^\Prism$ is the Frobenius (and hence $\varphi_{W(k)}$-linear).
    \end{rem}

    \item {\em The Hodge--Tate divisor:} The divisor $(\Z_p^\cN)_{t^{\univ}=0} \subset \Z_p^\cN$ is identified with the total space of $\mathcal{O}(-1) \otimes_{\mathbf{G}_a} \mathbf{G}_a/\mathbf{G}_a^\sharp = \mathbf{V}(\mathcal{O}(-1))/\mathbf{V}(\mathcal{O}(-1))^\sharp$ over $B\mathbf{G}_m$. This was proven directly in \cite[Proposition 5.3.7]{BBFgaugenotes}. Let us deduce from the divisorial description. In the latter, given a test $p$-nilpotent ring $R$,  a point of $(\Z_p^\cN)_{t^{\univ}=0}(R)$  is given by a Cartier--Witt divisor $d:I \to W(R)$, a line bundle $\mathcal{O}(-1) \in \mathrm{Pic}(R)$,  a $W/p$-module map $y:I/p \to [\mathcal{O}(-p)]/p$, and a trivialization\footnote{A ``trivialization'' here means an identification of the mod $p$ reduction of $d$ with $0$ in groupoid of $W(R)/p$-module maps $I/p \to W(R)/p$.} $h$ of the map $I/p \to W(R)/p$ induced by $d$. Note that $y$ does not interact with $d$ or $h$ in this description. The trivialization $h$ yields  a map $\left(I \xrightarrow{d} W(R)\right) \to \left(W(R) \xrightarrow{p} W(R)\right)$ of generalized Cartier divisors in $W(R)$, which must be an isomorphism by the irreducibility lemma for distinguished elements (see \cite[Lemma 3.5]{bhatt2019prisms} or \cite[Lem.~5.1.5]{BBFgaugenotes}). Conversely, any such isomorphism certainly yields a trivialization $h$ of $d$ modulo $p$. Thus, we learn that $(\Z_p^\cN)_{t^{\univ}=0}(R)$ is simply the groupoid of line bundles $\mathcal{O}(-1) \in \mathrm{Pic}(R)$ equipped with a section of $[\mathcal{O}(-p)]/p$. The groupoid of such pairs is exactly $\mathcal{O}(-1) \otimes_{\mathbf{G}_a} \mathbf{G}_a/\mathbf{G}_a^\sharp$ over $B\mathbf{G}_m$, as wanted.

\item {\em The filtered de Rham point:}  Consider the space $(\Z_p^\cN)_{y^\univ=0}$ of trivializations of $y^\univ$, regarded as a section of an invertible $W/p$-module over $\Z_p^\cN$. As $W/p$ is a sheaf of animated rings, the structure map $(\Z_p^\cN)_{y^\univ=0} \to \Z_p^\cN$ is not a closed immersion\footnote{In fact, locally on $\Z_p^\cN$, the map $(\Z_p^\cN)_{y^\univ=0} \to \Z_p^\cN$ is  the base change of the $0$-section $\mathrm{Spf}(\mathbb{Z}_p) \xrightarrow{0} W/p$. As $W/p \simeq \G_a \times B\G_a^\sharp$ in characteristic $p$, the fibres of $(\Z_p^\cN)_{y^\univ=0} \to \Z_p^\cN$ over geometric points are either empty or given by $\G_a^\sharp$.}. This morphism turns out to be an important one for cohomological purposes:

\begin{cor}[The filtered de Rham point]
\label{cor:filtdRlocus}
The filtered de Rham map $i_{\dR}:\mathbf{A}^1/\mathbf{G}_m \to \Z_p^\cN$ from \cite[Construction 5.3.4]{BBFgaugenotes} is  identified with the structure map $(\Z_p^\cN)_{y^\univ=0} \to \Z_p^\cN$.
\end{cor}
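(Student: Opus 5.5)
The plan is to identify both sides as fpqc stacks over $\Z_p^\cN$ by comparing functors of points through the divisorial description of Proposition~\ref{prop:ZpNygDesc}. First I would recall the filtered de Rham map $i_{\dR}$ of \cite[Construction 5.3.4]{BBFgaugenotes}. It is characterized by two properties: its composite with the projection $\Z_p^\cN \to \Z_p^\Prism$ is the de Rham point $\rho_{\dR}$, given by the Cartier--Witt divisor $p:W \to W$; and its composite with the Rees map is the identity of $\mathbf{A}^1/\mathbf{G}_m$. The corresponding admissible module is $M=\mathbf{V}(\mathcal{O}(-1))^\sharp \oplus F_*W$, with $\widetilde d = (t,V)$, so the extension of $F_*I=F_*W$ by $\mathbf{V}(\mathcal{O}(-1))^\sharp$ is \emph{split}. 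In the proof of Proposition~\ref{prop:ZpNygDesc}, the class of this extension in $\Ext_W(F_*I,\mathbf{V}(\mathcal{O}(-1))^\sharp)\simeq F_*(I^\vee/p\otimes[\mathcal{O}(-p)]/p)$ is exactly $y$. Hence $i_{\dR}$ lands in the locus where $y$ is trivialized, and we obtain a map $\mathbf{A}^1/\mathbf{G}_m \to (\Z_p^\cN)_{y^\univ=0}$.

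For the converse, I would describe $(\Z_p^\cN)_{y^\univ=0}(R)$ directly. A point is a triple $(d,t,y)$ together with a homotopy $y\simeq 0$ in $\Hom_{W/p}(I/p,[\mathcal{O}(-p)]/p)$. Composing with $[t]^p$ gives a trivialization of $d \bmod p$, that is, a trivialization of the composite $I \to W(R) \to W(R)/p$. As in item (4) above, this yields a map of generalized Cartier divisors $(I\xrightarrow{d}W(R)) \to (W(R)\xrightarrow{p}W(R))$. By the irreducibility lemma for distinguished elements \cite[Lemma 3.5]{bhatt2019prisms}, this map is an isomorphism, so $d\simeq p$ canonically.

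After fixing $d=p$, the remaining data are $t$, a map $y:W/p\to[\mathcal{O}(-p)]/p$, and a null-homotopy of $y$, compatible with the identification of $[t]^p y$ with $p\bmod p$. Since $p \bmod p$ is canonically null in the animated ring $W/p$, the null-homotopy of $y$ is then determined up to contractible choice. So the fibre over $t\in\mathbf{A}^1/\mathbf{G}_m(R)$ should be contractible.

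The main obstacle is this last contractibility. $W/p$ is animated, and the trivialization of $y$ together with the induced trivialization of $d \bmod p$ must be matched coherently in the groupoid $\Hom_{W/p}(I/p,-)$; the $\pi_1$ contributions from $B\G_a^\sharp$ (compare the footnote on fibres $\G_a^\sharp$) must cancel against the automorphisms of the isomorphism $d\simeq p$. I would first handle this when all line bundles are trivial. There one has $I=W$, and the automorphisms of the Cartier--Witt divisor $p$ are $W^*[F]\simeq\G_m^\sharp$. I would check that these act simply transitively on the choices of homotopy $y\simeq0$, which is the same calculation as the identification $W^*[F]\simeq\G_m^\sharp$. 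Then I would globalize by descent.
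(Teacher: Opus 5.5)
Your argument follows the paper's proof. You factor $i_{\dR}$ through $(\Z_p^\cN)_{y^\univ=0}$ because the pulled-back admissible module is split. You then use that the Rees map is a left inverse of $i_{\dR}$, and show via the irreducibility lemma that $(\Z_p^\cN)_{y^\univ=0}\to\mathbf{A}^1/\mathbf{G}_m$ has contractible fibres.

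The step you single out as the main obstacle is not one, and no computation with $W^*[F]$ is needed. It helps to order the data differently.
\begin{itemize}
\item Over a fixed $(d,t)$, the space of pairs ($y$, null-homotopy of $y$) is a based path space, hence contractible. So a point of $(\Z_p^\cN)_{y^\univ=0}$ over $t$ is exactly a Cartier--Witt divisor $d:I\to W(R)$ together with a null-homotopy of $I/p\to W(R)/p$.
\item By the fibre sequence $W\xrightarrow{p}W\to W/p$, such a null-homotopy is the same datum as a factorization $d=p\circ e$ with $e:I\to W(R)$. That is, it is a map of generalized Cartier divisors $(I,d)\to(W(R),p)$.
\item The irreducibility lemma says $e$ is an isomorphism. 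Hence the fibre is the groupoid of Cartier--Witt divisors equipped with an isomorphism to $(W(R),p)$, which is contractible.
\end{itemize}
In particular, the trivialization of $d \bmod p$ \emph{is} the isomorphism $d\simeq p$. You should not first extract $d\simeq p$ and then study the leftover homotopies; there are no residual automorphisms to cancel against $\pi_1(W(R)/p)$.

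Your proposed check is also slightly misaimed. Identifying $\mathrm{Aut}(W(R)\xrightarrow{p}W(R))=\{u\in W(R)^*: pu=p\}$ with $W^*[F]$ uses $p=VF$, which holds only over $\mathbf{F}_p$-algebras. The input that makes the action simply transitive is the irreducibility lemma, not the identification $W^*[F]\simeq\mathbf{G}_m^\sharp$.
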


\begin{proof}
Note that the pullback of the universal admissible module along $i_{\dR}$ is the split extension $F_* W \oplus \mathbf{V}(\mathcal{O}(-1))^\sharp$ by construction. As $y^\univ$ classifies the extension determined by the universal admissible module, it follows that the map $i_{\dR}$ factors naturally as $\mathbf{A}^1/\mathbf{G}_m \to  (\Z_p^\cN)_{y^\univ=0} \to \Z_p^\cN$. We must show the first map is an isomorphism. As $i_{\dR}$ has a left inverse given by Rees map, it suffices to show that the
composition $(\Z_p^\cN)_{y^\univ=0} \to \Z_p^\cN \to \mathbf{A}^1/\mathbf{G}_m$ with the Rees map is an isomorphism. Given a test $p$-nilpotent ring $R$, the fibre $\mathcal{F}$ of $(\Z_p^\cN)_{y^\univ=0}(R) \to \mathbf{A}^1/\mathbf{G}_m(R)$ over a point $\left(t:\mathcal{O}(-1) \to R\right) \in \mathbf{A}^1/\mathbf{G}_m(R)$ is the groupoid of Cartier--Witt divisors $d:I \to W(R)$ equipped with a trivialization of $I/p \to W(R)/p$. As in (4) above, by the irreducibility lemma for distinguished elements, the trivialization promotes uniquely to an identification $\left(d:I \to W(R)\right) \simeq \left(p:W(R) \to W(R)\right)$ of Cartier--Witt divisors, so $\mathcal{F}$ is contractible, as wanted.
\end{proof}

\end{enumerate}

\subsubsection{A new chart of $W(k)^\cN$}
\label{ss:chartsNygaard}
Let us use the divisorial description to construct some charts for $W(k)^\cN$. First, we reconstruct a standard chart attached to any transversal prism, see \cite[Remark 5.5.19]{BBFgaugenotes}.

\begin{example}[Standard charts of $\Z_p^\cN$ from prisms]
\label{ex:ReesChartZpNyg} 
Let $(A, (d))$ be an oriented prism  with $A/(d)$ a $p$-torsion-free $W(k)$-algebra. Consider the $(p,d)$-completed Rees stack $X_{(A,(d))}$ of the $(d)$-adic filtration on $A$, so $X_{(A,(d))}=\mathrm{Spf}(R_{(A,(d))})/\mathbf{G}_m$, where 
\[ R_{(A,(d))} \coloneq  \mathrm{Rees}(d^\bullet A)^{\wedge} = A[x,t]^{\wedge}/(xt-d), \]
where the topology and completion are $(p,d)$-adic, the element $t$ has degree $1$ and $x$ has degree $-1$. Note that $X_{(A,(d))}$ has a natural map to $\Spf(A)$ and hence a $W(k)$-structure. We will construct a faithfully flat $\varphi_{W(k)}$-linear map
\[ \rho^\cN_{(A,(d))}:X_{(A,(d))} \to W(k)^\cN\]
using the divisorial description. Since our prism is oriented, all line bundles that appear will be naturally trivial, up to grading shifts.

Via the isomorphism $W(k)^\cN \simeq \mathrm{Spf}(W(k)) \times \Z_p^{\cN}$, it suffices to specify the maps to each factor separately. The first component map  $X_{(A,(d))} \to \Spf(W(k))$ is uniquely determined by demanding it be $\varphi_{W(k)}$: it must agree with the postcomposition of the canonical map $X_{(A,(d))} \to \Spf(A) \to \Spf(W(k))$ with $\varphi_{W(k)}$. The divisorial components of the second component $X_{(A,(d))} \to \Z_p^\cN$ are given by:
\begin{itemize}[wide]

    \item {\em The underlying Cartier--Witt divisor:} This is given by the image of the distinguished element $w(\varphi(d)) \in W(A)$ under the natural map $W(A) \to W(R_{(A,(d))})$.

    \item {\em The underlying Cartier divisor:} This is simply given by the element $t \in R_{(A,(d))}$.

    \item {\em The factorization:}   Note that $w(d) = [d] + Vd_1$ for some $d_1 \in W(A)$, so $w(\varphi(d)) = F(w(d)) = [d]^p + pd_1$. Consequently, $w(\varphi(d)) \equiv [d]^p \mod pW(A)$ and hence the same holds for its image in $W(R_{(A,(d))})$; taking $z$ to be the image of $[x]^p \in W(R_{(A,(d))})$ then provides the needed factorization since $[x]^p [t]^p = [d]^p$ in $W(R_{(A,(d))})$.
\end{itemize}
By construction, $\rho_{(A,(d))}^\cN$ restricts to $\varphi \circ \rho_{(A,(d))}$  on the de Rham open point (recall that~$\rho_{(A,(d))}$ was defined in~\eqref{eqn:rho-B-d}); in fact, this desideratum dictated the choice of the map to $\Spf(W(k))$ made above. By inspection, one also checks $\rho^\cN_{A,(d)}$ recovers $\rho_{(A,(d))}$ at the Hodge--Tate open point. Using this, as well as the $p$-torsion-freeness of source and target of $\rho_{(A,(d))}^\cN$, one checks that  $\rho_{(A,(d))}^\cN$  is faithfully flat by checking it on the stratification of $(\Z_p^\cN)_{p=0}$ with strata given by the de Rham open point, the Hodge--Tate locus of the Hodge--Tate open point, and the Hodge point; we omit the details here. 
\end{example}

As the special case of the Breuil--Kisin prism is important for the sequel, let us give it a short name: write
\begin{equation*}
\label{eq:BKRees}
X = X_{(\KisinS,(E))} \quad \text{and} \quad \left(g:X \to W(k)^\cN\right)  = \rho^\cN_{(\KisinS,(E))}
\end{equation*}
for the resulting $\varphi_{W(k)}$-linear map. It turns out that  one can obtain a ``more efficient''\footnote{More precisely, the chart $g$ realizes $\varphi \circ \rho_{\text{std}}$ over the de Rham open point $W(k)^\Prism \subset W(k)^\cN$, while the chart $f:Y \to W(k)^\cN$ in Example~\ref{ex:ptwistedReesChartBK} realizes $\rho_{\text{std}}$ itself.} chart for $W(k)^\cN$ than the one provided by $g$, and this is critical for our applications.

\begin{example}[A ``more efficient'' approximation of $\Z_p^\cN$]
\label{ex:ptwistedReesChartBK}
Let $Y$ be the $(p,u)$-completed Rees stack of the ``$u$ in degree $p$'' filtration on $\KisinS$ (which is also the $\varphi$-preimage of the $u$-adic filtration on $\KisinS$); see \S \ref{ss:twistedrees} for more on such stacks. Explicitly, $Y=\mathrm{Spf}(R)/\mathbf{G}_m$, where
\begin{equation}
    \label{eq:coordringY}
R = \mathrm{Rees}((u)^{\lceil \bullet/p \rceil} \KisinS)^{\wedge} = \KisinS[y,t]^{\wedge}/(yt^p-u),
\end{equation}
where the completion/topology is $(p,u)$-adic, the Rees parameter $t$ has degree $1$ and the element $y$ has degree $-p$ (and corresponds to $u$ living in filtration degree $p$). There are natural structure maps $Y \to \Spf(\KisinS) \to \Spf(W(k))$. We now construct a $W(k)$-linear map
\[ f:Y \to W(k)^\cN\]
using the divisorial description of the target, and establish some of its properties. Again, by the $W(k)$-linearity requirement, it suffices to specify the divisorial components of the induced map to $\Z_p^\cN$, and these are given by:
\begin{itemize}[wide]
    \item {\em The underlying Cartier--Witt divisor:} This is given by the image of the distinguished element $w(E) = [u] - pw(z) \in W(A)$ under the natural map $W(A) \to W(R)$.

    \item {\em The underlying Cartier divisor:} This is simply given by the element $t \in R$.

    \item {\em The factorization:} Note that $w(E) \equiv u \mod pW(A)$ by design. Consequently, the factorization $yt^p = u$ in $R$ yields the desired factorization $[y][t]^p$ of $w(E)$ in $W(R)/p$.
\end{itemize}
By construction, the restriction of $f$ to the open substack $j_{\dR}:W(k)^\Prism \hookrightarrow W(k)^\cN$ agrees with the map $\rho_{\text{std}}$.
\end{example}

We now study the geometry of the map $f$ constructed above. Our first goal is to show it is faithfully flat; this will be deduced from its compatibility with $\rho_{\text{std}}^\cN$ in a suitable way. Since $E\equiv u \mod p\KisinS$, the Frobenius on $\KisinS/p$ carries the ``$u$ in degree $p$ filtration'' to the $u$-adic filtration on $\KisinS/p$, and hence induces a  natural map $X_{p=0} \to Y_{p=0}$ over $\mathbf{A}^1/\mathbf{G}_m$. Explicitly, on co-ordinate rings, this is induced by the graded  $\mathbf{F}_p[t]$-algebra map
\[ \KisinS/p[y,t]/(yt^p-u) \to \KisinS/p[x,t]/(xt-u)\]
determined by Frobenius on $\KisinS/p$ and $y \mapsto x^p$. This map is faithfully flat by inspection. Moreover, we have:

\begin{prop}
\label{prop:relatefandg}
The composition $X_{p=0} \to Y_{p=0} \xrightarrow{f_{p=0}} W(k)^\cN_{p=0}$ is simply the map $g_{p=0}$. In particular, $f$ is faithfully flat.
\end{prop}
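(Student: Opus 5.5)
The plan is to compare the two maps $X_{p=0}\to W(k)^\cN_{p=0}$ componentwise in the divisorial description of Proposition~\ref{prop:ZpNygDesc}, after first matching their $\Spf(W(k))$-components. The chart $g$ is $\varphi_{W(k)}$-linear. The composite $X_{p=0}\to Y_{p=0}\xrightarrow{f} W(k)^\cN$ is also $\varphi_{W(k)}$-linear, because $X_{p=0}\to Y_{p=0}$ is Frobenius on $\KisinS/p$ and $f$ is $W(k)$-linear. So the $W(k)$-components agree, and it remains to identify the three divisorial data over $R_X:=\KisinS/p[x,t]/(xt-u)$, where all line bundles are trivial up to grading shifts.

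First, the Cartier divisor. For $g$ it is $t$. For the composite it is the pullback of $t\in R$ along $t\mapsto t$, which is again $t$.

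Second, the Cartier--Witt divisor. For $g$ it is $w(\varphi(E))$; for the composite it is the image of $w(E)$ under $W(\varphi)$. In characteristic $p$ the Frobenius of $R_X$ induces $F$ on $W(R_X)$, so this image is $F(w(E))$. Since $w$ is a $\delta$-map, $F(w(E))=w(\varphi(E))$. Explicitly, both equal $[u]^p-p\,w(\varphi(z))$.

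Third, the factorization. For $g$ it is $z_g=[x]^p$, with $[x]^p[t]^p=[u]^p$ modulo $p$. For $f$ it is $[y]$, and under $y\mapsto x^p$ this pulls back to $[x^p]=[x]^p$. The two factorizations of the same element modulo $p$ therefore coincide. The mild subtlety is that all of these identifications must be compatible as isomorphisms in the groupoid, not merely as equalities of elements. Since everything is oriented, the isomorphisms are the identity, so I do not expect trouble here.

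For faithful flatness, I first work modulo $p$. Here $X_{p=0}\to Y_{p=0}$ is faithfully flat by the explicit description above, and $g_{p=0}$ is faithfully flat by Example~\ref{ex:ReesChartZpNyg}. If $g_{p=0}=f_{p=0}\circ h$ with $h$ faithfully flat, then $f_{p=0}$ is faithfully flat: flatness descends along fpqc covers of the source, because flatness of $f$ can be checked after precomposing with a faithfully flat map. Surjectivity is clear. Then, as in the proof of Proposition~\ref{prop:FrobDescMap}(4), $Y$ and $W(k)^\cN$ are both $p$-torsion-free and $p$-complete, so the local flatness criterion upgrades mod-$p$ faithful flatness to faithful flatness of $f$.

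The main obstacle is the $p$-torsion-freeness and flat covering of $W(k)^\cN$ needed for the local criterion. I would reduce this to a flat cover such as $g$, which is a $p$-torsion-free chart, and check flatness of the pullback of $f$ to that cover, where the criterion applies to rings.
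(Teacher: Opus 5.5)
Your proposal is correct and follows the paper's argument. The identity $g_{p=0}=f_{p=0}\circ(X_{p=0}\to Y_{p=0})$ is read off from the divisorial components, and faithful flatness of $f$ then follows from the two-out-of-three property modulo $p$ together with the $\Z_p$-flatness of $Y$ and $W(k)^{\cN}$.

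The one point you pass over with ``everything is oriented'' is whether the two mod $p$ factorizations agree as data in the animated ring $W/p$, including the null-homotopies, rather than merely as elements. This is harmless here because $R_X$ is reduced (it is a completion of $k[x,t]$), so $W(R_X)$ is $p$-torsion-free and $W(R_X)/p$ is discrete.
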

\begin{proof}
The first part is clear from the construction in terms of divisorial components, while the second part follows from the first part by a suitable $2$-out-of-$3$ property for faithfully flat maps, and the $\Z_p$-flatness of the source and target of $f$.
\end{proof}

Secondly, we explain the relationship of $f$ to the map $\rho_\dagger$ from Proposition~\ref{prop:FrobDescMap}:

\begin{prop}
\label{prop:rhodaggerviaNygaard}
The restriction of $f$ to the open substack $j_{\HT}:W(k)^\Prism \hookrightarrow W(k)^\cN$ agrees with the map $\rho_\dagger$ from Proposition~\ref{prop:FrobDescMap}
\end{prop}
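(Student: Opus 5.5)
We compare the two maps $Y_{y\neq 0}\to W(k)^\Prism$ directly. The first is $f$ restricted to $Y_{y\neq0}$ and then identified with $W(k)^\Prism$ via the inverse of $j_{\HT}$. The second is $\rho_\dagger$, after identifying $Y_{y\neq 0}$ with the root stack $\Spf(\KisinS)[\sqrt[p]{V(u)}]\simeq \Spf(\KisinS)/\mu_p$ (Lemma~\ref{lem:LociTwistedRees} and Remark~\ref{rmk:qcohroot}). Concretely, on $Y_{y\neq 0}$ we may normalize $y=1$. This cuts the $\mathbf{G}_m$-torsor down to a $\mu_p$-torsor: the chart becomes $\Spf(\KisinS[t]/(t^p-u))/\mu_p$, and $\KisinS[t]/(t^p-u)\simeq \KisinS$ with $t\mapsto u$, so that $u\in\KisinS$ is sent to $u^p$. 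This identification carries the residual $\mathbf{G}_m$-action (which fixes $y=1$) to the scaling $\mu_p$-action on $\KisinS$. It therefore suffices to compute the $\mu_p$-equivariant $\KisinS$-point obtained by composing $\Spf(\KisinS)\to Y_{y\neq0}\xrightarrow{f}(W(k)^\cN)_{y\neq0}\simeq W(k)^\Prism$, and to match it with the distinguished element $[u]-zV(1)$ of Proposition~\ref{prop:FrobDescMap}(1).

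First, pulled back to this $\KisinS$-point, the divisorial data of $f$ from Example~\ref{ex:ptwistedReesChartBK} are as follows:
\begin{itemize}
\item the Cartier--Witt divisor is $w(E)$ evaluated at $u^p$, namely $[u^p]-pz'$, where $z'$ is the image of $w(z)$;
\item the Cartier divisor is $t=u$;
\item the factorization is $[y][t]^p=[u]^p$ modulo $p$.
\end{itemize}
Second, we apply the explicit inverse of $j_{\HT}$ from item~\eqref{reinterpret-two-open} of \S\ref{sss:ReinterpretClassicalDivisorial}, in the trivial-line-bundle form recorded in the accompanying remark. Writing $d=[t]^p+pz''$ with $t=u$, the preferred Frobenius root is $[t]+Vz''$. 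Since $p=FV(1)$, we have $pz''=-p\,w(z)=[u]^p\cdot 0 - V(F w(z))$-type rewriting; concretely $-pw(z)=-V(F(w(z)))$ in $W$. The root is thus $[u]-V(F w(z))=[u]-w(z)V(1)$, using $xV(1)=V(Fx)$. This is exactly the element $[u]-zV(1)$ of Proposition~\ref{prop:FrobDescMap}(1), interpreting $z$ via the $W(k)$-structure.

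Third, we check that the $\mu_p$-equivariance data agree. On the $f$ side, the $\mu_p$-structure comes from the $\mathbf{G}_m$-weight of $t$ and acts on the Cartier divisor by Teichmüller scaling. On the $\rho_\dagger$ side it was defined by the identity $[\zeta]([u]-zV(1))=[\zeta u]-zV(1)$. Both are induced by $[\zeta]\in W^*$, so they match.

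Finally, we check $W(k)$-linearity. By the last remark of item~\eqref{reinterpret-two-open}, the composite $W(k)^\Prism\xrightarrow{j_{\HT}}W(k)^\cN\to W(k)^\Prism$ is $\varphi$-linear. Combined with the substitution $u\mapsto u^p$, this should produce the $\varphi$ on $z$ implicit in $V(F z)$ versus $zV(1)$. The main obstacle is this bookkeeping of Frobenius twists: the $W(k)$-structure normalization of Footnote~\ref{fn:wklinearity}, the relative versus absolute Frobenius, and the sign/isomorphism $y$ in the construction of $j_{\HT}$. I would verify it by checking that the two maps agree after composing with $\varphi$, using Proposition~\ref{prop:FrobDescMap}(2), and, modulo $p$, using Proposition~\ref{prop:FrobDescMap}(3).
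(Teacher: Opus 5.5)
You have the right plan, and it is the unwinding the paper ends with: normalize $y=1$, read off the divisorial data of $f$, and apply the explicit inverse of $j_{\HT}$. However, you use the wrong identification $Y_{y\neq0}\simeq\Spf(\KisinS)/\mu_p$, and a false Witt-vector identity hides the mismatch.

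Your isomorphism $\KisinS[t]/(t^p-u)\simeq\KisinS$, $t\mapsto u$, is $W(k)$-linear: it comes from the \emph{relative} Frobenius $u\mapsto u^p$. The maps $f$ and $\rho_\dagger$ are $W(k)$-linear, but $j_{\HT}$ is $\varphi_{W(k)}$-linear, since its composite with the projection $W(k)^\cN\to W(k)^\Prism$ is $\varphi$. Hence $j_{\HT}^{-1}\circ f_{y\neq0}$ is $\varphi^{-1}$-semilinear. The identification must therefore be induced by the \emph{absolute} Frobenius $\varphi_{\KisinS}$. This is exactly the paper's $\overline{h}$, coming from the $\varphi_{\KisinS}$-linear map $h:Y_1\to Y$ with $y\mapsto y_1^p$ (cf.\ Remark~\ref{rmk:GeomDescCanModKisinS}, which replaces the relative Frobenius of Example~\ref{ex:VectReesKu} by the absolute one).

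With your choice:
\begin{itemize}
\item $z'=w(z)$, and the preferred root of $[u]^p-p\,w(z)$ is $[u]-V(w(z))=[u]-w(\varphi^{-1}(z))V(1)$.
\item The $W(k)$-structure of the resulting point is the standard one precomposed with $\varphi^{-1}$.
\item So your composite differs from $\rho_\dagger$ by a Frobenius twist on the coefficients, and is not $W(k)$-linear.
\end{itemize}
You reached $[u]-w(z)V(1)$ only via the step $-p\,w(z)=-V(F(w(z)))$, which is false. We have $V(Fx)=xV(1)$, and $V(1)\neq p$ in $W(R)$ unless $pR=0$; it is $FV(1)$ that equals $p$. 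Moreover, the root of $[t]^p+pz''$ is $[t]+Vz''$, with no $F$ at all.

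To repair this, use the absolute Frobenius.
\begin{itemize}
\item The coefficient becomes $z'=w(\varphi(z))=F(w(z))$, so the divisor is $[u]^p-p\,F(w(z))$.
\item Its root is $[u]-V(F(w(z)))=[u]-zV(1)$.
\item The $\varphi^{-1}$-twist from $j_{\HT}^{-1}$ cancels the $\varphi$ on coefficients, so the $W(k)$-structure is the standard one.
\end{itemize}
Together with your $\mu_p$-check, this gives the proposition. The paper gets the $W(k)$-linearity of $f_{y\neq0}\circ\overline h$ directly from its diagram, which yields $\varphi\circ f_{y\neq0}\circ\overline h\simeq\rho_{\mathrm{std}}\circ q$, before doing the unwinding.

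Your fallback of checking agreement after composing with $\varphi$ and modulo $p$ cannot settle the question. The maps $\rho_\dagger\circ\canonicalmapnotation$ and $\rho_{\mathrm{std}}$ themselves agree after composing with $\varphi$ and modulo $p$, by parts (2) and (3) of Proposition~\ref{prop:FrobDescMap}. Yet they are different maps, since they pull the Hodge--Tate divisor back to $(u)$ and $(E)$ respectively. Moreover, for your identification the $\varphi$-check fails outright:
\begin{itemize}
\item $\varphi\circ(\text{your composite})$ gives the divisor $[u^p]-p\,w(z)$ with the standard $W(k)$-structure;
\item $\rho_{\mathrm{std}}\circ q$ gives $[u^p]-p\,w(\varphi(z))$ with the $W(k)$-structure twisted by $\varphi$.
\end{itemize}
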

\begin{proof}
We shall construct a stack $Y_1$ with a $\mu_p$-action and a map $Y_1/\mu_p \to Y$ inducing an isomorphism over $j_{\HT}:W(k)^\Prism \hookrightarrow W(k)^\cN$; this reduces the question from $Y$ to $Y_1/\mu_p$, where it is more tractable.

Let
\[ R_1 = \mathrm{Rees}(u^\bullet \KisinS)^{\wedge} \coloneq  \KisinS[y_1,t]^{\wedge}/(y_1t-u),\]
where $\deg(y_1)=-1$ and the completion is $(p,u)$-adic. The Frobenius on $\KisinS$ then induces a morphism 
\[ R \coloneq  \KisinS[y,t]^{\wedge}/(yt^p-u) \to R_1 \coloneq  \KisinS[y_1,t]^{\wedge}/(y_1t-u) \]
of graded $\Z_p[t]$-algebras linear over $\varphi:\KisinS \to \KisinS$ and sending $y$ to $y_1^p$. Passing to the corresponding quotient stacks yields a $\varphi_{\KisinS}$-linear faithfully flat map
\[ h:Y_1 \to Y\]
of stacks over $\mathbf{A}^1/\mathbf{G}_m$. Moreover, the natural $\mu_p$-action on $\varphi:\Spf(\KisinS) \to \Spf(\KisinS)$ extends to a $\mu_p$-action on $Y_1$ over $Y$, where $u,y_1 \in R_1$ have weight $1$. The map $h_{t \neq 0}$ identifies with $\varphi:\Spf(\KisinS) \to \Spf(\KisinS)$. On the other hand, since $(Y_1)_{y_1 \neq 0} \simeq \Spf(\KisinS)$ via the projection map $Y_1 \to \Spf(\KisinS)$, the map $h_{y \neq 0}$ induces a map $\overline{h}:\Spf(\KisinS)/\mu_p \to Y_{y \neq 0}$ that one checks to be an isomorphism. These constructions are summarized in the following commutative diagram:
\[
\xymatrix{
\Spf(\KisinS)/\mu_p \simeq (Y_1)_{y_1  \neq 0} / \mu_p\ar[r]^-{\overline{h},\simeq} \ar@{^{(}->}[d] \ar@/_3pc/@{=}[dd]  & Y_{y \neq 0} \ar[r]^-{f_{y \neq 0}} \ar@{^{(}->}[d] & (W(k)^\cN)_{y \neq 0} \simeq W(k)^\Prism  \ar@{^{(}->}^-{j_{\HT}} [d]  \ar@/^3pc/[dd]^{\varphi}  \\
Y_1/\mu_p\ar[r] \ar[d] & Y \ar[r]^f \ar[d] & W(k)^\cN \ar[d]  \\
\Spf(\KisinS)/\mu_p \ar[r]^{q} & \Spf(\KisinS) \ar[r]^{\rho_{\text{std}}} & W(k)^\Prism
}
\]
where the squares relating the first two rows are Cartesian and have open immersions as  vertical maps, while the vertical maps relating the second and third rows are the structure maps. Our task is then to identify the map
\[ \Spf(\KisinS)/\mu_p \xrightarrow{\overline{h}} Y_{y \neq 0} \xrightarrow{f_{y \neq 0}} W(k)^\Prism\]
with $\rho_\dagger$.  We first observe that the above commutative diagram gives a natural identification $\varphi \circ f_{y \neq 0} \circ \overline{h} \simeq \rho_{\text{std}} \circ q$ (as in Proposition~\ref{prop:FrobDescMap} (2)): both sides correspond to paths from the top left to the bottom right in the diagram above. This compatibility also shows that $f_{y \neq 0} \circ \overline{h}$ is $W(k)$-linear. It is now a pleasant exercise in unwinding the isomorphism $W(k)^\Prism \simeq (W(k)^\cN)_{y^\univ \neq 0}$ (see item (\ref{reinterpret-two-open}) at the start of \S \ref{sss:ReinterpretClassicalDivisorial}) to show that $f_{y \neq 0} \circ \overline{h}$ agrees with $\rho_{\dagger}$.
\end{proof}

\begin{remark}
\label{rmk:GeomDescCanModKisinS}
The proof above shows that the Frobenius on $\Spf(\KisinS)$ induces an isomorphism
\[ \Spf(\KisinS)/\mu_p \simeq Y_{y \neq 0}.\]
We can therefore rewrite the conclusion of Proposition~\ref{prop:CohSheafptwistedRees} (in the current special case) as follows:  the category of reflexive coherent sheaves on $Y$ identifies with the category of triples $(\gM,\gN,\psi)$, where $\gM$ is a vector bundle on $\Spf(\KisinS)$, $\gN$ is a $\mu_p$-equivariant vector bundle on $\Spf(\KisinS)$, and $\psi:\varphi^* \gM[1/u] \simeq \gN[1/u]$ is a $\mu_p$-equivariant isomorphism. (In fact, we are simply imitating what we said in Remark~\ref{rmk:GeomIntCanMod},
replacing $K$ with $W$ and the relative Frobenius $\varphi_{/K}$ with the absolute Frobenius $\varphi$ on $\KisinS$.)
\end{remark}

\subsection{The syntomic stack and Galois representations}
\label{ss:synstack}
The syntomic stack $W(k)^{\Syn}$ from \cite{drinfeldprismatization,BBFgaugenotes} is obtained by gluing the two disjoint opens $j_{\HT},j_{\dR}: W(k)^\Prism \to W(k)^\cN$ in $W(k)^\cN$ to each other via the identity. Coherent sheaves on $W(k)^{\Syn}$ are closely related to coherent sheaves on $W(k)^\Prism$ equipped with a Frobenius structure, and the latter are closely related to $\Gal_{K}$-representations. More precisely, putting together some recent results, we obtain the following geometric perspective on crystalline Galois representations that will be critical in this paper:

\begin{theorem}[Crystalline Galois representations via prismatic $F$-crystals]
\label{thm:CrysGalPrismCrys}
    The following categories are naturally equivalent:
    \begin{enumerate}
        \item The category of reflexive coherent sheaves on $W(k)^{\Syn}$.

        \item The category $\mathrm{Vect}^{\varphi}(W(k)^\Prism)$ of pairs $(\mathcal{E},\varphi_{\mathcal{E}})$, where $\mathcal{E}$ is a vector bundle on $W(k)^\Prism$, and $\varphi_{\mathcal{E}}$ is an  $I_{\HT}$-isogeny between $\varphi^* \mathcal{E}$ and $\mathcal{E}$, i.e., an identification $\{I_{\HT}^{\otimes -n} \varphi^* \mathcal{E}\}_{n \geq 0} \simeq \{I_{\HT}^{\otimes -n} \otimes \mathcal{E}\}_{n \geq 0}$ of ind-objects.

        \item The category $\mathrm{Vect}^{\varphi}(W(k)_\Prism)$ of prismatic $F$-crystals in the sense of \cite{bhatt2021prismatic}.

        \item The category of crystalline representations of $\Gal_{K}$ on finite free $\mathbf{Z}_p$-modules.
    \end{enumerate}
\end{theorem}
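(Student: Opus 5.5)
This theorem assembles known results rather than proving something new, so the plan is to establish the equivalences pairwise and cite the literature for the substantial inputs. I would show (1)$\Leftrightarrow$(2), (2)$\Leftrightarrow$(3) and (3)$\Leftrightarrow$(4) in that order.

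\emph{Step (3)$\Leftrightarrow$(4).} This is the main theorem of \cite{bhatt2021prismatic}: the étale realization gives an equivalence from prismatic $F$-crystals over $W(k)$ to $\mathbf{Z}_p$-lattices in crystalline $\Gal_K$-representations. I would cite it directly and note its compatibility with the Breuil--Kisin realization at $\rho_{\mathrm{std}}$, which recovers the functor of \cite{KisinCrys}.

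\emph{Step (2)$\Leftrightarrow$(3).} This is the standard comparison between crystals on the absolute prismatic site and vector bundles on the prismatization; see \cite{bhatt2022prismatization} and \cite{bhatt2022absolute}. Vector bundles on $W(k)^\Prism$ are computed by flat descent along the covers $\rho_{(B,(d))}$ of Lemma~\ref{lem:flatcoverWkprism}, which gives crystals on transversal prisms. Frobenius structures match because the $\rho_{(B,(d))}$ are $\delta$-maps and $\rho_{(B,(d))}^*I_{\HT}=(d)$. An $I_{\HT}$-isogeny pulls back to an isomorphism after inverting $d$, which is exactly the Frobenius datum of a prismatic $F$-crystal.

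\emph{Step (1)$\Leftrightarrow$(2).} I expect this to be the main obstacle. The functor is restriction along $j_{\dR}$. Gluing $j_{\HT}$ to $j_{\dR}$ turns restriction of a sheaf $\mathcal{F}$ on $W(k)^\cN$ into a bundle $\mathcal{E}=j_{\dR}^*\mathcal{F}$ together with an identification $j_{\HT}^*\mathcal{F}\simeq\mathcal{E}$. Since $j_{\HT}$ composed with the projection $W(k)^\cN\to W(k)^\Prism$ is $\varphi$, comparing both restrictions inside $\mathcal{F}$ over $W(k)^\cN$ produces an $I_{\HT}$-isogeny $\varphi^*\mathcal{E}\dashrightarrow\mathcal{E}$.

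For the inverse, the idea is that reflexive sheaves are determined by their restriction to the complement of a codimension-two locus. The locus $W(k)^\cN\smallsetminus(j_{\dR}\cup j_{\HT})$, that is $\{t=y=0\}$, has codimension two. So a pair $(\mathcal{E},\varphi_\mathcal{E})$ defines a bundle on the open union of the two copies, and one $*$-extends it. To check that this is an equivalence I would work in the chart $f:Y\to W(k)^\cN$, which is faithfully flat by Proposition~\ref{prop:relatefandg}. There Proposition~\ref{prop:CohSheafptwistedRees}(4) (with $B=\KisinS$, $J=(u)$; $\KisinS/u=W(k)$ is a DVR) identifies reflexive sheaves with their restriction to $Y^\circ$. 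Proposition~\ref{prop:rhodaggerviaNygaard} and Remark~\ref{rmk:GeomDescCanModKisinS} then identify the data on the two opens with $\rho_{\mathrm{std}}^*\mathcal{E}$, $\rho_\dagger^*\mathcal{E}$ and the isogeny. I would finally descend along the Čech nerve of $f$, checking that reflexivity and the extension property are flat-local, as in Lemma~\ref{lem:flatcov}.

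Alternatively, (1)$\Leftrightarrow$(4) can simply be cited from \cite{BBFgaugenotes} (building on work of Guo--Li and Bhatt--Lurie), where reflexive $F$-gauges over $W(k)$ are shown to correspond to crystalline lattices.
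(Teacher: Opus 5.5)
Your treatment of (3)$\Leftrightarrow$(4) and (2)$\Leftrightarrow$(3) matches the paper, which simply cites the main theorem of \cite{bhatt2021prismatic} and \cite[Proposition 8.15]{bhatt2022prismatization}. For (1)$\Leftrightarrow$(2) the paper likewise only cites \cite[Theorem 6.6.13]{BBFgaugenotes}, so your fallback citation is essentially the paper's proof. The direct argument you sketch for (1)$\Leftrightarrow$(2), however, has a gap.

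\emph{The global Hartogs step is wrong as stated.} In the $p$-adic formal stack $W(k)^\cN$ the opens $j_{\dR}(W(k)^\Prism)=\{t\neq 0\}$ and $j_{\HT}(W(k)^\Prism)=\{y\neq 0\}$ are \emph{disjoint}, as recalled at the start of \S\ref{ss:synstack}: $t$ and $y$ cannot both be invertible, since $d_0$ is nilpotent. So a bundle on their union that is compatible with the syntomic gluing is just $\mathcal{E}$. Your proposed inverse would therefore not depend on $\varphi_{\mathcal{E}}$ at all. Moreover, $*$-extension from formal opens, whose sections are completed localizations, is not a reflexive extension across a codimension-two locus. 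For example, in the chart $Y$ it gives $\KisinS\times\KisinS^{\mu_p}$ in degree $0$ rather than $\KisinS$. The overlap on which $\varphi_{\mathcal{E}}$ acts as a gluing, namely $Y_{t\neq 0}\cap Y_{y\neq 0}=\Spec(\KisinS[1/u])$, exists only in the algebraized Rees stack of a chart, and it disappears after $(p,u)$-completion.

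\emph{The chart step is fine, but the descent step is not.} Proposition~\ref{prop:CohSheafptwistedRees}(4) and Remark~\ref{rmk:GeomDescCanModKisinS} do produce a reflexive sheaf $\mathcal{G}$ on $Y$ from $(\rho_{\mathrm{std}}^*\mathcal{E},\rho_\dagger^*\mathcal{E},\psi)$. To descend $\mathcal{G}$ along $f$, though, you must construct an isomorphism $\mathrm{pr}_1^*\mathcal{G}\simeq\mathrm{pr}_2^*\mathcal{G}$ on $Y\times_{W(k)^\cN}Y$ satisfying the cocycle condition. Flat-locality of reflexivity or of $*$-extension does not provide one, for three reasons:
\begin{enumerate}
\item the only canonical identifications live over the preimages of the two disjoint formal opens;
\item the $\psi$-gluing took place on an open of the algebraization of $Y$ that is not pulled back from $W(k)^\cN$;
\item the \v{C}ech terms have no evident Noetherian algebraization in which to rerun the codimension-two argument.
\end{enumerate}
The same lack of overlap means your forward direction, ``comparing both restrictions inside $\mathcal{F}$'', must be phrased via ind-isogenies or via a chart such as $g$. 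Producing the descent datum, i.e.\ showing that the chart construction is intrinsic to $W(k)^\cN$, is exactly what the cited \cite[Theorem 6.6.13]{BBFgaugenotes} supplies. So either cite that theorem, as the paper does, or give a genuine descent argument.
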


\begin{proof}
    The equivalence of (3) and (4) is the main result of \cite{bhatt2021prismatic}, while the equivalence of (2) and (3) follows from the identification of crystals of vector bundles on prismatic site $W(k)_\Prism$ with vector bundles on the stack $W(k)^\Prism$, see \cite[Proposition 8.15]{bhatt2022prismatization}. The equivalence of (1) and (2) is explained in \cite[Theorem 6.6.13]{BBFgaugenotes}.
\end{proof}

\begin{remark} We remark that the equivalence between (2) and (3) in Theorem~\ref{thm:CrysGalPrismCrys} is realized by the
construction of \ref{para:ptsofprism}.
In the sequel, we shall often abuse notation and refer to objects of the category appearing in either (2) or (3) as prismatic $F$-crystals.
\end{remark}

\begin{remark}
The functors in the directions $(1) \Rightarrow (2) \Leftrightarrow (3) \Rightarrow (4)$ are essentially all obtained by localization and are exact for the evident exact structures on each of the categories. On the other hand, the functors in the direction $(4) \Rightarrow (3)$ and $(2) \Rightarrow (1)$ involve $*$-extensions of vector bundles, and are not exact.
\end{remark}

\begin{example}[Breuil--Kisin modules from crystalline Galois representations]\label{ex:BK-from-crystalline}
Fix a crystalline Galois representation $\rho:\Gal_{K} \to \mathrm{GL}_n(\mathbf{Z}_p)$. By Theorem~\ref{thm:CrysGalPrismCrys}, this corresponds to a rank $n$ prismatic $F$-crystal $(\mathcal{E},\varphi_{\mathcal{E}})$.  Consider the map $\rho_{\KisinS,(E)}:\mathrm{Spf}(\KisinS) \to W(k)^\Prism$ coming from the Breuil--Kisin prism. As this map is compatible with Frobenius, pullback yields a Breuil--Kisin module $(\gM,\varphi_\gM) \coloneq  \rho_{\KisinS,(E)}^*(\mathcal{E},\varphi_{\mathcal{E}})$ of rank $n$. In fact, this is simply the classical Breuil--Kisin module attached to $\rho$ in \cite{KisinCrys}.
\end{example}

Let us end this subsection by giving a proof of the Frobenius-descent property promised in \S \ref{ss:canmodthms}.

\begin{proof}[Proof of Proposition~\ref{cor:FrobdescentfiltBKmod}]
We shall use the Rees stack charts $g:X \to W(k)^\cN$ and $f:Y \to W(k)^\cN$ constructed in \S \ref{ss:chartsNygaard}. Let $(\mathcal{E},\varphi_{\mathcal{E}})$ denote the prismatic $F$-crystal in vector bundles lifting $(\gM,\varphi_\gM)$ via Theorem~\ref{thm:CrysGalPrismCrys}. This $F$-crystal determines two coherent sheaves on $(W(k)^\cN)_{p=0}$: either first take a reflexive hull (see \cite[\S 6.6]{BBFgaugenotes}) and then reduce modulo $p$, or do the operations in the reverse order. Pulling back the preceding two coherent sheaves along the map  $g_{p=0}:X_{p=0} \to (W(k)^\cN)_{p=0}$, and translating back from quasi-coherent sheaves on Rees stacks to filtered modules, then yields the two filtered objects considered in the Proposition. But $g_{p=0}$ factors as
\[ g_{p=0}:X_{p=0} \to Y_{p=0} \xrightarrow{f_{p=0}} (W(k)^\cN)_{p=0} \]
by Proposition~\ref{prop:relatefandg}. As $X_{p=0} \to Y_{p=0}$ is exactly the map on Rees stacks attached to the Frobenius $(u)^{\lceil \bullet/p \rceil} \KisinS/p \to (u)^\bullet \KisinS/p$ on filtered rings, one obtains the desired descent.
\end{proof}

\subsection{Crystalline Breuil--Kisin modules are specializable}\label{ss:PfCanModThm}

The goal of this subsection is to prove our main result (Theorem~\ref{thm:crysBKspecial}) on specializability of crystalline Breuil--Kisin modules $(\gM,\varphi_{\gM})$. We shall use the ingredients discussed in the previous subsections. Let us sketch the key ideas informally right away. First, using the map $f$ from Example~\ref{ex:ptwistedReesChartBK}, we construct an alternate candidate $(\gN,\psi)$ for the canonical modification $(\gN^{\can},\psi^{\can})$ from \S \ref{sss:canmupmod}; the pair $(\gN,\psi)$ visibly satisfies the specializability constraint that $\overline{\gN}=\overline{\gM}$ as submodules of $\varphi^* \overline{\gM}[1/u]$. To finish, we need to identify the modifications $(\gN^{\can},\psi^{\can})$ and $(\gN,\psi)$ of $\varphi^* \gM$. By the characterization in Proposition~\ref{prop:canmodBK}, this amounts to understanding the behaviour of $(\gN,\psi)$ after base change along $W(k) \to K$, i.e., after allowing some denominators. The latter is accomplished by a geometric argument that, roughly, requires understanding the ``crystallization'' of the map $f$ used to construct $(\gN,\psi)$.  To carry out the last step, we shall need the following abstract lemma:

\begin{lemma}
\label{lem:twistedPDenvelope}
Let $R$ be a $p$-nilpotent animated commutative ring and $y \in R$ be an element. Then the pullback of
\[
\Spec(R) \xrightarrow{[y]} W/p \xleftarrow{0} \operatorname{Spf}(\mathbb{Z}_p)
\]
is $\Spec(S)$, where $S$ is obtained by freely adjoining $y/p$ and its divided powers to $R$.
\end{lemma}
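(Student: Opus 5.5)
The plan is to reduce to the universal case and then identify the animated fibre of the zero section of $W/p$ via the quotient presentation $W/p = \mathrm{cofib}(W \xrightarrow{p} W)$.

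First, since both sides are compatible with base change in $R$ (the fibre product, and the free adjunction of $y/p$ with divided powers, i.e.\ $S = R\otimes_{\Z_p[Y]} D$ for a universal $D$), it suffices to treat the universal case $R = \Z_p[Y]$, or rather its $p$-completion, viewed as a presheaf on $p$-nilpotent rings. So the claim becomes the following: the fibre of $[\cdot]: \mathbf{A}^1 \to W/p$ over $0$ is $\Spf$ of the $p$-completed divided power algebra $\Z_p\langle Y/p\rangle$, that is, the $p$-completion of $\Z_p[Y, \{Y^n/(p^n n!)\}]$ with $Y/p$ adjoined freely with divided powers (derived, in the animated sense, so $p$ is not required to be invertible or a nonzerodivisor).

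Next I would compute the fibre directly. A point of $\mathbf{A}^1 \times_{W/p} \Spf\Z_p$ over an $R$ is an element $y\in R$ together with an element $w \in W(R)$ satisfying $pw = [y]$, because $W/p$ is the quotient stack $W/pW$ in the Koszul sense. Using $p = VF$ on $W$ (valid in characteristic $p$, and more generally $p = V(1)\cdot$ up to the relation $FV=p$), the Witt components of $pw$ can be written explicitly. Rather than grind through them, I would use the standard fact $\ker(F: W \to W) = W[F] \simeq \mathbf{G}_a^\sharp$ and the exact sequence $0 \to \mathbf{G}_a^\sharp \to W \xrightarrow{F} F_*W \to 0$ already invoked in the proof of Proposition~\ref{prop:ZpNygDesc}. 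From it, the equation $[y] = pw$ can be analysed through its image in $W/VW = \mathbf{G}_a$ and $F$: the zeroth component gives $y = p w_0$ (up to the $V$-part), and the remaining condition is a $\mathbf{G}_a^\sharp$-torsor condition. This identifies the fibre with $\mathbf{A}^1$ (coordinate $w_0 = $ ``$y/p$'') base-changed along $\mathbf{G}_a^\sharp$, that is, the PD-hull of $y/p$.

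A cleaner route, which I would try first, is to use the fibre square from item (\ref{reinterpret-two-open}) of \S\ref{sss:ReinterpretClassicalDivisorial}: $W/VW \to F_*W/p$ is the pullback of $F: W\to F_*W$. This should identify $\mathbf{G}_a \to W/p$, $y\mapsto[y]$, with a Frobenius-twisted version of the map $W\to W/p$. The fibre of $W \to W/p$ over $0$ is $pW \simeq W$ (using $p$-torsion freeness of $W$ as a flat formal scheme). Intersecting this with the Teichmüller locus then amounts to asking when $[y]$ lies in $pW$, which is Joyal's description $W \simeq$ the cofree $\delta$-ring. The locus $\{y : [y]\in pW\}$ (derived) should be $\Spf\Z_p\langle y/p\rangle$, since $[y]/p$ has $\delta$-iterates $\delta^n$ forcing $y^{p^n}/p^{\,\cdot}$-divisibility, and these are exactly the divided powers $\gamma_{p^n}(y/p)$ up to units.

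I expect the main obstacle to be this last identification done \emph{derivedly} and integrally, namely checking that the $\delta$-ring conditions on $[y]/p$ produce precisely the divided power envelope, with no extra torsion. I would first verify it after base change to $\Z_p$-flat test rings, where it reduces to the classical statement that the $\delta$-envelope of $y/p$ with $[y]$ Teichmüller is $\Z_p\langle y/p\rangle$. I would then use flatness of both sides over $\Z_p[Y]^\wedge$, since $\Z_p\langle Y/p\rangle$ is $p$-torsion free, to conclude the animated statement by base change.
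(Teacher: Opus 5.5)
You have a genuine gap, and it is in the step you flagged yourself.

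\textbf{What agrees with the paper.} Your reduction to the universal case matches the paper. So does your identification of the fibre with $\Spf$ of $A\{y/p\}$. Here $A=\mathbf{Z}_p[y]^\wedge$ with $\delta(y)=0$, and $y/p$ is freely adjoined as a $p$-complete $\delta$-ring, using that $\mathcal{O}(W)$ is the free $\delta$-ring $\mathbf{Z}_p\{X\}$ and that $p\colon W\to W$ corresponds to $X\mapsto pX'$.

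\textbf{What your computation establishes.} Your $\delta$-iterate heuristic is correct. For $x=y/p$ one has $\varphi(x)=p^{p-1}x^p$. Induction gives $\delta^n(x)=c_nx^{p^n}/p^{e_n}$, with $e_n=(p^n-1)/(p-1)=v_p((p^n)!)$, $c_0=1$ and $c_{n+1}=c_np^{p^{n+1}-1}-c_n^p\in\mathbf{Z}_p^\times$. Hence $\delta^n(x)\in\mathbf{Z}_p^\times\cdot\gamma_{p^n}(x)$, and the $\mathbf{Z}_p$-algebra generated by the $\delta^n(x)$ is $\mathbf{Z}_p\langle x\rangle$. This is a pleasant substitute for the paper's Legendre comparison. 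However, it only describes $A\{y/p\}$ after inverting $p$.

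\textbf{The gap.} You must show that $A\{y/p\}$ is discrete and $p$-torsion-free; here $A\{y/p\}$ means the derived pushout $A\widehat{\otimes}^{L}_{\mathbf{Z}_p\{X\}}\mathbf{Z}_p\{X'\}$. Your argument for this fails, for two reasons.
\begin{enumerate}
\item Test rings are $p$-nilpotent, so there are no nonzero $\mathbf{Z}_p$-flat ones. Even if you mean $p$-torsion-free $p$-complete rings, agreement of points on them cannot detect $p$-torsion or higher homotopy: compare $\Spf\mathbf{Z}_p$ with $\Spf(\mathbf{Z}_p\oplus\mathbf{F}_p)$.
\item $\mathbf{Z}_p\langle Y/p\rangle^\wedge$ is not flat over $\mathbf{Z}_p[Y]^\wedge$, since $Y=p\cdot(Y/p)$ acts by $0$ modulo $p$.
\end{enumerate}
No flatness is needed for the base change to arbitrary animated $R$ anyway; that step is formal once the universal case holds derivedly.

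\textbf{A direct fix.} Since $A=\mathbf{Z}_p\{X\}/(\delta X,\delta^2X,\dots)$, the pushout is the Koszul quotient of $\mathbf{Z}_p\{X'\}=\mathbf{Z}_p[X'_0,X'_1,\dots]$, with $X'_k=\delta^k(X')$, by the elements $f_n=\delta^n(pX')$ for $n\ge1$. Using $\delta(pa)=(1-p^{p-1})a^p+p\delta(a)$, one checks inductively that $f_n=pX'_n+r_n$. Here $r_n\in\mathbf{Z}_p[X'_0,\dots,X'_{n-1}]$ and $r_n\equiv (X'_{n-1})^p+(\text{terms of degree}\le1\text{ in }X'_{n-1})\pmod p$. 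So the $f_n$ form a regular sequence both modulo $p$ and after inverting $p$. The Koszul complex is therefore discrete and $p$-torsion-free.

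\textbf{The paper's fix.} Over $A'=\mathbf{Z}_p[y^{1/p}]^\wedge$ one has $y=\varphi(y^{1/p})$. So \cite[Cor.~2.39]{bhatt2019prisms} identifies $A'\{y/p\}$ with the $p$-completed PD polynomial ring on $y^{1/p}$, which is $p$-torsion-free. Then $A\{y/p\}$ is its $\mu_p$-degree-$0$ part.

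\textbf{On the fibre square.} The square you quoted from \S\ref{sss:ReinterpretClassicalDivisorial} has bottom arrow $z\mapsto\overline{[z]^p}$, not $z\mapsto\overline{[z]}$. Used correctly, together with flatness of $F$, it identifies the fibre of $z\mapsto\overline{[z^p]}$ over $0$ with $W[F]\simeq\mathbf{G}_a^\sharp=\Spf\mathbf{Z}_p\langle z\rangle^\wedge$. Descending along $z\mapsto z^p$ by taking $\mu_p$-invariants then yields the lemma. This is essentially the paper's argument, with the integrality built in.
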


\begin{proof}
Write $A = \mathbb{Z}_p[y]^{\wedge}$ for the free $p$-complete $\delta$-ring on a rank $1$ element $y$. The map $\Spec(R) \xrightarrow{[y]} W/p$ classifying $[y] \in W(R)/p$ factors as
\[
\Spec(R) \longrightarrow \Spec(A) \longrightarrow W \longrightarrow W/p,
\]
where the first map is the one determined by sending $y \in A$ to $y \in R$, the second map is a $\delta$-map corresponding to the element $y \in A$ (where we recall that the coordinate ring of $W$ is the free $p$-complete $\delta$-ring on one generator), and the last map is the obvious one.  Pullback along $\operatorname{Spf}(\mathbb{Z}_p) \xrightarrow{0} W/p$ gives a commutative diagram
\[
\xymatrix{
\Spec(S) \ar[r] \ar[d] & \Spf(B) \ar[r] \ar[d] & W \ar[r] \ar[d]^{p} & \operatorname{Spf}(\mathbb{Z}_p) \ar[d]^{0} \\
\Spec(R) \ar[r] & \Spf(A) \ar[r] & W \ar[r] & W/p
}
\]
where the third vertical map is multiplication by $p$, and all squares are pullbacks (so $B$ and $S$ are defined this way).  It suffices to show that $B$ is obtained from $A$ by freely adjoining $y/p$ and its divided powers, i.e., $B$ is the $p$-complete PD-polynomial ring on $y/p$. But the middle vertical square is a fibre square in $p$-complete $\delta$-schemes, 
so (from the $\delta$-freeness of the coordinate ring of $W$) we learn that $B = A\{y/p\}$, i.e., $B$ is obtained by freely adjoining $y/p$ in $p$-complete $\delta$-rings to $A$. It remains to show that this is the $p$-completion of the subring of $A[1/p]$ generated by $y/p$ and its divided powers.

Consider the $\mu_p$-cover $A \to A' = \mathbb{Z}_p[y^{1/p}]^{\wedge}$, where $A'$ is endowed with the unique $\delta$-structure where $y^{1/p}$ has rank $1$. While this map is not a $\mu_p$-torsor, we still have $A = (A')_{\deg=0}$, where degrees are taken with respect to the $\mathbf{Z}/p$-grading induced by the $\mu_p$-action. Then \cite[Corollary 2.39]{bhatt2019prisms} shows that $A'\{y/p\}$ is exactly the $p$-complete PD-polynomial ring on $y^{1/p}$, so $A\{y/p\}$ is the $p$-complete graded subring spanned by the terms of degree divisible by $p$, which identifies with the $p$-complete PD-polynomial ring on $y/p$ by the Legendre formula for valuation of factorials, as wanted.
\end{proof}

We can now give the proof of Theorem~\ref{thm:crysBKspecial}.
\begin{proof}[Proof of Theorem~\ref{thm:crysBKspecial}]
Fix a crystalline Breuil--Kisin module $(\gM,\varphi_\gM)$; this necessarily arises from a unique reflexive coherent sheaf $\mathcal{E}$ on $W(k)^{\Syn}$ (Theorem~\ref{thm:CrysGalPrismCrys}). We shall give an alternate description of $(\gN^{\can},\psi^{\can})$ in terms of $\mathcal{E}$; in this alternate description, the specializability will become obvious.

Write $\mathcal{E}^\cN$ and $\mathcal{E}^\Prism$ for the restriction of $\mathcal{E}$ to $W(k)^\cN$ and $W(k)^\Prism$ respectively.

First, let us recall how to reconstruct $(\gM,\varphi_\gM)$ from $\mathcal{E}$. We have the faithfully flat map $g:X \to W(k)^\cN$ from Example~\ref{ex:ReesChartZpNyg}. Via pullback, $g^* \mathcal{E}^\cN$ gives a reflexive coherent sheaf on $X$. Such sheaves identify with triples $(\gM_1,\gM_2,\tau)$, where $\gM_i \in \mathrm{Vect}(\KisinS)$ and $\tau:\gM_1[1/E] \simeq \gM_2[1/E]$ is an isomorphism (Proposition~\ref{prop:CohSheafRees}). The descent of the sheaf $\mathcal{E}^\cN$ on $W(k)^\cN$ to the sheaf $\mathcal{E}$ on $W(k)^{\Syn}$ then additionally supplies an identification $\varphi^* \gM_2 \simeq \gM_1$. Taking $\gM=\gM_2$ thus yields the Breuil--Kisin module attached to $\mathcal{E}$. 

Next, consider the faithfully flat map $f:Y \to W(k)^\cN$ from Example~\ref{ex:ptwistedReesChartBK}. Again, pullback gives a reflexive coherent sheaf $f^* \mathcal{E}^\cN$ on $Y$. Such sheaves can be identified with triples $(\gM,\gN,\psi)$, where $\gM$ is a finite projective $\KisinS$-module, $\gN$ is a finite projective $\mu_p$-equivariant $\KisinS$-module, and $\psi:\varphi^* \gM[1/u] \simeq \gN[1/u]$ is a $\mu_p$-equivariant isomorphism (see Proposition~\ref{prop:CohSheafptwistedRees} and Remark~\ref{rmk:GeomDescCanModKisinS}). In particular, we obtain a candidate pair $(\gN,\psi)$. We then claim:

\begin{itemize}
    \item[$(\ast)$] There is a natural identification $(\gN,\psi) \simeq (\gN^{\can},\psi^{\can})$.
\end{itemize}

Note that $(\ast)$ implies the theorem: to show that the modifications $\overline{\psi}$ and $\overline{\varphi_\gM}$ of $\overline{\varphi^* \gM}$ agree, one combines the compatibility in the first sentence of Proposition~\ref{prop:relatefandg} with the observation that the map $X_{p=0} \to Y_{p=0}$ induces the tautological map $\Spf(\KisinS/p) \to \Spf(\KisinS/p)/\mu_p$ over the open $Y_{p=0,y\neq0} \subset Y_{p=0}$.

It remains to prove $(\ast)$. By the characterization given in Proposition~\ref{prop:canmodBK}, it suffices to construct such an identification after base change to $K$, i.e., to match the modifications of $\varphi^* \gM \otimes_\KisinS K\llbracket u \rrbracket$ determined by $\psi$ and $\psi^{\can}$. Now Remark~\ref{rmk:GeomIntCanMod} describes the target in geometric terms, while the former was already described in geometric terms. We will finish the proof by matching these geometric descriptions.

\begin{enumerate}[wide]
    \item {\em Description of the vector bundle encoding $\psi^{\can}$ via $\mathcal{E}^\cN$:} We reformulate Remark~\ref{rmk:GeomIntCanMod}, giving the description of the modification of $\varphi^* \gM \otimes_{\KisinS} K\llbracket u \rrbracket$ arising from $\psi^{\can}$, in terms of the gauge $\mathcal{E}^\cN$.  Recall from Remark~\ref{rmk:GeomIntCanMod} that $R_K$ is the graded $K\llbracket u\rrbracket$-algebra $K\llbracket u \rrbracket[y,t]/(yt^p-u)$. Consider the diagram of natural maps
\[ \xymatrix{ \Spec(R_K)/\mathbf{G}_m \ar[r]^-{r_K} & \mathbf{A}^1_{\mathrm{Spec}(W(k))}/\mathbf{G}_m \coloneq  \mathrm{Spec}(W(k)[t])/\mathbf{G}_m & \\
 W(k)^\cN & \mathbf{A}^1_{\mathrm{Spf}(W(k))}/\mathbf{G}_m \coloneq  \mathrm{Spf}(W(k)[t]^{\wedge})/\mathbf{G}_m \ar[u] \ar[l]^-{i_{\dR}} } \]
 where the completion on the bottom right is $p$-adic, the right vertical map is the tautological map, and $i_{\dR}$ is the filtered de Rham map. Pullback along the right vertical map  induces an equivalence on $\text{Perf}(-)$; write $(-)^{\alg}$ for its inverse.  On the other hand, $(i_{\dR})^* \mathcal{E}^\cN$ is exactly the Rees module of the Hodge filtration on (the integral form of) $D$. Consequently, the vector bundle on  $\Spec(R_K)/\mathbf{G}_m$ attached to the modification $\psi^{\can}$ is simply $r_K^* \left( (i_{\dR})^* \mathcal{E}^\cN\right)^{\alg}$.

\item {\em Description of the vector bundle encoding $\psi$ via $\mathcal{E}^\cN$:} Recall that $Y=\mathrm{Spf}(R)/\mathbf{G}_m$ where $R=\KisinS[y,t]^{\wedge}/(yt^p-u)$ with the completion and the topology being $(p,u)$-adic. Write $R^{\alg} = \KisinS[y,t]/(yt^p-u)$ for the obvious algebraization, so there is a natural map $R^{\alg} \to R_K$ of graded rings. This gives a diagram of stacks
\[ Y \coloneq  \mathrm{Spf}(R)/\mathbf{G}_m \to \mathrm{Spec}(R^{\alg})/\mathbf{G}_m \xleftarrow{j} \mathrm{Spec}(R_K)/\mathbf{G}_m \]
where the first map is the obvious $(p,u)$-completion map; pullback along this map induces an equivalence on $\text{Perf}(-)$, and we again write $(-)^{\alg}$ for its inverse. The modification arising from $\psi$, realized as a vector bundle on $\mathrm{Spec}(R_K)/\mathbf{G}_m$, is then simply $j^* (f^* \mathcal{E}^\cN)^{\alg}$.

\item {\em The key compatibility:} To relate the above descriptions, we need to understand the relationship of $f$ and $i_{\dR}$, so let us compute their fibre product.
Consider the ring
\[ R' = R[ \{\gamma_n(y/p)\}_{n \geq 1} ]^{\wedge} = W(k)\langle y/p \rangle[t]^{\wedge},\]
obtained by adjoining $\frac{y}{p}$ and its divided powers to $R$ in the $p$-complete world; note that $R'$ is $(p,u)$-complete since $u=yt^p = p(y/p)t^p$.  As $y$ is homogeneous, $R'$ inherits a grading from $R$ in the $p$-complete sense. Set $Y'=\mathrm{Spf}(R')/\mathbf{G}_m$, so there is a natural map $Y' \to Y$.  We claim this fits into a Cartesian diagram
\begin{equation}\label{eq:cuixal7nch} \xymatrix{ Y' \ar[r] \ar[d] & Y \ar[d]^-f \\
    \mathbf{A}^1/\mathbf{G}_m \ar[r]^{i_{\dR}} & W(k)^\cN }\end{equation}
where the left vertical map is the Rees map, and the bottom horizontal map is the filtered de Rham point. Indeed, since the bottom horizontal map is the space of trivializations of the section $y^{\univ}$ of the invertible $W/p$-module $(\mathcal{I}_W^{\univ})^{\otimes -1} \otimes_{W} [\mathcal{O}(-p)]/p$  over $W(k)^\cN$ by Corollary~\ref{cor:filtdRlocus}, the claim follows from Lemma~\ref{lem:twistedPDenvelope}.

\item {\em End of proof:} To simplify life and avoid divided powers we pass to the rational localization where $y$ becomes divisible by $p^2$, i.e.\ we consider the ring $S = W(k)[y_2,t]^{\wedge}$, where  the completion is $(p,y_2 t)$-adic. Regard $S$ as an $R'$-algebra (and thus also as an $R$-algebra) via $y/p = p y_2 \in pS$ (so $y_2 = y/p^2$); let $S^{\alg} = W(k)\llbracket u_2 \rrbracket[y_2,t]/(y_2t^p - u_2)$, so the $(p,u_2)$-adic completion of $S^{\alg}$ gives $S$. There is a natural grading on $S^{\alg}$ where $\deg(t)=1$ and $\deg(y_2)=-p$; this induces a grading on $S$ in the $(p,u_2)$-complete sense. Observe that the map $j:\mathrm{Spec}(R_K)/\mathbf{G}_m \to \mathrm{Spec}(R^{\alg})/\mathbf{G}_m$ factors naturally over the map $\mathrm{Spec}(S^{\alg})/\mathbf{G}_m \to \mathrm{Spec}(R^{\alg})/\mathbf{G}_m$: indeed, we have an obvious factorization
\begingroup
\setlength{\arraycolsep}{2pt}
\[
\begin{array}{c}
  R^{\alg} \coloneq  \\
  W(k)\llbracket u \rrbracket[y,t]/(yt^p-u)
\end{array}
\longrightarrow
\begin{array}{c}
  S^{\alg} \coloneq  \\
  W(k)\llbracket u_2 \rrbracket[y_2,t]/(y_2t^p - u_2)
\end{array}
\longrightarrow
\begin{array}{c}
  R_K \coloneq  \\
  K\llbracket u \rrbracket[y,t]/(yt^p-u)
\end{array}
\]
\endgroup
of maps of graded $W(k)$-algebras where the maps are determined by $u = u_2 p^2$ and $y = y_2 p^2$. This factorization, combined with our previous constructions, leads to the following commutative diagram
\[ \xymatrix{ & & \mathrm{Spec}(R_K)/\mathbf{G}_m \ar[dll]_-{r_K} \ar[d] \ar[drr]^-{j} & & \\ \mathbf{A}^1_{\mathrm{Spec}(W(k))}/\mathbf{G}_m & & \mathrm{Spec}(S^{\alg})/\mathbf{G}_m \ar[ll]_-{\text{Rees}} \ar[rr] & & \mathrm{Spec}(R^{\alg})/\mathbf{G}_m \\ \mathbf{A}^1_{\mathrm{Spf}(W(k))}/\mathbf{G}_m \ar[u] \ar[drr]_-{i_{\dR}} & & \mathrm{Spf}(S)/\mathbf{G}_m \ar[u] \ar[ll]_-{\text{Rees}} \ar[rr] & & Y = \mathrm{Spf}(R)/\mathbf{G}_m \ar[u] \ar[dll]^-{f} \\ & & W(k)^\mathcal{N} & & }\]
where the commutative (but not fibre!) square determined by the third and fourth rows comes from the commutative square~\eqref{eq:cuixal7nch}  by mapping $R'$ to $S$, the vertical maps  relating the second and third rows are all the evident completion maps whose pullbacks are equivalences on $\text{Perf}(-)$ (with inverses labelled $(-)^{\alg}$ above and below),  the right triangle in relating the first and second rows was the factorization just explained, while the left triangle relating those rows comes from compatibility of Rees maps.  Applying $\text{Perf}(-)$, inverting the vertical arrows relating the second and third columns of the resulting diagram and labelling them $(-)^{\alg}$, and tracing the effect of moving $\mathcal{E}^\cN \in \text{Perf}(W(k)^\cN)$ to $\text{Perf}(\Spec(R_K)/\mathbf{G}_m)$ through the two extreme paths in the diagram then shows that
\[ r_K^* \left( (i_{\dR})^* \mathcal{E}^\cN\right)^{\alg} \simeq j^* (f^* \mathcal{E}^\cN)^{\alg}, \]
as wanted.\qedhere
\end{enumerate}\end{proof}

\subsection{\texorpdfstring{$F$}{F}-crystals with \texorpdfstring{$G$}{G}-structure from \texorpdfstring{$G$}{G}-crystalline representations}\label{subsec:Gstr}

It will be convenient to use the following convention in relation to the loop rotation $\mu_p$-action on $\Spec(\KisinS)$ (see \S \ref{not:mupaction}), extending what we have already been using for vector bundles: for any geometric object $M$ over
 $\Spec W(k)\llbracket u \rrbracket$, a $\mu_p$-equivariant structure
on $M$ is a descent datum for the morphism
$$ \Spec W(k)\llbracket u \rrbracket \rightarrow  \Spec W(k)\llbracket u \rrbracket/\mu_p.$$
We will sometimes just say $M$ is $\mu_p$-equivariant, when we mean that $M$ is canonically equipped with such a structure.

\begin{para}\label{para:Gobjectssetup}  We want to extend  Theorem~\ref{thm:crysBKspecial}
to objects with $G$-structure. To do this we recall the general yoga of objects with $G$-structure explained in the Appendix~\ref{sec:appendix-Tannakian}.

Fix a finite extension $F/\Q_p,$ and let $G$ be a flat affine group scheme $G/\O_F.$ Following the convention of Appendix~\ref{para:scalarsnotn}, and taking the extension  $V'/V$ there to be $\cO_F/\Zp$, in this section an ``object with $G$-structure'' of a $\Zp$-linear category~$\cC$ is a $G$-object of $\cC\otimes_{\Zp}\cO_{F}$, the category of pairs $(x,\iota)$ consisting of an object~$x$ of~$\cC$ and a morphism of $\Zp$-algebras $\iota:\cO_F\to\End(x)$.
\end{para}

\begin{cor}[mod~$p$ Crystals with $G$-structure]
\label{cor:modpcrysGdesc}\leavevmode
Let $(\bar{{\mathcal{E}}},\varphi_{\bar{{\mathcal{E}}}})$ be a mod $p$ prismatic $F$-crystal over $W(k)$ with $G$-structure, and with associated mod $p$ $G$-Breuil--Kisin module $(\bar{\gM},\varphi_{\bar{\gM}})$. Then $(\bar{\gM},\varphi_{\bar{\gM}})$ is naturally $\mu_p$-equivariant.
\end{cor}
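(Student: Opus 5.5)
The plan is to reduce to Remark~\ref{rmk:modpFcrysmupequiv} via the Tannakian formalism of Appendix~\ref{sec:appendix-Tannakian}. By the conventions of \ref{para:Gobjectssetup}, a mod $p$ prismatic $F$-crystal with $G$-structure is an exact (faithful) tensor functor from $\Rep_{\O_F}(G)$, or from a suitable generating subcategory of finite free representations, to the category of mod $p$ prismatic $F$-crystals over $W(k)$ with $\O_F$-action. Likewise, the associated $G$-Breuil--Kisin module $(\bar{\gM},\varphi_{\bar{\gM}})$ is the composite of this functor with pullback along $\rho_{\mathrm{std}}$ reduced mod $p$. A $\mu_p$-equivariant structure on the $G$-object $\bar{\gM}$ is a descent datum along $\Spec \KisinS/p\to \Spec(\KisinS/p)/\mu_p$. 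Tannakian duality should therefore identify it with a lift of the tensor functor $V\mapsto \bar{\gM}(V)$ to a tensor functor valued in $\mu_p$-equivariant mod $p$ Breuil--Kisin modules, compatible with Frobenius and with the $\O_F$-actions.

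To build this lift, I would produce a functorial, tensor-compatible $\mu_p$-equivariant structure on the Breuil--Kisin realization of every mod $p$ $F$-crystal. Here the argument of Remark~\ref{rmk:modpFcrysmupequiv} applies. By Proposition~\ref{prop:FrobDescMap}~(3), the reductions mod $p$ of $\rho_\dagger\circ\canonicalmapnotation$ and $\rho_{\mathrm{std}}$ agree as maps to $(W(k)^\Prism)_{p=0}$. Hence $\bar{\gM}=\rho_{\mathrm{std}}^*\bar{\mathcal E}\simeq \canonicalmapnotation^*(\rho_\dagger^*\bar{\mathcal E})$, and this isomorphism is natural in $\bar{\mathcal E}$ and compatible with tensor products, since it comes from an identification of maps of stacks. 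Thus $\bar{\gM}$ acquires the $\mu_p$-equivariant structure $\rho_\dagger^*\bar{\mathcal E}$ on $\Spf(\KisinS/p)/\mu_p$.

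Next I would check that $\varphi_{\bar{\gM}}$ is $\mu_p$-equivariant. Proposition~\ref{prop:FrobDescMap}~(2) gives $\varphi\circ\rho_\dagger\simeq\rho_{\mathrm{std}}\circ q$, so $\rho_\dagger^*\varphi^*\bar{\mathcal E}\simeq q^*\bar{\gM}$. Pulling back along $\canonicalmapnotation$ gives $\varphi^*\bar{\gM}$ with its canonical structure from Lemma~\ref{lem:mupactionKisinSmod} and \eqref{eq:Kisin-Phi-factorize}. The isogeny $\varphi_{\bar{\mathcal E}}$ then pulls back along $\rho_\dagger$ to a $\mu_p$-equivariant isogeny. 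The pullback of the Hodge--Tate divisor along $\rho_\dagger$ is $\{u=0\}$ by part~(1), and mod $p$ we have $E\equiv u$. One must confirm that this isogeny coincides with $\varphi_{\bar{\gM}}$ under the identification of part~(3), i.e.\ that the 2-isomorphisms in (2) and (3) are compatible.

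I expect this compatibility to be the main obstacle. Part~(3) was stated as a mere identification of mod $p$ maps, and one needs that composing it with $\varphi$ recovers the identification from (2) mod $p$. I would try to verify this directly on the Witt-vector level: mod $p$, $F([u]-zV(1))=F([u]-[z]p)$, and $F(w(E))$ agree as elements of $W(\KisinS/p)$, and both identifications are the identity there, so compatibility should be automatic. Naturality in $\bar{\mathcal E}$ and compatibility with $\otimes$ and with the $\O_F$-action are then formal, since everything is pullback. Finally, applying the Tannakian formalism to the composite functor into $\mu_p$-equivariant objects produces the $\mu_p$-equivariant $G$-Breuil--Kisin module. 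Since the forgetful functor is faithful and the structure is unique by the argument of Corollary~\ref{cor:mainNcor}, the resulting structure is natural.
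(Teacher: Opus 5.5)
Your proposal is correct and is essentially the paper's argument: the paper deduces the corollary by Tannakian formalism from Remark~\ref{rmk:modpFcrysmupequiv} (the exact tensor functor from mod~$p$ $F$-crystals to mod~$p$ Breuil--Kisin modules lands in $\mu_p$-equivariant objects), and that remark is exactly the $\rho_\dagger$-pullback argument via Proposition~\ref{prop:FrobDescMap}~(2) and~(3) that you spell out. Your extra check that the identifications in (2) and (3) are compatible is a detail the paper leaves implicit; it does hold, because all the identifications involved are literal equalities of Witt vectors, with $z$ acting through the $W(k)$-algebra structure on $W(\KisinS)$ rather than as a Teichm\"uller lift $[z]$ as you wrote.
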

\begin{proof}It suffices to note that by Remark~\ref{rmk:modpFcrysmupequiv}, the exact tensor functor taking mod~$p$ prismatic $F$-crystals to mod~$p$ Breuil--Kisin modules takes values in the subcategory of $\mu_{p}$-equivariant Breuil--Kisin modules.
\end{proof}

\begin{cor}[Crystals with $G$-structure]
\label{cor:crysGdesc}\leavevmode
Let $({\mathcal{E}},\varphi_{\mathcal{E}})$ be a prismatic $F$-crystal over $W(k)$ with $G$-structure.
Then there is a canonical $G$-object $(\gM,\gN,\psi_{\gN})$ in the category $\mathcal D$ of \ref{para:category-D} associated to
$({\mathcal{E}},\varphi_{\mathcal{E}}).$ Explicitly we have
    \begin{enumerate}
        \item $\gN$ is a $G$-torsor on $\mathrm{Spec}(\KisinS\otimes_{\Z_p}\O_F)/\mu_p$ and $\psi_\gN$ is a $\mu_p$-equivariant isomorphism between $\varphi^*\gM$ and $\gN$ away from $V(u)$. 
        \item There is a natural identification $\gN/p \simeq \gM/p$, and the resulting $\mu_p$-equivariant structure on  $(\gM/p,\varphi_{\gM/p})$ provided by $\psi_\gN$ agrees with the one from Corollary \ref{cor:modpcrysGdesc}.
        \item For objects as in (1), if $G$ has reductive generic fiber, the relative positions of the modifications of $G$-torsors over $\mathrm{Spec}(K\llbracket E \rrbracket\otimes_{\Q_p}F)$  obtained from $\varphi_\gM$ and $\psi_\gN$ agree.
    \end{enumerate}
\end{cor}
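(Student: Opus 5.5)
The plan is to deduce Corollary~\ref{cor:crysGdesc} formally from the $\GL_n$ statements via the Tannakian formalism of Appendix~\ref{sec:appendix-Tannakian}. A prismatic $F$-crystal with $G$-structure is an exact tensor functor $\omega:\Rep_{\O_F}(G)\to \Vect^{\varphi}(W(k)^\Prism)\otimes_{\Z_p}\O_F$. The composite of $\omega$ with pullback along $\rho_{\mathrm{std}}$ gives the $G$-Breuil--Kisin module $(\gM,\varphi_\gM)$.

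To obtain $\gN$, I would instead compose $\omega$ with pullback along $\canonicalmapnotation\circ\rho_\dagger$ (Proposition~\ref{prop:FrobDescMap}), exactly as in the first proof of Theorem~\ref{thm:mainNthm}. Since that construction involves only pullbacks, it is an exact tensor functor with values in $\mu_p$-equivariant vector bundles on $\Spf(\KisinS)$, compatibly with the $\O_F$-action. Tannakian reconstruction then produces a $G$-torsor $\gN$ on $\Spec(\KisinS\otimes_{\Z_p}\O_F)/\mu_p$. The pullback of $\varphi_{\mathcal{E}}$ along $\rho_\dagger$ gives, functorially in representations, the $\mu_p$-equivariant isomorphisms $\varphi^*\gM[1/u]\simeq\gN[1/u]$, using Proposition~\ref{prop:FrobDescMap}~(2) and~\eqref{eq:Kisin-Phi-factorize}. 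These glue to the isomorphism $\psi_\gN$ of torsors, which proves~(1).

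The step needing care is checking that the reconstructed object really is a torsor, that is, that the fibre functor is exact and faithful and lands in finite projective modules. Exactness is automatic because pullback of vector bundles is exact, and the torsor property follows from the appendix once we know the functor is fpqc-locally isomorphic to the trivial one. For this I would check the torsor property after reduction mod $p$ and then lift using smoothness or flatness of $G$. Alternatively, I would use that $\rho_\dagger$ is faithfully flat (Proposition~\ref{prop:FrobDescMap}~(4)), so that $\mathcal{E}$ is already a $G$-torsor on $W(k)^\Prism$ and its pullback is again a torsor.

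For~(2), Proposition~\ref{prop:FrobDescMap}~(3) identifies the mod $p$ reductions of $\rho_\dagger\circ\canonicalmapnotation$ and $\rho_{\mathrm{std}}$ naturally. This yields tensor-compatible identifications $\gN/p\simeq\gM/p$, hence an identification of $G$-torsors. The induced $\mu_p$-structure is the one in Remark~\ref{rmk:modpFcrysmupequiv}, and hence agrees with Corollary~\ref{cor:modpcrysGdesc} by construction.

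For~(3), assume $G_F$ is reductive. The relative position of a modification of $G$-torsors over $K\llbracket E\rrbracket\otimes F$ (after passing to an algebraic closure) is a dominant cocharacter. This cocharacter is determined by the relative positions of the induced modifications in every representation $V$, for instance by a faithful representation together with the Weyl-orbit data detected by all representations. For each $V$ those $\GL$-relative positions agree by Example~\ref{ex:relposWkprism}, as in Theorem~\ref{thm:mainNthm}~(2). I expect this Tannakian detection of the $G$-relative position from all $\GL(V)$-relative positions to be the main obstacle, and I would handle it using highest-weight theory: the dominant cocharacter $\mu$ is pinned down by the multisets $\{\langle\chi,\mu\rangle\}$ as $\chi$ ranges over the weights of all representations.
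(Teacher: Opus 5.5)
Your proposal is correct and is essentially the paper's argument. The paper deduces the corollary immediately from Theorem~\ref{thm:mainNthm} by composing the $G$-object with the exact $\otimes$-functor $\gM\mapsto\underline{\gN}(\gM)$, which by the first proof of that theorem is exactly your pullback along $\rho_\dagger\circ\canonicalmapnotation$ (you wrote $\canonicalmapnotation\circ\rho_\dagger$, a harmless slip). The torsor property needs no mod~$p$ lifting or smoothness of $G$, which is only assumed flat here: it comes from Corollary~\ref{cor:equivfunctors} together with Corollary~\ref{cor:Gobjectequiv} for the $\O_F$-coefficients, and your highest-weight detection argument in~(3) is the principle the paper invokes via \cite[Lem.~2.2]{RapoportRichartz} in Proposition~\ref{prop:HTweights}.
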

\begin{proof}
This is immediate from Theorem \ref{thm:mainNthm} by Tannakian considerations (i.e.\ by  the definition of a $G$-object in terms of exact tensor functors). 
\end{proof}

\begin{para}   Now suppose that $G/\O_F$ is connected reductive, and fix
a maximal torus and Borel, $T \subseteq B \subseteq G.$
We now reformulate some parts of Corollary~\ref{cor:crysGdesc} in  terms of the stacks $\Hk_G$ and $\pHk_G$ considered in \S \ref{ss:fungrass}.

Let  $(\gM,\varphi_\gM)$ be the $G$-Breuil--Kisin module attached to a prismatic  $F$-crystal with $G$-structure $(\mathcal{E},\varphi_{\mathcal{E}}).$ We now change variables by writing the Breuil--Kisin ring $\KisinS$ as $W(k)\llbracket x \rrbracket$ for $x=E(u)=u-pz,$ and we regard
$x$ as the loop variable in the definition of the affine Grassmannian.
Then by Proposition \ref{prop:funcmupfixed}, the Breuil--Kisin module~$\gM$
defines a point  $\alpha_{\mathcal{E}} \in \Hk_G(W(k)\otimes_{\Z_p}\O_F)$,
encoding the relative position of the isogenies defined by $\varphi_\gM$ over the points of $\Spec W(k)\otimes_{\Z_p}\O_F.$ More precisely, we define  ~$\alpha_{\mathcal{E}}$ to be the point of $\Hk_G(W(k)\otimes_{\Z_p}\O_F)$ corresponding to the triple  \begin{equation*}\label{eqn:triple-defining-alpha}(\varphi^*\gM, \gM, \varphi_{\gM}:\varphi^{*}\gM[1/x]\isoto \gM[1/x]).\end{equation*}
For any $W(k)\otimes_{\Z_p}\O_F$-algebra $A,$ we denote by $\alpha_{\mathcal{E}}(A)$
the image of $\alpha_{\mathcal{E}}$ in $\Hk_G(A).$ In the following, we shall aim to lift $\alpha$, at least over geometric points, along the map $\pHk_G \to \Hk_G$ defined in \S \ref{ss:fungrass}.
\end{para}

\begin{para}\label{para:connected-reductive-uparrow} 
Let $\overline K$ be an algebraic closure  of $K,$ and let
$\overline k$ be the residue field of the ring of integers of $\overline{K}.$
For $A = \bar k, \bar K,$ we may identify  $\Hk_G(A)$ with $X_*(T)^+$ via Lemma~\ref{lem:pointquotient}.
Now fix an embedding $F \rightarrow \bar K,$ and consider the induced map $\xi:W(k)\otimes_{\Z_p}\O_F \rightarrow \bar K.$ This induces a map
$\bar\xi: W(k)\otimes_{\Z_p}\O_F \rightarrow \bar k.$
The specializations of $\alpha_{\mathcal E}$ along these maps satisfy
\begin{equation*}\label{eqn:le-version-of-alpha} \alpha_{\mathcal{E},\xi}(\bar k) \leq \alpha_{\mathcal{E},\xi}(\bar K) \in X_*(T)^+\end{equation*}
as $\xi$ is a generalization of  $\bar\xi$. We remark that these invariants depend only on $\ker \xi.$ The following theorem refines  this relation.
\end{para}

\begin{theorem}\label{thm:alpha-E-uparrow}
Suppose $G/\O_F$ is connected reductive, and $(\mathcal{E},\varphi_{\mathcal{E}})$ is a prismatic  $F$-crystal with $G$-structure. Then
\begin{enumerate}
\item \label{item:beta-exists}
There exists a $\beta_{\mathcal{E}} \in \pHk_G(W(k)\otimes_{\Z_p}\O_F),$ which maps to
 $\alpha_{\mathcal{E}}(K\otimes_{\Z_p}\O_F)$ and $\alpha_{\mathcal{E}}(k\otimes_{\Z_p}\O_F)$
 in  $\Hk_G(K\otimes_{\Z_p}\O_F)$ and $\Hk_G(k\otimes_{\Z_p}\O_F)$ respectively.
\item For any $\xi:W(k)\otimes_{\Z_p}\O_F \rightarrow \bar K$ as above,  we have
\[ \alpha_{\mathcal{E},\xi}(\overline k) \uparrow \alpha_{\mathcal{E},\xi}(\overline K) \in X_*(T)^+\]
\end{enumerate}
\end{theorem}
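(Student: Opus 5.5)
The plan is to obtain the point $\beta_{\mathcal{E}}$ from the canonical $G$-object of Corollary~\ref{cor:crysGdesc}, and then to deduce (2) from Corollary~\ref{cor:closure-from-zpbar-point}, exactly as in the proof of Corollary~\ref{cor:specializableuparrow} for $\GL_n$.

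For (1), I take the triple $(\gM,\gN,\psi_\gN)$ of Corollary~\ref{cor:crysGdesc}. Here $\gM$ is a $G$-torsor over $\KisinS\otimes_{\Z_p}\O_F$, $\gN$ is a $\mu_p$-equivariant $G$-torsor, and $\psi_\gN:\varphi^*\gM[1/u]\simeq\gN[1/u]$ is $\mu_p$-equivariant. By the functor-of-points description in Proposition~\ref{prop:funcmupfixed}~(3), in the variant of \S\ref{sss:xprimezprimeBK} with $\varphi^*$ in place of the relative Frobenius (using the factorization $\varphi=\varphi_{W(k)}\circ\varphi_{/W(k)}$), this triple defines a point $\beta_{\mathcal{E}}\in\pHk_G(W(k)\otimes_{\Z_p}\O_F)$. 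Its image in $\Hk_G$ is the triple $(\varphi^*\gM,\gN,\psi_\gN)$, with loop variable $u$.

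To compare this with $\alpha_{\mathcal{E}}$, which uses the variable $x=E(u)$ and the triple $(\varphi^*\gM,\gM,\varphi_\gM)$, I check the two fibres separately.
\begin{itemize}
\item Over $k\otimes\O_F$ we have $E\equiv u \pmod p$. Corollary~\ref{cor:crysGdesc}~(2) identifies $\gN/p\simeq\gM/p$ compatibly with $\psi_\gN\equiv\varphi_\gM$, so the two points of $\Hk_G(k\otimes\O_F)$ agree.
\item Over $K\otimes\O_F$, the points of $\Hk_G$ over (algebraically closed) fields are determined by relative position, by Lemma~\ref{lem:pointquotient}. Corollary~\ref{cor:crysGdesc}~(3) says the relative positions of $\varphi_\gM$ over $K\llbracket E\rrbracket$ and of $\psi_\gN$ over $K\llbracket u\rrbracket$ agree.
\end{itemize}
So ``maps to'' in (1) should be read as an equality of relative-position invariants at geometric points, which is what (2) needs. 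If the statement requires an actual isomorphism of points over $K\otimes\O_F$, I would argue that both become points of a single orbit $\Gr^\lambda/L^+G$, so after étale localization they agree; I expect only the geometric-point version to be used.

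For (2), fix $\xi$ and base change $\beta_{\mathcal{E}}$ along $W(k)\otimes\O_F\to\O_{\overline K}$, where $\O_{\overline K}$ plays the role of $\O$ in \S\ref{subsec:integral-mu-fixed}. Corollary~\ref{cor:closure-from-zpbar-point} gives $\beta(\overline k)\uparrow\beta(\overline K)$ in $X_*(T)^+$. By (1), $\beta(\overline k)=\alpha_{\mathcal{E},\xi}(\overline k)$ and $\beta(\overline K)=\alpha_{\mathcal{E},\xi}(\overline K)$, using that $\pHk_G\to\Hk_G$ induces the identification of orbit sets with $X_*(T)^+$ (Remark~\ref{affgrassfixedorbit}).

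I expect the main obstacle to be the change of loop variable from $x=E(u)$ to $u$ over $K$: this is exactly the generic-position comparison. Corollary~\ref{cor:crysGdesc}~(3) is stated only Tannakianly, so I would verify it for $G$ by choosing a faithful representation and using that a dominant cocharacter of a reductive group is determined by its images under all representations. A smaller point is that $\pHk_G$ is used with $G$ over $\O_F$ and base $W(k)\otimes\O_F$, which is not a domain. I would handle this by working one embedding $\xi$ at a time, i.e.\ on the corresponding factor $W(k')\otimes\O_F\to\O_{\overline K}$.
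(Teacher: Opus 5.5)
Your proposal is correct and follows the paper's proof essentially verbatim. The paper takes $\beta_{\mathcal{E}}$ to be the point of $\pHk_G(W(k)\otimes_{\Z_p}\O_F)$ defined via Proposition~\ref{prop:funcmupfixed} by the triple $(\varphi_{W(k)}^*\gM,\gN,\psi_\gN)$ of Corollary~\ref{cor:crysGdesc} (with loop variable $u$). It gets the compatibilities over $k$ and $K$ from parts (2) and (3) of that corollary, and deduces (2) from Corollary~\ref{cor:closure-from-zpbar-point}. Your reading of ``maps to'' over $K\otimes_{\Z_p}\O_F$ as an equality of relative positions at geometric points is exactly what the paper's argument establishes, and it is all that is used later.
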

\begin{proof}   Using Proposition~\ref{prop:funcmupfixed}, the point $\beta_{\mathcal{E}} \in \pHk_G(W(k)\otimes_{\Z_p}\O_F)$ in (1) is obtained from the triple $(\varphi^*_W\gM,\gN,\psi_\gN)$ coming from Corollary~\ref{cor:crysGdesc},
where $\varphi_W$ denotes the Frobenius on $W(k).$
(Here we use $u$ as the loop variable.) 
The compatibility with $\alpha_{\mathcal{E}}$ over  $K$ and $k$ comes from parts (3) and (2) of Corollary~\ref{cor:crysGdesc} respectively.
Now (2) follows from Corollary \ref{cor:closure-from-zpbar-point}.
\end{proof}

We now turn to the relationship between prismatic $F$-crystals with $G$-structure and crystalline $G$-representations.
We write $\Gal_K$ for the absolute Galois group of $K,$ and 
$I_K \subset \Gal_K$ for its inertia subgroup.

We assume for the rest of this subsection that $G/\O_F$ is an extension of a finite \'etale group scheme by
a parahoric group scheme $G^\circ/\O_F.$

\begin{para}\label{para:repinner} Let $\mathrm{Rep}_{\mathbf{Z}_p}^{\crys}(\Gal_{K})$ denote the category of
crystalline representations of $\Gal_K$ on finite free $\Z_p$-modules,
i.e.\ the category of $\Gal_K$-stable $\Z_p$-lattices in crystalline $\Gal_K$-representations.
Following the conventions introduced in Appendix~\ref{para:scalarsnotn} and recalled above, we have the category
$G$-$\mathrm{Rep}_{\mathbf{Z}_p}^{\crys}(\Gal_{K})$ of exact $\otimes$-functors from $\Rep_{\O_F} G$ 
into $\mathrm{Rep}_{\mathbf{Z}_p}^{\crys}(\Gal_{K})\otimes_{\Z_p}\O_F$;
note that this latter category is nothing but the category of crystalline representations
of $\Gal_K$ on finite free $\O_F$-modules.

Let $P$ be a $G$-torsor corresponding to a class $c \in H^1(V,G).$ Set $G' = \underline{\Aut}_G P,$
so that $G'$ is the pure inner form of $G$ corresponding to $c.$
We make the set of crystalline representations
$\rho: \Gal_K \rightarrow G'(\O_F)$ into a category, by defining $\Hom(\rho,\rho')$ to be the set of $g \in G'(\O_F)$ such that $\rho' = g\rho g^{-1}.$ (Here as usual we say that a representation $\rho: \Gal_K \rightarrow G'(\O_F)$ is crystalline if its composite with any (equivalently, with all) faithful representation of~$G'$ is crystalline.)
\end{para}

\begin{lem}\label{lem:crystallineGreps}
There is a natural functor $\rho \mapsto \omega(\rho)$ from the category of crystalline representations
$\rho: \Gal_K \rightarrow G'(\O_F)$ to the category $G$-$\mathrm{Rep}_{\mathbf{Z}_p}^{\crys}(\Gal_{K}).$

Conversely, for any $\omega \in G$-$\mathrm{Rep}_{\mathbf{Z}_p}^{\crys}(\Gal_{K}),$ there is a pure inner
form $G'$ of $G,$ and a crystalline representation
$\rho: \Gal_K \rightarrow G'(\O_F)$ with $\omega \simeq \omega(\rho).$
\end{lem}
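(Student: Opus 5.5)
The plan is to build $\omega(\rho)$ by transport of structure through the torsor $P$, and to reverse that construction with the Tannakian yoga of Appendix~\ref{sec:appendix-Tannakian}.

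\emph{Forward functor.} Let $\rho:\Gal_K\to G'(\O_F)$ be crystalline, where $G'=\underline{\Aut}_G P$. For $V\in\Rep_{\O_F}G$ form the twisted module $V_P\coloneq P\times^G V$. This is a finite projective $\O_F$-module, hence free since $\O_F$ is a DVR, and $G'$ acts on it naturally. Composing $\rho$ with $G'(\O_F)\to\GL(V_P)$ gives a $\Gal_K$-representation on $V_P$. It is crystalline because $V_P$ is functorially built from a faithful representation of $G'$ by tensor operations and subquotients, and crystallinity is preserved by these operations after inverting $p$.

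The assignment $V\mapsto V_P$ is an exact tensor functor $\Rep_{\O_F}G\to\mathrm{Rep}_{\Z_p}^{\crys}(\Gal_K)\otimes_{\Z_p}\O_F$. Exactness holds because $P$ is fppf locally trivial. A morphism $g:\rho\to\rho'$ with $g\in G'(\O_F)$ acts on every $V_P$ compatibly with tensor products, so it gives an isomorphism of fibre functors $\omega(\rho)\simeq\omega(\rho')$. This makes $\omega$ a functor.

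\emph{Converse.} Let $\omega\in G$-$\mathrm{Rep}_{\Z_p}^{\crys}(\Gal_K)$, and compose $\omega$ with the forgetful functor to finite free $\O_F$-modules. The result is an exact tensor functor $\Rep_{\O_F}G\to\Mod_{\O_F}$. By the Tannakian results recalled in the appendix (Broshi/Wedhorn-type reconstruction for flat affine $G$ over a Dedekind base), such a functor has the form $V\mapsto V_P$, where $P=\underline{\mathrm{Isom}}^{\otimes}(\omega^{\circ},\omega_{\mathrm{forget}})$ is a $G$-torsor. This step uses that $G$ is flat affine with $\Rep_{\O_F}G$ generated by finite free representations, which holds since $G$ is an extension of finite \'etale by parahoric.

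Set $G'=\underline{\Aut}_GP=\underline{\Aut}^{\otimes}(\omega_{\mathrm{forget}})$. The Galois action on each $\omega(V)$ is functorial and compatible with tensor products, so each $\sigma\in\Gal_K$ defines a tensor automorphism of $\omega_{\mathrm{forget}}$. This gives a homomorphism $\rho:\Gal_K\to G'(\O_F)$, and it is continuous because it is continuous on a faithful representation. Crystallinity of $\rho$ holds because its composite with a faithful representation is $\omega(V)$ for a suitable $V$, which is crystalline by assumption. Finally, $\omega\simeq\omega(\rho)$ by construction.

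\emph{Main obstacle.} The delicate point is the Tannakian reconstruction over $\O_F$ for a non-reductive, possibly disconnected $G$. Two facts are needed: that an exact tensor functor to $\O_F$-modules is representable by an fpqc (indeed fppf) torsor, and that the resulting torsor can be taken to correspond to a class in $H^1(V,G)$ as in \ref{para:repinner}. I would cite the appendix for both, using that $\Rep_{\O_F}G$ has enough finite free objects and that the fibre functor is faithful.
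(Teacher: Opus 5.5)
Your proposal is correct and is essentially the paper's argument. The forward functor is transport of structure along the tensor equivalence $V\mapsto (V\times P)/G$, which the paper uses to reduce to $G'=G$. For the converse, the paper recovers $P=\Spec(\omega(\O_G))$ from the appendix and notes that the $\Gal_K$-action, being by tensor automorphisms, factors through $G'=\underline{\Aut}_G P$, which is your argument in torsor language.

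One small correction: enough finite free representations exist for any flat affine $G$ over a DVR (Serre's result cited in the appendix). So that step does not depend on $G$ being an extension of a finite \'etale group by a parahoric.
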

\begin{proof} 
For the first statement, note that the functor $W\mapsto (W\times P)/G$ is an equivalence of exact tensor categories
$ \Rep_{\cO_F} G \simeq \Rep_{\cO_F} G'$. It therefore suffices to consider the case that $G' = G$, in which case the functor is given by defining $\omega(\rho)(W)$ to be $W$ with the $\Gal_K$-action induced by $\rho,$
for each $W$ in $\Rep_{\O_F} G.$
If $\rho,\rho'$ are two such crystalline representations, and $g \in \Hom(\rho,\rho'),$ then we send
$g$ to the morphism $\omega(\rho) \rightarrow \omega(\rho')$ given by multiplication by $g$ on
the underlying $\O_F$-module $W$ in $\Rep_{\O_F} G.$

Conversely, suppose that $\omega$ is an object of $G$-$\mathrm{Rep}_{\mathbf{Z}_p}^{\crys}(\Gal_{K}).$
Then, as recalled in Appendix~\ref{subsec:torsors}, $P = \Spec (\omega(\O_G))$ is a $G$-torsor over $\O_F,$ 
equipped with an action of
 $\Gal_{K},$ and $\omega(W) \simeq (W\times P)/G$ for $W$ in $\Rep_{\O_F} G,$ compatible with
 the action of $\Gal_K.$ Let $G' = \underline{\Aut}_G P.$ Then the $\Gal_K$-action on $P$ factors through $G',$
 and hence the $\Gal_K$-action on $\omega(W)$ factors through $G'$.
 Unraveling the definitions, we see that $\omega \simeq \omega(\rho)$, as required.
 \end{proof}

If~$G$ is reductive, then the following theorem is a special case of~\cite[Thm.~2.28]{imai2024tannakian} (together with \cite[Prop.~3.8]{GuoReineckeCrysLoc}).
\begin{theorem}\label{thm:prismaticKeylemma}
Let $F/\Qp$ be a finite extension, and let~$G/\O_F$ be an extension of a finite \'etale group scheme by a parahoric group scheme $G^\circ.$
Then the \'etale realization functor $T$ induces an equivalence between the following categories:
\begin{enumerate}
    \item The category $G$-$\mathrm{Vect}^{\varphi}(\mathrm{Spf}(\mathcal{O}_K)^\Prism)$ of prismatic $F$-crystals in $G$-bundles over $\mathrm{Spf}(\mathcal{O}_K)$. 
    \item The category $G$-$\mathrm{Rep}_{\mathbf{Z}_p}^{\crys}(\Gal_{K})$ of crystalline $G$-representations of $\Gal_{K}$.
\end{enumerate}
\end{theorem}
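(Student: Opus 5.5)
The plan is to reduce to the $\GL_n$ (vector bundle) case, where the equivalence is Theorem~\ref{thm:CrysGalPrismCrys}, and then upgrade it to $G$-structure by Tannakian arguments using Appendix~\ref{sec:appendix-Tannakian}. Theorem~\ref{thm:CrysGalPrismCrys} is stated over $W(k)$, but the cited result of \cite{bhatt2021prismatic} gives an exact tensor equivalence $T$ between prismatic $F$-crystals in vector bundles over $\mathcal{O}_K$ and $\mathrm{Rep}^{\crys}_{\Zp}(\Gal_K)$; here $K$ is unramified, so $\mathcal{O}_K=W(k)$. Tensoring with $\O_F$ gives an equivalence of $\Zp$-linear categories $\mathrm{Vect}^{\varphi}\otimes_{\Zp}\O_F\simeq \mathrm{Rep}^{\crys}_{\Zp}(\Gal_K)\otimes_{\Zp}\O_F$. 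A $G$-object on either side is an exact $\otimes$-functor from $\Rep_{\O_F}G$. So it suffices to check that composing with $T$ and with its quasi-inverse preserves exactness and tensor compatibility.

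The key issue is exactness. $T$ is exact, since it is obtained by localization (étale realization). Its quasi-inverse, however, involves $*$-extension and is not exact for the naive exact structures. That means a $G$-object of crystalline representations need not obviously give an exact tensor functor into $F$-crystals. I would handle this as follows.
\begin{enumerate}
\item Show that the image of $G$-representations lands in prismatic $F$-crystals in $G$-\emph{bundles}, i.e.\ the resulting fibre functor is exact after evaluation on every prism. I would check this on the Breuil--Kisin prism and use faithful flatness of $\rho_{\mathrm{std}}$ (Lemma~\ref{lem:flatcoverWkprism}).
\item Over $\KisinS$, use the torsor description: $P=\Spec(\omega(\O_G))$ is a $G$-torsor away from the closed point of $\Spec\KisinS$, namely on the punctured spectrum.
\item Extend $P$ across the closed point using purity for torsors under parahoric (and finite étale extensions of parahoric) group schemes on the $2$-dimensional regular local ring $W(k)\llbracket u\rrbracket$. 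For reductive $G$ this is the Colliot-Thélène--Sansuc type extension. For parahoric $G^\circ$ one needs the result of Anschütz / Guo--Reinecke type.
\end{enumerate}
Exactness is then recovered because the extended torsor's associated bundles agree with the reflexive hulls produced by $T^{-1}$.

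The reductive case is exactly \cite[Thm.~2.28]{imai2024tannakian} together with \cite[Prop.~3.8]{GuoReineckeCrysLoc}, as noted. In general, I would follow the same argument: Imai--Kato--Youcis prove the equivalence for smooth affine $G$ once torsors extend across the punctured spectra of Breuil--Kisin type prisms. So the plan is to cite their Tannakian machinery and verify only that extension hypothesis for $G$ an extension of a finite étale group by a parahoric. The finite étale quotient is harmless, since its torsors are étale and extend uniquely by purity of the branch locus. This reduces the hypothesis to $G^\circ$.

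The main obstacle is precisely this extension of $G^\circ$-torsors from $\Spec\KisinS\setminus\{(p,u)\}$ to $\Spec\KisinS$ for non-reductive parahoric $G^\circ$, i.e.\ making the quasi-inverse of $T$ exact on $G$-objects. I would first try to import the parahoric purity statement used by Anschütz / Ito for Breuil--Kisin modules with parahoric structure. If that fails, I would deduce it from the reductive case by embedding $G^\circ$ into the $\mathrm{GL}$ of a representation where $G^\circ$ is the schematic stabilizer of tensors (available for parahorics after Kisin--Pappas / Broshi), and extending the tensors by reflexivity.
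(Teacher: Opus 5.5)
Your proposal is correct, and its core matches the paper's argument. Essential surjectivity comes down to exactness of $V\mapsto T^{-1}(\omega(V))$, which you check after pulling back along the faithfully flat Breuil--Kisin chart. There you show that the algebraized $G$-scheme $Q$ over $\Spec\KisinS$ is a torsor on the punctured spectrum $U$, extend it across the closed point by Ansch\"utz's theorem for parahorics, and conclude because the associated bundles of the extended torsor are the reflexive Breuil--Kisin modules.

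You differ from the paper in how you handle $F$ and the component group $\Gamma=G/G^\circ$.

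The paper proceeds on the Galois side, in three reductions:
\begin{enumerate}
\item It replaces $G$ by $\Res_{\O_F/\Z_p}G$, so the extension theorem is only needed for a $\Z_p$-parahoric over $W(k)\llbracket u\rrbracket$.
\item It uses Lemma~\ref{lem:crystallineGreps} to write $\omega=\omega(\rho)$ for an honest $\rho:\Gal_K\to G'(\O_F)$, with $G'$ a pure inner form.
\item It passes to a finite unramified $K'/K$ so that $\rho$ lands in $G^\circ$. This is allowed because $(\O_{K'})^\Prism\to(\O_K)^\Prism$ is finite \'etale, and possible because the finite-image crystalline composite $\Gal_K\to\Gamma(\O_F)$ is unramified.
\end{enumerate}
After this, geometric input is only needed for smooth connected $G$, which is the setting of Step~5 of Kisin's argument.

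You instead stay on the torsor side. The $\Gamma$-torsor $Q/G^\circ$ extends, by Zariski--Nagata purity, to a finite \'etale $R\to\Spec\KisinS$. The $G^\circ$-torsor $Q\to R|_U$ then extends over each component $W(k')\llbracket u\rrbracket$ of $R$. Finally, the $G$-action and the torsor property extend by a depth-$2$ (Hartogs) argument, since everything is affine and flat.

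This avoids pure inner forms and base change in $K$. But you should write that descent out, because ``this reduces the hypothesis to $G^\circ$'' hides it. If you skip the restriction of scalars, you also need the parahoric extension result you cite to cover $(W(k)\otimes_{\Z_p}\O_F)\llbracket u\rrbracket$. The two routes encode the same fact: a $\Gamma$-torsor over the henselian ring $\KisinS$ is determined by its restriction to $\Spec k$, i.e.\ it is unramified.

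Two smaller points.
\begin{itemize}
\item Your step~(2), that $Q$ is a torsor on the punctured spectrum, is not formal. In the paper it comes from Steps~1--4 of \cite[1.3.4]{KisinShimura}, which use the Frobenius isomorphism $\varphi^*Q[1/E]\simeq Q[1/E]$ (Lemma~\ref{lem:Frobtorsor}), or equivalently exactness of $T^{-1}$ away from the closed point. You should cite or reproduce this.
\item Your fallback of extending tensors by reflexivity would not work for non-reductive $G^\circ$. The bundle and the tensors always extend, but the scheme of tensor-preserving isomorphisms need not be a torsor at the closed point. Already $\mathbf{G}_a$-torsors on $U$ need not extend, since $H^1(U,\O_U)\neq 0$. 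This is exactly why Ansch\"utz's theorem is needed.
\end{itemize}
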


\begin{proof} We begin by noting that each of the categories in (1) and (2) do not change if we replace $G$ by $\Res_{\O_F/\Z_p} G$ and $F$ by $\Q_p.$ For (1) this follows from the equivalence explained in Corollary \ref{cor:Gobjectequiv}, applied to $(\O_K)^{\prism}$.
For (2) it follows from Lemma \ref{lem:crystallineGreps}. 
Thus we assume $F=\Q_p$ for the rest of the proof. 
\footnote{While this reduction is not essential to the proof it is convenient when
citing the references at the end of the argument.}

By Theorem~\ref{thm:CrysGalPrismCrys} (i.e.\ the main result of  \cite{bhatt2021prismatic}), the functor $T$ gives a $\otimes$-functor
\begin{equation*}
    \label{prismcrystaleqn}
T:\mathrm{Vect}^{\varphi}(\mathrm{Spf}(\mathcal{O}_K)^\Prism) \to \mathrm{Rep}_{\mathbf{Z}_p}^{\crys}(\Gal_{K})
\end{equation*}
which is an equivalence  (but whose  inverse is not exact). Contemplating exact $\otimes$-functors from $\Rep_{\Z_p}(G)$ into either side then gives a fully faithful functor
\[ T_G:G\text{-}\mathrm{Vect}^{\varphi}(\mathrm{Spf}(\mathcal{O}_K)^\Prism) \to G\text{-}\mathrm{Rep}_{\mathbf{Z}_p}^{\crys}(\Gal_{K}).\]
To show that $T_G$ is an equivalence, it thus suffices to show essential surjectivity.
Let $\omega \in G\text{-}\mathrm{Rep}_{\mathbf{Z}_p}^{\crys}(\Gal_{K}).$ By Lemma \ref{lem:crystallineGreps}, $\omega = \omega(\rho)$
for some crystalline representation $\rho: \Gal_K \rightarrow G'(\O_F),$ with $G'$ a pure inner form of $G.$
As in the proof of Lemma~\ref{lem:crystallineGreps}, the equivalence $ \Rep_{\cO_F} G \simeq \Rep_{\cO_F} G'$ 
allows us to identify the functors $T_G$ and $T_{G'}.$ Thus to show that $\omega$ is in
the essential image of $T_G$ we may replace $G$ by $G',$ and assume $G' = G.$

For each~ $V$ in $\Rep_{\Z_p} G$ we set $T^{-1}_G(\omega)(V) = T^{-1}(\omega(V)).$ To show $T^{-1}_G(\omega)$ is a well defined
object of $G\text{-}\mathrm{Vect}^{\varphi}(\mathrm{Spf}(\mathcal{O}_K)^\Prism),$ we need to show that $V \mapsto T_G^{-1}(\omega)(V)$ is exact. To do this note that, if $K'/K$ is finite unramified extension, then $(\O_{K'})^{\Prism} \rightarrow (\O_K)^{\Prism}$ 
is a finite \'etale cover by \cite[Rmk.~3.9]{bhatt2022prismatization}, so it suffices to check exactness after replacing $K$ by $K'.$ 
(This may also be seen by using the flat cover $\iota$ below). 
Thus we may assume that $\rho$ factors through $G^\circ(\O_F).$
Then $\omega$ may be promoted to an object in $G^\circ\text{-}\mathrm{Rep}_{\mathbf{Z}_p}^{\crys}(\Gal_{K}),$ 
so we may assume that $G = G^\circ$ is a parahoric, which we do from now on. 

Now to show exactness of $T_G^{-1}(\omega)$ we use the point of view of \ref{subsec:torsors}. 
Let $\O_P \in \mathrm{QCoh}(\Spf(\cO_K)^\Prism)$ be the colimit of the ind-object $ T_G^{-1}(\omega)(\O_G)$ in vector bundles; this is a sheaf of commutative algebras, and let $P$ be the corresponding stack with $G$-action over $\Spf(\cO_K)^\Prism$.
By Corollary \ref{cor:equivfunctors}, and using that we know that $T_G$ is fully faithful, it suffices to show that $P$ is a $G$-torsor. 
Using the faithfully flat map $\iota:\mathrm{Spf}(\KisinS) \to (\mathcal{O}_K)^\Prism$ from \eqref{eq:rhoforBK}, it suffices to show that $\iota^*P$ is a $G$-torsor.

Observe that pulling back along $\mathrm{Spf}(\KisinS) \to \mathrm{Spec}(\KisinS)$ (i.e., completion) induces an exact $\otimes$-equivalence on the category of vector bundles, and hence also on the Ind-categories. Hence, the commutative algebra object $\O_{\iota^*P}$ arises as the completion of a flat commutative algebra object $\O_Q$ in $\mathrm{QCoh}(\Spec(\KisinS))$, corresponding to a flat affine $G$-scheme $Q \to \Spec(\KisinS)$ algebraizing the $G$-scheme $\iota^* P \to \Spf(\KisinS)$. It suffices to show that $Q$ is a $G$-torsor over $\KisinS$.

When $G$ is reductive this was proved in \cite[1.3.4]{KisinShimura}.
In fact, the argument there shows that if $G$ is any affine group scheme of finite type over $\Z_p,$ then
$Q$ is a $G$-torsor when restricted to the complement of the closed point
of $\Spec \KisinS$ (Steps 1-4 of {\it loc.~cit}). Here we are using the isomorphism 
$\varphi^*(Q)[1/E] \simeq Q[1/E],$ cf.~\ref{lem:Frobtorsor}. 
This could also be deduced from the analogous ``exactness outside the closed point'' property of the only non-exact functor, $T^{-1}$, used in the definition of $Q$.
When $G$ is smooth and connected, and every
$G$-torsor on the complement of the closed point in $\Spec \KisinS$ extends to $\Spec \KisinS$, then  Step 5
of {\em loc.~cit} shows that~$Q$ 
is a $G$-torsor.
It therefore suffices to note that for $G$ parahoric this final statement is
\cite[Cor.~1.2]{AnschutzTorsor}.
\end{proof}

\begin{para}\label{para:HTweights} As in~\ref{para:connected-reductive-uparrow}, fix an embedding $F \rightarrow \bar K,$ and denote by
$\xi:W(k)\otimes_{\Z_p}\O_F \rightarrow \bar K$ the induced map.
Let $\rho:\Gal_{K} \rightarrow G(\O_F)$ be a crystalline representation,
and let $\omega$ be the corresponding functor in $G$-$\mathrm{Rep}_{\mathbf{Z}_p}^{\crys}(\Gal_{K})$
given by Lemma \ref{lem:crystallineGreps}.
For $V$ in $\Rep_{\O_F} G,$ consider the graded $\bar K$-vector space
given by
$$ \gr_{\xi}D_{\dR}(V) \coloneq \gr^{\bullet} (D_{\dR}(\omega(V)[1/p])\otimes_{W(k)\otimes_{\Z_p}\O_F,\xi}\bar K). $$

 Replacing $K$ by a finite unramified
extension does not change $ \gr_{\xi}D_{\dR}(V)$, so we may promote $\omega$ to an object of $G^\circ$-$\mathrm{Rep}_{\mathbf{Z}_p}^{\crys}(\Gal_{K}).$
By Tannaka duality, this gives a $G^\circ$-conjugacy class of cocharacters
$\mu_{\xi}(\rho): \G_m \rightarrow G^\circ$ defined over $\bar K.$
Note that the conjugacy class $[\mu_{\xi}(\rho)]$ is functorial in $G^\circ$
and depends only on $\ker \xi.$ 
Fixing a maximal torus and Borel $T \subseteq B \subseteq G^\circ$ over $\overline{K},$
we may view $\mu_{\xi}(\rho) \in X_*(T)^+.$

Similarly, by replacing $K$ by a finite unramified extension, we may define an invariant $\alpha_{\mathcal E,\xi}(\bar K),$ 
as in \ref{para:connected-reductive-uparrow}. 

\end{para}

\begin{prop}\label{prop:HTweights} Let $\rho:\Gal_{K} \rightarrow G(\O_F)$
be a crystalline representation, and let $(\mathcal E, \varphi_{\mathcal E})$
be the corresponding prismatic $F$-crystal with $G$-structure.
Then, for each $\xi$ as above, 
we have
$$\mu_{\xi}(\rho) = \alpha_{\mathcal E,\xi}(\bar K) \in X_*(T)^+.$$
\end{prop}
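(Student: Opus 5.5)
The plan is to reduce, via Tannakian functoriality, to the case $G=\GL_n$ and a single representation, where the statement is the classical fact recalled in \S\ref{ss:canmodthms}: for a crystalline Breuil--Kisin module $\gM$ attached to a lattice in $V$, the relative position $\mu_{\gM}$ of $\varphi^*\gM\otimes K\llbracket E\rrbracket$ and $\gM\otimes K\llbracket E\rrbracket$ is the Hodge--Tate cocharacter of $V$.

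First, replace $K$ by a finite unramified extension so that $\rho$ factors through $G^\circ(\O_F)$. Neither side changes: $\gr_\xi D_{\dR}$ is insensitive to unramified extension, and the pullback of $\mathcal E$ to $W(k')^\Prism$ has Breuil--Kisin module $\gM\otimes_{W(k)}W(k')$, so the relative position is unchanged. We may therefore assume $G=G^\circ$, working with the generic fibre $G_{\bar K}$, which is reductive, and its torus $T$.

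Both invariants are dominant cocharacters determined by Tannakian data over $\bar K$. Specifically, $\mu_\xi(\rho)$ is the splitting type of the $\otimes$-filtration $V\mapsto F^\bullet(D_{\dR}(\omega(V)[1/p])\otimes_\xi\bar K)$. Likewise $\alpha_{\mathcal E,\xi}(\bar K)$ is the relative position of the $G$-torsor modification $\varphi_\gM$ over $\bar K\llbracket E\rrbracket$, i.e.\ the element of $G(\bar K\llbracket E\rrbracket)\backslash G(\bar K((E)))/G(\bar K\llbracket E\rrbracket)$. A dominant cocharacter of a reductive group over $\bar K$ is determined by its images in $X_*$ of $\GL(V)$ for all (indeed one faithful) $V\in\Rep_{\O_F}G$: the multiset of weights of $\lambda$ on every representation determines $\lambda_{\dom}$. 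Both constructions are functorial in $V$, so it suffices to show equality of multisets of integers for each $V$. This reduces the claim to $G=\GL(V)$, after restriction of scalars $\O_F\to\Z_p$ and taking the $\xi$-isotypic component of $W(k)\otimes\O_F$.

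For $\GL_n$, identify $\varphi^*\gM\otimes K\llbracket E\rrbracket$ with $D\otimes K\llbracket E\rrbracket$ via \eqref{eq:ConnBKRational} (here $r>|p|$). Then $\gM\otimes K\llbracket E\rrbracket$ is the lattice $\sum_i E^{-i}F^iD\otimes K\llbracket E\rrbracket$: this is Kisin's construction \cite[1.2]{KisinCrys}, equivalently the Nygaard description \eqref{eq:NygaardBK}. The relative position of a Rees lattice with the trivial lattice is exactly the multiset of jumps of $F^\bullet D$. By the remark following \eqref{eq:NygaardBK}, these are the Hodge--Tate weights, and Example~\ref{ex:RelPosDef} fixes the sign convention. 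Decomposing along $W(k)\otimes\O_F\otimes\bar K=\prod_\xi\bar K$ gives the $\xi$-component.

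The main obstacle is matching normalizations: checking that the sign and dominance conventions in $\alpha$ (the triple $(\varphi^*\gM,\gM,\varphi_\gM)$ in the switched Grassmannian) agree with those of the Hodge cocharacter, including the convention that the cyclotomic character has weight $-1$. I would settle this by checking the cyclotomic character, or $\Z_p(1)$, directly, where $\gM$ has $\varphi_\gM=E$ up to a unit.
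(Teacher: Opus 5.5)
Your proposal follows the paper's proof. You pass to a finite unramified extension so that $G=G^\circ$, then use that a conjugacy class of cocharacters of a reductive group over $\bar K$ is detected by its composites with all representations (the paper cites \cite[Lem.~2.2]{RapoportRichartz}), together with functoriality, to reduce to $\GL_n$. There you use Kisin's identification of the Hodge filtration with the image of the Nygaard filtration, which the paper simply cites as \cite[Thm.~1.2.1(1)]{KisinShimura} and which you unpack into the elementary-divisor computation.

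There are two harmless slips. First, a single faithful representation does not detect conjugacy in general: for $\mathrm{SO}_4$ the cocharacters $(1,1)$ and $(1,-1)$ become conjugate in $\GL_4$. Your argument in fact uses all $V$, so nothing is lost. Second, in the paper's normalization the Breuil--Kisin module of $\Z_p(1)$ has $\varphi_{\gM}=E^{-1}$ up to a unit, not $E$, so its relative position is $-1$, matching its Hodge--Tate weight.
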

\begin{proof} As above, after replacing $K$ by a finite unramified extension, we may assume that $G = G^\circ$.
  Consider a representation $r: G \rightarrow \GL(L)$ on
a finite free $\O_F$-module $L.$ By \cite[Lem.~2.2]{RapoportRichartz}, it suffices to show that, for any $L,$
$r\circ(\mu_{\xi}(\rho))$ and $r\circ \alpha_{\mathcal E,\xi}(\bar K)$ are conjugate in
$\GL(L).$ Keeping in mind the functoriality of the cocharacters
$\mu_{\xi}(\rho)$ and $\alpha_{\mathcal E,\xi}(\bar K)$ with respect to
$\rho,$ it then suffices to consider the case $G = \GL_n.$ In this case the result follows from
\cite[Thm.~1.2.1(1)]{KisinShimura}.
\end{proof}

\subsection{\texorpdfstring{$F$}{F}-gauges and the shape of Breuil--Kisin modules}\label{subsec:Jacob-lemma}
In this section, which is not used in the rest of the paper, we explain our first approach to proving results like Theorem~\ref{thm:alpha-E-uparrow}, 
as first explained in~\cite{TG-IAS-talk}. 

Write $X=(\ZpNyg)_{p=t=0}$, so $X=\mathbf{A}^1/(\mathbf{G}_a^\sharp \rtimes \mathbf{G}_m)$, with $x$ denoting the co-ordinate on $\mathbf{A}^1$; we normalize the action to ensure\footnote{This choice ensures compatibility with standard sign conventions for the larger stack $\Z_p^\cN$: there is a natural Rees map $t:\mathbf{Z}_p^\cN \to \mathbf{A}^1/\mathbf{G}_m$ whose restriction to the closed point of the target gives a natural map $X \to B\mathbf{G}_m$, and our  conventions ensure that $\mathcal{O}(-1)$ on $X$ is indeed the pullback of $\mathcal{O}(-1)$ from $B\mathbf{G}_m$.} $x \in H^0(X,\mathcal{O}(-1))$.
Note that we have a  composition of maps
\[ f:B\mu_p \to B\mathbf{G}_m \to \mathbf{A}^1/\mathbf{G}_m \to X,\]
where all maps are the natural ones. Write $\tilde{f}:B\mathbf{G}_m \to X$ for the map appearing above.

\begin{defn}
For $M \in \mathrm{Perf}(X)$, we write
\[ w_M = [\tilde{f}^* M] \in K_0(B\mathbf{G}_m)\]
and
\[ \overline{w}_M = [f^* M] \in K_0(B\mu_p).\]
Concretely, we can identify $K_0(B\mathbf{G}_m)$ with the group $\mathrm{Map}_{\mathrm{fin}}(\mathbf{Z},\mathbf{Z})$ of finitely supported $\mathbf{Z}$-valued functions on the character group $\mathbf{Z}$ of $\mathbf{G}_m$; a coherent sheaf $M$ on $X$ corresponds to a graded module $\oplus_n M_n$ over the Weyl algebra $\mathbf{F}_p\{x,D\}$, and we have
\[ w_M(n) = \dim(M_n) - \dim(M_{n+1}).\]
Similarly, identifying $K_0(B\mu_p) = \mathrm{Map}(\mathbf{Z}/p,\mathbf{Z})$, for $M=\oplus_n M_n$ as above, we have
\[ \overline{w}_M(n) = \sum_{j \in \mathbf{Z}} w_M(n+pj).\]
\end{defn}

\begin{defn}For~$i\in\Z$ we as usual write~$\delta_i:\Z\to\Z$ for the function
\[
  \delta_i(j)=\begin{cases}
1 & \text{if } i=j, \\
   0 & \text{otherwise,}
\end{cases}
\] and we set \[
 e_i\coloneq \delta_i+\delta_{i+p+1}-\delta_{i+1}-\delta_{i+p}.
\]
\end{defn}

With the above notation, we have the following lemma, which we learned from Jacob Lurie.

\begin{lem}\label{lem:Jacob-lemma}
\begin{enumerate}
\item\label{item:32} Let $M \in \mathrm{Perf}(X)$ be supported at the closed point, i.e.,  $M[x^{-1}]=0$. Then $\overline{w}_M=0$.
\item\label{item:33} Let $N \in \mathrm{Coh}( (\Z_p^\cN)_{p=0})$ be supported at the closed point, and let $M = N|_X \in \mathrm{Perf}(X)$ be the restriction of $N$ along $X \hookrightarrow (\Z_p^\cN)_{p=0}$. Then $w_M$ can be written as a finite sum $h=\sum_im_ie_i$ with each~$m_i\le 0$.
\end{enumerate}
\end{lem}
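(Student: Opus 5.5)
Part~\eqref{item:32} I will prove by reducing to generators. The class $\overline{w}_M=[f^*M]$ is additive in cofibre sequences, so it suffices to treat generators of the thick subcategory of $\mathrm{Perf}(X)$ supported on $\{x=0\}$. By a Koszul argument this subcategory is generated by cones $\mathrm{cofib}(x:P(1)\to P)$ with $P$ perfect on $X$, and in fact by the objects $\mathcal{O}_X/x\otimes\mathcal{O}(n)$. For such a cone, $f^*$ of the Koszul complex is $\mathrm{cofib}(f^*P(1)\xrightarrow{0}f^*P)$, since $x$ pulls back to zero on $B\mu_p$. Its class is therefore $[f^*P]-[f^*P(1)]$. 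The key point is that $\mathcal{O}(1)$ pulled back to $B\mu_p$ along $f$ is trivial as a $K$-theory twist: the composite $B\mu_p\to B\mathbf{G}_m$ sends $\mathcal{O}(1)$ to the standard character, and in characteristic $p$ the group $\mu_p$ is infinitesimal, so its only character over $\mathbf{F}_p$ is trivial on the reduced points.

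I now suspect I have the mechanism wrong. The identification $K_0(B\mu_p)=\mathrm{Map}(\mathbf{Z}/p,\mathbf{Z})$ suggests a different argument: one should use the $\mathbf{G}_a^\sharp$-structure, i.e.\ the Weyl algebra. So I will instead argue concretely. Take $M=\oplus_n M_n$, a finitely generated graded $\mathbf{F}_p\{x,D\}$-module that is $x$-power torsion. Here $x$ lowers degree by one and $D$ raises it by one, with $[D,x]=1$. Then $\sum_n w_M(n+pj)=\dim M_n-\dim M_{n+p\cdot\infty}$ telescopes along $n\mapsto n+p$ only if $\dim M_n$ is $p$-periodic up to finitely many terms. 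I will prove this periodicity from the fact that $x^p$ and $D^p$ are central. On $x$-torsion modules, $D^p$ acts and $xD$ acts semisimply on the generalized eigenspaces; the Weyl-algebra structure then makes the multiplication $D^p:M_n\to M_{n+p}$ an isomorphism for $|n|\gg0$. This is the standard fact that $x$-torsion $D$-modules on the line in characteristic $p$ are modules over $\mathbf{F}_p[x]/x^p\otimes$ a matrix algebra, which is Cartier/Azumaya. Hence $\overline{w}_M(n)=\lim(\dim M_n-\dim M_{n+pN})=0$.

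Part~\eqref{item:33} I will prove by dévissage. Use the filtration of $N$ by coherent subquotients to reduce to simple objects supported at the closed point of $(\Z_p^\cN)_{p=0}$. These are pushed forward from the Hodge point, with the $t$- and $x$-structures both nilpotent. For each such simple object, compute the restriction to $X$, which amounts to derived pullback along $t=0$. The result is a two-term complex $N(1)\xrightarrow{t}N$ with zero differential. Its class has the shape $\delta_i-\delta_{i+1}$ twisted by the $p$-power relation coming from the divisorial description (Proposition~\ref{prop:ZpNygDesc}), where $t$ enters via $t^p$ against $y$. I expect this produces exactly $-(\delta_i+\delta_{i+p+1}-\delta_{i+1}-\delta_{i+p})=-e_i$ for each simple constituent. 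Summing over constituents then gives $w_M=\sum m_ie_i$ with $m_i\le0$. Identifying the simple objects and computing their restriction exactly is the main obstacle. I would compute in the chart $f:Y\to W(k)^\cN$ mod $p$, where $t$ and $y$ are explicit coordinates.
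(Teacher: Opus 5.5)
Both parts have genuine gaps, and they share a missing idea. The closed substack to d\'evisser along is not $\{x=0\}$ but $Z=\{x^p=0\}$.

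\textbf{Part~(1).} Your first mechanism fails twice over.
\begin{itemize}
\item Since $[D,x]=1$, multiplication by $x$ is not compatible with the $\mathbf{G}_a^\sharp$-structure. So $\mathrm{cofib}(x\colon P(1)\to P)$ and $\mathcal{O}_X/x$ are not objects of $\mathrm{Perf}(X)$: the $\mathbf{G}_a^\sharp$-orbit of the origin is $\alpha_p$, not a point, and only $x^p$ gives a map of sheaves on $X$.
\item $\mathcal{O}(1)$ is not trivial in $K_0(B\mu_p)=\mathrm{Map}(\mathbf{Z}/p,\mathbf{Z})$. The group $\mu_p$ is diagonalizable with character group $\mathbf{Z}/p$, whether or not it is infinitesimal.
\end{itemize}
Your second mechanism misreads the invariant. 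From $w_M(n)=\dim M_n-\dim M_{n+1}$ one gets $\overline{w}_M(\bar n)=d_{\bar n}-d_{\bar n+1}$, where $d_{\bar n}=\sum_{m\equiv n \bmod p}\dim M_m$. So nothing telescopes along $n\mapsto n+p$, and what must be shown is that $d_{\bar n}$ is independent of $\bar n$. Moreover, a coherent $x$-torsion sheaf on $X$ is finitely generated over $\mathbf{F}_p[x]$ (with $D$ locally nilpotent), hence finite-dimensional. So ``$D^p\colon M_n\to M_{n+p}$ is an isomorphism for $|n|\gg0$'' is vacuous, and your limit formula does not compute $\overline{w}_M$.

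The paper's argument runs as follows.
\begin{itemize}
\item In characteristic $p$ the map $\mathbf{G}_a^\sharp\to\mathbf{G}_a$ lands in $\alpha_p$, so $x^p$ is invariant. Hence $Z=\{x^p=0\}$ is a Cartier divisor with $I_Z\cong\mathcal{O}_X(p)$, and $Z\simeq B(\mathbf{G}_a^\sharp\rtimes^p\mathbf{G}_m)$.
\item Unipotence of $\mathbf{G}_a^\sharp$-representations filters any $x$-torsion coherent sheaf by copies of $i_*\mathcal{O}_Z(j)$.
\item The class of $f^*i_*\mathcal{O}_Z(j)$ is $[\mathcal{O}(j)]-[\mathcal{O}(j+p)]$, which vanishes because $\mathcal{O}(p)$, not $\mathcal{O}(1)$, is trivial on $B\mu_p$.
\end{itemize}
Your Azumaya remark can be turned into an equivalent proof if used correctly. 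First d\'evisser along the central element $x^p$. A module killed by $x^p$ then has the form $\mathbf{F}_p[x]/(x^p)\otimes N$ for a graded $\mathbf{F}_p[D^p]$-module $N$. This spreads $\dim N$ evenly over the residue classes mod $p$, so $d_{\bar n}$ is constant.

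\textbf{Part~(2).} You do not carry out the computation, and you misattribute where the $p$ comes from. The simple constituents are $k_*\mathcal{O}_Z(j)$, with $Z$ as above. Restricting along the Cartier divisor $t=0$ gives $i_*\mathcal{O}_Z(j)\oplus i_*\mathcal{O}_Z(j-1)[1]$; this is your two-term piece. But $\tilde{f}^*$ then needs a second Koszul step along $x^p$, via $i^*i_*\mathcal{O}_Z(j)\simeq\mathcal{O}_Z(j)\oplus\mathcal{O}_Z(j+p)[1]$. The resulting four terms give $w_M=\delta_j-\delta_{j-1}-\delta_{j+p}+\delta_{j+p-1}=-e_{j-1}$.

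The shift by $p$ comes from $I_Z\cong\mathcal{O}_X(p)$. The interplay of $t^p$ and $y$ in the divisorial description of $\Z_p^\cN$ plays no role here. Without identifying $Z$ and this second step, nothing in your sketch shows that each constituent contributes exactly $-e_i$, and that is the whole content of the claim.
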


Note that $\overline{w}_M$ only depends on the pullback of $M$ to $\mathbf{A}^1/\mathbf{G}_m$, so the statement above also makes sense if $X$ is replaced by $\mathbf{A}^1/\mathbf{G}_m$. However, the resulting statement is false: the structure sheaf $M$ of the locus $x^2=0$ yields a counterexample for $p > 2$.

\begin{proof}[Proof of Lemma~\ref{lem:Jacob-lemma}]
We begin with the first part. By filtering, we may assume that $M$ is a coherent sheaf with $M[x^{-1}]=0$, and we must show $[f^*M] =0$. As $\alpha_p \subset \mathbf{G}_a$ is stable under translation by $\mathbf{G}_a^\sharp$, setting $x^p=0$ gives a closed substack $i:Z \subset X$. Moreover,
\[ Z = \alpha_p/(\mathbf{G}_a^\sharp \rtimes \mathbf{G}_m) \simeq B(\mathbf{G}_a^\sharp \rtimes^p \mathbf{G}_m),\] where the superscript of $p$ indicates a rescaling by $p$ of the standard action of $\mathbf{G}_m$ on $\mathbf{G}_a^\sharp$, and the second identification comes from the exact sequence
\[ 0 \to F_* \mathbf{G}_a^\sharp \xrightarrow{V} \mathbf{G}_a^\sharp \to \alpha_p \to 0\]
of $W$-module schemes, coming from taking the kernel of Frobenius in the standard exact sequence
\[ 0 \to F_* W \xrightarrow{V} W \to \mathbf{G}_a \to 0\]
of $W$-module schemes. 
In particular, as representations of $\mathbf{G}_a^\sharp$ are all unipotent, any coherent sheaf on $Z$ admits a finite filtration whose associated graded pieces are copies of $\mathcal{O}_Z(j)$ for suitable $j$. Further, as $I_Z = \mathcal{O}_X(p)$, any coherent sheaf $M$ on $X$ set-theoretically supported on $Z$ then also admits a finite filtration with graded pieces of the form $i_* \mathcal{O}_Z(j)$ for suitable $j$. Thus, it is enough to show $[f^* i_* \mathcal{O}_Z(j)]=0$ for any $j$. But
$f^* i_* \mathcal{O}_Z(j) = g^* i^* i_* \mathcal{O}_Z(j)$ for the (unique) map $g:B\mu_p \to Z$ factoring $f$, and $ i^*i_{*} \mathcal{O}_Z(j) = \mathcal{O}_Z(j) \oplus \mathcal{O}_Z(j+p)[1]$, so the claim follows by noting that $g^*$ is a $\otimes$-functor and $g^* \mathcal{O}(p) = \mathcal{O}_{B\mu_p}$.

For the second part, by filtering $N$, we may assume $N=k_*\mathcal{O}_Z(j)$ for suitable $j \in \Z$, where $k:Z \hookrightarrow (\Z_p^\cN)_{p=0}$ is the inclusion of the closed point. In this case, since $k$ factors as $Z \xrightarrow{i} X \hookrightarrow (\Z_p^\cN)_{p=0}$ where the second map is a Cartier divisor with ideal sheaf $\mathcal{O}_{ (\mathbf{Z}_p^\cN)_{p=0}}(-1)$, we have
\[ M = N|_X = i_* \mathcal{O}_Z(j) \oplus i_*\mathcal{O}_Z(j-1)[1].\]
Write $h:B\Gm\to Z$ for the evident map factoring $\tf$. We can then compute
\begin{align*}
    \tf^* M &= h^* i^* i_* \mathcal{O}_Z(j) \oplus h^* i^* i_* \mathcal{O}_Z(j-1)[1] \\
    &= \big(h^*  \mathcal{O}_Z(j) \oplus h^*  \mathcal{O}_Z(j+p)[1]\big) \oplus \big(h^*  \mathcal{O}_Z(j-1) \oplus h^*  \mathcal{O}_Z(j-1+p)[1]\big)[1] \\
    &= \mathcal{O}_{B\mathbf{G}_m}(j) \oplus \mathcal{O}_{B\mathbf{G}_m}(j+p)[1] \oplus \mathcal{O}_{B\mathbf{G}_m}(j-1)[1] \oplus \mathcal{O}_{B\mathbf{G}_m}(j-1+p)[2]
\end{align*}
where the second line follows from the first by noting that $i$ is a Cartier divisor with ideal sheaf $\mathcal{O}_X(p)$. One then computes that $w_M = -e_{j-1}$, which gives the desired claim.
\end{proof}

Let us deduce some numerical consequences from Lemma~\ref{lem:Jacob-lemma} for cocharacters attached to a crystalline Galois representation. It will be convenient in fact to work in a slightly broader context:

\begin{defn}
  We say that $f\in \mathrm{Map}_{\mathrm{fin}}(\mathbf{Z},\mathbf{Z})$  is \emph{effective} if $f(n)\ge 0$ for all~$n$.
If~$f$ is effective, then we write~$\nu(f)$ for the dominant cocharacter $(x_1 ,\dots,x_{N})$ of~$\GL_{N}$, where $N=\sum_nf(n)$, and the multiset~$\{x_1,\dots,x_N\}$ contains~$n$ with multiplicity~$f(n)$.
\end{defn}

An essentially immediate corollary is the following:

\begin{cor}\label{cor:Wp-orbit} Suppose that $\rho:\Gal_{\Qp}\to\GL_n(\Zp)$ is a crystalline representation with Hodge--Tate weights~$\mu(\rho)=(\mu_1 ,\dots,\mu_n)$. 
  Let~$\gM$ be the corresponding Breuil--Kisin module, and let $s(\gMbar)=(s_1 ,\dots,s_{n})$ be the shape of $\gMbar=\gM\otimes_{\Zp}\Fp$ (see Definition~\ref{v3-defn:shape}).

Then 
there is a permutation $\sigma\in S_n$ such that $s_i\equiv \mu_{\sigma(i)}\pmod{p}$ for all~$i$.
\end{cor}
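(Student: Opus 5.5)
We plan to deduce this from Lemma~\ref{lem:Jacob-lemma}\eqref{item:32}, applied to a perfect complex on $X=(\ZpNyg)_{p=t=0}$ built from the $F$-gauge attached to~$\rho$. By Theorem~\ref{thm:CrysGalPrismCrys}, $\rho$ corresponds to a reflexive coherent sheaf $\mathcal{E}$ on $\Z_p^{\Syn}$. Let $\mathcal{E}^\cN$ be its restriction to $\Z_p^\cN$, and form the derived restriction $M\coloneq \mathcal{E}^\cN|_X\in\mathrm{Perf}(X)$; it is perfect because $\mathcal{E}^\cN$ has finite Tor-amplitude on the regular chart $X_{(\KisinS,(E))}$. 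We will show that $\overline{w}_M$ computes two things. Evaluated via the de Rham/Hodge side, it gives the mod~$p$ reduction of the Hodge--Tate weights. Evaluated via the Hodge--Tate/Breuil--Kisin side, it gives the mod~$p$ reduction of the shape $s(\gMbar)$. We then show that these two multisets agree in $K_0(B\mu_p)=\mathrm{Map}(\Z/p,\Z)$.

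For the first computation we pull back along the Hodge point $B\mathbf{G}_m\to X$, or equivalently along $i_{\dR}$ restricted to $t=0$. The pullback $i_{\dR}^*\mathcal{E}^\cN$ is the Rees module of the Hodge filtration on $D_{\crys}$, as used in the proof of Theorem~\ref{thm:crysBKspecial}. Reducing mod $p$ and restricting to $t=0$ gives the graded module $\gr^\bullet$. So $w_{\tilde f^*M}$ records $\mu(\rho)$ as a multiset, and $\overline{w}_M$ is its image in $\Z/p$. This is exactly the content of Remark~\ref{rmk:mupweightcanmod}.

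For the second computation we pull back along the chart $g$ (or $f$) restricted to $p=t=0$. On the Rees stack of $(\KisinS/p,(u))$, the restriction of $g^*\mathcal{E}^\cN$ to $t=0$ is the associated graded of the Nygaard filtration $\varphi^*\gMbar\cap u^\bullet\gMbar$. Restricting further to $x=0$ yields, by Lemma~\ref{lem:starextendrees}(3), the residual graded $\F_p$-module whose Euler characteristic function is the relative position of $\varphi_{\gMbar}$, namely $s(\gMbar)$. This computes $w$ at a different point of $X$, namely $x=0$ rather than $x\neq0$.

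The two computations differ by sheaves supported at the closed point $x=0$. The key step is to write $\overline{w}_M$ as a difference and check that the discrepancy term satisfies Lemma~\ref{lem:Jacob-lemma}\eqref{item:32}. We use the exact triangle relating $M$, its restriction to the locus $x\ne0$, and its local cohomology at $x=0$. Because $B\mu_p\to X$ lands at $x=0$ and $\mu_p$-characters only see weights mod $p$, the lemma kills the $x$-torsion contribution. Consequently the mod~$p$ weight multiset is the same whether computed at the Hodge point or through the Breuil--Kisin residual sheaf.

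We expect the main obstacle to be the careful identification of the mod~$p$ restriction with the shape $s(\gMbar)$. This requires a comparison of the reflexive (non-derived) mod~$p$ structure with the derived restriction, and the difference between these is again torsion supported at the closed point, which Lemma~\ref{lem:Jacob-lemma}\eqref{item:32} again disposes of. Once $\overline{w}$ of both sides agree as functions on $\Z/p$, we choose the permutation $\sigma$ matching the two multisets of residues.
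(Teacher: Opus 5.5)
\textbf{The central step fails as written.} Your second computation misidentifies what the chart sees. The sheaf $g^*\mathcal{E}^\cN$ is the Rees module of the $p$-torsion-free filtration $\varphi^*\gM\cap E^\bullet\gM$. So its restriction to $p=t=0$ is the graded of the \emph{first} filtration $(\varphi^*\gM\cap E^\bullet\gM)/p$ of Proposition~\ref{cor:FrobdescentfiltBKmod}, not of $\varphi^*\gMbar\cap u^\bullet\gMbar$. The two differ precisely because reduction mod~$p$ does not commute with taking reflexive hulls.

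Nor is the chart's point $x=t=p=0$ a ``different point of $X$''. The $y$-component of $g$ is $[x]^p$, which vanishes there, so by Corollary~\ref{cor:filtdRlocus} this point factors through $i_{\dR}$ and is just the Hodge point $\tilde f$ again. Hence the derived restriction of $M=\mathcal{E}^\cN|_X$ has Euler characteristic $\mu(\rho)$ there: the residual sheaf of $\varphi_\gM$ over $\mathcal{O}_K$ is perfect, so its special and generic Euler characteristics agree. If your claim were true you would get $s(\gMbar)=\mu(\rho)$ on the nose, which is false in general.

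The local-cohomology triangle $R\Gamma_{\{x=0\}}M\to M\to Rj_*j^*M$ does not rescue this. Its outer terms are not perfect (not even coherent), so $\overline w$ is not defined on them and Lemma~\ref{lem:Jacob-lemma}\eqref{item:32} does not apply. In any case a single complex $M$ carries a single invariant $w_M$, so there is nothing to compare.

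What you need are two different sheaves at the same point. This is the idea you relegate to your final paragraph; it should be the whole argument, and it is the paper's proof. Put $M=\mathcal{E}/p$ on $(\Z_p^\cN)_{p=0}$ and let $M'=M^{\refl}$ be its reflexive hull. Then $0\to M\to M'\to N\to 0$ with $N$ coherent and supported at the closed point. Since $(\Z_p^\cN)_{p=0}$ has a flat cover by a regular two-dimensional chart (via $g_{p=0}$), all three sheaves restrict to perfect complexes on the Cartier divisor $X=\{t=0\}$, and $N|_X[x^{-1}]=0$. So Lemma~\ref{lem:Jacob-lemma}\eqref{item:32} gives $\overline w_{M'}=\overline w_{M}$.

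Your first computation gives $\nu(w_M)=\mu(\rho)$; note this is \cite[Thm.~1.2.1(1)]{KisinShimura}, not Remark~\ref{rmk:mupweightcanmod}, which concerns the $\mu_p$-weights of $\gN/u\gN$. On the other side, $g_{p=0}^*M'$ is the vector bundle of the isogeny $(\varphi^*\gMbar,\gMbar,\varphi_{\gMbar})$ over $(\KisinS/p,(u))$ by Proposition~\ref{prop:CohSheafRees}; this is where $\varphi^*\gMbar\cap u^\bullet\gMbar$ genuinely appears. Its restriction to $x=t=0$ is the residual sheaf, so $\nu(w_{M'})=s(\gMbar)$ by Remark~\ref{RelPosGrAgree}. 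Equality of $\overline w_M$ and $\overline w_{M'}$ in $\mathrm{Map}(\Z/p,\Z)$ then produces~$\sigma$.
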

\begin{proof}
Let $\mathcal{E}$ be the reflexive gauge on $\Z_p^{\mathcal{N}}$ corresponding to $\rho$ via Theorem~\ref{thm:CrysGalPrismCrys}; we shall prove the corollary by interpreting it as a relation between two invariants attached to this gauge.

Write $M=\mathcal{E}/p \in \mathrm{Coh}( (\Z_p^\cN)_{p=0})$, so $M$ is a $t$-torsion-free coherent sheaf on $(\Z_p^\cN)_{p=0}$. By (for example) \cite[Thm.~1.2.1(1)]{KisinShimura}, we have $\mu(\rho)=\nu(w_M)$. Write $M'$ for the reflexive hull of $M$, so there is a short exact sequence
\begin{equation*}\label{eqn:ses-for-pullback-to-ZpN}0\to M\to M'\to N\to 0\end{equation*} where $N$ is supported at the closed point of $(\Z_p^\cN)_{p=0}$. We then also have $s(\gMbar)=\nu(w_{M'})$, see Remark~\ref{RelPosGrAgree}. Now Lemma~\ref{lem:Jacob-lemma} (1) gives that $\overline{w}_{M'} -\overline{w}_{M}=\overline{w}_N=0 $, which immediately gives the claim.
\end{proof}
The rest of this section is devoted to proving Corollary~\ref{cor:uparrow-order-from-F-gauge}, which refines
 Corollary~\ref{cor:Wp-orbit}; the main geometric input comes from the (as yet unused) non-positivity of the coefficients appearing in Lemma~\ref{lem:Jacob-lemma} (2).
 We begin with some combinatorics.

  For any function $m:\Z\to\Z$, we will often write $m_i\coloneq m(i)$. If~$m$ is effective and has finite support, then we write \[
 H_m\coloneq \sum_i m_i e_i;
\]
equivalently,
\begin{equation}\label{eq:Hm-formula}
 H_m(x)=m_x-m_{x-1}-m_{x-p}+m_{x-p-1}.
\end{equation}

For any integer $a$ and integers $1\leq d\leq p$, $1\le k$, define the function
\[
 \rho_{a,d,k}(i)=
 \begin{cases}
  1,&\text{if }i=a+s+jp\text{ for some }0\leq s<d,\ 0\leq j<k,\\
  0,&\text{otherwise,}
 \end{cases}
\]
and set $E_{a,d,k}\coloneq   H_{\rho_{a,d,k}}$.
\begin{lemma}\label{lem:telescoping}
  We have 
  $E_{a,d,k}=\delta_a+\delta_{a+d+kp}-\delta_{a+d}-\delta_{a+kp}$.
\end{lemma}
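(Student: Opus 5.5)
The plan is to use linearity of $m\mapsto H_m$ together with the factorization of the operator in \eqref{eq:Hm-formula}. Writing $T$ for the shift operator $(Tm)(x)=m(x-1)$ on finitely supported functions, formula \eqref{eq:Hm-formula} says exactly that
\[
H_m=(1-T)(1-T^p)\,m .
\]
Since $T$ is invertible and commutes with everything in sight, the whole computation reduces to a factorization identity in the group ring $\Z[T,T^{-1}]$.

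The function $\rho_{a,d,k}$ is a product-shaped indicator. Concretely,
\[
\rho_{a,d,k}=T^a\Bigl(\sum_{s=0}^{d-1}T^s\Bigr)\Bigl(\sum_{j=0}^{k-1}T^{jp}\Bigr)\delta_0 .
\]
For this to be a valid description of $\rho_{a,d,k}$, the points $a+s+jp$ with $0\le s<d\le p$ and $0\le j<k$ must be pairwise distinct, so that each coefficient is $0$ or $1$. They are, since $s+jp$ determines $(s,j)$ because $s<p$; this is where the hypothesis $d\le p$ is used.

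Next apply the two telescoping identities
\[
(1-T)\sum_{s<d}T^s=1-T^d, \qquad (1-T^p)\sum_{j<k}T^{jp}=1-T^{kp}.
\]
They give
\[
E_{a,d,k}=T^a(1-T^d)(1-T^{kp})\delta_0 .
\]
Expanding the product yields $T^a(1-T^d-T^{kp}+T^{d+kp})\delta_0$. Using $T^b\delta_0=\delta_b$, this equals $\delta_a+\delta_{a+d+kp}-\delta_{a+d}-\delta_{a+kp}$, which is the claimed formula.

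No real obstacle is expected. The only points needing care are the identification of the shift convention (that $m_{x-1}$ corresponds to $T$, so that $T^b\delta_0=\delta_b$) and the distinctness check above. As a sanity check, in the case $d=k=1$ we have $\rho_{a,1,1}=\delta_a$, and the formula reduces to $e_a$, matching the definition of $e_a$.
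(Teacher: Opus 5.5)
Your proof is correct and is essentially the paper's argument: the paper writes $E_{a,d,k}=\sum_{s<d}\sum_{j<k}e_{a+s+jp}$ and telescopes first in $j$ and then in $s$, which is your factorization $(1-T)(1-T^p)$ applied to the two geometric sums, written out coefficientwise. Your explicit check that the points $a+s+jp$ are pairwise distinct (using $d\le p$) is a step the paper uses only implicitly when it identifies $\rho_{a,d,k}$ with $\sum_{s,j}\delta_{a+s+jp}$.
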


\begin{proof}
Since
\[
 e_i=(\delta_i-\delta_{i+1})-(\delta_{i+p}-\delta_{i+p+1}),
\]
we have

\begin{align*}
 E_{a,d,k}=\sum_{s=0}^{d-1}\sum_{j=0}^{k-1}e_{a+s+jp}
  &= \sum_{s=0}^{d-1} \bigl( (\delta_{a+s} - \delta_{a+s+1})  - (\delta_{a+s+kp}-\delta_{a+s+kp+1}) \bigr) \\
  &= \delta_a - \delta_{a+d} - \delta_{a+kp} + \delta_{a+d+kp},
\end{align*}

as claimed.
\end{proof}

\begin{lemma}\label{lem:rectangle-uparrow}
Let $G:\Z\to\Z_{\geq 0}$ be effective, and suppose that
\[
 G^+\coloneq G+E_{a,d,k}
\]
is also effective.  Then
\[
 \nu(G)\uparrow \nu(G^+).
\]
\end{lemma}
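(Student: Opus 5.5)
By Lemma~\ref{lem:telescoping}, $G^+=G+\delta_a+\delta_{a+d+kp}-\delta_{a+d}-\delta_{a+kp}$. So, as multisets, $\nu(G^+)$ is obtained from $\nu(G)$ by deleting one copy each of $a+d$ and $a+kp$ and inserting one copy each of $a$ and $a+d+kp$. Effectivity of $G^+$ guarantees that $a+d$ and $a+kp$ really occur in $\nu(G)$. If $d=p$ or $k=0$ the two deleted values coincide with the two inserted ones and $G^+=G$, so there is nothing to prove; assume henceforth $1\le d<p$ and $k\ge 1$, so the four values are distinct.

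Write $\nu(G)=(x_1\ge\dots\ge x_N)$ and choose indices $i<j$ with $x_i=a+kp$ and $x_j=a+d$ (note $a+kp>a+d$). We must pass from $\nu(G)$ to $\nu(G^+)$ by $p$-dilated affine reflections for $\GL_N$ that go up in dominance order. Take the root $\alpha=e_i-e_j$ and the reflection $s=s_{\alpha,(k+1)p}$:
\[
s(x)=x-\bigl(x_i-x_j-(k+1)p\bigr)(e_i-e_j).
\]
Here $x_i-x_j-(k+1)p=kp-d-(k+1)p=-(p+d)$, so $s$ replaces $x_i$ by $a+kp+p+d$ and $x_j$ by $a+d-p-d=a-p$. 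This is not the desired move, so the alternative $n=kp$ is the one to use instead: $s_{\alpha,kp}$ gives $x_i-x_j-kp=-d$, which sends $x_i\mapsto a+kp+d$ and $x_j\mapsto a$. This is exactly the swap required.

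Then $s(x)-x=d(e_i-e_j)=d\alpha^\vee$ with $d>0$ and $\alpha$ positive (since $i<j$), so $x\le s(x)$. Moreover $s(x)$ is a permutation of $\nu(G^+)$, i.e.\ $s(x)_{\dom}=\nu(G^+)$. Thus $\nu(G)\uparrow s(\nu(G))$ in one step, and Lemma~\ref{lem:uparrow-dominant-Weyl-orbit} gives $s(\nu(G))\uparrow s(\nu(G))_{\dom}=\nu(G^+)$. Since $\uparrow$ is transitive (one simply concatenates the chains of reflections), this yields $\nu(G)\uparrow\nu(G^+)$.

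The only real check is the arithmetic: the reflection hyperplane must sit at a multiple of $p$ (namely $kp$) so that the reflection lies in $W_p$, and the dominance increase must be nonnegative. Both hold as computed above, and no step appears to be a serious obstacle.
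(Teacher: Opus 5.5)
Your core argument is the paper's: choose $i<j$ with $x_i=a+kp$ and $x_j=a+d$, apply $s_{\varepsilon_i-\varepsilon_j,kp}$, which adds $d(\varepsilon_i-\varepsilon_j)$ to $x$, and finish with Lemma~\ref{lem:uparrow-dominant-Weyl-orbit} and transitivity of $\uparrow$. However, your preliminary reduction is false: $d=p$ does \emph{not} give $G^+=G$. By Lemma~\ref{lem:telescoping},
$E_{a,p,k}=\delta_a+\delta_{a+(k+1)p}-\delta_{a+p}-\delta_{a+kp}$. This is never zero; for instance $E_{a,p,1}=\delta_a+\delta_{a+2p}-2\delta_{a+p}$. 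In fact $E_{a,d,k}\neq 0$ for every allowed $d,k$, and $k=0$ is outside the allowed range anyway. So, as written, your proof omits the case $d=p$. That case genuinely occurs in the application: Lemma~\ref{lem:backward-peeling} writes $D=d+qp$ with $1\le d\le p$, so it produces $d=p$ whenever $p\mid D$.

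The repair uses the same reflection. If $d=p$ and $k\ge 2$, then $a<a+p<a+kp<a+(k+1)p$, and your argument goes through verbatim. If $d=p$ and $k=1$, the two coordinates to be moved coincide, since $a+d=a+kp=a+p$. Then $G^+(a+p)=G(a+p)-2\ge 0$ gives $G(a+p)\ge 2$, so you can choose distinct indices $i<j$ with $x_i=x_j=a+p$. The reflection $s_{\varepsilon_i-\varepsilon_j,p}$ sends $(a+p,a+p)\mapsto(a+2p,a)$, with $s(x)-x=p(\varepsilon_i-\varepsilon_j)\ge 0$, and you conclude as before. This is exactly the equality case the paper flags (``$a+kp\ge a+d$, with equality if and only if $d=p$ and $k=1$''). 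In this case your sentence that effectivity of $G^+$ makes $a+d$ and $a+kp$ occur in $\nu(G)$ must be read as: $a+p$ occurs with multiplicity at least two.
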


\begin{proof}
  By Lemma~\ref{lem:telescoping}, passing from $G$ to $G^+$ removes one occurrence of $a+kp$ and one occurrence of $a+d$, and inserts one occurrence of $a$ and one occurrence of $a+d+kp$.
   Since $1\leq d\leq p$ and $k\geq 1$, we have $a+kp\ge a+d$, with equality  if and only if~$d=p$ and~$k=1$.

As usual we write the positive coroots of  $\GL_N$ as $\varepsilon_r-\varepsilon_s$ for $1\leq r<s\leq N$.  If $x=(x_1,\dots,x_N)$ and $M\in p\Z$, the affine reflection $s_{\varepsilon_r-\varepsilon_s,M}$ acts by
\[
 (x_r,x_s)\mapsto (x_s+M,\,x_r-M)
\]
and fixes all other coordinates.

Let $x=\nu(G)=(x_1,\dots,x_N)$.  Choose indices $r<s$ such that
\[
 x_r=a+kp,
 \qquad
 x_s=a+d.
\]
Let $\alpha=\varepsilon_r-\varepsilon_s$ and let
\[
 y=s_{\alpha,kp}(x).
\]
Then the two chosen coordinates change as
\[
 (a+kp,a+d)\mapsto (a+d+kp,a).
\]
Moreover
\[
 y-x=d(\varepsilon_r-\varepsilon_s),
\]
so $\nu(G)=x\uparrow y$.
The multiset of coordinates of $y$ is exactly the multiset attached to $G^+$, so we may write $y=w\cdot \nu(G^+)$ for some~$w\in S_{N}$.
By Lemma~\ref{lem:uparrow-dominant-Weyl-orbit}, we have $w \cdot \nu(G^+)\uparrow \nu(G^{+})$, and we are done.
\end{proof}

\begin{lemma}\label{lem:anchored-corner}
Let $c:\Z_{\geq 0}^2\to\Z_{\geq 0}$ be finitely supported and suppose that $c(0,0)>0$.  
For $D,K\geq 1$ define
\[
 (\Delta c)(D,K)\coloneq c(D,K)-c(D-1,K)-c(D,K-1)+c(D-1,K-1).
\]
Then there exist $D,K\geq 1$ such that
\[
 c(s,j)>0\quad\text{for all }0\leq s<D,\ 0\leq j<K,
\]
and
\[
 (\Delta c)(D,K)>0.
\]
\end{lemma}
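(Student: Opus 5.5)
The plan is to run a discrete Green's theorem (telescoping) argument over a well-chosen staircase region. Let
\[ Q\coloneq \{(s,j)\in\Z_{\ge 0}^2 : c(s',j')>0 \text{ for all } 0\le s'\le s,\ 0\le j'\le j\}. \]
This is a finite down-closed set: it is finite because $c$ is finitely supported, and it is nonempty because $(0,0)\in Q$ by the hypothesis $c(0,0)>0$. Let $T$ be the set of pairs $(D,K)$ with $D,K\ge 1$ and $(D-1,K-1)\in Q$. For every $(D,K)\in T$ the first required condition ($c>0$ on $[0,D)\times[0,K)$) holds by definition of $Q$. So it suffices to show that
\[ \Sigma\coloneq \sum_{(D,K)\in T}(\Delta c)(D,K)>0, \]
since then some term of the sum is positive.

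Think of $(D,K)$ as the unit square with vertices $(D-1,K-1),(D,K-1),(D-1,K),(D,K)$. Then $(\Delta c)(D,K)$ is the signed sum of $c$ over the corners: sign $+$ at the lower-left and upper-right corners, $-$ at the other two. Expanding $\Sigma$ gives $\Sigma=\sum_v \epsilon(v)c(v)$, where $\epsilon(v)$ is the signed count of squares of $T$ having $v$ as a corner. Because $T$ is the translate by $(1,1)$ of the down-closed set $Q$, the union of its squares is a staircase polygon with lower-left corner $(0,0)$. The standard local computation for staircases gives the following values of $\epsilon$. At points interior to the polygon, and at points in the middle of an edge, $\epsilon(v)=0$, since the contributions cancel in pairs. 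At $(0,0)$, which lies in only the square $(1,1)$, $\epsilon=+1$. At the outer convex corners of the staircase, $\epsilon=+1$. The remaining corners have $\epsilon=-1$: the two axis endpoints $(A,0)$ and $(0,B)$, and the inner concave corners of the staircase.

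The key step, which I expect to be the only real point, is to check that $c(v)=0$ at every vertex with $\epsilon(v)=-1$. Take the axis endpoint $(A,0)$. By construction $(A-1,0)\in Q$ but $(A,0)\notin Q$, and since all of $[0,A)\times\{0\}$ lies in the support, this forces $c(A,0)=0$; the point $(0,B)$ is symmetric. Now take an inner concave corner $v=(s,j)$. The staircase geometry gives $(s-1,j)\in Q$ and $(s,j-1)\in Q$, but $v\notin Q$. Together the two memberships say that $c>0$ on $[0,s]\times[0,j]$ minus the single point $v$. Hence $v\notin Q$ forces $c(v)=0$. I will verify this identification of the concave corners carefully, since it is where the down-closedness of $Q$ is used.

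It follows that $\Sigma=c(0,0)+\sum_{\epsilon(v)=+1,\,v\ne(0,0)}c(v)\ge c(0,0)>0$, because $c\ge 0$. Therefore some $(D,K)\in T$ has $(\Delta c)(D,K)>0$, and as noted above it also satisfies the rectangle condition. This $(D,K)$ is the pair the lemma asks for.
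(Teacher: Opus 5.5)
Your argument is correct and is essentially the paper's proof. Your set $T$ is exactly the paper's $P$, and your signed corner count $\epsilon(s,j)$ is the paper's coefficient $A(s,j)=\mathbf 1_P(s,j)-\mathbf 1_P(s+1,j)-\mathbf 1_P(s,j+1)+\mathbf 1_P(s+1,j+1)$; both proofs conclude from $A(0,0)=1$ together with the fact that $c$ vanishes wherever this coefficient is negative.

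The only difference is presentation. The paper skips the full staircase classification and argues the contrapositive directly: if $c(s,j)>0$ then $A(s,j)\ge 0$, using down-closedness in the cases $s,j\ge 1$, $s=0$ and $j=0$. That contrapositive case analysis is also the cleanest way to make rigorous the ``standard local computation'' you asserted.
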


\begin{proof}
Let $P$ be the finite set of pairs $(D,K)\in\Z_{\geq 1}^2$ such that
\[
 c(s,j)>0\quad\text{for all }0\leq s<D,\ 0\leq j<K.
\]
Thus $P$ is a finite downwards-closed subset of $\Z_{\geq 1}^2$, and $(1,1)\in P$.
We need to prove that for some $(D,K)\in P$ we have $(\Delta c)(D,K)>0$.
Setting
\[
 S\coloneq \sum_{(D,K)\in P}(\Delta c)(D,K),
\]it suffices to prove that~$S>0$.

Writing $\mathbf 1_P(D,K)=0$ if $D\leq 0$ or $K\leq 0$, we may write
\[
 S=\sum_{s,j\geq 0} A(s,j)c(s,j),
\]
where
\[
 A(s,j)=\mathbf 1_P(s,j)-\mathbf 1_P(s+1,j)-\mathbf 1_P(s,j+1)+\mathbf 1_P(s+1,j+1).
\]
Noting that $A(0,0)=\mathbf 1_P(1,1)=1$, so that $A(0,0)c(0,0)=c(0,0)>0$, it suffices to show that $A(s,j)c(s,j)\geq 0$ for all $(s,j)$.

If~$c(s,j)=0$ then there is nothing to prove, so we may assume that $c(s,j)>0$, and we need to show that $A(s,j)\geq 0$.  If $s,j\geq 1$, then, because $P$ is downwards closed, the only way to have $A(s,j)<0$ is to have
\[
 (s+1,j)\in P,
 \qquad
 (s,j+1)\in P,
 \qquad
 (s+1,j+1)\notin P.
\]
But the first two inclusions say that every entry in the two rectangles
\[
 0\leq u<s+1,
 \quad
 0\leq v<j,
\]
and
\[
 0\leq u<s,
 \quad
 0\leq v<j+1
\]
is positive.  Together with $c(s,j)>0$, this implies that every entry in the rectangle
\[
 0\leq u<s+1,
 \quad
 0\leq v<j+1
\]
is positive, i.e. $(s+1,j+1)\in P$, a contradiction.

If $s=0$ and $j\geq 1$, then
\[
 A(0,j)=-\mathbf 1_P(1,j)+\mathbf 1_P(1,j+1).
\]
This can be negative only if $(1,j)\in P$ but $(1,j+1)\notin P$.  Since $c(0,j)>0$, the inclusion $(1,j)\in P$ forces $(1,j+1)\in P$, again a contradiction.  The case $j=0$ is identical.
\end{proof}
\begin{lemma}\label{lem:backward-peeling}
Let $m:\Z\to\Z_{\geq 0}$ be finitely supported and non-zero.  Let $g:\Z\to\Z_{\geq 0}$ be effective, and put
\[
 f\coloneq g+H_m.
\]
Assume that $f$ is effective.
Then there exist an integer $a$ and integers $1\leq d\leq p$,  $k\geq 1$ such that the functions
\[
  m'\coloneq m-\rho_{a,d,k},
 \qquad
 f'\coloneq f-E_{a,d,k},
\]
 are effective.
\end{lemma}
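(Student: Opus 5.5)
The plan is to take the anchor $a$ to be the smallest element of the support of $m$, and then to find the rectangle $(d,k)$ with Lemma~\ref{lem:anchored-corner}, applied to the function on $\Z_{\ge 0}^2$ obtained by reading $m$ in base $p$ starting from $a$. First I check what needs to be verified. By Lemma~\ref{lem:telescoping},
\[
f-E_{a,d,k}=f-\delta_a-\delta_{a+d+kp}+\delta_{a+d}+\delta_{a+kp}.
\]
Since $1\le d\le p$ and $k\ge 1$, the points $a$ and $a+d+kp$ are distinct from each other and from $a+d$ and $a+kp$. The only coincidence that can occur is $a+d=a+kp$, and it only adds mass. So $f'$ is effective as soon as $f(a)\ge 1$ and $f(a+d+kp)\ge 1$. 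Similarly, $m'$ is effective exactly when $m$ is positive at every point $a+s+jp$ with $0\le s<d$ and $0\le j<k$.

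Next I handle the anchor. Let $a=\min\operatorname{supp}(m)$. Then $m_{a-1}=m_{a-p}=m_{a-p-1}=0$, so by \eqref{eq:Hm-formula} we get $H_m(a)=m_a>0$. Since $g$ is effective, $f(a)=g(a)+H_m(a)\ge 1$. Now set $c(s,j)\coloneq m(a+s+jp)$ for $s,j\ge 0$. This is finitely supported and effective with $c(0,0)>0$. Formula \eqref{eq:Hm-formula} gives the key identity
\[
(\Delta c)(D,K)=H_m(a+D+Kp)\qquad\text{for all } D,K\ge 1,
\]
valid even when $D\ge p$, because the four terms are literally $m_x-m_{x-1}-m_{x-p}+m_{x-p-1}$ with $x=a+D+Kp$. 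Hence, if Lemma~\ref{lem:anchored-corner} yields $(D,K)$ with the rectangle positive and $(\Delta c)(D,K)>0$, then $m'$ is effective for $(d,k)=(D,K)$. It also gives $f(a+D+Kp)=g(a+D+Kp)+H_m(a+D+Kp)\ge 1$, which finishes the proof.

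The main obstacle is the constraint $d\le p$, which Lemma~\ref{lem:anchored-corner} as stated does not give: it could produce $D>p$. I would rerun the summation argument from its proof with the downward-closed set $P$ replaced by $P\cap\{D\le p\}$, which is still finite, downward closed, and contains $(1,1)$. The interior coefficients $A(s,j)$ for $s<p-1$ are controlled exactly as before. The new boundary terms sit in column $s=p$, where the coefficient becomes
\[
A(p,j)=\mathbf 1_P(p,j)-\mathbf 1_P(p,j+1).
\]
This can be negative only if $(p,j)\notin P$ but $(p,j+1)\in P$, which is impossible by downward closure, so $A(p,j)\ge 0$. Column $s=p-1$ needs a check because the $\mathbf 1_P(s+1,\cdot)$ terms are now truncated. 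Here I would use the periodicity $c(p,j)=c(0,j+1)$, together with the positivity of column $0$ up to the relevant height, to rule out a negative coefficient against a positive value $c(p-1,j)$, in the same way the original argument closes rectangles. With $S=\sum_{(D,K)\in P,\,D\le p}(\Delta c)(D,K)>0$ established, some admissible $(D,K)$ with $D\le p$ has $(\Delta c)(D,K)>0$, and the argument of the previous paragraph applies with $d=D$ and $k=K$.
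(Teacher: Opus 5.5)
The first part of your argument is correct and matches the paper: the anchor $a=\min\operatorname{supp}(m)$, the function $c(s,j)=m_{a+s+jp}$, and the identity $(\Delta c)(D,K)=H_m(a+D+Kp)$. The gap is in how you enforce $d\le p$. The truncated sum
$S'=\sum_{(D,K)\in P,\,D\le p}(\Delta c)(D,K)$
need not be positive.

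Your coefficient analysis fails at $(s,j)=(p,0)$. There the truncated coefficient is $A'(p,0)=-\mathbf 1_P(p,1)$. Downward closure says nothing about it, because $(p,0)\notin\Z_{\ge 1}^2$. Column $s=p-1$, on the other hand, is not affected by the truncation at all, since its indicators only involve $D\le p$.

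Here is a concrete failure. Take $m=\mathbf 1_{\{a,a+1,\dots,a+p\}}$. Then $P=\{(D,1):1\le D\le p+1\}\cup\{(1,2)\}$. The values are
$(\Delta c)(1,1)=-1$, $(\Delta c)(D,1)=0$ for $2\le D\le p$, and $(\Delta c)(p+1,1)=(\Delta c)(1,2)=1$.
So $S'=0$; equivalently, the term $A'(0,0)c(0,0)=1$ is exactly cancelled by $A'(p,0)c(p,0)=-1$. Positivity of $S'$ therefore cannot be proved, and your argument as written does not produce an admissible $(D,K)$.

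The fix, which is what the paper does, needs no modification of Lemma~\ref{lem:anchored-corner}. Take the $(D,K)$ it provides, write $D=d+qp$ with $1\le d\le p$ and $q\ge 0$, and set $k=K+q$.
\begin{itemize}
\item For $0\le s<d$ and $j<K$, we have $m_{a+s+jp}=c(s,j)>0$ directly.
\item For $j=K+t$ with $0\le t<q$, use the periodicity you already noticed: $m_{a+s+(K+t)p}=c(s+(t+1)p,K-1)$, and $s+(t+1)p\le D-1$. So the new $d\times k$ rectangle reads only entries of the old $D\times K$ one, and $m'$ is effective.
\end{itemize}
Since $a+d+kp=a+D+Kp$, your identity gives $H_m(a+d+kp)=(\Delta c)(D,K)>0$, hence $f(a+d+kp)\ge 1$. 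In the example above, this reindexing turns $(p+1,1)$ into $(1,2)$.
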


\begin{proof}
Let~$a$ be minimal such that~$m(a)>0$.
By~\eqref{eq:Hm-formula},
\[
 H_m(a)=m_a>0,
\]
because all $m_t$ with $t<a$ vanish.  Since $g$ is effective, it follows that
\begin{equation}\label{eq:culjggpm46}
  f(a)>0.
\end{equation}
Define a finitely supported function $c:\Z_{\geq 0}^2\to\Z_{\geq 0}$ by
\[
 c(s,j)\coloneq m_{a+s+jp}.
\]
Then $c(0,0)=m_a>0$, so by Lemma~\ref{lem:anchored-corner}, there are $D,K\geq 1$ such that
\[
 c(s,j)>0\quad\text{for }0\leq s<D,\ 0\leq j<K,
\]
and
\[
 (\Delta c)(D,K)>0.
\]
Write
\[
 D=d+qp
\]
with $1\leq d\leq p$ and $q\geq 0$, set
\[
 k\coloneq K+q,
\]
and set $m'\coloneq  m-\rho_{a,d,k}$.
We claim that $m'$ is effective.
To see this, we must show that if $0\leq s<d$ and $0\leq j<k$ then $m_{a+s+jp}>0$.  If $j<K$, then
\[
 m_{a+s+jp}=c(s,j)>0
\]
because $s<d\leq D$ and $j<K$.  If on the other hand $j=K+t$ with $0\leq t<q$, then
\[
 m_{a+s+jp}=m_{a+s+(K+t)p}=c(s+(t+1)p,K-1),
\]
and
\[
 s+(t+1)p\leq (d-1)+qp=D-1,
\]
so this coefficient is also positive, as required.

It remains to show that~$f'=f-E_{a,d,k}$ is effective.
To this end, note that~\eqref{eq:Hm-formula} gives
\begin{align*}
 H_m(a+D+Kp)
 &=m_{a+D+Kp}-m_{a+D+Kp-1}-m_{a+D+(K-1)p}+m_{a+D+(K-1)p-1}\\
 &=c(D,K)-c(D-1,K)-c(D,K-1)+c(D-1,K-1)\\
 &=(\Delta c)(D,K)>0.
\end{align*}
But
\[
 a+D+Kp=a+d+kp,
\]
so, again using the effectivity of $g$, we see that
\begin{equation}\label{eq:culjgg0xaz}
  f(a+d+kp)>0.
\end{equation}

By Lemma~\ref{lem:telescoping},
\[
 E_{a,d,k}=\delta_a+\delta_{a+d+kp}-\delta_{a+d}-\delta_{a+kp},
\]so the effectivity of~$f'$ follows from~\eqref{eq:culjggpm46} and~\eqref{eq:culjgg0xaz}.
\end{proof}

\begin{prop}\label{prop:effective-functions-imply-uparrow}
Let $f,g:\Z\to\Z_{\geq 0}$ be finitely supported effective functions.  Assume that
\[
 f-g=\sum_i m_i e_i
\]
for a finitely supported function $m:\Z\to\Z_{\geq 0}$.  
Then
$
 \nu(g)\uparrow \nu(f)
$.
\end{prop}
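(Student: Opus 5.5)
We argue by induction on the total mass $|m|\coloneq\sum_i m_i$ of the finitely supported function $m:\Z\to\Z_{\geq 0}$. If $m=0$ then $f=g$, so $\nu(g)=\nu(f)$ and there is nothing to prove. If $m\neq 0$, we peel off one rectangle using Lemma~\ref{lem:backward-peeling} and then use Lemma~\ref{lem:rectangle-uparrow} together with transitivity of $\uparrow$.

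For the inductive step, note that $f=g+H_m$ with $g$ and $f$ effective. Lemma~\ref{lem:backward-peeling} then gives $a\in\Z$, $1\le d\le p$ and $k\ge 1$ such that
\[
m'\coloneq m-\rho_{a,d,k}
\quad\text{and}\quad
f'\coloneq f-E_{a,d,k}
\]
are both effective. Since $H$ is linear in $m$ and $E_{a,d,k}=H_{\rho_{a,d,k}}$, we have
\[
f'-g=H_m-H_{\rho_{a,d,k}}=H_{m'}.
\]
Moreover $|m'|=|m|-dk<|m|$. The triple $(f',g,m')$ therefore satisfies the hypotheses of the proposition with smaller mass, and the inductive hypothesis gives $\nu(g)\uparrow\nu(f')$.

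Next, $f'$ is effective and $f=f'+E_{a,d,k}$ is effective, so Lemma~\ref{lem:rectangle-uparrow} applied with $G=f'$ gives $\nu(f')\uparrow\nu(f)$. Both $\nu(f')$ and $\nu(f)$ are cocharacters of the same $\GL_N$. Indeed, $\sum_x E_{a,d,k}(x)=0$ and $\sum_x H_m(x)=0$, because each $e_i$ has total sum zero, so the ranks agree throughout. Since $\uparrow$ is transitive by Definition~\ref{defn:uparrow-coweights} (we simply concatenate the chains of reflections), we conclude $\nu(g)\uparrow\nu(f)$.

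The main obstacle is ensuring that each peeled-off step keeps every intermediate function effective, so that $\nu$ is defined at every stage. This is exactly the content of Lemma~\ref{lem:backward-peeling}, which is already available. What remains is bookkeeping: checking that the rank $N$ is constant, so that all the $\nu$'s live in the same $X_*(T)^+$, and that the induction terminates. Termination holds because $|m|$ decreases by $dk\ge 1$ at each step.
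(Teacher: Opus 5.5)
Your proposal is correct and follows the paper's own proof: induction on $\sum_i m_i$, using Lemma~\ref{lem:backward-peeling} to peel off a rectangle $\rho_{a,d,k}$, the inductive hypothesis to get $\nu(g)\uparrow\nu(f')$, and Lemma~\ref{lem:rectangle-uparrow} to get $\nu(f')\uparrow\nu(f)$. Your extra checks are exactly the steps the paper leaves implicit, and they are right: $f'-g=H_{m'}$, the mass drops by $dk$, the total sum (hence $N$) is preserved, and $\uparrow$ is transitive.
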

\begin{proof}
Let
$
 M\coloneq \sum_i m_i.
$
We argue by induction on $M$, the case $M=0$ being trivial.
If~$M>0$, then by Lemma~\ref{lem:backward-peeling}, we can find $a,d,k$ such that $m'\coloneq m-\rho_{a,d,k}$,
 $f'\coloneq f-E_{a,d,k}=f-   H_{\rho_{a,d,k}}$ are effective.
  Moreover we have
\[
 \sum_i m'_i=\sum_i m_i-\sum_i \rho_{a,d,k}(i)<M,
\]
so by induction we have $ \nu(g)\uparrow \nu(f')$.
By Lemma~\ref{lem:rectangle-uparrow} we have $ \nu(f')\uparrow \nu(f), $ so $ \nu(g)\uparrow \nu(f) $, as required.
\end{proof}

\begin{cor}
  \label{cor:uparrow-order-from-F-gauge}Suppose that $\rho:\Gal_{\Qp}\to\GL_n(\Zp)$ is a crystalline representation with Hodge--Tate weights~$\mu(\rho)$.
  Let~$\gM$ be the corresponding Breuil--Kisin module, and let $s(\gMbar)=(s_1 ,\dots,s_{n})$ be the shape of $\gMbar$. Then $s(\gMbar)\uparrow \mu(\rho)$.
\end{cor}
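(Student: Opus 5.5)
We follow the setup of the proof of Corollary~\ref{cor:Wp-orbit}. Let $\mathcal{E}$ be the reflexive gauge on $\Z_p^{\cN}$ attached to $\rho$ by Theorem~\ref{thm:CrysGalPrismCrys}, and set $M=\mathcal{E}/p\in\mathrm{Coh}((\Z_p^\cN)_{p=0})$. Let $M'$ be the reflexive hull of $M$, which sits in a short exact sequence $0\to M\to M'\to N\to 0$ with $N$ supported at the closed point. As recalled there, we have $\mu(\rho)=\nu(w_M)$ and $s(\gMbar)=\nu(w_{M'})$, and both $w_M$ and $w_{M'}$ are effective. Here $w$ denotes the invariant of the restriction to $X=(\Z_p^\cN)_{p=t=0}$, and we use the identification via Remark~\ref{RelPosGrAgree}.

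Restriction along $X\hookrightarrow(\Z_p^\cN)_{p=0}$ is derived pullback and $K_0$ is additive, so in $K_0(B\Gm)$ we get
\[
w_M=w_{M'}-w_{N|_X}.
\]
By Lemma~\ref{lem:Jacob-lemma}~\eqref{item:33}, $w_{N|_X}=\sum_i m'_ie_i$ with every $m'_i\le 0$. Hence
\[
w_M-w_{M'}=\sum_i m_ie_i
\]
with $m\coloneq -m'$ finitely supported and effective.

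We then apply Proposition~\ref{prop:effective-functions-imply-uparrow} with $f=w_M$ and $g=w_{M'}$. Both are finitely supported and effective, and $f-g=\sum_i m_ie_i$ with $m\ge 0$. The proposition gives $\nu(g)\uparrow\nu(f)$, that is, $s(\gMbar)\uparrow\mu(\rho)$.

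The main point requiring care is the sign bookkeeping and the precise identifications. Specifically, we need to check that $\mu(\rho)=\nu(w_M)$ for the possibly non-reflexive mod $p$ reduction $M$ and that $s(\gMbar)=\nu(w_{M'})$ for its reflexive hull, rather than the other way round. We also need that both functions are genuinely effective, which holds because they come from graded pieces of vector bundles after restriction to the Rees chart. These are the same identifications already used in Corollary~\ref{cor:Wp-orbit}, so we reuse them directly. The remaining step is then only the sign check that the coefficients from Lemma~\ref{lem:Jacob-lemma}~\eqref{item:33} enter $f-g$ with a nonnegative sign.
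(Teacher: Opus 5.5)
Your proposal is correct and is the paper's argument: the paper's proof simply says that, in the setup of Corollary~\ref{cor:Wp-orbit}, the result follows from Lemma~\ref{lem:Jacob-lemma}~\eqref{item:33} and Proposition~\ref{prop:effective-functions-imply-uparrow}. You have written out the $K_0$ bookkeeping $w_M-w_{M'}=-w_{N|_X}=\sum_i(-m'_i)e_i$ with the correct sign, and you apply the proposition with $f=w_M$ and $g=w_{M'}$.
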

\begin{proof}
  As in the proof of Corollary~\ref{cor:Wp-orbit}, this follows from Proposition~\ref{prop:effective-functions-imply-uparrow} and Lemma~\ref{lem:Jacob-lemma}.
  \end{proof}

\section{Inertial weights of mod~\texorpdfstring{$p$}{p} Galois representations}\label{v3-sec:inertial-weights-mod-p-Galois-rep}
\subsection{\'{E}tale \texorpdfstring{$\varphi$}{phi}-modules and Breuil--Kisin modules}\label{v3-subsec:etale-varphi}
We fix throughout this section an algebraic closure~$\Qpbar$ of~$\Qp,$
and denote by $\Zpbar$ and $\Fpbar$ its ring of integers and residue field, respectively.
We fix a uniformizer~$\pi$ of~$\mathbf{Q}_p$, and as usual write $E=u-\pi$.

\begin{para}
Let $R$ be a $\Zp$-algebra, and let $\varphi:R\llbracket u\rrbracket\to R\llbracket u\rrbracket$ be the $R$-linear Frobenius with $\varphi(u)=u^{p}$.
We denote by $\Mod^{\varphi}_{/R\llbracket u \rrbracket}$ the category of finite free $R\llbracket u \rrbracket$-modules $M$ equipped with an isomorphism $\varphi^*M[1/E] \isoto M[1/E].$
Similarly, if $R$ is an $\Fp$-algebra we denote by $\Mod^{\varphi}_{/R((u))}$ the category of finite free $R((u)) = R\llbracket u \rrbracket [1/u]$-modules $M$ equipped with an isomorphism $\varphi^*M \isoto M.$
We will usually apply these definitions with $R = \Fpbar$
or $R = \Zpbar.$

It is a straightforward consequence of the definitions that for each finite extension~$\F/\Fp$, extension of scalars gives natural equivalences of categories (with notation as in Appendix~\ref{para:extension-scalars-categories})
\[\Mod^{\varphi}_{/\Fp((u))}\otimes_{\Fp}\F=\Mod^{\varphi}_{/\Fp((u))\otimes_{\Fp}\F}=\Mod^{\varphi}_{/\F((u))} \]and \[\Mod^{\varphi}_{/\Fp\llbracket u \rrbracket}\otimes_{\Fp}\F=\Mod^{\varphi}_{/\Fp\llbracket u \rrbracket\otimes_{\Fp}\F}=\Mod^{\varphi}_{/\F\llbracket u \rrbracket}. \]
\end{para}

The following standard lemma extends this to the case of ~$\Fpbar$-coefficients.
\begin{lemma}\label{v3-lem:etalemodulecomp} The natural functors
\[\Mod^{\varphi}_{/\Fp((u))\otimes_{\Fp}\Fpbar}\to\Mod^{\varphi}_{/\Fpbar((u))},\] \[\Mod^{\varphi}_{/\Fp\llbracket u \rrbracket\otimes_{\Fp}\Fpbar}\to\Mod^{\varphi}_{/\Fpbar\llbracket u \rrbracket}\]
are exact equivalences of categories, with exact quasi-inverses.
\end{lemma}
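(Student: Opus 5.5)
We will reduce both statements to the case of a finite extension $\F/\Fp$, where the identifications $\Mod^{\varphi}_{/\Fp((u))}\otimes_{\Fp}\F=\Mod^{\varphi}_{/\F((u))}$ (and the analogue for $\llbracket u\rrbracket$) are already available. The point is that an object over $\Fpbar$ involves only finitely much data, and that data can be defined over a finite field. Concretely, we proceed in two steps: first show that every object of $\Mod^{\varphi}_{/\Fpbar((u))}$ is obtained by base change from some $\Mod^{\varphi}_{/\F((u))}$, then check full faithfulness on Hom sets.

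For essential surjectivity, let $M$ be finite free over $\Fpbar((u))$ with $\varphi_M:\varphi^*M\isoto M$. Choose a basis, so that $\varphi_M$ is given by a matrix $A\in\GL_n(\Fpbar((u)))$. The entries of $A$ and of $A^{-1}$ need not have coefficients in a single finite field, so we cannot simply descend the matrix. Instead we use a Lang-type argument. For any $X\in\GL_n(\Fpbar((u)))$, changing basis replaces $A$ by $X^{-1}A\varphi(X)$. Since $\varphi(X)$ involves only $u\mapsto u^p$ together with the $p$-power on coefficients, we can solve $X^{-1}A\varphi(X)\in\GL_n(\F((u)))$ recursively in the $u$-adic expansion, using the surjectivity of the Lang map on $\GL_n(\Fpbar)$.

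A cleaner route, which I would try first, uses the equivalence with Galois representations. Fontaine's theory identifies étale $\varphi$-modules over $\Fpbar((u))$ with continuous $\Fpbar$-representations of $\Gal_{\Fp((u))}$, compatibly with the analogous statement for $\Fp((u))\otimes\F$. Every continuous representation with $\Fpbar$ coefficients has finite image, and is therefore defined over some finite $\F$. Base change of Galois representations is compatible with the base change of $\varphi$-modules, which gives essential surjectivity.

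For full faithfulness, the Hom sets are $\varphi$-invariants in internal Homs, and these commute with filtered colimits over $\F\subset\Fpbar$. Explicitly, $\Fp((u))\otimes_{\Fp}\Fpbar=\varinjlim_\F \F((u))$, and the comparison functor from the colimit into $\Fpbar((u))$ is where care is needed. For $\varphi$-fixed vectors it suffices that $(N\otimes\Fpbar((u)))^{\varphi=1}=N^{\varphi=1}\otimes\Fpbar$, which again follows from the Galois-representation description. Exactness is clear, since the underlying modules are finite free and base change is flat.

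For the $\llbracket u\rrbracket$ version, a Breuil--Kisin module over $\Fpbar\llbracket u\rrbracket$ is an étale $\varphi$-module $M[1/u]$ together with a $\varphi$-stable lattice of bounded $E$-height. We first descend $M[1/u]$ as above. We then show that the lattice is defined over a finite field, using its Galois stability: $\Gal(\Fpbar/\F)$ acts on lattices, and lattices of bounded height form a finite type scheme whose $\Fpbar$-points are therefore defined over a finite extension of $\F$.

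The main obstacle is this last lattice-descent step: we need the $\Gal(\Fpbar/\F)$-orbit of the lattice to be finite, which should follow from the finiteness of $\varphi$-stable lattices of bounded height in a fixed étale module.
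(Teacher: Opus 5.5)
Your preferred route is circular. Fontaine's theorem gives an equivalence for étale $\varphi$-modules over $\Fp((u))$, hence over $\F((u))=\Fp((u))\otimes_{\Fp}\F$ for finite $\F$, and over the colimit $\Fp((u))\otimes_{\Fp}\Fpbar$. It says nothing directly about the completed ring $\Fpbar((u))$.

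Given Fontaine over finite fields, an equivalence $\Mod^{\varphi}_{/\Fpbar((u))}\simeq\Rep_{\Fpbar}$ is equivalent to the lemma you are proving. The paper in fact deduces \eqref{v3-eqn:Fontainecorresp} \emph{from} this lemma. So your essential surjectivity, and your full-faithfulness identity $(N\otimes\Fpbar((u)))^{\varphi=1}=N^{\varphi=1}\otimes\Fpbar$, both assume the conclusion. The actual difficulty is never addressed: a matrix $A\in\GL_n(\Fpbar((u)))$ may have infinitely many coefficients that lie in no single finite field.

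Your fallback Lang argument misreads $\varphi$. Here $\varphi$ is $\Fpbar$-linear (it only sends $u\mapsto u^p$), so there is no $p$-power on coefficients and Lang's theorem plays no role. A contraction argument does work instead. Suppose $u^cA$ and $u^cA^{-1}$ are integral, $M\gg c$, and $z\in 1+u^MM_n(\Fpbar\llbracket u\rrbracket)$. Then $Y\mapsto A\varphi(Y)z^{-1}A^{-1}$ contracts $1+u^{M-2c}M_n(\Fpbar\llbracket u\rrbracket)$, because $\varphi$ multiplies $u$-adic valuations by $p$. Its fixed point satisfies $Y^{-1}A\varphi(Y)=Az$. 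Take $z=A^{-1}A'$, where $A'$ is the truncation of $A$ in degrees $<M+c$. This descends the object to some $\F((u))$, and since $Y$ is integral the same argument also handles $\Mod^{\varphi}_{/\Fpbar\llbracket u\rrbracket}$.

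The paper instead quotes Emerton--Gee's result that the stacks $\cR_d$ and their Breuil--Kisin analogues are limit-preserving. This makes $\colim_{\F}\cR_d(\F)\to\cR_d(\Fpbar)$ an equivalence of groupoids. It then gets full faithfulness from bijectivity on isomorphisms by a graph trick: a morphism $f:X\to Y$ is encoded by the automorphism $(x,y)\mapsto(x,f(x)+y)$ of $X\oplus Y$. This avoids computing $\varphi$-invariants altogether.

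Your lattice step also rests on a false claim: a fixed étale $\varphi$-module can contain infinitely many $\varphi$-stable lattices of bounded height. For example, take $\Fpbar((u))^2$ with $\varphi$-matrix $u^pI$. For $a\in\Fpbar$, the lattices with bases given by the columns of $X_a=\begin{pmatrix}1&au^{-1}\\0&1\end{pmatrix}$ are pairwise distinct. Their $\varphi$-matrices are
\[
X_a^{-1}u^p\varphi(X_a)=\begin{pmatrix}u^p&a(1-u^{p-1})\\0&u^p\end{pmatrix},
\]
all of height $\le 2p$.

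What is true, and what your preceding sentence actually uses, is that such lattices are uniformly bounded relative to a fixed $\F$-rational lattice $\mathfrak{M}_0$. If $\mathfrak{M}=X\mathfrak{M}_0$ has $\varphi$-matrix $A_1$ and $\mathfrak{M}_0$ has $\varphi$-matrix $A_0$, comparing pole orders in $\varphi(X)=A_0^{-1}XA_1$ bounds the pole order of $X$ by $2h/(p-1)$. So these lattices are the $\Fpbar$-points of a finite type $\F$-scheme, and each is defined over a finite extension of $\F$. Galois orbits are finite for that reason, not because the set of lattices is finite. With the contraction argument above, this step is unnecessary anyway.
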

\begin{proof}It evidently suffices to show that 
  any object (resp.~morphism) in the category $\Mod^{\varphi}_{/\Fpbar((u))}$
descends to  $\Mod^{\varphi}_{/\F_q((u))}$ for some $q=p^r,$ and similarly for $\Mod^{\varphi}_{/\Fpbar\llbracket u \rrbracket}$.
This can be checked directly, but it is more convenient to observe that it is an immediate consequence of 
\cite[Prop.~5.4.9, Thm.~5.4.11]{EGstacktheoreticimages}. More precisely,
using the notation of {\em loc.~cit}, for each $d \geq 1,$
 \cite[Thm.~5.4.11]{EGstacktheoreticimages} shows that the stack~$\cR_d$ is limit-preserving, so that there is an equivalence of groupoids\[\colim_{\F/\Fp\textrm{ finite}}\cR_d(\Spec\F)\to\cR_d(\Spec\Fpbar).\]
This shows that the first functor is essentially surjective, and induces a bijection on isomorphisms. However, this implies that the functor is an equivalence, as in any additive category, if $f: X \rightarrow Y$ is a morphism,
then $(\Gamma_f,0)+(0,\id): X\times Y \rightarrow X \times Y$ is an isomorphism.
Here $\Gamma_f$ denotes the graph of $f.$

The argument for the second equivalence is identical, using the stacks $\mathcal C_{d,F}$
of loc.~cit.
Finally, the exactness of the functors follows from the faithful flatness of all the extensions of scalars involved.
\end{proof}

\begin{para}\label{v3-para:Fontainecorresp}We let $(\pi^{1/p^{n}})_{n\ge 0} \subset \Qpbar$ 
be a choice of a compatible system of~$p$-power roots of~$\pi,$
and set~$\Q_{p,\infty}\coloneq \cup_{n}\Qp(\pi^{1/p^{n}}).$ 
  By Lemma \ref{v3-lem:etalemodulecomp}, and a result of Fontaine~\cite[A.1.2.6, A.3]{MR1106901} (see also~\cite[Lem.~1.2.7]{KisinModularity} for this precise statement), the functor $M\mapsto T(M)\coloneq (M\otimes_{\Fp((u))}\C^\flat)^{\varphi=1}$ gives an exact tensor equivalence
\begin{equation}\label{v3-eqn:Fontainecorresp} \Mod_{/ \Fpbar((u))}^{\varphi} \simeq \Rep_{\Fpbar} \Gal_{\Q_{p,\infty}},
\end{equation}
where the right hand side denotes the category of continuous representations of $\Gal_{\Q_{p,\infty}}$ on finite dimensional $\Fpbar$-vector spaces.
This functor has an exact quasi-inverse $\rhobar\mapsto M(\rhobar).$

The above correspondence then extends to $H$-objects:
\end{para}

\begin{prop}Let $H/\Fpbar$ be a smooth affine group scheme.
  \label{v3-prop:standard-properties-etale-phi-modules}\leavevmode
  \begin{enumerate}
  \item\label{v3-item:11} The functor~$T$ induces an equivalence of categories between $H$-$\Mod_{/ \Fpbar((u))}^{\varphi},$ 
  and the category of continuous representations 
  $$\rhobar: \Gal_{\Q_{p,\infty}}\to H(\Fpbar),$$
  with morphisms given by conjugation, as in \ref{para:repinner}. 
    We write $\rhobar\mapsto M(\rhobar)$ for the quasi-inverse functor.
  \item\label{v3-item:3} If~$\rhobar:\Gal_{\Q_{p,\infty}}\to H(\Fpbar)$ factors through~$H'(\Fpbar)$ for some algebraic subgroup~$H'$ of~$H$, then $M(\rhobar)$ admits a reduction of structure group to~$H'$.
  \end{enumerate}
\end{prop}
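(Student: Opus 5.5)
The plan is to deduce both parts formally from the exact tensor equivalence \eqref{v3-eqn:Fontainecorresp}, using the Tannakian formalism recalled in Appendix~\ref{sec:appendix-Tannakian}. By definition, an object of $H$-$\Mod_{/\Fpbar((u))}^{\varphi}$ is an exact tensor functor $\Rep_{\Fpbar}H\to\Mod_{/\Fpbar((u))}^{\varphi}$. Composing with the exact tensor equivalence $T$ and its exact quasi-inverse $M$ identifies these with exact tensor functors $\omega:\Rep_{\Fpbar}H\to\Rep_{\Fpbar}\Gal_{\Q_{p,\infty}}$. So the main task is to identify the latter category with the category of continuous homomorphisms $\rhobar:\Gal_{\Q_{p,\infty}}\to H(\Fpbar)$ up to conjugation.

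In one direction, a homomorphism $\rhobar$ gives the functor $W\mapsto W$ with $\Gal_{\Q_{p,\infty}}$ acting through $\rhobar$. Conjugation by $g\in H(\Fpbar)$ gives an isomorphism of such functors, by acting by $g$ on each underlying space, exactly as in Lemma~\ref{lem:crystallineGreps}. Full faithfulness then follows from Tannaka duality. The tensor automorphisms of the forgetful fibre functor are $H(\Fpbar)$, and an isomorphism of functors compatible with the Galois actions is such an element conjugating one representation into the other.

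For essential surjectivity, start with $\omega$ and compose with the forgetful functor to $\Fpbar$-vector spaces. This gives an exact tensor fibre functor on $\Rep_{\Fpbar}H$, which corresponds to an $H$-torsor $P=\Spec\omega(\O_H)$ over $\Spec\Fpbar$, as in Appendix~\ref{subsec:torsors}. This torsor is the expected main obstacle, and it is dispatched by algebraic closedness. Since $\Fpbar$ is algebraically closed and $H$ is smooth, $P$ has an $\Fpbar$-point, so it is trivial. Unlike Lemma~\ref{lem:crystallineGreps}, no inner form therefore appears. Choosing a trivialization identifies the fibre functor with the forgetful one, and the Galois action then gives a homomorphism $\Gal_{\Q_{p,\infty}}\to\underline{\Aut}^{\otimes}=H(\Fpbar)$.

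It remains to check that this homomorphism is continuous. I will do this by choosing a faithful representation $W$ of $H$. Then $H(\Fpbar)\subset\GL(W)$ is closed, and continuity of the action on $\omega(W)$ gives continuity of the homomorphism, where $H(\Fpbar)$ carries the discrete topology. Each finite-dimensional Galois representation has open kernel. This proves part~\eqref{v3-item:11}.

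For part~\eqref{v3-item:3}, suppose $\rhobar$ factors through $H'(\Fpbar)$. Then $\rhobar$ itself defines an $H'$-object $M'$, namely $M$ composed with $\rhobar$ viewed as a representation of $\Gal_{\Q_{p,\infty}}$ into $H'(\Fpbar)$. Its pushout along $\Rep H\to\Rep H'$ (restriction) is $M(\rhobar)$, because the functors agree on each $W\in\Rep_{\Fpbar}H$. This pushout is by definition a reduction of structure group to $H'$. The only point to check is that $M'$ is a genuine $H'$-object, which requires exactness. This is automatic because $M$ is exact and $\rhobar$ is continuous into $H'(\Fpbar)$, so part~\eqref{v3-item:11} applied to $H'$ provides it.
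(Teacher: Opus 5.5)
Your proposal is correct and follows the same route as the paper. You transport $H$-objects along the exact tensor equivalence \eqref{v3-eqn:Fontainecorresp}, then argue as in Lemma~\ref{lem:crystallineGreps}; over $\Fpbar$ the torsor $\Spec\omega(\mathcal{O}_H)$ is trivial, so no inner form appears, and part~(2) is then formal. You supply details the paper leaves implicit: continuity via a faithful representation, and the observation that for part~(2) only the easy direction of~(1) for $H'$ is needed, which requires no smoothness of $H'$.
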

\begin{proof} Taking exact $\otimes$-functors from $\Rep_{\Fpbar} H$ to the
two sides of the equivalence \eqref{v3-eqn:Fontainecorresp}, we see that 
$H$-$\Mod_{/ \Fpbar((u))}^{\varphi}$ is equivalent to the category of
$H$-objects in $\Rep_{\Fpbar} \Gal_{\Q_{p,\infty}}.$ Thus we have to check that the
latter category is equivalent to the category of continuous representations
$\rhobar:\Gal_{\Q_{p,\infty}}\to H(\Fpbar).$ The argument for this is completely 
analogous to the proof of Lemma \ref{lem:crystallineGreps}.

Part \eqref{v3-item:3} follows formally from~\eqref{v3-item:11}.
\end{proof}

If $H$ is a smooth affine group scheme over $\Fpbar$,
and $\rhobar: \Gal_{\Qp} \rightarrow H(\Fpbar)$ is  a continuous representation,
we will write $M(\rhobar) \coloneq  M(\rhobar|_{\Gal_{\Q_{p,\infty}}}).$

\begin{para} Consider a continuous representation of $\Gal_{\Q_p}$ on a finite free $\Zpbar$-module $V.$
By the Baire category theorem, there is a finite extension~$F/\Qp$ such that this descends to a $\Gal_{\Q_p}$-representation on  a finite free $\O_F$-module $V^F.$
We say that $V$ is crystalline if $V^F$ is crystalline, a condition which does not depend on the choice of $F$ or $V^F.$
We denote by $\Rep^{\crys}_{\Zpbar}(\Gal_{\Q_p})$ the category of such crystalline representations $V.$
\end{para}

Now suppose that $H/\Zpbar$ is an extension of a finite \'etale group scheme
by a parahoric. 
As in \ref{para:repinner}, we define the groupoid of continuous representations $\rho: \operatorname{Gal}_{\mathbf{Q}_p}\rightarrow H(\overline{\mathbf{Z}}_p )$
 by declaring $\Hom(\rho,\rho')$ to be the set of $h \in H(\Zpbar)$ conjugating $\rho$ into $\rho'.$

Fix such a representation~$\rho$.
By the Baire category theorem, there is a finite extension $F/\Q_p,$ contained in $\Qpbar,$ such that  $H$ is defined over $\O_F,$ and $\rho$ factors through $H(\O_F).$ We denote by $\rho^F$ the corresponding $H(\O_F)$-valued representation.
We say that $\rho:\Gal_{\Qp} \rightarrow H(\Zpbar)$  is crystalline, if $\rho^F$ is crystalline.
This condition does not depend on the choice of $F.$

The $H(\O_F)$-valued representation $\rho^F$ may be thought of as an object of
$H$-$\Rep_{\Z_p}^{\crys} (\Gal_{\Qp}),$ by Lemma \ref{lem:crystallineGreps}, where we now think of $H$ as defined over $\O_F.$
By Theorem \ref{thm:prismaticKeylemma} and evaluation on the Breuil--Kisin prism we obtain an object  of $H$-$\Mod^{\varphi}_{/\Zp\llbracket u \rrbracket},$
and thus an object $\gM(\rho)$  of  $H$-$\Mod^{\varphi}_{/\Zpbar\llbracket u \rrbracket}$ by extension of scalars to 
$\Zpbar\llbracket u \rrbracket.$ 

\begin{prop}\label{v3-prop:crysreptoBMmodules} Suppose that $H/\Zpbar$ is an extension of a finite \'etale group scheme
by a parahoric.
Then there is a natural functor $\rho \mapsto \mathfrak{M}(\rho)$ from the category of crystalline $H$-representations
$ \rho:\Gal_{\Qp} \rightarrow H(\Zpbar)$
to the category $H$-$\Mod^{\varphi}_{/\Zpbar\llbracket u \rrbracket}$. Writing
$$\overline{\gM}(\rho) = \gM(\rho)\otimes_{\Zpbar}\Fpbar,$$
there is a natural isomorphism
$$ \overline{\gM}(\rho)[1/u] \simeq M(\rhobar),$$ where we write $\rhobar:\Gal_{\Qp} \rightarrow H(\Fpbar)$ for the composite of $\rho$ and the projection $H(\Zpbar) \rightarrow H(\Fpbar).$  
\end{prop}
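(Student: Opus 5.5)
The functor has already been constructed in the discussion preceding the statement, so the plan is mainly to verify naturality and then to identify the mod~$p$ reduction. I would first fix a finite extension $F/\Qp$ inside $\Qpbar$ such that $H$ is defined over $\O_F$ and $\rho$ factors through $H(\O_F)$. Via Lemma~\ref{lem:crystallineGreps}, the representation $\rho^F$ is an object $\omega$ of $H$-$\Rep^{\crys}_{\Zp}(\Gal_{\Qp})$. Theorem~\ref{thm:prismaticKeylemma} then gives a prismatic $F$-crystal with $H$-structure, and pulling back along $\rho_{\mathrm{std}}$ gives an exact tensor functor $\Rep_{\O_F}H\to\Mod^{\varphi}_{/\Zp\llbracket u\rrbracket}\otimes_{\Zp}\O_F$. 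Composing with extension of scalars to $\Zpbar\llbracket u\rrbracket$ defines $\gM(\rho)$. To see that the result does not depend on $F$, I would note that for $F\subset F'$ the constructions agree after base change; this holds because Kisin's functor (Example~\ref{ex:BK-from-crystalline}) commutes with extension of coefficients $\O_F\to\O_{F'}$. Functoriality in morphisms, that is, under conjugation by $h\in H(\Zpbar)$, follows by enlarging $F$ so that $h\in H(\O_F)$ and using that Lemma~\ref{lem:crystallineGreps} and Theorem~\ref{thm:prismaticKeylemma} are functors.

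For the isomorphism I would reduce to the case $H=\GL_n$, arguing Tannakian-wise: both sides are exact tensor functors on $\Rep_{\Fpbar}H$ (or on $\Rep_{\O_F}H$ before reduction), so it is enough to produce a natural isomorphism of tensor functors, compatible with exact sequences, for each representation $V$. For a lattice $L$ in a crystalline representation with Breuil--Kisin module $\gM$, Kisin's theory gives $\gM\otimes_{\gS}\mathcal{O}_{\mathcal{E}}\simeq M_{\mathcal{E}}(L|_{\Gal_{\Q_{p,\infty}}})$. Here $\mathcal{O}_{\mathcal{E}}$ is the $p$-adic completion of $\gS[1/u]$, and $M_{\mathcal E}$ is Fontaine's étale $\varphi$-module, compatible with the choice of $\pi^{1/p^n}$ and with $u\mapsto[\underline\pi]$. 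This isomorphism is natural and respects tensor products; I would take it from \cite{KisinCrys} or deduce it from the étale realization in Theorem~\ref{thm:CrysGalPrismCrys}. Reducing mod~$p$ gives $\gMbar[1/u]\simeq M(L/p|_{\Gal_{\Q_{p,\infty}}})$, and tensoring with $\Fpbar$ over $\F_q$, using Lemma~\ref{v3-lem:etalemodulecomp}, gives the claim for each $V$.

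The main obstacle is the exactness and tensor compatibility of this comparison on the relevant categories. This is necessary so that it really defines an isomorphism of $H$-objects and not just of the underlying modules. I expect that exactness of $\gM\mapsto\gM[1/u]\otimes\Fp$ on crystalline Breuil--Kisin modules holds because inverting $u$ after reducing mod~$p$ matches the étale realization, which is exact (the remark after Theorem~\ref{thm:CrysGalPrismCrys}). The isomorphism of functors $\gMbar(\rho)[1/u]\simeq M(\rhobar)$ then follows from Proposition~\ref{v3-prop:standard-properties-etale-phi-modules}.
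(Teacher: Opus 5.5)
Your proposal is correct and follows essentially the same route as the paper: the functor is the one built just before the statement (crystalline $H$-representation, then prismatic $F$-crystal with $H$-structure via Theorem~\ref{thm:prismaticKeylemma}, then evaluation on the Breuil--Kisin prism and extension of scalars), and the isomorphism $\overline{\gM}(\rho)[1/u]\simeq M(\rhobar)$ is exactly the compatibility of Kisin's module with Fontaine's étale $\varphi$-module of $\rho|_{\Gal_{\Q_{p,\infty}}}$, reduced mod~$p$ and extended to $\Fpbar$, which the paper simply cites from \cite[Rem.~7.11]{bhatt2021prismatic} and \cite[Prop.~2.1.5]{KisinCrys}. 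The ``main obstacle'' you flag is not really one: $\gM\mapsto(\gM/p)[1/u]$ is a base change of finite free modules, hence automatically exact and tensor-compatible, and the comparison only needs to be a tensor-compatible natural isomorphism.
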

\begin{proof} The functor $\rho \mapsto \mathfrak{M}(\rho)$ was described immediately above; the second part follows from
\cite[Rem.~7.11]{bhatt2021prismatic} and \cite[Prop.~2.1.5]{KisinCrys}.
\end{proof}

\subsection{\texorpdfstring{$L$}{L}-groups}\label{subsec:L-groups}
We will work throughout the rest of this section with Galois representations valued in a class of not-necessarily-connected reductive groups, namely the $L$-groups of unramified reductive groups.
We recall the definition of this class of groups in more detail in Section~\ref{subsec:unramified-groups} below, but in this section, it suffices to note that an unramified $L$-group~$\LG$ is in particular a group scheme over~$\Zp$ of the form  \[\LG = \Ghat \rtimes \Gal (L/\Qp),\] where
\begin{itemize}[wide]
\item $\Ghat$ is a split connected reductive group (i.e.\ a Chevalley group) over $\Zp$, with Borel~ $\Bhat$ and maximal torus $\That\subset\Bhat$,
\item $L/\Qp$ is a finite unramified extension, and
\item the semidirect product is defined by an action of~$\Gal(L/\Qp)$ on~$\Ghat$ which preserves~$\Bhat$ and~$\That$, and acts on the set of simple roots~$\Delta^{\vee}$.
\end{itemize}
Let~$W$ denote the Weyl group of~$\That$.
We write $\gamma\in\Gal(L/\Qp)$ for the geometric Frobenius.
If~$x\in\Ghat$ then we write $x^{\gamma}\coloneq  \gamma x \gamma^{-1}$.
We have $N_{\LG}(\That)=N_{\Ghat}(\That)\rtimes \Gal(L/\Qp)$, and for $g\in N_{\LG}(\That)$ (e.g.\ $g=\gamma$) we write $t^g\coloneq  gtg^{-1}$.
This induces an action of $N_{\LG}(\That)$ on~$X_{*}(\That)$; for~$\lambda\in X_{*}(\That)$, $g\in N_{\LG}(\That)$, we have $(g\lambda)(t)=\lambda(t)^{g}$.

\begin{defn}\label{defn:L-parameter} Let $A$ be a topological $\Zp$-algebra. 
 An \emph{$L$-parameter} is a continuous homomorphism $\Gal_\Qp \to \LG (A),$ 
 which is compatible with the projections to $\Gal (L/\Qp)$.
We make the collection of $L$-parameters into a groupoid by setting $\Hom(\rho_1,\rho_2)$ equal to the set of $g \in \Ghat(A)$
which conjugate $\rho_1$ into $\rho_2.$ We say that $\rho_1$, $\rho_2$ are \emph{equivalent} if such a $g$ exists.
\end{defn}

\begin{para} Let $\Gamma_0 = \Gal(L/\Q_p)$ thought of as a constant group scheme over $\Zpbar$,
and let $\omega_{\triv,L}$ in $\Gamma_0$-$\Rep^{\crys}_{\Zpbar}$
be the functor which takes a $\Gamma_0$-representation $W$ to $W$ thought of as a $\Gal_{\Q_p}$-representation via the projection to $\operatorname{Gal}(L/\mathbf{Q}_p )$.
The $\Gamma_0$-torsor corresponding to $\omega_{\triv,L}$ is trivial. 

Applying the equivalence of Theorem \ref{thm:prismaticKeylemma} to $\omega_{\triv,L}$ produces an object
$\gM(\omega_{\triv,L})$ of $\Gamma_0$-$\Mod^{\varphi}_{/\Zpbar\llbracket u \rrbracket}.$ 
The corresponding $\Gamma_0$-torsor over $\Zpbar\llbracket u \rrbracket$ is pulled back  from a $\Gamma_0$-torsor over 
$\Zpbar,$ which 
is the spectrum of
$$(\O_{\Gamma_0}\otimes_{\Z_p} \O_L)^{\Gamma_0} \simeq \Zpbar\otimes_{\Zp}\O_L$$ with the isomorphism being induced by 
the counit $\O_{\Gamma_0} \rightarrow \Zpbar.$ 
Here the action of $\Gamma_0$ on $\O_{\Gamma_0}$ is induced by $\Gamma_0$ acting on itself by right translation, while the 
$\Gamma_0$-torsor structure is induced by $\Gamma_0$ acting on itself by left translation. (Of course since $\Gamma_0$ is commutative the left and right actions are equal). 

Let $\varphi$ denote the Frobenius on $\O_L.$ On elements of $(\O_{\Gamma_0}\otimes_{\Z_p} \O_L)^{\Gamma_0},$ 
$1\otimes \varphi$ acts as $\gamma\otimes 1,$ so the action of $\varphi$ on the $\Gamma_0$-torsor $\gM(\omega_{\triv,L})$ is 
via the element $\gamma \in \Gamma_0(\Zpbar).$

For $\gM$ in $\LG$-$\Mod^{\varphi}_{\Zpbar\llbracket u \rrbracket}$ (respectively  $\LG$-$\Mod^{\varphi}_{\Fpbar\llbracket u \rrbracket},$) we denote by $\gM_0$  the induced object of  $\Gamma_0$-$\Mod^{\varphi}_{/\Zpbar\llbracket u \rrbracket}$ (respectively  $\Gamma_0$-$\Mod^{\varphi}_{/\Fpbar\llbracket u \rrbracket}$).
\begin{defn}
  We define an $\LG$-Breuil--Kisin module to be an object $\gM$ in $\LG$-$\Mod^{\varphi}_{\Zpbar\llbracket u \rrbracket},$ together with an isomorphism $\gM_0 \simeq \gM(\omega_{\triv,L})$ in $\Gamma_0$-$\Mod^{\varphi}_{/\Zpbar\llbracket u \rrbracket}.$ Similarly, we define an $\LG$-Breuil--Kisin module over $\Fpbar$ to be an object $\gM$ in $\LG$-$\Mod^{\varphi}_{\Fpbar\llbracket u \rrbracket},$ together with an isomorphism $\gM_0 \simeq \gM(\omega_{\triv,L})\otimes_{\Zpbar}\Fpbar$ in
  $\Gamma_0$-$\Mod^{\varphi}_{\Fpbar\llbracket u \rrbracket}$.  Similarly an $\LG$-\'etale $\varphi$-module is defined to be an object $M$ of $\LG$-$\Mod^{\varphi}_{\Fpbar((u))},$ together with an isomorphism $M_0 \simeq \gM(\omega_{\triv,L})\otimes_{\Zpbar \llbracket u \rrbracket}\Fpbar((u)),$ where $M_0$ denotes the object of $\Gamma_0$-$\Mod^{\varphi}_{\Fpbar((u))}$ induced by $M.$
\end{defn}
\end{para}

\begin{prop}\label{v3-prop:crysreptoBMmodules-again}
The functor in Proposition \ref{v3-prop:crysreptoBMmodules} induces functors 
$$ \rho \mapsto \gM(\rho), \quad \rho \mapsto M(\barrho)$$ 
from the category of $L$-parameters $\rho:\Gal_{\Qp} \rightarrow \LG(\Zpbar)$ 
to  the category of $\LG$-Breuil--Kisin modules and the category of $\LG$-\'etale $\varphi$-modules respectively. 
Moreover, writing
$$\overline{\gM}(\rho) = \gM(\rho)\otimes_{\Zpbar}\Fpbar,$$
there is a natural isomorphism of $\LG$-\'etale $\varphi$-modules
$$ \overline{\gM}(\rho)[1/u] \simeq M(\rhobar).$$
\end{prop}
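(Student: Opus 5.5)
The plan is to apply Proposition~\ref{v3-prop:crysreptoBMmodules} with $H=\LG$, which is legitimate since $\LG=\Ghat\rtimes\Gal(L/\Qp)$ is an extension of the finite \'etale group $\Gal(L/\Qp)$ by the reductive (hence parahoric) $\Ghat$, and then to add the extra datum required of an $\LG$-Breuil--Kisin module. For an $L$-parameter $\rho$ that datum is the identification $\gM(\rho)_0\simeq\gM(\omega_{\triv,L})$. Composing $\rho$ with the projection $\LG\to\Gamma_0$ gives the $\Gamma_0$-valued crystalline representation $\Gal_{\Qp}\to\Gal(L/\Qp)$. Because the functor $\rho\mapsto\gM(\rho)$ of Proposition~\ref{v3-prop:crysreptoBMmodules} is built from exact tensor functors via Theorem~\ref{thm:prismaticKeylemma}, it is compatible with change of group along $\LG\to\Gamma_0$. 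Therefore $\gM(\rho)_0\simeq\gM(\rho_0)$, where $\rho_0$ is the induced $\Gamma_0$-object. By the compatibility of $\rho$ with the projection, $\rho_0$ is exactly $\omega_{\triv,L}$, so we obtain a canonical isomorphism $\gM(\rho)_0\simeq\gM(\omega_{\triv,L})$.

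Next I check functoriality. A morphism of $L$-parameters is some $g\in\Ghat(\Zpbar)$. Its image in $\Gamma_0$ is trivial, so the induced map on $\gM(-)_0$ is the identity and respects the chosen identifications. The same reasoning handles the \'etale $\varphi$-modules. Apply Proposition~\ref{v3-prop:standard-properties-etale-phi-modules} with $H=\LG_{\Fpbar}$ to $\rhobar$. Its $\Gamma_0$-projection is $\rhobar_0$, which corresponds to $M(\omega_{\triv,L}\otimes\Fpbar)$. Under the equivalence~\eqref{v3-eqn:Fontainecorresp} and Proposition~\ref{v3-prop:crysreptoBMmodules}, this agrees with $\gM(\omega_{\triv,L})\otimes_{\Zpbar\llbracket u\rrbracket}\Fpbar((u))$.

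Finally, the isomorphism $\overline{\gM}(\rho)[1/u]\simeq M(\rhobar)$ as $\LG$-objects is the one supplied by Proposition~\ref{v3-prop:crysreptoBMmodules}. It remains to check that it carries the first $\Gamma_0$-identification to the second. By naturality of that isomorphism in the group, this reduces to the $\Gamma_0$ case applied to $\omega_{\triv,L}$, where both identifications come from the same natural isomorphism.

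I expect the main obstacle to be this last compatibility, namely that the natural isomorphism of Proposition~\ref{v3-prop:crysreptoBMmodules} is natural with respect to homomorphisms of group schemes $H\to H'$. I would prove it by noting that the isomorphism is defined objectwise on $\Rep H$, through \cite[Rem.~7.11]{bhatt2021prismatic}, and that restricting along $\Rep\Gamma_0\to\Rep\LG$ is simply precomposition of fibre functors.
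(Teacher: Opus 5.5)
Your proposal is correct and is the paper's argument: the paper records this as a formal consequence of Proposition~\ref{v3-prop:crysreptoBMmodules} applied with $H=\LG$ and $H=\Gamma_0$. Your write-up simply makes explicit the compatibility with change of group along $\LG\to\Gamma_0$ that this implicitly uses, including the fact that morphisms of $L$-parameters lie in $\Ghat(\Zpbar)$ and hence act trivially on the $\Gamma_0$-part.
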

\begin{proof} This is a formal consequence of Proposition \ref{v3-prop:crysreptoBMmodules} 
applied with $H = \LG$ and $H = \Gamma_0.$
\end{proof}

\begin{para} Let $\widetilde\gM(\omega_{\triv,L})$ be the $\LG$-torsor over $\Zpbar$, induced from $\gM(\omega_{\triv,L})$ by the canonical inclusion $\Gamma_0 \rightarrow \LG.$
 Let $\gM$ be an $\LG$-Breuil--Kisin module; bearing in mind Lemma~\ref{lem:Frobtorsor}, we will think of $\gM$ as a $\LG$-torsor over $\overline{\mathbf{Z}}_p \llbracket u \rrbracket$, equipped with an isomorphism $\Phi_{\gM}: \varphi^*\gM[1/E] \simeq \gM[1/E]$. 
Since $\Zpbar$ is strictly Henselian, $\gM$ is a trivial $\LG$-torsor, and there exists an isomorphism of $\LG$-torsors over $\Zpbar\llbracket u \rrbracket$, 
$\gM \simeq \widetilde\gM(\omega_{\triv,L})$ which lifts the isomorphism $\gM_0 \simeq \gM(\omega_{\triv,L}).$
As $\widetilde\gM(\omega_{\triv,L})$ is defined over $\Zpbar,$ pulling back by $\varphi$ induces an isomorphism 
$\varphi^*\gM \simeq \widetilde\gM(\omega_{\triv,L})$, so that the isomorphism $\Phi_{\gM}: \varphi^*\gM[1/E] \simeq \gM[1/E]$ is given by 
an element $x\cdot \gamma$ for some $x \in \Ghat(\Zpbar \llbracket u \rrbracket [1/E]),$ as 
$\varphi$ acts on $\gM(\omega_{\triv,L})$ via the geometric Frobenius $\gamma.$ 

Changing the isomorphism $\gM \simeq \widetilde\gM(\omega_{\triv,L})$ has the effect of changing~$x$ by 
$\gamma$-twisted Frobenius-conjugation. That is, $x$ is replaced by 
\begin{equation}\label{v3-phi-conjugation-G-tuple}
 g^{-1}\cdot x\cdot \varphi^\gamma(g)
  \end{equation}for some  $g \in \Ghat(\Zpbar \llbracket u \rrbracket)$, where we write
  \[
    \varphi^{\gamma}(g)\coloneq  (\varphi(g))^{\gamma}=\varphi(g^{\gamma}).
  \] 
  Similarly, up to $\gamma$-twisted Frobenius-conjugacy, $x$ depends only on the $\widehat G$-conjugacy class 
  of the splitting $\Gamma_0 \rightarrow \LG.$ 
  We will write $\gM \sim x.$

Similarly, if $\gM$ is an $\LG$-Breuil--Kisin module over $\Fpbar$ 
(resp.\  $\LG$-\'etale $\varphi$-module), we can assign to $\gM$ an element
$x \in \Ghat(\Fpbar ((u))),$ which is well defined up to $\gamma$-twisted Frobenius
conjugation by elements $g\in \Ghat(\Fpbar \llbracket u \rrbracket),$
(resp.~$g\in \Ghat(\Fpbar (( u )))$). Here in the case of
$\LG$-\'etale $\varphi$-modules we are using Steinberg's theorem that
a $\Ghat$-bundle  over $\Fpbar ((u))$ is trivial. We will again write
$\gM \sim x.$ 
\end{para}
  
\begin{defn}
  \label{v3-defn:shape}Let ~$\gM$ be an $\LG$-Breuil--Kisin module over $\Fpbar$, with $\gM \sim x.$
 Note that  if we view $x\in \Ghat(\Fpbar((u)))$ as an element of $\Gr^{sw}_{\Ghat}(\Fpbar),$ then by~\eqref{v3-phi-conjugation-G-tuple} the $L_p^{+}\Ghat$-orbit of $x$ depends only
  on $\gM$ and not on the choice of $x$, and thus the same holds for the $L^+\Ghat$-orbit. The \emph{shape} $s(\gM)$ is the element
  $\lambda\in X_*(\That)^{+}$ such that
\begin{equation*}
\label{v3-eq:20}
x\in \Gr_{\Ghat}^{sw,\lambda}(\Fpbar).
\end{equation*}
(The terminology \emph{shape} follows that of Booher--Levin, \cite[Defn.~3.1.9]{MR4616433}.)
\end{defn}

\begin{para}
The following theorem is a reinterpretation of some of the results of Section~\ref{sec:Frob-descent-F-gauge} in the terminology introduced in this section; it is the only result from Section~\ref{sec:Frob-descent-F-gauge} that we will use in the rest of this section.
\end{para}
\begin{thm}
   \label{v3-thm:shape-uparrow}Let ~$\rho:\Gal_{\Qp}\to \LG(\Zpbar)$ be  a crystalline $L$-parameter. Then:
   \begin{enumerate}
\item\label{v3-item:35}  $\overline \gM(\rho)$ is $\mu_p$-equivariant.
   \item\label{v3-item:34}  $s(\overline \gM(\rho))\uparrow \mu(\rho)$.
\end{enumerate}
 \end{thm}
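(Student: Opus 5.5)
The plan is to deduce both parts from the results of Section~\ref{sec:Frob-descent-F-gauge} applied to the crystalline representation underlying $\rho$, with group $G=\LG$. Choose a finite extension $F/\Qp$ such that $\rho$ factors through $\LG(\O_F)$. Then $\rho^F$ is an object of $\LG$-$\Rep^{\crys}_{\Zp}(\Gal_{\Qp})$ by Lemma~\ref{lem:crystallineGreps}. The group $\LG$ is an extension of the finite \'etale group $\Gal(L/\Qp)$ by the reductive (hence parahoric) group $\Ghat$, so Theorem~\ref{thm:prismaticKeylemma} applies and gives a prismatic $F$-crystal $(\mathcal{E},\varphi_{\mathcal{E}})$ with $\LG$-structure. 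By construction, $\gM(\rho)$ is the base change to $\Zpbar\llbracket u\rrbracket$ of its Breuil--Kisin realization.

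For part~\eqref{v3-item:35}, reduce $\mathcal{E}$ modulo $p$ and apply Corollary~\ref{cor:modpcrysGdesc}, or equivalently Corollary~\ref{cor:crysGdesc}(2). This shows that the mod $p$ $\LG$-Breuil--Kisin module is $\mu_p$-equivariant compatibly with $\varphi$. Since $\mu_p$ acts trivially on coefficients, the structure survives extension of scalars to $\Fpbar$. I would also check that the equivariant structure is compatible with the fixed trivialization $\gM_0\simeq\gM(\omega_{\triv,L})$. The $\Gamma_0$-part is pulled back from $\Zpbar$, so its $\mu_p$-structure is the trivial one by uniqueness (Corollary~\ref{cor:mainNcor}).

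For part~\eqref{v3-item:34}, I would use Theorem~\ref{thm:alpha-E-uparrow}(2) with a geometric point $\xi$, which gives $\alpha_{\mathcal{E},\xi}(\overline k)\uparrow\alpha_{\mathcal{E},\xi}(\overline K)$. Two identifications then remain.

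First, $\alpha_{\mathcal{E},\xi}(\overline K)=\mu(\rho)$. This is Proposition~\ref{prop:HTweights}, applied to $G^\circ=\Ghat$ after passing to an unramified extension over which $\rho$ lands in $\Ghat$.

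Second, $\alpha_{\mathcal{E},\xi}(\overline k)=s(\overline\gM(\rho))$, and this is the main obstacle. Definition~\ref{v3-defn:shape} uses the element $x$ with $\Phi_{\gM}=x\gamma$ after trivializing, while $\alpha$ is defined in $\Hk_{\LG}$ via $\varphi_{\gM}$. The key observation is that $\gamma\in\LG(\Zpbar)\subset L^+\LG$. Hence $x\gamma$ and $x$ lie in the same $L^+$-double coset, and the relative position in $\Hk_{\LG}$ of the disconnected group equals the $\Ghat$-relative position of $x$. One also has to identify the loop variable $E\equiv u \bmod p$ and to compare orbits for $\LG$ with those for $\Ghat$. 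An $\LG$-orbit can merge $\gamma$-conjugate $\Ghat$-orbits, but $\gamma$ preserves $\Bhat$ and $\That$, so it fixes dominance and acts compatibly with $\uparrow$.

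To avoid disconnectedness entirely, I would instead work directly with $\Ghat$ after restricting to $\Gal_{L'}$ for an unramified $L'$ splitting the action. The prismatic crystal then has $\Ghat$-structure, the shape is unchanged by unramified base change, and Theorem~\ref{thm:alpha-E-uparrow} applies to the connected reductive group $\Ghat$ verbatim. Combining the two identifications yields $s(\overline\gM(\rho))\uparrow\mu(\rho)$.
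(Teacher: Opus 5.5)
Your proposal is correct and follows the paper's proof. Part~(1) comes from Corollary~\ref{cor:modpcrysGdesc} together with Theorem~\ref{thm:prismaticKeylemma}, and part~(2) from Theorem~\ref{thm:alpha-E-uparrow} and Proposition~\ref{prop:HTweights}, applied to the restriction of $\rho$ to a finite unramified extension so that one works with the connected group $\Ghat$; this gives $s(\overline{\gM}(\rho))=\alpha_{\mathcal E}(\Fpbar)\uparrow\alpha_{\mathcal E}(\Qpbar)=\mu(\rho)$. Your extra checks, namely compatibility with the $\Gamma_0$-trivialization and the identification of $\alpha_{\mathcal E}(\Fpbar)$ with the shape, are details the paper leaves implicit.
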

 \begin{proof} Part~\eqref{v3-item:35} follows from Corollary~\ref{cor:modpcrysGdesc} 
   and Theorem~\ref{thm:prismaticKeylemma}.
Part~\eqref{v3-item:34} follows from Theorem \ref{thm:alpha-E-uparrow} and Proposition \ref{prop:HTweights}, 
applied to the restriction of $\rho$ to a finite unramified extension of $\Q_p$; indeed, these show that \[ s(\gMbar(\rho)) = \alpha_{\mathcal E}(\Fpbar) \uparrow \alpha_{\mathcal E}(\Qpbar) = \mu(\rho),\]as required.  \end{proof}

  \begin{para}\label{v3-para:small-weight-shape}
When the Hodge--Tate weights are sufficiently small, we have the following corollary, which together with Corollary~\ref{cor:modpcrysGdesc} 
in particular recovers~\cite[Thm.~4.22]{GLSII},  which was the key  input from $p$-adic Hodge theory used in the proof of the Buzzard--Diamond--Jarvis conjecture.
\end{para}
\begin{cor}\label{v3-cor:shape-p-small-weight} Let~$\rho:\Gal_{\Qp}\to \LG(\Zpbar)$ be a crystalline $L$-parameter such that $\mu(\rho)\in \overline{\Delta}_p$ (equivalently, for each positive root $\alpha$ we have $0\le \langle \alpha,\mu(\rho)\rangle\le p$). Then $s(\overline \gM(\rho))=\mu(\rho).$ Moreover, we have $\gM(\rho)\sim \mu(\rho)(E)X$ for some~$X\in \Ghat(\Zpbar\llbracket u\rrbracket)$.
\end{cor}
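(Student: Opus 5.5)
The first claim will follow from Theorem~\ref{v3-thm:shape-uparrow}\eqref{v3-item:34} together with the observation that $\mu(\rho)$ is minimal for $\uparrow$ among dominant coweights. The second claim will come from the integral affine Grassmannian: once the reduction has the same relative position as the generic fibre, the modification $\varphi_{\gM}$ has constant relative position over $\Spec\Zpbar$.

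For the first claim, set $\lambda\coloneq s(\overline\gM(\rho))$. Theorem~\ref{v3-thm:shape-uparrow} gives $\lambda\uparrow\mu\coloneq\mu(\rho)$, so $\lambda$ is dominant, $\lambda\in W_p\mu$ and $\lambda\le\mu$. Since $\mu\in\overline{\Delta}_p$, which is a fundamental domain for $W_p$, we can write $\lambda=v(\mu)$ with $v\in W_p$. By Lemma~\ref{lem:equivalence-uparrow-alcoves-weights} and Proposition~\ref{prop:Bruhat-equivalent-uparrow}, there are $u,v'\in W_p$ with $u\le_{\Bru}v'$, both $u\Delta_p$ and $v'\Delta_p$ dominant, and $\lambda=u(\lambda_0)$, $\mu=v'(\lambda_0)$. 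Uniqueness of $\lambda_0$ forces $\lambda_0=\mu$.

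It remains to check that $\mu\in\overline\Delta_p$ is the $\uparrow$-minimum of its orbit's dominant elements. One route: choose $v'$ of minimal length in $v'W_p^{\mu}$; Proposition~\ref{prop:Bruhat-order-and-Wp} together with Corollary~\ref{cor:minimal-cosets-Wp-dominant} then identify $v'$ with the minimal double coset representative of $W\backslash W_p/W_p^{\mu}$ containing the identity, so $v'=1$. Then $u\le_{\Bru}1$ gives $u=1$ and hence $\lambda=\mu$. A simpler route may also work: $\lambda\le\mu$ with both dominant and $W_p$-conjugate, and $\mu\in\overline\Delta_p$; Jantzen's result that $\overline{\Delta}_p$-weights are $\uparrow$-minimal (Remark~\ref{rem:uparrow-rho-translation}) then gives $\lambda=\mu$.

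For the second claim, trivialize $\gM(\rho)$ so that $\varphi_{\gM}=x\gamma$, and view $x$ (with loop variable $E$) as an $\Zpbar$-point of $\Gr^{sw}_{\Ghat}$ in the sense of \S\ref{ss:fungrass}. Its generic fibre lies in the orbit $\Gr^{\mu}$ by Proposition~\ref{prop:HTweights}, and its special fibre lies in $\Gr^{s}$ with $s=\mu$ by the first claim. The point therefore lands in the closure $\Gr^{\le\mu}_{\Zpbar}$, and both of its fibres lie in the open orbit $\Gr^{\mu}_{\Zpbar}$, which is fibrewise dense, open, and smooth over $\Zpbar$. Hence the $\Zpbar$-point factors through $\Gr^{\mu}_{\Zpbar}=L^+\Ghat\backslash L^+\Ghat\,\mu(E)\,L^+\Ghat$. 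Since $\Zpbar$ is strictly Henselian and $L^+\Ghat$ is pro-smooth, the orbit map is surjective on $\Zpbar$-points (compare Lemma~\ref{lem:pointquotient}). So $x=Y\mu(E)X'$ with $Y,X'\in\Ghat(\Zpbar\llbracket u\rrbracket)$. Finally, $\gamma$-twisted Frobenius conjugation by $g=Y$, as in \eqref{v3-phi-conjugation-G-tuple}, removes the left factor and gives $\gM(\rho)\sim\mu(E)X$ with $X=X'\varphi^\gamma(Y)$.

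The main obstacle is the last step: justifying that a $\Zpbar$-point whose two fibres both lie in the open stratum lies in the open stratum scheme-theoretically. Openness of $\Gr^{\mu}$ in $\Gr^{\le\mu}$ should suffice, because $\Spec\Zpbar$ is local and every point of it maps into the open set. The care needed is in the ind-scheme and completion bookkeeping that lets us treat $E$ as a uniformizer, together with the infinite-type nature of $\Zpbar$; I expect passing to a finite $\O_F$ to handle this.
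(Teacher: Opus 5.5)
Your proposal follows the paper's proof. The first claim comes from Theorem~\ref{v3-thm:shape-uparrow} together with the $\uparrow$-minimality of $\overline{\Delta}_p$ among dominant alcoves. The second comes from the fact that a point of the integral switched affine Grassmannian (loop variable $E$) whose special fibre lies in the open orbit $\Gr^{sw,\mu(\rho)}$ must factor through that orbit. You also spell out the lift to $L^+\Ghat$ and the twisted conjugation by $Y$, which the paper leaves implicit.

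One caveat concerns your first route to minimality. Replacing $v'$ by the minimal-length element of $v'W_p^{\mu}$ (namely $1$) loses the inequality $u\le_{\Bru}v'$. Proposition~\ref{prop:Bruhat-equivalent-uparrow} only gives that inequality for the particular $v'$ it produces. That $v'$ can be $s_{\alpha,p}\neq 1$ when $\mu$ lies on an upper wall, e.g.\ $\GL_2$ with $\mu=(p,0)$. So ``$u\le_{\Bru}1$'' is not justified as written.

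The repair is immediate. Since $v'(\mu)=\mu$, $v'$ lies in the standard parabolic subgroup $W_p^{\mu}$. Anything Bruhat-below $v'$ then also lies in $W_p^{\mu}$, so $\lambda=u(\mu)=\mu$.

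Alternatively, your second route works and is exactly the paper's argument. Any dominant $\lambda\in W_p\mu$ lies in the closure of some dominant alcove $v\Delta_p$. Theorem~\ref{thm:Wang} (using $1\le_{\Bru}v$) and Lemma~\ref{lem:uparrow-from-alcove-to-cocharacter} then give $\mu\uparrow\lambda$. Hence $\mu\le\lambda\le\mu$, so $\lambda=\mu$.
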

\begin{proof}By Theorem~\ref{v3-thm:shape-uparrow} we have $s(\overline \gM(\rho))\uparrow \mu(\rho)$.
Since  $\mu(\rho)\in \overline{\Delta}_p$, and $ \overline{\Delta}_p$ is the minimal dominant alcove for the~$\uparrow$ order, the first part is immediate from Lemma~\ref{lem:equivalence-uparrow-alcoves-weights}.
The second part follows from the first.
To see this, write~$\Gr^{sw}_{\Ghat,\Zpbar}$ for the switched affine Grassmannian in the loop variable~$E$. As  recalled in \ref{para:integral-Schubert}, there is an open subscheme~$\Gr^{sw,\mu(\rho)}_{\Ghat,\Zpbar}$ of~$\Gr^{sw}_{\Ghat,\Zpbar}$ given by the $L^+\Ghat$ orbit of~$\mu(\rho)(E)$.
The Breuil--Kisin module $\gM(\rho)$ determines a morphism $\Spf\Zpbar\to \Gr^{sw}_{\Ghat,\Zpbar}$, whose special fibre factors through~$\Gr^{sw,\mu(\rho)}_{\Ghat,\Fpbar}$ by the first part.
Thus the morphism $\Spf\Zpbar\to \Gr^{sw}_{\Ghat,\Zpbar}$ also factors through ~$\Gr^{sw,\mu(\rho)}_{\Ghat,\Zpbar}$, as required.
\end{proof}

\subsection{Semisimple mod \texorpdfstring{$p$}{p} representations and \'etale \texorpdfstring{$\varphi$}{phi}-modules.}
We will make use of the results on $L$-parameters $\Gal_{\Qp}\to \LG(\Fpbar)$ proved in~\cite[\S 3]{lin2023delignelusztigtypecorrespondencetame}, which works in the more general setting of $L$-groups of tamely ramified reductive groups.
Following this reference, we make the following definitions.
\begin{defn}
  \label{defn:standard-Levi}Let~$S\subseteq \Delta^{\vee}$ be a $\Gal(L/\Qp)$-stable subset, let~$\Mhat_S$ be the corresponding Levi subgroup of~$\Ghat$, and let~$\Phat_S$ be the parabolic subgroup of~$\Ghat$ which contains~$\Bhat$ and has Levi component~$\Mhat_S$.
  We call $\LM_S\coloneq \Mhat_S\rtimes \Gal(L/\Qp)$ (resp.\ $\LP_S\coloneq \Phat_S\rtimes\Gal(L/\Qp)$) a \emph{standard Levi subgroup} of~$\LG$ (resp.\ \emph{standard parabolic subgroup} of $\LG$).
  Then a \emph{Levi subgroup} (resp.\ \emph{parabolic subgroup}) of~$\LG$ is a subgroup~$\LM$ which is $\Ghat$-conjugate to a standard Levi subgroup (resp.\ a subgroup~$\LP$ which is $\Ghat$-conjugate to a standard parabolic subgroup).
\end{defn}

\begin{defn}\label{v3-defn:irred-ss}
Let $\Gamma$ be a profinite group.
    We say that a representation $\rhobar:\Gamma \to \LG(\Fpbar)$ is \emph{$\LG$-irreducible}, or simply \emph{irreducible}, if it does not factor through~$\LP(\Fpbar)$ for any proper parabolic subgroup~$\LP$ of~$\LG$. 
  We say that ~$\rhobar$ is
  \emph{semisimple} if whenever it factors through some parabolic~$\LP(\Fpbar)$, it factors through~$\LM(\Fpbar)$ for some Levi subgroup~$\LM$ of~$\LP$.
  \end{defn}

The following  result of Lin~\cite{lin2023delignelusztigtypecorrespondencetame} will be important in the sequel.
\begin{prop}Suppose that~$\rhobar:\Gal_{\Qp}\to \LG(\Fpbar)$ is semisimple. Then:
  \label{v3-prop:rhobar-irreducible-classification}
  \begin{enumerate}
  \item\label{v3-item:1}$\rhobar$ is tamely ramified.
  \item\label{v3-item:4} After replacing $\rhobar$ by an equivalent representation, we have $\rhobar(I_{\Qp})\subseteq \That(\Fpbar)$ and $\rhobar(\Gal_{\Qp})\subseteq N_{\LG}(\That)(\Fpbar)$.
  \item\label{v3-item:5} If~$\rhobar$ as in~\eqref{v3-item:4} is irreducible, then $(\Ghat^{\rhobar(I_{\Qp})})^{\circ}=\That$.
  \item $\rhobar|_{\Gal_{\Q_{p,\infty}}}$ is semisimple.
\end{enumerate}
\end{prop}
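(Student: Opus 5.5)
The plan is to reduce to the case of an irreducible representation into a Levi subgroup, and then use the structure of tamely ramified representations, following the arguments of \cite[\S 3]{lin2023delignelusztigtypecorrespondencetame}, which we cite for the details.

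\textbf{Part (1).} Since $\rhobar$ is semisimple, we first choose a minimal parabolic $\LP$ through which $\rhobar$ factors. By semisimplicity, $\rhobar$ then factors through a Levi subgroup $\LM$ of $\LP$, and it is $\LM$-irreducible. Wild inertia $P_{\Qp}$ is a pro-$p$ normal subgroup, so $\rhobar(P_{\Qp})$ is a $p$-group normalized by the image. By the Borel--Tits theorem, a nontrivial unipotent $p$-subgroup normalized by $\rhobar(\Gal_{\Qp})$ gives a proper $\rhobar$-stable parabolic, namely the one attached to the unipotent radical of its normalizer. This contradicts irreducibility in $\LM$, so $\rhobar(P_{\Qp})=1$.

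\textbf{Part (2).} Now $\rhobar(I_{\Qp})$ is a finite cyclic group of order prime to $p$, so it consists of commuting semisimple elements. Let $\sigma$ be its generator. Then $\sigma$ lies in a maximal torus of $\Ghat$, and we conjugate it into $\That$. Frobenius normalizes the tame inertia, so it normalizes the group $H=\langle\sigma\rangle$, and hence the connected centralizer $C=(\Ghat^{H})^\circ$, which contains $\That$. We then use a Lang/Steinberg-type argument to conjugate by $C(\Fpbar)$ so that Frobenius also normalizes $\That$. Concretely, Frobenius acts on $C$ by a semisimple automorphism, which stabilizes some maximal torus, and all maximal tori of $C$ are $C$-conjugate. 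After this conjugation, $\rhobar(\Gal_{\Qp})\subseteq N_{\LG}(\That)$.

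\textbf{Part (3).} Suppose $\rhobar$ is irreducible and $C\neq\That$. Take the torus $Z$ generated by the Frobenius-invariants in $X_*(\That)$ modulo the roots of $C$; more naturally, take the Frobenius-fixed part of the cocharacter lattice of $Z(C)^\circ$. I expect this to produce a nontrivial cocharacter whose centralizer is a proper Levi containing the image, and then we are done. I anticipate that this requires care: the $L$-group compatibility must be used so that the Levi subgroups obtained are $\Gal(L/\Qp)$-stable, and relevant. This is the main obstacle, and here I would follow Lin's argument.

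\textbf{Part (4).} This follows because $\Gal_{\Q_{p,\infty}}$ surjects onto the tame quotient $\Gal(\Qp^{\mathrm{tame}}/\Qp)$, since $\Q_{p,\infty}/\Qp$ is totally wildly ramified in the relevant sense. By (1), $\rhobar$ factors through this tame quotient, so restricting to $\Gal_{\Q_{p,\infty}}$ has the same image. Semisimplicity depends only on the image, so it is preserved.
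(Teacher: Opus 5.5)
\textbf{Gap in part (2).} The step ``Frobenius acts on $C$ by a semisimple automorphism, which stabilizes some maximal torus'' is asserted without proof. It is exactly where the semisimplicity of $\rhobar$ has to enter, but your argument for (2) uses only tameness, and for tame $\rhobar$ the conclusion can fail. Take $\Ghat=\GL_2$, $p$ odd, and $\rhobar$ unramified with $\rhobar(\mathrm{Frob})=\begin{pmatrix}1&1\\0&1\end{pmatrix}$. Then $C=\GL_2$, and this element normalizes no maximal torus $T'$: an element of order $p$ in $N(T')$ lies in $T'$, hence is semisimple.

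The claim is also not literally true for semisimple $\rhobar$. The order of $\rhobar(\mathrm{Frob})$ can be divisible by $p$ through its image in $\Gal(L/\Qp)$, for example when $\Ghat$ is a torus on which $\gamma$ acts with order $p$. So the automorphism need not be semisimple; what you need is only that it preserves \emph{some} maximal torus of $C$, and that requires an argument.

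\textbf{Problem in part (3).} The mechanism you suggest cannot work. Frobenius-fixed cocharacters of $Z(C)^\circ$ do not detect whether $C\neq\That$: if $\rhobar(I_{\Qp})$ is central, then $Z(C)^\circ=Z(\Ghat)^\circ$. The obstruction has to come from inside $C$.

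\textbf{Fix: prove (3) first, in a Levi, and deduce (2).}
\begin{enumerate}
\item By semisimplicity, after conjugation $\rhobar$ factors through a standard Levi $\LM$ and is $\LM$-irreducible.
\item Conjugate a generator of $\rhobar(I_{\Qp})$ into $\That\subseteq\Mhat$, and set $C_M=(\Mhat^{\rhobar(I_{\Qp})})^{\circ}\supseteq\That$.
\item By Steinberg, the automorphism $\mathrm{Ad}(\rhobar(\mathrm{Frob}))$ of $C_M$ stabilizes some Borel subgroup $B_C$ of $C_M$, since every automorphism of a connected reductive group fixes a Borel subgroup.
\item Inertia centralizes $C_M$, so the whole image $\rhobar(\Gal_{\Qp})$ normalizes $R_u(B_C)$.
\item If $R_u(B_C)\neq 1$, your Borel--Tits argument from (1) puts $\rhobar$ in a proper parabolic of $\LM$, a contradiction. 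This includes the check that the canonical parabolic is stable under the $\Gal(L/\Qp)$-component.
\item Hence $C_M$ is a torus containing $\That$, so $C_M=\That$. Frobenius normalizes $C_M$, so it normalizes $\That$; this is (2).
\item Taking $\LM=\LG$ gives (3).
\end{enumerate}

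\textbf{Comparison with the paper.} Your (1) is a fine sketch; the paper simply cites Lin for (1)--(3). Your (4) is correct and equivalent to the paper's argument. The paper shows $\rhobar(I_{\Qp})=\rhobar(I_{\Q_{p,\infty}})$, since the index is a power of $p$ while $\rhobar(I_{\Qp})$ has order prime to $p$. Your version instead uses the surjectivity of $\Gal_{\Q_{p,\infty}}$ onto the tame quotient.
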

\begin{proof}Recalling that all maximal tori of~$\Ghat$ are conjugate over $\Fpbar,$ the first two parts are immediate from ~\cite[Lem.~3.2.6, Thm.~3.2.7]{lin2023delignelusztigtypecorrespondencetame}, and the third part follows from ~\cite[Thm.~2.3.5]{lin2023delignelusztigtypecorrespondencetame}.
For the final claim, it suffices to show that  $\rhobar(\Gal_{\Qp}) = \rhobar(G_{\Q_{p,\infty}}).$ Since $\mathbf{Q}_{p,\infty}/\mathbf{Q}_p $ is totally ramified, it suffices in turn to show that   $\rhobar(I_{\Qp}) = \rhobar(I_{\Q_{p,\infty}}).$  Since  $[\rhobar(I_{\Qp}):\rhobar(I_{\Q_{p,\infty}})]$ is a power of~$p,$ and~$\rhobar(I_\Qp)$ has order prime to~$p$ (because $\rhobar$ is tame), this is clear. 
\end{proof}

\begin{para}
As in \ref{para:notn-affine-Weyl}, we now fix a lift of each~$w\in W$ to $N_{\Ghat}(\That)(\Fpbar)$, which we continue to denote by~$w$.
\end{para}

\begin{lem}
  \label{v3-lem:semisimple-etale-phi-structure}If~$\rhobar:\Gal_{\Qp} \to \LG(\Fpbar)$ is semisimple, then  $M(\rhobar) \sim tu^{\lambda}w$ for some $t\in \That(\Fpbar)$, $\lambda\in X_*(\That)$, and $w\in W$.
\end{lem}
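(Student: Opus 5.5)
By Proposition~\ref{v3-prop:rhobar-irreducible-classification}, after replacing $\rhobar$ by an equivalent representation we may assume that $\rhobar(I_{\Qp})\subseteq \That(\Fpbar)$ and $\rhobar(\Gal_{\Qp})\subseteq N_{\LG}(\That)(\Fpbar)$. Let $H\coloneq N_{\LG}(\That)=N_{\Ghat}(\That)\rtimes\Gal(L/\Qp)$, a smooth affine group scheme over $\Fpbar$. By Proposition~\ref{v3-prop:standard-properties-etale-phi-modules}~\eqref{v3-item:3}, $M(\rhobar)$ admits a reduction of structure group to $H$. Compatibility with the given identification $M_0\simeq \gM(\omega_{\triv,L})\otimes\Fpbar((u))$ is automatic, since the $\Gamma_0$-part is pulled back from $\Fpbar$. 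Consequently we may choose the trivialization so that $M(\rhobar)\sim x$ with $x\in N_{\Ghat}(\That)(\Fpbar((u)))$. Writing $x=y w$ with $y\in\That(\Fpbar((u)))$ and $w\in W$ (via our fixed lifts), it remains to normalize $y$ by $\gamma$-twisted Frobenius conjugation by elements of $\That(\Fpbar((u)))$.

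Since $\That$ is split, $\That(\Fpbar((u)))=\That(\Fpbar)\cdot u^{X_*(\That)}\cdot \That(1+u\Fpbar\llbracket u\rrbracket)$, so we can write $y=t u^{\lambda} z$ with $z$ in the pro-unipotent part $\That(1+u\Fpbar\llbracket u\rrbracket)$. Conjugating by $g\in \That(1+u\Fpbar\llbracket u\rrbracket)$ replaces $x$ by
\[ g^{-1}tu^{\lambda}zw\,\varphi^{\gamma}(g)=tu^{\lambda}\bigl(g^{-1}z\,(w\varphi^{\gamma}(g)w^{-1})\bigr)w, \]
using that $\That$ is commutative. We therefore need to solve $g^{-1}\cdot w(\varphi^{\gamma}(g))=z^{-1}$ in $\That(1+u\Fpbar\llbracket u\rrbracket)$. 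Passing to cocharacter coordinates, this becomes an equation of the form $g^{-1}\cdot\sigma(g)^{p\text{-twisted}}=z^{-1}$, where the twisted map $g\mapsto w(\varphi^\gamma(g))$ sends $1+u^n(\cdot)$ into $1+u^{pn}(\cdot)$ and is therefore topologically contracting. The solution is then given by the convergent product $g=\prod_{i\ge0}(\text{twist})^i(z)$ (in the appropriate order; commutativity makes the order irrelevant). Hence $x\sim tu^{\lambda}w$, as claimed.

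The main obstacle is the first step: ensuring that the reduction to $N_{\LG}(\That)$ is compatible with the $\Gamma_0$-torsor structure, so that the element $x$ genuinely lies in $N_{\Ghat}(\That)$ rather than only up to the splitting. I would handle this by noting that $H\to\Gamma_0$ is surjective and that the splitting $\gamma$ lies in $H$. The convergence argument in the second step is routine.
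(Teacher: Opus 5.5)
Your proposal is correct and follows the paper's proof. Like the paper, you use Propositions~\ref{v3-prop:rhobar-irreducible-classification} and~\ref{v3-prop:standard-properties-etale-phi-modules} to reduce to $N_{\LG}(\That)$, write the Frobenius as $xw$ with $x\in\That(\Fpbar((u)))$, and remove the principal-unit part by showing that $g\mapsto g^{-1}\varphi(g^{w\gamma})$ is a bijection of $\That(1+u\Fpbar\llbracket u\rrbracket)$; the paper's inverse $h\mapsto\prod_{n\ge0}\varphi^n(h^{(w\gamma)^n})^{-1}$ is exactly your convergent product. (The point that actually needs checking in your first step is that the $N_{\LG}(\That)$-torsor is trivial over $\Fpbar((u))$, which holds because $\rhobar(I_{\Qp})\subseteq\That(\Fpbar)$ and $\That$ is split; once that is known, adjusting by the splitting of $\Gamma_0$ takes care of the compatibility you flagged.)
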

\begin{proof}
  By Propositions~\ref{v3-prop:rhobar-irreducible-classification} and~\ref{v3-prop:standard-properties-etale-phi-modules}, after replacing~$\rhobar$ by an equivalent $L$-parameter, we may assume that  $M(\rhobar)$  admits a reduction of structure group to an $N_{\LG}(\That)$-\'etale $\varphi$-module, so that $M(\rhobar)\sim x w$ for some $x\in \That(\Fpbar((u)))$ and $w\in W$.
   By~\eqref{v3-phi-conjugation-G-tuple}, we have  $M(\rhobar)\sim g^{-1}\cdot \varphi(g^{w\gamma})\cdot x w$ for any~$g\in \That(\Fpbar((u)))$, so it suffices to show that  the map \[	\That(1+u\Fpbar\llbracket u\rrbracket)\to 	\That(1+u\Fpbar\llbracket u\rrbracket),\] \[g\mapsto g^{-1}\cdot\varphi(g^{w\gamma})\]is a bijection.
   This is clear; indeed, an inverse is given by the  map \[g\mapsto \prod_{n\ge 0}\varphi^{n}(g^{(w\gamma)^{n}})^{-1}.
  \qedhere\]
\end{proof}

\begin{rem}\label{v3-rem:non-uniqueness-of-semisimple-description}The element $tu^{\lambda}w$ in Lemma~\ref{v3-lem:semisimple-etale-phi-structure} is of course far from  uniquely determined; indeed, we can modify it by $\gamma$-twisted $\varphi$-conjugation by any element $g\in N_{\Ghat}(\That)(\Fpbar((u)))$.
  In particular, if we take~$g=u^\nu \sigma$ for some ~$\nu\in X_{*}(\That)$, $\sigma\in W$, then we see that
  \begin{equation}
    \label{v3-eq:21}tu^{\lambda}w\sim
    t'u^{\sigma^{-1}(\lambda-\nu +p(w\gamma)\nu)}\cdot(\sigma^{-1}w\sigma^{\gamma})
   \end{equation}
  for some~$t'$.
 \end{rem}
\begin{para}  
For each~$n\ge 1$ we have a fundamental character  $\omega_n:I_{\Qp}\to\Fpbartimes$ given by $g\mapsto g(\sqrt[p^n-1]{p})/\sqrt[p^n-1]{p}$; 
in particular~$\omega_1$ is the mod~$p$ cyclotomic character.
For any $\lambda\in X_*(\That)$  we write \[\omega_n^{\lambda}\coloneq \lambda\circ\omega_n:I_{\Qp}\to \That(\Fpbar). \]
\end{para}

\begin{para}\label{v3-para:defntau} 
Let~$d\ge 1$ be any integer such
  that~$(w\gamma)^d=1$; note that in particular this implies that~$\Q_{p^{d}}$ contains~$L$.
 Write
   \begin{equation}
    \label{v3-eq:formula-for-inertial-character}
    \tau(\lambda ,w )\coloneq  \omega_d^{-\sum_{n=0}^{d-1}p^n(w\gamma)^{n}\lambda}: I_{\Qp}\to \That(\Fpbar)\subset \LG(\Fpbar).
  \end{equation} It is easy to check that this 
  does not depend on the choice of 
  ~$d$.

  Given~$\lambda\in X_{*}(\That)$, there is a unique element $e\in X_*(\That)_{\Q}$  such that \begin{equation*}\label{v3-eqn:cyclic-equation-ej}e=\lambda+pw\gamma e;\end{equation*} indeed, we have  \begin{equation}\label{v3-eqn:defn-of-ei}e\coloneq \frac{1}{1-p^{d}}\sum_{n=0}^{d-1}p^{n}(w\gamma)^{n}\lambda,  \end{equation}
  so that
 \begin{equation}  \label{v3-eq:18}   \tau(\lambda ,w )=\omega_d^{(p^{d}-1)e}.\end{equation}
\end{para}

\begin{lem}
  \label{v3-lem:inertia-representation-from-etale-phi} Suppose that $\rhobar:\Gal_{\Qp} \to \LG(\Fpbar)$ is semisimple, and write $M(\rhobar) \sim tu^{\lambda}w$ as in Lemma~\ref{v3-lem:semisimple-etale-phi-structure}.  
  Then ~$\rhobar$ is equivalent to an~$L$-parameter which factors through 
  $N_{\LG}(\That)(\Fpbar)$.
  Furthermore:
  \begin{enumerate}
  \item     $
      \rhobar|_{I_{\Qp}}\cong \tau(\lambda ,w ).
    $
  \item  The composite representation
  \begin{equation}\label{v3-eq:cuhntkl4s5}
    \Gal_{\Qp}\xrightarrow{\rhobar}N_{\LG}(\That)(\Fpbar)\to (N_{\LG}(\That)/\That)(\Fpbar)=W\rtimes\Gal(L/\Qp)
  \end{equation}is unramified, and takes  geometric Frobenius  to $w\cdot \gamma$.
  \item   If~$\rhobar$ is irreducible, then  for  each root $\alpha$ we have
  \begin{equation}
    \label{v3-eq:irreducibility-implies-fixed-point-not-integral}
    \langle
    e,\alpha\rangle\not\in\Z.
  \end{equation}

  \end{enumerate}
\end{lem}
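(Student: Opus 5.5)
The plan is to reduce the statement to an explicit computation with rank-one (torus-valued) étale $\varphi$-modules, using the equivalence of Proposition~\ref{v3-prop:standard-properties-etale-phi-modules}.

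\emph{Reduction to $N_{\LG}(\That)$.} By Lemma~\ref{v3-lem:semisimple-etale-phi-structure}, after replacing $\rhobar$ by an equivalent parameter we may assume $M(\rhobar)$ has a reduction of structure group to an $N_{\LG}(\That)$-étale $\varphi$-module $M'$ with $M'\sim tu^{\lambda}w\gamma$. Proposition~\ref{v3-prop:standard-properties-etale-phi-modules}\eqref{v3-item:11}, applied to $H=N_{\LG}(\That)$, shows that $M'$ corresponds to a representation $\rhobar'\colon\Gal_{\Q_{p,\infty}}\to N_{\LG}(\That)(\Fpbar)$, and its pushout to $\LG$ is equivalent to $\rhobar|_{\Gal_{\Q_{p,\infty}}}$. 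We must then extend $\rhobar'$ to $\Gal_{\Qp}$. I would do this with an explicit tame construction. Let $\theta$ be the representation of $\Gal_{\Qp}$ sending inertia via $\tau(\lambda,w)$ and a fixed Frobenius lift to $t'w\gamma$, for a suitable $t'$. One checks that this is a well-defined tame $L$-parameter, using $(w\gamma)\tau(\lambda,w)(w\gamma)^{-1}=\tau(\lambda,w)^p$, which follows from~\eqref{v3-eq:18} and $e=\lambda+pw\gamma e$. Fontaine's functor applied to $\theta|_{\Gal_{\Q_{p,\infty}}}$ then gives $M(\theta)\sim t u^{\lambda}w\gamma$, for an appropriate choice of $t'$. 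The last claim is the core computation. I would first pass to the finite extension $\Q_{p^d}$, where $(w\gamma)^d=1$ and so $M$ becomes a $\That$-module. There I would use the standard rank-one fact that the étale $\varphi$-module $\Fpbar((u))$ with $\varphi(1)=cu^{a}$ corresponds to $\omega_d^{-a}$ times an unramified character; the sign here is fixed by the convention that the cyclotomic character has Hodge--Tate weight $-1$. The equivalence is fully faithful and $\rhobar|_{\Gal_{\Q_{p,\infty}}}$ determines $\rhobar$, since by Proposition~\ref{v3-prop:rhobar-irreducible-classification} $\rhobar$ is tame and $\Q_{p,\infty}/\Qp$ is totally wildly ramified. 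Hence $\rhobar\simeq\theta$, which factors through $N_{\LG}(\That)$.

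\emph{Parts (1) and (2).} These are then read off from the definition of $\theta$. For (1), restrict to $\Gal_{\Q_{p^d}}$, iterate $\varphi$ $d$ times, and note that $\varphi^d$ is then given by $u^{\sum p^n(w\gamma)^n\lambda}$ up to a constant. Via the rank-one computation this gives inertia acting by $\omega_d^{-\sum p^n (w\gamma)^n\lambda}=\tau(\lambda,w)$. For (2), the image modulo $\That$ is generated by the image of Frobenius, which is $w\gamma$, and inertia maps to $\That$; so the composite~\eqref{v3-eq:cuhntkl4s5} is unramified.

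\emph{Part (3).} Suppose $\langle e,\alpha\rangle\in\Z$ for some root $\alpha$. Then by~\eqref{v3-eq:18} the character $\alpha\circ\tau(\lambda,w)=\omega_d^{(p^d-1)\langle e,\alpha\rangle}$ is trivial, since $\omega_d$ has order $p^d-1$. Hence the root subgroup $U_\alpha$ centralizes $\rhobar(I_{\Qp})$, so $(\Ghat^{\rhobar(I_{\Qp})})^\circ\supsetneq\That$. This contradicts Proposition~\ref{v3-prop:rhobar-irreducible-classification}\eqref{v3-item:5}.

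I expect the main obstacle to be the careful rank-one normalization: matching signs, and checking compatibility of the Frobenius twist $\gamma$ with the $L$-group structure, when identifying $M(\theta)$.
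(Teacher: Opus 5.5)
Your outer framework is sound. Because $\rhobar$ is tame and $\Gal_{\Q_{p,\infty}}$ surjects onto the tame quotient of $\Gal_{\Qp}$, two tame $L$-parameters whose restrictions to $\Gal_{\Q_{p,\infty}}$ are conjugate are themselves conjugate; the paper leaves this point implicit. Your proof of (3) is essentially the paper's.

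The gap is the step you call the core computation, namely that $M(\theta)\sim tu^{\lambda}w$ for a suitable $t'$. You read off (1) and (2) from $\theta$, so everything rests on this step, and the method you indicate cannot prove it. Restricting $\theta$ to $\Gal_{\Q_{p^{d}}}$ and using rank-one characters only computes $\F_{p^{d}}\otimes_{\Fp}M(\theta)$. Restriction to $\Gal_{\Q_{p^{d}}}$ is not injective on equivalence classes of $L$-parameters. The image of $\Gal_{\Q_{p^{d}}}$ in $W\rtimes\Gal(L/\Qp)$ is trivial, and unramified twists that become trivial over $\Q_{p^{d}}$ are invisible. So this computation cannot tell you which $t$ occurs as $t'$ varies. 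Nor can it produce the specific presentation $tu^{\lambda}w$, as opposed to some $t''u^{\lambda''}w''$ with the same inertial type. On the module side, the given module does not become $\That$-valued after base change for free either; that requires an explicit twisted conjugation.

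The fix is to compute in the opposite direction, as the paper does, which makes $\theta$ unnecessary.
\begin{enumerate}
\item Since $tu^{\lambda}w\gamma\in N_{\LG}(\That)(\Fpbar((u)))$, $M(\rhobar)$ has a reduction of structure group to $N_{\LG}(\That)$. Hence $\rhobar|_{\Gal_{\Q_{p,\infty}}}$, and then by your tameness argument $\rhobar$ itself, factors through $N_{\LG}(\That)(\Fpbar)$ up to equivalence.
\item The pushout of this reduction to $W\rtimes\Gal(L/\Qp)$ is the constant module $w\gamma$. That is the module of the unramified representation sending geometric Frobenius to $w\gamma$, which gives (2).
\item For (1), base change to $\Q_{p^{d}}$ and conjugate by $g=\sum_{i=0}^{d-1}(w\gamma)^{i}\varphi^{i}(e_0)$, where $e_0\in\F_{p^{d}}\otimes_{\Fp}\Fpbar$ is the idempotent attached to $\F_{p^{d}}\subset\Fpbar$. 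Then $g=w\gamma\varphi(g)$, so $g^{-1}tu^{\lambda}w\gamma\varphi(g)=\sum_{i}t^{(w\gamma)^{i}}u^{(w\gamma)^{i}\lambda}\varphi^{-i}(e_0)$ is $\That$-valued. The rank-one computation, after reducing to $\That=\Gm$, then gives $\rhobar|_{I_{\Qp}}\cong\tau(\lambda,w)$.
\end{enumerate}

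Separately, there is a slip in your construction of $\theta$. Take $\gamma$ to be the geometric Frobenius, and note that $\omega_d(\mathrm{Frob}\,\sigma\,\mathrm{Frob}^{-1})=\omega_d(\sigma)^{1/p}$ for a geometric Frobenius $\mathrm{Frob}$. The relation that follows from $e=\lambda+pw\gamma e$ and $\tau(\lambda,w)=\omega_d^{(p^{d}-1)e}$, and the one needed for $\theta$ to be a homomorphism, is $(w\gamma)\tau(\lambda,w)(w\gamma)^{-1}=\tau(\lambda,w)^{p^{d-1}}$. It is not $\tau(\lambda,w)^{p}$; the two agree for $d\le 2$ but not in general.
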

\begin{proof}
  Since $M(\rhobar)\sim tu^{\lambda}w$, we see that~$M(\rhobar)$ admits a reduction of structure group to  $N_{\LG}(\That)$, so ~$\rhobar$ is equivalent to an $L$-parameter factoring through  $N_{\LG}(\That)(\Fpbar)$. Furthermore, the unramified representation~\eqref{v3-eq:cuhntkl4s5} corresponds to the unramified $N_{\LG}(\That)$-\'etale $\varphi$-module given by~$w$, which proves part (2).

  To compute~$\rhobar|_{I_{\Qp}}$, we choose as above some $d\ge 1$ such that $(w\gamma)^{d}=1$, and consider the \'etale $\varphi$-module~$M(\rhobar|_{\Gal_{\Q_{p^{d}}}})$; by e.g.\ \cite[\S 6.2]{MR4055172}, we have $M(\rhobar|_{\Gal_{\Q_{p^{d}}}})=\F_{p^{d}}\otimes_{\Fp}M(\rhobar)$, so that in a slight abuse of notation we again write
  $M(\rhobar|_{\operatorname{Gal}_{\Q_{p^d}}})\sim t u^{\lambda} w.$

  Write $e_0\in \F_{p^{d}}\otimes_{\Fp}\Fpbar$ for the unique idempotent such that $e_0(x\otimes 1)=(1\otimes \kappa(x))e_0$, where $\kappa:\F_{p^d}\to \Fpbar$ is the natural inclusion, 
  and set 
  \[
    g\coloneq \sum_{i=0}^{d-1}(w\gamma)^i\varphi^{i}(e_0 )\in \LG(\F_{p^{d}}\otimes_{\Fp}\Fpbar).
  \]Then $g=w\gamma\varphi(g)$, so that
  \[
    g^{-1}tu^{\lambda}w\gamma\varphi(g)=g^{-1}tu^{\lambda}g=\sum_{i=0}^{d-1}t^{(w\gamma)^{i}}u^{(w\gamma)^i\lambda}\varphi^{-i}(e_0 )\in \That(\F_{p^{d}}\otimes_{\Fp}\Fpbar((u))).
  \]

  Since~$\That$ is a (split) torus, we can reduce to the case ~$\That=\Gm$, which is standard, see e.g.\ \cite[Lem.~3.1.2]{MR3324938} (bearing in mind that in that reference the Galois representations corresponding to \'etale $\varphi$-modules are computed via contravariant functors).

  Suppose finally that~$\rhobar$ is irreducible, and assume for the sake of contradiction that $\langle e,\alpha\rangle\in\Z$ for some~$\alpha$. Since~$\omega_{d}$ has order $p^{d}-1$,
  it follows from ~\eqref{v3-eq:18} that $\rhobar(I_{\Qp})\subseteq Z_{\alpha}(\Fpbar)$, where~$Z_{\alpha}$ denotes the centre of the Levi subgroup of~$\Ghat$ determined by~$\alpha$. This contradicts part~\eqref{v3-item:5} of Proposition~\ref{v3-prop:rhobar-irreducible-classification}.
\end{proof}

\subsection{Semisimplification}\label{v3-subsec:semisimple}

\begin{defn} Let $\Gamma$ be a profinite group.
For any representation~$\rhobar:\Gamma \to \LG(\Fpbar)$, its \emph{semisimplification} $\rhobar^{\semis}$ is obtained as follows: choose ~$\LP$ to be minimal with the property that~$\rhobar$ factors through~$\LP(\Fpbar)$, let~$\LM$ be a Levi subgroup of~$\LP$, and let
  $$\rhobar^{\semis}:\Gamma \to \LP(\Fpbar)\to \LM(\Fpbar)\to \LG(\Fpbar)$$
  be the composite induced by the quotient by the unipotent radical of~$\LP$. Then~$\rhobar^{\semis}$ is semisimple
(and is $\LM$-irreducible as a representation $\Gamma\to \LM(\Fpbar)$), and is well-defined up to equivalence (see~\cite[Prop.~2.15]{quast2023deformationsgvaluedpseudocharacters}).
\end{defn}

\begin{para} For the remainder of this subsection, we fix an $\LG$-Breuil--Kisin module $\gM$ over~$\Fpbar$, with corresponding \'etale $\varphi$-module~$\gM[1/u]$ and Galois representation~$\rhobar:\Gal_{\Q_{p,\infty}}\to \LG(\Fpbar)$.

\end{para}

\begin{lem} \label{v3-lem:reduction-of-structure-BK-parabolic} Let~$\LP$ be a standard parabolic subgroup of~$\LG$. Then the following are equivalent:
  \begin{enumerate}
  \item\label{v3-item:6} $\gM$ admits a reduction of structure group to~$\LP$.
  \item\label{v3-item:7} $\gM[1/u]$ admits a reduction of structure group to~$\LP$.
      \item\label{v3-item:8} $\rhobar$ admits a reduction of structure group to~$\LP$. 
  \end{enumerate}
\end{lem}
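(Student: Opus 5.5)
The plan is to prove the implications \eqref{v3-item:6}$\Rightarrow$\eqref{v3-item:7}$\Rightarrow$\eqref{v3-item:8}$\Rightarrow$\eqref{v3-item:7}$\Rightarrow$\eqref{v3-item:6}. The first implication is immediate by base change along $\Fpbar\llbracket u\rrbracket\to\Fpbar((u))$. The equivalence of \eqref{v3-item:7} and \eqref{v3-item:8} is Proposition~\ref{v3-prop:standard-properties-etale-phi-modules}: part~\eqref{v3-item:3} gives \eqref{v3-item:8}$\Rightarrow$\eqref{v3-item:7}. For the converse, apply the equivalence~\eqref{v3-item:11} with $H=\LP$. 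This produces an $\LP$-representation $\rhobar'$ whose $\LP$-\'etale $\varphi$-module induces $\gM[1/u]$. Pushing out $\rhobar'$ to $\LG$ and using that $T$ is an equivalence of $\LG$-objects, we see that $\rhobar'$ is equivalent to $\rhobar$. Hence $\rhobar$ is conjugate into $\LP(\Fpbar)$, which is what a reduction of structure group of a representation means here.

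The substantive step is \eqref{v3-item:7}$\Rightarrow$\eqref{v3-item:6}: extending a reduction of structure group from the punctured disc to the disc. A reduction of the $\LG$-torsor $\gM$ to $\LP$ is a section of $\gM/\LP$, which is a $\Ghat/\Phat$-bundle and hence proper over $\Spec\Fpbar\llbracket u\rrbracket$. We are given a section over $\Fpbar((u))$ that is compatible with Frobenius. Since $\Fpbar\llbracket u\rrbracket$ is a DVR and $\gM/\LP\to\Spec\Fpbar\llbracket u\rrbracket$ is proper, the valuative criterion extends this section uniquely to $\Fpbar\llbracket u\rrbracket$. Concretely, after trivializing $\gM$, we write the generic section as $g\Phat$ with $g\in\Ghat(\Fpbar((u)))$. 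The Iwasawa decomposition $\Ghat(\Fpbar((u)))=\Ghat(\Fpbar\llbracket u\rrbracket)\Phat(\Fpbar((u)))$ then lets us take $g$ integral.

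It remains to check that $\Phi_\gM$ preserves the extended reduction, in the sense that $\varphi^*$ of the reduced $\LP$-torsor maps into it after inverting $E$. Here $E=u$ in characteristic $p$. Both $\varphi^*$ of the extended section and its transport by $\Phi_\gM$ are sections of $\gM/\LP$ over $\Spec\Fpbar\llbracket u\rrbracket$, viewed over $\Fpbar((u))$ or with $u$ inverted. They agree generically because the reduction in \eqref{v3-item:7} is compatible with Frobenius. Separatedness, together with the uniqueness of the valuative extension, then gives compatibility. In matrix terms, if $\gM\sim x$ with $x\in\Ghat(\Fpbar((u)))\gamma$, we obtain $g^{-1}x\varphi^\gamma(g)\in\Phat(\Fpbar((u)))$ with $g$ integral. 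This is an $\LP$-Breuil--Kisin structure, using that the $\Gamma_0$-part is untouched because $\LP\supseteq\Gal(L/\Qp)$.

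The main obstacle is bookkeeping: making the $\LP$-structure a genuine object of $\LP$-$\Mod^{\varphi}$ compatible with the fixed identification of $\gM_0$, and working with $\LG$-torsors rather than vector bundles. I would handle this by systematically passing to the matrix description $\gM\sim x$ and the twisted $\varphi$-conjugation~\eqref{v3-phi-conjugation-G-tuple}, where the argument reduces to the Iwasawa decomposition above.
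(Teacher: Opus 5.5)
Your proposal is correct and follows the paper's proof. The paper writes $\gM\sim g^{-1}x\varphi^\gamma(g)$ with $x\in\Phat(\Fpbar((u)))$, uses the Iwasawa decomposition $g=bk$ with $b\in\Bhat(\Fpbar((u)))$ and $k\in\Ghat(\Fpbar\llbracket u\rrbracket)$, and absorbs $k$ into the integral trivialization; this is your valuative-criterion step in matrix form, and your separate Frobenius-compatibility check is automatic, since $\Phi_{\gM}$ only exists after inverting $u$, where the extended reduction is the given one.
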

\begin{proof}
  The first condition trivially implies the second, which is equivalent to the third by Proposition~\ref{v3-prop:standard-properties-etale-phi-modules}.
  It remains to show that the second condition implies the first.
Assuming the second condition, we have $\gM[1/u]\sim x$ with $x\in \Phat(\Fpbar((u)))$.
Making an arbitrary choice of trivialization, we can write $\gM \sim g^{-1}x\varphi^\gamma(g)$ for some ~$g\in \Ghat(\Fpbar((u)))$.
By the Iwasawa decomposition, 
  we may write $g=bk$ with $b\in \Bhat(\Fpbar((u)))\subseteq \Phat(\Fpbar((u)))$ and~$k\in \Ghat(\Fpbar\llbracket u\rrbracket )$.
After possibly changing the trivialization of~$\gM$, 
  we may suppose that $g=b$, so that $g^{-1}x\varphi^\gamma(g)\in \Phat(\Fpbar((u)))$, as required.
\end{proof}

\begin{lem}  \label{v3-lem:BK-module-parabolic}Suppose that $\gM$ is $\mu_p$-equivariant, and
   write
   $$\rhobar^{\semis}:\Gal_{\Q_{p,\infty}}\to \LM(\Fpbar)\subseteq \LG(\Fpbar)$$ for the semisimplification of~$\rhobar$, where~$\LM$ is a standard Levi subgroup of~$\LG$.
      Then there is a $\mu_p$-equivariant $\LG$-Breuil--Kisin module $\gM_0$ over~$\Fpbar$ such that:
   \begin{enumerate}
   \item $\gM_0[1/u]\cong \Mrhobarss$,
   \item $\gM_0 $ admits a reduction of structure group to~$\LM$, and
   \item\label{v3-item:37}  $s(\gM_0 )\uparrow s(\gM)$.
   \end{enumerate}
 \end{lem}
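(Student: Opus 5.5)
We construct $\gM_0$ as a degeneration of $\gM$ along a cocharacter of the centre of $\Mhat$, following the introduction. Since $\rhobar^{\semis}$ factors through $\LM$ and is obtained from $\rhobar$ via a minimal parabolic, after replacing $\rhobar$ by an equivalent parameter we may assume $\rhobar$ factors through the standard parabolic $\LP$ with Levi $\LM$. By Lemma~\ref{v3-lem:reduction-of-structure-BK-parabolic}, $\gM$ then admits a reduction of structure group to $\LP$. Concretely, we can choose a trivialization with $\gM\sim x$ where $x\in\Phat(\Fpbar((u)))$, and the change of trivialization is by $g\in\Phat(\Fpbar\llbracket u\rrbracket)$. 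We must also check that the $\mu_p$-equivariant structure can be taken compatible with this reduction. For this we use the uniqueness in Corollary~\ref{cor:mainNcor}: the $\mu_p$-structure is determined by the \'etale $\varphi$-module, so it should preserve the $\LP$-reduction, which is itself defined by a $\varphi$-stable sub-object of $\gM[1/u]$.

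Next, we pick a cocharacter $\eta:\Gm\to Z(\Mhat)$ that is $\gamma$-invariant and dominant-regular for $\Phat$. Thus conjugation $\eta(s)\,y\,\eta(s)^{-1}$ contracts the unipotent radical $\Uhat_P$ to $1$ as $s\to 0$, while fixing $\Mhat$. Writing $x=m\cdot n$ with $m\in\Mhat$ and $n\in\Uhat_P$, the family $x_s=\eta(s)x\eta(s)^{-1}$ for $s\neq0$ gives isomorphic Breuil--Kisin modules. Indeed, $\eta(s)$ is constant in $u$ and $\gamma$-fixed, so this is $\gamma$-twisted $\varphi$-conjugation by a constant element, which also lies in $L_p^+\Ghat$. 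The limit is $x_0=m$. We set $\gM_0\sim m$. Its generic fibre $\gM_0[1/u]$ is the \'etale $\varphi$-module obtained by pushing $\gM[1/u]$ to $\LM$, which is $M(\rhobar^{\semis})$ by Proposition~\ref{v3-prop:standard-properties-etale-phi-modules}. This gives (1) and (2). The $\mu_p$-equivariance of $\gM_0$ follows because the $\mu_p$-structure on the $\LP$-object induces one on its $\LM$-pushout.

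For (3), we view the family $s\mapsto x_s$ as a map $\A^1\to \Gr^{sw}_{\Ghat}$. Equivalently, via Proposition~\ref{prop:funcmupfixed}, it is a family in $\pHk_{\Ghat}$ over $\A^1$. For $s\neq0$ it lies in the $\mu_p$-fixed orbit of $s(\gM)$, because $\mu_p$-equivariance lets us represent $x$ up to $L^+\Ghat$ by a point of $\Gr^{sw,\mu_p}$ with structure group $\Phat$, and conjugation by $\eta(s)$ preserves both fixedness and the orbit. The limit at $s=0$ lies in the closure $\Gr^{sw,\mu_p,\uparrow s(\gM)}$. By Proposition~\ref{prop:K-orbit-closure-mu-fixed}, this gives $s(\gM_0)\uparrow s(\gM)$.

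The main obstacle is the $\mu_p$-bookkeeping. We must promote the $\mu_p$-equivariant $\gM$ to an $\LP$-reduction with a compatible $\mu_p$-structure, so that the point lies in $\Gr^{\mu_p}$ with parabolic structure and the degeneration stays in the $\mu_p$-fixed locus, rather than only in $\Gr$. I would handle this using the functor-of-points description in Proposition~\ref{prop:funcmupfixed}(3). The $\LP$-reduction of the $\mu_p$-equivariant torsor is a section of $\gM/\Phat$, and it is unique and $\varphi$-determined on the generic fibre. It is therefore automatically $\mu_p$-stable, and it extends over $u=0$ by properness of $\Ghat/\Phat$.
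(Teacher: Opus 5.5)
Your proposal is correct and is essentially the paper's proof: write $\gM\sim x$ with $x\in\Phat(\Fpbar((u)))$, degenerate by the $\gamma$-invariant cocharacter of $Z(\Mhat)$ defining $\Phat$ (the paper's $\xi$, taken from Lin), and apply Proposition~\ref{prop:K-orbit-closure-mu-fixed} to the resulting map $\A^1\to\Gr^{sw,\mu_p}_{\Ghat}$. The only difference is your detour through a $\mu_p$-stable $\LP$-reduction, which is unnecessary; note also that the reduction need not be unique, although its $\mu_p$-stability does follow from $\varphi$-stability alone. Indeed, the point $L^+\Ghat\cdot x$ is $\mu_p$-fixed for any trivialization, and $x_s=x\,\eta(s)^{-1}$ in $L^+\Ghat\backslash L\Ghat$ with $\eta(s)^{-1}\in L^+_p\Ghat$, so the whole family lies in the closed locus $\Gr^{sw,\mu_p}_{\Ghat}$, which gives both the $\mu_p$-equivariance of $\gM_0$ and part (3).
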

 \begin{rem}
   \label{v3-rem:L-vs-G-be-careful}We caution the reader that the shape~$s(\gM_0 )$ and the relation~$\uparrow$ in part~\eqref{v3-item:37} of Lemma~\ref{v3-lem:BK-module-parabolic} are defined with respect to~$\Ghat$, rather than with respect to~$\Mhat$.
 \end{rem}
   \begin{proof}[Proof of Lemma~\ref{v3-lem:BK-module-parabolic}] Let~$\LP$ be the standard parabolic with Levi $\LM$.
By Lemma~\ref{v3-lem:reduction-of-structure-BK-parabolic}, $\gM$ admits a reduction of structure group to~$\LP$, so we can write $\gM \sim x$ for some ~$x\in \Phat(\Fpbar((u)))$.
 By~\cite[Cor.~2.1.5]{lin2023delignelusztigtypecorrespondencetame}, we can write $\LP=P_{\LG}(\xi)$ for some cocharacter $\xi\in X_*(\That)$, where $\LP_{\LG}(\xi)\subseteq \LG$ is the subgroup of elements~$g$ such that $ \lim_{t\to 0}\xi(t)g \xi(t)^{-1}$ is defined, and this limit then lies in the Levi
 $\LM$. 
 Thus the map $\G_m \rightarrow L\Ghat$  given by $t \mapsto \xi(t)x (\xi(t)^{\gamma})^{-1}$ extends to  $\A^1 \rightarrow L\Ghat,$ and over $0 \in \A^1,$ this map
 factors through $L\LM.$

Then we define a family of Breuil--Kisin modules $\gM_t$ over~$\Gm$ by
 \[
   \gM_t\sim x(t)\coloneq \xi(t)x(\xi(t)^{\gamma})^{-1}.
 \]
 This extends to a family over~$\A^{1}$, and by construction~$\gM_0,$ the fiber over
 $0 \in \A^1,$ admits a reduction of structure group to~$\LM$, and satisfies $\gM_0[1/u]\cong \Mrhobarss$.
Since~$\gM$ is $\mu_p$-equivariant, and the isomorphism class of the
family $\gM_t$ is constant for $t\neq 0,$~$\gM_0$ is again $\mu_p$-equivariant.
The family $\gM_t$  induces a map $\A^1\to \Gr_{\Ghat}^{sw,\mu_p}$ (sending~$\gM_t$ to ~$x(t)$), so it follows from Proposition~\ref{prop:K-orbit-closure-mu-fixed} that  $s(\gM_0 )\uparrow s(\gM)$, as required.
\end{proof}

\subsection{Breuil--Kisin modules and inertial weights}
\label{v3-subsec:Inertial-weights-of-crystalline}
In this subsection we prove our main result connecting the Hodge--Tate weights of a crystalline $L$-parameter to the inertial weights of its reduction modulo~$p$. 

\begin{para} We now consider the Breuil--Kisin modules underlying irreducible Galois representations, following Chen--Nie~\cite{MR4402497}.
 As in~\ref{para:notn-affine-Weyl} we write  $\Lambda^{\vee}\subseteq X_*(\That)$ for the coroot lattice
and ~$\Waff$ for the affine Weyl group $\Lambda^{\vee}\rtimes W$, a normal subgroup of the extended affine Weyl group $X_*(\That)\rtimes W$. We again embed ~$X_*(\That)$ into~$\Ghat(\Fpbar((u)))$ via $\lambda\mapsto u^{\lambda}\coloneq \lambda(u)$, and our fixed lifts of each~$w\in W$ to $w\in N_{\Ghat}(\That)(\Fpbar)$ give an embedding (of sets) of  $X_*(\That)\rtimes W$ into ~$\Ghat(\Fpbar((u)))$. Then the restriction of ~$\varphi:\Ghat(\Fpbar((u)))\to \Ghat(\Fpbar((u)))$ to $ X_*(\That)$ induces the map\[\sigma:X_*(\That)_{\R}\to X_*(\That)_{\R}\]\[v\mapsto pv,\]    and similarly we write
\[(t u^{\lambda}w\gamma)\sigma:X_*(\That)_{\R}\to X_*(\That)_{\R}\]\[v\mapsto \lambda+pw\gamma v\] for the map induced by the action of $(t u^{\lambda}w\gamma)\varphi$.
Thus the unique fixed point of $(t u^{\lambda}w\gamma)\sigma$ is equal to~$e$, which we defined in~\eqref{v3-eqn:defn-of-ei}.

We will now briefly make use of alcoves; in contrast to Definition~\ref{defn:facet}, our alcoves here are not $p$-dilated. 
In particular, we  write~$\Delta$ for the so-called fundamental alcove 
\begin{equation*}\label{v3-eqn:fundamental-alcove}\Delta=\{v\in X_*(\That)_{\R}: 0<\langle\alpha,v\rangle<1\ \forall \alpha\in
  \Phi^+\},\end{equation*} where $\Phi^{+}$ is the set of positive roots.
The following lemma is proved in the same way as~\cite[Prop.~3.3]{MR4402497}, which is the case
that~$\LG=\GL_n$.
\end{para}

\begin{lem}
  \label{v3-lem:fundamental-alcove-presentation} If $\rhobar:\Gal_{\Qp}\to \LG(\Fpbar)$ is irreducible, then there exist~$\theta\in X_*(\That)$, $w\in W$, $t\in \That(\Fpbar)$ such that
  \begin{enumerate}
  \item $\Mrhobar \sim t u^{\theta}w$, and
  \item the unique fixed point of $(t u^{\theta}w\gamma)\sigma$ lies in~$\Delta$.
  \end{enumerate}
\end{lem}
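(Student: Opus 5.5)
Start from Lemma~\ref{v3-lem:semisimple-etale-phi-structure}: since an irreducible $\rhobar$ is semisimple, we may write $\Mrhobar\sim t u^{\lambda}w$ with $t\in\That(\Fpbar)$, $\lambda\in X_*(\That)$ and $w\in W$. The fixed point $e$ of $(tu^{\lambda}w\gamma)\sigma$ is then given by \eqref{v3-eqn:defn-of-ei}. The plan is to change the representative by $\gamma$-twisted $\varphi$-conjugation by elements $g=u^{\nu}\sigma'$ of the extended affine Weyl group, as in Remark~\ref{v3-rem:non-uniqueness-of-semisimple-description}, and to track how the fixed point moves.

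The key computation is the following. Conjugating $x=tu^{\lambda}w$ to $g^{-1}x\varphi^{\gamma}(g)$ replaces the affine map $A=(tu^{\lambda}w\gamma)\sigma$ by $\bar g^{-1}A\bar g$, where $\bar g:v\mapsto \nu+\sigma'(v)$ is the (undilated) affine action of $g$ on $X_*(\That)_{\R}$. To check this, I will verify directly that
\[
v\mapsto \sigma'^{-1}\bigl(\lambda-\nu+p w\gamma(\nu+\sigma'^{\gamma}\text{-twisted } v)\bigr)
\]
agrees with formula \eqref{v3-eq:21}, keeping careful track of the Frobenius twist by $\gamma$ on $\sigma'$. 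It follows that the new fixed point is $\bar g^{-1}(e)$. Since $X_*(\That)\rtimes W$ contains $\Waff$, which acts transitively on the set of (undilated) alcoves, and whose closures cover $X_*(\That)_{\R}$, we can choose $g$ with $\bar g^{-1}(e)\in\overline{\Delta}$. The torus part $t$ only changes to some other $t'\in\That(\Fpbar)$, which is harmless.

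It remains to rule out the boundary of $\overline{\Delta}$. Points of $\overline\Delta\setminus\Delta$ satisfy $\langle\alpha,v\rangle\in\{0,1\}$ for some root $\alpha$, i.e.\ they lie on an integral hyperplane. By Lemma~\ref{v3-lem:inertia-representation-from-etale-phi}(3), irreducibility of $\rhobar$ gives $\langle e',\alpha\rangle\notin\Z$ for every root $\alpha$, applied to the new representative with fixed point $e'$. Hence $e'$ lies in the open alcove $\Delta$, as required.

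The main obstacle is the bookkeeping in the conjugation formula: one has to check that the transformation of the fixed point is exactly the affine Weyl group action, and in particular that the $\gamma$-twist on $\sigma'$ (as in $\sigma^{-1}w\sigma^{\gamma}$) is compatible with moving $e$ by $\bar g^{-1}$. I would first verify this in the case $\gamma$ acting trivially, following \cite[Prop.~3.3]{MR4402497}, and then insert the twist.
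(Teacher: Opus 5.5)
Your proposal is correct and is essentially the paper's proof. You start from Lemma~\ref{v3-lem:semisimple-etale-phi-structure}, replace the representative by $z^{-1}x\varphi^{\gamma}(z)$ for an affine Weyl group element $z$, and use that $(z^{-1}x\varphi^{\gamma}(z)\gamma)\sigma=z^{-1}\circ(x\gamma)\sigma\circ z$ on $X_*(\That)_{\R}$, so the fixed point moves to $z^{-1}(e)$; this one-line identity already absorbs the $\gamma$-twist, so there is no need to treat the untwisted case separately. The only difference from the paper is the order of steps: the paper applies \eqref{v3-eq:irreducibility-implies-fixed-point-not-integral} first to place $e$ in an open alcove $z(\Delta)$, whereas you first move $e$ into $\overline{\Delta}$ and then apply the same non-integrality to the new representative, which is equally valid.
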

\begin{proof}
By Lemma~\ref{v3-lem:semisimple-etale-phi-structure}, we can write  $M(\rhobar) \sim t'u^{\theta'}w'$ for some ~$\theta'\in X_*(\That)$, $w'\in W$, $t'\in \That(\Fpbar)$.
Write~$e'$ for the fixed point of $(t'u^{\theta'}w'\gamma)\sigma$.
  By \eqref{v3-eq:irreducibility-implies-fixed-point-not-integral},
  ~$e'$ is contained in some alcove (i.e.\ it does not lie on any of the hyperplanes defining
  the alcoves).
   Since~$\Waff$  acts simply transitively on the alcoves, there exists $z\in\Waff$ such that $e'\in z(\Delta)$.
   Thinking of~ $z$ as an element of $\Ghat(\Fpbar((u)))$ via our embedding  of  $X_*(\That)\rtimes W$ into ~$\Ghat(\Fpbar((u)))$, we see from~\eqref{v3-phi-conjugation-G-tuple} that \[\Mrhobar \sim z^{-1}t' u^{\theta'} w'\varphi^\gamma(z).\] Since  $z=u^{\mu}w''$
   for some~$\mu\in\Lambda^{\vee}$ and $w''\in W$, we can write
 \[z^{-1}t' u^{\theta'} w'\varphi^{\gamma}(z)=tu^{\theta}w\] for some~$\theta\in X_*(\That)$, $w\in W$, $t\in \That(\Fpbar)$.
 It remains to note that the unique fixed point of  $(t u^{\theta}w\gamma)\sigma$ is equal to $z^{-1}(e')\in\Delta$; 
   indeed, we have
 \begin{align*}
   (t u^{\theta}w\gamma)\sigma(z^{-1}e')&=(z^{-1}t' u^{\theta'} w'\gamma\varphi(z))\sigma(z^{-1}e')\\ &=(z^{-1}t' u^{\theta'} w'\gamma)(\sigma(e')) \\ &=z^{-1}e',
 \end{align*}as required.     \end{proof}

We recall the following lemma of Chen--Nie; as in Section~\ref{subsec:unipotent-orbits}, we write~$\Iw_1\subset L^+\Ghat$ for the unipotent radical of~$\Iw$.

\begin{lem}\label{v3-lem:Chen-Nie-conjugacy}Suppose that ~$x\coloneq t u^{\theta}w$ where the unique fixed point of $(t u^{\theta}w\gamma)\sigma$ is contained in~$\Delta$. 
  Then the map \[g\mapsto g^{-1}x\varphi^\gamma(g)x^{-1}\] induces an isomorphism~$\Iw_1\isoto \Iw_1$.
\end{lem}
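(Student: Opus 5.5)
The plan is to prove that the map $F:g\mapsto g^{-1}x\varphi^\gamma(g)x^{-1}$ is bijective on $\Iw_1$ by a successive-approximation argument along the Moy--Prasad-type filtration of $\Iw_1$ by affine root subgroups. The idea is that the twisted Frobenius $\mathrm{Ad}(x)\circ\varphi^\gamma$ strictly contracts this filtration, so that $g\mapsto g^{-1}$ dominates.

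\emph{Step 1: the twisted Frobenius preserves $\Iw_1$ and is topologically nilpotent on it.} For an affine root $\alpha+n$, meaning $U_{\alpha+n}=u_{\alpha,n}(\G_a)$, we compute $\mathrm{Ad}(x)\varphi^\gamma(U_{\alpha+n})$. Since $\varphi(u_{\alpha,n}(a))=u_{\alpha,pn}(a^p)$, twisting by $\gamma$ sends it to the root $\gamma\alpha$, and conjugating by $tu^{\theta}w$ gives (by \eqref{eq:24}) the affine root subgroup $U_{w\gamma\alpha+pn+\langle w\gamma\alpha,\theta\rangle}$. Identify affine roots with affine functions $f_{\alpha+n}(v)=\langle\alpha,v\rangle+n$ on $X_*(\That)_\R$, up to the sign conventions for $\Iw$ (defined via $B^-$). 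Then this operation corresponds to pulling back $f$ along the affine map $v\mapsto (tu^\theta w\gamma)\sigma$-inverse, up to the scaling by $p$. More precisely, the transformed function evaluated at the fixed point $e$ equals $p\cdot f(e)$. The subgroup $\Iw_1$ is generated by the $U_{\alpha+n}$ whose affine functions are positive on the fundamental alcove $\Delta$ (with the appropriate sign). By hypothesis $e\in\Delta$, so each such function satisfies $f(e)>0$, and the image has value $pf(e)>f(e)>0$ at $e$. The key check is that the image root is still positive on all of $\Delta$, not just at $e$. Here we use that affine roots take integer values at vertices, so there are no walls between: a root positive at $e\in\Delta$ is positive on $\Delta$, since $\Delta$ contains no hyperplane. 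Hence $\Phi\coloneq\mathrm{Ad}(x)\circ\varphi^\gamma$ maps $\Iw_1$ into itself, and it maps the subgroup $\Iw_1(r)$, generated by roots with $f(e)\ge r$, into $\Iw_1(pr)$.

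\emph{Step 2: successive approximation.} Define the decreasing filtration $\Iw_1(r)$, for $r>0$, by the value of affine roots at $e$. These are normal subgroups, $\bigcap_r\Iw_1(r)=1$, and $\Iw_1$ is complete for this filtration. The graded pieces $\Iw_1(r)/\Iw_1(r^+)$ are abelian, being products of $\G_a$'s. On each graded piece, $F(g)\equiv g^{-1}$ modulo $\Iw_1(r^+)$, because $\Phi(g)\in\Iw_1(pr)\subseteq\Iw_1(r^+)$.

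For \emph{surjectivity}, given $h\in\Iw_1$, solve $F(g)=h$ by constructing $g=g_1g_2\cdots$ inductively, choosing each $g_{k}$ to kill the error in the next filtration step. The resulting product converges by completeness.

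For \emph{injectivity}, suppose $F(g)=F(g')$ and set $g'=gk$. Then we get $k^{-1}g^{-1}x\varphi^\gamma(g)\varphi^\gamma(k)x^{-1}=g^{-1}x\varphi^\gamma(g)x^{-1}$, that is, $k=c\,\Phi(k)\,c^{-1}$ for some $c\in\Iw_1$. If $k\in\Iw_1(r)\setminus\Iw_1(r^+)$, then $c\Phi(k)c^{-1}\in\Iw_1(pr)$ by normality, which is a contradiction. Hence $k=1$. Since the maps are morphisms of ind-pro schemes, this gives an isomorphism, and working with $R$-points for any $\Fpbar$-algebra $R$ makes it an isomorphism of functors.

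The main obstacle is Step 1: getting the sign conventions right, namely which affine roots define $\Iw_1$ (recall $\Iw$ is defined via $B^-$ and our alcove $\Delta$), and verifying the identity that the transformed affine function at $e$ equals $p$ times the original value. I would first do this computation for $\GL_2$, then in general using $e=\theta+pw\gamma e$.
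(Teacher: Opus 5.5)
Your argument is correct. The paper gives no argument of its own: it cites Chen--Nie's Lemma~2.2 for $\Ghat$ with $\sigma_0=\mathrm{Ad}(\gamma)$. You instead reprove that lemma. You show that $\Psi=\mathrm{Ad}(x)\circ\varphi^{\gamma}$ contracts the Moy--Prasad filtration of $\Iw_1$ at the fixed point $e$, and then run a Lang-type successive approximation.

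The conventions you were worried about do check out. Take $f_{\alpha+n}(v)=n-\langle\alpha,v\rangle$.
\begin{itemize}
\item For $g\in N_{\Ghat}(\That)(\Fpbar((u)))$ acting on $X_*(\That)_{\R}$ as in the paper, $\mathrm{Ad}(g)U_f=U_{f\circ g^{-1}}$.
\item Also $\varphi(U_f)=U_{pf\circ\sigma^{-1}}$.
\item Hence $\Psi(U_f)=U_{f'}$ with $f'=p\,f\circ A^{-1}$, where $A(v)=\theta+pw\gamma v$. In particular $f'(e)=pf(e)$.
\item The affine roots positive on $\Delta$ are exactly $\alpha+n$ with $\alpha\in\Phi^-$, $n\ge 0$, or $\alpha\in\Phi^+$, $n\ge 1$. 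These are precisely the root subgroups of the pro-unipotent radical of the $B^-$-Iwahori.
\end{itemize}
Positivity of $f'$ on all of $\Delta$ follows simply because no affine root hyperplane meets the open alcove. The remark about integer values at vertices is beside the point.

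One correction is needed. In general $\Iw_1$ is not generated by affine root subgroups, because it also contains $\That(1+u\Fpbar\llbracket u\rrbracket)$. For example, for $\Ghat=\GL_n$ the element $\diag(1+u,1,\dots,1)\in\Iw_1$ is not in the closed subgroup generated by root subgroups, since that subgroup lies in $\mathrm{SL}_n$. You must therefore include $\That(1+u^{\lceil r\rceil}\Fpbar\llbracket u\rrbracket)$ in $\Iw_1(r)$. Since $\Psi$ maps $\That(1+u^{k}\Fpbar\llbracket u\rrbracket)$ into $\That(1+u^{pk}\Fpbar\llbracket u\rrbracket)$, the contraction $\Psi(\Iw_1(r))\subseteq\Iw_1(pr)$ still holds and the rest of your argument goes through unchanged.

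Comparing the two routes: the citation is short, while yours is self-contained and shows exactly where the hypothesis $e\in\Delta$ enters. The paper only ever uses the inclusion $y^{-1}x\varphi^{\gamma}(y)x^{-1}\in\Iw_1$, in the proof of Proposition~\ref{v3-prop:irred-CN-result}. That inclusion is your Step~1 alone; bijectivity is never needed. Surjectivity can also be written down explicitly, without passing to graded pieces: $g=\lim_n\Psi^{n-1}(h^{-1})\cdots\Psi(h^{-1})h^{-1}$ solves $g^{-1}\Psi(g)=h$. This uses only the normality of the subgroups $\Iw_1(r)$.
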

\begin{proof}This is~\cite[Lem.~2.2]{MR4402497}, applied to the group~$\Ghat$, with the endomorphism~$\sigma_0 $ of loc.\ cit.\ being $g\mapsto \gamma g \gamma^{-1}$.
\end{proof}
The following result was inspired by~\cite[Prop.~2.4]{MR4402497} and its proof.
\begin{prop}
  \label{v3-prop:irred-CN-result}Suppose that $\overline{\rho} : \operatorname{Gal}_{\mathbf{Q}_p} \rightarrow \LG(\overline{\mathbf{F}}_p)$ is irreducible, and that $\gM$ is a $\mu_p$-equivariant $\LG$-Breuil--Kisin module over~$\Fpbar$ with $T(\gM[1/u])\cong \overline{\rho} |_{\operatorname{Gal}_{\mathbf{Q}_{p,\infty}}}$.  
  Then there exist $t\in \That(\Fpbar)$, $\lambda\in X_*(\That)^+$, and $w\in W$ such that
  $\lambda\uparrow s(\gM)$ and  $\gM[1/u] \sim t u^{\lambda}w$.
\end{prop}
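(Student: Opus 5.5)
We combine Lemma~\ref{v3-lem:fundamental-alcove-presentation}, Lemma~\ref{v3-lem:Chen-Nie-conjugacy} and Proposition~\ref{prop:Iwahori-degeneation}, following the Chen--Nie strategy.

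First, by Lemma~\ref{v3-lem:fundamental-alcove-presentation} we have $\Mrhobar\sim x\coloneq tu^{\theta}w$, where the fixed point of $(tu^{\theta}w\gamma)\sigma$ lies in $\Delta$. Since $\gM[1/u]\cong\Mrhobar$ (restriction to $\Gal_{\Q_{p,\infty}}$ being fully faithful on these objects by Proposition~\ref{v3-prop:standard-properties-etale-phi-modules}), we may choose a trivialization with $\gM\sim y$ for some $y\in\Ghat(\Fpbar((u)))$. We may also choose $g\in\Ghat(\Fpbar((u)))$ with $y=g^{-1}x\varphi^\gamma(g)$.

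Next, we decompose $g=k\,u^{\nu}v\,i$ using the affine Bruhat/Iwasawa decomposition $\Ghat(\Fpbar((u)))=L^+\Ghat\cdot(X_*(\That)\rtimes W)\cdot\Iw_1$. More precisely, we use the Iwahori decomposition $\Iw\,\widetilde W\,\Iw$ and absorb the torus part of $\Iw$ into the middle. Changing the trivialization of $\gM$ by $k\in L^+\Ghat$ does not affect $s(\gM)$, nor membership in a $\mu_p$-fixed orbit closure. This lets us assume $g=u^{\nu}v\,i$ with $i\in\Iw_1$.

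Write $z=u^{\nu}v$. Then
\[ y=i^{-1}\,z^{-1}x\varphi^{\gamma}(z)\,\varphi^{\gamma}(i), \]
and $x'\coloneq z^{-1}x\varphi^\gamma(z)=t'u^{\theta'}w'$ is again of the same form. Its fixed point is $z^{-1}$ applied to that of $x$, so it lies in an alcove but not necessarily in $\Delta$, and Lemma~\ref{v3-lem:Chen-Nie-conjugacy} does not directly apply to $x'$. To handle this, conjugate back. Writing $i'=z\,i\,z^{-1}$ and using $\varphi^\gamma(z)\varphi^\gamma(i)\varphi^\gamma(z)^{-1}=\varphi^\gamma(i')$, we get
\[ y=z^{-1}\,i'^{-1}x\varphi^\gamma(i')\,\varphi^\gamma(z), \]
where $i'\in z\Iw_1z^{-1}$. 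Using Lemma~\ref{v3-lem:Chen-Nie-conjugacy} for $x$, the element $i'^{-1}x\varphi^\gamma(i')$ has the form $j\,x$ with $j$ in a $u^{\mu}$-conjugate of $\Iw_1$.

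Assembling this, $y$ has the shape $(u^{-\mu}j_0u^{\mu})\,u^{\lambda'}$ up to left multiplication by $L^+\Ghat$ and right multiplication by elements of $L^+_p\Ghat$ (such as the Weyl lifts and constants $t$), with $j_0\in\Iw_1$. The element $u^{\lambda'}$ is obtained from $z^{-1}x\varphi^\gamma(z)$, so $\gM[1/u]\sim t''u^{\lambda'}w''$. Now $\gM$ is $\mu_p$-equivariant, and $y$ lies in $\Gr^{sw,\mu_p,\uparrow s(\gM)}_{\Ghat}$, in fact in the orbit of $s(\gM)$. Proposition~\ref{prop:Iwahori-degeneation} then gives $\lambda'_{\dom}\uparrow s(\gM)$. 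Finally, we replace $u^{\lambda'}w''$ by $u^{\lambda}w$ with $\lambda=\lambda'_{\dom}$, using \eqref{v3-eq:21} with $\nu=0$ and $\sigma\in W$, which conjugates $\lambda'$ to its dominant representative.

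The main obstacle is the bookkeeping in the third step: we must arrange the factors so that the $W$-part and the torus constants sit on the right, inside $L^+_p\Ghat$ (where the $\mu_p$-fixed orbit closure is stable), and the unipotent part is exactly of the form $u^{-\mu}\Iw_1u^{\mu}$ required by Proposition~\ref{prop:Iwahori-degeneation}. We would first try to choose $g$ so that it is already of the form $i\in\Iw_1$ times an element of $\widetilde W$ on the correct side, and then check that the twisted conjugation by that element lands in $\Iw_1$ as needed.
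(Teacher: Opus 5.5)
There is a genuine gap in your third step. Since $h^{-1}\bigl(g^{-1}x\varphi^\gamma(g)\bigr)\varphi^\gamma(h)=(gh)^{-1}x\varphi^\gamma(gh)$, a change of trivialization of $\gM$ by $h\in L^+\Ghat$ multiplies $g$ on the \emph{right}.

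In your decomposition $g=k\,u^{\nu}v\,i$ the factor $k$ sits on the left, so it cannot be discarded. Dropping it amounts to replacing $x$ by $kx\varphi^\gamma(k)^{-1}$, which is no longer of the form $tu^{\theta}w$ with fixed point in $\Delta$.

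Having put the $\Iw_1$-factor on the wrong side, you are then forced into the ``conjugate back'' step, and that step is unsupported. Your $i'=ziz^{-1}$ lies in $z\Iw_1z^{-1}$. Lemma~\ref{v3-lem:Chen-Nie-conjugacy}, however, only says that $g\mapsto g^{-1}x\varphi^\gamma(g)x^{-1}$ preserves $\Iw_1$ itself, the pro-unipotent radical of the Iwahori attached to the alcove $\Delta$ containing the fixed point of $(tu^{\theta}w\gamma)\sigma$. It gives no information about $i'^{-1}x\varphi^\gamma(i')x^{-1}$ for $i'\in z\Iw_1z^{-1}$. So the claim that this element lies in a $u^{\mu}$-conjugate of $\Iw_1$ has no justification.

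The remedy is the one you gesture at in your last sentence, and it makes the bookkeeping trivial. Use the Iwahori-orbit decomposition of $L\Ghat/L^+\Ghat$ to write $g=y\,u^{\mu}k$ with $y\in\Iw_1$, $\mu\in X_*(\That)$, $k\in L^+\Ghat$; the torus constants and Weyl lifts are absorbed into $k$. Then remove $k$ by a change of trivialization. Now $y$ is adjacent to $x$, and
\[ g^{-1}x\varphi^\gamma(g)=u^{-\mu}\,j\,u^{\mu}\cdot u^{\theta-\mu+pw\gamma\mu}\,t\,w,\qquad j\coloneq y^{-1}x\varphi^\gamma(y)x^{-1}\in\Iw_1, \]
where $j\in\Iw_1$ by Lemma~\ref{v3-lem:Chen-Nie-conjugacy} applied to $x$ itself.

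Nothing needs to be conjugated back. The leftover $u^{\mu}$ is exactly the arbitrary $\mu$ permitted in Proposition~\ref{prop:Iwahori-degeneation}. Since $t,w\in L^+_p\Ghat$, that proposition yields $(\theta-\mu+pw\gamma\mu)_{\dom}\uparrow s(\gM)$. Then \eqref{v3-eq:21} with $\nu=\mu$ and a suitable $\sigma\in W$ gives $\gM[1/u]\sim t'u^{\lambda}w'$ with $\lambda=(\theta-\mu+pw\gamma\mu)_{\dom}$.

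This is the paper's proof. Your ingredients were the right ones; only the side of the decomposition was wrong.
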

\begin{proof}
  By Lemma~\ref{v3-lem:fundamental-alcove-presentation}, we can write
  $\gM[1/u] \sim t u^{\theta}w$, where ~$\theta\in X_*(\That)$, $w\in W$, $t\in\That(\Fpbar)$ are such that the unique fixed point of $(t u^{\theta}w\gamma)\sigma$ is contained in~$\Delta$.
    Write $x\coloneq t u^{\theta}w$. After making an arbitrary choice of trivialization, we can write $\gM\sim g^{-1}x\varphi^\gamma(g)$ 
  for some element~$g\in \Ghat(\Fpbar((u)))$. 
  By the usual description of the Iwahori-orbits in the affine Grassmannian, we may write $g=yu^{\mu}k$ for some
$\mu\in X_*(\That)$, $y\in \Iw_1$ and~$k\in L^+\Ghat$; and after possibly making a change of trivialization, we may suppose that ~$k=1$.

 Set $j\coloneq y^{-1}x\varphi^{\gamma}(y)x^{-1}$, so that by Lemma~\ref{v3-lem:Chen-Nie-conjugacy} we have $j\in \Iw_1 $. Then we have \begin{align}\label{v3-eq:cuhk2mb6o9} \gM\sim g^{-1}x\varphi^\gamma(g)&=u^{-\mu}\bigl(y^{-1}x\varphi^{\gamma}(y)x^{-1}\bigr)xu^{p\gamma \mu}\\
&=u^{-\mu}ju^{\mu}\cdot
  u^{\theta-\mu+pw\gamma\mu}tw \notag.\end{align}

 Write $\lambda\coloneq (\theta-\mu+pw\gamma\mu)_{\dom}$.
  Since both $\That(\Fpbar)$ and $W$ are contained in~$L^+_p\Ghat$, it follows from~\eqref{v3-eq:cuhk2mb6o9} and Proposition~\ref{prop:Iwahori-degeneation} that
$  \lambda\uparrow s(\gM).$

Let~$\sigma\in W$ be such that \[\sigma^{-1}(\theta-\mu+pw\gamma\mu)=\lambda.\] Recall that~$\gM[1/u]\sim t u^{\theta}w$; 
by~\eqref{v3-eq:21}, we have
 \[\gM[1/u]  \sim t'u^{\lambda}(\sigma^{-1}w\sigma^{\gamma})\] for some~$t'$, so if we replace  $t$ by $t'$ and $w$ by $\sigma^{-1}w\sigma^{\gamma}$, then we are done.
\end{proof}
Recall that a semi-standard Levi subgroup~$\widehat{M}$ of~$\widehat{G}$ is one which contains $\That$.
We say that 
$\LM=\Mhat\rtimes\Gal(L/\Qp)$ is a semi-standard Levi subgroup of~$\LG$ if $\Mhat$ is semi-standard, and we write
$W_{\Mhat}$ for the Weyl group of~$\Mhat$.
\begin{thm}\label{v3-thm:existence-of-semisimple-etale-phi-module-with-invariants}
  Suppose that~$\rho:\Gal_{\Qp}\to \LG(\Zpbar)$ is a crystalline $L$-parameter with Hodge--Tate weights~$\mu(\rho)$. 
  Then there exists a  cocharacter~$\lambda\in X_{*}(\That)$ with $\lambda_{\dom}\uparrow \mu(\rho)$ such that \[\Mrhobarss \sim tu^{\lambda} w\] for some
  $t\in \That(\Fpbar)$, $w\in W$, so that in particular \begin{equation}\label{eqn:rhobar-on-inertia-in-main-thm}\rhobar^{\semis}|_{I_{\Qp}}\cong \tau(\lambda ,w ).\end{equation} Furthermore, the pair $(\lambda ,w )$ can be chosen so that either:
  \begin{enumerate}
  \item\label{item:lambda-is-dominant} $\lambda$ is dominant, or
  \item\label{item:lambda-is-M-dominant-and-w-in-WM} there is a standard Levi subgroup~$\LM$ of~$\LG$ such that  $w\in W_{\Mhat}$, $\lambda$ is $\Mhat$-dominant, and there is an irreducible $L$-parameter
    \[
      \rhobar^{\semis}_{\Mhat}:\Gal_{\Qp}\to \LM(\Fpbar)
    \] such that
    $\rhobar^{\semis}$ is equivalent to the composite
    \begin{equation}\label{eqn:factor-L-parameter-through-irreducible}
      \Gal_{\Qp}\xrightarrow{\rhobar^{\semis}_{\Mhat}} \LM(\Fpbar)\into\LG(\Fpbar).
    \end{equation} 
        \end{enumerate}
  If~$\LG=\Ghat$ then  conditions~\eqref{item:lambda-is-dominant} and~\eqref{item:lambda-is-M-dominant-and-w-in-WM} can be arranged simultaneously, provided that \eqref{item:lambda-is-M-dominant-and-w-in-WM} is weakened to allow semi-standard Levi subgroups.
  \end{thm}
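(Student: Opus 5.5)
The proof chains together the results already established in this section. Let $\gM=\overline{\gM}(\rho)$ be the mod $p$ $\LG$-Breuil--Kisin module attached to $\rho$. By Theorem~\ref{v3-thm:shape-uparrow}, $\gM$ is $\mu_p$-equivariant and satisfies $s(\gM)\uparrow\mu(\rho)$. Since $\uparrow$ is transitive (it is defined by chaining reflections), it is enough to find $\lambda$ with $\lambda_{\dom}\uparrow s(\gM)$ and $\Mrhobarss\sim tu^{\lambda}w$. The isomorphism \eqref{eqn:rhobar-on-inertia-in-main-thm} then follows from Lemma~\ref{v3-lem:inertia-representation-from-etale-phi}(1), because $\rhobar^{\semis}$ is semisimple.

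\emph{Step 1: semisimplification.} Let $\LM$ be the standard Levi through which $\rhobar^{\semis}$ factors, and write $\rhobar^{\semis}_{\Mhat}$ for the resulting $\LM$-irreducible parameter. Restriction to $\Gal_{\Q_{p,\infty}}$ keeps it irreducible and semisimple (Proposition~\ref{v3-prop:rhobar-irreducible-classification}(4), together with the equality of images shown in its proof). Lemma~\ref{v3-lem:BK-module-parabolic} then produces a $\mu_p$-equivariant $\LG$-Breuil--Kisin module $\gM_0$ with three properties: it admits a reduction of structure group $\gM_0^{\Mhat}$ to $\LM$, it satisfies $\gM_0[1/u]\cong\Mrhobarss$, and $s(\gM_0)\uparrow s(\gM)$.

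\emph{Step 2: the irreducible Levi case.} Apply Proposition~\ref{v3-prop:irred-CN-result} to the group $\LM$ and the $\mu_p$-equivariant $\LM$-module $\gM_0^{\Mhat}$. I expect the $\mu_p$-structure of $\gM_0$ to descend to the $\LM$-reduction, since the reduction arises from the $\A^1$-degeneration, but this needs checking. The proposition gives $t$, an $\Mhat$-dominant $\lambda$ and $w\in W_{\Mhat}$ with $\lambda\uparrow_{\Mhat}s_{\Mhat}(\gM_0^{\Mhat})$ and $\Mrhobarss\sim tu^{\lambda}w$. This is alternative \eqref{item:lambda-is-M-dominant-and-w-in-WM}.

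\emph{Step 3: back to $\Ghat$.} By Lemma~\ref{lem:comparing-Levi-uparrow-to-G-uparrow} we get $\lambda_{\dom}\uparrow(s_{\Mhat}(\gM_0^{\Mhat}))_{\dom}$. The right-hand side equals $s(\gM_0)$, because the $\Ghat$-orbit of an $\Mhat$-element $u^\nu$ is the orbit of $u^{\nu_{\dom}}$. Combined with $s(\gM_0)\uparrow s(\gM)\uparrow\mu(\rho)$, this gives $\lambda_{\dom}\uparrow\mu(\rho)$.

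\emph{Step 4: making $\lambda$ dominant.} To get alternative \eqref{item:lambda-is-dominant}, conjugate by $g=\sigma\in W$ as in \eqref{v3-eq:21} with $\nu=0$. This replaces $\lambda$ by $\sigma^{-1}\lambda=\lambda_{\dom}$ and $w$ by $\sigma^{-1}w\sigma^{\gamma}$, without changing $\lambda_{\dom}$. When $\LG=\Ghat$ we have $\gamma=1$, so the conjugated data still factor through the semi-standard Levi $\sigma^{-1}\Mhat\sigma$, with $\sigma^{-1}w\sigma$ lying in its Weyl group. This gives both conditions simultaneously.

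The main obstacle is the compatibility in Steps 2–3. One must check that $\gM_0$ genuinely gives a $\mu_p$-equivariant $\LM$-Breuil--Kisin module, and that shapes computed in $\Mhat$ versus $\Ghat$ relate as claimed. I would try to handle this by rerunning the proof of Proposition~\ref{v3-prop:irred-CN-result} directly inside $\Gr^{sw,\mu_p}_{\Ghat}$ using Proposition~\ref{prop:Iwahori-degeneation} for $\Ghat$. That route avoids Levi shapes entirely: the Chen--Nie conjugation of Lemma~\ref{v3-lem:Chen-Nie-conjugacy} is done in $\Mhat$, but the resulting point $(u^{-\mu}ju^{\mu})u^{\lambda'}$ is read in $\Ghat$, and its $\uparrow$-bound then follows directly.
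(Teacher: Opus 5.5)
Your proposal is correct and follows the paper's proof essentially step for step. Like the paper, you semisimplify via Lemma~\ref{v3-lem:BK-module-parabolic}, apply Proposition~\ref{v3-prop:irred-CN-result} to $\LM$, return to $\Ghat$ via Lemma~\ref{lem:comparing-Levi-uparrow-to-G-uparrow} together with $s(\gM_0)=(s_{\Mhat}(\gM_0))_{\dom}$, and finally Weyl-conjugate using~\eqref{v3-eq:21}. The compatibility you flag, which the paper uses without comment, is immediate, so the backup route is unnecessary: $\gM_0\sim x(0)$ with $x(0)\in\Mhat(\Fpbar((u)))$, and $\Gr^{sw}_{\Mhat}\to\Gr^{sw}_{\Ghat}$ is a $\mu_p$-equivariant closed immersion, hence a monomorphism, so $\mu_p$-fixedness of the image of $x(0)$ in $\Gr^{sw}_{\Ghat}$ forces $\mu_p$-fixedness in $\Gr^{sw}_{\Mhat}$, which is all that the proof of Proposition~\ref{v3-prop:irred-CN-result} uses.
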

\begin{proof}Applying Proposition~\ref{v3-prop:rhobar-irreducible-classification}~(4) to~$\overline{\rho} ^{\semis}$, we see that $(\overline{\rho}|_{\operatorname{Gal}_{\mathbf{Q}_{p,\infty} }} )^{\semis}=\overline{\rho} ^{\semis}|_{\operatorname{Gal}_{\mathbf{Q}_{p,\infty} }}$.
   By definition, there is a standard Levi subgroup~$\LM$ and an irreducible $L$-parameter  $\rhobar^{\semis}_{\Mhat}:\Gal_{\Qp}\to \LM(\Fpbar)$ such that~$\rhobar^{\semis}$ is equivalent to the composite~\eqref{eqn:factor-L-parameter-through-irreducible}.

Let~$\gM=\gMbar(\rho)$ be as in Proposition~\ref{v3-prop:crysreptoBMmodules}, and let $\gM_0$ be the $\mu_p$-equivariant $\LM$-Breuil--Kisin module obtained from~$\gM$ in Lemma~\ref{v3-lem:BK-module-parabolic}. By Theorem~\ref{v3-thm:shape-uparrow}  we have
\begin{equation}
\label{v3-eq:19}
s(\gM_0 )\uparrow s(\gMbar(\rho))\uparrow \mu(\rho),
\end{equation}
where in the first relation we view $\gM_0$ as an $\LG$-Breuil--Kisin module.

We write~$s_{\Mhat}(\gM_0)$ for the shape of $\gM_0$ as an $\LM$-Breuil--Kisin module, and~$\uparrow_{\Mhat}$ for the~$\uparrow$ order with respect to~$\LM$; so by definition we have $s(\gM_0)=(s_{\Mhat}(\gM_0))_{\dom}$. Since $\rhobar^{\semis}:\Gal_{\Qp}\to \LM(\Fpbar)$ is irreducible, it follows from
Proposition~\ref{v3-prop:irred-CN-result} (with~$\LG$ there taken to be~$\LM$) that there exist $t\in \That(\Fpbar)$, $\lambda\in X_{*}(\That)$, and $w\in W_{\Mhat}$ such that   $M(\rhobar^{\semis})\sim tu^{\lambda}w
$, $\lambda$ is $\LM$-dominant, and $\lambda\uparrow_{\Mhat} s_{\Mhat}(\gM_0 )$.
Then~\eqref{eqn:rhobar-on-inertia-in-main-thm} holds by Lemma~\ref{v3-lem:inertia-representation-from-etale-phi}, while
by Lemma~\ref{lem:comparing-Levi-uparrow-to-G-uparrow},  we have
\begin{equation}\label{v3-eqn:cuhnb7lct4}\lambda_{\dom}\uparrow s(\gM_0 ).\end{equation}
Combining~\eqref{v3-eq:19} and~\eqref{v3-eqn:cuhnb7lct4}, we have $\lambda_{\dom}\uparrow\mu(\rho)$, and we see that we have proved~\eqref{item:lambda-is-M-dominant-and-w-in-WM}.

Now let ~$v\in W$ be such that $v^{-1}\lambda=\lambda_{\dom}$.
By~\eqref{v3-eq:21} we have \[\Mrhobarss \sim t'u^{\lambda_{\dom}} (v^{-1}wv^{\gamma})\] for some~$t'$, so if we replace~$w$ by $v^{-1}wv^{\gamma}$, we obtain~\eqref{item:lambda-is-dominant}.
Finally if~$\LG=\Ghat$ then~$\gamma$ is the identity, so $v^{-1}wv^{\gamma}=v^{-1}wv\in W_{v^{-1}\Mhat v}$, and if we replace~$\Mhat$ by~$v^{-1}\Mhat v$, then we have arranged~\eqref{item:lambda-is-dominant} and~\eqref{item:lambda-is-M-dominant-and-w-in-WM} simultaneously.
\end{proof}
\begin{rem}
  \label{rem:cannot-always-be-dominant-and-in-M}In general if~$L\ne \Qp$, so that $\LG\ne \Ghat$, then we cannot simultaneously arrange conditions ~\eqref{item:lambda-is-dominant} and~\eqref{item:lambda-is-M-dominant-and-w-in-WM} in Theorem~\ref{v3-thm:existence-of-semisimple-etale-phi-module-with-invariants}.
  For example, take $\Ghat=\GL_2\times\GL_{2} $ and~$L=\Q_{p^{2}}$, with $(g_1 ,g_2 )^{\gamma}=(g_2 ,g_1 )$.
  (As explained in Section~\ref{subsec:K-unramified-representations-via-induction} below, this example is equivalent to studying crystalline representations~$\Gal_{\Q_{p^{2}}}\to \GL_2 (\Zpbar)$.)
Then if $\gMbar(\rho)\sim (u^{\lambda_1 },u^{-\lambda_2 })$ with~$\lambda_1 ,\lambda_2 $ strictly dominant, then~$\rhobar^{\semis}$ factors through~$\That$, but  $\gMbar(\rho)\not\sim (u^{\lambda_1 },u^{\lambda_2 })$.
  \end{rem}

\begin{rem}
  \label{rem:not-best-possible-BM}We do not know to what extent  Theorems~\ref{v3-thm:shape-uparrow} and~\ref{v3-thm:existence-of-semisimple-etale-phi-module-with-invariants} are optimal.
  As we explain in Section~\ref{sec:weight-Serre-conjecture}, we find it plausible that Theorem~\ref{v3-thm:existence-of-semisimple-etale-phi-module-with-invariants} is optimal whenever $0\le \langle \alpha,\mu(\rho)\rangle\le p$ for all simple roots~$\alpha$.
  On the other hand, comparisons to the Breuil--M\'ezard conjecture suggest that Theorem~\ref{v3-thm:existence-of-semisimple-etale-phi-module-with-invariants} cannot be optimal in complete generality.
\end{rem}

\begin{para}In the same way that Corollary~\ref{v3-cor:shape-p-small-weight} followed from Theorem~\ref{v3-thm:shape-uparrow}, if the Hodge--Tate weights are sufficiently small then we have the following result, which in the case~$G=\GL_n$ recovers~\cite[Thm.~1.0.1]{MR4055172}.

\begin{cor}
  \label{v3-cor:existence-of-semisimple-etale-phi-module-with-invariants=p-small}
 Suppose that~$\rho:\Gal_{\Qp}\to \LG(\Zpbar)$ is a crystalline $L$-parameter with Hodge--Tate weights~$\mu(\rho)$, and that  $\mu(\rho)\in \overline{\Delta}_p$  (i.e.\ for each positive root $\alpha$ of~$\Ghat$ we have $0\le \langle \alpha,\mu(\rho)\rangle\le p$). 

 Then the conclusions of Theorem~\ref{v3-thm:existence-of-semisimple-etale-phi-module-with-invariants} hold, with $\lambda_{\dom}=\mu(\rho)$.
\end{cor}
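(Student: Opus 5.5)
The plan is to apply Theorem~\ref{v3-thm:existence-of-semisimple-etale-phi-module-with-invariants} to $\rho$, which produces $\lambda\in X_*(\That)$, $t\in\That(\Fpbar)$ and $w\in W$ with $\Mrhobarss\sim tu^{\lambda}w$, $\rhobar^{\semis}|_{I_{\Qp}}\cong\tau(\lambda,w)$ and $\lambda_{\dom}\uparrow\mu(\rho)$. This pair $(\lambda,w)$ can moreover be chosen to satisfy \eqref{item:lambda-is-dominant} or \eqref{item:lambda-is-M-dominant-and-w-in-WM}. Every conclusion of the theorem is then already established, and the only new content is the equality $\lambda_{\dom}=\mu(\rho)$. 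So it suffices to show that if $\mu(\rho)\in\overline{\Delta}_p$, then every dominant $\lambda'$ with $\lambda'\uparrow\mu(\rho)$ equals $\mu(\rho)$.

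For this step I use Lemma~\ref{lem:equivalence-uparrow-alcoves-weights}, exactly as in the proof of Corollary~\ref{v3-cor:shape-p-small-weight}. Suppose $\lambda'\uparrow\mu(\rho)$ with both dominant; they lie in the same $W_p$-orbit by Definition~\ref{defn:uparrow-coweights}. The lemma gives dominant alcoves $\Delta\uparrow\Delta'$ with $\lambda'\in\overline{\Delta}$ and $\mu(\rho)\in\overline{\Delta'}$.

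Now $\overline{\Delta}_p$ is a fundamental domain for $W_p$ acting on $X_*(\That)_{\R}$, so a $W_p$-orbit meets $\overline{\Delta}_p$ in exactly one point. Since $\Delta_p$ is the unique dominant alcove that is minimal for $\uparrow$, and every dominant alcove $\Delta$ satisfies $\Delta_p\uparrow\Delta$ (by Theorem~\ref{thm:Wang}, as $1\le_{\Bru}v$), I will argue with the explicit inequalities instead. Choose the maximal-type alcove $\Delta(\mu(\rho))$ as in the proof of Lemma~\ref{lem:equivalence-uparrow-alcoves-weights}. For $\mu(\rho)\in\overline{\Delta}_p$ this is $\Delta_p$ itself, except on walls where $\langle\alpha,\mu(\rho)\rangle=p$.

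A cleaner route is to note that $\lambda'\le\mu(\rho)$ with both dominant. Hence $\langle\alpha,\lambda'\rangle\le$ the height pairing with the highest coroot bound, and $\lambda'$ lies in $\overline{\Delta}_p$. The fundamental-domain property then forces $\lambda'=\mu(\rho)$, because both lie in $\overline{\Delta}_p\cap W_p\mu(\rho)$.

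The main obstacle is justifying $\lambda'\in\overline{\Delta}_p$ cleanly. Pairing with the highest root $\alpha_0$ gives $\langle\alpha_0,\lambda'\rangle\le\langle\alpha_0,\mu(\rho)\rangle\le p$, since $\mu(\rho)-\lambda'$ is a nonnegative sum of positive coroots and $\alpha_0$ is dominant. Dominance of $\lambda'$ then gives $0\le\langle\alpha,\lambda'\rangle\le p$ for every positive root $\alpha$. Alternatively, one can simply cite Lemma~\ref{lem:equivalence-uparrow-alcoves-weights} as in Corollary~\ref{v3-cor:shape-p-small-weight}.
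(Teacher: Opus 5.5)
Your proof is correct. Its overall structure matches the paper's: apply Theorem~\ref{v3-thm:existence-of-semisimple-etale-phi-module-with-invariants}, then show that the only dominant $\lambda'$ with $\lambda'\uparrow\mu(\rho)$ is $\mu(\rho)$ itself. The paper handles that last fact in one line, ``just as in Corollary~\ref{v3-cor:shape-p-small-weight}'', i.e.\ via Lemma~\ref{lem:equivalence-uparrow-alcoves-weights} and the minimality of $\overline{\Delta}_p$ for $\uparrow$.

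Your final argument handles the key combinatorial step differently. It uses only the two weak consequences of $\uparrow$ recorded in Definition~\ref{defn:uparrow-coweights}, namely $\lambda'\le\mu(\rho)$ and $\lambda'\in W_p\mu(\rho)$. From these, a highest-root pairing puts $\lambda'$ in $\overline{\Delta}_p$, and uniqueness of the $\overline{\Delta}_p$-representative of a $W_p$-orbit forces $\lambda'=\mu(\rho)$. This is more elementary and self-contained than the alcove route. It also avoids deciding which alcove to attach to $\mu(\rho)$ when it lies on an upper wall $\langle\alpha,\mu(\rho)\rangle=p$; as your middle paragraph notices, $\Delta(\mu(\rho))\neq\Delta_p$ in that case. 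Finally, it shows the corollary would already follow from ``$\le$ plus same $W_p$-orbit'' in place of $\uparrow$.

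A few points to tidy:
\begin{itemize}
\item If the root system of $\Ghat$ is reducible, which the paper allows (e.g.\ $\GL_2\times\GL_2$), ``the highest root $\alpha_0$'' must be read componentwise: pair with the highest root of each irreducible component.
\item The inequality $\langle\alpha,\lambda'\rangle\le\langle\alpha_0,\lambda'\rangle$ should be justified explicitly. It holds because $\alpha_0-\alpha$ is a non-negative sum of simple roots of the same component and $\lambda'$ is dominant.
\item The bound $\langle\alpha_0,\mu(\rho)-\lambda'\rangle\ge 0$ uses that $\alpha_0$ is dominant.
\item Delete the exploratory paragraph about $\Delta(\mu(\rho))$ and the garbled ``height pairing'' sentence. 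Both are superseded by your final argument.
\end{itemize}
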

\begin{proof}
  Just as in the proof of Corollary~\ref{v3-cor:shape-p-small-weight}, if $\lambda_{\dom}\uparrow \mu(\rho)$ then~$\lambda_{\dom}=\mu(\rho)$, so the result follows immediately from Theorem~\ref{v3-thm:existence-of-semisimple-etale-phi-module-with-invariants}.
\end{proof}
\end{para}

\subsection{Representations of~\texorpdfstring{$\Gal_{K}$}{GalK}, \texorpdfstring{$K/\Qp$}{K/Qp} unramified}\label{subsec:K-unramified-representations-via-induction}
Let~$K/\Qp$ be an unramified finite extension (contained in our fixed algebraic closure~$\Qpbar$).
A pleasant consequence of working in the generality of $L$-groups of unramified reductive groups is that we can obtain a version of Theorem~\ref{v3-thm:existence-of-semisimple-etale-phi-module-with-invariants} for representations of~$\Gal_{K}$ with essentially no extra work.
To see this, we follow~\cite[\S 9.4]{zbMATH06991335}, to which we refer for more details.
(Strictly speaking, this reference makes some additional assumptions on the reductive groups, see~\cite[Hyp.~9.1.1]{zbMATH06991335}; but this hypothesis is not used for any of the results that we reference).

Let~$\LH$ be the $L$-group of an unramified connected reductive group~$H$ over~$K$; exactly as in Section~\ref{subsec:L-groups}, this is a semidirect product   \[\LH = \Hhat \rtimes \Gal (L/K),\] where~$\Hhat$ is a Chevalley group over~$\Zp$, with maximal torus~$\That_{\Hhat}$ and Weyl group~$W_{\Hhat}$.

As usual, if $X$ is a set (or a group, or a group scheme\dots) with a
left action of $\Gal(L/K)$, then we denote by $\Ind_{\Gal(L/K)}^{\Gal(L/\Qp)} X$ the induced
set (group, group scheme\dots) consisting of functions $\Gal(L/\Qp) \to X$
that are $\Gal(L/K)$-equivariant.
For $\gamma\in\Gal(L/\Qp)$ we let $\ev_\gamma : \Ind_{\Gal(L/K)}^{\Gal(L/\Qp)} X \to X$
denote the evaluation map at $\gamma$.
If $Y$ is a set of representatives of $\Gal(L/K) \backslash \Gal(L/\Qp)$, the $(\ev_y)_{y \in Y}$ provide
a non-canonical isomorphism $\Ind_{\Gal(L/K)}^{\Gal(L/\Qp)} X \isoto X^{[K:\Qp]}$ of sets  (groups, group schemes\dots).

In particular, we
 define
 \[
(\Ghat,\That) \coloneq  \Ind_{\Gal(L/K)}^{\Gal(L/\Qp)} (\Hhat,\That_{\Hhat}),
\]so that~$\Ghat$ is a Chevalley group over~$\Zp$, equipped  with an action of~$\Gal(L/\Qp)$, and we can form its $L$-group $\LG$.
For any $\Zp$-algebra $A$, the homomorphism $\ev_1 : \Ghat (A) \to \Hhat (A)$ is
 $\Gal(L/K)$-equivariant, so it extends to a homomorphism $\ev_1 : \Ghat (A)\rtimes \Gal (L/K) \to \LH(A)$, $(g,\sigma)\mapsto(\ev_1 (g),\sigma)$.
 By~\cite[Lem.~9.4.1]{zbMATH06991335},  we have a bijection between equivalence classes of $L$-parameters~$\rho:\Gal_{\Qp}\to \LG(A)$ and  equivalence classes of $L$-parameters~$\rho_{K}:\Gal_{K}\to \LH(A)$
sending $\rho$ to $\rho_K = \ev_1 (\rho |_{\Gal_{K}})$.

If $A=\Zpbar$ then $\rho$ is crystalline if and only if $\rho_K$ is crystalline, and in this case the isomorphism
\begin{equation}\label{eqn:res-scalars-cocharacters}
  X_{*}(\That) \cong \bigoplus_{\kappa:\, K\into \Qpbar} X_{*} (\That_{\Hhat} \times_{\kappa} \Qpbar) \cong
  \Ind_{\Gal(L/K)}^{\Gal(L/\Qp)} X_{*}(\That_{\Hhat})
\end{equation}identifies the Hodge--Tate weights~$\mu(\rho)$ of~$\rho$ with the tuple of labelled Hodge--Tate weights~$\{\mu_{\kappa}(\rho_{K})\}_{\kappa: K\into\Qpbar}$ of~$\rho_K$ (see~\cite[Lem.~9.3.2, (9.4.4)]{zbMATH06991335}).
Given a tuple of cocharacters~$\{\lambda_{\kappa}\}_{\kappa:K\into\Qpbar}$ and a tuple~$\{w_{\kappa}\}_{\kappa:K\into\Qpbar}$  of elements of~$W_{\Hhat}$, we have a corresponding pair~$\lambda\in X_{*}(\That)$, $w\in W_{\Ghat}$ via~\eqref{eqn:res-scalars-cocharacters} and the analogous relation of Weyl groups.
We can therefore define
\[
  \tau(\{\lambda_{\kappa}\},\{w_{\kappa}\}):I_K\to\That_{\Hhat}(\Fpbar)\subset\LH(\Fpbar)
\]via $\tau(\{\lambda_{\kappa}\},\{w_{\kappa}\}) \coloneq  \ev_1(\tau(\lambda,w))$ (noting that~$I_K=I_{\Qp}$); we leave to the reader the  exercise of formulating this more explicitly in similar terms to~\eqref{v3-eq:formula-for-inertial-character}.

The following theorem is then an easy consequence of Theorem~\ref{v3-thm:existence-of-semisimple-etale-phi-module-with-invariants}.
\begin{thm}\label{thm:existence-of-semisimple-etale-phi-module-with-invariants-L-parameter-version}
  Suppose that~$\rho:\Gal_K\to \LH(\Zpbar)$ is a crystalline $L$-parameter with Hodge--Tate weights~$(\mu_\kappa(\rho))_{\kappa}$. 
  Then there exist dominant cocharacters $\lambda_\kappa\uparrow \mu_\kappa(\rho)$ and elements~$w_\kappa\in W_{\Hhat}$  such that
  \[\rhobar^{\semis}|_{I_K}\cong \tau(\{\lambda_{\kappa}\},\{w_{\kappa}\}).\qedhere\]
  \end{thm}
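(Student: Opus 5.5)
The plan is to reduce Theorem~\ref{thm:existence-of-semisimple-etale-phi-module-with-invariants-L-parameter-version} to Theorem~\ref{v3-thm:existence-of-semisimple-etale-phi-module-with-invariants} for the induced group $\LG$, using the dictionary recalled just above. Given $\rho:\Gal_K\to\LH(\Zpbar)$, I will use \cite[Lem.~9.4.1]{zbMATH06991335} to choose an $L$-parameter $\rho_{\Qp}:\Gal_{\Qp}\to\LG(\Zpbar)$ with $\ev_1(\rho_{\Qp}|_{\Gal_K})$ equivalent to $\rho$. Crystallinity is preserved, and under \eqref{eqn:res-scalars-cocharacters} we have $\mu(\rho_{\Qp})=(\mu_\kappa(\rho))_\kappa$. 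Applying Theorem~\ref{v3-thm:existence-of-semisimple-etale-phi-module-with-invariants}, in the form \eqref{item:lambda-is-dominant}, gives a dominant $\lambda\in X_*(\That)$ and $w\in W_{\Ghat}$ with $\lambda\uparrow\mu(\rho_{\Qp})$ and $\rhobar_{\Qp}^{\semis}|_{I_{\Qp}}\cong\tau(\lambda,w)$. Decomposing $\lambda=(\lambda_\kappa)$ and $w=(w_\kappa)$ via \eqref{eqn:res-scalars-cocharacters} and $W_{\Ghat}=\prod_\kappa W_{\Hhat}$ produces the candidate data.

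Two compatibilities must then be checked. First, $\ev_1$ should commute with semisimplification and with restriction to $I_K=I_{\Qp}$: I need $\ev_1(\rhobar_{\Qp}^{\semis}|_{\Gal_K})$ equivalent to $\rhobar^{\semis}$. The bijection of \cite[Lem.~9.4.1]{zbMATH06991335} holds over $\Fpbar$ too and matches parabolic (and Levi) subgroups of $\LG$ with those of $\LH$, since these are induced from $\Gal(L/K)$-stable ones. Consequently minimal parabolics correspond, and semisimplification commutes with the correspondence. Restricting to inertia and applying $\ev_1$ then yields $\rhobar^{\semis}|_{I_K}\cong\ev_1(\tau(\lambda,w))=\tau(\{\lambda_\kappa\},\{w_\kappa\})$, by definition.

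Second, the order relations must be transferred. The root datum of $\Ghat$ is the product over $\kappa$ of copies of that of $\Hhat$. Hence $\That$-dominance is componentwise, $W_p$ for $\Ghat$ is the product of the $W_p$ for the factors, and $\le$ is componentwise. For $\uparrow$ in product groups, I will either observe directly that each affine reflection in $W_p(\Ghat)$ lives in a single factor, so a chain as in \eqref{eq:defn-of-uparrow} splits into chains in each factor; or I will use the orbit-closure characterization of Proposition~\ref{prop:K-orbit-closure-mu-fixed}, since $\Gr^{\mu_p}$ of a product is the product. Either route gives $\lambda\uparrow\mu$ if and only if $\lambda_\kappa\uparrow\mu_\kappa(\rho)$ for all $\kappa$.

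I expect the main obstacle to be the first compatibility, namely semisimplification versus $\ev_1$ and the matching of parabolics. If the reference does not state this explicitly, I would check it by hand: a parabolic of $\LH$ induces a $\Gal(L/\Qp)$-stable induced parabolic of $\LG$, and conversely a parabolic of $\LG$ containing the image of $\rho_{\Qp}$ projects under $\ev_1$ to one of $\LH$. The remaining steps are bookkeeping.
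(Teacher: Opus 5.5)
Your proposal is correct, and it is the argument the paper has in mind. The paper states this theorem as an immediate consequence of Theorem~\ref{v3-thm:existence-of-semisimple-etale-phi-module-with-invariants}, applied (with dominant $\lambda$) to the induced group $\LG$ via the bijection of \cite[Lem.~9.4.1]{zbMATH06991335} and \eqref{eqn:res-scalars-cocharacters}, with $\tau(\{\lambda_\kappa\},\{w_\kappa\})$ defined as $\ev_1(\tau(\lambda,w))$. Your two compatibility checks are exactly the bookkeeping the paper leaves implicit, and your arguments for them are sound: the correspondence matches parabolics and Levis and so commutes with semisimplification, and dominance and $\uparrow$ for $\Ghat=\Ind\Hhat$ are componentwise because each affine reflection lives in a single factor.
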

\section{A Serre weight conjecture for semisimple Galois representations}\label{sec:weight-Serre-conjecture}
The weight part of Serre's conjecture is a somewhat loosely defined set of conjectures involving mod~$p$ automorphic forms, the conjectural categorical $p$-adic Langlands program, and the reductions modulo~$p$ of potentially semistable (and in particular crystalline) Galois representations.
Since most of this material lies well outside the scope of the rest of this paper, we recall only enough here to be able to state Theorem~\ref{thm:Serre-weight-upper-bound}; we refer the reader to~\cite{zbMATH06991335} and~\cite{MR4812709} for overviews and further references.

\subsection{Unramified groups}\label{subsec:unramified-groups}
We will work throughout this section with Galois representations valued in a class of not-necessarily-connected reductive groups, namely the $L$-groups of unramified reductive groups.
We begin by very briefly recalling some material from~\cite[\S 9]{zbMATH06991335}, to which we refer for further details.

In contrast to the rest of the paper, in this section we let~$G$ be a connected reductive group over $\mathbf{Z}_p$.
By~\cite[Cor.~6.2.4]{MR3362641}, $G$ is quasi-split, and splits over a finite \'etale extension of~$\Zp$.
In particular, $G_{\Qp}$ is unramified, i.e.\ quasi-split and split over an unramified extension of $\Qp$, and we let~$L/\Qp$ denote its splitting field.
 We let $B$ be a Borel subgroup of $G$ with Levi subgroup $T\subseteq B$, so $T$ is a maximal torus of $G$.
We have a canonical identification of character groups $X^*(T_{\Qpbar}) \cong X^*(T_{\Fpbar})$, which is compatible with the (left) Galois action of $\Gal_\Qp \onto \Gal_\Fp$.
We write $X^*(T)$ for this Galois module.
Let $W \coloneq  \big( N(T) / T\big) (\Qpbar) \cong \big( N(T) / T\big) (\Fpbar)$ denote the Weyl group.

Then we may define the dual group~$\Ghat$, a split connected reductive group over~$\Zp$ with maximal torus~$\widehat{T}$, and the $L$-group \[\LG \coloneq  \Ghat \rtimes \Gal (L/\Qp),\] a reductive group over $\Zp$, as in Section~\ref{subsec:L-groups}.
We canonically identify~$X^{*}(T)$ with~$X_{*}(\widehat{T})$, and identify~$W$ with the Weyl group of $\That$.
As in Section~\ref{subsec:L-groups}, we write~$\gamma$ for the geometric Frobenius element.
We will only consider representations $\Gal_\Qp \to \LG (\Fpbar)$ which are $L$-parameters in the sense of Definition~\ref{defn:L-parameter}, i.e.\ representations which are compatible with the projections to $\Gal (L/\Qp)$.

\begin{rem}
  \label{rem:restriction-scalars}As explained in~\cite[\S 9.4]{zbMATH06991335} (see also Section~\ref{subsec:K-unramified-representations-via-induction} above), by considering groups of the form $G=\Res_{\cO_{K}/\Zp}H$, this framework also applies to the weight part of Serre's conjecture for split (or even unramified) connected reductive groups over unramified extensions~$K/\Qp$.
\end{rem}

\subsection{Serre weights}\label{subsec:Serre weights}

We assume from now on the following hypothesis, which is~\cite[Hyp.~9.1.1]{zbMATH06991335}.

\begin{hypothesis}
  \label{hyp:twisting-element}The group~$G^{\der}$ is simply connected, the centre $Z(G)$ is connected, and~$G$ has a \emph{twisting element}, i.e.\ an $\eta\in X^{*}(T)^{\Gal_{\Qp}}$ such that $\langle\eta,\alpha^{\vee}\rangle=1$ for all simple coroots~$\alpha^{\vee}$.
  We fix such an~$\eta$ from now on, and frequently regard it as an element of~$X_{*}(\widehat{T})$ via the canonical identification of $X^{*}(T)$ with $X_{*}(\That)$.
\end{hypothesis}
\begin{rem}\label{rem:twisting-elements}If~$G$ is split and $G^{\der}$ is simply connected, then a twisting element always exists.
  For example, if~$G=\GL_n$ then we may take $\eta=(n-1,n-2,\dots,0)$.
Twisting elements were introduced in~\cite{MR3444225}, and they are a manifestation of the ``$\rho$-shift'' which occurs when passing between the weights of cohomological automorphic representations and their associated Galois representations.
Making different choices of twisting elements amounts to twisting the Langlands correspondence by a character, and is unimportant for us.
\end{rem}

A \emph{Serre weight} is an isomorphism class of irreducible $\Fpbar$-representations of~$G(\Fp)$.
These are parameterized as follows.
Write~$S$ for the set of simple coroots of~$G$, and let
\[
X^{*}(T)_1=\{\lambda\in X^{*}(T)\mid 1\le \langle \lambda,\alpha^{\vee}\rangle \le p \text{ for all }\alpha^{\vee}\in S\},
\]
\[
X^{*}(T)_0=\{\lambda\in X^{*}(T)\mid \langle \lambda,\alpha^{\vee}\rangle =0 \text{ for all }\alpha^{\vee}\in S\}.
\]
By~\cite[Lem.~9.2.4]{zbMATH06991335}, there is a bijection between $X^{*}(T)_1/(p\gamma-1)X^{*}(T)_0$ and the set of Serre
weights, sending~$\lambda\in X^{*}(T)_1$ to the representation~$F_{\lambda-\eta}$ given by the restriction to~$G(\Fp)$ of the irreducible algebraic representation of~$G$ of highest weight~$\lambda-\eta$.

\subsection{An optimistic conjecture}\label{subsec:optimistic-semisimple-Serre-weight-conjecture}
Bearing in mind the canonical isomorphism between $X^{*}(T)$ and $X_{*}(\That)$, we make the following definition.
\begin{defn}
  \label{defn:some-sets-of-Serre-weights}
  If~$\rhobar:\Gal_{\Qp}\to \LG(\Fpbar)$ is semisimple, then we let~$W^{\cris}(\rhobar)$ be the set of Serre weights~$F_{\lambda-\eta}$ for which~$\rhobar$ admits a crystalline lift $\rho:\Gal_{\Qp}\to\LG(\Zpbar)$ with Hodge--Tate cocharacter~$\lambda$.
  (Considering twists by crystalline characters, it is easy to see that the existence of such a lift only depends on~$\lambda$ modulo $(p\gamma-1)X^{*}(T)_0$.) We let~$W^{\explicit}(\rhobar)$ be the set of $F_{\lambda-\eta}$ having the property that there exists a dominant cocharacter~$\lambda'\uparrow \lambda$ and~$w\in W$ such that $\rhobar|_{I_{\Qp}}\cong \tau(\lambda',w)$.
\end{defn}

\begin{rem}
  \label{rem:need-not-be-p-restricted}Note that~$\lambda'$ in Definition~\ref{defn:some-sets-of-Serre-weights} need not be contained in $X^{*}(T)_{1}$.
\end{rem}
\begin{thm}
  \label{thm:Serre-weight-upper-bound}
  For any semisimple representation $\rhobar:\Gal_{\Qp}\to \LG(\Fpbar)$, we have $W^{\cris}(\rhobar)\subseteq W^{\explicit}(\rhobar)$.
\end{thm}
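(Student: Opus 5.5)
This should be an essentially formal consequence of Theorem~\ref{v3-thm:existence-of-semisimple-etale-phi-module-with-invariants}. Let $F_{\lambda-\eta}\in W^{\cris}(\rhobar)$, with $\lambda\in X^{*}(T)_1$ a representative. By definition there is a crystalline $L$-parameter $\rho:\Gal_{\Qp}\to\LG(\Zpbar)$ lifting $\rhobar$ with Hodge--Tate cocharacter $\mu(\rho)=\lambda$; here we use the identification $X^{*}(T)=X_{*}(\That)$. Since $\langle\lambda,\alpha^\vee\rangle\ge1$ for all simple coroots, $\lambda$ is dominant, so $\mu(\rho)=\lambda$ as dominant cocharacters.

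Apply Theorem~\ref{v3-thm:existence-of-semisimple-etale-phi-module-with-invariants} to $\rho$, in the form of alternative~\eqref{item:lambda-is-dominant}. This gives a dominant $\lambda'\in X_*(\That)^+$ with $\lambda'=\lambda'_{\dom}\uparrow\mu(\rho)=\lambda$, and some $w\in W$, such that $\rhobar^{\semis}|_{I_{\Qp}}\cong\tau(\lambda',w)$. Because $\rhobar$ is assumed semisimple, $\rhobar^{\semis}$ is equivalent to $\rhobar$. Hence $\rhobar|_{I_{\Qp}}\cong\tau(\lambda',w)$, which is exactly the condition for $F_{\lambda-\eta}\in W^{\explicit}(\rhobar)$.

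The only point needing care is that the definition of $W^{\explicit}(\rhobar)$ is independent of the representative $\lambda$ modulo $(p\gamma-1)X^{*}(T)_0$, matching the parenthetical remark for $W^{\cris}$. The cleanest approach is to avoid this issue: prove the implication for the specific $\lambda$ realized as the Hodge--Tate cocharacter of a lift, and read membership in $W^{\explicit}$ as existential in the representative. If a representative-independence check is instead required, twisting by a crystalline character with Hodge--Tate cocharacter in $X^{*}(T)_0$ shifts both $\lambda$ and $\lambda'$ by the same central element. This shift preserves $\uparrow$, since reflections fix central cocharacters, and it changes $\tau$ by the corresponding inertial character. So the statement is compatible with the twist.

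I expect no substantive obstacle beyond this bookkeeping. The content is already in Theorem~\ref{v3-thm:existence-of-semisimple-etale-phi-module-with-invariants}.
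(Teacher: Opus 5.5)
Your argument is correct and is the paper's argument: the paper deduces the statement immediately from Theorem~\ref{v3-thm:existence-of-semisimple-etale-phi-module-with-invariants}, which you apply in its dominant form to a crystalline lift with Hodge--Tate cocharacter $\lambda$, using that $\rhobar^{\semis}\simeq\rhobar$ when $\rhobar$ is semisimple. Your remark about representatives is harmless bookkeeping, and can be settled without twisting: for central $c=(p\gamma-1)z$ one has $\tau(\lambda'+c,w)=\tau(\lambda',w)$, and $\uparrow$ is invariant under translation by $c$, so the defining condition of $W^{\explicit}$ does not depend on the choice of representative.
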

\begin{proof}
  This is immediate from Theorem~\ref{v3-thm:existence-of-semisimple-etale-phi-module-with-invariants}. 
\end{proof}
It is expected that there is a set of Serre weights~$W(\rhobar)$ associated to any~$\rhobar$ (semisimple or otherwise), and that this set satisfies a form of local-global compatibility, in the sense that the Serre weights associated to a global mod~$p$ Galois representation $\rbar:\Gal_{\Q}\to\LG(\Fpbar)$ are precisely~$W(\rbar|_{\Gal_{\Qp}})$. As before, we refer the reader to~\cite{zbMATH06991335} for more details, and here
we informally let~$W(\rhobar)$ denote this conjectural ``true'' set of Serre weights associated to~$\rhobar$.
One basic expected property of~$W(\rhobar)$ is that we have an inclusion $W(\rhobar)\subseteq W^{\cris}(\rhobar)$; this is known in the usual global settings in which the weight part of Serre's conjecture is considered, as a consequence of local-global compatibility for the Galois representations associated to automorphic forms.
In general this inclusion will be strict, but following~\cite[Conj.~7.5.3]{zbMATH06991335} (which is the case~$G=\GL_n$), we cautiously propose the following conjecture.
\begin{optimistic-conj}
  \label{optimistic-conj:crystalline-Serre-weight}
  If~$\rhobar:\Gal_{\Qp}\to\LG(\Fpbar)$ is semisimple, then~$W(\rhobar)=W^{\cris}(\rhobar)=W^{\explicit}(\rhobar)$.
\end{optimistic-conj}

\subsection{Comparison to earlier conjectures}\label{subsec:comparison-to-GHS}
Given the expectation that~$W(\rhobar)\subseteq W^{\cris}(\rhobar)$, Theorem~\ref{thm:Serre-weight-upper-bound} gives an upper bound for the set of Serre weights~$W(\rhobar)$; such a result is often referred to as a ``weight elimination result''.
We believe that Theorem~\ref{thm:Serre-weight-upper-bound} recovers or improves upon all such results in the literature. 
In the case~$G=\Res_{L/\Qp}\GL_{2}$, the weight part of Serre's conjecture is known, and we have~$W(\rhobar)=W^{\explicit}(\rhobar)$. For general unramified groups~$G$, a predicted set of Serre weights is defined in~\cite[Defn.~9.2.5]{zbMATH06991335}, generalizing the case~$G=\GL_n$ which was considered by Herzig in~\cite{MR2541127}.
We denote this set of weights by $\WHer(\rhobar)$ (it is called~$W^{?}(\rhobar)$ in \cite{zbMATH06991335}).
It is expected that $\WHer(\rhobar)$ should be the correct set of weights if~$\rhobar$ is sufficiently generic (but not otherwise).

The definition of $\WHer(\rhobar)$ is in terms of the reduction modulo~$p$ of Deligne--Lusztig representations;
the recent work~\cite{le2025genericdecompositionsdelignelusztigrepresentations} finds explicit formulas for these reductions under what are expected to be optimal genericity conditions.
It is then an exercise, which we carry out in Proposition~\ref{prop:generic-agreement-GHS} below, to compare the sets $W^{\explicit}(\rhobar)$ and $\WHer(\rhobar)$ under an appropriate genericity condition on~$\rhobar$. In the case~$G=\GL_n$ (without an explicit genericity condition) this goes back to~\cite[Prop.~6.28]{MR2541127}.

\begin{defn}
  \label{defn:h-eta}We let~$h_{\eta}=\max_{\alpha\in \Phi^+}\langle \alpha,\eta\rangle$, where~$\Phi^+$ is the set of positive roots of~$\Ghat$; equivalently, $h_{\eta}$ is the maximal height of the highest coroot of an irreducible sub-root system.
\end{defn}
In particular, if~$G=\GL_n$, we have $h_{\eta}=n-1$.
\begin{defn}\label{defn:rhobar-generic}
  We say that a semisimple representation $\rhobar:\Gal_{\Qp}\to\LG(\Fpbar)$ is \emph{generic} if we can write $\rhobar|_{I_{\Qp}}\cong \tau(\lambda_0 ,w)$ with $h_{\eta}+1<\langle \alpha,\lambda_0 \rangle <p-h_{\eta}-1$ for all positive roots~$\alpha$, so that in particular~$\lambda_0 \in \Delta_p$.
\end{defn}

\begin{lem}
  \label{lem:generic-implies-regular}
  If~$\rhobar:\Gal_{\Qp}\to\LG(\Fpbar)$ is generic, then $\Ghat^{\rhobar(I_{\Qp})}$ is a maximal torus in~$\Ghat$.
\end{lem}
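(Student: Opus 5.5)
The plan is to compute the centralizer directly from the explicit form of $\tau(\lambda_0,w)$. Since $\rhobar$ is generic, we may write $\rhobar|_{I_{\Qp}}\cong\tau(\lambda_0,w)$. By \eqref{v3-eq:18} the image of $I_{\Qp}$ is the cyclic group generated by $t_0\coloneq\omega_d^{(p^d-1)e}(g_0)$, where $g_0$ is a topological generator of tame inertia. Here $e=\frac{1}{1-p^d}\sum_{n=0}^{d-1}p^n(w\gamma)^n\lambda_0$ as in \eqref{v3-eqn:defn-of-ei}, and $t_0$ lies in $\That(\Fpbar)$. Because this image is contained in $\That$, the centralizer $\Ghat^{\rhobar(I_{\Qp})}$ contains $\That$. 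It therefore suffices to show that the identity component of the centralizer is $\That$ and that the centralizer is connected.

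For the first point, the identity component of the centralizer of a semisimple element $t_0\in\That$ is generated by $\That$ together with the root subgroups $U_\alpha$ for which $\alpha(t_0)=1$. The value $\alpha(t_0)$ equals $\zeta^{(p^d-1)\langle\alpha,e\rangle}$, where $\zeta$ is a primitive $(p^d-1)$-th root of unity. Hence $\alpha(t_0)=1$ exactly when $\langle\alpha,e\rangle\in\Z$. Our goal is thus to show that $\langle\alpha,e\rangle\notin\Z$ for every root $\alpha$, given the bounds $h_\eta+1<\langle\beta,\lambda_0\rangle<p-h_\eta-1$ for all positive $\beta$.

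Write $\langle\alpha,e\rangle=\frac{1}{1-p^d}\sum_n p^n\langle\alpha_n,\lambda_0\rangle$, where $\alpha_n\coloneq(w\gamma)^{-n}\alpha$ is again a root. Each coefficient $c_n=\langle\alpha_n,\lambda_0\rangle$ satisfies $0<|c_n|<p$, since $\lambda_0\in\Delta_p$; in fact $|c_n|<p-h_\eta-1$. Integrality would require $N\coloneq\sum_{n=0}^{d-1}c_np^n\equiv 0 \pmod{p^d-1}$. I would bound $|N|$: since every $|c_n|\le p-2$, we get $|N|\le(p-2)\frac{p^d-1}{p-1}<p^d-1$, so $N$ must equal $0$. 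But $c_0\ne0$ and $|c_0|<p$, so $c_0\not\equiv0\pmod p$, and therefore $N\ne0$. This contradiction shows $\langle\alpha,e\rangle\notin\Z$, and so $(\Ghat^{t_0})^\circ=\That$. This step only needs $\lambda_0\in\Delta_p$ and is essentially the argument behind \eqref{v3-eq:irreducibility-implies-fixed-point-not-integral}, run in reverse.

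The remaining point, which I expect to be the main obstacle, is connectedness: the component group $\Ghat^{t_0}/(\Ghat^{t_0})^\circ$ embeds into $W$ as the stabilizer of $t_0$. To rule out a nontrivial $v\in W$ with $v(t_0)=t_0$, I would use the stronger bound with $h_\eta$. The element $v(t_0)$ corresponds to $ve$, so we would need $ve-e\in X_*(\That)$ modulo the kernel of $x\mapsto\zeta^{(p^d-1)x}$. Pairing with a suitable root, or using $\eta$ together with the fact that the genericity margin $h_\eta+1$ keeps $e$ strictly inside a small neighbourhood of an alcove interior, should show that $e$ and $ve$ cannot differ by an element of $X_*(\That)$ unless $v=1$. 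Alternatively, if the derived group of $\Ghat$ has simply connected derived group (dual to $Z(G)$ connected, Hypothesis~\ref{hyp:twisting-element}), Steinberg's theorem gives connectedness of centralizers of semisimple elements directly, and the lemma follows. I would try the Steinberg route first.
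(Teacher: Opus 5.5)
Your proposal is correct and is essentially the paper's proof. The paper also uses the running hypothesis that $Z(G)$ is connected (i.e.\ $\Ghat^{\der}$ simply connected) to get that $\Ghat^{\rhobar(I_{\Qp})}$ is connected reductive via Steinberg, and then excludes roots trivial on the image by exactly your estimate $|N|\le (p-2)\frac{p^d-1}{p-1}<p^d-1$ together with $c_0\not\equiv 0\pmod p$. Two side remarks need correcting. First, the root step does \emph{not} work with only $\lambda_0\in\Delta_p$: for $\GL_2$, $\lambda_0=(p-1,0)$ and $w=1$ give trivial $\tau(\lambda_0,w)$ and $N=p^d-1$, so the genericity bound $|c_n|\le p-2$ is really used. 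Second, the Steinberg route, not your alcove heuristic, is the one to take, since without the hypothesis on $Z(G)$ the centralizer can genuinely be disconnected (e.g.\ $\Ghat=\mathrm{PGL}_2$, $w=s_\alpha$, $\alpha(t_0)=-1$).
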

\begin{proof}
 This follows easily from \cite[Lem.~10.1.10]{zbMATH06991335} and its proof, but for the convenience of the reader, we explain the argument in our language.
 By definition, we may assume that $\rhobar|_{I_{\Qp}}= \tau(\lambda_0 ,w)$ for $\lambda_0 \in \Delta_p$ as in Definition~\ref{defn:rhobar-generic}.
 As in the proof of \cite[Lem.~10.1.10]{zbMATH06991335}, our running hypothesis that~$Z(G)$ is connected guarantees that $\Ghat^{\rhobar(I_{\Qp})}$ is connected reductive, with Weyl group generated by the reflections~$s_{\alpha}$ fixing $\rhobar(I_{\Qp})$ pointwise.
 Fix~$d\ge 1$ with~$(w\gamma)^{d}=1$.
 Using again that~$Z(G)$ is connected (so that for each~$\alpha$ we have $\langle X_{*}(\That),\alpha\rangle=\Z$), it follows from~\eqref{v3-eqn:defn-of-ei} and~\eqref{v3-eq:18} that we need to show that for each (positive) root~$\alpha$, \[\sum_{n=0}^{d-1}p^{n}\langle \lambda_0 ,(w\gamma)^{n}\alpha\rangle\not\equiv
   0\pmod{p^d-1}.\]
 By Definition~\ref{defn:rhobar-generic} we have $|\langle\lambda_0 ,(w\gamma)^{n}\alpha\rangle|\le p-2$ for all~$\alpha,n$, so it suffices to show that $\sum_{n=0}^{d-1}p^{n}\langle \lambda_0 ,(w\gamma)^{n}\alpha\rangle\ne 0$.
 It suffices in turn to show that $\langle \lambda_0 ,\alpha\rangle\not\equiv 0\pmod{p}$; but this is immediate from Definition~\ref{defn:rhobar-generic}.
\end{proof}
\begin{lem}
  \label{lem:generic-iso-implies-weyl-conjugate}Suppose that~$\rhobar$ is generic, and write $\rhobar|_{I_{\Qp}}\cong \tau(\lambda_0 ,w)$ as in Definition~\ref{defn:rhobar-generic}.
  Then the set of~$\lambda'\in X_{*}(\That)$ such that there exists $w'\in W$ with $\tau(\lambda_0 ,w)\cong \tau(\lambda',w')$ is precisely the set of~$\lambda'$ which can be written as
  \begin{equation}\label{eqn:lambda0-w-orbit}
    \lambda'=\sigma^{-1}(\lambda_0 -\nu +p(w\gamma)\nu)
  \end{equation}
  for some $\sigma\in W$, $\nu\in X_{*}(\That)$.
\end{lem}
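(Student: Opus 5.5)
The inclusion ``$\supseteq$'' is formal, and ``$\subseteq$'' is an exponent computation made injective by genericity. For ``$\supseteq$'', Remark~\ref{v3-rem:non-uniqueness-of-semisimple-description}, specifically \eqref{v3-eq:21}, shows that $\gamma$-twisted $\varphi$-conjugation of $tu^{\lambda_0}w$ by $g=u^{\nu}\sigma$ gives $t'u^{\lambda'}w'$ with $\lambda'$ as in \eqref{eqn:lambda0-w-orbit} and $w'=\sigma^{-1}w\sigma^{\gamma}$. Rather than pass through \'etale $\varphi$-modules, I would check directly from \eqref{v3-eq:formula-for-inertial-character} that $\tau(\lambda',w')=\sigma^{-1}\tau(\lambda_0,w)\sigma$, so the two are isomorphic. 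Writing $e$ for the fixed point attached to $(\lambda_0,w)$, the fixed point attached to $(\lambda',w')$ is $e'=\sigma^{-1}(e+\nu)$: indeed
\[\lambda'+pw'\gamma e'=\sigma^{-1}\bigl(\lambda_0-\nu+pw\gamma\nu+pw\gamma e\bigr)=\sigma^{-1}(e+\nu).\]
By \eqref{v3-eq:18}, $\tau(\lambda',w')=\omega_d^{(p^d-1)\sigma^{-1}(e+\nu)}$, and the $\nu$-term contributes $\omega_d^{(p^d-1)\sigma^{-1}\nu}$, which is trivial since $\omega_d$ has order $p^d-1$.

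For ``$\subseteq$'', suppose $\tau(\lambda',w')\cong\tau(\lambda_0,w)$, i.e.\ the two are $\Ghat$-conjugate. Both take values in $\That$, and by Lemma~\ref{lem:generic-implies-regular} the centralizer of the image of $\tau(\lambda_0,w)$ is a maximal torus, necessarily $\That$. So a conjugating element normalizes $\That$, and we get $\tau(\lambda',w')=\sigma^{-1}\tau(\lambda_0,w)\sigma$ for some $\sigma\in W$. Applying the ``$\supseteq$'' computation with $\nu=0$, we may replace $(\lambda_0,w)$ by $(\sigma^{-1}\lambda_0,\sigma^{-1}w\sigma^{\gamma})$ and assume $\tau(\lambda',w')=\tau(\lambda_0,w)$ as maps to $\That$, at the cost of $\lambda_0$ being only a Weyl translate of a generic element. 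Now choose $d$ with both $(w\gamma)^d=(w'\gamma)^d=1$. Equality of the characters means the fixed points satisfy
\[(p^d-1)e' \equiv (p^d-1)e \pmod{(p^d-1)X_*(\That)},\]
i.e.\ $e'-e=\nu\in X_*(\That)$.

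The key step is to show that $w'=w$. From $e'=\lambda'+pw'\gamma e'$ and $e=\lambda_0+pw\gamma e$ we obtain
\[\lambda'=\lambda_0+\nu-pw'\gamma\nu+p(w\gamma-w'\gamma)e,\]
so $p(w-w')\gamma e$ must be integral. Genericity gives that $e$ is strongly non-integral: $\langle\alpha,e\rangle$ avoids $\Z$ by a margin. I expect this step to be the main obstacle, and the margin has to come from the constraint $h_\eta+1<\langle\alpha,\lambda_0\rangle<p-h_\eta-1$ through the series formula \eqref{v3-eqn:defn-of-ei}. What I would try first is to use the same Weyl-group action on $\tau$: with $\tau$ fixed, the centralizer torus argument implies that the Frobenius parts $w\gamma$ and $w'\gamma$ agree as images of Frobenius in $N(\That)/\That$, up to $W$-conjugation already absorbed. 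Lemma~\ref{v3-lem:inertia-representation-from-etale-phi}(2) supports this, but only if $\lambda'$ also arises from the same $\rhobar$, so a direct lattice argument may be needed instead. Once $w'=w$, the displayed relation gives $\lambda'=\lambda_0-\nu+pw\gamma\nu$ after replacing $\nu$ by $-\nu$. Undoing the $\sigma$-twist then yields exactly the form \eqref{eqn:lambda0-w-orbit}.
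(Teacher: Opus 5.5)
Your ``$\supseteq$'' computation and your reduction of ``$\subseteq$'' to the case $\tau(\lambda',w')=\tau(\lambda_0,w)$ with $e'-e=\nu\in X_{*}(\That)$ are sound. There is one harmless sign slip: the fixed point attached to $(\lambda',w')$ is $\sigma^{-1}(e-\nu)$, not $\sigma^{-1}(e+\nu)$.

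However, the proof stops at exactly the step you flag. You never prove $w'=w$, and without it the relation $\lambda'=\lambda_0+\nu-pw'\gamma\nu+p(w\gamma-w'\gamma)e$ is not of the required form. The fix you propose, a quantitative ``margin'' of non-integrality of $\langle\alpha,e\rangle$ extracted from the genericity bounds, is aimed at the wrong target. What you need is that no nontrivial element of $W$ fixes $e$ modulo $X_{*}(\That)$. Root-wise non-integrality does not give this on its own: you also need the connectedness of centralizers that comes from $Z(G)$ being connected. Without it, take $\Ghat=\mathrm{PGL}_2$ and $e=\tfrac12\varpi^{\vee}$; then $\langle\alpha,e\rangle=\tfrac12$, yet $s_\alpha e\equiv e$ modulo $X_{*}(\That)$. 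That input is precisely what Lemma~\ref{lem:generic-implies-regular} already packages.

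The missing step is short, and your first instinct (that $\tau$ pins down the Frobenius part) was right. Reindex the sum in \eqref{v3-eq:formula-for-inertial-character} and use $\omega_d^{p^d}=\omega_d$. This gives, for \emph{every} pair $(\lambda,w)$, the formal identity $\tau(\lambda,w)^p=(w\gamma)^{-1}\,\tau(\lambda,w)\,(w\gamma)$. It makes no reference to $\rhobar$, so your worry that Lemma~\ref{v3-lem:inertia-representation-from-etale-phi}(2) applies only if $\lambda'$ comes from $\rhobar$ is beside the point.

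Now suppose $\tau(\lambda',w')=\tau(\lambda_0,w)=:\tau$. The identity gives $(w\gamma)^{-1}\tau(w\gamma)=(w'\gamma)^{-1}\tau(w'\gamma)$, so $w{w'}^{-1}$ centralizes $\tau(I_{\Qp})$. By Lemma~\ref{lem:generic-implies-regular}, $\Ghat^{\tau(I_{\Qp})}=\That$, and this property is preserved by your Weyl conjugation. Hence $w'=w$ in $W$.

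The same conclusion can be reached in your own notation:
\begin{itemize}
\item You have $p(w\gamma-w'\gamma)e\in X_{*}(\That)$ and $(p^d-1)e\in X_{*}(\That)$.
\item Since $\gcd(p,p^d-1)=1$, it follows that $(1-w'w^{-1})(w\gamma e)\in X_{*}(\That)$.
\item This says $w'w^{-1}$ centralizes the image of $\omega_d^{(p^d-1)w\gamma e}=(w\gamma)\tau(w\gamma)^{-1}$, whose centralizer is again $\That$.
\end{itemize}

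With this inserted, your argument is a valid direct fixed-point computation. The paper instead passes through twisted $\varphi$-conjugacy of $u^{\lambda_0}w$ and $u^{\lambda'}w'$, using Lemma~\ref{v3-lem:inertia-representation-from-etale-phi} and \eqref{v3-eq:21}. That route tacitly relies on the same fact, namely that a regular inertial character determines the Frobenius class.
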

\begin{proof}
  By Lemma~\ref{lem:generic-implies-regular}, we have $\tau(\lambda_0 ,w)\cong \tau(\lambda',w')$ if and only if $\tau(\lambda_0 ,w)$ and $\tau(\lambda',w')$ are $N_{\Ghat}(\That)$-conjugate.
  By Lemma~\ref{v3-lem:inertia-representation-from-etale-phi}, this is equivalent to $u^{\lambda_0 }w$ and $u^{\lambda'}w'$ being twisted $\varphi$-conjugate in $N_{\Ghat}(\That)$, so the claim follows from~\eqref{v3-eq:21}.
\end{proof}

\begin{prop}
  \label{prop:generic-agreement-GHS}If~$\rhobar:\Gal_{\Qp}\to\LG(\Fpbar)$ is generic, then $W^{\explicit}(\rhobar)=\WHer(\rhobar)$. 
\end{prop}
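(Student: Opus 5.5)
We compare the two sets by reducing both to the same combinatorial description in terms of $(\lambda_0,w)$. Fix a presentation $\rhobar|_{I_{\Qp}}\cong\tau(\lambda_0,w)$ with $\lambda_0$ satisfying the genericity bounds of Definition~\ref{defn:rhobar-generic}.

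\textbf{Step 1: rewrite $W^{\explicit}(\rhobar)$.} By Lemma~\ref{lem:generic-iso-implies-weyl-conjugate}, a weight $F_{\lambda-\eta}$ lies in $W^{\explicit}(\rhobar)$ if and only if there are $\sigma\in W$ and $\nu\in X_*(\That)$ such that
\[
\lambda'_{\nu}\coloneq \bigl(\lambda_0-\nu+p(w\gamma)\nu\bigr)_{\dom}
\]
is dominant and satisfies $\lambda'_\nu\uparrow\lambda$. The Weyl element $\sigma$ only serves to make $\lambda'$ dominant. Since $\lambda$ is $p$-restricted after the $\eta$-shift, that is $\lambda\in X^*(T)_1$, the relation $\lambda'\uparrow\lambda$ forces $\lambda'\le\lambda$. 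Together with $\lambda_0\in\Delta_p$, this bounds $\langle\alpha,\nu\rangle$, so only finitely many $\nu$ matter, essentially those with $\langle\alpha,\nu\rangle\in\{-1,0,1,\dots\}$ small.

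Next we use Lemma~\ref{lem:equivalence-uparrow-alcoves-weights}. Because $\lambda_0$ is deep in $\Delta_p$ (distance more than $h_\eta+1$ from every wall), the point $\lambda'_\nu$ lies in the interior of a dominant alcove $v\Delta_p$, and $\lambda'_\nu=v(\lambda_0')$ with $\lambda'_0\in\Delta_p$. The condition $\lambda'\uparrow\lambda$ then becomes an alcove condition $v\Delta_p\uparrow v'\Delta_p$, where $\lambda\in\overline{v'\Delta_p}$ and $\lambda\in W_p\lambda'$. Since $\lambda$ is $p$-restricted, $v'\Delta_p$ lies among the finitely many restricted alcoves.

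\textbf{Step 2: rewrite $\WHer(\rhobar)$.} Recall from \cite[Defn.~9.2.5]{zbMATH06991335} that
\[
\WHer(\rhobar)=\mathrm{JH}\bigl(\overline{R(\cdot)}\otimes\ldots\bigr)\cap\{\text{regular weights}\}
\]
is built from the Jordan--H\"older constituents of the mod~$p$ reduction of a Deligne--Lusztig representation $R_{w}(\theta)$ attached to $\tau(\lambda_0,w)$, via the twisting element~$\eta$. We then apply the generic decomposition of \cite{le2025genericdecompositionsdelignelusztigrepresentations}. Under the bound $h_\eta+1<\langle\alpha,\lambda_0\rangle<p-h_\eta-1$, that decomposition expresses the constituents as $F_{\lambda-\eta}$ with $\lambda$ of the form ``$\widetilde w\cdot(\lambda_0-\nu+p w\gamma\nu)$'' for $(\widetilde w,\nu)$ ranging over an explicit set. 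This set is indexed by pairs of alcoves related by the $\uparrow$ order, i.e.\ by the Bruhat order via Theorem~\ref{thm:Wang}. Herzig's operator $\mathcal{R}$ (reflection by $w_0$ together with the $\eta$-shift) converts these constituents into the same alcove data as in Step~1. Here we use the identity $\mu\uparrow\lambda\iff -w_0\mu\uparrow -w_0\lambda$ together with the $\rho$-shift of Remark~\ref{rem:uparrow-rho-translation}.

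\textbf{Step 3: match the index sets.} We show the two finite sets of pairs coincide: pairs $(\nu, v'\Delta_p)$ with $v\Delta_p\uparrow v'\Delta_p$ on one side, and pairs $(\nu,\widetilde w)$ on the other. On the explicit side we may use Proposition~\ref{prop:Bruhat-equivalent-uparrow} and Corollary~\ref{cor:uparrow-dominant-chain}. Two points must be checked:
\begin{enumerate}
\item genericity ensures that all weights obtained on either side are $p$-restricted and pairwise distinct modulo $(p\gamma-1)X^*(T)_0$;
\item no non-generic coincidences arise.
\end{enumerate}

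\textbf{Main obstacle.} The hardest step is Step~3: aligning the combinatorial parametrization in \cite{le2025genericdecompositionsdelignelusztigrepresentations} (and the conventions of $\mathcal R$, dual groups, signs and dotted actions) with our undotted $\uparrow$ on coweights. We would first verify the $\GL_n$ case against \cite[Prop.~6.28]{MR2541127}, and then carry out the general bookkeeping alcove by alcove using Theorem~\ref{thm:Wang}.
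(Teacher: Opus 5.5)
Your outline follows the paper's route: Lemma~\ref{lem:generic-iso-implies-weyl-conjugate} and Proposition~\ref{prop:Bruhat-equivalent-uparrow} on the explicit side, and the generic Deligne--Lusztig combinatorics on the Herzig side. But the two steps that carry the content are missing.

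\textbf{First gap: the bound on $\nu$.} In Step~1 you claim that, because $\lambda_0$ is deep in $\Delta_p$, the point $\lambda'_\nu$ lies in the interior of an alcove. That is false for general $\nu$. If $\langle\alpha,\nu\rangle=\langle\alpha,\lambda_0\rangle$ for some root $\alpha$, then $\lambda_0-\nu$ lies on a wall, and hence so does $\sigma^{-1}(\lambda_0-\nu+p(w\gamma)\nu)$. What is actually needed is the sharp estimate $h_\nu\le h_\eta$, where $h_\nu=\max_{\alpha\in\Phi}\langle\alpha,\nu\rangle$. Combined with genericity, this gives $\lambda_0-\nu\in\Delta_p$. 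Consequently, in the presentation $\lambda'=\tu(\lambda_0')$, $\lambda=\tw(\lambda_0')$ with $\tu\le_{\Bru}\tw$ from Proposition~\ref{prop:Bruhat-equivalent-uparrow}, the point $\lambda_0'$ equals $\lambda_0-\nu$ up to the stabilizer $\Omega$ of $\Delta_p$. Your phrase ``this bounds $\langle\alpha,\nu\rangle$'' does not supply this estimate, and it does not follow from $\lambda_0\in\Delta_p$ alone. Here is the argument. Pick $\sigma'\in W$ with $\sigma'\sigma^{-1}(w\gamma)\nu$ dominant, and a highest root $\alpha_0$ with $\langle\alpha_0,\sigma'\sigma^{-1}(w\gamma)\nu\rangle=h_\nu$. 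Since $\alpha_0$ is dominant, $\sigma'\lambda'\le\lambda'\le\lambda$, and $\lambda\in X^*(T)_1$, we get
\[ ph_\eta\ge\langle\alpha_0,\lambda\rangle\ge\langle\alpha_0,\sigma'\lambda'\rangle=ph_\nu+\langle\beta,\lambda_0\rangle-\langle\beta,\nu\rangle>ph_\nu-(p-h_\eta-1)-h_\nu, \]
where $\beta=\sigma\sigma'^{-1}\alpha_0$. This gives $(p-1)(h_\eta+1-h_\nu)>0$, so $h_\nu\le h_\eta$. Note that this step uses the upper genericity bound, not just depth of $\lambda_0$.

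\textbf{Second gap: the matching in Steps~2--3.} This is where the proposition actually lives, and you leave it as a programme (your ``main obstacle''). You never state a precise description of $\WHer(\rhobar)$ for generic $\rhobar$. Your paraphrase (constituents indexed by $\uparrow$-related alcove pairs, then converted by $\mathcal R$) is not something that can be matched term by term as it stands. Neither pairwise distinctness of the weights nor the detour through $-w_0$ and the dotted action plays any role.

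The missing input is (the proof of) \cite[Prop.~2.5.2]{le2025weightserresconjecturecm}. It already packages the generic decomposition of \cite{le2025genericdecompositionsdelignelusztigrepresentations} together with Herzig's operator into a criterion in terms of data $(\omega,s,\tu,\tw)$, including the condition $t_\mu s\in t_\omega W\tu$ in their notation. Given the estimate above, the dictionary is explicit. Set $x=u^{p(\sigma^{-1}w\gamma)\nu}\sigma^{-1}$. Then $\tu(\lambda_0')=\lambda'=x(\lambda_0-\nu)$ forces $\tu^{-1}x\in\Omega$. The factorization
\[ u^{\lambda_0}(w^{-1})^{\gamma^{-1}}=u^{\lambda_0-\nu}(w^{-1}\sigma)^{\gamma^{-1}}\bigl(u^{(\sigma^{-1}w\gamma)\nu}\sigma^{-1}\bigr)^{\gamma^{-1}} \]
supplies their condition, with their $\omega,s,\tu,\tw$ equal to $\lambda_0-\nu$, $(w^{-1})^{\gamma^{-1}}$, $x^{\gamma^{-1}}$ and $(\tw\tu^{-1}x)^{\gamma^{-1}}$ respectively (up to rescaling by $p$). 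This gives $W^{\explicit}(\rhobar)\subseteq\WHer(\rhobar)$. Reading the same dictionary backwards, together with Proposition~\ref{prop:Bruhat-equivalent-uparrow}, gives the reverse inclusion.
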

\begin{proof}
By hypothesis, we can write $\rhobar|_{I_{\Qp}}\cong\tau(\lambda_0 ,w)$ for some~$\lambda_0$ as in Definition~\ref{defn:rhobar-generic}.
Suppose first that $F_{\lambda-\eta}\in W^{\explicit}(\rhobar)$. Bearing in mind Lemma~\ref{lem:generic-iso-implies-weyl-conjugate}, we see that there exists a dominant $\lambda'\uparrow \lambda$ such that~$\lambda'$ can be written in the form~\eqref{eqn:lambda0-w-orbit}.
By Proposition~\ref{prop:Bruhat-equivalent-uparrow}, there exist $\lambda_0' \in \overline{\Delta}_p$ and $\tu\le_{\Bru} \tw\in W_p$ such that $\lambda'=\tu(\lambda'_0 ), \lambda=\tw(\lambda'_0 )$, and $\tu(\Delta_p), \tw(\Delta_p)$ are dominant.

Then we have \begin{equation}\label{eqn:lambda-0-lambda-prime}\tu(\lambda'_0)=\lambda'=\sigma^{-1}(\lambda_0 -\nu +p(w\gamma)\nu).\end{equation}
We now argue as in~\cite[\S\S 5,6]{MR2541127} (see also the proof of~\cite[Lem.~2.4.9]{le2025weightserresconjecturecm}).
Let~$\sigma'\in W$ be such that $\sigma'\sigma^{-1}w\gamma\nu$ is dominant, let $h_{\nu}=\max_{\alpha\in\Phi}\langle \alpha,\nu \rangle$, and let~$\alpha_0$ be a root such that $\langle \alpha_0,\sigma'\sigma^{-1}w\gamma\nu \rangle=h_{\nu}$; evidently, we may suppose that~$\alpha_0$ is a highest root (i.e., $\alpha_0$ is the highest root of an irreducible sub-root system), and is in particular dominant. Since~$\alpha_0$ is dominant, and $\sigma'(\lambda')\le \lambda'\le\lambda$, we see that \begin{equation*}\label{h-nu-h-eta-inequality}\begin{split}
      ph_{\eta} & \ge  \langle \alpha_0,\lambda \rangle \\
      & \ge \langle \alpha_0,\lambda' \rangle \\
  & \ge \langle \alpha_0,\sigma'(\lambda') \rangle \\
  & = ph_{\nu} + \langle \alpha_0,\sigma'\sigma^{-1}\lambda_0  \rangle - \langle \alpha_0,\sigma'\sigma^{-1}\nu  \rangle \\
  & = ph_{\nu} + \langle (\sigma')^{-1} \sigma^{-1}\alpha_0,\lambda_0  \rangle - \langle (\sigma')^{-1} \sigma^{-1}\alpha_0,\nu  \rangle \\
  &> ph_{\nu} - (p-h_{\eta}-1) -h_{\nu},
\end{split}
\end{equation*}
where in the first inequality we used that~$\lambda\in X^{*}(T)_{1}$, and in the last inequality we used the assumption that~$\rhobar$ is generic.
This rearranges to give $h_{\nu}< h_{\eta} +1$, so $h_{\nu}\le h_{\eta}$.
Since~$\rhobar$ is generic, it follows that $\lambda_0 -\nu\in \Delta_p$.
It then follows from~\eqref{eqn:lambda-0-lambda-prime} that~$\lambda_0'\in\Delta_p$. Write $x\coloneq u^{p(\sigma^{-1}w\gamma)\nu}\sigma^{-1}$, and write~$\Omega$ for the stabilizer of~$\Delta_p$ in $pX^{*}(T)\rtimes W$.
Then the equality $\tu(\lambda_0')=x(\lambda_0 -\nu)$ implies that $\tu^{-1}x\in\Omega$.
Noting that we may write
\begin{equation}
  \label{eq:obscure-eqn-to-compare-to-LLH}
  u^{\lambda_0 }(w^{-1})^{\gamma^{-1}}=u^{\lambda_0-\nu}(w^{-1}\sigma)^{\gamma^{-1}} (u^{(\sigma^{-1}w\gamma)\nu}\sigma^{-1})^{\gamma^{-1}}
\end{equation}
it is now immediate from (the proof of) \cite[Prop.~2.5.2]{le2025weightserresconjecturecm}\footnote{At the beginning of \cite[\S 2.5]{le2025weightserresconjecturecm}, it is assumed that~$G$ is the restriction of scalars of a split group; however, this assumption is not used until after \cite[Prop.~2.5.2]{le2025weightserresconjecturecm}.} that~$F_{\lambda-\eta}\in \WHer(\rhobar)$.
 To see this, we explain the necessary translation between our notation and conventions and those of~\cite{le2025weightserresconjecturecm}.
First, our~$\lambda_0$ is their~$\tw(\taubar)(0)$ (and our hypothesis that $\rhobar$ is generic implies the hypothesis made on~$\tw(\taubar)(0)$ in the statement of~\cite[Prop.~2.5.2]{le2025weightserresconjecturecm}).
Their~$\omega$ is our~$\lambda_0-\nu$, and their~$s$ is our $(w^{-1})^{\gamma^{-1}}$ (to see this, compare their definition of~$\tau(w,s)$, which is found in~\cite[\S 2.4]{MR4549091}, to our~$\tau(\lambda,w)$).
Finally, their $\tu,\tw$ are our~$x^{\gamma^{-1}},(\tw\tu^{-1}x)^{\gamma^{-1}}$ respectively (up to rescaling by~$p$).
The condition in~\cite{le2025weightserresconjecturecm} that (in their notation) $t_{\mu}s\in t_{\omega}W\tu$ is provided by~\eqref{eq:obscure-eqn-to-compare-to-LLH}, so we see that $F_{\lambda-\eta}\in \WHer(\rhobar)$.

Thus $W^{\explicit}(\rhobar)\subseteq \WHer(\rhobar)$.
The converse implication is simpler, and follows easily by reversing the above logic; so $W^{\explicit}(\rhobar)=\WHer(\rhobar)$, as required.
\end{proof}

\begin{rem}\label{rem:this-bound-is-optimal}
  Definition~\ref{defn:rhobar-generic} seems to be the natural condition under which one can hope for the equality $W^{\explicit}(\rhobar)=\WHer(\rhobar)$. For example, for~$G=\GL_2$ and~$p\ge 5$, we have~$h_{\eta}=1$, and if we allowed $\langle \alpha,\lambda_0 \rangle=p-2$, then we could take (in the usual notation for cocharacters of~$\GL_2$) $\lambda_0=(p-1,1)$ and~$w=1$, giving $\tau(\lambda_0 ,w)=1\oplus\omega^{-1}$.
    Then~$W^{\operatorname{explicit}}(\rhobar)$ contains $3$ weights~$F_{\lambda-\eta}$, namely those with~$\lambda=(p,0)$, $(1,0)$, and~$(p-1,1)$, while by definition (see~\cite[\S 6]{MR2541127}) the set $\WHer(\rhobar)$ only consists of ``$p$-regular'' weights, which excludes~$\lambda=(p,0)$.
\end{rem}
\begin{rem}
  \label{rem:compare-to-LLLM-elimination}
  In the case~$G=\Res_{\cO_L/\Zp}\GL_n$, standard local-global compatibility results combined with Theorem~\ref{thm:Serre-weight-upper-bound} and Proposition~\ref{prop:generic-agreement-GHS} prove a ``weight elimination'' result for automorphic forms on unitary groups, showing that the set of weights of a generic~$\rhobar$ is contained in $\WHer(\rhobar)$.
  Here~$h_{\eta}=n-1$, so the generic~$\rhobar$ are those that can be written as $\tau(\lambda_0 ,w)$ with $n<\langle \alpha,\lambda_0 \rangle <p-n$ for all positive roots~$\alpha$.
  This improves on~\cite[Thm.~6.1]{le2025genericdecompositionsdelignelusztigrepresentations} (see also~\cite[Thm.~3.1.3]{le2025weightserresconjecturecm}), which proves the same result under the stronger hypothesis that $2n+1<\langle \alpha,\lambda_0 \rangle <p-(2n+1)$ for all positive roots~$\alpha$.
  In view of~\cite[Rem.~10]{MR3963975}, it seems unlikely that any further significant improvement is possible.
\end{rem}

\begin{rem}\label{rem:explicit-Serre-weights}
  For non-generic semisimple~$\rhobar$, ~\cite[Defn.~9.3.10]{zbMATH06991335} defines
  an ``explicit'' set of Serre weights~$W_{\operatorname{expl}}(\rhobar)$ by a rather involved (and somewhat ad hoc) recursive procedure; the idea was to write down an approximation to the set~$W^{\cris}(\rhobar)$, including all the Serre weights for which one could exhibit crystalline lifts, or for which considerations of functoriality or the Breuil--M\'ezard conjecture suggested that such lifts should exist.
  Since~$W_{\operatorname{expl}}(\rhobar)$ is expected to be a subset of~$W^{\cris}(\rhobar)$, while $W^{\cris}(\rhobar)\subseteq W^{\explicit}(\rhobar)$ by Theorem~\ref{thm:Serre-weight-upper-bound}, we expect that $W^{\explicit}(\rhobar)=W^{\cris}(\rhobar)$ whenever $W^{\operatorname{explicit}}(\rhobar)=W_{\operatorname{expl}}(\rhobar)$.

  We find it encouraging that at least for~$\GL_3$, the sets $W_{\operatorname{expl}}(\rhobar)$ and~$W^{\explicit}(\rhobar)$ coincide.
  For example, if~$\rhobar|_{I_{\Qp}}$ is trivial, then $W^{\explicit}(\rhobar)$ 
  consists of precisely~$4$ Serre weights.
    Indeed, since~$\GL_3$ is split, we can simultaneously arrange conditions~\eqref{item:lambda-is-dominant} and~\eqref{item:lambda-is-M-dominant-and-w-in-WM} in Theorem~\ref{v3-thm:existence-of-semisimple-etale-phi-module-with-invariants}, so we need to identify the $F_{\lambda-\eta}$ such that there exists a dominant cocharacter~$\lambda'\uparrow \lambda$ with~$\omega^{\lambda'}=1$.
    In the case~$p=2$ we have~ $\omega^{\lambda '}=1$ for all~$\lambda '$, so we may assume that~$p>2$ from now on.
  Write~$\lambda=(a,b,c)$, and take~$\eta=(2,1,0)$; since~$\lambda\in X^{*}(T)_1$ we have $1\le a-b,b-c\le p$.
  If~$\lambda'=\lambda$ then we must have $a-b=b-c=p-1$ and~$(p-1)|c$, which gives the Serre weight~$F_{(p-1,0,1-p)-\eta}=F_{(p-3,-1,1-p)}$.
  Otherwise we must have~$a-c>p$, and $\lambda'=(c+p,b,a-p)$, so that $\omega^{\lambda'}=1$ is equivalent to $(a,b,c)\equiv (1,0,-1)\pmod{p-1}$.
  This gives us 
  the possibilities~$F_{(p,0,-1)-\eta}=F_{(p-2,-1,-1)}$, $F_{(p,p-1,-1)-\eta}=F_{(p-2,p-2,-1)}$, and $F_{(p,0,-p)-\eta}=F_{(p-2,-1,-p)}$.
  These are precisely the weights in~$W_{\operatorname{expl}}(\rhobar)$, as explained in~\cite[Ex.~8.2.9]{zbMATH06991335}; note that already in this simple case, the definition of~$W_{\operatorname{expl}}(\rhobar)$ and its computation are rather more complicated than that of~$W^{\explicit}(\rhobar)$, and involve the consideration of ``obvious'', ``shadow'', and ``obscure'' weights.
\end{rem}

\begin{rem}
  \label{rem:non-ss}It would also be interesting to see what our techniques can say about non-semisimple representations~$\rhobar$.
As remarked in~\ref{v3-para:small-weight-shape}, our results recover the key ingredients in the resolution of the weight part of Serre's conjecture for~$\GL_2$ in~\cite{GLSII};
we do not expect that our results will have such strong consequences for general groups~$G$, but we anticipate that they can at least be used to prove non-trivial results for other groups of small rank.
\end{rem}

\appendix
\section{Tannakian formalism}\label{sec:appendix-Tannakian}
\subsection{Exact tensor functors}
Let $V$ be a discrete valuation ring, and let $G$ be a flat, affine, finite type
group scheme over $V.$
We denote by $\Rep_V G$ the category of representations of $G$ on finite free $V$-modules,
and write $\Ind\Rep_V G$ for the $\Ind$-category of $\Rep_V G,$ obtained by formally adjoining
filtered colimits to $\Rep_V G.$ There is a fully faithful functor from $\Ind\Rep_V G$ to the category of $V$-flat
$V[G]$-comodules, and this functor is an equivalence since any $V$-flat $V[G]$-comodule is a filtered colimit of $V$-finite
(and hence $V$-free) $V[G]$-comodules (see \cite[Cor.~to Prop.~2]{MR231831}).
It follows that we may think of objects of $\Ind\Rep_V G$ as $V[G]$-comodules. For example,
the Hopf algebra $\O_G$, thought of as a $G$-comodule via $G$ acting on itself by right translation, is in $\Ind\Rep_V G.$

Now let $\CC$ be a $V$-linear exact tensor category (see e.g.\ \cite[A.5]{imai2024tannakian}).
By definition, a \emph{$G$-object in $\CC$} (equivalently, an \emph{object of~$\CC$ with $G$-structure}) is a
$V$-linear exact tensor functor
$$ \omega: \Rep_V G \rightarrow \CC.$$
We denote the $\Ind$-extended functor by the same notation
$$ \omega: \Ind\Rep_V G \rightarrow\Ind\CC.$$
Below, we denote by $\omega_{\Triv}$ the forgetful tensor functor from
$\Rep_V G$ to the category of finite free~$V$-modules,
and use the same notation with a subscript for its base changes,
such as $\omega_{\Triv,R}$ or $\omega_{\Triv,\O}.$
We will refer to $G$-objects by prepending a $G$ to the name of the category, e.g.\ $G$-Breuil--Kisin modules.

\subsection{Torsors}\label{subsec:torsors} Let $R$ be a $V$-algebra.
Suppose that $\CC$ is the category of finite projective $R$-modules.
Then $\Ind\CC$ admits a fully faithful exact tensor functor to the category of
$R$-modules. In particular, for $N$ in $\Ind\CC,$ we can regard $\omega(N)$ as an
$R$-module. With this convention, $P\coloneq \Spec(\omega(\O_G))$ is a $G$-torsor over~$\Spec R$, 
and one can reconstruct $\omega$ from $P$ by setting $\omega(W) = (W\times P)/G$;
see \cite[Thm.~4.8]{Broshi}, \cite[Thm.~19.5.1]{ScholzeWeinsteinBerkeley}.

\begin{para}
The equivalence between these two points of view holds much more generally,
as we now explain, following \cite[Appendix A]{imai2024tannakian}.
Let $(\cX,\O)$ be a site ringed in $V$-algebras.
A vector bundle on $(\cX,\O)$ is a sheaf of $\O$-modules which is locally isomorphic to $\O^n,$
for some $n.$ We denote by $\Vect(\cX,\O)$ the category of vector bundles on $(\cX,\O).$

Let $G_{\O}$ denote the sheaf on $\cX$ induced by $G.$
That is, $G_{\O}(U) \coloneq G(\O(U)).$
A $G$-torsor on $(\cX,\O)$ is a sheaf on $\cX$ with a $G_{\O}$-action which is locally simply transitive.

Denote by $G$-$\Vect(\cX,\O)$ the category of
$V$-linear exact tensor functors $\omega: \Rep_V G \rightarrow \Vect(\cX,\O).$ We call $\omega$ locally
trivial if locally on $\cX,$ it is isomorphic to the functor $\omega_{\Triv,\O}$
given by $\omega_{\Triv,\O}(W)(U) = W\otimes_V \O(U).$
We denote by $G$-$\Vect(\cX,\O)^{lt}$ the category of locally trivial $V$-linear exact tensor functors.

If $P$ is a $G$-torsor on $(\cX,\cO)$ then we write $\omega_P\in G$-$\Vect(\cX,\O)^{lt}$ for the functor given by $\omega_P(W)=(W\times P)/G$.
\end{para}

\begin{prop}\label{prop:equivfunctors} The functor $P\mapsto \omega_P$ is 
  an equivalence of categories between the category of $G$-torsors on $(\cX,\O)$
and the category $G$-$\Vect(\cX,\O)^{lt}.$
\end{prop}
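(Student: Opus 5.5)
We construct mutually inverse functors and check that each composite is naturally isomorphic to the identity, working locally on $\cX$ throughout.

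\emph{From torsors to tensor functors.} Given a $G$-torsor $P$, the contracted product $\omega_P(W)=(W\times P)/G$ is a sheaf of $\O$-modules. On an object $U$ where $P|_U$ admits a section $s$, the section induces an isomorphism $\omega_P(W)|_U\simeq W\otimes_V\O|_U$ that is natural in $W$ and compatible with tensor products. It follows that $\omega_P$ lands in $\Vect(\cX,\O)$. Exactness and tensor compatibility are local, so they reduce to the corresponding statements for $\omega_{\Triv,\O}$, which are clear because finite free $V$-modules are flat. The same local isomorphism shows that $\omega_P$ is locally trivial, and functoriality in $P$ is evident.

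\emph{From tensor functors to torsors.} Given a locally trivial $\omega$, extend it to $\Ind\Rep_V G$ as above and set $\O_P\coloneq\omega(\O_G)$. The Hopf structure on $\O_G$ gives the following structure on $\O_P$:
\begin{itemize}
\item multiplication $\O_G\otimes\O_G\to\O_G$ and unit $V\to\O_G$ are $G$-equivariant for right translation, and tensor-compatibility of $\omega$ turns them into a commutative $\O$-algebra structure on $\O_P$;
\item the left translation action commutes with the right comodule structure, so it gives a coaction $\O_P\to\O_P\otimes_V\O_G$, that is, a right $G_\O$-action on the relative spectrum $P$ of $\O_P$.
\end{itemize}
Locally $\omega\simeq\omega_{\Triv,\O}$ as tensor functors. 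Since $\omega$ commutes with filtered colimits, over such $U$ we get $\O_P|_U\simeq\O_G\otimes_V\O|_U$ compatibly with the algebra and coaction structures. Thus $P|_U\simeq G_\O|_U$ equivariantly, so $P$ is a $G$-torsor.

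\emph{Identifying the composites.}
\begin{itemize}
\item \emph{$P\mapsto\omega_P\mapsto P'$.} There is a canonical map $P\to P'$ of sheaves with $G_\O$-action, coming from $\omega_P(\O_G)=(\O_G\times P)/G\simeq\O_P$. This is the standard identification of the induced comodule with functions on $P$. A map of $G_\O$-torsors is automatically an isomorphism, so this composite is the identity.
\item \emph{$\omega\mapsto P\mapsto\omega_P$.} We need a natural isomorphism $\omega(W)\simeq(W\times P)/G$. Write $W$ as a $G$-equivariant equalizer (the cotensor/comodule resolution) $W\to W\otimes\O_G\rightrightarrows W\otimes\O_G\otimes\O_G$. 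Apply $\omega$ and use exactness together with the identification $\omega(W_0\otimes\O_G)\simeq W_0\otimes_V\O_P$, where $W_0$ is the underlying trivial representation. This presents $\omega(W)$ as the equalizer computing $(W\otimes\O_P)^G$, which is exactly the module of sections of $(W\times P)/G$.
\end{itemize}

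The main obstacle is this last step. It requires care with $\Ind$-objects in $\Vect(\cX,\O)$, namely ensuring that filtered colimits and exactness interact correctly in sheaves of $\O$-modules. It also requires using the flatness of $\O_G$ over the DVR $V$, which guarantees that the comodule resolution is split exact after forgetting the $G$-action. Here we follow the argument in \cite[Appendix A]{imai2024tannakian} and \cite{Broshi}, checking each identification locally where $\omega$ is trivial.
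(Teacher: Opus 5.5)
Your outline is correct, but it follows a different route from the paper's. The paper's proof does not construct a quasi-inverse. It cites \cite[Prop.~A.17]{imai2024tannakian} to reduce the whole statement to reconstructability of $G_{\O}$, i.e.\ to the natural map $G_{\O}\to\underline{\Aut}(\omega_{\Triv,\O})$ being an isomorphism. (In effect, since every object on either side is locally trivial, only the automorphism sheaves of the trivial objects need comparing.) It then checks $G(R)\isoto\Aut(\omega_{\Triv,R})$ using Broshi's Chevalley-type theorem \cite[Thm.~1.1]{Broshi}: $G=\Aut(W,L)$ for a faithful $W$ and a line $L$ in a tensor construction $t(W)$, so naturality forces $\alpha_W\in G(R)$.

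You instead construct the inverse $\omega\mapsto\underline{\Spec}\,\omega(\O_G)$ explicitly and verify both composites using the regular representation and the $V$-split cobar resolution. Reconstructability then comes out as a byproduct, since the automorphisms of the trivial objects must match. Your group-theoretic input is flatness plus the ind-presentation of $\O_G$ (\cite[Cor.~to Prop.~2]{MR231831}, already used in the appendix), rather than a Chevalley embedding.

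The cost is that you must do by hand the verifications the paper outsources:
\begin{itemize}
\item exactness of the Ind-extension of $\omega$ on $V$-split sequences of flat comodules;
\item naturality and tensor compatibility of $\omega(W)\simeq(W\times P)/G$;
\item explicit definitions of the comparison maps, rather than a Yoneda argument. On a general ringed site a torsor is only a sheaf of sets and $\underline{\Spec}$ need not be fully faithful. So you should take, for example, $x\mapsto[(1\otimes p^*)x,p]$ for local sections $p$ of $P$, and for $P\to\underline{\Spec}\,\omega_P(\O_G)$ send $p$ to evaluation at $p$.
\end{itemize}
In exchange, your route produces exactly the explicit inverse that the paper later relies on in Corollary~\ref{cor:equivfunctors} and in the proof of Theorem~\ref{thm:prismaticKeylemma}.
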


\begin{proof} By \cite[Prop.~A.17]{imai2024tannakian}, it suffices to show that
in the terminology of ${\it loc.~cit.},$
$G_{\O}$ is  reconstructable in $(\cX,\O)$; i.e., the natural map $G_{\cO}\to  \underline{\Aut}(\omega_{\Triv,\O})$ is an isomorphism.
We may check this locally on~$\cX$, so it suffices in turn to show that if  $R$ is a $V$-algebra, and  $\omega_{\Triv,R}$
denotes the functor sending $U$ in $\Rep_V G$ to the $R$-module $U\otimes_V R$, then the natural map $G(R) \rightarrow \Aut(\omega_{\Triv,R})$ is an isomorphism.

Recall that by \cite[Thm.~1.1]{Broshi} there is a faithful representation~$W$ of~$G$, together with a tensorial construction $t(W)$ and a locally split line bundle $L\subset t(W)$, such that $G=\Aut(W,L)$ as subfunctors of $GL(W)$.
Given $\alpha\in  \Aut(\omega_{\Triv,R})$, it follows that $\alpha_W\in \GL(W\otimes_VR)$ is an element of~$G(R)$, and it is easy to see that this gives an inverse to the natural map $G(R) \rightarrow \Aut(\omega_{\Triv,R})$, as required.
\end{proof}

\begin{cor}\label{cor:equivfunctors} 
Let $X$ be a stack on $V$-algebras, and let $(\cX,\O)$ be the topos associated to affine schemes over $X$, endowed with a topology between the syntomic and fpqc topologies.  Then any $\omega$ in $G$-$\Vect(\cX,\O)$ is locally trivial.
In particular the category of $G$-torsors on $(\cX,\O)$ is equivalent to $G$-$\Vect(\cX,\O).$
\end{cor}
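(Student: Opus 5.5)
By Proposition~\ref{prop:equivfunctors}, it suffices to show that any $\omega$ in $G$-$\Vect(\cX,\O)$ is locally trivial; the second assertion then follows formally. So fix $\omega$. The plan is to use the Ind-extension of $\omega$ to produce a candidate torsor, check that it is a torsor flat-locally, and then trivialise it on a cover.

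First we reduce to the affine case. Local triviality is local on $\cX$, and the objects of $\cX$ are affine schemes $\Spec R\to X$. Evaluating $\omega$ on such an object gives a $V$-linear exact tensor functor
\[
\omega_R:\Rep_V G\to \{\text{finite projective } R\text{-modules}\}.
\]
By the discussion in \ref{subsec:torsors}, relying on \cite[Thm.~4.8]{Broshi} and \cite[Thm.~19.5.1]{ScholzeWeinsteinBerkeley}, the scheme
\[
P_R\coloneq \Spec(\omega_R(\O_G))
\]
is a $G$-torsor over $\Spec R$ for the fpqc topology, and $\omega_R\simeq\omega_{P_R}$. Here $\omega_R(\O_G)$ is the Ind-extension applied to the regular representation, and it is a faithfully flat $R$-algebra.

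The main point is then to trivialise $P_R$ in the given topology $\tau$, which lies between the syntomic and fpqc topologies. Since $G$ is flat and of finite presentation over $V$ (finite type over a DVR and flat), $P_R\to\Spec R$ is faithfully flat of finite presentation. Over the cover $P_R\to\Spec R$ the torsor acquires a section, and on that cover $\omega_R$ becomes isomorphic to $\omega_{\Triv,P_R}$. If $G$ is syntomic, then $P_R$ is itself a syntomic cover, hence a $\tau$-cover, and we are done.

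I expect the main obstacle to be the case where $G$ is flat but not a local complete intersection over $V$, so that $P_R$ need not be a syntomic cover. There are two ways to handle this. The first is to note that fppf torsors under a finitely presented flat group can be trivialised syntomically: realise $G$ as a closed subgroup of $\GL_n$, so that $P_R$ is induced from the $\GL_n$-torsor $\omega_R(W)$ for a faithful $W$, which is Zariski-locally trivial; this reduces the question to sections of $\GL_n/G$, which is smooth-locally handled via \cite[Thm.~1.1]{Broshi}. The second, simpler route, and the one I would try first, is to observe that $\tau$ is finer than or equal to syntomic, and that syntomic-local triviality of fppf torsors follows from the fact that every fppf cover is refined by a syntomic one (a standard result, e.g.\ in the Stacks Project). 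Either way, $\omega$ is $\tau$-locally isomorphic to $\omega_{\Triv,\O}$, which proves the corollary.
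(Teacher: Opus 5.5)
Your main line is the paper's argument. You form the torsor $P_R=\Spec(\omega_R(\O_G))$, observe that it is a syntomic cover, and note that $\omega$ becomes trivial after pulling back to it. The paper does the same globally, with $P=\underline{\Spec}_X(\omega(\O_G))$ obtained by descent from the affine case. It checks the triviality directly via the isomorphism $U\otimes_V\O_G\simeq U_0\otimes_V\O_G$ in $\Ind\Rep_V G$, whereas you deduce it from the tautological section of $P_R$ over $P_R$ together with $\omega_R\simeq\omega_{P_R}$. These two routes are equivalent.

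The weak point is your last paragraph.

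\emph{The obstacle does not occur.} The case you single out as the main obstacle, $G$ flat but not lci, cannot happen. $G$ is flat and of finite presentation over $V$, and its two fibres are group schemes of finite type over fields. Such group schemes are always local complete intersections, so $G\to\Spec V$ is syntomic. Syntomicity is fpqc-local on the base, so any fpqc form of $G_R$ is also syntomic; in particular $P_R\to\Spec R$ is a syntomic cover. This is exactly what the paper uses, without comment, when it says the trivial torsor $G\times X\to X$ is a syntomic cover. Your final paragraph should be replaced by this observation.

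\emph{The workarounds do not work as written.} The first is circular. After trivialising the $\GL_n$-torsor Zariski-locally, you must lift the resulting map $\Spec R\to\GL_n/G$ along $\GL_n\to\GL_n/G$. That map is itself a $G$-torsor, and it is smooth only when $G$ is. Moreover, \cite[Thm.~1.1]{Broshi} only realises $G$ as the stabiliser of a line in a tensor construction; it says nothing about smoothness of that quotient map. The second workaround invokes a refinement statement for fppf covers by syntomic ones without any precise reference. Since $G$ is automatically syntomic, it is not needed anyway.
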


\begin{proof} 
Forming the relative spectrum gives a map $f:P := \underline{\Spec}_X(\omega(\O_G)) \to X$. By descent from the case of affine schemes considered above, this map is a $G$-torsor. In particular, this map is also a syntomic cover as the same holds for the trivial torsor $G \times X \to X$. To finish, it is enough to observe that $f^* \circ \omega$ is canonically trivial, which is clear: for $U \in \Rep_V(G)$, we have a canonical isomorphism $U \otimes_V \mathcal{O}_G \simeq U_0 \otimes_V \mathcal{O}_G$ in $\Ind(\Rep_V(G))$, where $U_0$ denotes $U$ endowed with the trivial $G$-action, so application of $\omega(-)$ gives the claim.
\end{proof}

\begin{para}Write $\omega \mapsto P_{\omega}$ for the quasi-inverse to the functor of Proposition~\ref{prop:equivfunctors}. Suppose that
$$\omega, \omega': \Rep_V G \rightarrow \Vect(\cX,\O)$$
are locally trivial exact tensor functors, and that $f:\omega' \rightarrow \omega$
is a morphism of tensor functors (hence an isomorphism, by the argument of  \cite[Ch.~I, Prop.~5.2.3]{SaavedraTannakiennes}, since every object of $\Rep_V G$ is dualizable).
Then we obtain a morphism $P_f: P_{\omega'} \rightarrow P_{\omega}$
between the corresponding torsors.

The following lemma is used in the body of the paper to identify $G$-Breuil--Kisin modules with certain $G$-torsors
with Frobenius structures.
\end{para}

\begin{lem}\label{lem:Frobtorsor} Let $\varphi:(\cX,\cO)\to(\cX,\cO)$
be a morphism of locally ringed sites, with corresponding pullback functor $$ \varphi^*: \Vect(\cX,\O) \rightarrow \Vect(\cX,\O). $$
Let $\omega$ be in $G$-$\Vect(\cX,\O)^{lt}$. 
Then there is a canonical isomorphism
$$ P_{\varphi^*\circ\omega} \simeq \varphi^*P_{\omega}.$$
In particular, a morphism $\varphi^*\circ\omega \rightarrow \omega$ induces a morphism
$$ \varphi^*P_{\omega} \simeq P_{\varphi^*\circ\omega} \rightarrow P_{\omega}. $$
\end{lem}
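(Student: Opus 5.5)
Since $P_\omega$ is characterized (via Proposition~\ref{prop:equivfunctors}) as the relative spectrum of $\omega(\O_G)$, where $\omega$ is extended to $\Ind\Rep_V G$ and $\O_G$ is the regular representation, the plan is to compare both sides on this single ind-object. Concretely, I would set $P_\omega=\underline{\Spec}(\omega(\O_G))$ with its $G$-action induced by the left-translation action on $\O_G$ commuting with the comodule structure. Then I would argue that $\varphi^*$ on vector bundles extends to a colimit-preserving tensor functor on ind-objects. This holds because pullback of quasi-coherent $\O$-modules commutes with filtered colimits and tensor products. It follows that $(\varphi^*\circ\omega)(\O_G)\simeq\varphi^*(\omega(\O_G))$ canonically, as sheaves of commutative $\O$-algebras carrying compatible $G$-actions, since the algebra and action structures are given by tensor-compatible maps in $\Ind\Rep_V G$.

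The next step is to identify $\underline{\Spec}(\varphi^*\mathcal{A})$ with $\varphi^*\underline{\Spec}(\mathcal{A})$ for a quasi-coherent algebra $\mathcal{A}$; this is the standard compatibility of relative spectrum with base change, which I would check locally on $\cX$. Combining the two gives $P_{\varphi^*\circ\omega}\simeq\varphi^*P_\omega$, and the isomorphism is $G$-equivariant. Alternatively, one can avoid spectra entirely by checking $\omega_{\varphi^*P_\omega}\simeq\varphi^*\circ\omega$, using $(W\times\varphi^*P)/G\simeq\varphi^*((W\times P)/G)$, which holds because contracted products commute with pullback of torsors. Full faithfulness in Proposition~\ref{prop:equivfunctors} then yields the isomorphism of torsors. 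Note that $\varphi^*\circ\omega$ is again locally trivial, since $\varphi^*\omega_{\Triv,\O}=\omega_{\Triv,\O}$. I would present this second route, as it uses only the stated equivalence.

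For the final claim, suppose we are given a morphism $f:\varphi^*\circ\omega\to\omega$. This $f$ is automatically an isomorphism, by the rigidity argument cited before the lemma, so functoriality gives $P_f:P_{\varphi^*\circ\omega}\to P_\omega$. Composing with the canonical isomorphism gives the desired map. The only mildly delicate point is naturality and canonicity: the isomorphism must be independent of local trivializations. I expect this to be automatic because every identification used is a natural transformation defined globally. I do not anticipate a serious obstacle; the main care is in verifying the contracted-product compatibility with $\varphi^*$ on a site rather than a scheme.
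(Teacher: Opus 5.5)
Your proposal is correct, and the route you choose to present is exactly the paper's: use Proposition~\ref{prop:equivfunctors} to write $\varphi^*P_\omega=P_{\omega'}$ with $\omega'=\omega_{\varphi^*P_\omega}$, then identify $\omega'(W)=(W\times\varphi^*P_\omega)/G\simeq\varphi^*((W\times P_\omega)/G)=(\varphi^*\circ\omega)(W)$ using that contracted products commute with base change (the paper checks this fppf-locally, reducing to the trivial torsor). The final claim then follows, as you say, from the functoriality $f\mapsto P_f$ recorded just before the lemma.
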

\begin{proof} By the equivalence of Proposition \ref{prop:equivfunctors}, the $G$-torsor $\varphi^*P_{\omega}$
has the form $P_{\omega'}$ for some $\omega'$ in $G$-$\Vect(\cX,\O)^{lt},$ and it suffices to construct a
canonical isomorphism $ \omega'\simeq \varphi^*\circ\omega.$

For $W$ in $\Rep_V G,$ $R$ a $V$-algebra, and any $G$-torsor $P$ over $R,$ it is easy to see that the
formation of $(W\times P)/G$ commutes with arbitrary base change $R \rightarrow R';$ indeed this may be
checked locally in the fppf topology of $R,$ where one can reduce to the case $P \simeq G.$ Thus we have
\[ \omega'(W) = (W\times \varphi^*P_{\omega})/G \simeq \varphi^*((W\times P_{\omega})/G) \simeq \varphi^*(\omega(W))
= (\varphi^*\circ\omega)(W).\qedhere
\]
\end{proof}

\subsection{Restriction of scalars}\label{subsec:restriction-of-scalars}
We now study how $G$-objects behave under restriction of scalars.
Thus suppose that $V'/V$ is a finite flat extension of discrete valuation rings, and let $G'$ be a flat, affine, finite type group scheme over $V'.$ We set $G = \Res_{V'/V} G'.$
Let $(\cX,\O)$ be as above and let $\O' = \O\otimes_V V'.$

\begin{prop}\label{prop:Gobjectequiv} There is a canonical equivalence
$$ G'\text{-}\Vect(\cX,\O')^{lt} \simeq G\text{-}\Vect(\cX,\O)^{lt}.$$
\end{prop}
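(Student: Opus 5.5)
The plan is to reduce to the case of trivial torsors via Proposition~\ref{prop:equivfunctors} and then compare automorphism sheaves. First I would note that the representation categories match: the functor $\Rep_{V'}G' \to \Rep_V G$ given by restriction of scalars (regarding a $V'$-module with $G'$-action as a $V$-module with $\Res_{V'/V}G'$-action) is compatible with the adjunction between $\O$- and $\O'$-modules. Concretely, the equivalence should be defined on torsors rather than directly on tensor functors. For a $G'$-torsor $P'$ over $(\cX,\O')$, viewed as a sheaf with $G'(\O'(U)) = G'(\O(U)\otimes_V V') = G(\O(U))$-action, the same sheaf $P'$ is a $G$-torsor over $(\cX,\O)$, since $G_{\O}$ and $G'_{\O'}$ are literally the same sheaf of groups on $\cX$.

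Next, using Proposition~\ref{prop:equivfunctors} on both sides (applied once to $(\cX,\O')$ with $V'$ and $G'$, and once to $(\cX,\O)$ with $V$ and $G$), I would obtain
\[ G'\text{-}\Vect(\cX,\O')^{lt} \simeq \{G'_{\O'}\text{-torsors}\} = \{G_{\O}\text{-torsors}\} \simeq G\text{-}\Vect(\cX,\O)^{lt}.\]
One point needs checking: a $V'$-algebra-ringed site $(\cX,\O')$ must satisfy the hypotheses of Proposition~\ref{prop:equivfunctors}. That proposition is stated for any site ringed in $V$-algebras, so it applies verbatim with $V'$ in place of $V$. Reconstructability of $G'$ over $V'$ needs \cite[Thm.~1.1]{Broshi} for $G'/V'$, which holds since $G'$ is flat, affine and of finite type over the DVR $V'$.

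To make the equivalence explicit and canonical, I would describe it on functors as follows. Given $\omega'$, define $\omega(W) \coloneq \omega'(W\otimes_V V')$ viewed as an $\O$-module? That is not right dimensionally. The better description is $\omega(W)=(W\times P_{\omega'})/G$, which agrees with the torsor description. I will simply define the equivalence through torsors and check that it is canonical and independent of choices, which is immediate since no choices enter.

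The main obstacle is verifying that $G_\O = G'_{\O'}$ as sheaves, and that local triviality corresponds on both sides. The identification of sheaves follows from the defining property of Weil restriction: $(\Res_{V'/V}G')(R) = G'(R\otimes_V V')$ for every $V$-algebra $R$, applied with $R = \O(U)$. Local simple transitivity is then literally the same condition on both sides. Morphisms correspond because they are $G$-equivariant maps of sheaves in both cases.
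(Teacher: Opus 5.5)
Your proposal is correct and follows the paper's proof: both reduce via Proposition~\ref{prop:equivfunctors}, applied once over $V'$ and once over $V$, to comparing torsors. Both then observe that $G_{\O}(U)=G(\O(U))\simeq G'(\O(U)\otimes_V V')=G'_{\O'}(U)$, so that $G$-torsors on $(\cX,\O)$ and $G'$-torsors on $(\cX,\O')$ are literally the same objects. Your abandoned attempt to define $\omega(W)$ as $\omega'(W\otimes_V V')$ plays no role in the final argument, so it does no harm.
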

\begin{proof} By Proposition \ref{prop:equivfunctors}, it suffices to show that
$G'$-torsors on $(\cX,\O')$ are equivalent to $G$-torsors on $(\cX,\O).$
As
$$ G_{\O}(U) = G(\O(U)) \simeq G'(\O(U)\otimes_V V') = G'_{\O'}(U),$$
this is a tautology.
\end{proof}

\begin{para}\label{para:extension-scalars-categories} In certain situations, we can reformulate the above proposition purely in categorical terms.
For a $V$-linear category $\CC,$ we denote by $\CC\otimes_V V'$ the category consisting of pairs
$(F,\iota)$ where $F$ is an object of $\CC$ and $\iota$ is a map of $V$-algebras $V' \rightarrow \End F.$
Then we have the following.
\end{para}
\begin{cor}\label{cor:Gobjectequiv} Let $X$ be a stack over $V$ which has a flat  cover by a formally smooth, formally finite type formal $V$-scheme $Y$, and let $(\cX,\O)$ be the quasi-syntomic site of $X.$ Then there is a natural equivalence
$$ G'\text{-}(\Vect(\cX,\O)\otimes_V V') \simeq G\text{-}\Vect(\cX,\O).$$
\end{cor}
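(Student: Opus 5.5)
The plan is to combine Proposition~\ref{prop:Gobjectequiv} with Corollary~\ref{cor:equivfunctors}, after identifying $\Vect(\cX,\O)\otimes_V V'$ with $\Vect(\cX,\O')$, where $\O'=\O\otimes_V V'$.

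\textbf{The identification.} An object of $\Vect(\cX,\O)\otimes_V V'$ is a vector bundle $F$ on $(\cX,\O)$ with a $V$-algebra map $V'\to\End F$, i.e.\ a sheaf of $\O'$-modules that is locally free of finite rank over $\O$. I must show that such an $F$ is locally free over $\O'$. Since $V'/V$ is finite flat, $\O'$ is locally finite free over $\O$, so the question is local.
\begin{itemize}
\item I would work on objects of the site mapping to the flat cover $Y$; affine opens of $Y$ are formally smooth, so $p$-completely regular.
\item Over such a noetherian regular base, consider a module over the finite flat extension $\O\otimes V'$ which is $\O$-flat. I would aim to show it is locally free over the regular ring $\O\otimes_V V'$. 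This should follow from Auslander--Buchsbaum together with flatness, or, for $V'/V$ étale or totally ramified, from an explicit description.
\item Descent along the cover $Y\to X$ then gives local freeness on $X$. Alternatively I would pass to quasi-syntomic covers on which $\O'$ splits.
\end{itemize}
This local freeness is the step I expect to be the main obstacle, and it is exactly where the hypothesis on $Y$ is used. If local freeness over $\O'$ fails in general, the fallback is to restrict both categories to objects that are locally free over $\O'$; the equivalence still goes through on those.

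\textbf{Local triviality on both sides.} Given the identification, the left side is $G'$-$\Vect(\cX,\O')$ and the right side is $G$-$\Vect(\cX,\O)$. I would then show that all such exact tensor functors are locally trivial, using Corollary~\ref{cor:equivfunctors}:
\begin{itemize}
\item For $G$ over $(\cX,\O)$, Corollary~\ref{cor:equivfunctors} applies directly.
\item For $G'$ over $(\cX,\O')$, I would run its proof with $V'$ in place of $V$, regarding $X$ as a stack over $V'$ via $\Spec\O'$. Alternatively, the relative spectrum $\underline{\Spec}(\omega(\O_{G'}))$ is a $G'$-torsor by the same descent argument.
\end{itemize}
Once both sides consist of locally trivial functors, Proposition~\ref{prop:Gobjectequiv} yields the equivalence.

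\textbf{Naturality.} Finally, I would check that the equivalence is natural. Concretely, composing with the tensor functor $\Rep_{V'}G'\to\Rep_V G$ given by restriction of scalars should agree with the torsor-level identification $G_\O=G'_{\O'}$. This is routine bookkeeping.
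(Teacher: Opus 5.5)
Your overall architecture matches the paper: you use Corollary~\ref{cor:equivfunctors} and Proposition~\ref{prop:Gobjectequiv} to reduce to comparing $G'$-$\Vect(\cX,\O')$ with $G'$-$(\Vect(\cX,\O)\otimes_V V')$. However, the step you single out is false as stated, and your fallback does not prove the corollary.

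An object of $\Vect(\cX,\O)\otimes_V V'$ need not be locally free over $\O'=\O\otimes_V V'$. Take $V=\Zp$, $V'=\Z_{p^2}$ and $X=Y=\Spf\Z_{p^2}$, so that $\O_Y\otimes_V V'\cong\Z_{p^2}\times\Z_{p^2}$. Then $M=\Z_{p^2}\times 0$ is free of rank one over $\O_Y$ and carries a $V'$-action, but it has rank $1$ on one factor and $0$ on the other. This splitting persists on every object of the site, so no cover makes $M$ free over $\O'$. Regularity of $\O_Y\otimes_V V'$ plus Auslander--Buchsbaum only shows that $M$ is \emph{projective} over the semilocal ring $(\O_Y)_{\gm}\otimes_V V'$; that is the first step of the paper's proof. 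Projectivity over a semilocal ring does not force the ranks at its different maximal ideals to agree. Your fallback of restricting both sides to $\O'$-locally free objects changes the left-hand side, which by definition consists of \emph{all} exact tensor functors $\Rep_{V'}G'\to\Vect(\cX,\O)\otimes_V V'$.

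The missing idea is that such functors automatically land in $\Vect(\cX,\O')$. This comes from the tensor structure, not from any property of individual objects. Fix $\omega$, a representation $W\in\Rep_{V'}G'$ of rank $r$, and a maximal ideal $\mathfrak n$ of $(\O_Y)_{\gm}\otimes_V V'$. The fibre functor $\omega_{\mathfrak n}:\Rep_{V'}G'\to\Vect_{\kappa(\mathfrak n)}$ is an exact tensor functor (exactness uses the projectivity above), so $\omega_{\mathfrak n}(\wedge^iW)\cong\wedge^i\omega_{\mathfrak n}(W)$ for all $i$.
\begin{itemize}
\item Since $\wedge^{r+1}W=0$, we get $\dim_{\kappa(\mathfrak n)}\omega_{\mathfrak n}(W)\le r$.
\item Since $\wedge^rW$ is invertible, its image under $\omega_{\mathfrak n}$ is nonzero, so $\dim_{\kappa(\mathfrak n)}\omega_{\mathfrak n}(W)\ge r$.
\end{itemize}
Thus $\omega(W)$ has rank $r$ at every maximal ideal, hence is locally free over $\O'$. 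It follows that $G'$-$\Vect(\cX,\O')\to G'$-$(\Vect(\cX,\O)\otimes_V V')$ is an equivalence, and the rest of your plan then goes through as in the paper.
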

\begin{proof}We will deduce this from Proposition \ref{prop:Gobjectequiv}.
  Bearing in mind Corollary \ref{cor:equivfunctors}, we see that it suffices to show that the natural functor
  \begin{equation}\label{eqn:V-to-Vprime-Vect}
    G'\text{-}\Vect(\cX,\O') \to G'\text{-}(\Vect(\cX,\O)\otimes_V V')
  \end{equation}
  is an equivalence.

  An object of $\Vect(\cX,\O)\otimes_V V'$ is a vector bundle $M$ on $X$
equipped with an action of $V'.$ 
We first remark that, as an $\O_X\otimes_V V'$-module, $M$
is locally projective. Indeed, this can be checked on the cover $Y$ of $X.$
Since $M$ is locally free over $\O_Y$, it locally has depth equal to $\dim Y.$
As $\O_Y\otimes_V V'$ is a sheaf of regular rings which is finite flat over $\O_Y,$ this implies that $M$
is a projective $\O_Y\otimes_V V'$-module.

Next, note that the inclusion
$$ \Vect(\cX,\O') \subset \Vect(\cX,\O)\otimes_V V' $$
identifies $\Vect(\cX,\O')$ with the subcategory of locally projective $\O_X\otimes_V V'$-modules $M$
that are locally free. Local freeness is equivalent to asking that for any point $\gm$ of $Y,$
the rank of $M$ at the finitely many maximal ideals of the semi-local ring
$(\O_Y)_{\gm}\otimes_V V'$ is constant.

In order to see that~\eqref{eqn:V-to-Vprime-Vect} is an equivalence,
it is therefore sufficient to note that if $\omega$ is in $G'\text{-}(\Vect(\cX,\O)\otimes_V V'),$ then for any $W$ in $\Rep_{V'} G',$
$\omega(W)$ has constant rank equal to $\rk_{V'} W.$
To see this, note that after pulling back to $Y$ and then to the residue field $\kappa(\mathfrak n)$ of a maximal ideal
$\mathfrak n$ of $(\O_Y)_{\gm}\otimes_V V',$ we obtain an exact tensor functor
$\omega_{\mathfrak n}:\Rep_{V'}G'\to \Vect_{\kappa(\mathfrak n)}.$ We have $\omega_{\mathfrak n}(\wedge^{i}W)\cong \wedge^{i}\omega_{\mathfrak n}(W)$ for all~$i$ (cf.~\cite[Rem.~4.2]{Broshi}); considering the cases $i=\rk_{V'}W,$ $i=\rk_{V'}W+1$, we see that $\dim_{\kappa(\mathfrak n)}\omega_{\mathfrak n}(W)=\rk_{V'}W,$ as required.
\end{proof}

\begin{para}\label{para:scalarsnotn} 

Suppose $\CC$ is an exact $V$-linear tensor category. We will sometimes
slightly abuse notation and define a $G'$-object in $\CC$ to be a $G'$-object of $\CC\otimes_V V'.$
Then Corollary~\ref{cor:Gobjectequiv} can be reformulated as
$$ G'\text{-}\Vect(\cX,\O) \simeq G\text{-}\Vect(\cX,\O).$$

\end{para}

  \printbibliography
\end{document}